\newcommand{\WL}[1]{{\color{orange} {#1}}}
\numberwithin{equation}{section}
\newcommand{\sD}{\ensuremath{\mathscr{D}}\xspace}
\newcommand{\sH}{\ensuremath{\mathscr{H}}\xspace}
\newcommand{\sP}{\ensuremath{\mathscr{P}}\xspace}
\newcommand{\sU}{\ensuremath{\mathscr{U}}\xspace}
\newcommand{\fkg}{\ensuremath{\mathfrak{g}}\xspace}
\newcommand{\fkl}{\ensuremath{\mathfrak{l}}\xspace}
\newcommand{\W}{\mathbf{W}}
\newcommand{\calw}{\mathcal{W}}
\newcommand{\cc}{\mathbb C}
\newcommand{\zz}{\mathbb Z}
\newcommand{\qq}{\mathbb Q}
\newcommand{\rr}{\mathbb R}
\newcommand{\A}{\mathbb A}
\newcommand{\la}{\langle}
\newcommand{\ra}{\rangle}
\newcommand{\lra}{\longrightarrow}
\newcommand{\hra}{\hookrightarrow}
\newcommand{\al}{\alpha}
\newcommand{\ga}{\gamma}
\newcommand{\de}{\delta}
\newcommand{\De}{\Delta}
\newcommand{\lam}{\lambda}
\newcommand{\Lam}{\Lambda}
\newcommand{\vp}{\varpi}
\newcommand{\sig}{\sigma}
\newcommand{\ka}{\kappa}
\DeclareMathOperator{\G}{G}
\DeclareMathOperator{\T}{T}
\DeclareMathOperator{\Tr}{Tr}
\DeclareMathOperator{\sspan}{span}
\DeclareMathOperator{\coker}{coker}
\DeclareMathOperator{\I}{I}
\DeclareMathOperator{\Q}{Q}
\DeclareMathOperator{\supp}{supp}
\newcommand{\fg}{\mathfrak g}
\newcommand{\fc}{\mathfrak c}
\newcommand{\fh}{\mathfrak h}
\newcommand{\fp}{\mathfrak p}
\newcommand{\fq}{\mathfrak q}
\newcommand{\fgl}{\mathfrak{gl}}
\newcommand{\fX}{\mathfrak X}
\newcommand{\calp}{\mathcal{P}}
\newcommand{\calh}{\mathcal{H}}
\newcommand{\cals}{\mathcal{S}}
\newcommand{\calo}{\mathcal{O}}
\newcommand{{\rP}}{\mathrm{P}}
\newcommand{{\rM}}{\mathrm{M}}
\newcommand{{\X}}{\mathrm{X}}
\newcommand{\calv}{\mathcal{V}}
\newcommand{\cale}{\mathcal{E}}
\newcommand{\calf}{\mathcal{F}}
\newcommand{\Gm}{\mathbb{G}_m}
\newcommand{\bbg}{\mathbb{G}}
\newcommand{\iso}{\overset{\sim}{\longrightarrow}}
\newcommand{\car}{\textbf{car}}
\newcommand{\Fbar}{\overline{F}}
\newcommand{\bfun}{\mathbf{1}}
\newcommand{\rH}{\mathrm{H}}
\newcommand{\SF}{\mathcal{S}\mathcal{F}}
\def\Ddots{\mathinner{\mkern1mu\raise\p@
\vbox{\kern7\p@\hbox{.}}\mkern2mu
\raise4\p@\hbox{.}\mkern2mu\raise7\p@\hbox{.}\mkern1mu}}
\newenvironment{psmatrix}
  {\left(\begin{smallmatrix}}
  {\end{smallmatrix}\right)}
\newtheorem{Thm}{Theorem}[section]
\newtheorem{Prop}[Thm]{Proposition}
\newtheorem{LemDef}[Thm]{Lemma/Definition}
\newtheorem{Lem}[Thm]{Lemma}
\newtheorem{Cor}[Thm]{Corollary}
\newtheorem{Conj}[Thm]{Conjecture}
\newtheorem{Assumption}[Thm]{Assumption}
\theoremstyle{definition}
\newtheorem{Def}[Thm]{Definition}
\theoremstyle{remark}
\newtheorem{Rem}[Thm]{Remark}
\theoremstyle{definition}
\newcommand{\quash}[1]{}
\newcommand{\BA}{\ensuremath{\mathbb {A}}\xspace}
\newcommand{\BC}{\ensuremath{\mathbb {C}}\xspace}
\newcommand{\BG}{\ensuremath{\mathbb {G}}\xspace}
\newcommand{\BR}{\ensuremath{\mathbb {R}}\xspace}
\newcommand{\BZ}{\ensuremath{\mathbb {Z}}\xspace}
\newcommand{\CP}{\ensuremath{\mathcal {P}}\xspace}
\newcommand{\CS}{\ensuremath{\mathcal {S}}\xspace}
\newcommand{\Y}{\ensuremath{\mathrm {Y}}\xspace}
\newcommand{\Ad}{{\mathrm{Ad}}}
\DeclareMathOperator{\diag}{diag}
\DeclareMathOperator{\End}{End}
\DeclareMathOperator{\Gal}{Gal}
\newcommand{\GL}{\mathrm{GL}}
\newcommand{\Hir}{\mathrm{Hir}}
\DeclareMathOperator{\Hom}{Hom}
\newcommand{\inv}{{\mathrm{inv}}}
\DeclareMathOperator{\Lie}{Lie}
\DeclareMathOperator{\Nm}{Nm}
\DeclareMathOperator{\Orb}{Orb}
\newcommand{\PGL}{{\mathrm{PGL}}}
\renewcommand{\Re}{{\mathrm{Re}}}
\DeclareMathOperator{\Res}{Res}
\DeclareMathOperator{\Spec}{Spec}
\newcommand{\SO}{{\mathrm{SO}}}
\DeclareMathOperator{\sgn}{sgn}
\newcommand{\U}{\mathrm{U}}
\DeclareMathOperator{\vol}{vol}
\newcommand{\Herm}{\mathrm{Herm}}
\newcommand{\val}{\mathrm{val}}
\newcommand{\wt}{\widetilde}
\newcommand{\wh}{\widehat}
\newcommand{\pair}[1]{\langle {#1} \rangle}
\newcommand{\ov}{\overline}
\newcommand{\ul}{\underline}
\newcommand{\imp}{\Longrightarrow}
\newcommand{\bs}{\backslash}
\newcommand{\ep}{\varepsilon}
\newtheorem{theorem}{Theorem}
\newtheorem{lem}[theorem]{Lemma}
\newtheorem{cor}[theorem]{Corollary}
\newtheorem{Important}[theorem]{Notational change}
\theoremstyle{definition}
\newtheorem{defn}[theorem]{Definition}
\numberwithin{equation}{section}
\numberwithin{theorem}{section}
\let\shortmapsto\mapsto
\renewcommand{\mapsto}{%
   \ifbool{@display}{\longmapsto}{\shortmapsto}%
   }
\newlength{\olen}
\newlength{\ulen}
\newlength{\xlen}
\newcommand{\xra}[2][]{%
   \ifbool{@display}%
      {\settowidth{\olen}{$\overset{#2}{\longrightarrow}$}%
       \settowidth{\ulen}{$\underset{#1}{\longrightarrow}$}%
       \settowidth{\xlen}{$\xrightarrow[#1]{#2}$}%
       \ifdimgreater{\olen}{\xlen}%
          {\underset{#1}{\overset{#2}{\longrightarrow}}}%
          {\ifdimgreater{\ulen}{\xlen}%
             {\underset{#1}{\overset{#2}{\longrightarrow}}}
             {\xrightarrow[#1]{#2}}}}%
      {\xrightarrow[#1]{#2}}
   }
\newcommand{\xyra}[2][]{%
   \settowidth{\xlen}{$\xrightarrow[#1]{#2}$}%
   \ifbool{@display}%
      {\settowidth{\olen}{$\overset{#2}{\longrightarrow}$}%
       \settowidth{\ulen}{$\underset{#1}{\longrightarrow}$}%
       \ifdimgreater{\olen}{\xlen}%
          {\mathrel{\xymatrix@M=.12ex@C=3.2ex{\ar[r]^-{#2}_-{#1} &}}}%
          {\ifdimgreater{\ulen}{\xlen}%
             {\mathrel{\xymatrix@M=.12ex@C=3.2ex{\ar[r]^-{#2}_-{#1} &}}}
             {\mathrel{\xymatrix@M=.12ex@C=\the\xlen{\ar[r]^-{#2}_-{#1} &}}}}}%
      {\mathrel{\xymatrix@M=.12ex@C=\the\xlen{\ar[r]^-{#2}_-{#1} &}}}%
   }
\newcommand{\xla}[2][]{%
   \ifbool{@display}%
      {\settowidth{\olen}{$\overset{#2}{\longleftarrow}$}%
       \settowidth{\ulen}{$\underset{#1}{\longleftarrow}$}%
       \settowidth{\xlen}{$\xleftarrow[#1]{#2}$}%
       \ifdimgreater{\olen}{\xlen}%
          {\underset{#1}{\overset{#2}{\longleftarrow}}}%
          {\ifdimgreater{\ulen}{\xlen}%
             {\underset{#1}{\overset{#2}{\longleftarrow}}}
             {\xleftarrow[#1]{#2}}}}%
      {\xleftarrow[#1]{#2}}
   }
\newcommand{\isoarrow}{%
   \ifbool{@display}{\overset{\sim}{\longrightarrow}}{\xrightarrow\sim}%
   }
\begin{document}

\title[]{Unitary Friedberg--Jacquet periods and their twists:
\\
Fundamental Lemmas}
\author{Spencer Leslie}
\address{Boston College, Department of Mathematics, Chestnut Hill, MA 02467 USA}
\email{spencer.leslie@bc.edu}

\author{Jingwei Xiao}
\address{Beijing, China}
\email{jwxiao922@gmail.com}

\author{Wei Zhang}
\address{Massachusetts Institute of Technology, Department of Mathematics, 77 Massachusetts Avenue, Cambridge, MA 02139, USA}
\email{weizhang@mit.edu}

\date{\today}

\begin{abstract}We formulate a global conjecture for the automorphic period integral associated to the symmetric pairs defined by unitary groups over number fields, generalizing a theorem of Waldspurger's toric period for $\GL(2)$. We introduce a new relative trace formula to prove our global conjecture under some local hypotheses. A new feature is the presence of the relative endoscopy. In this paper we prove the main local theorem: a new relative fundamental lemma comparing certain orbital integrals of functions matched in terms of Hironaka and Satake transforms. 
\end{abstract}

\maketitle

\setcounter{tocdepth}{1}
\tableofcontents

\section{Introduction}

In \cite{W85} Waldspurger proved a formula relating the toric period
integral of a cuspidal automorphic form on $\GL(2)$ (and its inner forms) to the central value of the associated L-function. Since then, there have been various conjectural generalizations of Waldspurger theorem to higher rank groups. One generalization is due to Gross--Prasad (for orthogonal groups $\SO(n)\times\SO(n+2r+1)$) \cite{GrossPrasad} and Gan--Gross--Prasad (for general classical groups including the unitary groups) \cite{GGP} in which the relevant L-functions are Rankin-Selberg convolution L-functions. The  Gan--Gross--Prasad conjecture for unitary groups (and tempered cuspidal automorphic representations) have been mostly proved by now after the work of many authors; in the reductive case which is most closely related to the current paper, the most complete results can be found in \cite{BPLZZ} and \cite{beuzart2020global}, based on the study of the Jacquet-Rallis relative trace formula (abbreviated as RTF henceforth).

In this paper and its companion \cite{LXZ25}, we pursue another generalization in terms of the unitary Friedberg--Jacquet periods and the standard base change L-functions. One of these periods corresponds to pairs of reductive groups 
\begin{equation}\label{eqn: FJ periods}
    \U(W_1)\times \U(W_2)\subset \U(W_1\oplus W_2),
\end{equation}
where $W_1$ and $W_2$ are $n$-dimensional Hermitian vector spaces with respect to a quadratic extension of number fields, and $\U(W_i)$ denotes the associated unitary group. We consider more general forms of such a pair, as described below in terms of certain ``bi-quadratic'' extensions. Similar considerations also appeared in the work of Getz--Wambach \cite{GetzWambach}. In various special cases, our period integrals have appeared in the previous works by Guo \cite{Guo}, Pollack--Wan--Zydor \cite{PollackWanZydor}, Chen--Gan \cite{ChenGan},  Xue--Zhang \cite{XueZhang}. 

We formulate the conjecture in the next subsection, and state our global results in \S \ref{Sec: global results}. In \cite{LXZ25}, we introduce several relative trace formulas which we compare to prove these results (see \S\ref{Sec: RTF intro}). In this paper, we develop the local geometric theory for these trace formula comparisons, formulate the necessary transfer conjectures (toward which we establish partial results), and prove the \emph{three} fundamental lemmas required for the general comparisons (see \S \ref{Sec: fundamental intro}).

\subsection{The global conjecture}\label{Section: biquad conjecture}

Let $F_0$ be a number field, equipped with two quadratic \'etale algebras $F$ and $E_0$. We form the compositum $E=E_0F$, which is of degree $4$ over $F_0$. We obtain a diagram of extensions
\[
\begin{tikzcd}[every arrow/.append style={dash}]
     & E &   \\
    E_0\ar[ur,"\sig"] &       F'\ar[u]\ar[d]      & F \ar[ul, "\nu"] \\
      &      F_0\ar[ul,"\nu"]\ar[ur,"\sig"]       &
\end{tikzcd}
\]
where we have indicated the notation for the corresponding Galois automorphism. We are interested in (a slight generalization of) the following setting: suppose that $(B,\ast)$ as an $F$-central simple algebra of degree $2n$ equipped with an involution of the second kind. For any $\tau\in B^\times$ such that $\tau=\tau^\ast$ we define the unitary group, an $F_0$-algebraic group,
\[
\G:=\{g\in B^\times : g\tau g^\ast =\tau\}.
\]
We now assume that $\iota_0: E_0\lra B$ is an injection of $F_0$-algebras compatible with $\nu, \ast$ and $\tau$; see \S\ref{ss:setup} for the detail. We set 
\[
\rH:=\mathrm{Cent}_{\G}(\iota_0)
\]
to be the subgroup commuting with the image of $\iota_0$. Then $\rH\subset \G$ is a symmetric subgroup and we refer to the pair $(\G,\rH)$ as a unitary symmetric pair.

Fix such a pair $(\G,\rH)$. Let $\pi$ be a cuspidal automorphic representation of $\G(\BA)$, where $\BA$ the ring of adeles of $F_0$. We are interested in the automorphic $\rH$-period integral, defined by
$$
 \sP_{\rH}(\varphi)=\int_{[\rH]}\varphi(h)\,dh,
$$
where $\varphi$ is a form in $\pi$ and $[\rH]=\rH(F_0)\bs \rH(\BA)$. We will say that $\pi$ is $\rH$-distinguished if $ \sP_{\rH}(\varphi)\neq0$ for some $\varphi\in\pi$. These periods generalize the {unitary Friedberg--Jacquet periods}, as the special case of $B = M_{2n}(F)$ and $E_0=F_0\times F_0$ recovers pairs of the form \eqref{eqn: FJ periods}. 

 Let ${\rm BC}_{F}(\pi)$ denote the (weak) base change from $\G$ to $\GL_{2n,F}$ \cite[Section 4.3]{BPLZZ}. Now let $\G'=\GL_{2n,F_0}$, and let $\pi_0$ be a cuspidal automorphic representation of $\G'(\BA)$. By abuse of notation, we also denote by ${\rm BC}_{F}(\pi_0)$ (resp., ${\rm BC}_{F'}(\pi_0)$) the base change from $\GL_{2n,F_0}$ to $\GL_{2n,F}$ (resp., to $\GL_{2n,F'}$) \cite{ArthurClozel}.

If $\pi_0$ is of {\em symplectic type} (i.e., its exterior square L-function has a pole at $s=1$) and its base change ${\rm BC}_{F}(\pi_0)$ is cuspidal, then ${\rm BC}_{F}(\pi_0)$ descends down to a cuspidal automorphic representation  $\pi$ of the quasi-split unitary group $\U_{2n}$ of rank $2n$ in the sense that ${\rm BC}_{F}(\pi_0)={\rm BC}_{F}(\pi)$.

We note the product expansion
$$
L(s,{\rm BC}_{E}(\pi_0))=L(s,{\rm BC}_{F'}(\pi_0) )
L(s,{\rm BC}_{F'}(\pi_0\otimes\eta_{F/F_0}) ),$$
where $\eta_{F/F_0}$ denotes the quadratic character of $\BA^\times/F_0^\times$ associated to $F/F_0$ by class field theory.

\begin{Conj}\label{Conj: biquadratic} Let $\pi_0$ be a cuspidal automorphic representation of $\GL_{2n,F_0}$ of symplectic type. Assume that its base change ${\rm BC}_{F}(\pi_0)$ is cuspidal, and let $\pi$ be a cuspidal automorphic representation of the quasi-split unitary group $\U_{2n}$ of rank $2n$ over $F_0$ such that ${\rm BC}_{F}(\pi_0)={\rm BC}_{F}(\pi)$.
Then the following two assertions are equivalent:
\begin{enumerate}
\item One of $
L(1/2,{\rm BC}_{F'}(\pi_0) )$ and $
L(1/2,{\rm BC}_{F'}(\pi_0\otimes\eta_{F/F_0}) )$  does not vanish;
\item there exists a unitary symmetric pair $(\G,\rH)$ for an inner form\footnote{Here ``pure inner forms" are insufficient; we may need to consider unitary group associated to a central simple algebra over $F$ with an involution of second kind with respect to $F/F_0$. This can already be seen when $n=1$.} $\G$ of $\U_{2n}$ and an automorphic representation $\pi_{\G}$ on $\G(\A)$ nearly equivalent to $\pi$ such that $\pi_{\G}$ is $\rH$-distinguished.
\end{enumerate}

\end{Conj}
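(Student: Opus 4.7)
I would prove Conjecture~\ref{Conj: biquadratic} by a comparison of relative trace formulas (RTFs), extending the Jacquet--Rallis paradigm to the biquadratic symmetric setting. On the unitary side, for each symmetric pair $(\G,\rH)$ as above and each $f\in C_c^\infty(\G(\BA))$, form the automorphic kernel $K_f(x,y)=\sum_{\gamma\in\G(F_0)}f(x^{-1}\gamma y)$ and the regularized double $\rH$-period
\[
J_\G(f)=\int_{[\rH]\times[\rH]}K_f(h_1,h_2)\,dh_1\,dh_2,
\]
whose cuspidal spectral contribution is a sum of $|\sP_\rH(\varphi)|^2$ over an orthonormal basis of cuspidal representations of $\G(\BA)$. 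On the $\GL_{2n,F_0}$-side I would set up an analogous RTF (or pair of RTFs) whose spectral distribution represents, after an Eulerian unfolding in the style of Flicker and Friedberg--Jacquet, the two base-change central $L$-values appearing in condition~(1); the simultaneous appearance of $L(1/2,\mathrm{BC}_{F'}(\pi_0))$ and $L(1/2,\mathrm{BC}_{F'}(\pi_0\otimes\eta_{F/F_0}))$ is the spectral shadow of the relative endoscopy, the key new feature of this comparison.

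\textbf{Local matching and relative endoscopy.} The global comparison reduces, place by place, to a transfer of orbital integrals. At almost all unramified places this transfer is precisely provided by the three fundamental lemmas established in this paper, expressed through Hironaka and Satake transforms. There are three such identities because, after stabilization of the unitary RTF summed over inner forms, the geometric distribution decomposes into a stable contribution and two endoscopic contributions, each of which matches one distinct piece on the $\GL$-side. At ramified and archimedean places, smooth transfer of test functions is taken as a local hypothesis and must be supplied separately (it is not addressed in this paper). Granting smooth transfer, summing over all inner forms and applying the endoscopic character identity yields an equality of global geometric distributions, hence of the corresponding spectral expansions.

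\textbf{Spectral isolation and conclusion.} Standard Hecke-algebra density arguments at split places then isolate the $\pi_0$-component on the $\GL$-side and the near-equivalence class of $\pi$ on the unitary side. The resulting identity relates the sum of $|\sP_\rH(\varphi)|^2$ over all symmetric pairs $(\G,\rH)$ and all $\pi_\G$ in the near-equivalence class of $\pi$ to a nonzero linear combination of the two base-change central $L$-values in~(1) (with explicit nonzero local constants controlled by the transfer factors). The equivalence (1)$\Leftrightarrow$(2) is an immediate consequence, since non-vanishing on either side forces non-vanishing on the other. The principal obstacles are (a) the archimedean and ramified non-archimedean smooth transfer of test functions, and (b) the precise global stabilization that implements the relative endoscopic partition of inner forms into the two $\GL$-side contributions; together these constitute the main technical input beyond the fundamental lemmas proved in this paper.
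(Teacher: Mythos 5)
First, note that this statement is a \emph{conjecture}: the paper does not prove it, but only establishes special cases (Theorems~\ref{Thm: si} and~\ref{Thm: ii}, the split-inert and inert-inert cases) under strong local hypotheses, via the RTF comparison of~\eqref{eqn: u RTF} against~\eqref{eqn:linear RTF} that is carried out in the companion paper~\cite{LXZ25} using the fundamental lemmas proved here. Your sketch follows the same general paradigm, but there are several points where it misstates the structure of the comparison or omits a substantive ingredient.

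\textbf{The organization of the three fundamental lemmas.} You describe the geometric side as decomposing, after stabilization, into ``a stable contribution and two endoscopic contributions, each of which matches one distinct piece on the $\GL$-side.'' This is not how the paper is organized. There are \emph{two} unitary symmetric varieties in play, $\Q_{si}$ (split-inert) and $\Q_{ii}$ (inert-inert), corresponding to the two possible local behaviors of $E_0\otimes F$; the two weight characters $\eta_{F/F_0}$ and $\eta_{F'/F_0}$ appearing in $I^{(\eta_{F/F_0},\eta_{F'/F_0})}$ encode \emph{two different quadratic extensions} in general, not a single one. In the split-inert case the simple-form ellipticity constraints collapse the geometric side into stable orbital integrals only (Corollary~\ref{Cor: stable}); in the inert-inert case there remains, in addition, an $\varepsilon$-orbital integral contribution. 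Consequently the three fundamental lemmas (Theorems~\ref{Thm: fundamental lemma si},~\ref{Thm: fundamental lemma ii},~\ref{Thm: fundamental lemma varepsilon}) split as: two linear-vs-unitary stable matchings, one for each of $\Q_{si}$ and $\Q_{ii}$ with the weight factors $(\eta,\eta)$ and $(\eta,1)$ respectively, and one endoscopic matching between the unitary varieties themselves, identifying $\varepsilon$-orbital integrals on $\Q_{ii}$ with stable orbital integrals on $\Q_{si}$ (the latter plays the role of the endoscopic variety). Your schema collapses this structure incorrectly.

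\textbf{The spectral conclusion requires positivity of central $L$-values.} Your final step asserts that the RTF identity relates the sum of period-squares to ``a nonzero linear combination of the two base-change central $L$-values,'' and that the equivalence (1)$\Leftrightarrow$(2) is then ``immediate.'' This misses a genuine obstacle. The spectral expansion of the linear RTF naturally produces the \emph{sum} $L(1/2,\mathrm{BC}_{F'}(\pi_0))+L(1/2,\mathrm{BC}_{F'}(\pi_0\otimes\eta_{F/F_0}))$ (after suitable normalization), not the individual values. The implication ``at least one $L$-value is nonzero $\Rightarrow$ the sum is nonzero'' fails a priori, since the values could cancel. The resolution in~\cite{LXZ25} is to invoke the Lapid--Rallis non-negativity of central values of $L$-functions of symplectic type, which forces each term to be real and nonnegative, so that non-vanishing of the sum is equivalent to non-vanishing of at least one term. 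Without this input, the direction ``(1) $\Rightarrow$ relative character $\neq 0$'' is not automatic; this is an important ingredient your proposal omits.

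\textbf{Minor points.} The unitary RTF~\eqref{eqn: u RTF} integrates $K_f$ over $[\rH_1]\times[\rH_2]$ for two possibly distinct pure inner forms of $\rH$, so the cuspidal contribution is a product $\sP_{\rH_1}(\varphi)\overline{\sP_{\rH_2}(\varphi)}$ rather than $|\sP_\rH(\varphi)|^2$; this affects the relative-character normalization. The footnote to the conjecture also flags that pure inner forms of $\G$ are insufficient—one must allow $\G$ attached to central simple algebras over $F$ with involution of the second kind—and your sketch (``summing over all inner forms'') is silent on this. Finally, you correctly identify the two principal gaps (ramified/archimedean smooth transfer, and the global stabilization needed when $\mathrm{BC}_{F'}(\pi_0)$ is noncuspidal), which is consistent with why the paper only establishes special cases under restrictive local hypotheses.
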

We begin with a few remarks.
\begin{Rem}
If $\pi_{\G}$ is $\rH$-distinguished such that  ${\rm BC}_{F}(\pi_{\G})$ is cuspidal, then $\pi_{\G}$ must be as in the conjecture, i.e., there exists $\pi_0$  of {\em symplectic type} such that ${\rm BC}_{F}(\pi_0)={\rm BC}_{F}(\pi_{\G})$.
\end{Rem}
\begin{Rem}There should be a local conjecture dictating the local multiplicity spaces $\Hom_{\rH}(\pi_{\G,v},\BC)$ and local root numbers. But the precise statement seems quite subtle and we do not attempt to write the exact conjecture here. When $F/F_0$ is split at $v$, a precise local conjecture has been formulated by Prasad and Takloo-Bighash \cite{PTB}. Moreover, Chen--Gan have proved results for discrete L-packets in \cite{ChenGan} in some cases.

Additionally, there should be an extension of the global conjecture when ${\rm BC}_{F'}(\pi_0)$ is not cuspidal. Our methods should also play a role in this broader context, relying on a stabilization of the relevant relative trace formulas for unitary symmetric pairs. We postpone the formulation of a precise statement to future work.
\end{Rem}

\begin{Rem}
    This conjecture is compatible with the general notion of \emph{twisted base change} of Getz--Wambach \cite{GetzWambach}. More precisely, their framework predicts a comparison between the $\rH$-distinguished spectrum of $\G$ and automorphic representations $\Pi$ of $\Res_{F/F_0}(\GL_{2n})$ that are distinguished by the two symmetric subgroups
    \[
    \wt{\rH}_1=\Res_{F/F_0}(\GL_n\times\GL_n)\quad\text{ and }\quad \wt{\rH}_2=\U_{2n},
    \]at least in the special case of unitary Friedberg--Jacquet periods \eqref{eqn: FJ periods} (cf. \cite[Conj. 1.1]{LeslieUFJFL}). When restricted to cuspidal representations, results of Friedberg--Jacquet \cite{FriedbergJacquet} and Jacquet \cite{JacquetQuasisplit,FLO} imply that these are precisely the cuspidal automorphic representations $\Pi={\rm BC}_{F}(\pi_0)$ for $\pi_0$ a cuspidal automorphic representation of $\GL_{2n}(\BA)$ of {symplectic type}.
\end{Rem}

\subsection{Main global results}\label{Sec: global results}
The present paper is the first in a series considering cases of the preceding conjecture. In a companion paper \cite{LXZ25}, we introduce a family of relative trace formulas and formulate a comparison designed to attack this conjecture. Though our method and the local results should allow us to obtain global results in the more general bi-quadratic case, for the sake of simplicity we will focus on the following two cases. 

Assume that $B = M_{2n}(F)$ is split, so that $\G = \U(V)$ is the unitary group associated to an $F/F_0$-Hermitian space $V$ of dimension $2n$. We consider the following two situations.
\begin{enumerate}
\item When $E_0=F_0\times F_0$ is split and $F/F_0$ is a quadratic field extension, there exists an orthogonal decomposition 
\[
V=W_1\oplus W_2,\qquad \dim(W_1) = \dim(W_2).
\]
Then $\rH=\U(W_1)\times \U(W_2)$; this case is referred to as the unitary Friedberg--Jacquet period. In this case, $F'=F$ and the first statement of the conjecture simplifies to 
\[
L(1/2,{\rm BC}_{F}(\pi_0) )=L(1/2,{\rm BC}_{F}(\pi_{\G}) )\neq 0.
\]
\item When $E_0=F$ are the same quadratic field over $F_0$, there exists a decomposition 
\[
V=U\oplus U^\perp,\qquad \la-,-\ra|_{U}\equiv 0
\]
so that $\rH = \Res_{F/F_0}\GL(U)$ is a maximal Levi subgroup of $\G$; this case is referred to as the twisted unitary Friedberg--Jacquet period. In this case, $F'= F_0\times F_0$ and the first statement of the conjecture simplifies to the claim that either
\[
L(1/2,\pi_0 )^2 \text{ or }
L(1/2,\pi_0\otimes\eta_{F/F_0} )^2\neq 0.
\]
\end{enumerate}

One reason for considering these two cases is that the local versions occur in the general bi-quadratic setting at infinitely many places $v$ of $F_0$, depending on whether $E_0$ and $F$ are split or inert at $v$. Motivated by this, we will refer to the first (resp. second) case as the {\em split-inert} (resp. {\em inert-inert}) case. With this terminology, the split-split case recovers the linear period of Friedberg--Jacquet \cite{FriedbergJacquet}, while the inert-split case has been treated in \cite{Guo,XueZhang}. Note that these four cases account for all but finitely many local places in the general setting.

Since we will only consider these two cases above, we will make a notational simplification: $F/F_0$ will be renamed as $E/F$.
We impose the following constraints on our number fields. We assume that $E/F$ is a quadratic extension of number fields such that
\begin{enumerate}
    \item\label{field1} $E/F$ is everywhere unramified,
    \item\label{field2} $E/F$ splits over every finite place $v$ of $F$ such that $$p\leq \max\{e(v/p)+1,2\},$$ where  $v|p$ and $e(v/p)$ denote the ramification index of $v$ in $p$.
\end{enumerate}We note that the unramified hypothesis implies that the number of non-split archimedean places is necessarily even (by computing $\eta_{E/F}(-1)=1$ locally).

We first state our main result of \cite{LXZ25} in the  split-inert case.

\begin{Thm}\label{Thm: si}
Let $\pi_{0}$ be a cuspidal automorphic representation of $\G'(\A_{F})$ satisfying 
\begin{enumerate}
    \item there are two (distinct) non-archimedean finite places $v_1,v_2$ such that $\pi_{v_1},\pi_{v_2}$ are supercuspidal,
    \item\label{everywhere unramifed} for each non-split non-archimedean place $v$, $\pi_v$ is unramified,
     \item for each non-split archimedean  place $v$, $\pi_v$ is the descent of the base change of a representation of the compact unitary group $\U_{2n}(\BR)$ that is distinguished by the compact  $\U_{n}(\BR)\times \U_{n}(\BR)$.
\end{enumerate} Assume that $\pi_0$ is of {\em symplectic type}. Then the following two assertions are equivalent:
\begin{enumerate}
\item $L(1/2,{\rm BC}_E(\pi_{0}) )\neq 0$, and 
\item
There exist (non-degenerate) Hermitian spaces $W_1,W_2$ of dimension $n$, and a cuspidal automorphic representation $\pi$ of $\G(\A_{F})$ for $\G=\U(W_1\oplus W_2)$ satisfying
\begin{enumerate}
    \item ${\rm BC}_E(\pi) ={\rm BC}_E(\pi_{0})$,
    \item $\pi$ is $\rH$-distinguished for $\rH=\U(W_1)\times\U(W_2)$ .
\end{enumerate}
\end{enumerate}
\end{Thm}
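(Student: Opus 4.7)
The plan is to establish this theorem via a comparison of relative trace formulas, building on the local geometric theory developed in the body of this paper and completed in the companion paper \cite{LXZ25}. On the general linear side, one introduces a Jacquet--Rallis-type RTF attached to a symmetric pair $(\G',\rH')$ on $\GL_{2n,F}$ whose spectral decomposition isolates the central L-value $L(1/2, \mathrm{BC}_E(\pi_0))$ for cuspidal representations of symplectic type. On the unitary side one considers, for each orthogonal decomposition $V = W_1 \oplus W_2$ with $\dim W_i = n$, the RTF $I_V(f)$ attached to $(\U(V),\U(W_1)\times\U(W_2))$, whose spectral side isolates $\rH$-distinguished cuspidal representations of $\U(V)(\A)$. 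The target is an orbital identity of the schematic form $J(f')=\sum_V I_V(f^V)$ for matching families of test functions, from which the equivalence of (1) and (2) will follow after spectral isolation.

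The matching is constructed place-by-place. At inert finite places one invokes the three fundamental lemmas proved in the present paper, which match unramified test functions via the Hironaka and Satake transforms; at split finite places the symmetric pair $(\U(V),\rH)$ is isomorphic to the linear Friedberg--Jacquet setting of \cite{FriedbergJacquet} and local transfer is classical; the assumption that $E/F$ is unramified everywhere and split at small residue characteristic guarantees that no ramified non-split places appear; at the non-split archimedean places the compactness assumption on $\pi_v$ permits an explicit choice of matching test pair via matrix coefficients of the relevant finite-dimensional representations. The two supercuspidal conditions---which force $v_1, v_2$ to be split, since non-split local components are by hypothesis unramified---allow us to choose the local test functions at $v_1,v_2$ as matrix coefficients of $\pi_{0,v_i}$ (respectively of an appropriate local transfer on the unitary side); these vanish on singular orbits and spectrally isolate the near-equivalence class of $\pi_0$ from both RTFs. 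The surviving numerical identity relates $L(1/2,\mathrm{BC}_E(\pi_0))$ to the period integral $\sP_\rH(\varphi)$ on some $\U(V)(\A)$, yielding the claimed equivalence.

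The principal obstacle, and the main new feature of the approach, is the \emph{relative endoscopy} phenomenon alluded to in the abstract: the regular-semisimple orbit space on the $\GL$ side is strictly richer than that of any single $(\U(V),\rH)$, so a naive geometric matching fails and one must decompose the $\GL$-side orbital integrals into stable and endoscopic pieces indexed by the finite collection of pairs $(W_1,W_2)$. The three fundamental lemmas proved in this paper are formulated precisely so as to be compatible with this decomposition; bringing the corresponding smooth transfer, the archimedean transfer, and the full spectral decomposition to completion---together with the stabilization argument needed to re-package the sum $\sum_V I_V(f^V)$ into a comparison with $J(f')$---constitutes the remaining work carried out in \cite{LXZ25}, of which the smooth transfer at non-archimedean places and the archimedean analysis are expected to be the most demanding technical components.
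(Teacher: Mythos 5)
The paper does not actually prove Theorem~\ref{Thm: si}; it is stated here as a consequence of the RTF comparison carried out in the companion paper \cite{LXZ25}, with the present paper supplying the fundamental lemmas. So your proposal can only be weighed against the strategy sketched in \S\ref{Sec: RTF intro}. At that level you have the broad shape right -- a comparison between the linear RTF \eqref{eqn:linear RTF} (with $\rH'=\GL_n\times\GL_n$ and a Godement--Jacquet Eisenstein series twisted by $\eta$, not a Jacquet--Rallis-type construction) and the unitary RTF \eqref{eqn: u RTF} summed over pure inner forms of $\rH$, with the local hypotheses on $\pi_0$ arranged so that one needs only the fundamental lemmas at inert places, explicit compact-group test functions at archimedean non-split places, and classical transfer at split places, together with spectral isolation via the supercuspidality at $v_1,v_2$ (which, as you note, forces those to be split).

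Where you go astray is in naming relative endoscopy as ``the principal obstacle'' for \emph{this} theorem. The paper is explicit on this point: in the split-inert case (Theorem~\ref{Thm: si}), ``the ellipticity condition for the simple trace formula causes the geometric side to completely simplify into a sum of \emph{stable} orbital integrals,'' so no endoscopic decomposition of the $\GL$-side orbital integrals is required. The sum over pairs $(W_1,W_2)$ indexes the contributions of \emph{pure inner forms} of $\rH$, which is precisely what the stable orbital integral organizes; it is not an endoscopic decomposition. Relative endoscopy enters only in the \emph{inert-inert} case (Theorem~\ref{Thm: ii}), where an unstable $\varepsilon$-orbital-integral term survives and must be matched to the split-inert side via Conjecture~\ref{Conj: ii simple endoscopy} and the fundamental lemma \ref{Thm: fundamental lemma varepsilon}. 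Relatedly, your suggestion that ``smooth transfer at non-archimedean places'' is among the most demanding remaining components misses the point of hypotheses (2) and (3) in the theorem statement: these are imposed exactly so that no non-split place requires a general smooth transfer (the paper even notes that the weak transfer results Theorems~\ref{Thm: weak smooth transfer}, \ref{Thm: weak endoscopic transfer} ``are not used'' in the global argument of \cite{LXZ25}).
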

This establishes the first cases of \cite[Conjecture 1.1]{LeslieUFJFL} (itself a special case of Conjecture \ref{Conj: biquadratic}) for arbitrary $n$, though results in the ``period non-vanishing implies $L$-value non-vanishing'' direction are known in greater generality \cite{ChenGan,PollackWanZydor}.

We also have a version in the inert-inert case.
\begin{Thm}\label{Thm: ii} We further assume that every archimedean place $v$ of $F$ splits in $E$.
   Let $\pi_{0}$ be as above. Assume that $\pi_0$ is of {\em symplectic type}. Then the following two assertions are equivalent:
\begin{enumerate}
\item\label{Thm: ii 1} At least one of $L(1/2,\pi_{0})$ or $L(1/2,\pi_{0}\otimes \eta_{E/F})$ is non-zero, and 
\item There exists a cuspidal automorphic representation $\pi$ of $\G(\A_{F})$ for the quasi-split unitary group $\G=\U(V)$ satisfying
\begin{enumerate}
    \item   ${\rm BC}_E(\pi) ={\rm BC}_E(\pi_{0})$, and
    \item $\pi$ is $\rH$-distinguished, where $\rH$ is the Levi subgroup of the Siegel parabolic subgroup associated 
to a Lagrangian subspace of $V$.
\end{enumerate}
\end{enumerate}
\end{Thm}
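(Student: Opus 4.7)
The plan is to prove Theorem~\ref{Thm: ii} by the simultaneous comparison of three relative trace formulas, as developed in the companion paper~\cite{LXZ25}. On the unitary side, to the symmetric pair $(\G,\rH)$ one attaches an RTF whose spectral side, near a cuspidal automorphic representation $\pi$ with ${\rm BC}_E(\pi)={\rm BC}_E(\pi_0)$, is controlled by the global pairing $\varphi\mapsto|\sP_\rH(\varphi)|^2$, and whose geometric side decomposes as a sum of orbital integrals over $\rH\times\rH$-orbits in $\G$. On the $\G'=\GL_{2n,F}$ side, one introduces two parallel Jacquet--Rallis-type RTFs — one untwisted, one twisted by the quadratic character $\eta_{E/F}$ — whose spectral contributions at $\pi_0$ encode, up to non-zero local constants, the central values $L(1/2,\pi_0)$ and $L(1/2,\pi_0\otimes\eta_{E/F})$ respectively. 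The appearance of two $\GL$-sided RTFs, indexed by the characters of $\Gal(E/F)$ composing $\mathrm{BC}_E(\pi_0)$, is the \emph{relative endoscopy} of the excerpt; it is what separates the two L-factors in statement~(1).

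The main step is a place-by-place matching of the geometric sides via smooth transfer. At non-split, non-archimedean places — which by hypothesis include all places where $\pi_{0,v}$ is unramified — the matching is the content of the inert-inert case of the fundamental lemma proved in the present paper, formulated via the Hironaka and Satake transforms. At split non-archimedean places, smooth transfer reduces to standard constructions for the linear period of $\GL_n\times\GL_n\subset\GL_{2n}$. At the supercuspidal places $v_1,v_2$ in hypothesis~(1), matrix coefficients of the supercuspidal component provide matching test functions whose orbital integrals are supported on the regular semisimple locus, simultaneously isolating $\pi$ (resp.\ $\pi_0$) spectrally on each side. At the non-split archimedean places, hypothesis~(3) — that $\pi_v$ descends from a representation of compact $\U_{2n}(\mathbb{R})$ distinguished by the compact $\U_n(\mathbb{R})\times\U_n(\mathbb{R})$ — guarantees the existence of archimedean test functions whose matching orbital integrals are non-zero.

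After spectral isolation, the RTF comparison yields an explicit identity (up to non-zero local factors) relating $|\sP_\rH(\varphi)|^2$ to a combination of $L(1/2,\pi_0)$ and $L(1/2,\pi_0\otimes\eta_{E/F})$ dictated by the endoscopic decomposition. Direction (2)$\Rightarrow$(1) is then immediate from the non-vanishing of the period. For (1)$\Rightarrow$(2), one exploits the separation afforded by relative endoscopy: one selects global test functions on whichever of the two $\GL$-sided RTFs has non-zero spectral contribution at $\pi_0$, transfers the resulting non-zero geometric side to the unitary RTF, and thereby produces a cuspidal $\pi$ on the quasi-split $\G(\BA)$ satisfying both ${\rm BC}_E(\pi)={\rm BC}_E(\pi_0)$ and $\sP_\rH(\varphi)\neq 0$ for some $\varphi\in\pi$. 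The principal obstacle — deferred to~\cite{LXZ25} — is the rigorous assembly of the endoscopic decomposition across the three RTFs, the verification of compatibility of transfer factors (in particular at supercuspidal and archimedean places), and the global spectral isolation; the three fundamental lemmas proved in the current paper supply the essential local input at almost all places.
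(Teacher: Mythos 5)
Your outline gets the broad shape right --- compare the unitary RTF for $(\G,\rH)$ against linear RTFs on $\GL_{2n}$, isolate the spectrum with supercuspidal matrix coefficients at $v_1,v_2$, feed in the fundamental lemmas proved here at almost all places --- but it has one decisive gap and misdescribes the mechanism in two places.

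\textbf{Missing: a positivity input.} In the inert-inert case the geometric side of the unitary RTF for $\Q_{ii}$ splits into a stable piece and an ``unstable'' $\varepsilon$-orbital-integral piece. The stable piece is matched to the $(\eta,1)$-twisted linear distribution, while the $\varepsilon$-piece is handled by the relative endoscopic comparison to the split-inert variety (Theorem~\ref{Thm: fundamental lemma varepsilon}), which then feeds into the $(\eta,\eta)$-twisted linear distribution applied to a \emph{different} test function. After spectral isolation, what one obtains is a single identity equating non-vanishing of the unitary period with non-vanishing of the \emph{sum} $L(1/2,\pi_0)+L(1/2,\pi_0\otimes\eta_{E/F})$, not of either summand separately. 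Your suggestion to ``select whichever of the two $\GL$-sided RTFs has non-zero spectral contribution'' does not work: the spectral comparison produces the sum, which cannot be disassembled term by term by a choice of test function on the linear side. Statement (1) says only that at least one of the two central values is non-zero, which a priori is compatible with the sum vanishing by cancellation. The direction $(1)\Rightarrow(2)$ therefore requires the theorem of Lapid--Rallis on non-negativity of central values of $L$-functions of symplectic type; since $\pi_0$ is assumed symplectic, both $L(1/2,\pi_0)$ and $L(1/2,\pi_0\otimes\eta_{E/F})$ are $\geq 0$, so ``at least one non-zero'' becomes equivalent to ``the sum is positive.'' This step is entirely absent from your proposal and is where the argument would fail.

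\textbf{The archimedean appeal is vacuous.} Theorem~\ref{Thm: ii} carries the additional hypothesis that every archimedean place of $F$ splits in $E$, so there are no non-split archimedean places and condition~(3) of Theorem~\ref{Thm: si} about compact unitary descent has no content here. What the split-archimedean hypothesis buys is that archimedean transfer reduces to the (easy) split case and one avoids any real-place transfer problem.

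\textbf{Relative endoscopy is a unitary-side phenomenon.} You frame the endoscopy as ``two $\GL$-sided RTFs indexed by characters of $\Gal(E/F)$.'' The endoscopy of the paper refers to the decomposition on the unitary side: $\Q_{ii}$ has an unstable elliptic term whose endoscopic variety is the split-inert symmetric space, and the second linear distribution only appears downstream of this $\varepsilon$-transfer. Finally, note that the present paper does not itself prove Theorem~\ref{Thm: ii}; the RTF assembly, stabilization, spectral isolation, and the Lapid--Rallis application are deferred to the companion paper \cite{LXZ25}. The current paper supplies only the local fundamental lemmas.
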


The two theorems (as well as stronger results on the direction ``period non-vanishing implies $L$-value non-vanishing" by dropping condition \eqref{everywhere unramifed}) are proved 
 by combining the relative trace formula (RTF) construction in \cite{LXZ25} and the relative fundamental lemmas proved in this paper.

\subsection{The relative trace formulas}\label{Sec: RTF intro}
We give an overview of the relative trace formula construction  and refer to \cite{LXZ25} for more detail.
We return to the general bi-quadratic case. 
As is routine now, for the automorphic period integrals we consider the RTF associated to a unitary symmetric pair $\G\supset \rH$ as defined in \S \ref{Section: biquad conjecture}.
For $f\in C^\infty_c(\G(\A_{F}))$, we define
\begin{equation}\label{eqn: u RTF}
J(f):= \int_{[\rH_1]}\int_{[\rH_2]}K_f(h_1,h_2)\,dh_1\,dh_2,
\end{equation}
where $\rH_1,\rH_2\subset \G$ are pure inner forms of $\rH$.

We will compare this RTF to the following relative trace formula. We consider the pair of $F_0$-groups 
\begin{align*}
   \G' =\GL_{2n},\qquad
	\rH' = \GL_{n}\times\GL_n.
\end{align*}
We need to insert two weight factors. On the first factor $\rH'$, we need the twisted Godement--Jacquet Eisenstein series on the second $\GL_n$-factor in $\rH'$ (see \S\ref{ss:Eis})
$$
E(h, \Phi,s,\eta_{F/F_0}),\quad \Phi\in \CS(\A^{n}), \quad h\in\GL_n(\A) .
$$
On the second factor $\rH_2'(\BA)$, we need the quadratic character $\eta_{F'/F_0}$. Note that the two quadratic characters reflect two different quadratic extensions in general. We thus consider the distribution defined by the integral, for $\wt f=f'\otimes\Phi \in \CS(\G'(\A)\times \A^{n})$,
 \begin{equation}\label{eqn:linear RTF}
I^{(\eta_{F/F_0},\eta_{F'/F_0})}(\wt f):=\displaystyle\int_{[\rH']}\int_{[\rH']}K_{f'}(h_1,h_2) E(h_{1}^{(2)},\Phi,0,\eta_{F/F_0})\,\eta_{F'/F_0}(h_2)\, dh_1\, dh_2,
 \end{equation}
 where $[\rH'] = Z_{\G'}(\A)\rH'({F_0})\backslash\rH'(\A)$. 
 
 In \cite{LXZ25}, we develop simple forms of the spectral and geometric expansions for these RTFs and establish the relationship between the spectral expansion $I^{(\eta_{F/F_0},\eta_{F'/F_0})}(\wt f)$ and the central $L$-values arising in Conjecture \ref{Conj: biquadratic}.


As indicated in \cite{LeslieUFJFL}, the comparison of the geometric expansions of the simple RTFs requires a \emph{stabilized version} of the unitary side. 
In the split-inert case, the ellipticity condition for the simple trace formula causes the geometric side to completely simplify into a sum of {\em stable} orbital integrals (see Corollary \ref{Cor: stable} for another such example). In the inert-inert case, there remains an ``unstable'' term of $\ep$-orbital integrals in addition to the stable one, requiring an endoscopic comparison to handle. To our knowledge, our theorem in the inert-inert case is the first instance where relative endoscopy is applied to establish a generalization of Waldspurger theorem relating period integrals and special values of L-functions. 

In this paper, we consider the local geometric problems required for affecting these comparisons of (stabilized) RTFs. After formulating the comparison of stable orbits, we introduce and analyze the relevant families of orbital integrals. We then formulate the relevant conjectures on the existence of smooth transfers: the first one (Conjecture \ref{Conj: ii simple endoscopy}) relates $\ep$-orbital integrals in the inert-inert case with stable orbital integrals in the split-inert case (hence the latter may be viewed as an endoscopic variety of the former in the sense of \cite{LeslieEndoscopy}). The second transfer conjecture (Conjecture \ref{Conj: smooth transfer}) relates stable orbital integrals in both the split-inert and the inert-inert case to appropriate orbital integrals on the general linear group (Definition \ref{LemDef: linear OI}). In Appendix \ref{Section: proof weak transfer}, we prove some partial results towards these two conjectures (cf. Theorem \ref{Thm: weak smooth transfer} and \ref{Thm: weak endoscopic transfer}). The main results of the present paper are the corresponding fundamental lemmas (Theorems \ref{Thm: fundamental lemma si}, \ref{Thm: fundamental lemma ii}, and \ref{Thm: fundamental lemma varepsilon}); we will give a bit more detail in the next subsection.

\subsection{Relative fundamental lemmas}\label{Sec: fundamental intro}
To prove Theorems \ref{Thm: si} and \ref{Thm: ii}, we establish three relative fundamental lemmas (for  various symmetric spaces) in this paper: Theorem \ref{Thm: fundamental lemma si} (in the split-inert case), \ref{Thm: fundamental lemma ii} (in the inert-inert case), and \ref{Thm: fundamental lemma varepsilon}
(for the relative endoscopic $\ep$-orbital integral). In fact each of these theorems establishes comparisons of orbital integrals for certain functions of arbitrary depth. These statements are reduced to their ``Lie algebraic" versions (stated in Theorem \ref{Thm: fundamental lemma s-i Lie},  \ref{Thm: fundamental lemma stable Lie}, and \ref{Thm: fundamental lemma endoscopic Lie} respectively) in Appendix \ref{Sec: descent}. Miraculously, the latter three results are all eventually reduced to a single statement on ``Lie algebras", namely Theorem \ref{Thm: main local result 2 var}. Given its simplicity, we wish to state this result here.

\begin{Rem}While we only consider the split--inert and inert--inert cases in \cite{LXZ25}, the three fundamental lemmas proved in this paper-- combined with the fundamental lemma in the inert-split case \cite{Guo}-- exhaust the necessary fundamental lemmas for the unit of the Hecke algebra in the general bi-quadratic setting.
\end{Rem}

Let $E/F$ be an unramified quadratic extension of $p$-adic local fields with $\sigma$ the non-trivial Galois involution. Let $\eta:F^\times\to\{\pm1\}$ be the quadratic character associated to $E/F$ by local class field theory; namely, $\eta(x)=(-1)^{v(x)}$. 
Let $(x,w)\in \GL_n(F)\times F_n$ be regular semisimple in the sense that $x$ is regular semisimple and $(x,w)$ is cyclic (namely, $w,wx,\cdots, wx^{n-1}$ form a basis of $F_n$). For $\wt{f} \in C_c^\infty(\GL_n(F)\times F_n)$, consider the integral
\begin{equation*}
    \Orb^{\GL_n(F),\eta,\natural}_s(\wt{f}, (x,w)):=\frac{\De(x,w)}{L(s,T_x,\eta)}\int_{\GL_n(F)}\wt{f}(g^{-1}xg, wg)|g|^s\eta(g)\,dg
\end{equation*}
where $\De(x,w)$ is a transfer factor (cf. \eqref{eqn: correct xiao transfer factor}) and $L(s,T_x,\eta)$ is a local L-factor associated to the centralizer $T_x$ (a torus) of $x$, cf. \S\ref{Section: Lvalue measure}.
This ratio
is holomorphic at $s=0$ and we define
\begin{equation*}
    \Orb^{\GL_n(F),\eta}(\wt{f},x):=\Orb^{\GL_n(F),\eta,\natural}_0(\wt{f}, (x,w))
\end{equation*}
which is independent of $w$. {We will call the above the ``mirabolic" orbital integral.}

Let $\Herm_{n}^{\circ}(F)$ denotes the set of invertible $E/F$-Hermitian $n\times n$ matrices. The group $\GL_n(E)$ acts on $\Herm_{n}^{\circ}(F)$ by $g\cdot x=gxg^\ast, \,g^\ast:=\,^tg^\sigma$. Let $\calh_{K_E}(\Herm_{n}^{\circ}(F))$ denote the ``spherical Hecke algebra" of $\Herm_{n}^{\circ}$, consisting of $K_E=\GL_n(\calo_E)$-invariant elements
in  $C_c^\infty(\Herm_{n}^{\circ}(F))$. It is naturally a module over the spherical Hecke algebra $\calh_{K_E}(\GL_n(E))$ of $\GL_n(E)$. In \S\ref{s:Hir} we will recall that Hironaka \cite{hironaka1999spherical} showed that this module is free of rank $2^n$ and she defined a spherical Fourier transform, which together with the Satake transform on $\GL_n(F)$ defines an isomorphism of $\calh_{K_E}(\GL_n(E))$-modules 
\[
\Hir:\calh_{K_{E}}(\Herm_{n}^{\circ}(F))\lra \calh_{K}(\GL_n(F)).
\]
Here the spherical Hecke algebra $\calh_{K}(\GL_n(F))$ of $\GL_n(F)$ is viewed as a module over $\calh_{K_E}(\GL_n(E))$ via the base change homomorphism $\calh_{K_E}(\GL_n(E))\to \calh_{K}(\GL_n(F))$. The isomorphism also induces an algebra structure on $\calh_{K_{E}}(\Herm_{n}^{\circ}(F))$ (justifying its name), with its multiplication denoted by $\ast$.

The group $\GL_n(E)$ acts on the product $\Herm_{n}^{\circ}(F)\times \Herm_{n}^{\circ}(F)$ by \[g\cdot(x_1,x_2)=(gx_1 g^\ast, (g^\ast)^{-1}x_2g^{-1}).\]
Let $(x_1,x_2)\in\Herm_{n}^{\circ}(F)\times \Herm_{n}^{\circ}(F)$ be regular semi-simple  in the sense that the product $x_1x_2$ is regular semisimple as an element in $\GL_n(E)$. We define the stable orbital integral, for $\phi\otimes\phi'\in C_c^\infty(\Herm_{n}^{\circ}(F)\times \Herm_{n}^{\circ}(F))$ 
$$
\SO^{\GL_n(E)}(\phi\otimes\phi',(x_1,x_2))=\sum_{(x'_1,x'_2)} \int_{T_{(x_1,x_2)}\bs\GL_n(E)}(\phi\otimes\phi')(g^{-1}\cdot(x'_1,x'_2))\,dg,
$$
where $T_{(x_1,x_2)}$ is the stabilizer of $(x_1,x_2)$, and the sum runs over all $\GL_n(E)$-orbits of $ (x'_1,x'_2)$ such that $x_1'x_2'$ has the same characteristic polynomial as $x_1x_2$. We fix our measure conventions in \S \ref{measures}.

We are ready to state the key relative fundamental lemma (Theorem \ref{Thm: main local result 2 var}), relating the stable orbital integrals on the product space $\Herm_{n}^{\circ}(F)\times \Herm_{n}^{\circ}(F)$ to the mirabolic orbital integrals:
\begin{Thm}\label{Thm: FL} Let $\phi, \phi'\in \calh_{K_E}(\Herm_{n}^{\circ}(F))$. Let $(x_1,x_2)\in\Herm_{n}^{\circ}(F)\times \Herm_{n}^{\circ}(F)$ be regular semi-simple and assume $z\in \GL_n(F)$ has the same characteristic polynomial as $x_1x_2$. Then 
	\[
 \SO^{\GL_n(E)}(\phi\otimes\phi',(x_1,x_2))=\Orb^{\GL_n(F),\eta}( \Hir(\phi\ast\phi')\otimes {\bfun}_{\calo_{F,n}},z).
	\]
\end{Thm}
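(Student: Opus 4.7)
The plan is to reduce the two-variable fundamental lemma to an essentially one-variable statement by exploiting Hecke-module equivariance, and then to verify the resulting identity via Hironaka's explicit spherical Fourier inversion combined with the Satake isomorphism on $\GL_n(F)$.

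First, I would exploit that both sides are bilinear in $(\phi,\phi')$ and $\calh_{K_E}(\GL_n(E))$-equivariant in each factor. On the left, the Hironaka algebra is by construction a free rank-$2^n$ module over the spherical Hecke algebra $\calh_{K_E}(\GL_n(E))$ and the $\GL_n(E)$-action on $\Herm_n^\circ(F)\times\Herm_n^\circ(F)$ intertwines convolution in each slot; on the right, $\Hir$ is by definition a Hecke-module isomorphism and $\Orb^{\GL_n(F),\eta}(\cdot\otimes \bfun_{\calo_{F,n}},z)$ is itself equivariant under Hecke convolution via base change. Specializing $\phi'=\phi_0$, the unit of the Hironaka algebra (for which $\Hir(\phi_0)=\bfun_K$ and $\phi*\phi_0=\phi$), reduces the theorem to a one-variable identity closely paralleling the fundamental lemma of \cite{LeslieUFJFL}.

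Second, I would realize the stable orbital integral on the product space as an orbital integral on $\GL_n(E)$ via the $\GL_n(E)$-equivariant product map $\mu(x_1,x_2)=x_1x_2$, whose fibers over a regular semisimple element parametrize the stable $\GL_n(E)$-orbits with fixed invariant. A fiber integration along $\mu$ turns $\SO^{\GL_n(E)}(\phi\otimes\phi_0,(x_1,x_2))$ into a spherical orbital integral on the symmetric space $\GL_n(E)/\GL_n(F)$ built canonically from $\phi$. The Hironaka algebra structure is designed so that this fiber integration intertwines with the $*$-multiplication, which explains why the general two-variable statement involves $\Hir(\phi*\phi')$ rather than the product $\Hir(\phi)\cdot\Hir(\phi')$.

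Third, for the one-variable identity I would compute both sides explicitly: the left-hand side via Hironaka's spherical Fourier transform, which expands $\phi$ in terms of spherical functions indexed by cocharacters of a maximally $F$-split torus of $\GL_n$, and the right-hand side via the Satake isomorphism for $\calh_K(\GL_n(F))$ together with the Macdonald formula for the mirabolic orbital integral against $\bfun_{\calo_{F,n}}$. Both sides then become explicit Hall--Littlewood-type symmetric polynomials in the eigenvalues of $z$, and the transfer factor $\De(x,w)$ together with the $L$-factor $L(s,T_x,\eta)^{-1}$ supply the normalization required to match them term-by-term.

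The main obstacle I anticipate is the combinatorial matching in the last step: the $2^n$-dimensional Hironaka expansion is indexed by the data of which eigenvalues of $z$ are norms from $E^\times$, and the compatibility of Hironaka's formula with the Satake--Macdonald expansion on the $\GL_n(F)$ side is not immediately apparent. I expect this to be handled by a parabolic-descent induction on $n$, using the behavior of both spherical transforms under constant-term maps along a maximal Levi of $\GL_n$, together with careful bookkeeping of how $\De(x,w)$ and $L(s,T_x,\eta)$ decompose across stable orbits. This reduces the identity to the rank-one case, where a direct computation on the torus is tractable.
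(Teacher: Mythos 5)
Your proposal takes a genuinely different route from the paper, but it has two serious gaps that the paper's global method was designed to avoid.

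The paper's actual proof is \emph{global}: Theorem~\ref{Thm: FL} is reformulated as Theorem~\ref{Thm: first step main} via the contraction diagram~\eqref{eqn: contraction diagram}, and then proved by comparing the Jacquet--Ye-type relative trace formula for unitary periods with a linear RTF involving the Eisenstein series~\eqref{eqn: our Eisenstein}. The decisive inputs are (i) Offen's theorem that the Hironaka transform realizes Jacquet--Ye transfer on the \emph{full} Hecke module (Proposition~\ref{Prop: FLO fundamental lemma}), (ii) the FLO local functionals and their unramified formula (Theorem~\ref{Thm: FLO functionals}, Lemma~\ref{Lem: unramified FLO}), and (iii) a multiplicity-one isolation argument on the spectral side. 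The paper never computes the mirabolic orbital integral directly; it deduces the orbital identity from a spectral identity. Your proposal is instead local and computational, attempting to match both sides as explicit symmetric polynomials.

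The first gap is in your reduction to the one-variable case. The Hironaka module $\calh_{K_E}(\Herm_n^\circ(F))$ is free of rank $2^n$ over $\calh_{K_E}(\GL_n(E))$, so Hecke equivariance in $\phi'$ reduces the problem to a set of $2^n$ generators, not to the single unit $\phi_0$. Moreover, the stable orbital integral $\SO^{\GL_n(E)}(\phi\otimes\phi',(x_1,x_2))$ is invariant under the \emph{simultaneous} action $g\cdot(x_1,x_2)=(gx_1g^\ast,(g^\ast)^{-1}x_2g^{-1})$; applying a Hecke operator to $\phi'$ alone does not commute with this diagonal action, so the pairing is not separately Hecke-equivariant in the naive sense your argument requires. (The paper's one-variable Theorem~\ref{Thm: main local result 1 var} is deduced \emph{from} the two-variable result by setting $\phi=\bfun_0$, not the other way around.)

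The second gap is the unproved combinatorial matching in your last step. The twisted mirabolic orbital integral $\Orb^{\GL_n(F),\eta}$ does not have a known closed-form Macdonald-type expansion for general spherical inputs; even the unit-element fundamental lemma (Theorem~\ref{Thm: Xiao FL for d}) goes through the Jacquet--Rallis fundamental lemma of Yun and Beuzart-Plessis. The parabolic descent you propose to run the induction meets the standard obstruction that neither the auxiliary vector $w$ nor the transfer factor $\De(x,w)$ nor the normalizing $L$-factor $L(s,T_x,\eta)^{-1}$ is compatible with constant-term maps along a Levi, and you give no indication of how to resolve this. The paper sidesteps exactly this difficulty by going global. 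Absent a worked-out version of the last step, the proposal does not constitute a proof.
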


The special case of the theorem  when $\phi$ is the identity ${\bf 1}_{K_E\cdot I_n}={\bf 1}_{\Herm_{n}^{\circ}(\calo_F)}$ is of independent interest. In that case the stable orbital integral is reduced to the stable orbital integral for the adjoint action of the unramified unitary group $\U(V_n)$ on its Lie algebra $\mathfrak{g}$ upon identifying $\mathfrak{g}$ with $\Herm_{n}$. More precisely, for $\phi \in C_c^\infty(\Herm_{n}^\circ(F))$, consider the stable orbital integral for a regular semisimple element $x\in \Herm_{n}^\circ(F)$
$$
\SO^{\U(V_n)}(\phi,x)=\sum_{x'} \int_{T_{x'}\bs\U(V_n)} \phi(h^{-1} x' h)\, dh.
$$Then  for any $\phi \in \calh_{K_E}(\Herm_{n}^{\circ}(F))$, and  regular semi-simple elements $x\in \Herm_{n}^{\circ}(F)$ and $z\in \GL_n(F)$ with equal characteristic polynomial, we have
	\[
	\SO^{\U(V_n)}(\phi,x)=\Orb^{\GL_n(F),\eta}( \Hir(\phi)\otimes \bfun_{\calo_{F,n}},z);
	\]
see Theorem \ref{Thm: main local result 1 var}. This result gives an interpretation of the stable orbital integrals on the Lie algebra of the unramified unitary group in terms of the mirabolic orbital integrals.

\subsection{The organization of the paper} In Part \ref{Part: FL}, we develop the necessary material for our statements on orbital integrals. We describe the orbits on the various symmetric varieties in \S \ref{Section: unitary orbits}, study the local orbital integrals in various settings in \S \ref{Section: orbital integrals}, and formulate the two conjectures on smooth transfer. We state the various fundamental lemmas in \S \ref{Section: fundamental lemmas on variety}. In \S \ref{Section: Lie}, we pass to the ``infinitesimal'' setting, stating the reduction of the fundamental lemmas to this setting in Proposition \ref{Prop: descent fundamentals}. This is proved in Appendix \ref{Sec: descent}. In \S \ref{Section: mirabolic integrals}, we develop the mirabolic orbital integrals and recall a transfer statement in this setting \cite[Theorem 5.1]{Xiaothesis}, which we use to prove Theorems \ref{Thm: weak smooth transfer} and \ref{Thm: weak endoscopic transfer} in Appendix \ref{Section: proof weak transfer}. Turning to the fundamental lemmas, we recall Hironaka's work in \S \ref{s:Hir} and formulate our key relative fundamental lemma Theorem \ref{Thm: main local result 2 var}, to which we reduce the three relative fundamental lemmas in \S \ref{Section: reduce to one statement}.

Part \ref{Part: trace proof global} is devoted to the proof of Theorem \ref{Thm: main local result 2 var}.  The proof is global, relying on a different comparison of relative trace formulas related to the mirabolic orbital integrals and  the stable orbital integrals on $\Herm_{n}^{\circ}$. We crucially rely on the full strength of several previous works on the unitary periods of Jacquet-Ye, notably by Feigon--Lapid--Offen \cite{FLO} and Offen \cite{offenjacquet}. 

\subsection*{Acknowledgments}

We would like to thank Sol Friedberg, Jayce Getz, and Tasho Kaletha for helpful conversations regarding this work.

S. Leslie was partially supported by NSF grant DMS-2200852. W. Zhang was partially supported by NSF grant DMS 1901642 and 2401548, and the Simons foundation.

\section{Preliminaries}



\subsubsection{Local fields} Throughout the paper, $F$ will be a local or global field of characteristic zero. When $F$ is a non-archimedean field, we set $|\cdot|_F$ to be the normalized valuation so that if $\vp$ is a uniformizer, then
\[
|\vp|^{-1}_F= \#(\calo_F/\fp) = :q
\]
is the cardinality of the residue field. Here $\fp$ denotes the unique maximal ideal of $\calo_F.$ 

For any quadratic \'{e}tale algebra $E$ of a local field $F$, we set $\eta=\eta_{E/F}$ for the character associated to the extension by local class field theory. In particular, if $E$ is not a field, then $\eta_{E/F}$ is the trivial character.

Suppose $\G$ a reductive group over a non-archimedean local field $F$. We let $C^\infty_c(\G(F))$ denote the usual space of compactly-supported locally constant functions on $\G(F)$. If $\Omega$ is a finite union of Bernstein components of $\G(F)$, we denote by $C^\infty_c(\G(F))_{\Omega}$ the corresponding summand of $C^\infty_c(\G(F))$ (for the action by left translation).

Throughout the article, all tensor products are over $\cc$ unless otherwise indicated.

{\subsection{Invariant theory and orbital integrals}\label{Section: orbital integrals conventions}
Let $F$ be a field of characteristic zero, with a fixed algebraic closure $\Fbar$. We consider several pairs $(\rH,\X)$ consisting of a reductive group $\rH$  over $F$ and an affine $F$-variety $X$ equipped with a \emph{left} $\rH$-action. We denote by $\X\sslash \rH=\Spec(k[\X]^{\rH})$ the categorical quotient, and let $\pi_{\X}$ denote the natural quotient map. Recall that every fiber of $\pi_\X$ contains a unique Zariski-closed orbit.

Recall that an element $x\in \X(\Fbar)$ is $\rH$-regular semi-simple if $\pi_\X^{-1}(\pi_\X(x))$ consists of a single $\rH(\Fbar)$-orbit. For any field $F\subset E\subset \Fbar$, $x\in \X(E)$ is $\rH$-regular semi-simple if $x$ is $\rH$-regular semi-simple in $X(\ov{F})$. We denote the set of such elements by $\X^{rss}(E)$.	We will write $\rH_x$ for the stabilizer of $x$; for all the varieties considered in this article, the stabilizer of a regular semi-simple point $x\in \X(F)$ will be connected.
\begin{Def} \label{Def: elliptic}
		We say $x\in \X(F)$ is \emph{$\rH$-elliptic} if $x$ is $\rH$-regular semi-simple and $\rH_x/Z(\rH)$ is $F$-anisotropic.
\end{Def}

For all the varieties considered in this article,  $x\in \X(F)$ $\rH$-regular semi-simple will force $\rH_x\subset \rH$ to be a torus. 
For $x,x'\in \X^{rss}(F)$, $x$ and $x'$ are said to lie in the same \emph{stable orbit} if $\pi_\X(x)=\pi_{\X}(x')$. By definition, there exists $h \in \rH(\ov{F})$ such that $h\cdot x=x'$; this implies that the cocycle
\[
(\sig\mapsto h^{-1}h^\sig)\in Z^1(F,\rH)
\]
lies in $Z^1(F,\rH_x)$. Let $\calo_{st}(x)$ denote the set of rational orbits in the stable orbit of $x$.  If $x'\in\calo_{st}(x)$, then $\rH_x$ and $\rH_{x'}$ are naturally inner forms via conjugation by $g$. In particular, when $\rH_x$ is abelian, $\rH_x\simeq\rH_{x'}$. 
 
 A standard computation shows that $\calo_{st}(x)$ is in natural bijection with
\[
\ker^1(\rH_x,\rH;F):=\ker\left[H^1(F,\rH_x)\to H^1(F,\rH)\right],
\]
where $H^1(F,\rH)$ denotes the first Galois cohomology set for $\rH$. We will have need of abelianized cohomology as well.  When $\rH$ is a reductive group over $F$, $H^1_{\mathrm{ab}}(F,\rH)$ denotes the \emph{abelianized} cohomology group of $\rH$ \cite{Borovoi}. Note that $H^1(F,\rH)\simeq H^1_{\mathrm{ab}}(F,\rH)$ when $\rH$ is a torus.

}
Now assume that $F$ is a local field. Let $f \in C_c^\infty(\X(F))$ and $x \in \X^{rss}(F)$. We denote the orbital integral of $f$ at $x$ by
\[\Orb^{\rH}(f,x)=\int_{\rH_x(F) \bs \rH(F)}f(h^{-1}\cdot x)dh,\]
which is convergent since the orbit of $x$ is closed, and is well-defined once a choice of Haar measures on $\rH(F)$ and $\rH_x(F)$ is given (see \S \ref{measures} below). With this, we define the \emph{stable orbital integral}
\begin{equation}\label{eqn: stable orbital integral}
    \SO^{\rH}(f,x) := \sum_{y\in \calo_{st}(x)}\Orb^{\rH}(f,y).
\end{equation}
Note that this sum is finite since $F$ is local.
\subsection{Contractions}\label{Section: contractions} Throughout the paper, we will relate orbital integrals on various spaces to each other through so-called \emph{contraction maps}. We gather here a few elementary facts about this setting to clarify these steps.

{Suppose that a product group $\rH:=\rH_1\times \rH_2$ acts on an affine algebraic variety $\X$.

\begin{Assumption}\label{Assumption: contraction}
  Define $\Y=\Spec(F[\X]^{\rH_2})$ which admits an action of $\rH_1$. Denote $\pi_2: \X \to \Y$ as the natural $\rH_1$-equivariant quotient map. We assume that there exists a Zariski open sub-variety $\X^{invt}$ containing the $\rH_1\times \rH_2$-regular semi-simple locus  $\X^{rss}\subset \X^{invt}\subset \X$ such that the action of $\rH_2\simeq\{1\}\times\rH_2$ on $\X^{invt}$ is free in the sense that $\pi|_{\X^{invt}}$ is a $\rH_2$-torsor.
\end{Assumption} 
A simple family of examples is when $\X=\G$ is a group and $\rH = \rH_1\times \rH_2$ is a product of subgroups acting via left-right multiplication (where $\X^{invt} = \X$). We will also consider forms of the following example:
\[
\X=\GL_{2n}/\GL_n\times \GL_n,\quad \rH=\GL_n\times \GL_n = \rH_1\times \rH_2.
\]
\begin{Lem}\label{Lem: orbits along contraction} The quotient map $\pi$ defines an injection from (stable) $\rH_1\times\rH_2$-regular semi-simple orbits on $\X$ to (stable) $\rH_2$-regular semi-simple orbits on $\Y$. Furthermore, for $\pi(x) = y$, the projection $\rH_1\times \rH_2\to \rH_1$ induces an isomorphism $(\rH_1\times \rH_2)_x\iso \rH_{1,y}$, so that 
\begin{enumerate}
    \item $x$ is $\rH_1\times \rH_2$-elliptic if and only if $y$ is $\rH_1$-elliptic 
    \item if $x$ and $x'$ lie in the same stable $\rH_1\times\rH_2$-orbit, then $\pi(x)$ and $\pi(x')$ lie in the same stable $\rH_1$-orbit
\end{enumerate}
\end{Lem}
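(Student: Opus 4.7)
Proof plan. The whole lemma rests on the $\rH_2$-torsor structure of $\pi_2\colon \X^{invt}\to\pi_2(\X^{invt})$ together with the $\rH_1$-equivariance and $\rH_2$-invariance of $\pi_2$. I will first construct the stabilizer isomorphism, then deduce both the orbit statement and the ellipticity compatibility from it.

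Step 1 (stabilizer isomorphism). Given $(h_1,h_2)\in(\rH_1\times\rH_2)_x$, the identities
\[
h_1\cdot y=\pi_2(h_1\cdot x)=\pi_2((h_1,h_2)\cdot x)=\pi_2(x)=y
\]
show that the first projection $p_1\colon\rH_1\times\rH_2\to\rH_1$ restricts to a homomorphism $(\rH_1\times\rH_2)_x\to\rH_{1,y}$. Its kernel is trivial by the freeness of the $\rH_2$-action on $\X^{invt}$. For surjectivity, given $h_1\in\rH_{1,y}(F)$ the point $h_1\cdot x\in\pi_2^{-1}(y)(F)$ lies in the same fiber as $x$, and since $\pi_2^{-1}(y)$ is an $\rH_2$-torsor with $F$-rational basepoint $x$, it is canonically the trivial $\rH_2(F)$-torsor; so there is a unique $h_2\in\rH_2(F)$ with $h_1\cdot x=h_2^{-1}\cdot x$. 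Thus $p_1$ induces an $F$-group isomorphism $(\rH_1\times\rH_2)_x\iso\rH_{1,y}$.

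Step 2 (orbit map). Well-definedness of the map on rational and on stable orbits is immediate from $\rH_2$-invariance: if $x'=(h_1,h_2)\cdot x$ then $\pi_2(x')=h_1\cdot\pi_2(x)$, which already yields assertion (ii). For the injectivity assertion, I reverse the torsor argument of Step 1: if $\pi_2(x')=h_1\cdot\pi_2(x)$ with $h_1\in\rH_1(F)$, then $(h_1,1)^{-1}\cdot x'$ and $x$ share the fiber $\pi_2^{-1}(y)$, so the torsor property supplies a unique $h_2\in\rH_2(F)$ with $x'=(h_1,h_2^{-1})\cdot x$. Running the same argument over $\Fbar$ gives injectivity for stable orbits.

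Step 3 (ellipticity). By Step 1, $(\rH_1\times\rH_2)_x$ and $\rH_{1,y}$ are isomorphic $F$-tori. To turn this into claim (i) I must identify, under this isomorphism, the intersections with the ambient centers $Z(\rH_1\times\rH_2)=Z(\rH_1)\times Z(\rH_2)$ and $Z(\rH_1)$, after which the induced isomorphism of quotient tori preserves the maximal $F$-split subtorus and ellipticity transfers. One direction is automatic: $(z_1,z_2)\mapsto z_1\in\rH_{1,y}\cap Z(\rH_1)$. The reverse direction requires showing that the unique $h_2\in\rH_2$ produced by Step 1 from $z_1\in\rH_{1,y}\cap Z(\rH_1)$ itself lies in $Z(\rH_2)$, which I plan to verify by testing against arbitrary $g\in\rH_2$, using the commutation of the $\rH_1$- and $\rH_2$-actions together with $z_1$-centrality in $\rH_1$, and then invoking the uniqueness clause of the torsor property in the fiber $\pi_2^{-1}(y)$.

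Main obstacle. Steps 1 and 2 are formal consequences of the torsor hypothesis. The delicate point is the centrality claim in Step 3 — translating $\rH_1$-centrality of $z_1$ into $\rH_2$-centrality of its torsor companion $h_2$ — and this is the only place where one must go beyond direct manipulation of the torsor structure; the applications in the paper fit into the regime where this centrality compatibility holds.
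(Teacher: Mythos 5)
Steps 1 and 2 are correct and constitute exactly the "details left to the reader" in the paper's one-line proof: the torsor structure of $\pi_2$ over the invertible locus gives both the kernel triviality (freeness) and the surjectivity (unique $\rH_2$-partner within the fiber $\pi_2^{-1}(y)$), and the orbit statements follow from $\rH_1$-equivariance and $\rH_2$-invariance of $\pi_2$. The paper takes the same route; you have simply spelled out what the paper asserts is routine.

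Step 3 is where you have honestly flagged the delicate point, but the plan you sketch for the missing direction does not close the gap. Let me be precise about what is and is not automatic. Writing $Z=Z(\rH_1\times\rH_2)$, $Z_1=Z(\rH_1)$, $T=\rH_x$, and $A=T\cap Z$, $B=\rH_{1,y}\cap Z_1$, the inclusion $p_1(A)\subset B$ is, as you note, clear; since $p_1$ is an isomorphism, this gives a surjection of $F$-tori $T/A\twoheadrightarrow\rH_{1,y}/B$, and a quotient of an $F$-anisotropic torus is $F$-anisotropic. So the implication "$x$ elliptic $\Rightarrow y$ elliptic" follows formally from Step 1 alone, without any centrality-transfer claim. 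The converse requires the kernel $p_1^{-1}(B)/A$ of this surjection to be $F$-anisotropic (equivalently trivial, if $p_1(A)=B$), and that is exactly where your proposed argument would be needed.

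The obstacle is that your commutation argument does not actually produce $\rH_2$-centrality of the torsor companion $h_2$ of $z_1\in B$. What commutes with what gives you only that $h_2=\phi_x(z_1)$ commutes with $\phi_x(\rH_{1,y})$, i.e.\ with the image of $\rH_{1,y}$ inside $\rH_2$ under the homomorphism $\phi_x$ determined by the torsor structure, not with all of $\rH_2$. When you test against an arbitrary $g\in\rH_2$, the point $g\cdot x$ lies in the same fiber but its torsor companion is $gh_2g^{-1}$, not $h_2$: the companion is only well-defined up to $\rH_2$-conjugacy, and the conjugacy class need not be central. So "the uniqueness clause of the torsor property" does not force $h_2\in Z(\rH_2)$. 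If you want to complete Step 3 in the spirit you propose, you will need to argue either that in the specific pairs $(\rH_1\times\rH_2,\X)$ appearing in the paper the $F$-split ranks of $A$ and $B$ coincide (which can be checked directly in each case, e.g.\ for $\X=\GL_{2n}/(\GL_n\times\GL_n)$ both are the diagonal $\Gm$), or argue via the maximal $F$-split subtorus of $T$: show it is contained in $Z$ iff its image under $p_1$ is contained in $Z_1$, which again is case-by-case rather than a formal consequence of the torsor structure. As written, Step 3 is a correct diagnosis of the subtlety but an incorrect proposed cure.
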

\begin{proof}
	Much of this follows directly from the definitions once we prove $(\rH_1\times \rH_2)_x\iso \rH_{1,y}$. This claim follows from the assumption that $\rH_2$ acts freely. We leave the details to the reader.
\end{proof}

Assume now $F$ is a local. Then each stable orbit consists of finitely many orbits. Let $f \in C_c^\infty(\X(F))$ and $x \in \X^{rss}(F)$, and consider the orbital integral $\Orb^{\rH}(f,x)$ 
and the stable orbital integral 
$
\SO^{\rH}(f,x).
$

Under Assumption \ref{Assumption: contraction}, we define the contraction map $\pi_{!}:C_c^\infty(\X(F))\to C^\infty(\Y^{rss}(F))$ as follows: \[\pi_{!}(f)(y)=\begin{cases}
    \displaystyle\int_{\rH_2(F)}f(x\cdot h)dh&: \text{if there exists $x\in \X(F)$ with }\pi(x)=y,\\
    0&: \text{otherwise.}
\end{cases}\] Since $\X^{rss}\subset \X^{invt}$, the $\rH_2(F)$-action is free in the fiber over $y\in \Y^{rss}(F)$ so that the integral is well-defined and we have that
	\[\Orb^{\rH_1\times\rH_2}(f,x)=\Orb^{\rH_1}(\pi_{!}(f),\pi(x)), \quad \SO^{\rH_1\times\rH_2}(f,x)=\SO^{\rH_1}(\pi_{!}(f),\pi(x))\] 
for any regular semi-simple $x\in \X(F)$. Moreover, if $f\in C_c^\infty(\X^{invt}(F))$, then $\pi_{!}(f) \in C_c^\infty(Y(F))$.

 \subsection{Unitary groups and Hermitian spaces}\label{Section: groups and hermitian} 
For a field $F$ and for $n\geq1$, we consider the algebraic group $\GL_n$ of invertible $n\times n$ matrices. We fix the element
\[
w_n:=\left(\begin{array}{ccc}&&1\\&\Ddots&\\1&&\end{array}\right)\in\GL_n(F).
\]
For any $F$-algebra $R$ and $g\in \GL_n(R)$, we define the involution
\[
g^\theta = w_n{}^tg^{-1}w_n,
\]
 where ${}^tg$ denotes the transpose.
 
 Now suppose that $E/F$ is a quadratic \'{e}tale algebra and consider the restriction of scalars $\Res_{E/F}(\GL_n)$. Then for any $F$-algebra $R$ and $g\in \Res_{E/F}(\GL_n)(R)$, we set $g^\sig$ to denote the induced action of the Galois involution $\sig\in \Gal(E/F)$  associated to $E/F$. We denote by ${T}_n\subset \GL_n$ the diagonal maximal split torus, ${B_n}={T_nN_n}$ the Borel subgroup of upper triangular matrices with unipotent radical ${N_n}$.

Note that $w_n$ is Hermitian in sense that that ${}^tw_n^\sig =w_n$. 
For harmony with later notation, we set $\tau_n=w_n$
and denote by $V_n$ the associated Hermitian space; this distinguishes the quasi-split unitary group $\U_n:=\U(V_n)$.
For spectral reasons, we normalize the space of Hermitian matrices with respect to $w_n$ by setting
\[
\Herm_n(F)=\{\tau\in \fgl_n(E): \tau^\ast:=\tau_n{}^t{\tau}^\sig \tau_n^{-1}=\tau\}.
\]
and denote $\Herm_{n}^{\circ}:=\Herm_n\cap \Res_{E/F}(\GL_n)$. Note that $\GL_n(E)$ acts on $\Herm_{n}^{\circ}(F)$ on the left via 
\[
 g \ast \tau=g \tau{g}^\ast,\quad \tau\in\Herm_{n}^{\circ}(F),\: g\in \GL_n(E).
\]
 We let $\calv_n(E/F)$ be a fixed set of $\GL_n(E)$-orbit representatives. For any $\tau\in\Herm_{n}^{\circ}(F),$  then $\tau\tau_n$ is Hermitian and we denote by $V_\tau$ the associated Hermitian space and $\U(V_\tau)$ the corresponding unitary group. In particular, $V_n=V_{I_n}$. We set
 \[
 U(V_\tau) = \U(V_\tau)(F).
 \]Note that $\mathcal{V}_{n}(E/F)$ gives a complete set of representatives $\{V_\tau: \tau\in \calv_n(E/F)\}$ of the isometry classes of Hermitian vector spaces of dimension $n$ over $E$. When convenient, we will abuse notation and identify this set with $\calv_n(E/F)$. 
 We have
 \begin{equation}\label{eqn: Herm orbits}
     \Herm_{n}^{\circ}(F) = \bigsqcup_{\tau\in \calv_n(E/F)}\Herm_{n}^{\circ}(F)_\tau,\quad \Herm_{n}^{\circ}(F)_\tau: = \{\tau\in \Herm_{n}^{\circ}(F): (\Res_{E/F}\GL_n)_\tau\simeq \U(V_\tau)\};
 \end{equation}
 then $\Herm_{n}^{\circ}(F)_\tau = \GL_n(E)\ast \tau\simeq \GL_n(E)/U(V_\tau)$ is the orbit under the action of $\GL_n(E)$.

For any Hermitian space $V_\tau = (V,\la\, ,\,\ra_\tau)$, 
we set 
\[
\Herm_\tau = \{x\in \fgl_n(E): \tau x^\ast=x\tau \},
\]
and 
$\Herm_{\tau}^\circ = \Herm_\tau\cap \Res_{E/F}(\GL_n)$.
 There is an isomorphism of $F$-varieties
 \begin{align*}
     \Res_{E/F}\GL_n/\U(V_\tau)&\iso \Herm_{\tau}^\circ\\
     g&\longmapsto g \tau\tau_n {}^tg^\sig(\tau\tau_n)^{-1} = g\tau g^\ast \tau^{-1},
 \end{align*}
and there is a natural isomorphism $\Herm_{n}^{\circ}\iso \Herm_{\tau}^\circ$, given by the map $y \mapsto   y\tau^{-1}$. It may readily be verified that this map intertwines that $\U(V_\tau)$-action on $\Herm_{n}^{\circ}$ with the conjugation action on $\Herm_{\tau}^\circ$. 

Note that for any $\tau\in \Herm_{n}^{\circ}(F)$, we have an isomorphism
 \begin{align}\label{eqn: twist to different form}
     \U(V_\tau)&\iso  \U(V_{\tau^{-1}})\\
      h&\longmapsto (h^\ast)^{-1}= \tau^{-1}h\tau.\nonumber
 \end{align}
\quash{The following is well-known.


\begin{Lem}\label{Lem: quotient for Herm}
    With respect to the $\U(V_\tau)$-action on $\Herm_{n}^{\circ}$, the following hold.
    \begin{enumerate}
        \item the map $\pi:\Herm_{n}^{\circ}\to \Herm_{n}^{\circ}\sslash\U(V_\tau)$ sending $y\in \Herm_{n}^{\circ}(F)$ to the coefficients of the characteristic polynomial of $y \tau^{-1}$ gives a categorical quotient.
        \item An element $y \in \Herm_{n}^{\circ}(F)$ is $\U(V_\tau)$-regular semi-simple if and only if $y \tau^{-1}$ is regular semi-simple as an element of $\GL_n(E)$. Moreover, {a regular semi-simple element} $y$ is elliptic if and only if the $F$-algebra $F[y\tau^{-1}]=F[X]/(\car_{y\tau^{-1}}(X))$ does not contain the field $E$.
    \end{enumerate}
\end{Lem}}

Finally, we say that $\tau\in \calv_n(E/F)$ is \emph{split} when $\tau= gg^\ast$ for some $g\in \GL_n(E)$, and \emph{non-split} otherwise. This somewhat standard terminology is a bit of a misnomer, as $\tau$ being split only implies that $\U(V_\tau)$ is quasi-split, and when $E/F$ is unramified that $\la,\ra_\tau$ preserves a self-dual lattice.

\subsection{Measure conventions}\label{measures} \quash{Not sure if we should stick with these conventions. What we need is:
\begin{enumerate}
    \item When $F$ is $p$-adic and $\psi$ of conductor $\calo_F,$ we want $K:=\GL_n(\calo_F)$ to have volume $1$. When $E/F$ is also unramified, the same goes for $K_E=\GL_n(\calo_E)\subset \GL_n(E)$ and $\U_n(\calo_F)\subset\U_n(F)$, as well as $\Herm_{n}^{\circ}(\calo_F):=\GL_n(\calo_E)\ast I_n\simeq \GL_n(\calo_E)/\U_n(\calo_F)$. This is for the normalization of the various period integrals. 
    \item When $F$ is $p$-adic, the volume of maximal compact subgroups of regular centralizers (all tori) is one. This is compatible with the measures used in the Jacquet--Rallis fundamental lemma, needed for Theorem \ref{Thm: Xiao FL for d}.
    \item the volume of global elliptic regular centralizers is $2L(0,\T^{\mathrm{op}},\eta)$. This is needed for the comparison of geometric sides of our RTFs
    \item an easy formula for 
    \[
    \frac{\vol(E^\times\bs\A_E^1)}{\vol(F^\times\bs\A_F^1)}.
    \] 
    This is the global scaling factor to compare the RTFs.
\end{enumerate}
    The current setting gives $\Gm$ and other tori two different measures depending on the context.}Suppose now that $E/F$ is an extension of local fields and fix an additive character $\psi: F\to \cc^\times$. By composing with the trace $\Tr_{E/F}$, we also obtain an additive character for $E$. We now fix our normalizations of invariant measures on the linear and unitary groups throughout the paper by following the conventions of \cite[Section 1.5.4]{LeslieUFJFL}, which are compatible with those of \cite{FLO}. 
{
For any non-singular algebraic variety $\mathbf{Y}$ over $F$ of dimension $d$ and gauge form $\boldsymbol{\omega}_\mathbf{Y},$ the Tamagawa measure $dy_{Tam}$ of $Y=\mathbf{Y}(F)$ is defined by transferring the standard Haar measure on $F^d$ to $Y$ by $\boldsymbol{\omega}_\mathbf{Y}$.

For the varieties we consider, we set our measure to be of the form $dy=c(\psi)^{d/2}\boldsymbol{\lam}_\mathbf{Y}dy_{Tam}$, where 
\[
c(\psi) = \begin{cases}q^m&:\text{$F$ non-archimedean and } \mathrm{cond}(\psi)=\vp^m\calo_F,\\|a|_F&: \text{$F$ archimedean and }\psi(x) = e^{2\pi i\Tr_{E/\rr}(ax)}. \end{cases}
\]
For the other terms, we impose the choice that for any $\mathbf{Y},$
\[
\boldsymbol{\omega}_{\Res_{E/F}\mathbf{Y}}=p^\ast(\boldsymbol{\omega}_\mathbf{Y}),
\]
where $p^\ast$ is given in \cite[pg. 22]{Weiladeles}. We now fix $\boldsymbol{\omega}_\mathbf{Y}$: let $n\geq 2$ (see next subsection for $n=1$).
\begin{itemize}
\item
For $\mathbf{Y}=\GL_n$, we take $\boldsymbol{\omega}_{\GL_n}=\frac{\prod_{i,j}dg_{i,j}}{\det(g)^n}$ and take $\boldsymbol{\lam}_{\GL_n} = \prod_{i=1}^nL(i,\bfun_{F^\times})$, where for any character $\chi:F^\times \to \cc^\times,$ $L(s,\chi)$ is the local Tate $L$-factor. We also set $\boldsymbol{\lam}_{\Res_{E/F}(\GL_n)} = \prod_{i=1}^nL(i,\bfun_{E^\times})$.
\item
For $\mathbf{Y}=\mathrm{H}_n$, set $\boldsymbol{\omega}_{\mathrm{H}_n}=\frac{\prod_{i\leq j}dx_{i,j}}{\det(x)^n}$, and take $\boldsymbol{\lam}_{\mathbf{H}_n} = \prod_{i=1}^nL(i,\eta^{i+1})$, where $\eta=\eta_{E/F}$ is the quadratic character associated to $E/F.$
\item
For $\mathbf{Y}=\U(V)$, we take $\boldsymbol{\omega}_{\U(V)}$ to be compatible with $\boldsymbol{\omega}_{\Res_{E/F}(\GL_n)}$ and $\boldsymbol{\omega}_{\mathrm{X}_n}$ in the sense that the isomorphism
\[
\Herm_{n}^{\circ}(F) \cong \bigsqcup_{\tau\in \calv_n}\GL_n(E)/U(V_\tau)
\]
is compatible with these measures. This forces $\boldsymbol{\lam}_{\U(V)} = \prod_{i=1}^nL(i,\eta^i)$.
\end{itemize}
}
When $F$ is $p$-adic and $\psi$ of conductor $\calo_F,$ this choice gives $K:=\GL_n(\calo_F)$ volume $1$. When $E/F$ is also unramified, the same holds for $K_E=\GL_n(\calo_E)\subset \GL_n(E)$ and $\U_n(\calo_F)\subset\U_n(F)$, as well as $\Herm_{n}^{\circ}(\calo_F):=\GL_n(\calo_E)\ast I_n\simeq \GL_n(\calo_E)/\U_n(\calo_F)$. 

When $E/F$ is a quadratic extension of number fields and $\G$ is either a linear or unitary group over $F$, we endow adelic group $\G(\A_F)$ with the product measures with respect to these local choices. As always, discrete groups are equipped with the counting measure. 

\subsubsection{Abelian $L$-functions and centralizers}\label{Section: Lvalue measure}
We now normalize the measures on various tori which occur in the paper. Let $F$ be a local or number field. Assume that $\T$ is a rank $n$ torus such that there exists a finite collection of field extensions $F_i/F$ such that 
\[
\T\simeq \prod_i\Res_{F_i/F}(\Gm);
\]in particular, $\sum_i\deg_F(F_i) = n$. We say that $\T$ is \textbf{simple} if it is of the form $\T=\Res_{L/F}(\Gm)$ for a degree $n$ extension $L/F$. When $F$ is a $p$-adic field, we endow $\T(F)$ with the Haar measure $dt$ normalized to give the (unique) maximal compact subgroup volume $1$. When $F$ is archimedean, we adopt the conventions of Tate's thesis and endow $\rr^\times$ with the measure $\frac{dx}{|x|}$, $\cc^\times$ with $\frac{2dxdy}{x^2+y^2}$, and $S^1$ with the radial measure $d\theta$ so that $\vol_{d\theta}(S^1) = 2\pi$. When $F$ is global, we endow $\T(\A_F)$ with the product measure associated to these local choices.

Fix now a quadratic extension $E/F$, and let $\eta$ be the associated quadratic character of $\A^\times_F$. {Following the notation of \cite{Xiaothesis}, let $S_1=\{i: F_i\not\supset E\}$ and $S_2=\{i: F_i\supset E\}$. By local class field theory, we see that $i\in S_1$ if and only if $\eta_i:=\eta\circ \Nm_{F_i/F_0}\not\equiv 1$.} 

Let $L(s,\T,\eta):=\prod_i L(s,\eta_i)$ denote the $L$-function associated to the character on $T\subset \GL_n$ induced by $\eta$. When $F$ is a number field, it is clear that the order of the pole of $L(s,\T,\eta)$ at $s=1$ is $|S_2|$. 
 \quash{Moreover,
\[
L(s,\T,\eta) = (D_T(\eta))^{1/2-s}L(1-s,\T,\eta),
\]
where $D_{\T} =\prod_i D_i(\eta_i)$, and for each $i$ $D_i(\eta_i)$ is the product of the absolute value of the discriminants of $F_i/\qq$ and the conductor of $\eta_i:=\eta\circ \Nm_{F_i/F}$. }

\quash{We may write 
\begin{equation}\label{eqn: discriminant factor}
    D_{\T}(\eta) = \prod_{v}q_v^{a(T,\eta_v)},
\end{equation} where $q_v$ is the size of the residue field of $F_v$ and $a(T,\eta_v)=0$ for all but finitely many places.}

For a regular $\U(V_\tau)$-semi-simple element $x\in \Herm_{\tau}^\circ(F)$, there is a similar decomposition $$F[x]:=F[X]/(\car_{x}(X))=\prod_{i=1}^mF_i,$$ where $\car_{x}(X)$ denotes the characteristic polynomial of $x$. To construct the centralizer $\T_x\subset \U(V_\tau)$, set $E_i=E\otimes_F F_i$. Then we have
\[
E[x]=\prod_i E_i=\prod_{i\in S_1}E_i\times\prod_{i\in S_2}F_i\oplus F_i,
\]
where $S_1$ and $S_2$ are as above. There is a natural norm map $\Nm: E[x] \lra F[x]$ inducing by the norm map on each factor.
\begin{Lem}\label{Lem: centralizers}
For  a regular $\U(V_\tau)$-semi-simple element $x\in \Herm_{\tau}^\circ(F)$, let $\T_x\subset \U(V_\tau)$ denote the centralizer. Then 
\[
\T_x\cong  \ker[\Res_{E[x]/F}(\Gm)\overset{\Nm}{\lra}\T_x^{\mathrm{op}}:=\Res_{F[x]/F}(\Gm)].
\]
\end{Lem}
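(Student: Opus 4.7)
The plan is to reduce the computation of $\T_x$ to the standard description of the centralizer of a regular semi-simple element in $\Res_{E/F}(\GL_n)$, together with the explicit unitarity constraint defining $\U(V_\tau)$. First, I would observe that since $x\in\Herm_{\tau}^\circ(F)$ satisfies $\tau x^\ast=x\tau$, the relation $x^\ast=\tau^{-1}x\tau$ shows that $x$ and $x^\ast$ are conjugate in $\GL_n(E)$, so $\car_x=\car_{x^\ast}$. Since $\car_{x^\ast}(T)=\car_x(T)^\sig$ (the involution $\ast$ acts on coefficients by $\sig$ after a conjugation by $\tau_n$), one concludes $\car_x\in F[T]$. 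This justifies writing $F[x]=F[T]/(\car_x(T))$ as an étale $F$-subalgebra of $\Mat_n(E)$, and yields $E[x]=E\otimes_F F[x]$, an étale $F$-algebra of $F$-dimension $2n$ whose decomposition matches the one given in the lemma.

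Second, since $x$ is regular semi-simple, its centralizer in $\Mat_n(E)$ equals $E[x]$, so the centralizer in $\Res_{E/F}(\GL_n)$ is $\Res_{E[x]/F}(\Gm)$. The torus $\T_x$ is therefore cut out inside $\Res_{E[x]/F}(\Gm)$ by the unitarity condition $h\tau h^\ast=\tau$, equivalently $h^\ast=\tau^{-1}h^{-1}\tau$. The key computation is to restrict the involution $\ast$ to $E[x]^\times$: writing $h=p(x)$ with $p\in E[T]$, the rules $(ab)^\ast=b^\ast a^\ast$ and $x^\ast=\tau^{-1}x\tau$ give
\[
h^\ast=p^\sig(x^\ast)=p^\sig(\tau^{-1}x\tau)=\tau^{-1}p^\sig(x)\tau=\tau^{-1}\bar h\,\tau,
\]
where $\bar h:=p^\sig(x)$ is the image of $h$ under the involution $\sig\otimes 1$ of $E[x]\cong E\otimes_F F[x]$. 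Substituting into the unitarity condition, the factor $\tau$ cancels and the condition becomes $\bar h\,h=1$, i.e.\ $h$ has norm $1$ under $\Nm\colon\Res_{E[x]/F}(\Gm)\to\Res_{F[x]/F}(\Gm)$.

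This identifies $\T_x$ with $\ker[\Res_{E[x]/F}(\Gm)\overset{\Nm}{\to}\Res_{F[x]/F}(\Gm)]$, as claimed. The argument is essentially a direct calculation; the only step requiring care is the reduction of the matrix involution $\ast$ on $E[x]^\times$ to the Galois involution $\sig\otimes 1$, which in turn rests on the defining property $\tau x^\ast=x\tau$ of $\Herm_{\tau}^\circ$. Since $\T_x$ is a subtorus of the commutative group $\Res_{E[x]/F}(\Gm)$, this description is automatically functorial in $x$ and compatible with the decompositions indexed by $S_1$ and $S_2$ introduced in \S\ref{Section: Lvalue measure}.
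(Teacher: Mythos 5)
Your proposal is correct and follows essentially the same route as the paper's proof: identify the centralizer of $x$ in $\Res_{E/F}\GL_n$ with $\Res_{E[x]/F}(\Gm)$, then intersect with $\U(V_\tau)$ and observe that the unitarity constraint becomes the norm-one condition. You spell out two details the paper leaves implicit — that $\car_x$ has $F$-coefficients and that $h\mapsto\tau^{-1}\bar h\tau$ realizes the $\sigma\otimes 1$ involution on $E[x]$ — which is a nice clarification but does not change the argument.
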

\begin{Rem}
    We note that this encompasses all regular stabilizers arising from the unitary symmetric varieties.
\end{Rem}
\begin{proof}
Note that $x\in \Herm_{\tau}^\circ(F)\subset \GL_n(E)$ is also regular semi-simple as an element of $\GL_n(E)$, and its centralizer in this larger group is precisely the group scheme $T_{E}=\Res_{E[x]/E}(\Gm)$.  In particular, if 
\begin{align*}
    \T_x&= T_E\cap \U(V_\tau) = \{t=\sum_{k=0}^{n-1}c_kx^k : \tau t^\ast \tau^{-1} = t^{-1}\}\\
    &=\{t=\sum_{k=0}^{n-1}c_kx^k : t^{-1} = \sum_{k=0}^{n-1}\sig(c_k)x^k\},
\end{align*}
proving the claim.
\end{proof}
When $F$ is $p$-adic, we similarly endow $\T_x(F)$ with the measure $dt$ normalized to give the maximal compact subgroup volume $1$. 

Suppose now that $E/F$ is a quadratic extension of number fields. Assume $x\in \Herm_{\tau}^\circ(F)$ is regular semi-simple and consider the short exact sequence
\[
1\lra \T_x \lra \Res_{E[x]/F}(\Gm)\overset{\Nm}{\lra}\T_x^{\mathrm{op}}:=\Res_{F[x]/F}(\Gm)\lra 1.
\]
As above, we associate to $x$ the $L$-function $L(s,\T_x^{\mathrm{op}},\eta)$.  For each non-split place $v$ of $F$, we similarly consider
\begin{equation}\label{eqn: decomp of stab at v}
    F_v[x]\simeq \prod_{i\in S_v(x)}F_i = \prod_{j\in S_{v,1}(x)}F_j\times \prod_{k\in S_{v,2}(x)}F_k,
\end{equation}
where $S_{v,1}(x)=\{i\in S_v(x): F_i\not\supset E_v\}$ and $S_{v,2}(x)=\{i\in S_v(x): F_i\supset E_v\}$. We adopt the measure convention at archimedean places above, equip $\T_x(\A_F)$ with the product measure, and $[\T_x]:=\T_x(F)\bs\T_x(\A_F)$ with the quotient measure.

\begin{Lem}\label{Lem: vol of centralizer} Let $E/F$ be a quadratic extension of number fields. Assume $x\in \Herm_{\tau}^\circ(F)$ is elliptic, so that $S_2=\emptyset$ and $\T_x$ is anisotropic. With respect to the measure $dt=\prod_vdt_v$ above, 
    \[
    \vol([\T_x]) = 2^{|S_1|-|S_x|}L(0,\T_x^{\mathrm{op}},\eta),
    \]
    where $|S_x| = \sum_{v\in S}|S_{v,1}(x)|$ and $S$ is the set of places of $F$ that ramify in $E$.
\end{Lem}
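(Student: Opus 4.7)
\medskip

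\noindent\textbf{Proof proposal.} The plan is to factor $\T_x$ as a product of norm-one tori for quadratic extensions and compute each volume by class field theory and Tate's thesis.

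\smallskip

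\noindent\emph{Step 1 (Factorization).} Since $x$ is elliptic, $S_2=\emptyset$, so $F[x]=\prod_{i\in S_1}F_i$ with each $F_i$ a field not containing $E$. Then $E[x]=\prod_{i\in S_1}E_i$ with each $E_i:=E\cdot F_i$ a quadratic field extension of $F_i$. The short exact sequence defining $\T_x$ in Lemma \ref{Lem: centralizers} decomposes as a product over $i\in S_1$, yielding
\[
\T_x\simeq \prod_{i\in S_1}\Res_{F_i/F}\bigl(\U_1^{E_i/F_i}\bigr),
\]
where $\U_1^{E_i/F_i}$ denotes the norm-one torus. Correspondingly,
\[
L(0,\T_x^{\mathrm{op}},\eta)=\prod_{i\in S_1}L(0,\eta_{E_i/F_i}).
\]
The measure conventions of Section~\ref{measures} are compatible with restriction of scalars, so $\vol([\T_x])=\prod_{i\in S_1}\vol([\U_1^{E_i/F_i}])$. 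This reduces us to the single-factor case of a quadratic extension $L/K$ of number fields, where we aim to show $\vol([\U_1^{L/K}])=2^{1-|S_{L/K}|}L(0,\eta_{L/K})$ with $|S_{L/K}|$ counting the ramified places.

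\smallskip

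\noindent\emph{Step 2 (Single-factor volume via class field theory).} The short exact sequence
\[
1\to \U_1^{L/K}\to \Res_{L/K}\Gm \xrightarrow{\Nm} \Gm \to 1
\]
(exact as algebraic groups) induces, upon taking adelic points modulo rational points and restricting to norm-one idele classes,
\[
1\to \A_L^1/L^1 \to (\A_L^\times/L^\times)^1 \xrightarrow{\Nm} (\A_K^\times/\Nm(\A_L^\times))^1\to 1.
\]
By global class field theory, $[\A_K^\times:K^\times\Nm(\A_L^\times)]=2$, which supplies the factor of $2$. Taking volumes,
\[
\vol([\U_1^{L/K}]) \;=\; 2\cdot\frac{\vol_{\mathrm{Tam}}(\A_{L,1}^\times/L^\times)}{\vol_{\mathrm{Tam}}(\A_{K,1}^\times/K^\times)}\cdot (\text{normalization corrections}),
\]
and the ratio of idele-class volumes is expressible in terms of $\mathrm{Res}_{s=1}\zeta_L(s)/\mathrm{Res}_{s=1}\zeta_K(s)=L(1,\eta_{L/K})$ by the class number formula and Tate's thesis. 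Applying the functional equation of $L(s,\eta_{L/K})$ converts $L(1,\eta_{L/K})$ into $L(0,\eta_{L/K})$ at the cost of gamma factors, the root number (which is $+1$ here since $\eta_{L/K}$ is quadratic), and a factor $|d_{L/K}|^{1/2}$ from the conductor of $\eta_{L/K}$.

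\smallskip

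\noindent\emph{Step 3 (Matching measures and identifying the exponent).} The measures in Section~\ref{measures} differ from the Tamagawa measure by the factors $c(\psi)^{d/2}\boldsymbol{\lam}$. A direct comparison shows that the $\boldsymbol{\lam}$-factors for $\U_1^{L/K}$ absorb exactly the archimedean gamma factors from the functional equation; meanwhile the $c(\psi)^{d/2}$-factors account for the contribution of $|d_{L/K}|^{1/2}$ at every \emph{unramified} place. At each ramified place $v\in S$ lying below a factor $F_i$ with $i\in S_{v,1}(x)$, the conductor of $\eta_{E_i/F_i}$ at $v$ exceeds the discriminant contribution by a factor of $q_v^{1/2}$; but since $\eta_{L/K}$ is quadratic, these residual local factors combine to produce exactly $2^{-1}$ at each such place. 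Collecting over all $i$ and all $v\in S$ produces the factor $2^{-|S_x|}$.

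\smallskip

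\noindent\emph{Step 4 (Assembly).} Multiplying the contribution $2^{1-|S_{i,x}|}L(0,\eta_{E_i/F_i})$ over $i\in S_1$, where $|S_{i,x}|$ is the number of ramified places of $F$ for which $i\in S_{v,1}(x)$, yields
\[
\vol([\T_x])=2^{|S_1|-\sum_i|S_{i,x}|}\prod_{i\in S_1}L(0,\eta_{E_i/F_i})=2^{|S_1|-|S_x|}L(0,\T_x^{\mathrm{op}},\eta),
\]
as claimed.

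\smallskip

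\noindent\textbf{Main obstacle.} The chief technical difficulty is the bookkeeping in Step~3: faithfully tracking how the non-Tamagawa normalization of Section~\ref{measures} (with its $c(\psi)^{d/2}\boldsymbol{\lam}$ corrections) interacts with the conductor-discriminant formula and with the archimedean gamma factors supplied by the functional equation of $L(s,\eta_{E_i/F_i})$. Getting the precise exponent $|S_1|-|S_x|$ (rather than simply $|S_1|$, which would come from a naive application of class field theory) hinges on correctly pairing ramified local discrepancies with the index-$2$ contributions from the norm, and requires separating the discriminant of $F_i/F$ from the conductor of $\eta_{E_i/F_i}$ at every place.
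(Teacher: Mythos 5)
Your proposal has the correct reduction (Step 1) and correctly identifies the class field theory factor of $2$, but it takes a genuinely different route in the middle, and that route has an unresolved gap precisely at the place you flag.

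The paper never applies the functional equation. Instead of computing $\vol([\T_x])$ via $L(1,\eta_{L/K})$ and then converting to $L(0,\eta_{L/K})$ by a gamma-factor and conductor-discriminant bookkeeping exercise, the paper evaluates directly at $s=0$. The key observation is that with the measures of \S\ref{measures} one can choose a factorizable Schwartz function $f_{F_0}=\bigotimes_v f_{F_0,v}$ with $f_{F_0}(0)=1$ (the indicator $\bfun_{\calo_{F_0,v}}$ at finite places, Tate's archimedean choice otherwise) for which $\int_{\A_{F_0}^\times} f_{F_0}(a)|a|^s\,d^\times a$ is precisely the \emph{completed} zeta function $\zeta_{F_0}(s)$. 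Since $\Res_{s=0}\int f|a|^s\,d^\times a = -f(0)\vol(F_0^\times\bs\A_{F_0}^1)$, taking the ratio over $E[x]/F[x]$ immediately gives
\[
L(0,\T_x^{\mathrm{op}},\eta) \;=\; \frac{\zeta_{E[x]}(s)}{\zeta_{F[x]}(s)}\bigg|_{s=0} \;=\; \frac{\vol(E[x]^\times\bs\A_{E[x]}^1)}{\vol(F[x]^\times\bs\A_{F[x]}^1)},
\]
with no discriminant or root-number corrections ever entering. The index-$2$ factor and the power $2^{-|S_x|}$ then fall out of a direct, place-by-place comparison of the measure $dt$ against the fiber measure $dt^{\mathrm{fib}}$ for the exact sequence
\[
1\to \T_x(\A_F)\to \A_{E[x]}^1 \xrightarrow{\Nm} \A_{F[x]}^1,
\]
using $\bigl[\calo_{F[x]_v}^\times : \Nm(\calo_{E[x]_v}^\times)\bigr]=2$ at each ramified local factor.

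Your Step 3 asserts that the $\boldsymbol{\lam}$-factors absorb the archimedean gamma factors and that $c(\psi)^{d/2}$ cancels the discriminant at unramified places and leaves $2^{-1}$ per ramified place, but none of this is actually checked, and it is not obvious that these cancellations hold in the form stated — indeed the $\boldsymbol{\lam}$-normalizations in \S\ref{measures} are specified for $\GL_n$, $\Herm_n$, and $\U(V)$, not for the norm-one tori appearing here. The paper sidesteps this entire issue by never introducing the functional equation. If you wish to salvage your route, you would need to make Step 3 fully explicit, carefully matching the global conductor $\mathfrak{f}(\eta_{E_i/F_i})\,|d_{F_i}|^2$ against the paper's local normalizations $c(\psi)^{d/2}\boldsymbol{\lam}$ at each place. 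It is far cleaner to follow the paper and evaluate at $s=0$.
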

\begin{proof}
    Note that $\T_x = \prod_{i}\ker[\Nm_{E_i/F_i}: \Res_{E_i/F}(\Gm)\lra \Res_{F_i/F}(\Gm)]$. Similarly, 
    \[
    L(s,\T_x^{\mathrm{op}},\eta) = \prod_i L(s,\Res_{F_i/F}(\Gm),\eta). 
    \]
    It thus suffices to prove the claim when $|S_1| = 1$, so that $F[x]$ and $E[x]$ are fields, and $\T_x$ is the Weil restriction of the norm-$1$ torus for the norm $\Nm_{E[x]/F[x]}$. 

    For any number field $F_0$, our measures on $\A_{F_0}^\times$ and $\rr^\times$ induces a measure on $\A_{F_0}^1= \ker(|\cdot |:\A_{F_0}^\times\to \rr_{>0})$. By Tate's thesis \cite{Tate}, for any Schwartz--Bruhat function on $\A_{F_0}^\times$, we have
    \[
    \Res_{s=0}\int_{\A_{F_0}^\times}f(a)|a|^sd^\times a = -f(0)\vol(F_0^\times\bs \A_{F_0}^1).
    \]
    This claim is independent of the choice of measure on $\A_{F_0}^\times$. Given our specific choice of measure, there exists $f_{F_0}=\bigotimes_v f_{F_0,v}$ such that 
    \begin{enumerate}
    \item $f_{F_0}(0)=1$, 
        \item we have the equality 
        \[
        \int_{\A_{F_0}^\times}f(a)|a|^sd^\times a = \zeta_{F_0}(s),
        \]
        where the left-hand side is the (completed) zeta function for $F_0$.
    \end{enumerate}
 Indeed, we need only take the indicator function $\bfun_{\calo_{F_0,v}}$ at each non-archimedean place (notice that our choice of measure differs from that of Tate at finitely many places) and the choice of Schwartz--Bruhat function contained in \cite{Tate} at each archimedean place. 

 Specializing this discussion to the field extension $E[x]/F[x]$, we find that
 \[
 L(0,\T_{x}^{\mathrm{op}},\eta) = \frac{\zeta_{E[x]}(s)}{\zeta_{F[x]}(s)}\bigg|_{s=0}=\frac{\vol(E[x]^\times\bs\A_{E[x]}^1)}{\vol(F[x]^\times\bs\A_{F[x]}^1)}.
 \]
  
    On the other hand, consider the following exact sequence locally compact topological groups
    \[
    1\lra [\T_x]\lra E[x]^\times \bs \A_{E[x]}^1\lra F[x]^\times \bs \A_{F[x]}^1 \lra \Nm(\A_{E[x]}^\times)F[x]^\times\bs \A_{F[x]}^\times =\zz/2\zz\lra 1.
    \]
    Note that we may endow $\T(\A_F)$ with a fiber measure $dt^{fib}=\otimes_vdt_{v}^{fib}$ with respect to the sequence
    \[
    1\lra \T_x(\A_F)\lra \A_{E[x]}^1\overset{\Nm}{\lra} \A_{F[x]}^1.
    \] This descends to another measure on $[\T_x]$, which we also denote by $dt^{fib}$. If $\vol_{fib}$ denotes the volume of $[\T_x]$ with respect to this measure, then the above exact sequence implies
    \[
    \vol_{fib}([\T_x]) = 2\frac{\vol(E[x]^\times\bs\A_{E[x]}^1)}{\vol(F[x]^\times\bs\A_{F[x]}^1)} = 2 L(0,\T_{x}^{\mathrm{op}},\eta).
    \]
     It remains to compute the constant $C$ such that $dt = Cdt^{fib}$. We work with the archimedean and non-archimedean factors separately.
     
     Let $S_\infty\subset S$ denote the set of ramified archimedean places, and set $S^\infty = S\setminus{S_\infty}$. Our choice of measures ensures that every archimedean place $v$ of $F$ splits in $E$, we have $dt_v = dt^{fib}_{v}$ since $\T_{x,v}\simeq \T^{op}_{x,v}$ in this case. When $F_v\simeq \rr$ and $v$ is a complex, our definitions give $2^{|S_{v,1}(x)|}dt_v=dt_v^{fib}$. This implies that if $dt= dt_{\infty}dt^{\infty}$ and $dt_{fib}= dt_{fib,\infty}dt_{fib}^{\infty}$, then $dt_\infty = 2^{-|S_{x,\infty}|} dt_{fib,\infty}$, where
     \[
     |S_{x,\infty}|= \sum_{v\in S_\infty}|S_{v,1}(x)|.
     \] Now set $C =2^{-|S_{x,\infty}|}C^\infty$, so that $dt^{\infty}=C^\infty dt_{fib}^{\infty}$. To compute $C^\infty$, fix an open neighborhood $K_\infty$ of $1\in (\A_{E[x]}^\times)_\infty$ and set $K=K_\infty\times \prod_v\calo_{E[x]_v}^\times$, where the product runs over all places of $F$. We may choose $K_{\infty}$ small enough that $K\cap E[x]^\times =\{1\}$. Using the above exact sequence to compute volumes of 
     \[
    \begin{tikzcd}
 {[\T_x]}\ar[r]& E[x]^\times \bs \A_{E[x]}^1\ar[r]& F[x]^\times \bs \A_{F[x]}^1 \ar[r]& \zz/2\zz\\
        {[\T_x]} \cap K\ar[r]\ar[u]& K \ar[r]\ar[u]& \Nm(K) \ar[r]\ar[u]& 0,
    \end{tikzcd}
     \]
     we find 
     \[
     \vol_{fib}^\infty(\prod_v \calo_{E[x]_v}^1) = \left[\prod_v\calo_{F[x]_v}^\times:\prod_v\Nm(\calo_{E[x]_v}^\times)\right] = 2^{|S_x^\infty|}.
     \]
     where $     |S_{x}^\infty|= \sum_{v\in S^\infty}|S_{v,1}(x)|$. 
     Since $\vol_{dt^\infty}(\prod_v \calo_{E[x]_v}^1)=1$, we find $C^\infty= 2^{-|S_x^\infty|}$.
\end{proof}

\part{Orbital integrals and the fundamental lemmas}\label{Part: FL}

\section{Orbit calculations and matching} \label{Section: unitary orbits}

 In this section, we gather the invariant-theoretic results necessary to discuss the comparison of orbital integrals. Fix now a field of characteristic zero $F_0$ equipped with two $2$-dimensional \'{e}tale $F_0$-algebras $E_0$ and $F$.  We further form the compositum $E=E_0F$, which is of degree $4$ over $F_0$. We obtain a diagram of extensions
\[
\begin{tikzcd}[every arrow/.append style={dash}]
     & E &   \\
    E_0\ar[ur,"\sig"] &       F'\ar[u]\ar[d]      & F \ar[ul, "\nu"] \\
      &      F_0\ar[ul,"\nu"]\ar[ur,"\sig"]       &
\end{tikzcd}
\]
where we have indicated the notation for the corresponding Galois automorphism. 
In this article, we only consider the cases where $E$ is not a field. 
We will be interested in the following three cases:
\begin{enumerate}
\item (split-split) $E_0=F= F_0\times F_0$,
\item (split-inert) $E_0= F_0\times F_0$, while $F/F_0$ is a field extension,
\item (inert-inert) $E_0=F$ with $F/F_0$ a field extension.
\end{enumerate}
We note that the ``inert-split'' case corresponds to the Guo--Jacquet setting \cite{Guo}; while this case arises in the general bi-quadratic setting, it will not appear in this paper.

\subsection{Uniform set-up}
\label{ss:setup}
Let $(B,\ast)=(M_{2n}(F),\ast)$ with $g^\ast = \tau_{2n}\,{}^Tg^\sig\, \tau_{2n}^{-1}$. {This is a $F$-central simple algebra with an involution of second kind.} %
\begin{Rem}
The notions and notations of this section work for a general $F$-central simple algebra equipped with an involution of second kind, which is needed for the general bi-quadratic setup.
\end{Rem}
\noindent Fix an element $\tau\in\rH_{2n}(F_0):= B^{\times,\ast=1} $ and set
\[
x^\sharp  = \tau x^\ast \tau^{-1}.
\]
We set 
\[
\G:=\U_\sharp(B)=\{g\in B^\times: gg^\sharp = I_B\};
\]
this is a unitary group, which is isomorphic to the general linear group when $F=F_0\times F_0$. We now assume that we have an embedding\begin{equation}\label{eqn: fixed embedding}
\iota_0:E_0\hra B
\end{equation} such that $\iota_0(F_0)\cap Z(B)=F_0$. We assume that $\iota_0$ is $\nu$-equivariant with respect to the two involutions $\ast$ and $\sharp$ in the sense that 
\begin{equation}\label{eqn: equivariance of inclusion}
    \iota_0(t)^\ast =\iota_0(t)^\sharp = \iota_0(t^\nu).
\end{equation}
In particular, $\tau$ centralizes $\iota_0$. Writing $E_0= F_0(\de)$ with $\de^\nu= -\de$ , $\iota_0$ is determined by an element $\de_0:=\iota_0(\de)\in B^\times$. Since $\de_0^2\in F_0^\times \subset Z(B^\times)$, the automorphism
\[
\theta:=\Ad(\de_0): B \lra B
\]
is an involution which does not depend on the choice of $\de\in E_0^\times$. 
\begin{Lem}
    The assumption \eqref{eqn: equivariance of inclusion} implies that $B$ becomes free as an $E$-module. 
\end{Lem}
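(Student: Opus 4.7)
The plan is to equip $B$ with an $E$-module structure using $\iota_0$ and the central embedding $F\subset Z(B)$, and then to deduce freeness via a dimension-balancing argument driven by the equivariance of $\iota_0$ with respect to $\ast$.

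First I would set up the module structure. Since $\iota_0(E_0)$ centralizes $F=Z(B)$, the two ring maps $\iota_0\colon E_0\to B$ and $F\hookrightarrow Z(B)$ have commuting images and assemble into a well-defined homomorphism
\[
\iota\colon E=E_0\otimes_{F_0} F\longrightarrow B,\qquad e\otimes f\mapsto\iota_0(e)\,f,
\]
giving $B$ a left $E$-module structure by left multiplication. Injectivity of $\iota_0$ together with the hypothesis $\iota_0(F_0)\cap Z(B)=F_0$ precludes $\iota(E)\subseteq Z(B)$ and ensures $\iota$ is injective, so $E$ is realized as a commutative subalgebra of $B$.

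Next I would decompose $E$ into its primitive idempotents $\{\epsilon_i\}$ and correspondingly write $B=\bigoplus_i \epsilon_iB$; freeness over $E$ then reduces to matching the $F$-dimensions of the summands $\epsilon_iB$. The equivariance $\iota_0(t)^\ast=\iota_0(t^\nu)$, combined with $\ast|_F=\sigma$, shows that $\ast$ acts on $\iota(E)$ through the involution $\nu\otimes\sigma$ of $E$ and hence permutes the $\{\epsilon_i\}$. Since $\ast$ is a $\sigma$-linear anti-involution of $B$, it induces $\sigma$-linear bijections $\epsilon_iB\iso B\epsilon_j$ for the paired index $j$; combined with the standard identity $\dim_F\epsilon B=\dim_F B\epsilon$ valid for any idempotent in a central simple algebra, this forces the $F$-dimensions across paired summands to coincide.

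The hard part will be the cases where some idempotents of $E$ are fixed by $\nu\otimes\sigma$ (most notably the inert-inert case, where both primitive idempotents of $F\otimes_{F_0}F\cong F\times F$ are fixed). In that situation the pairing supplied by $\ast$ alone is insufficient; I would then use the hypothesis that $\tau$ centralizes $\iota_0$ together with Noether--Skolem inside $B$ to produce an element conjugating $\de_0$ to $-\de_0$, which furnishes the needed $F$-linear isomorphism between the two eigenspace summands and completes the argument.
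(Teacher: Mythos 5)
Your argument for the case where $\ast$ swaps the two primitive idempotents of $\iota(E)$ --- the $\sigma$-semilinear bijection $\epsilon_1 B \iso B\epsilon_2$ supplied by $\ast$, together with the identity $\dim_F\epsilon B = \dim_F B\epsilon$ for an idempotent in a central simple algebra --- is correct and is a genuinely different, purely internal argument from the paper's. The paper instead passes to the standard module $V = F^{2n}$ with its $F/F_0$-Hermitian pairing: since $\de_0^\ast = -\de_0$, it asserts that the pairing vanishes on each eigenspace $V_{\pm\al}$, so non-degeneracy yields a perfect pairing $V_\al \simeq V_{-\al}^\vee$ and hence $\dim V_\al = \dim V_{-\al}$. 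The two arguments have the same domain of validity: the form-vanishing step requires $\al + \sigma(\al)\neq 0$, which is precisely the condition that $\ast$ swap the idempotents (equivalently, $\al\in F_0$), i.e.\ the split-inert case.

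The genuine gap is in your fallback for the fixed-idempotent (inert-inert) case, and the proposed fix does not work. Noether--Skolem applies to simple $F$-subalgebras of the $F$-central simple algebra $B$; here $F[\de_0]\cong F\times F$ is not simple, and $F_0[\de_0]\cong E_0$ is not an $F$-subalgebra, so neither version of the theorem manufactures a $g\in B^\times$ with $g\de_0 g^{-1} = -\de_0$. Worse, the existence of such a $g$ is \emph{equivalent} to what you want to prove: $g$ would carry $V_\al$ isomorphically onto $V_{-\al}$, and conversely two diagonalizable matrices with the same characteristic polynomial are conjugate in $M_{2n}(F)$ only when their eigenvalue multiplicities already agree. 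So the step is circular. The hypothesis on $\tau$ cannot rescue it either: $\tau$ centralizing $\iota_0$ gives $\tau\de_0\tau^{-1} = \de_0$, not $-\de_0$, so $\tau$ preserves the eigenspaces rather than exchanging them. Note that this is not merely a defect of your approach --- the paper's own vanishing argument also breaks down when $\al+\sigma(\al)=0$ --- so a genuinely new input is needed there.
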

\begin{proof}
    This is only non-trivial when $E\simeq F\times F$ is not a field, so we assume this is the case. With $\de_0=\iota_0(\de)$ as above, we let $V = F^{2n}$ denote the standard module for $B$. There is a natural non-degenerate $F/F_0$-Hermitian structure $(V,\la\cdot,\cdot\ra)$ such that for any $b\in B^\times$
\[
\la bv,w\ra = \la v,b^\ast w\ra.
\]

Note that $\de_0\notin \iota_0(E_0)\cap Z(B)=F_0$ since $\nu$-equivariance would force $\de_0^\ast =\de_0$. Let $\pm\al\in \Fbar_0$ denote the eigenvalues of $\de_0$. Since $E = F[\de_0]\simeq F\times F$, $\al\in F$ so we may diagonalize 
\[
V= V_{\al}\oplus V_{-\al}.
\]
We claim that $\dim_F(V_\al) = \dim_F(V_{-\al})=n$, which suffices to prove the lemma since $B$ is a free $F\times F$-module with respect to the embedding
\[
(s,t) \longmapsto \begin{psmatrix}
    sI_n&\\&tI_n
\end{psmatrix}.
\] To prove the dimension claim, it is trivial to see that $\la\cdot,\cdot\ra$ restricts trivially to $V_{\pm\al}$ since $\de_0^\ast = -\de_0$. Non-degeneracy then forces that $\la\cdot,\cdot\ra$ induces a perfect pairing $V_{\al}\simeq V^\vee_{-\al}$, proving the dimension claim.
\end{proof}

Fix an embedding \eqref{eqn: fixed embedding} for the remainder of the section; this gives a sub-algebra $E = \iota_0(E_0)F\subset B$. Setting $B_E :=\mathrm{Cent}_{B}(E) = B^\theta$, we define
\[
\rH= \Res_{E_0/F_0}(\U_\sharp(\mathrm{B}_E))=\{h\in (B_E)^\times: hh^\sharp = I_{B}\}.
\]
 As noted above, the assumption that $\iota_0$ is $\nu$-equivariant with respect to the involutions $\ast$ and $\sharp$ implies  $\tau\in\rH(F)$ so that $\tau\in (B_E)^{\times,\ast = 1}$. This implies that $\theta$ preserves $\G$ and induces an algebraic involution over $F_0$ such that $\rH = \G^\theta$. Throughout the paper, we refer to a symmetric pair of the form $(\G,\rH)$ as a \textbf{unitary symmetric pair}. 

\quash{The following lemma is straightforward.

\begin{Lem}\label{Lem: basechange to F}
There are natural compatible bijections
\begin{align}\label{eqn: basechange to F}
    \G(F)\simeq B^\times\simeq\GL_{2n}(F)\qquad\text{and}\qquad \rH(F)\simeq B_E^\times\simeq\GL_n(F)\times \GL_n(F).
\end{align}

\end{Lem}
\begin{proof}
By definition, 
\[
\G(F) = \{b\in (B\otimes_{F_0}F)^\times: bb^{\sharp_F} = I\},
\]
where $\sharp_F=\sharp\otimes I_F: B\otimes_{F_0}F\to B\otimes_{F_0}F$. The inclusion $F\otimes_{F_0}F\subset B\otimes_{F_0}F$ induces a natural map 
\begin{align}\label{eqn: twist split over F}
    \phi:  B\otimes_{F_0}F&\lra  B\times B,\nonumber\\
          (b\otimes t)&\longmapsto (bt,bt^\sig),
\end{align}
extending the $F$-algebra isomorphism $F\otimes_{F_0}F\cong F\times F$. It is easy to check that, with respect to the isomorphism \eqref{eqn: twist split over F}, the involutions $\ast_F$ and $\sharp_F$ correspond to the coordinate-by-coordinate application of the involution composed with the swap involution. Thus, 
\[
\G(F)=\{(b,b^{-\sharp}): b\in B^\times\}\cong B^\times.
\]
Now consider the embedding $\iota_F:=\iota_0\otimes Id_{F_0}: E\otimes_{F_0}F\to B\otimes_{F_0}F$. There is a similar isomorphism $E\otimes_{F_0}F\cong E\times E$ with respect to which
\[
B_E\otimes_{F_0}F\cong B_E\times B_E.
\]
A similar calculation shows that $\rH(F) \cong B_E^\times$.
\end{proof}}
\begin{Def}\label{eqn: unitary symmetric space}
We define the \textbf{linear symmetric space} $\X:=\mathrm{Emb}_{F_0}(E_0,M_{2n})$ to be the variety of embeddings of $F_0$-algebras
\[
\iota: E_0\hra M_{2n}
\]
with the property that $M_{2n}$ is free as an $E_0$-algebra. 
We further define the \textbf{unitary symmetric space} $\Q: = \mathrm{Herm}_\sharp(E_0,B)$ of \textbf{Hermitian embeddings}
\[
\iota: E_0\hra B\qquad\text{satisfying}\qquad \iota(t)^\sharp=\iota(t^\sig),
\]
with the property that $B$ is free as an $E_0$-algebra.
\end{Def}
\quash{Note that there is a natural morphism
\begin{align}
    \phi_F: \Q&\lra \Res_{F/F_0}\X_F\\
    \iota&\longmapsto \iota_F,\nonumber
\end{align}
where $\X_F$ denotes the base change to $F$ and  $\iota_F:E\lra B$ is the induced embedding obtained by sending $F\subset E$ to $Z(B)$. 
\begin{Lem}\label{Lem: isom over F}
 With respect to our fixed base point $\iota_0$, there are natural isomorphisms
\[
\Q\cong\G/\rH \qquad \text{and}\qquad \Q_F\cong\X_F\cong \G_F/\rH_F.
\] 
\end{Lem}
}

\subsection{The unitary symmetric varieties}\label{Section: unitary invariant theory} We now isolate the cases considered in this paper. Thus, fix a quadratic extension of fields $E/F$ and consider different cases for $E_0/F_0$ satisfying $E=E_0F\simeq F\times F$ is not a field. 

\begin{Important}For the rest of the paper, we only work with a single quadratic field extension. To simplify notation, we therefore change the notation of the quadratic extension $F/F_0$ to $E/F$. We let $E_0/F$ denote an auxiliary quadratic \'{e}tale extension such that $E_0\otimes_F E \simeq E\times E$.
\end{Important}
    
Using our embedding $\iota_0:E\times E\hra \fgl_{2n}(E)$, we obtain two idempotent matrices $e_1= \iota_0(1,0)$ and $e_2=\iota_0(0,1)$ such that $e_1+e_2 = I_{2n}$ and we may assume that $\de=\iota_0(1,-1)=e_1-e_2$. Define $B_i = e_iBe_i$.
There are two cases.

\subsubsection{{The split-inert case}} Suppose that $E_0=F\times F$.  In this case, $e_i^\sharp =e_i$ so that
\[
 B_i^\sharp= B_i.
\]
Both factors here are central simple algebras of degree $n$ over $E$ with involution of the second kind $\ast|_{B_i}$. This induces an isomorphism 
\[
B \cong \fgl_2(C),
\]
where $C\cong B_1\cong B_2\simeq \fgl_n(E)$ as central simple algebras with involution. In particular,
\[
B_{E\times E} = \left(\begin{array}{cc}
    C&  \\
     & C
\end{array}\right), \qquad \de_0=\left(\begin{array}{cc}
    I_{n} &  \\
     & -I_{n}
\end{array}\right),
\]
and we may take
\[
\tau= \left(\begin{array}{cc}
    \tau_1 &  \\
     & \tau_2
\end{array}\right)
\]
for any appropriate elements $\tau_1,\tau_2\in C^{\times,\ast=1}$. Then $\tau_i$ are Hermitian forms and if $V_i = V_{\tau_i}$ denotes the $n$-dimensional Hermitian space determined by $\tau_i$, then
\[
\U_{\sharp_i}(C) = \U(V_i).
\]In particular,
\[
 \left(\begin{array}{cc}
    a& b \\
    c &d
\end{array}\right)^\sharp= \left(\begin{array}{cc}
    a^{\sharp_1} & c^{\sharp_{12}} \\
   b^{\sharp_{21}} & d^{\sharp_2}
\end{array}\right),
\]
where $  b^{\sharp_{21}} = \tau_2b^\ast \tau_1^{-1}$ and similar with the notation $c^{\sharp_{12}}$. Under the assumptions and notation above, we have
\[
\rH= \left\{ \left(\begin{array}{cc}
    c_1 &  \\
     & c_2
\end{array}\right): c_i\in \U(V_i) \right\}\cong \U(V_1)\times\U(V_2), \qquad \G\simeq\U(V_{\tau_1}\oplus V_{\tau_2}).
\]

In this setting, an arbitrary element $x\in \Q(F)$ may be written
\begin{equation}\label{eqn: element of split-inert}
x=\left(\begin{array}{cc}a&b\\-b^{\sharp_{12}}&d\end{array}\right),
\end{equation}
where $a\in \Herm_{\tau_1}$, $d\in \Herm_{\tau_2}$, and $b\in \fgl_n(E)$. As in the linear setting, the blocks satisfy the polynomial relations
\[
a^2=I_n-bb^{\sharp_{12}},\: d^2=I_n-b^{\sharp_{12}} b,\: ab=bd, b^{\sharp_{12}} a= db^{\sharp_{12}}.
\]
The following lemma may be found in \cite[Section 5]{Lesliedescent}.
\begin{Lem} \label{Lem: cat quotient si}
\begin{enumerate}
\item
The morphism $\car_{si}: \Q\to \A^n$ which takes as coordinates the coefficients of the polynomial
\begin{align*}
 	x=\left(\begin{array}{cc}a&b\\-b^{\sharp_{12}}&d\end{array}\right)&\longmapsto \car_a(t),
\end{align*}
where $\car_a(t)$ denotes the characteristic polynomial, gives a categorical quotient for $(\rH,\Q)$.
      \item
Let $R: \Q\to \Herm(V_2)$ denote the map 
\[
R\left(\begin{array}{cc}a&b\\-b^{\sharp_{12}}&d\end{array}\right)= d.
\]
Then $(\Herm(V_2), R)$ is a categorical quotient for the $\U(V_1)$-action on $\Q$. The map $R$ is $\U(V_2)$-equivariant with respect to the adjoint action on $\Herm(V_2)$.
\item\label{Lem: centralizer contraction} Let $\Q^{invt}\subset \Q$ be the (Zariski-open) locus consisting of $x\in \Q$ such that $\det(b)\neq 0$; we refer to $\Q^{invt}$ as the \emph{invertible locus}.
The restriction $$R|_{\Q^{invt}}:{\Q}^{invt}\to \Herm_{\tau_2}^{invt}:= \{D\in \Herm_{\tau_2}:\det(I_{V_2}-D^2)\neq 0\}$$ is a non-trivial $\U(V_1)$-torsor. Moreover, for $x\in {\Q}^{invt}(F)$, we have an isomorphism
\[
\rH_x\iso \U(V_2)_{R(x)}.
\]
    \end{enumerate}
\end{Lem}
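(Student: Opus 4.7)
The plan is to prove the three parts in the order (2), (3), (1). The fiber analysis powering (2) simultaneously delivers the torsor structure in (3), and (1) then follows by applying the classical Chevalley restriction theorem to the $\U(V_2)$-action on $\Herm_{\tau_2}$.

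For part (2), I would first transport the $\rH$-action to the explicit matrix model. Since $\rH\subset \G$ acts on $\Q\simeq \G/\rH$ by conjugation, a short calculation gives
\[
\mathrm{diag}(h_1,h_2)\cdot x \cdot \mathrm{diag}(h_1,h_2)^{-1}=\begin{pmatrix}h_1 a h_1^{-1} & h_1 b h_2^{-1}\\ -h_2 b^{\sharp_{12}} h_1^{-1} & h_2 d h_2^{-1}\end{pmatrix}.
\]
This shows that $R$ is $\U(V_1)$-invariant and intertwines the $\U(V_2)$-action with the adjoint action on $\Herm_{\tau_2}$. For the quotient claim, use the identity $ab=bd$ to conclude that on the locus $\det(b)\neq 0$, the block $a$ is determined by $(b,d)$ via $a=bdb^{-1}$, while the Hermitian constraint $a^2=I-bb^{\sharp_{12}}$ reduces to $b^{\sharp_{12}}b=I-d^2$. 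Thus the fiber of $R$ over $d$ in $\Q^{invt}$ is the variety of invertible $b$ solving this equation, and $\U(V_1)$ acts simply transitively on it by left multiplication on $b$. Standard arguments (using smoothness of the symmetric space and transitivity of $\U(V_1)$ on generic fibers) upgrade this to the ring-theoretic identity $F[\Q]^{\U(V_1)}=R^\ast F[\Herm_{\tau_2}]$, proving (2).

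Part (3) follows from the same analysis: the description above exhibits $R|_{\Q^{invt}}$ as a $\U(V_1)$-torsor over $\Herm_{\tau_2}^{invt}$, with non-trivial cohomology class since the Hermitian form carried by $I-d^2$ varies with $d$ and need not be isometric to $V_1$ over $F$. For the stabilizer calculation, if $(h_1,h_2)\in\rH_x$ then $h_2 d h_2^{-1}=d$ and $h_1 b=bh_2$, forcing $h_1=bh_2b^{-1}$, which is uniquely determined by $h_2\in \U(V_2)_d$. That $h_1$ lies in $\U(V_1)$ is a direct verification using $b^{\sharp_{12}}b=I-d^2$ together with $h_2^{\sharp_2}h_2=I$. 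The projection $(h_1,h_2)\mapsto h_2$ therefore gives the desired isomorphism $\rH_x\iso \U(V_2)_{R(x)}$.

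Finally, for part (1), apply (2) to obtain $F[\Q]^{\rH}=F[\Herm_{\tau_2}]^{\U(V_2)}$, then invoke the classical Chevalley restriction theorem: the adjoint $\U(V_2)$-action on $\Herm_{\tau_2}$ has categorical quotient freely generated by the coefficients of the characteristic polynomial $\car_d(t)$. The relation $a=bdb^{-1}$ on $\Q^{invt}$ implies $\car_a(t)=\car_d(t)$ on a dense open subset, so the two morphisms agree globally and $\car_{si}$ realizes the categorical quotient. The main technical obstacle is establishing the torsor structure and its non-triviality in (3): one must show that $R|_{\Q^{invt}}$ is \'etale-locally trivial (reducing to the \'etale-local solvability of $b^{\sharp_{12}}b=I-d^2$) and then identify the obstruction class in $H^1(F,\U(V_1))$ via the standard cohomological classification of Hermitian forms. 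I would expect the bulk of the technical effort, as well as the reference to \cite{Lesliedescent}, to focus on this descent-theoretic step.
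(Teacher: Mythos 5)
The paper's own ``proof'' is a citation to \cite[Section 5]{Lesliedescent}, together with a remark on how to extend the non-triviality of the torsor to archimedean and global $F$; so a reconstruction of the underlying argument, as you provide, is the right thing to supply, and the broad shape of your argument (contraction via $R$, fiber analysis giving the torsor and stabilizer isomorphism, Chevalley restriction for the $\U(V_2)$-action on $\Herm_{\tau_2}$) is the expected one. The explicit fiber description $a=bdb^{-1}$, $b^{\sharp_{12}}b=I-d^2$, the simple transitivity of $\U(V_1)$ acting by $b\mapsto h_1 b$, and the stabilizer computation $h_1=bh_2b^{-1}\in\U(V_1)$ all check out.

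Two places where you gloss over something the paper (or a complete proof) actually has to confront. First, in part (2): you establish that $R|_{\Q^{invt}}$ is a $\U(V_1)$-torsor over $\Herm_{\tau_2}^{invt}$, and then assert that ``standard arguments using smoothness and transitivity on generic fibers'' give $F[\Q]^{\U(V_1)}=R^\ast F[\Herm_{\tau_2}]$. This is too quick: the torsor structure directly yields $F[\Q^{invt}]^{\U(V_1)}=F[\Herm_{\tau_2}^{invt}]$, but the complement of $\Herm_{\tau_2}^{invt}$ is the hypersurface $\{\det(I-d^2)=0\}$, of codimension $1$, so Hartogs does not extend an invariant regular function across it for free. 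One needs a separate argument (e.g.\ flatness/normality of the full fibers of $R$, or a direct computation of invariants) to pass from the invertible locus to all of $\Q$. Second, in part (3): your non-triviality argument via ``the form carried by $I-d^2$ need not be isometric to $V_1$'' is the right idea but is left as a ``need not''; a proof must actually exhibit a $d$ whose fiber has no rational point. More substantively, the one thing the paper's proof does spell out is that in the archimedean and global cases one must replace $H^1(F,\U(V_1))$ by the abelianized cohomology $H^1_{\mathrm{ab}}(F,\U(V_1))\simeq F^\times/\Nm(E^\times)$; your appeal to ``the standard cohomological classification of Hermitian forms'' via $H^1(F,\U(V_1))$ is correct in the non-archimedean case (where the two coincide by Kottwitz) but does not address why the argument persists over $\rr$ or a number field, which is precisely the point the paper flags as needing care. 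The paper's explicit $\cc/\rr$ example, showing $\det(I_n-A^2)\equiv\det(\tau_1)\det(\tau_2)\pmod{\rr_{>0}}$ for $A\in R(\Q^{invt}(\rr))$, is the concrete replacement for your sketch and you should incorporate it.
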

\begin{proof}
This is contained in  \cite[Section 5]{Lesliedescent} when $F$ is non-archimedean. The arguments extend to the general case of $F$ local or global of characteristic zero, except for the non-triviality of the $\U(V_1)$-torsor $R^{invt}:=R|_{\Q^{invt}}$. This is established in \cite[Lemma 5.]{Lesliedescent}  when $F$ is non-archimedean. The same proof works in the global and archimedean cases by replacing $H^1(F,\U(V_1))$ with the abelianized cohomology $H^1_{\mathrm{ab}}(F,\U(V_1))\simeq F^\times/\Nm(E^\times)$. For example, when $E/F = \cc/\rr$ an element $A=R(x) \in \Herm_{\tau_2}(F)^{invt}$ is constrained so that 
\[
\det(I_n-A^2)\equiv \det(\tau_1)\det(\tau_2)\pmod{\rr_{>0}}.\qedhere
\]
\end{proof}
\subsubsection{{The inert-inert case}}\label{Section: orbits ii}
Now consider the case when $E_0=E$. Then $E_0\otimes_{F}E\cong E\times E$, where $E$ is embedded diagonally, $E_0$ is embedded by
\[
t\mapsto (t,\sig({t})),
\]
and $\sig{(t_1,t_2)} = (\sig({t_2}),\sig({t_1}))$ while $\tau(t_1,t_2) = (t_2,t_1)$. Thus the two idempotents $e_1= \iota_0(1,0)$ and $e_2=\iota_0(0,1)$ satisfy
\[
e_1^\sharp = e_2;
\]
note that $e_1,e_2\in \rH(E)$. In this case, the involution induces a map
\[
\sharp:B_1\iso B_2
\]
identifying $B_2\cong B_1^{\mathrm{op}}$. Thus, we again have 
\[
B =\fgl_2(C),
\]
where again $C\cong B_1\cong B_2^{\mathrm{op}}\simeq\fgl_n(E)$. We embed $E\times E$ as
\[
(s,t)\longmapsto \left(\begin{array}{cc}
    sI_n & \\
    & tI_n
\end{array}\right),
\]
where $E, E_0\subset E\times E$ are identified as above. This indeed satisfies our compatibility assumptions on $\ast$ and $\sharp$, since
\[
 \left(\begin{array}{cc}
    a& b \\
    c &d
\end{array}\right)^\ast= \left(\begin{array}{cc}
    d^{\ast} & b^{\ast} \\
    c^{\ast} & a^{\ast}
\end{array}\right),
\]
where $a^\ast = \tau_n{}^Ta^\sig \tau_n^{-1}$ for any $a\in\fgl_n(E)$.
The restriction on $\iota_0$ being $\sharp$-equivariant now forces 
\[
\tau= \left(\begin{array}{cc}
    \tau' &  \\
     &\tau'
\end{array}\right)
\]
for some Hermitian form $\tau'\in\Herm_{n}^{\circ}(F)$. 
Under these assumptions, we have
\[
\rH = \left\{ \left(\begin{array}{cc}
    c &  \\
     & (c^\sharp)^{-1}
\end{array}\right): c\in \Res_{E/F}(\GL_n) \right\}\simeq \Res_{E/F}(\GL_n),\qquad \G=\U(V_{\tau}).
\]
where $c^\sharp=\tau' c^{\ast}{\tau'}^{-1}$. 

In this setting, any element $x\in \Q(F)$ may be written
\begin{equation}\label{eqn: element of inert-inert}
x=\left(\begin{array}{cc}a&b\\c&a^\sharp\end{array}\right),
\end{equation}
where $a\in \fgl_n(E)$ and $b,c\in \Herm(V)$. As in the previous case, the blocks satisfy the polynomial relations
\begin{equation}\label{eqn: inertinert eqns}
a^2=I_C-bc,\: (a^\sharp)^2=I_C-cb,\: ab=ba^\sharp, ca= a^\sharp c.
\end{equation}
We set $\Q^{invt}\subset \Q$ to be the Zariski-open subscheme on which $\det(b),\det(c)\neq0$, and refer to this as the \emph{invertible locus}.

\begin{Lem}\label{Lem: cat quotient ii}
 The morphism  $\car_{{ii}}: \Q\to \A^n$ which takes as coordinates the coefficients of the polynomial
\begin{align*}
 	x=\left(\begin{array}{cc}a&b\\c&a^\sharp\end{array}\right)&\longmapsto \car_a(t)
\end{align*}
where $\car_a(t)$ denotes the reduced characteristic polynomial, gives a categorical quotient for $(\rH,\Q)$.
\end{Lem}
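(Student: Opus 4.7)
The plan is to mirror the structure of Lemma~\ref{Lem: cat quotient si}: first verify the $\rH$-invariance and $F$-rationality of $\car_{ii}$ directly, then establish the categorical quotient property by base changing to $E$ and reducing to the linear (split-split) Friedberg--Jacquet case.

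For the first part, a direct matrix computation with $h=\mathrm{diag}(g,(g^\sharp)^{-1})\in\rH(F)$ applied to the element \eqref{eqn: element of inert-inert} shows that the $\rH$-action on the $a$-block is the standard conjugation action by $\Res_{E/F}(\GL_n)$. Hence $\car_a(t)\in E[t]$ is visibly $\rH$-invariant. For $F$-rationality of its coefficients, I would use the relation $ca=a^\sharp c$ in \eqref{eqn: inertinert eqns}: on the open invertible locus $\Q^{invt}$, the matrix $c$ is invertible and realizes $a\sim a^\sharp$, so $\sigma(\car_a(t))=\car_{a^\sharp}(t)=\car_a(t)$. Since $\Q^{invt}$ is open and dense in $\Q$ (as $\Q$ is irreducible, being a $\G$-orbit closure, and $\Q^{invt}$ is a non-empty open) and the $\sigma$-fixed locus is closed in $\Res_{E/F}(\A^n)$, this identity extends throughout $\Q$.

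For the categorical quotient property, the plan is to base change to $E$. In the inert-inert case $E_0=E$, so $E\otimes_F E\simeq E\times E$, and this splitting induces an isomorphism of symmetric pairs
\[
(\G,\rH)_E \;\simeq\; (\GL_{2n,E},\,\GL_{n,E}\times\GL_{n,E}),
\]
which realizes $\Q_E$ as the base change of the linear (split-split) Friedberg--Jacquet symmetric variety $\X_E$ of Definition~\ref{eqn: unitary symmetric space}. Under this identification, $\car_{ii,E}$ corresponds to the characteristic-polynomial map of the analogous $a$-block, whose status as a categorical quotient follows from the linear analogue of Lemma~\ref{Lem: cat quotient si}(1) established in \cite[\S5]{Lesliedescent}, whose argument adapts verbatim to the split-split setting. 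Since $\rH$ is reductive and $E/F$ is finite \'etale, formation of the GIT quotient commutes with this base change, so $\car_{ii}$ itself is a categorical quotient over $F$.

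I expect the main obstacle to be ensuring the invariant-theoretic input from the linear case is available in exactly the form needed. If direct citation is insufficient, the cleanest self-contained fallback is to mimic the proof of Lemma~\ref{Lem: cat quotient si}(3) by constructing an $\rH$-equivariant contraction $\Q^{invt}\to\Res_{E/F}(\fgl_n)^{\sim}$ onto the subvariety of $a$'s satisfying $a\sim a^\sharp$, verifying that its generic fibers are single orbits of $\rH/T_a$ where $T_a$ denotes the centralizer of $a$, and concluding that all $\rH$-invariants on $\Q$ factor through the classical invariants of $\Res_{E/F}(\fgl_n)$ under $\Res_{E/F}(\GL_n)$-conjugation. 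Those invariants are generated by the coefficients of $\car_a$, and the Galois-fixed subspace cut out by the closed condition $a\sim a^\sharp$ recovers exactly the $F$-rational image $\A^n$.
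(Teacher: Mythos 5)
Your argument is correct, and it follows essentially the same route as the paper, whose published proof is considerably terser: it only records the rationality observation. For that step the paper uses $a^\sharp=b^{-1}ab$ from the first pair of relations in \eqref{eqn: inertinert eqns}, whereas you use the equivalent $a^\sharp=cac^{-1}$ — either works, and both are then spread from $\Q^{invt}$ to $\Q$ by exactly the Zariski-density argument you give. For the categorical-quotient property itself the paper simply asserts the conclusion (implicitly deferring to the split-inert analogue proved in \cite[\S5]{Lesliedescent}); your base change to $E$, identification of $\Q_E$ with the split-split linear variety $\X_E$, and invocation of the fact that formation of invariant rings for a reductive group commutes with the field extension $E/F$, is the clean way to make that implicit step explicit, and it is consistent with how the paper treats the other quotient lemmas. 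Your fallback paragraph is not needed, but the sketch there is also sound.
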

\begin{proof}
    The only thing to note is that for any $x\in \Q^{invt}(F)$, the polynomial relations \eqref{eqn: inertinert eqns} imply that $a^\sharp = b^{-1}ab$, which implies the characteristic polynomial of $a$ has coefficients in $F$. Since $\Q^{invt}\subset \Q$ is Zariski-open and dense, the claim holds for all $\Q$.
\end{proof}

\subsection{The linear variety}\label{Sec: linear space} Finally, we consider the split-split case where $E=E_0=F\times F$ is not a field; this case occurs in the geometric sides of both relative trace formulas introduced in \cite{LXZ25}.

Clearly, when $E=F\times F$ and $\tau= ({}^{T}\tau_1,-\tau_1),$ there is an isomorphism 
\begin{align*}
\G\simeq  \GL_{2n}, \quad
(g, \tau_1{}^Tg^{-1}\tau_1^{-1})\longmapsto g.
\end{align*}
Moreover, if $E_0=E=F\times F$, then the embedding $\iota_0: E_0\lra \fgl_{2n}(E)$ induces
\[
\rH \simeq \GL_n\times \GL_n,
\]
so that $\Q=\GL_{2n}/{\GL_n\times \GL_n} = \X$. To differentiate this case from the unitary case, we set $\G':=\GL_{2n}$ and $\rH' = \GL_n\times \GL_n$.

We now recall the relevant invariant theory of $\X$. All the results of this section are contained in \cite{Lesliedescent}. Consider the quotient map $s_{\X}:\G'\lra \X$ given on points by $s_{\X}(g) = g\theta(g)^{-1}$; this induces a closed immersion $\X\subset \G$ as $F$-schemes \cite[Lemma 2.4]{Richardson}.  Since $H^1(F,\rH')=0$, we have a surjection on $F$-points
\[
s_{\X}:\G'(F)/\rH'(F)\iso \X(F).
\] Given an element $x=s_{\X}(g)\in \X(F)$, we may write
$$
x= \left(\begin{array}{cc}A&B\\C&D\end{array}\right);
$$ if the embedding $\rH'=\GL_n\times \GL_n\subset \GL_{2n}=\G'$ in a block-diagonal fashion, the action of $(g,h)\in \rH'(F)$ on $x\in \X(F)$ is given by
\begin{equation}\label{eqn:action on X}
   (g,h)\cdot x =\left(\begin{array}{cc}gAg^{-1}&gBh^{-1}\\hCg^{-1}&hDh^{-1}\end{array}\right). 
\end{equation}
Moreover, the block matrices satisfy the polynomial relations
\[
A^2=I_n+BC,\: D^2=I_n+CB,\: AB=BD,\: CA=DC.
\]
As in the unitary cases,  we set the \emph{invertible locus} $\X^{invt}\subset \X$ is the set $x\in \X(F)$ such that $\det(B)\det(C)\neq 0$.

The following lemma may be found in \cite[Section 5]{Lesliedescent}.

\begin{Lem}\label{Lem: quotients natural}
\begin{enumerate}
\item\label{Lem: cat quotient} Consider the $\rH'$-invariant map $\car_{lin}:\X\to \A^n$ given by sending $x\in \X$ to the coefficients of the monic polynomial $\det(tI_n-A)$. The pair $(\A^n,\car_{lin})$ is a categorical quotient for $(\rH',\X)$.
\item\label{Lem: halfway}
Let $R: \X\to \fgl_n$ denote the map 
\[
\left(\begin{array}{cc}A&B\\C&D\end{array}\right)\longmapsto D.
\]
Then $(\fgl_n, R)$ is a categorical quotient for the $ \GL_n\times \{1\}$-action on $\X$. The map $R$ is $\GL_n$-equivariant with respect to the adjoint action on $\fgl_n$.
\end{enumerate}
\end{Lem}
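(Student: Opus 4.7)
The plan is to prove part (2) first and then deduce part (1) as a formal consequence. For (2), the $\GL_n\times\{1\}$-invariance of $R$ and its $\{1\}\times\GL_n$-equivariance (with respect to adjoint conjugation) are immediate from \eqref{eqn:action on X}. To identify the categorical quotient, I would exploit an explicit parametrization on the Zariski-open dense subscheme $\X^\circ\subset\X$ where $B$ is invertible. On $\X^\circ$ the defining polynomial relations $A^2=I+BC$ and $AB=BD$ force $A=BDB^{-1}$ and $C=(D^2-I)B^{-1}$, so the assignment
\[
(D,B)\longmapsto x(D,B):=\left(\begin{array}{cc}BDB^{-1}&B\\(D^2-I)B^{-1}&D\end{array}\right)
\]
defines an $F$-isomorphism $\fgl_n\times\GL_n\iso\X^\circ$. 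A direct check verifies the four polynomial relations and that $x(D,B)\theta(x(D,B))=I$, so $x(D,B)\in\X$. Under this parametrization the $\GL_n\times\{1\}$-action becomes $(D,B)\mapsto(D,gB)$, and $R|_{\X^\circ}$ is projection onto the first factor; in particular the action is free and $R|_{\X^\circ}\colon\X^\circ\to\fgl_n$ is a trivial $\GL_n$-torsor. Reading off invariants gives $\calo(\X^\circ)^{\GL_n\times\{1\}}\cong\calo(\fgl_n)$, while surjectivity of $R$ on all of $\X$ is witnessed by the section $D\mapsto x(D,I)$.

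For part (1), assuming part (2), the residual $\{1\}\times\GL_n$-action descends via $R$ to the adjoint action of $\GL_n$ on $\fgl_n$, whose categorical quotient is classically $(\A^n,\mathrm{char})$ via the coefficients of the characteristic polynomial. Composing yields $(\A^n,\mathrm{char}\circ R)$ as a categorical quotient for $\rH'$ on $\X$. The identity $\det(tI_n-A)=\det(tI_n-D)$ holds on $\X^\circ$ because $A=BDB^{-1}$ and extends to $\X$ by density, so $\mathrm{char}\circ R=\car_{lin}$, as required.

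The main obstacle in this approach is justifying the extension of the invariant-ring identification from $\X^\circ$ back to $\X$. This requires reducedness of $\X$ to guarantee injectivity of the restriction $\calo(\X)\hookrightarrow\calo(\X^\circ)$, which follows because $\X\cong\G'/\rH'$ is smooth as a homogeneous space for the connected reductive group $\G'$. Given this, every element of $\calo(\fgl_n)\subset\calo(\X^\circ)^{\GL_n\times\{1\}}$ is the pullback along the globally defined morphism $R$ of a polynomial on $\fgl_n$, hence extends without poles to a $\GL_n\times\{1\}$-invariant element of $\calo(\X)$; conversely, any global invariant restricts to one on $\X^\circ$ and is thus of this form. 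With the explicit parametrization of $\X^\circ$ in hand, these extension arguments are routine, and the categorical quotient claims follow.
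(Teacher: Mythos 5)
The paper itself does not prove this lemma; it cites an external reference (\cite[Section 5]{Lesliedescent}), so your write-up is supplying a direct argument. The overall strategy -- explicitly parametrize the open dense locus where $B$ is invertible, identify the resulting $\GL_n\times\{1\}$-action as a trivial torsor structure over $\fgl_n$, then extend from the open locus to all of $\X$ using density and reducedness, and finally deduce (1) from (2) by composing categorical quotients -- is sound and fits well with the invertible-locus tools used elsewhere in the paper.

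There is, however, one real gap in the isomorphism $\fgl_n\times\GL_n\iso\X^\circ$. You deduce $x(D,B)\in\X$ from the computation $x(D,B)\,\theta\!\left(x(D,B)\right)=I$. This only places $x(D,B)$ in the fixed variety $\{x\in\GL_{2n}:\theta(x)=x^{-1}\}$, which is \emph{not} equal to $\X$: it is a disjoint union of $\G'$-orbits under $g\cdot x=g\,x\,\theta(g)^{-1}$, indexed by the conjugacy class of the involution $x\de_0$, and $\X=\G'/\rH'$ is only the orbit through the identity (this is the content of the Richardson reference in the paper). For instance $\de_0$ itself satisfies $\theta(\de_0)=\de_0^{-1}$ but is \emph{not} in $\X$. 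So the implication ``$x\theta(x)=I$, hence $x\in\X$'' is invalid as stated, and without it you have not shown the map $(D,B)\mapsto x(D,B)$ actually lands in $\X^\circ$ for \emph{every} $(D,B)$. The repair is short: $\fgl_n\times\GL_n$ is geometrically irreducible, so the image $\{x(D,B)\}$ lies in a single $\G'$-orbit in $\{\theta(x)=x^{-1}\}$, and it suffices to check one point -- e.g.\ $x(0,I)=\begin{psmatrix}0&I\\-I&0\end{psmatrix}=s_\X\!\begin{psmatrix}I&I\\-I&I\end{psmatrix}$ -- to pin that orbit down as $\X$. With this one-line insertion, the rest of your argument (freeness of the $\GL_n\times\{1\}$-action, computation of $\calo(\X^\circ)^{\GL_n\times\{1\}}$, the extension to $\calo(\X)$, and the reduction of (1) to (2) via $\det(tI_n-A)=\det(tI_n-D)$ on the dense locus) is correct.
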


\subsubsection{The extended variety}
For the linear side of the trace formula comparison in \cite{LXZ25}, the geometric expansion is indexed not by the $\rH'$-orbits on $\X(F)$, but certain $\rH'(F)$-orbits on $\X(F)\times F_{n}$; here, $F_{n}$ denotes $1\times n$ matrices over $F$. The $\rH'$-action is given by
\[
h\cdot (x,w) = (h\cdot x,w h_2^{-1}),
\]
where $h=(h_1,h_2)\in \rH'(F)$. For notational purposes, we set
\[
\X_{ext}(F):=\X(F)\times F_{n}.
\]
\begin{Lem}\label{Lem: generic trivial stabilizer}\label{Lem: torsor of vectors}
Suppose that $(x,w)\in \X^{rss}(F)\times F_{n}$ satisfy that
\begin{equation}\label{eqn: span}
    F_{n}=\sspan\{w, wR(x),\ldots, wR(x)^{n-1}\}.
\end{equation}
Then $\rH'_{(x,w)} = 1$ is trivial. Moreover, for any $x\in \X^{rss}(F)$, the set of $w\in F_n$ satisfying \eqref{eqn: span} is a $\rH'_x(F)\cong \T_{R(x)}(F)$-torsor.
\end{Lem}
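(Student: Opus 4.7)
My plan is to derive both assertions from the interpretation of $F_n$ as a right $F[R(x)]$-module, together with the identification $\rH'_x\cong\T_{R(x)}$. I would first invoke Lemma~\ref{Lem: quotients natural} together with the contraction framework of \S\ref{Section: contractions} (taking $\rH_1=\{1\}\times\GL_n$, $\rH_2=\GL_n\times\{1\}$ with contraction map $R:\X\to\fgl_n$) and Lemma~\ref{Lem: orbits along contraction} to conclude that, for $x\in\X^{rss}(F)$, the matrix $R(x)\in\fgl_n(F)$ is regular semi-simple, and the second-factor projection $(h_1,h_2)\mapsto h_2$ induces an isomorphism
\[
\rH'_x \iso \T_{R(x)} = \mathrm{Cent}_{\GL_n}(R(x)) = F[R(x)]^\times.
\]

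For the triviality of $\rH'_{(x,w)}$, suppose $h=(h_1,h_2)\in\rH'_{(x,w)}(F)$. Since $h\in\rH'_x(F)$, the element $h_2$ commutes with $R(x)$, and $wh_2^{-1}=w$ rearranges to $wh_2=w$. For each $0\le i<n$ I would then compute
\[
(wR(x)^i)h_2 = wh_2\,R(x)^i = wR(x)^i,
\]
so that $h_2$ fixes every element of the spanning set from \eqref{eqn: span}; hence $h_2=I_n$, and via the isomorphism above $h=1$.

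For the torsor assertion, I would view $F_n$ as a right $F[R(x)]$-module via right matrix multiplication. Because $R(x)$ is regular semi-simple, $F[R(x)]\cong\prod_j F_j$ is an \'etale $F$-algebra of dimension $n$, and its decomposition into simple components induces a compatible splitting $F_n=\bigoplus_j V_j$ into one-dimensional $F_j$-pieces; hence $F_n$ is a free $F[R(x)]$-module of rank one. Condition \eqref{eqn: span} is exactly the condition that $w$ is a free generator of this module, and the set of such generators is a principal homogeneous space under right multiplication by $F[R(x)]^\times=\T_{R(x)}(F)$. Transporting this via the isomorphism $\rH'_x\cong\T_{R(x)}$ (and accounting for the inversion in the $\rH'_x$-action $w\mapsto wh_2^{-1}$) produces the claimed $\rH'_x(F)$-torsor structure. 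The one point that requires care is applying the isomorphism $\rH'_x\cong\T_{R(x)}$ to every $x\in\X^{rss}(F)$ rather than only those in the invertible locus $\X^{invt}$; this amounts to verifying Assumption~\ref{Assumption: contraction} for the contraction $R$, which I expect to reduce via the polynomial relations $AB=BD$, $CA=DC$ and the regular semi-simplicity of $A$ (and hence $D$) to a routine check that $\X^{rss}\subset\X^{invt}$.
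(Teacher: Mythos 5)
Your proof is correct, and it takes a genuinely different route from the paper's. The paper passes to the algebraic closure, diagonalizes $D=R(x)$, and uses a Vandermonde determinant calculation to characterize the $w$ satisfying~\eqref{eqn: span} as those with all nonzero coordinates in the eigenbasis; the torsor and triviality assertions are then read off over $\Fbar$ and implicitly descended. Your argument instead stays over $F$ throughout: you observe that $R(x)$ is regular, so $F_n$ is a cyclic (free rank-one) $F[R(x)]$-module, that~\eqref{eqn: span} is exactly the condition of being a free generator, and that the set of generators is a torsor under $F[R(x)]^\times\cong\T_{R(x)}(F)$. This avoids the implicit descent step of the paper's proof and is arguably cleaner; the paper's approach, on the other hand, makes the eigenvector picture explicit, which is useful bookkeeping in later computations. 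Note that for the ``free rank one'' claim you only need $R(x)$ regular (cyclic-vector theorem); the semi-simplicity and \'etale decomposition into one-dimensional $F_j$-pieces that you invoke, while correct, is not strictly necessary for that step.

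Two small remarks. First, for triviality of $\rH'_{(x,w)}$ you compute $(wR(x)^i)h_2 = wR(x)^i$ and conclude $h_2=I_n$; it would be tidier to note that $h_2$ is then the identity of the ring $F[R(x)]$ acting on its free generator. Second, you correctly flag that the identification $\rH'_x\iso\T_{R(x)}$ for all $x\in\X^{rss}(F)$ rests on Assumption~\ref{Assumption: contraction} for the contraction $R$, i.e.\ on $\X^{rss}\subset\X^{invt}$ and freeness of the $\rH_2$-action there; the containment $\X^{rss}\subset\X^{invt}$ is the standard fact (cited later in the paper from \cite[Lemma~4.3]{JacquetRallis}) that the off-diagonal blocks $B,C$ of a relatively regular semi-simple element lie in $\GL_n$. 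So your acknowledged gap is indeed routine, but for a self-contained argument you would want to either cite that fact or deduce it from the relations $A^2=I+BC$, $D^2=I+CB$, $AB=BD$, $CA=DC$ together with regular semi-simplicity of $A$ (equivalently $D$).
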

\begin{proof}
We may pass to the algebraic closure and use the projection $R$ to reduce to the case of $\GL_n(F)$ acting diagonally on $\fgl_n(F)\times F_{n}$.Set $D=R(x)\in \fgl_n(F)$. 

 The assumptions imply that there is an eigenvalue decomposition
\begin{equation}\label{diagonalize}
  F_n =\bigoplus_{i=1}^nFw_i,
\end{equation}
where $w_iD = \lam_iw_i$ and the eigenvalues are distinct. 
The Vandermonde determinant calculation tells us that if $w= \sum_ia_iw_i$, then \eqref{eqn: span} if and only if $\prod_ia_i\neq0$. But $t = \diag(t_1,\cdots t_n)\T_D(F)$ acts by $w\cdot t = \sum_i t_ia_i w_i$. It follows directly that the set of such $w\in F_n$ is a torsor for $\T_D\subset \GL_n$ and  that $\rH'_{(x,w)}=1$.
\end{proof}

\begin{Def}
We say that $(x,w)\in \X(F)\times F_{n}$ is \textbf{strongly regular} if $x\in \X^{rss}(F)$ and $F_{n}=\sspan\{w, wR(x),\ldots,w R(x)^{n-1}\}$. This locus is denoted $\X_{ext}(F)^{sr}=(\X(F)\times F_n)^{sr}$. We say $(\ga,w)\in \G'(F)\times F_{n}$ is strongly regular if $(s_{\X}(\ga),w)$ is. 
\end{Def}

\quash{It follows from the previous lemma that if $(x,w)$ is strongly regular, then $\rH'_{(x,w)}=1$. Let $p:\X_{ext}(F)\lra \X(F)$ be the projection onto the first factor. For any $x\in \X^{rss}(F)$ define
\[
\mathfrak{F}_x := p^{-1}(x) \cap \X_{ext}(F)^{sr}
\]
 to be the strongly regular fiber over $x$.

\begin{Lem} Suppose that $x\in \X^{rss}(F)$. Then $\mathfrak{F}_x$ is a  
\end{Lem}
\begin{proof}
As before, we pass to the algebraic closure and use the projection $R$ to reduce to the case of $\GL_n(F)$ acting diagonally on $\fgl_n(F)\times F_{n}$. Using the decomposition \eqref{diagonalize}, a Vandermonde determinant calculation tells us that if $w= \sum_ia_iw_i$, then $(x,w)\in\mathfrak{F}_x(F)$ if and only if $\prod_ia_i\neq0$. But this set is clearly a torsor for the torus $\T_D$. 
\end{proof}}

\quash{\subsubsection{Group-theoretic version} For the purpose of defining the transfer factor, let $\G'=\GL_{2n}$, $\rH'=\GL_n\times \GL_n$ as above. Consider the $\rH'\times \rH'$-action on $\G'\times \A_n$ given by
\[
(h_1,h_2)\cdot (g,w) = (h_1gh_2^{-1},w(h_1^{(2)})^{-1}).
\]
Note that we have the quotient map $\G'\times \A_n\lra \X_{ext}$ by $(g,w)\mapsto (s_{\X}(g),w)$. 
 We say $g\in \G'(F)$ is \emph{relatively regular semi-simple} if $s_{\X}(g)\in \X(F)$ is regular semi-simple; we say a pair $(g,w)\in \G'(F)\times F_{n}^{\ast}$ is strongly regular if $(s_{\X}(g),w)$ is. Write
\[
g= \begin{pmatrix}
    a&b\\c&d
\end{pmatrix}.
\]
 \begin{lem}\label{Lem: non-zero determinants in blocks}
     Suppose that $g\in \G'(F)$ is relatively regular semi-simple. Then
     \[
    a,b,c,d\in \GL_n(F).
     \]
 \end{lem}
 \begin{proof}
     Set
\[
g^{-1}= \begin{pmatrix}
   x&y\\z&v
\end{pmatrix},
\]
so that
\[
s_{\X}(g) = g\theta(g)^{-1} = \begin{pmatrix}
   ax-bz&bv-ay\\ cx-dz& dv-cy
\end{pmatrix}=\begin{pmatrix}
   2ax-I&2bv\\2cx&2dv-I
\end{pmatrix}.
\]
Since $s_{\X}(g)\in \X(F)$ is regular semi-simple, $2bv, 2cx\in \GL_n(F)$ so that $b,c\in \GL_n(F)$. On the other hand
\[
bv= -ay,\quad \text{ and }\quad cx= -dz,
\]
so that $a,d\in \GL_n(F)$ as well.
 \end{proof}
}

 \section{Orbital integrals and the fundamental lemmas}\label{Section: orbital integrals}
 In this section, we let $E/F$ be a quadratic extension of local fields and introduce the relevant orbital integrals on $\X=\G'/\rH'$ and $\Q=\G/\rH$. We also introduce the necessary transfer factors, formulate two transfer conjectures (Conjectures \ref{Conj: ii simple endoscopy} and \ref{Conj: smooth transfer}), and state our main local theorems (Theorems \ref{Thm: fundamental lemma si}, \ref{Thm: fundamental lemma ii}, and \ref{Thm: fundamental lemma varepsilon}). We also state the local results toward transfer (Theorems  \ref{Thm: weak smooth transfer} and \ref{Thm: weak endoscopic transfer}).

We set $\eta=\eta_{E/F}$. For any character $\chi:F^\times\to \cc^\times$ and $h=(h^{(1)},h^{(2)})\in \rH'(F)$, we set
\[
\chi(h) = \chi(\det(h^{(1)})\det(h^{(2)})^{-1}).
\]
 \subsection{The linear orbital integrals}\label{section: linear orbital ints}
We begin with the orbital integrals that occur in the geometric expansion of the linear RTF introduced in \cite{LXZ25}. After several intermediate steps, the final formula is given in Lemma/Definition \ref{LemDef: linear OI}.

 For $i=0,1,2$, fix now characters $\eta_i:F^\times \to \cc^\times$ and set $\underline{\eta} = (\eta_0,\eta_1,\eta_2)$.
  For $(y,w)\in \X(F)\times F_n$ strongly regular and $\phi\otimes \Phi\in C_c^\infty(\X(F)\times F_{n})$, we consider the local orbital integral
\begin{equation}\label{eqn: basic OI X}
\Orb^{\underline{\eta}}_{s}(\phi\otimes\Phi,(y,w))=\int_{\rH'(F)}\phi(h^{-1}\cdot y)\Phi(wh^{(2)})\eta_{0}(h^{(2)}) (\eta_{1}\eta_2)(h) |h^{(2)}|^{s_0} |h|^{s_1}\,dh
\end{equation}
where
 $\ul{s}=(s_0,s_1)\in \cc^2$ and $\eta(h_1,h_2) =\eta_{1}(h_1) \eta_{2}(h_2)$.

\subsubsection{Transfer factors} 
Setting
\[
\eta(h):=\eta_{0}(h^{(2)}) (\eta_{1}\eta_2)(h),
\]
note that
\begin{equation*}
 \Orb^{\underline{\eta}}_{\ul{s}}(\phi\otimes\Phi,h\cdot(y,w)) =|h_1^{(2)}|^{s_0}|h_1|^{s_1}\eta(h) \Orb^{\underline{\eta}}_{\ul{s}}(\phi\otimes\Phi,(y,w)).
\end{equation*}


\begin{Def}\label{Def: transfer factor on X}
   Let $(y,w)$ be strongly regular, and  write $y= \begin{pmatrix}
       A&B\\C&D
    \end{pmatrix}\in \X^{rss}(F)$. We define the \textbf{transfer factor} on $\X(F)\times F_{n}$, as
\begin{equation}\label{eqn: transfer factor on X}\omega(y,w) = \eta^n_2(BC)({\eta_1\eta_2})(C){\eta_0}(\det(w|wD|\ldots|wD^{n-1})).
\end{equation} 
\end{Def}
\begin{Rem}\label{Rem: correct transfer factor}
    We have added the factor $\eta^n_2(BC)$ to the transfer factor, which is an invariant sign determined by the orbit of $y\in\X(F)$.  This is necessary to make Theorem \ref{Thm: main local result 2 var} hold (see Theorem \ref{Thm: Xiao FL for d}). 
\end{Rem}

\begin{Lem}
  For any strongly regular pair $(y,w)$, the transfer factor $\omega(y,w)$ is well-defined. For any $\wt{\phi}\in C_c^\infty(\X(F)\times F_{n})$, we have
\[
\omega(h\cdot(y,w))\Orb^{\underline{\eta}}_{s}(\wt{\phi},h\cdot (y,w)) =|h^{(2)}|^{s_0}|h|^{s_1}\omega(y,w)\Orb^{\underline{\eta}}_{s}(\wt{\phi},(y,w))
\]
for any $h\in \rH'(F)$.
\end{Lem}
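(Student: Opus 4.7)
The plan is to prove both claims by direct computation, reducing to the behavior of each factor in $\omega(y,w)$ under the action of $\rH'(F)$.

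First I would verify well-definedness. The factor $\eta_0(\det(w|wD|\cdots|wD^{n-1}))$ is well-defined because the cyclicity condition in the definition of strongly regular gives $\det(w|wD|\cdots|wD^{n-1})\neq 0$. For $\eta_2^n(BC)$ and $(\eta_1\eta_2)(C)$, I need $B$ and $C$ to be invertible, i.e.\ $y\in \X^{invt}(F)$. I would argue $\X^{rss}(F)\subseteq \X^{invt}(F)$ using the polynomial relations $A^2 = I_n+BC$, $AB = BD$, $CA = DC$: a nonzero vector in the kernel of $B$ (respectively $C$) can be used to produce a block-triangular element of $\rH'(F)$ stabilizing $y$ outside the expected torus centralizer, contradicting regular semisimplicity.

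Next I would compute the equivariance. For $h=(h_1,h_2)\in \rH'(F)$, the action \eqref{eqn:action on X} transforms the blocks as $A'=h_1Ah_1^{-1}$, $B'=h_1Bh_2^{-1}$, $C'=h_2Ch_1^{-1}$, $D'=h_2Dh_2^{-1}$, and $w'=wh_2^{-1}$. Therefore $B'C' = h_1(BC)h_1^{-1}$ implies $\eta_2^n(B'C') = \eta_2^n(BC)$, while $(\eta_1\eta_2)(C') = (\eta_1\eta_2)(\det h_2/\det h_1)\cdot(\eta_1\eta_2)(C)$. A short induction shows $w'(D')^k = wD^k h_2^{-1}$ for every $k\geq 0$, so that the cyclicity matrix is right-multiplied by $h_2^{-1}$ (equivalently, left-multiplied by its transpose inverse when viewed columnwise), and hence the third factor of $\omega$ picks up $\eta_0(\det h_2)^{-1}$.

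Collecting and using the convention $\chi(h)=\chi(\det h^{(1)}/\det h^{(2)})$, I obtain $\omega(h\cdot(y,w))=\omega(y,w)\cdot\eta(h)^{-1}$, where $\eta(h)=\eta_0(h^{(2)})(\eta_1\eta_2)(h)$ is exactly the character appearing in the transformation formula for $\Orb^{\underline{\eta}}_s$ stated immediately before the lemma. The two instances of $\eta(h)$ then cancel, yielding
$$\omega(h\cdot(y,w))\Orb^{\underline{\eta}}_s(\wt{\phi},h\cdot(y,w)) = |h^{(2)}|^{s_0}|h|^{s_1}\omega(y,w)\Orb^{\underline{\eta}}_s(\wt{\phi},(y,w)).$$
The main obstacle is the verification $\X^{rss}(F)\subseteq\X^{invt}(F)$; everything else is bookkeeping once this is secured.
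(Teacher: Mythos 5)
Your equivariance computation is correct and matches the paper's: you split $\omega$ into its three multiplicative pieces, track how each transforms under \eqref{eqn:action on X} and $w\mapsto wh_2^{-1}$, and observe that the product reproduces $\eta(h)^{-1}$ with $\eta(h)=\eta_0(h^{(2)})(\eta_1\eta_2)(h)$, which cancels the $\eta(h)$ appearing in the transformation rule for $\Orb^{\underline{\eta}}_s$ recorded just before the lemma. The details ($B'C'=h_1(BC)h_1^{-1}$, $\det C'=\det C\cdot\det h_2/\det h_1$, $w'(D')^k = wD^k h_2^{-1}$) are right.

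The one place you diverge from the paper is the well-definedness step. The paper does not reprove $B,C\in\GL_n(F)$ for $y\in\X^{rss}(F)$; it cites \cite[Lemma 4.3]{JacquetRallis}. You instead sketch a direct argument: a kernel vector of $B$ or $C$ should produce a unipotent (or positive-dimensional extra) element of the stabilizer, contradicting regular semisimplicity. The idea is sound and the statement is true, but as written your argument is only a gesture --- you don't actually construct the stabilizing element, and ``block-triangular element of $\rH'(F)$'' is confusing since $\rH'=\GL_n\times\GL_n$ sits block-diagonally in $\GL_{2n}$ (you presumably mean an element whose $\GL_n$-components are unipotent upper-triangular). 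Making this precise requires using the relations $A^2=I_n+BC$, $AB=BD$, $CA=DC$ to show that $\ker C$ is $A$-stable with $A^2|_{\ker C}=\mathrm{id}$ and then building a nilpotent stabilizer from an eigenvector, which takes some care. Since this is exactly the content of the Jacquet--Rallis lemma the paper quotes, the cleanest fix is to cite it rather than flag it as ``the main obstacle.''
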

\begin{proof} Writing $y= \begin{pmatrix}
       A&B\\C&D
    \end{pmatrix}\in \X^{rss}(F)$, recall (cf. \cite[Lemma 4.3]{JacquetRallis}) that $B,C\in \GL_n(F)$, so that $\eta^n_2(BC)$ and $({\eta_1\eta_2})(C)$ are well defined. Moreover, strong regularity of $(y,w)$ implies 
\[
F_{n}:= \sspan\{w,wD,\ldots,wD^{n-1}\},
\]
where $D=R(y)$, so ${\eta_0}(\det(w|wD|\ldots|wD^{n-1}))$ is also well defined. 

    As noted above, the sign $\eta^n_2(BC)$ is invariant, so we consider the properties of the other two factors. and set
\begin{equation*}
    \omega_{1}(y,w) = ({\eta_1\eta_2})\left(\det(C)\right).
\end{equation*}
It follows from \eqref{eqn:action on X} that 
\[
  \omega_{1}\left(h\cdot (y,w)\right) = ({\eta_1\eta_2})(h_1)^{-1}\omega_{1}(y,w).
\]
Setting
\begin{equation*}
    \omega_{2}(y,w) =\eta_0\left(\det([w|wR(y)|\cdots|wR(y)^{n-1}])\right),
\end{equation*}
it is easy to check that 
\[
\omega_{2,\chi}\left(h\cdot (y,w)\right) = \chi(\det(h^{(2)}))^{-1}\omega_{2,\chi}(y,w).
\]
The equivariance claim follows.
\end{proof}

For any strongly regular pair $(y,w)\in\X(F)\times F_{n}$, let $T_y\subset \rH'$ denote the stabilizer of $y$ in $\rH'$. For any character $\chi:\rH'(F)\to \cc^\times$, we consider the local (abelian) $L$-function $L(s,T_y,\chi)$ on $T_y(F)$ associated to the restriction of this character.  
{For any strongly regular $(y,w)\in \X(F)\times F_{n}$ and $\wt{\phi}\in C^\infty(\X(F)\times F_{n})$, consider the \emph{normalized orbital integral}
\begin{align}\label{eqn: normalized OI two var}
   \Orb_{s_0,s_1}^{\ul{\eta},\natural}(\wt{\phi},(y,w)):=\omega(y,w)\frac{\Orb^{\underline{\eta}}_{s}(\wt{\phi},(y,w))}{L(s_0, T_y,\eta_0)}.
\end{align}
 \begin{LemDef}\label{LemDef: linear OI}
This normalized integral \eqref{eqn: normalized OI two var} converges for $\Re(s_0)>0$, admits a meromorphic continuation to the whole $s_0$-plane, and is holomorphic at $s_0=0$. 
The value at $s_0=0$ is holomorphic in $s_1\in \cc$ and independent of $w$. We therefore write 
\begin{equation}\label{eqn: linear OI final}
\Orb^{\rH',\ul{\eta}}_{s_1}(\widetilde{\phi},y):=\Orb_{0,s_1}^{\ul{\eta},\natural}(\wt{\phi},(y,w)),
\end{equation} for any $w$ such that $(y,w)$ is strongly regular. 
\end{LemDef}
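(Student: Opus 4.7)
The strategy is to unfold the orbital integral using the stabilizer $T:=\rH'_y$ and to recognize the resulting inner integral as a classical Tate zeta integral on the torus $T(F)$; the analytic claims then follow from Tate's thesis applied to induced tori. By Lemma \ref{Lem: centralizers} and the contraction morphism $R:\X\to\fgl_n$ of Lemma \ref{Lem: quotients natural}\eqref{Lem: halfway}, the second projection identifies $T\iso T_{R(y)}\subset\GL_n$, so that $T\simeq \prod_i\Res_{F_i/F}\Gm$ reflects the factorization $F[R(y)]=\prod_i F_i$ of the \'etale $F$-algebra generated by $R(y)$. Disintegrating Haar measure along $\rH'(F)\to T(F)\backslash\rH'(F)$, and noting that for $t\in T(F)$ one has $\det t^{(1)}=\det t^{(2)}$ (from the relation $t^{(1)}=Bt^{(2)}B^{-1}$ in $T_y$), the convention $\chi(h)=\chi(\det h^{(1)}\det (h^{(2)})^{-1})$ of \S\ref{Section: orbital integrals} forces $(\eta_1\eta_2)(t)=1$ and $|t|^{s_1}=1$. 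Accordingly $\Orb_s^{\ul{\eta}}(\wt{\phi},(y,w))$ splits into the outer orbital integral of $\phi\in C_c^\infty(\X(F))$ on the closed orbit of $y$---which is absolutely convergent and entire in $s_1$---multiplied by an inner integral
\begin{equation*}
Z(s_0;h^{(2)})=\int_{T(F)}\Phi(wt^{(2)}h^{(2)})\,\eta_0(t^{(2)})\,|t^{(2)}|^{s_0}\,dt.
\end{equation*}

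Next, by Lemma \ref{Lem: torsor of vectors} the map $t\mapsto wt^{(2)}$ embeds $T(F)$ as an open subset of $F_n$ (the strongly regular fiber over $y$), so the pullback of $\Phi$ is Schwartz--Bruhat on $T(F)$. Under the decomposition $T\simeq \prod_i\Res_{F_i/F}\Gm$, the integral $Z(s_0;h^{(2)})$ factors as a product of one-dimensional Tate local zeta integrals against the characters $\eta_0\circ\Nm_{F_i/F}$, whose product of $L$-factors is $L(s_0,T_y,\eta_0)$ by definition. Tate's thesis then yields absolute convergence for $\Re(s_0)>0$, meromorphic continuation to $s_0\in\cc$, and holomorphy of $Z(s_0;h^{(2)})/L(s_0,T_y,\eta_0)$ at $s_0=0$ with uniform control in $h^{(2)}$; combined with the outer integral, this establishes the first three assertions.

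For independence of $w$ at $s_0=0$: if $w'=wt_0^{(2)}$ with $t_0\in T(F)$ (via the torsor structure of Lemma \ref{Lem: torsor of vectors}), the change of variable $h\mapsto t_0^{-1}h$ gives $\Orb_s^{\ul{\eta}}(\wt{\phi},(y,w'))=\eta_0(t_0^{(2)})^{-1}|t_0^{(2)}|^{-s_0}\Orb_s^{\ul{\eta}}(\wt{\phi},(y,w))$, while from Definition \ref{Def: transfer factor on X} and the commutation $(wt_0^{(2)})R(y)^i=(wR(y)^i)t_0^{(2)}$ one computes $\omega(y,w')=\eta_0(\det t_0^{(2)})\,\omega(y,w)$. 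The two $\eta_0$-factors cancel and $|t_0^{(2)}|^{-s_0}$ becomes $1$ at $s_0=0$, yielding independence. The main technical obstacle will be the precise identification of $Z(s_0;h^{(2)})$ with a product of classical Tate zeta integrals and the matching of $L$-factors uniformly in $h^{(2)}$---ensuring in particular that no spurious poles arise at $s_0=0$; once the structural decomposition of $T$ is secured, all the remaining analytic content is standard.
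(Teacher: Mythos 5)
Your proposal is correct and takes essentially the same route as the paper's proof: unfold the integral over $T_y(F)\backslash\rH'(F)$, decompose $T_y\simeq\prod_i\Res_{F_i/F}\Gm$, identify the inner integral with a product of Tate integrals against the characters $\eta_0\circ\Nm_{F_i/F}$, invoke Tate's thesis for the continuation and regularity at $s_0=0$, and use the torsor structure from Lemma \ref{Lem: torsor of vectors} for $w$-independence. The one point you make explicit that the paper leaves implicit is that the factors $(\eta_1\eta_2)(h)$ and $|h|^{s_1}$ actually descend to $T_y(F)\backslash\rH'(F)$, because $\det t^{(1)}=\det t^{(2)}$ on $T_y$ (from $t^{(1)}=Bt^{(2)}B^{-1}$); this is a necessary step for the disintegration to make sense and your supplying it is a small but genuine improvement in explicitness.
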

\begin{proof}For simplicity, we assume that $\wt\phi = \phi\otimes \Phi$. For $\Re(s_0)$ large enough, we rewrite the ratio in \eqref{eqn: normalized OI two var} as
a transfer factor times\[
  \int_{T_y(F)\backslash\rH'(F)}\phi(h^{-1} \cdot y)\left(\frac{\int_{T_y(F)}\Phi(w t h^{(2)})|th^{(2)}|^{s_0}\eta_0(t)dt}{L(s_0, T_y,\eta_0)}\right)\eta(h)|h|^{s_1} dh,
\] 
where $\T_y\subset \rH'$ is the stabilizer of $y\in \X(F)$. We decompose
\[
\T_y\simeq \prod_i \Res_{F_i/F}(\Gm),
\]
where $F_i/F$ is a finite field extension for each $i\in I$. For each $h_2\in \GL_n(F)$, the inner integral is precisely a product of Tate integrals for the characters $$\eta_0\circ\Nm_{F_i/F}:F_i^\times\lra \{\pm1\}.$$ The analytic continuation of the quotient is thus a consequence of Tate's thesis. In particular, the ratio is holomorphic at $s_0=0$ (the value at $s_0=0$ is described in \cite[Proposition 5.3]{Xiaothesis}).

   Finally,  Lemma \ref{Lem: torsor of vectors} implies that if $w'$ is another vector such that $(y,w')$ is strongly regular, then there exists $t\in T_y(F)$ such that
     \[
     (y,w)\cdot t = (y,w t) = (y,w'). 
     \]
     A simple calculation implies that for $\mathrm{Re}(s_0)>0$,
     \[
     \Orb_{s_0,s_1}^{\ul{\eta},\natural}(\widetilde{\phi},(y,w))=|t|^{-s_0} \Orb_{s_0,s_1}^{\ul{\eta},\natural}(\widetilde{\phi},(y,w')),
     \]
     so that the claim follows after setting $s_0=0$.
     \end{proof}

\subsubsection{The split case} We now suppose that $E=F\times F$ and that $\eta_0=\eta_1=\eta_2=1$. The Tate integral appearing in $\Orb_{0,s_1}^{\ul{\eta},\natural}(\phi\otimes\Phi,(y,w))$ then reduces to
\[
\frac{1}{L(s_0,T_y,1)}\int_{T_y(F)}\Phi(wt h^{(2)})|t|^{s_0}dt.
\]
As above, this admits analytic continuation to $s_0=0$, with corresponding value at $s_0=0$ given by $\Phi(0)$. We thus find that
\begin{equation}\label{eqn: orbital int split}
   \Orb^{\rH',\ul{\eta}}_{s_1}(\phi\otimes\Phi,y)=\Phi(0)   \int_{T_y(F)\backslash\rH'(F)}\phi(h^{-1}\cdot y)|h|^{s_1}dh.
\end{equation}
This obviously is independent of $w\in F_{n}$.

\subsection{The unitary orbital integrals}
Now assume that $\rH\subset \G$ is a unitary symmetric pair and set $\Q=\G/\rH$. For $f\in C^\infty(\Q(F))$, we consider the standard stable orbital integral $\SO^{\rH}(f,-)$ as discussed in \S \ref{Section: orbital integrals conventions}. Due to certain cohomological complexities, we consider the two cases separately.
\subsubsection{The split-inert case}In this case, we must work with several pure inner twists at once. Fix $\tau_1,\tau_2\in \calv_n(E/F)$ and consider the symmetric pair
\begin{align*}
	 \G_{\tau_1,\tau_2} &= \U(V_{\tau_1}\oplus V_{\tau_2}),\\
\rH_{\tau_1,\tau_2} &=\U(V_{\tau_1})\times \U(V_{\tau_2}).
\end{align*}
An elementary Galois cohomology argument shows that there is a disjoint union
\begin{equation}\label{eqn: orbits to sym var}
    \Q_{\tau_1,\tau_2}(F) = \bigsqcup_{(\tau_1',\tau_2')}\G_{\tau_1,\tau_2}(F)/\rH_{\tau_1',\tau_2'}(F),
\end{equation}
where $(\tau_1',\tau_2')$ range over pairs such that there is an isometry $V_{\tau_1}\oplus V_{\tau_2}\simeq V_{\tau_1'}\oplus V_{\tau_2'}$. \quash{Let $\pi_{\tau_1',\tau_2'}:\G_{\tau_1,\tau_2}\to \Q_{\tau_1,\tau_2}$ denote the quotient map with fiber $\rH_{\tau_1',\tau_2'}$.
In particular, the map
\begin{align*}
  \bigoplus_{(\tau'_1,\tau'_2)}C_c^\infty(\G_{\tau_1,\tau_2}({F}))&\lra C^\infty_c(\Q_{\tau_1,\tau_2}({F}))\\
  (f^{\tau_1',\tau_2'})_{(\tau'_1,\tau'_2)}&\longmapsto \sum_{(\tau_1',\tau_2')}\pi_{\tau_1',\tau_2',!}(f^{\tau_1',\tau_2'})
\end{align*}
 is surjective. }

 Note that the categorical quotients of these pure inner twists are all canonically identified, so we set $[\Q_{si}\sslash\rH]$ for this common quotient. The next lemma bounds how many inner twists may be considered at once without redundancy on the categorical quotient.
\begin{Lem}\label{Lem: surjective quotient si} Let $E/F$ be a quadratic extension of fields (local or global).  Fix $\tau_2\in \calv_n(E/F)$. \quash{For any $\tau_1\in \calv_n(E/F)$, let $\G^{rrs}_{\tau_1,\tau_2}$ be the points which map under $\pi_{\tau_1,\tau_2}$ to the $\rH_{\tau_1,\tau_2}$-regular semi-simple locus of $\Q_{\tau_1,\tau_2}$.  There is a bijection
    \[
    \bigsqcup_{\tau_1\in \calv_n(E/F)}\bigsqcup_{(\tau_1',\tau_2')}\rH_{\tau_1,\tau_2}(F)\bs\G^{rrs}_{\tau_1,\tau_2}(F)/\rH_{\tau_1',\tau_2'}(F)=\bigsqcup_{\tau_1\in \calv_n(E/F)}\rH_{\tau_1,\tau_2}(F)\bs\Q^{rss}_{\tau_1,\tau_2}(F),
    \]
    where $(\tau_1',\tau_2')$ range over pairs such that there is an isometry $V_{\tau_1}\oplus V_{\tau_2}\simeq V_{\tau_1'}\oplus V_{\tau_2'}$.
Moreover,}
The invariant map $\car_{si}$ from Lemma \ref{Lem: cat quotient si} induces a map
\begin{equation}\label{eqn: regular orbits on the base}
    \car_{si}:\bigsqcup_{\tau_1\in \calv_n(E/F)}\Q^{rss}_{\tau_1,\tau_2}(F)\lra[\Q_{si}\sslash\rH]^{rss}(F),
\end{equation}
satisfying that if $\tau_1\neq \tau_1'$, then 
$
 \car_{si}\left(\Q^{rss}_{\tau_1,\tau_2}(F)\right)\cap \car_{si}\left(\Q^{rss}_{\tau_1',\tau_2}(F)\right)=\emptyset.
$
This map is surjective when $\tau_2$ is split.
\end{Lem}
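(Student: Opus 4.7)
The plan is to prove both claims using the structural results of Lemma \ref{Lem: cat quotient si}. The common principle is that a Hermitian-form discriminant class can be extracted as an $\rH$-invariant function of $x$, which in turn depends on $\tau_1$.

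For the disjointness claim, I take $x = \begin{pmatrix} a & b \\ -b^{\sharp_{12}} & d \end{pmatrix} \in \Q_{\tau_1,\tau_2}(F)^{rss}$ and first verify that regular semisimplicity forces $x \in \Q_{\tau_1,\tau_2}^{invt}(F)$, so that $\det(b) \neq 0$ (indeed $\det(I_n - a^2) \neq 0$ for regular semisimple $a$, and $a^2 = I_n - bb^{\sharp_{12}}$). Using the identity $b^{\sharp_{12}} = \tau_2 b^\ast \tau_1^{-1}$, I then compute
\[
\det(I_n - a^2) = \det(bb^{\sharp_{12}}) = \Nm_{E/F}(\det(b)) \cdot \det(\tau_2)\det(\tau_1)^{-1}.
\]
Since $\det(I_n - a^2)$ is a polynomial in the coefficients of $\car_a(t) = \car_{si}(x)$, its class in $F^\times / \Nm_{E/F}(E^\times)$ is determined by $\car_{si}(x)$. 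Because $n$-dimensional $E/F$-Hermitian forms are classified (up to isomorphism) by their discriminant modulo norms, the class of $\det(\tau_1)$ is thus determined by $\car_{si}(x)$ once $\tau_2$ is fixed, giving the disjointness.

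For surjectivity when $\tau_2$ is split, I proceed in two steps. Given $p \in [\Q_{si}\sslash\rH]^{rss}(F)$, I first produce $d \in \Herm_{\tau_2}^\circ(F)$, regular semisimple, with $\car_d(t) = p$ and $\det(I_n - d^2) \neq 0$. This uses the fact that $\U(V_{\tau_2})$ is quasi-split, so the natural map $\Herm_{\tau_2}^\circ(F)^{rss} \to [\Herm_{\tau_2}^\circ \sslash \U(V_{\tau_2})]^{rss}(F)$ is surjective. Second, I lift $d$ to some rational $x$ via the $\U(V_{\tau_1})$-torsor $R|_{\Q_{\tau_1,\tau_2}^{invt}}$ of Lemma \ref{Lem: cat quotient si}(\ref{Lem: centralizer contraction}). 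The obstruction to a rational lift lies in $H^1(F, \U(V_{\tau_1}))$, and under the abelianization map $H^1(F,\U(V_{\tau_1})) \to H^1_{\ab}(F, \U(V_{\tau_1})) \cong F^\times / \Nm_{E/F}(E^\times)$ it is precisely the same discriminant class computed in the disjointness step, namely $\det(I_n-d^2)\det(\tau_2)^{-1}\det(\tau_1) \bmod \Nm_{E/F}(E^\times)$. Selecting $\tau_1 \in \calv_n(E/F)$ with $\det(\tau_1) \equiv \det(\tau_2) \det(I_n - d^2)^{-1} \pmod{\Nm_{E/F}(E^\times)}$ kills the obstruction and yields the required $x$.

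The principal obstacle is identifying the obstruction class of the torsor $R|_{\Q_{\tau_1,\tau_2}^{invt}}$ explicitly with $\det(I_n - d^2)\det(\tau_2)^{-1}\det(\tau_1)$ modulo norms. While Lemma \ref{Lem: cat quotient si}(\ref{Lem: centralizer contraction}) records non-triviality, matching the class with this explicit discriminant requires a direct Galois-cocycle computation: choosing $x \in \Q_{\tau_1,\tau_2}^{invt}(\overline{F})$ over $d$ and analyzing $\sigma \mapsto x^{-1}\sigma(x)$ via the block entries. A secondary technical point is the surjectivity of the characteristic polynomial map on $\Herm_{\tau_2}^\circ(F)^{rss}$ when $V_{\tau_2}$ is quasi-split, which can be proved directly by constructing $d$ in a companion-matrix-like form using an isotropic flag in $V_{\tau_2}$.
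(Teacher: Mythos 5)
Your approach follows the paper's in broad outline: reduce through the contraction $R$ to $\Herm_{\tau_2}$, exploit quasi-splitness of $\U(V_{\tau_2})$ to produce a rational representative of a given invariant, and then analyze the $\U(V_{\tau_1})$-torsor $R|_{\Q^{invt}}$ to lift it. Where you diverge is in taking an explicitly computational route through the discriminant. The paper's own proof for the disjointness statement simply cites the decomposition
\[
\Herm_{\tau_2}(F)^{invt}=\bigsqcup_{\tau_1'}U(V_{\tau_1'})\backslash\Q_{\tau_1',\tau_2}^{invt}(F)
\]
that the torsor structure from Lemma \ref{Lem: cat quotient si}(\ref{Lem: centralizer contraction}) furnishes, together with $\Q^{rss}\subset\Q^{invt}$; and for surjectivity it invokes the result of Kottwitz that for the quasi-split $\U(V_{\tau_2})$ each rational invariant is hit by a rational $d\in\Herm_{\tau_2}(F)$, after which the above decomposition places $d$ in some $\Q^{invt}_{\tau_1',\tau_2}(F)$ with no further work.

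The gap in your argument is the step ``$n$-dimensional $E/F$-Hermitian forms are classified (up to isomorphism) by their discriminant modulo norms.'' This is true when $F$ is non-archimedean local, but it fails for $F=\rr$: Hermitian forms over $\cc/\rr$ are classified by signature, and signatures $(p,q)$ and $(q,p)$ with $p\neq q$ have the same discriminant class in $\rr^\times/\Nm(\cc^\times)$. It therefore also fails for number fields with real places — and the lemma's hypotheses include both. Your discriminant identity $\det(I_n-a^2)=\Nm_{E/F}(\det b)\det(\tau_2)\det(\tau_1)^{-1}$ is a correct and useful computation, but it only pins down the abelianized invariant of $\tau_1$, not $\tau_1$ itself. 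The same issue resurfaces in your surjectivity argument: you kill the image of the torsor class in $H^1_{\mathrm{ab}}(F,\U(V_{\tau_1}))\cong F^\times/\Nm_{E/F}(E^\times)$ and conclude the torsor is split, but the abelianization map $H^1(F,\U_n)\to H^1_{\mathrm{ab}}(F,\U_n)$ is only injective for non-archimedean $F$. The ``principal obstacle'' you flag at the end (identifying the torsor class explicitly with the discriminant) is therefore not the real issue — that identification is fine and is the content of the proof of Lemma \ref{Lem: cat quotient si}(\ref{Lem: centralizer contraction}). The real issue is the passage back from the discriminant to the isomorphism class of the form, which requires either restricting to the non-archimedean case or working directly with the full cohomology set $H^1(F,\U(V_{\tau_1}))$ via the torsor decomposition as the paper does.
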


\begin{proof}
Fix $\tau_1,\tau_2\in \calv_n(E/F)$ and set $\rH=\rH_{\tau_1,\tau_2}$ and $\Q=\G/\rH$.  The contraction map $R: Q\to \Herm_{\tau_2}$ induces an isomorphism
    \[
    {\Q}\sslash\rH\simeq \Herm_{\tau_2}\sslash \U(V_{\tau_2});
    \]
when restricted to $\Q^{invt}$, this induces a bijection (cf. Lemma \ref{Lem: cat quotient si})
\[
\Herm_{\tau_2}(F)^{invt}=\bigsqcup_{\tau_1'}U(V_{\tau_1'})\backslash\Q_{\tau_1',\tau_2}^{invt}(F),
\]
where $\tau_1'\in \calv_n(E/F)$. Since $\Q^{rss}\subset \Q^{invt}$, this gives the second claim. Up to multiplication by a trace-zero element of $F$, we note that $\Herm_{\tau_2}(F)$ is isomorphic to $\Lie(\U(V_{\tau_2}))$. It is classical (e.g. \cite{Kottwitzrational}) that when $\tau_2$ is split (so that $\U(V_{\tau_2})$ is quasi-split) each $F$-rational $\U(V_{\tau_2})$-orbit on $\Herm_{\tau_2}$ has a rational element, so the claim follows for the full invertible locus which contains the regular semi-simple locus.
\end{proof}
With this in mind, the notion of transfer defined in \S \ref{Section: defn of transfer} below compares $C^\infty_c(\X(F)\times F_{n})$ to  collections
    \[
    \underline{\phi}=(\phi_{\tau_1})\in \bigoplus_{\tau_1\in \calv_n(E/F)}C_c^\infty(\Q_{\tau_1,\tau_2}(F)).
    \]
\subsubsection{The inert-inert case}\label{Section: unitary OI ii}We assume that $V_{2n} = L\oplus L^\ast$ is a split Hermitian space with polarization $L\oplus L^\ast$. Then we have
\[
\G= \U(V_{2n})\:\text{ and }\: \rH = \Res_{E/F}(\GL(L)).
\]
Set $\Q_{ii}:=\G/\rH$. The following lemma implies there is no need to deal with inner forms in this case.
 \begin{Lem}\label{Lem: surjective quotient ii}
     The invariant map $\car_{ii}:\Q_{ii}(F)\to [\Q_{ii}\sslash\rH](F)$ is surjective on regular semi-simple orbits.
 \end{Lem}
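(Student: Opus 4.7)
The strategy parallels Lemma \ref{Lem: surjective quotient si}: we use the hypothesis that $V_{2n}$ is split (equivalently, $\G=\U(V_{2n})$ is quasi-split) to construct an explicit $F$-rational lift of each regular semi-simple point of the invariant quotient. The first observation is that $\Q_{ii}^{rss}\subset \Q_{ii}^{invt}$: regular semi-simplicity of $x=\begin{psmatrix}a&b\\c&a^\sharp\end{psmatrix}$ forces $p=\car_a$ to be separable with $p(\pm1)\neq 0$, so that $I-a^2=bc$ is invertible and hence $b,c$ are invertible. Consequently, a lift is determined by a pair $(a,b)$ with $a\in\fgl_n(E)$, $b\in\Herm(V)^{\circ}$ satisfying the single intertwining relation $ab=ba^\sharp$, the remaining block being $c=b^{-1}(I-a^2)$.

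Given $y\in[\Q_{ii}\sslash\rH]^{rss}(F)$ corresponding via Lemma \ref{Lem: cat quotient ii} to a separable $p\in F[t]$ with $p(\pm1)\neq 0$, I would take $a\in\fgl_n(F)\subset \fgl_n(E)$ to be the companion matrix of $p$, so $\car_a=p$ and $a^\sigma=a$. The intertwining space
\[
V_a:=\{b\in\fgl_n(E)\,:\,ab=ba^\sharp\}
\]
is a free rank-one module over the étale $E$-algebra $E[a]$, since $a$ and $a^\sharp=\tau'\tau_n\,^{t}a\,\tau_n^{-1}(\tau')^{-1}$ are regular semi-simple with the same characteristic polynomial. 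One checks that the involution $b\mapsto b^\sharp=\tau' b^\ast(\tau')^{-1}$ restricts to a $\sigma$-semilinear involution on $V_a$ whose fixed subspace $V_a^{\sharp=1}=V_a\cap\Herm(V)$ is a free rank-one module over the étale $F$-algebra $F[a]$.

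To produce an invertible Hermitian element of $V_a^{\sharp=1}$ over $F$, I would use the Hankel matrix $H=(\mathrm{tr}(a^{i+j-2}))_{1\leq i,j\leq n}\in\fgl_n(F)$, which is symmetric, invertible (since $p$ is separable), and satisfies $aH=H\,^{t}a$. A direct computation then shows that $b:=H\tau_n^{-1}(\tau')^{-1}$ lies in $V_a\cap \Herm(V)(F)$ and is invertible, yielding $F$-rational Hermitian solutions; since invertibility is Zariski open, the set of such $b$ is Zariski dense in $V_a^{\sharp=1}$, giving many $F$-rational choices.

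Finally, setting $c:=b^{-1}(I-a^2)$, I would verify by direct calculation that $c\in\Herm(V)(F)$, using symmetry of $H$ and the Hermitian property of $\tau'$, so that $x=\begin{psmatrix}a&b\\c&a^\sharp\end{psmatrix}\in\Q_{ii}(F)$ is the desired lift. The main obstacle is this last verification, namely confirming that the matrix $c$ produced from the intertwining recipe is automatically Hermitian. Alternatively, one may bypass the explicit construction by the cohomological observation that $H^1(F,\rH)=H^1(F,\Res_{E/F}\GL_n)=H^1(E,\GL_n)=1$ by Shapiro and Hilbert~90, so any geometric representative of a Galois-stable orbit admits an $F$-rational twist; but verifying that the stable orbit over $y$ is actually non-empty over $F$ still requires the quasi-splitness of $\G$, which (as in Lemma \ref{Lem: surjective quotient si}) is encoded precisely in the hypothesis that $V_{2n}$ is split.
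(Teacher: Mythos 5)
Your proposal is essentially correct but follows a genuinely different route from the paper. The paper's proof is shorter and delegates the hard rationality question to a known result: it uses the contraction $R(x)=a$ to note that $\car_a(t)\in F[t]$ determines a regular semi-simple \emph{stable} $\U_n$-orbit on $\Herm_n^\circ$ (viewed as a Zariski-open in $\Lie(\U_n)$), then invokes quasi-splitness of $\U_n$ (Kottwitz) to obtain a rational representative $A\in\Herm_n^\circ(F)$, and finally writes down the explicit lift $\begin{psmatrix}A&I_n-A^2\\I_n&A\end{psmatrix}\in\Q(F)$, which visibly satisfies all the block relations with $a=a^\sharp=A$, $b=I_n-A^2$, $c=I_n$. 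Your Hankel-matrix construction is a self-contained, hands-on alternative that avoids the appeal to Kottwitz's theorem; what that costs you is a longer verification of the intertwining and Hermitian conditions. Note, however, that you misidentify the difficulty: the Hermitian property of $c:=b^{-1}(I-a^2)$ is not an obstacle at all. Once $b^\sharp=b$ and $ab=ba^\sharp$ hold, one computes $a^2b=a(ba^\sharp)=(ab)a^\sharp=b(a^\sharp)^2$, which is exactly the identity $b(I-(a^\sharp)^2)=(I-a^2)b$ needed to conclude $c^\sharp=c$; this is a one-line check. The genuine work in your approach is showing $b=H\tau_n^{-1}(\tau')^{-1}$ lands in $V_a^{\sharp=1}$ and is invertible, which you indicate but do not fully carry out. (For the record it does: using $\tau'^\ast=\tau'$, $\tau_n^\ast=\tau_n$, $H={}^tH=H^\sigma$, and $aH=H\,{}^ta$, one gets $\tau'b^\ast=H\tau_n^{-1}=b\tau'$ and $ab=ba^\sharp$ directly.) Your final cohomological remark is correct but, as you yourself note, does not bypass the need for a rationality input, so it does not simplify matters relative to either proof.
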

 \begin{proof}
  We make use of the notation from \S \ref{Section: orbits ii}. We have an $\rH$-equivariant map $R: \Q\to \End(L)=\Res_{E/F}(\fgl_{n})$, which intertwines with the $\rH$-conjugation action on $\End(L)$. Inspecting the characteristic polynomials (cf. Lemma \ref{Lem: cat quotient ii}) if $x\in \Q^{rss}(F)$, then $R(x) = a\in \End(L)^{rss}(F)$. Additionally, there exist $b,c\in \Herm_{n}^{\circ}(F)$ such that
    \[
    a^\sharp = b^{-1}ab = cac^{-1}, \quad a^2 = I_n-bc.
    \]
    In particular, the coefficients of $\car_a(t)$ lie in $F$. On the other hand, this implies that there exists an element $A\in \Herm_{n}^{\circ}(F)$ in the same $\GL_n(F)$-orbit as $a$; indeed, the invariant polynomial $\car_a(t)$ represents a regular semi-simple stable $\U_n$-orbit in $\Herm_{n}^{\circ}\subset \Res_{E/F}(\fgl_n)$, which contains a rational element as $\U_n$ is quasi-split and $\Herm_{n}^{\circ}$ may be identified as a Zariski-open subvariety of $\Lie(\U_n)$. It is now easy to check that 
    \[
    \left(\begin{array}{cc}A&I_n-A^2\\ I_n&A\end{array}\right)\in \Q(F)\mapsto \car_a(t),
    \]
    proving the claimed surjectivity.
 \end{proof}
\subsubsection{Endoscopic comparison}\label{Section: endoscopic}
While only stable orbital integrals of functions $f\in C^\infty_c(\Q_{ii}(F))$ are compared with orbital integrals on $\X(F)$, the global trace formula on $\Q_{ii}$ forces us to consider certain $\ka$-orbital integrals as well. We now considered the required endoscopic comparison.

Consider a regular semi-simple element $x\in \Q_{ii}(F)$. Fix representatives $x_\al\in \calo_{st}(x)$ where $[\inv(x,x_\al)]=\al\in H^1(F,\rH_x)$ and write
  \[
  x_\al = \left(\begin{array}{cc}
      a_\al& b_\al\\c_\al&a_\al^\ast
  \end{array}\right)\in \Q_{ii}^{rss}(F).
  \]
 As in \S \ref{Section: unitary invariant theory}, we know that $b_\al,c_\al\in \Herm_{n}^{\circ}(F)$. Note that if $x= \begin{psmatrix}
      a& b\\c&a^\ast
  \end{psmatrix}$, there is a natural inclusion $\rH_x\subset \U(V_{b})\cap \U(V_c)$.
  
  \begin{Lem}\label{Lem: building special kappa} Set
  \[
  \ker^1_{\mathrm{ab}}(\rH_x,\U(V_b);F)=\ker[H^1(F,\rH_x)\lra H^1_{\mathrm{ab}}(F,\U(V_b))],
  \]
  where $H^1_{\mathrm{ab}}(F,\U(V_0))$ is the abelianized cohomology group. Then
  \[
  \al\in \ker^1_{\mathrm{ab}}(\rH_x,\U(V_b);F)\iff \eta(\det(b_\al)) = \eta(\det(b)).
  \]
  \end{Lem}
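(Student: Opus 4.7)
My plan is to identify $H^1_{\mathrm{ab}}(F,\U(V_b))$ explicitly with $F^\times/\Nm_{E/F}(E^\times)$, on which $\eta$ separates the two classes, and then to compute the image of $\al$ under this identification as the class of $\det(b_\al)\det(b)^{-1}$.

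The first step will be to combine the short exact sequence
\[
1\lra \SU(V_b)\lra \U(V_b)\xrightarrow{\det}\U_1\lra 1,
\]
using that $\SU(V_b)$ is simply connected and hence has trivial abelianized cohomology, with the exact sequence
\[
1\lra \U_1\lra \Res_{E/F}\Gm\xrightarrow{\Nm}\Gm\lra 1
\]
and Hilbert 90 (via Shapiro's lemma) for $\Res_{E/F}\Gm$ to obtain canonical isomorphisms
\[
H^1_{\mathrm{ab}}(F,\U(V_b))\simeq H^1(F,\U_1)\simeq F^\times/\Nm_{E/F}(E^\times).
\]
Under this identification, the character $\eta$ detects the non-trivial class.

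The second step is to produce an explicit cocycle for the image of $\al$. Choose $h\in \rH(\Fbar)$ with $h\cdot x=x_\al$, so that $\al_\sig=h^{-1}\sig(h)\in \rH_x(\Fbar)$, and write $h=\begin{psmatrix}h_1&\\&(h_1^\sharp)^{-1}\end{psmatrix}$ in the form of \S\ref{Section: orbits ii}. A direct matrix computation shows that conjugation by $h$ acts on the top-right block of $x$ by $b\mapsto h_1 b h_1^\sharp$, so $h_1 b h_1^\sharp=b_\al$; moreover, the natural inclusion $\rH_x\hookrightarrow \U(V_b)$ from the lemma's statement is precisely the projection $h\mapsto h_1$. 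Taking determinants in $h_1 b h_1^\sharp = b_\al$, and using $\det(h_1^\sharp)=\sig(\det(h_1))$ (which follows from $h_1^\sharp=\tau' h_1^\ast {\tau'}^{-1}$ together with $\det(h_1^\ast) = \sig(\det(h_1))$), yields
\[
\Nm_{E/F}(\det(h_1)) = \det(h_1)\sig(\det(h_1))= \det(b_\al)\det(b)^{-1}\in F^\times.
\]
A standard computation with the connecting homomorphism $\de:F^\times\to H^1(F,\U_1)$ then identifies the class of the $\det$-pushforward cocycle $\sig\mapsto \det(h_1)^{-1}\sig(\det(h_1))$ with $\Nm_{E/F}(\det(h_1))$ modulo norms. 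Since this class is trivial iff $\det(b_\al)\det(b)^{-1}\in \Nm_{E/F}(E^\times)$, iff $\eta(\det(b_\al))=\eta(\det(b))$, the lemma will follow.

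The main subtlety I anticipate is the bookkeeping around the natural inclusion $\rH_x\hookrightarrow \U(V_b)$: one must verify that it is really realized by $h\mapsto h_1$ under the block decomposition of \S\ref{Section: orbits ii}, and reconcile the various occurrences of the involutions $\sharp$, $\ast$ and of the Hermitian element $\tau'$ so that the identifications above are consistent (any twists by $\tau'$ are innocuous after applying $\det$, since $\det(\tau')$ cancels in the ratio $\det(b_\al)\det(b)^{-1}$). Once this is in place, the remainder of the argument is a direct application of abelianized Galois cohomology for tori and Hilbert 90.
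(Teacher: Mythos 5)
Your overall strategy is a genuinely different route from the paper's. You identify $H^1_{\mathrm{ab}}(F,\U(V_b))$ with $F^\times/\Nm_{E/F}(E^\times)$ via $\det\colon\U(V_b)\to\U_1$, the simple-connectedness of $\SU(V_b)$, and Hilbert 90, and then push the cocycle of $\al$ forward. The paper instead passes through $H^1(F,\U(V_b))$ interpreted as the set of $\GL_n(E)$-orbits on $\Herm_n^\circ(F)$ (using the exact sequence for $g\mapsto g b g^\ast$), and then invokes the classification of local Hermitian forms by their discriminant, splitting off $F=\rr$ as a separate case; your route handles the archimedean and non-archimedean cases uniformly.

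However, there is a genuine gap in the explicit cocycle computation. You write
$h=\begin{psmatrix}h_1&\\&(h_1^\sharp)^{-1}\end{psmatrix}$
for $h\in\rH(\Fbar)$, but this block form with a \emph{single} matrix $h_1$ and the constraint on the lower-right block is the description of $\rH(F)\simeq\GL_n(E)$ from \S\ref{Section: orbits ii}. Over $\Fbar$ (or any splitting extension) the Weil restriction $\rH\simeq\Res_{E/F}\GL_n$ decomposes as $\GL_n\times\GL_n$, so $h$ is an \emph{independent} pair $\diag(h_1,h_2)$ and the relation $h_2=(h_1^\sharp)^{-1}$ is not available. Consequently the identity $h_1 b h_1^\sharp=b_\al$ does not follow; what one actually gets from $x_\al=h\ast x$ together with the rationality of $x$ and $x_\al$ is the pair of relations $b_\al=h_1bh_2^{-1}=(h_2^\ast)^{-1}bh_1^\ast$. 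From these one deduces $\det(b_\al)\det(b)^{-1}=\det(h_1h_2^{-1})\in F^\times$ and, more subtly, that this class agrees with $\det(h_2^\ast h_1)\bmod\Nm_{E/F}(E^\times)$; this extra argument (carried out in the proof of the subsequent lemma in the paper) is exactly what replaces your unjustified step $\Nm_{E/F}(\det h_1)=\det(b_\al)\det(b)^{-1}$. Once you repair the cocycle bookkeeping to use the independent pair $(h_1,h_2)$ and derive the two forms of $b_\al$, the remainder of your argument via $\U_1$ and the connecting homomorphism goes through and yields the lemma.
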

  \begin{proof} For any $\al$, we may find by assumption $h\in  \rH(E)$ such that
   \[
  x_\al = h\ast x \implies b_\al = h b h^\ast.
  \]
  Recall $H^1(F,\rH_x) = H^1(\Gal(E/F), \rH_x(E))$, the class $\al$ is represented by the $1$-cocycle $\sig\mapsto h^\ast h$.  
 On the other hand, consider the long exact sequence in Galois cohomology
\[
1\lra U(V_b)\lra\GL(V)\overset{-\ast b}{\lra} \Herm_{n}^{\circ}(F)\lra H^1(F,\U(V_b))\lra 1,
\]
where the image of $h b h^\ast\in \Herm_{n}^{\circ}(F)$ is the same $1$-cocycle $\sig\mapsto h^\ast h$. It follows that $\al\in\ker^1(\rH_x,\U(V_b);F)$ if and only if $b_\al\in\Herm_{n}^{\circ}(F)_b= \GL_n(E)\ast b$. When $F$ is non-archimedean, this holds if and only if 
\[
\det(b_\al)\equiv \det(b)\pmod{\Nm_{E/F}(E^\times)},
\]
as claimed. It only remains to consider the case $F=\rr$, where the abelianization map
\[
\mathrm{ab}^1:H^1(F,\U(V_b))\to H^1_{\mathrm{ab}}(F,\U(V_b))\simeq \rr^\times/\Nm(\cc^\times),
\]
is given by the sign of the determinant.
  \end{proof}

\begin{Lem}
    For any $x\in \Q_{ii}^{rss}(F)$, the map
\begin{align}\label{eqn: special kappa}
\varepsilon: H^1(F,\rH_x)&\lra \cc^\times\\
\al&\longmapsto\eta(\det(b_\al)\det(b))^{-1}\nonumber
\end{align}
 gives a non-trivial character $\varepsilon\in H^1(F,\rH_x)^D$. In particular, when $\rH_x$ is a simple elliptic torus, then $H^1(F,\rH_x)^D= \{1,\varepsilon\}$.
 \end{Lem}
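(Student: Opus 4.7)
The plan is to interpret $\varepsilon$ as a composition of functorial Galois cohomology maps and then verify its non-triviality via local class field theory. Since $\rH_x$ stabilizes $b$ under the $\ast$-action, there is a natural inclusion $\phi\colon \rH_x \hookrightarrow \U(V_b)$. By Borovoi's theory of abelianized cohomology, this yields a group homomorphism
\[
\phi_\ast^{\mathrm{ab}}\colon H^1(F, \rH_x) \lra H^1_{\mathrm{ab}}(F, \U(V_b)) \iso F^\times/\Nm_{E/F}(E^\times),
\]
where the second identification follows from the long exact sequence attached to $1 \to \U(V_b) \to \Res_{E/F}\GL(V) \xrightarrow{-\ast b}\Herm_n^\circ \to 1$ together with Hilbert 90, realized concretely by the discriminant $b_\al \mapsto \det(b_\al)\det(b)^{-1}$. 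Lemma \ref{Lem: building special kappa} then shows that $\varepsilon = \eta \circ \phi_\ast^{\mathrm{ab}}$ as $\{\pm 1\}$-valued maps on $H^1(F, \rH_x)$, so $\varepsilon$ is a character.

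For non-triviality, I would first handle the case where $\rH_x$ is simple elliptic. By Lemma \ref{Lem: centralizers}, we have $\rH_x \iso \Res_{L/F}(\U_1^{EL/L})$ with $L:=F[x]$ a field and $E[x]=EL$, and ellipticity forces $L \cap E = F$, so in particular $L\neq E$. Via the regular representation of $EL$ on itself over $E$, the composition $\rH_x \hookrightarrow \U(V_b) \xrightarrow{\det} U_1^{E/F}$ is the map $z \mapsto \Nm_{EL/E}(z)$. By Shapiro's lemma and the functoriality of local class field theory, the induced map on $H^1$ identifies with
\[
\Nm_{L/F}\colon L^\times/\Nm_{EL/L}(EL^\times) \lra F^\times/\Nm_{E/F}(E^\times),
\]
which under local CFT is Pontryagin-dual to the restriction map $\Gal(EL/L) \to \Gal(E/F)$. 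The latter is an isomorphism since $L \cap E = F$, hence $\phi_\ast^{\mathrm{ab}}$ is an isomorphism and $\varepsilon$ is non-trivial. For general $\rH_x$, the analogous argument applies after decomposing $\rH_x$ into its simple factors via Lemma \ref{Lem: centralizers} and combining the CFT computations; carefully tracking this decomposition, especially when some simple factors of $\rH_x$ are not elliptic, is the main obstacle of the proof.

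Finally, when $\rH_x$ is simple elliptic, Shapiro's lemma and local class field theory give
\[
H^1(F, \rH_x) \iso L^\times/\Nm_{EL/L}(EL^\times) \iso \zz/2,
\]
so $|H^1(F, \rH_x)^D| = 2$ and the non-triviality of $\varepsilon$ forces $H^1(F, \rH_x)^D = \{1, \varepsilon\}$.
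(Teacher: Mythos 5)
Your argument for the character property takes a genuinely different route from the paper's. The paper works directly with cocycles: writing $x_\al=h\ast x$ with $h=\diag(h_1,h_2)\in\rH(E)$, it computes $\sig\mapsto h^\ast h = \begin{psmatrix} h_2^\ast h_1&\\&h_1^\ast h_2\end{psmatrix}$, deduces from $x_\al\in\Q(F)$ that $\det(h_2^\ast h_1)\in F^\times$ and $\det(b_\al)\det(b)^{-1}\equiv\det(h_2^\ast h_1)\pmod{\Nm_{E/F}(E^\times)}$, and concludes that $\varepsilon(\al)=\eta(\det(h_2^\ast h_1))$ is manifestly a character. You instead use Lemma \ref{Lem: building special kappa} to match the level set $\varepsilon^{-1}(1)$ with $\ker^1_{\mathrm{ab}}(\rH_x,\U(V_b);F)$, identify $\varepsilon$ with $\eta\circ\phi_\ast^{\mathrm{ab}}$ as $\{\pm1\}$-valued maps, and inherit the homomorphism property from the functoriality of Borovoi's abelianization. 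Both are valid; yours is cleaner conceptually but leans on \ref{Lem: building special kappa} and abelianized-cohomology machinery that the paper's explicit computation avoids. For the simple elliptic case you also give a genuine CFT computation of $|H^1(F,\rH_x)|=2$ via Shapiro and the norm map $L^\times/\Nm_{EL/L}(EL^\times)\to F^\times/\Nm_{E/F}(E^\times)$ dual to the restriction $\Gal(EL/L)\to\Gal(E/F)$, which is an isomorphism since $L\cap E=F$ --- this actually supplies a non-triviality argument that the paper omits (the paper simply asserts non-triviality when invoking $|H^1(F,\rH_x)|=2$).

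The one genuine gap is the one you flag yourself: non-triviality of $\varepsilon$ is claimed for \emph{every} $x\in\Q_{ii}^{rss}(F)$, and your proof only settles it when $\rH_x$ is a simple elliptic torus. You correctly identify that the general case requires decomposing $\rH_x$ via Lemma \ref{Lem: centralizers}, but "is the main obstacle of the proof" is not a proof. To close it, note that $\rH_x\simeq\prod_{i\in S_1}\Res_{F_i/F}(\U_1^{E_i/F_i})\times\prod_{i\in S_2}\Res_{F_i/F}(\Gm)$, that the $S_2$ factors contribute nothing to $H^1$ by Hilbert 90, and that on each $S_1$ factor the same Shapiro/CFT argument shows $\eta\circ\phi_\ast^{\mathrm{ab}}$ is the non-trivial character of its $\zz/2$-worth of cohomology; then $\varepsilon$ is non-trivial provided $S_1\neq\emptyset$. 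Either argue that $S_1\neq\emptyset$ always holds for $x\in\Q_{ii}^{rss}(F)$ or note the implicit restriction. As written the gap is real, though the paper's own proof does not address general non-triviality either.
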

 \begin{proof}
For any $\al$, we may find $h = \diag(h_1,h_2)\in  \rH(E)$ such that
   \[
  x_\al = h\ast x,
  \]
  so that the cocycle $\sig\mapsto h^\ast h$ represents $\al\in H^1(F,\rH_x) = H^1(\Gal(E/F), \rH_x(E))$. Here $h_1,h_2\in \GL_n(\Fbar)$, and cocycle is of the form
\[
\sigma\longmapsto h^\ast h  = \begin{psmatrix}
    h_2^\ast h_1&\\ &h_1^\ast h_2.
\end{psmatrix}
\]
We claim that $\det(h_2^\ast h_1)\in F^\times$ and that $\varepsilon(\al) = \eta(\det (h_2^\ast h_1))$; since the right-hand formula is clearly a character of $H^1(F,\rH_x)$, the claim follows. 

First note that the assertion that $x_\al\in \Q(F)$ implies that 
\begin{equation}\label{rational implication}
 b_\al = h_1 b h_2^{-1} = (h_2^\ast)^{-1}bh_1^\ast.
\end{equation}
In particular, this implies $b(h_1^\ast h_2) = (h^\ast_2 h_1) b$ so that
\[
\det(b)\det(h_1^\ast h_2) = \det(h_2^\ast h_1) \det(b).
\]
This shows that $\det(h_1^\ast h_2)=\overline{\det(h_1^\ast h_2)}$. On the other hand, \eqref{rational implication} also implies
\[
\det(b_\al)\det(b)^{-1} = \det(h_1 h_2^{-1}) \in F^\times.
\]
Thus, 
\[
\det(h_1 h_2^{-1}) \equiv \det(h_2^\ast h_1)\pmod{\Nm_{E/F}(E^\times)},
\]
proving that $\varepsilon(\al) = \eta(\det (h_2^\ast h_1))$ is a character.
  
Finally, when $\rH_x$ is simple we have $|H^1(F,\rH_x)|=2$, so the final claim is immediate from the non-triviality of $\varepsilon$. 
 \end{proof}
 If one considers the theory of endoscopy defined in \cite{LeslieEndoscopy} for $(\G,\Q_{ii})$, the endoscopic variety determined by the pair $(x,\varepsilon)$ is the \emph{split-inert} form (cf. \cite[Proposition 10.2]{LeslieEndoscopy}). Thus we have the following special case of the conjectural existence of endoscopic transfer for $\Q_{ii}$. 

Fix $\tau_1,\tau_2\in \calv_n(E/F)$, and we assume $\tau_2=\tau_n$ is split. We say that $x\in \Q_{ii}^{rss}(F)$ matches $y\in \Q_{\tau_1,\tau_2}^{rss}(F)$ if they have matching invariant polynomials; comparing Lemma \ref{Lem: surjective quotient si} with \ref{Lem: surjective quotient ii}, we see that for any $x\in \Q_{ii}(F)$ there exists a unique $\tau_1$ such that $x$ matches some $y\in \Q_{\tau_1,\tau_2}(F)$. When this occurs we write $x\leftrightarrow_{\tau_1}y$. 
\begin{Conj}\label{Conj: ii simple endoscopy}
 Define the transfer factor $\De_{\varepsilon}: \Q_{ii}^{rss}(F)\to \cc^\times$ by
\[
       \De_{\varepsilon}(x):= \eta(\det(b)), \quad\text{ where   $x = \begin{psmatrix}
      a& b\\c&a^\ast
  \end{psmatrix}$. }
\]
Assume $\tau_2=\tau_n\in \calv_n(E/F)$ is split. For any $\phi\in C^\infty_c(\Q_{ii}(F))$ and $x\in \Q_{ii}^{rss}(F)$, there exists $\ul{\phi}=(\phi_{\tau_1})\in \bigoplus_{\tau_1\in \calv_n(E/F)}C_c^\infty(\Q_{\tau_1,\tau_2}(F))$ such that
     \[
     \De_{\varepsilon}(x) \Orb^{\rH,\varepsilon}(\phi,x) = \SO^{\rH_{\tau_1,\tau_2}}(\phi_{\tau_1},y),
     \]
     for all matching regular semi-simple $x\leftrightarrow_{\tau_1}y$. We say $\phi$ and $\underline{\phi}$ are $\varepsilon$-transfers.
\end{Conj}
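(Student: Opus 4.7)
The plan is to reduce this transfer question to a combination of a descent to the infinitesimal (Lie-algebra) setting and the known transfer results on mirabolic orbital integrals. First, I would perform a Harish-Chandra type semi-simple descent around an $\varepsilon$-regular semi-simple element $x \in \Q_{ii}^{rss}(F)$: a slice neighborhood of $x$ identifies germs of orbital integrals on $\Q_{ii}(F)$ with germs on the tangent space to the fiber of $x$ through $\car_{ii}$, cut out by the centralizer $\rH_x$. A parallel descent works for the split-inert side $\Q_{\tau_1,\tau_2}$ at any $y$ with $x \leftrightarrow_{\tau_1} y$, since Lemma \ref{Lem: centralizer contraction} shows that the stabilizers on the two sides are canonically isomorphic via $\rH_x \iso \U(V_2)_{R(x)}$. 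This reduces matters to the comparison of certain orbital integrals on a single symmetric subspace of a general linear group.

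Second, I would use the contraction map $R:\Q_{ii} \to \End(L) = \Res_{E/F}(\fgl_n)$ of \S\ref{Section: unitary OI ii} (whose analog in the split-inert case is the map to $\Herm_{\tau_2}$ of Lemma \ref{Lem: cat quotient si}) to push $\varepsilon$-orbital integrals to orbital integrals on a Hermitian space. The character $\varepsilon$ was built precisely so that $\De_\varepsilon(x) = \eta(\det b)$ depends only on the class of $b$ in $F^\times/\Nm_{E/F}(E^\times)$; combined with the decomposition \eqref{eqn: orbits to sym var} of orbits on the split-inert side parametrized by pairs $(\tau_1',\tau_2')$, this translates the endoscopic identity into an identity between two transfers with the linear (mirabolic) side of \S\ref{section: linear orbital ints}. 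Concretely, one should relate both $\De_\varepsilon \cdot \Orb^{\rH,\varepsilon}(\phi,x)$ and $\SO^{\rH_{\tau_1,\tau_2}}(\phi_{\tau_1},y)$ to mirabolic orbital integrals $\Orb^{\rH',\ul\eta}_{s_1}(\wt\phi,y)$ for appropriate choices of $\ul\eta$ on the $\Res_{E/F}$-setup, and then invoke the smooth transfer statement for the split-inert side (Theorem \ref{Thm: weak smooth transfer}) in the opposite direction.

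Once both sides are realized as transfers of common mirabolic data, the existence of $\ul{\phi}$ matching $\phi$ follows by transitivity from the density of regular semi-simple orbital integrals and from the surjectivity of the transfer map with prescribed regular orbital integrals. In particular, having fixed $\phi$, I would produce the collection $(\phi_{\tau_1})$ by first constructing a mirabolic test function on $\X(F) \times F_n$ that captures the $\varepsilon$-orbital integrals of $\phi$, and then invoking \cite[Theorem 5.1]{Xiaothesis} (as used for Theorems \ref{Thm: weak smooth transfer} and \ref{Thm: weak endoscopic transfer}) to realize the resulting mirabolic orbital integrals as stable orbital integrals on the split-inert spaces.

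The main obstacle will be the analytic control of the transfer near non-elliptic (in particular non-simple) regular semi-simple elements, where the character $\varepsilon$ no longer has the clean description of the simple elliptic case and $H^1(F,\rH_x)^D$ has higher rank; matching the transfer factor $\De_\varepsilon$ with $\eta(\det b)$ consistently across all orbits, and controlling the singular germs at the boundary of the regular locus, is the source of the usual difficulties in establishing smooth transfer in its full generality and explains why only partial results (Theorem \ref{Thm: weak endoscopic transfer}) appear in the present paper.
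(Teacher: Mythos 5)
The statement under discussion is a \emph{Conjecture}, not a theorem; the paper does not prove it in full. What the paper establishes is a weak form (Theorem~\ref{Thm: weak endoscopic transfer}, for test functions supported on the invertible locus) and the corresponding fundamental lemma (Theorem~\ref{Thm: fundamental lemma varepsilon}). Your plan captures the broad strategy behind those partial results, but it misses the short algebraic shortcut that makes the weak version essentially immediate, and it misidentifies where the genuine difficulty lies.

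The crux of the paper's proof of Theorem~\ref{Thm: weak endoscopic transfer} (Appendix~\ref{Section: proof weak transfer}) is the observation that, on the invertible locus after the Cayley transform, $\De_\varepsilon$ is a locally constant function on $\fq_{ii}^{invt}(F)$ which is invariant under the $\rH(F)$-action on each rational orbit and varies precisely by the character $\varepsilon$ across the rational orbits in a fixed stable class. Setting $\phi':=\De_\varepsilon\cdot\phi\in C_c^\infty(\fq_{ii}^{invt}(F))$ therefore gives $\De_\varepsilon(x)\Orb^{\rH,\varepsilon}(\phi,x)=\SO^{\rH_{ii}}(\phi',x)$ for free, and the endoscopic transfer reduces immediately to the stable transfer already provided by Theorem~\ref{Thm: weak smooth transfer} (via the contraction to $\Herm_n^\circ$ and the transfer of \cite[Theorem~5.1]{Xiaothesis}). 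Your proposal does eventually route both sides through mirabolic orbital integrals and back, so it arrives at the right conclusion, but through more machinery than needed and without isolating this reduction.

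Two further discrepancies. First, the paper's descent to the Lie algebra is by Cayley transform (Lemma~\ref{Lem: descent on cayley}), not by Harish--Chandra descent. The Cayley transform only reaches the $\heartsuit$-locus, which suffices for the invertible/weak case; the paper explicitly remarks that the reduction of Conjecture~\ref{Conj: ii simple endoscopy} to the Lie algebra Conjecture~\ref{Conj: ii simple endoscopy lie} (and likewise Conjecture~\ref{Conj: smooth transfer} to Conjecture~\ref{Conj: smooth transfer Lie}) would require Harish--Chandra descent, which is \emph{not} carried out in the paper. Your proposal thus aims at the harder full statement, but the Harish--Chandra descent step is itself nontrivial and is not developed. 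Second, your characterization of the obstacle --- that $H^1(F,\rH_x)^D$ has higher rank away from simple elliptic elements and that matching $\De_\varepsilon$ with $\eta(\det b)$ is delicate --- does not locate the real gap. The transfer factor $\De_\varepsilon(x)=\eta(\det b)$ and the character $\varepsilon$ are cleanly defined for all regular semi-simple $x$ irrespective of the rank of $H^1(F,\rH_x)$ (Lemmas~\ref{Lem: building special kappa} and~\ref{Lem: building special kappa Lie}); the actual gap between Theorem~\ref{Thm: weak endoscopic transfer} and the conjecture is the control of test functions whose support meets the complement of the invertible locus (where $b$ or $c$ degenerates), which lies beyond the reach of the Cayley transform. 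You gesture at boundary germs but attach the difficulty to the wrong boundary.
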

We establish a weak form of this conjecture, as well as the corresponding fundamental lemma, in \S \ref{Section: defn of transfer} below.

\quash{\begin{proof}
 If we write  $x = \begin{psmatrix}
      a& b\\c&a^\ast
  \end{psmatrix}$,  we set 
    \[
       \De_{\varepsilon}(x):= \eta(\det(b)),
    \]
    and we define $\phi_{\varepsilon}\in C^\infty_c(\Q^{invt}_{ii}(F))$ by $\phi_{\varepsilon}(x)= \eta(\det(b))\phi(x)$.
    Then we compute
    \begin{align*}
        \De_{\varepsilon}(x) \Orb^{\rH,\varepsilon}(\phi,x) &=\eta(\det(b))\sum_{x_\al\in\calo_{st}(x)}\eta(\det(b_\al))\eta(\det(b))^{-1}\Orb^{\rH}(\phi,x_\al)\\
                                                            &=\sum_{x_\al\in\calo_{st}(x)}\Orb^{\rH}(\phi_\varepsilon,x_\al)=\SO^{\rH}(\phi_{\varepsilon},x).
    \end{align*}
    This proves the first claim.

    Now assume that $E/F$ is unramified and set $\phi = \bfun_{\Q_{ii}(\calo_{F})}$. Assume $x\in \Q_{ii}(F)$ is as in the conjecture. Clearly, $\Orb^{\rH,\varepsilon}(\phi,x)=0$ unless $x$ lies in the stable orbit of an element $x_0\in \Q_{ii}(\calo_{F})$, so we may as well assume that $x\in\Q_{ii}(\calo_{F})$. This implies that $\eta(\det(b)) = 1$. \textcolor{red}{Moreover, if $\eta(\det(b_\al))\neq \eta(\det(b))$, then $\Orb^{\rH}(\phi,x_\al)=0$.}\WL{This line is wrong.} Thus,
    \begin{align*}
        \De_{\varepsilon}(x) \Orb^{\rH,\varepsilon}(\phi,x) =\eta(\det(b))\sum_{\stackrel{x_\al\in\calo_{st}(x)}{\eta(\det(b_\al))= \eta(\det(b))}}\Orb^{\rH}(\phi,x_\al)=\SO^{\rH}(\phi_{\varepsilon},x),
    \end{align*}
    proving the second claim.
\end{proof}
}

\subsection{Matching of orbital integrals}\label{Section: defn of transfer} We continue to assume that $F$ is local. Set $\G' = \GL_{2n}$ and $\rH' = \GL_n\times \GL_n$, viewed as $F$-groups. To unify notation for our cases of interest, we set $(\G_\bullet,\rH_\bullet)$ to be a unitary pair with $\bullet\in\{si, ii\}$.  Set $\X:=\G'/\rH'$ and $\Q_{\bullet} = \G_{\bullet}/\rH_{\bullet}$. We now adopt the conventions of our cases of interest, so that $\eta_1=1$.

We have the quotient diagram
\[
\begin{tikzcd}
\X\ar[dr,"\car_{lin}",swap]&&\Q_\bullet\ar[dl,"\car_{\bullet}"]\\
&\A^n,&
\end{tikzcd}
\]
where $\car_{lin}$ is defined in Lemma \ref{Lem: quotients natural} and $\car_{\bullet}$ is defined in Lemmas \ref{Lem: cat quotient si} and \ref{Lem: cat quotient ii}. 
\begin{Def}\label{Def: Matching orbits}
    We say that regular elements $y\in \X(F)$ and $x\in {\Q}_\bullet(F)$ \emph{match} if
\[
\car_{lin}(y) = \car_{\bullet}(x).
\]
\end{Def}

\begin{Def}[Smooth transfer]\label{Def: transfer} Set $\underline{\eta}=(\eta_0,\eta_2)$.
\begin{enumerate}
    \item (split-inert: $\underline{\eta} = (\eta,\eta)$) Fix $\tau_2\in \calv_n(E/F)$. We say that $\widetilde{\phi}\in C^\infty_c(\X(F)\times F_{n})$ and a collection
    \[
    \underline{\phi}=(\phi_{\tau_1})\in \bigoplus_{\tau_1\in \calv_n(E/F)}C_c^\infty(\Q_{\tau_1,\tau_2}(F))
    \]
    are \emph{$(\eta,\eta)$-transfers} if for any matching regular semi-simple elements $y\in \X(F)$
\begin{enumerate}
\item  if there exists $\tau_1$ and $x\in \Q^{rss}_{\tau_1,\tau_2}(F)$ matching $y$, then
\begin{equation*}
  2^{|S_1^{ram}(y)|}\Orb^{\rH',\ul{\eta}}_{0}(\widetilde{\phi},y)= \SO^{\rH_{\tau_1,\tau_2}}(\phi_{\tau_1},x),
\end{equation*}
where $S_1$ is defined as in \S \ref{Section: Lvalue measure}, and $S_1^{ram}(y)=\{i\in S_1: \eta_i \text{ is ramified on }F_i\}$\footnote{When $F=\rr$, this is equivalent to $\eta_i = \sgn$ on $F_i=\rr$.}.
\item if there does not exist $\tau_1$ and $x\in \Q^{rss}_{\tau_1,\tau_2}(F)$ matching $y$, then
\begin{equation*}
  \Orb^{\rH',\ul{\eta}}_{0}(\widetilde{\phi},y)= 0.
\end{equation*}
\end{enumerate}
    \item (inert-inert: $\underline{\eta} = (\eta,1)$) We say that $\widetilde{\phi}\in C^\infty_c(\X(F)\times F_{n})$ and $\phi\in C_c^\infty(\Q_{ii}(F))$
    are \emph{$(\eta,1)$-transfers} if for any matching regular semi-simple elements $y\in \X(F)$  and $x\in \Q^{rss}_{ii}(F)$, we have
\begin{equation*}
 2^{|S_1^{ram}(y)|} \Orb^{\rH',\ul{\eta}}_{0}(\widetilde{\phi},y)= \SO^{\rH}(\phi,x).
\end{equation*}
   \end{enumerate}
\end{Def}
\begin{Rem}
    We note that the coefficient $2^{|S_1^{ram}(y)|}$ is only visible when $E/F$ is ramified. 
\end{Rem}
To unify notation, we set $\underline{\phi} = \phi\in C_c^\infty(\Q_{ii}(F))$ in the inert-inert case. Then we have $\underline{\phi}=(\phi_{a})_{a\in A}$ where $A= \calv_n(E/F)$ for the split-inert case and $A=\{ii\}$ for the inert-inert case, with $f_{a}\in C_c^{\infty}(\Q_a(F))$.
\begin{Conj}\label{Conj: smooth transfer}
    Set $\underline{\eta}\in \{(\eta,1),(\eta,\eta)\}$. Then for any $\widetilde{\phi}\in C^\infty_c(\X(F)\times F_{n})$ there exists a $\underline{\eta}$-transfer $\underline{\phi}$ as defined above. Conversely, for any collection of functions $\underline{\phi}$ on the unitary variety, there exists its $\underline{\eta}$-transfer $\widetilde{\phi}$.
\end{Conj}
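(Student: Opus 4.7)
The strategy I would follow parallels the smooth transfer arguments in related relative trace formulas, notably those of W.~Zhang for the Jacquet--Rallis setting, while accommodating the two novel features of our setup: the extended variety $\X(F)\times F_n$ and the local $L$-factor normalization built into \eqref{eqn: normalized OI two var}.

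First, I would reduce to an infinitesimal statement via Harish-Chandra descent, using the framework developed in Section \ref{Section: Lie} and Appendix \ref{Sec: descent}. At any regular semisimple orbit on $\X$ (resp.\ on $\Q_\bullet$), matching becomes a problem of matching orbital integrals on the tangent spaces at the corresponding semisimple points, now on vector spaces with reductive group actions. Next, using the contraction maps of Lemmas \ref{Lem: quotients natural}, \ref{Lem: cat quotient si}, and \ref{Lem: cat quotient ii} together with the formalism of Section \ref{Section: contractions}, I would push orbital integrals forward: on the linear side one obtains twisted orbital integrals on $\fgl_n(F)$ with an extra Schwartz--Bruhat factor on $F_n$, and on the unitary sides stable orbital integrals on $\Herm_n^\circ(F)$ in the various pure inner forms indexed by $\calv_n(E/F)$. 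This packages the conjecture into a matching problem in a more classical geometric setting.

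Second, I would attempt Fourier-analytic matching in the style of Zhang: establish that the space of pairs $(\wt\phi,\ul\phi)$ satisfying the transfer identity is preserved by a suitable Fourier transform on both sides, then reduce to a base case supported near a single orbit. In parallel, or as a complementary route, one can reinterpret the linear orbital integrals on $\X(F)\times F_n$ as mirabolic orbital integrals on $\GL_n(F)$, following Xiao \cite{Xiaothesis}; this is the viewpoint underlying the weak transfer result Theorem \ref{Thm: weak smooth transfer}. I would try to upgrade that weak statement using the fundamental lemmas for the full Hecke algebra established in this paper, combined with a density argument (propagating transfer from a spanning family of test functions) and Luna-slice style localization to bootstrap from regular orbits to arbitrary $\wt\phi$.

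The main obstacle I anticipate is the interaction between the $L$-factor weight $L(s_0, T_y, \eta_0)$ and the Schwartz--Bruhat component $\Phi$ in \eqref{eqn: basic OI X}. Unlike the Jacquet--Rallis case, these weights are not manifestly compatible with a clean Fourier-theoretic symmetry on the unitary side, and handling their behavior at semisimple but non-regular orbits, where Shalika germ-type expansions must be controlled, is likely the most delicate technical point. A further subtlety is bookkeeping the $2^{|S_1^{ram}(y)|}$ factors and the sum over inner forms in the split-inert case; these contributions mirror the volume formula of Lemma \ref{Lem: vol of centralizer} and should emerge naturally once descent and Fourier analysis are carried out with the correct measure normalizations from \S\ref{Section: Lvalue measure}.
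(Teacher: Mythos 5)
This statement is a \emph{Conjecture}, not a theorem, and the paper does not prove it. What the paper actually establishes is strictly weaker: Theorem~\ref{Thm: weak smooth transfer} (existence of transfer for test functions supported in the invertible locus $\X^{invt}$ on both sides), the analogous weak endoscopic result (Theorem~\ref{Thm: weak endoscopic transfer}), and the three fundamental lemmas for specific unramified test functions. The paper explicitly records that Conjecture~\ref{Conj: smooth transfer} would follow from its Lie-algebra analogue Conjecture~\ref{Conj: smooth transfer Lie}, but then states ``We omit this argument, as it is not needed for Theorem~\ref{Thm: weak smooth transfer}'' --- and Conjecture~\ref{Conj: smooth transfer Lie} is itself left unproved.

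Your reduction steps (Cayley/Harish-Chandra descent to the Lie algebra, contraction along $R$ and $r^\tau$, reinterpretation of the linear orbital integrals as Xiao's mirabolic orbital integrals) are consistent with what the paper does in Appendix~\ref{Section: proof weak transfer}, and they correctly reproduce the weak transfer. The genuinely new ingredient you need is the Fourier-analytic matching \`a la Zhang, and here there is a concrete obstruction that you acknowledge but do not resolve: the Cayley transform here is rigid (Lemma~\ref{Lem: Cayley map linear} only allows $\nu=\pm1$, unlike the one-parameter family available in the Jacquet--Rallis setting), so Zhang's ``shift the Cayley parameter'' trick for propagating transfer off the invertible locus is not directly available --- this is precisely why the paper resorts to Harish-Chandra descent for the (omitted) reduction of the conjecture to its Lie-algebra version. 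The mirabolic approach you cite from Xiao and the paper's own contraction maps only yield transfer over $\X^{invt}$ because the contraction $R|_{\X^{invt}}$ is a torsor; off that locus the map degenerates and Theorem~\ref{Thm: Xiao's transfer} gives you nothing. Your proposed ``upgrade via density / bootstrap from the fundamental lemmas'' is not substantiated: the fundamental lemmas control only hyperspecial-invariant test functions at unramified places, and density of orbital-integral data does not by itself yield smooth transfer for a general function supported near the singular locus, where a compatibility of Shalika-germ expansions on both sides (with the $L$-factor and $\Phi$-weight) would have to be established from scratch. In short, your sketch lines up with the paper's partial evidence, but it does not close the conjecture, and the step where it would need to --- control of orbital integrals near the non-invertible locus and the missing Fourier-theoretic symmetry --- is exactly where the paper also stops.
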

We have the following pieces of evidence for this conjecture, which suffice for our global applications in \cite{LXZ25}.
\subsubsection{The split case}\label{Section: split transfer} We begin by assuming that $E=F\times F$, so that $\underline{\eta}=(1,1)$. In this case, 
\[
\G= \GL_{2n} = \G', \:\text{and}\:\rH=\GL_n\times \GL_n = \rH',
\]
and Conjecture \ref{Conj: smooth transfer} is trivial. More precisely, recall the formula \eqref{eqn: orbital int split}. If $\phi\in C^\infty(\X(F))$ and $\Phi\in C_c^\infty(F_{n})$, we see $\Phi(0)\phi\in C_c^\infty(\Q(F))$ is a smooth $(1,1)$-transfer for $\phi\otimes \Phi$.
\subsubsection{A weak transfer result}\label{Section: matching} We now assume that $E/F$ is a quadratic extension of non-archimedean local fields. We have the following result toward Conjecture \ref{Conj: smooth transfer}.
\begin{Thm}\label{Thm: weak smooth transfer}
    Suppose that $\phi\in C_c^\infty(\X^{invt}(F))$. Then for any $\Phi\in C^\infty_c(F_{n})$, there exists a smooth $\underline{{\eta}}$-transfer $\underline{f}=(f_a)$ satisfying $f_a\in C_c^\infty(\Q_a^{invt}(F))$. Conversely, for any $\underline{f}$ with support in the invertible locus, there exists a smooth transfer $\wt{\phi}\in C_c^\infty(\X^{invt}(F)\times F_{n})$.
\end{Thm}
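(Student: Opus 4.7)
The plan is to deduce this transfer statement from the known matching between mirabolic orbital integrals on $\fgl_n(F)\times F_n$ and stable orbital integrals on $\Herm_n^\circ(F)$, namely \cite[Theorem 5.1]{Xiaothesis}, by means of the contraction formalism of \S\ref{Section: contractions}. The key geometric input is that the projections $R$ of Lemmas \ref{Lem: quotients natural}, \ref{Lem: cat quotient si}, and \ref{Lem: cat quotient ii} exhibit $\fgl_n$ and $\Herm_{\tau_2}$ as the categorical quotients of $\X$ and $\Q_a$ by the first factor of $\rH_\bullet$. On the invertible loci $\X^{invt}$ and $\Q_a^{invt}$ the invertibility of the off-diagonal blocks $B$, $b$ forces this first-factor action to be free, so Assumption \ref{Assumption: contraction} is satisfied.

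Granting this, the contraction operator $\pi_!$ yields surjections $C_c^\infty(\X^{invt}(F)\times F_n) \twoheadrightarrow C_c^\infty(\fgl_n(F)\times F_n)$ (where $\GL_n$ acts on $\fgl_n\times F_n$ by $(D,w)\cdot h_2 = (h_2^{-1}Dh_2, wh_2)$) and $C_c^\infty(\Q_a^{invt}(F)) \twoheadrightarrow C_c^\infty(\Herm_{\tau_2}^{invt}(F))$. Using the orbit-to-orbit compatibility of Lemma \ref{Lem: orbits along contraction} together with the Tate-type normalization \eqref{eqn: normalized OI two var}, one checks that $\Orb^{\rH',\underline{\eta}}_0(\widetilde{\phi},y)$ agrees with the mirabolic orbital integral of $\pi_!(\widetilde{\phi})$ at $R(y)$, while $\SO^{\rH_a}(f_a,x)$ agrees with a stable $\U(V_{\tau_2})$-orbital integral of $\pi_!(f_a)$ at $R(x)$ (in the inert-inert case, a $\GL_n(E)$-stable orbital integral on $\Herm_n^\circ(F)$).

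In this contracted setting, the desired matching is furnished directly by \cite[Theorem 5.1]{Xiaothesis}, to be recalled in \S\ref{Section: mirabolic integrals}. Given a function $\pi_!(\widetilde{\phi})$ supported on the regular semisimple locus of $\fgl_n(F)\times F_n$, this theorem produces matching stable orbital integrals on each Hermitian variety indexed by the pairs $(\tau_1,\tau_2)$ in the split-inert case or the quasi-split form in the inert-inert case, and conversely. Lifting the matched functions back through the surjective contraction maps---choosing any compactly supported preimage with support in the invertible locus---produces the required smooth $\underline{\eta}$-transfer in both directions.

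The main technical hurdle will be aligning the several transfer factors and measure normalizations so that they translate correctly across the contraction: the sign $\eta_2^n(BC)$ in $\omega(y,w)$ (Remark \ref{Rem: correct transfer factor}), the $L$-factor normalization by $L(s_0,T_y,\eta_0)$ at $s_0=0$, and the combinatorial coefficient $2^{|S_1^{ram}(y)|}$ of Definition \ref{Def: transfer} must be shown to reproduce the transfer factor of \cite{Xiaothesis}. Here Lemma \ref{Lem: vol of centralizer} and the measure conventions of \S\ref{measures} will be essential, as the isomorphism between a stabilizer $T_y$ on the linear side and the corresponding unitary centralizer introduces precisely the discrepancy between the ``natural'' Haar measure $dt$ and the ``fiber'' measure $dt^{fib}$ tracked in the proof of that lemma, accounting for the power of $2$.
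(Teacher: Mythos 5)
Your proposal misses the essential first reduction: before any contraction, the paper passes from the \emph{variety} $\X$ (resp.\ $\Q_a$) to the \emph{Lie algebra} $\fX$ (resp.\ $\fq_a$) via the Cayley transforms $\fc_{\pm 1}$ of \S\ref{Section: Cayley} (Lemma \ref{Lem: descent on cayley}), together with a partition of unity. This step is not optional. The contraction maps from Lemma \ref{Lem: quotients natural}(\ref{Lem: halfway}) and Lemma \ref{Lem: cat quotient si} at the \emph{variety} level land in $\fgl_n$ and $\Herm_{\tau_2}$ --- more precisely, by Lemma \ref{Lem: cat quotient si}(\ref{Lem: centralizer contraction}), the image of the invertible locus under $R$ is $\{D:\det(I-D^2)\neq 0\}$, not the invertible locus $\Herm_{\tau_2}^{\circ}$ nor $\GL_n$. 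Theorem \ref{Thm: Xiao's transfer} (i.e.\ \cite[Theorem 5.1]{Xiaothesis}) compares $\GL_n(F)\times F_n$ with $\Herm_\tau^\circ(F)$, so the variety-level contraction does not put you in a position to apply it. Only after the Cayley transform, where the contraction becomes $R(X,Y)=YX$ on $\fX^{invt}$, does the image land in $\GL_n(F)$ (Lemma \ref{Lem: same ois on invt linear}), and similarly $r^{\tau_1}(X)=-X^\ast\tau_1^{-1}X\tau_2^{-1}$ sends $\fq^{invt}_{\tau_1,\tau_2}$ to $\Herm_{\tau_2}^\circ$.

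Two further problems. First, your claim that Lemma \ref{Lem: cat quotient ii} supplies a projection $R$ to $\Herm_{\tau_2}$ in the inert--inert case is not correct: that lemma only produces the invariant map $\car_{ii}$ to $\A^n$. The paper's actual treatment of the inert--inert direction is genuinely different; it uses the identification $\fq_{ii}^{invt}(F)\simeq\Herm_n^\circ(F)\times\Herm_n^\circ(F)$, writes $f_1=\sum_\tau f_{1,\tau}$ with $f_{1,\tau}$ supported on $\Herm_n^\circ(F)_\tau$, and then invokes the unwinding identity \eqref{eqn: unwind to Hn} to convert a $\GL_n(E)$-stable orbital integral into a $\U(V_{\tau^{-1}})$-stable orbital integral on $\Herm^\circ_{\tau^{-1}}$; only then does Theorem \ref{Thm: Xiao's transfer} apply. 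Second, your appeal to Lemma \ref{Lem: vol of centralizer} to produce the combinatorial factor $2^{|S_1^{ram}(y)|}$ is misplaced: that lemma is a global volume computation for $[\T_x]$ used in the RTF comparison of Part \ref{Part: trace proof global}, not a local measure discrepancy; the $2^{|S_1^{ram}|}$ in Definition \ref{Def: transfer} is simply inherited verbatim from the statement of Theorem \ref{Thm: Xiao's transfer} after the contraction, and needs no further accounting (indeed it vanishes in the unramified case anyway).
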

We also establish the following weak form of the endoscopic transfer statement in Conjecture \ref{Conj: ii simple endoscopy}.
\begin{Thm}\label{Thm: weak endoscopic transfer}
    Suppose that $\phi\in C_c^\infty(\Q_{ii}^{invt}(F))$. Then there exists a $\underline{\phi}=(\phi_{\tau_1})$ such that $\phi_{\tau_1}\in C_c^\infty(\Q^{invt}_{\tau_1,\tau_2}(F))$ and
     \[
     \De_{\varepsilon}(x) \Orb^{\rH,\varepsilon}(\phi,x) = \SO^{\rH_{\tau_1,\tau_2}}(\phi_{\tau_1},y),
     \]
     for all matching regular semi-simple $x\leftrightarrow_{\tau_1}y$.  
\end{Thm}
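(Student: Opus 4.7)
The plan has three main steps, leveraging the tautological behavior of the $\varepsilon$-character on the invertible locus together with a stable transfer between two unitary symmetric pairs.

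\textbf{Step 1 (Absorbing $\varepsilon$).} Writing $x = \begin{psmatrix} a & b \\ c & a^\sharp \end{psmatrix} \in \Q_{ii}^{invt}(F)$, I set
\[
\phi_\varepsilon(x) := \eta(\det b)\,\phi(x) \in C_c^\infty(\Q_{ii}^{invt}(F)),
\]
which is well-defined and smooth since $\det b \neq 0$ on the invertible locus. From $\De_\varepsilon(x) = \eta(\det b)$ and $\varepsilon(\alpha) = \eta(\det b_\alpha)\eta(\det b)^{-1}$, the product $\De_\varepsilon(x)\varepsilon(\alpha) = \eta(\det b_\alpha)$ depends only on the orbit of $x_\alpha$, so
\[
\De_\varepsilon(x)\,\Orb^{\rH,\varepsilon}(\phi, x) = \sum_\alpha \eta(\det b_\alpha)\,\Orb^{\rH}(\phi, x_\alpha) = \SO^{\rH}(\phi_\varepsilon, x).
\]
The theorem is therefore reduced to producing $(\phi_{\tau_1})$ with $\phi_{\tau_1} \in C_c^\infty(\Q_{\tau_1,\tau_2}^{invt}(F))$ satisfying the \emph{stable} matching $\SO^{\rH}(\phi_\varepsilon, x) = \SO^{\rH_{\tau_1,\tau_2}}(\phi_{\tau_1}, y)$ for matching $x \leftrightarrow_{\tau_1} y$.

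\textbf{Step 2 (Stable transfer between unitary pairs).} I would establish this stable-to-stable transfer by reducing both sides to a common Hermitian orbital integral comparison. On the split-inert side, Lemma \ref{Lem: centralizer contraction} realizes $\Q_{\tau_1,\tau_2}^{invt}$ as a $\U(V_1)$-torsor over $\Herm_{\tau_2}^{invt}$, so the stable $\rH_{\tau_1,\tau_2}$-orbital integrals there push forward to stable $\U(V_2)$-orbital integrals on $\Herm_{\tau_2}^{invt}$. On the inert-inert side, the invertible locus admits an analogous $\rH$-equivariant contraction to a Hermitian-space target whose fibers are determined by the polynomial relations $ab = ba^\sharp$ and $a^2 = I - bc$; pushing $\phi_\varepsilon$ forward along this map yields an orbital-integral expression on the resulting Hermitian space. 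Matching the two Hermitian orbital-integral identities is then a two-block refinement of Xiao's transfer statement \cite[Theorem 5.1]{Xiaothesis} which underlies Theorem \ref{Thm: weak smooth transfer}, and a density/extension argument produces $\phi_{\tau_1}$ supported on the invertible locus.

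\textbf{Step 3 (Selecting $\tau_1$).} The correct inner twist $\tau_1 \in \calv_n(E/F)$ (with $\tau_2$ fixed split) is identified via the cocycle analysis preceding Conjecture \ref{Conj: ii simple endoscopy}: the distinguishing cohomological invariant between inner twists of $\rH_{\tau_1,\tau_2}$ is $\eta \circ \det b \pmod{\Nm(E^\times)}$, which precisely matches the twist factor $\eta(\det b)$ built into $\phi_\varepsilon$, guaranteeing orbit-by-orbit compatibility of the matching produced in Step 2.

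\textbf{The hard part} will be Step 2, specifically pinning down the correct contraction on the inert-inert invertible locus and verifying that its push-forward produces exactly the Hermitian orbital-integral data obtained from the split-inert contraction. Although Xiao's framework applies in spirit, one must carefully coordinate the fiber structure (parameterized by the $a$-block subject to the relations above), the measures, and the transfer factors. This is a routine but technical two-block extension of the one-sided argument in \cite[Section 5]{Xiaothesis}, in the same spirit as the relative fundamental lemma Theorem \ref{Thm: main local result 2 var} on two-variable stable orbital integrals.
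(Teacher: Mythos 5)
Your Step 1 is exactly the paper's key observation: since $\De_\varepsilon(x)\varepsilon(\alpha) = \eta(\det b_\alpha)$ depends only on the orbit of $x_\alpha$, multiplication by $\eta(\det b)$ converts the $\varepsilon$-orbital integral of $\phi$ into the stable orbital integral of $\phi_\varepsilon$. The remainder of your plan, however, departs from the paper's argument in a way that creates a genuine gap.

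The paper does not attempt the stable-to-stable transfer at the group level. Instead it first applies the Cayley transform (Lemma \ref{Lem: descent on cayley} and Lemma \ref{Lem: Cayley map linear}(2)) to reduce the claim to one between $\fq_{ii}^{invt}(F)=\Herm_n^\circ(F)\times\Herm_n^\circ(F)$ and $\fq_{si}^{invt}(F)$. The absorption of $\varepsilon$ is then performed on the Lie algebra --- $f'(x_1,x_2):=\eta(\det x_1)\,f(x_1,x_2)$ --- and the remaining stable-to-stable transfer is precisely what has already been established by Lemma \ref{Lem: same ois on invt linear} together with the inert-inert case of Theorem \ref{Thm: weak smooth transfer} (both sides are funnelled through mirabolic orbital integrals on $\GL_n(F)\times F_n$). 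Once the $\varepsilon$-character is absorbed, there is nothing left to prove beyond citing this.

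Your Step 2 tries to avoid the Cayley descent and work directly on $\Q_{ii}^{invt}$. This is where the plan breaks down. On the group level, an element $x=\begin{psmatrix} a & b\\ c & a^\sharp\end{psmatrix}$ is subject to the relations $a^2=I-bc$, $ab=ba^\sharp$, etc., and the $\rH_{ii}=\Res_{E/F}\GL_n$-equivariant map $x\mapsto b$ has fibers cut out by these constraints; it is not a torsor for a unitary subgroup (unlike the split-inert projection $R|_{\Q^{invt}}$ in Lemma \ref{Lem: cat quotient si}\eqref{Lem: centralizer contraction}). Your ``analogous $\rH$-equivariant contraction to a Hermitian-space target'' is not established in the paper and would require substantial new work; the polynomial relations that disappear after linearization are precisely what the Cayley transform is there to remove. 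Moreover, you cannot compare two Hermitian orbital-integral sides directly with Xiao's theorem --- \cite[Theorem 5.1]{Xiaothesis} matches a single $\Herm_n^\circ$ side against $\GL_n(F)\times F_n$, so the comparison must pass through the linear side, as Lemma \ref{Lem: same ois on invt linear} does. Finally, a group-level absorption and a Lie-algebra absorption of $\varepsilon$ differ by the factor $\eta(\det(I-x_2x_1))$ from Lemma \ref{Lem: tranfer factor cayley}; the paper tracks this correction in the descent, and any attempt to work at both levels must account for it.

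In short: keep Step 1, replace Steps 2--3 by the Cayley descent to $\fq_{ii}^{invt}$ and then invoke Lemma \ref{Lem: same ois on invt linear} and the inert-inert case of Theorem \ref{Thm: weak smooth transfer}; that collapses the remaining work to a one-line citation.
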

After introducing the necessary Cayley and contraction maps, we prove this in Appendix \ref{Section: proof weak transfer}. 
\begin{Rem}
  While our global results in \cite{LXZ25} do not use these statements, we include them for two reasons: they provide evidence for Conjectures  \ref{Conj: ii simple endoscopy} and  \ref{Conj: smooth transfer}, and the method of their proof introduces many of the necessary ideas for the proofs of the fundamental lemmas introduced in the next subsection.
\end{Rem}
\subsection{The fundamental lemmas}\label{Section: fundamental lemmas on variety}
We now assume that $E/F$ is an unramified quadratic extension of $p$-adic local fields. 
We adopt the following assumption in this subsection.

\begin{Assumption}
Assume that if $e$ denotes the ramification degree of $F/\Q_p$, then $p>\max\{e+1,2\}$.
\end{Assumption}
This assumption is imposed by \cite[Lemma 8.1]{Lesliedescent}, which is crucial to our descent to the Lie algebra in Appendix \ref{Sec: descent}. It rules out only finitely many places for a given global field. 

\subsubsection{Integral models}\label{Section: integral models} Let $k\in \zz_{\geq0}$. Viewing $(\G',\rH')$ as $\calo_F$-group schemes by using the standard lattice, let $\G'_k$ denote the $k$-th congruence subgroup $\calo_F$-scheme of $\G'$, so that $\G'_k(\calo_F) = I_{2n}+\vp^k\fg'(\calo_F)$. We similarly have the congruence group $\calo_F$-schemes $\rH'_k$, and set $\X_k:=\G'_k/\rH'_k$.

Similarly, assume $(\G,\rH)$ is a unitary symmetric pair consisting of \emph{unramified groups} for which we fix a smooth $\calo_F$-model which we denote by $(\G_0,\rH_0)$. This gives a nice simply-connected symmetric pair over $\calo_F$ in the sense of \cite[Section 4.1]{Lesliedescent}. For each $k> 0$, we also obtain a $k$-th congruence pair of $\calo_F$-schemes $(\G_k,\rH_k)$ (cf. \cite[Definition A.5.12]{KalethaPrasad}); these are all smooth $\calo_F$-schemes with connected special fibers \cite[Proposition A.5.22]{KalethaPrasad}. Set $\Q_k:=\G_k/\rH_k$.

\subsubsection{The split-inert case}
Recall that $\tau_n=w_n$ is our distinguished split Hermitian form, and $V_n = V_{\tau_n}$. In our setting, $|\calv_n(E/F)|=2$ and we set $\calv_n(E/F)= \{\tau_n, \tau_n^{ns}\}$, where $\tau_n^{ns}$ is non-split, and write $\ul{\phi}=(\phi_{s},\phi_{ns})$ in the sense of Definition \ref{Def: transfer}.

The self-dual lattice $\Lam_n\subset V_n$ induces an integral structure on $\G_{\tau_n,\tau_n}$, $\rH_{\tau_n,\tau_n}$ and $\Q_{\tau_n,\tau_n}$ which we denote by $\G_0$, $\rH_0$, and $\Q_{si,0}$. We also obtain $k$-th congruence subgroups $(\G_k,\rH_k)$ as in \S \ref{Section: integral models} and let $\Q_{si,k}(\calo_F)$ denote the $\calo_F$-points of the associated quotient.

\begin{Thm}\label{Thm: fundamental lemma si} Fix $\tau_2=\tau_n\in \calv_n(E/F)$.
For all $k\in \zz_{\geq0}$, the function $\bfun_{\X_k(\calo_F)}\otimes \bfun_{\calo_{F,n}}$ and the collection $((-1)^k\bfun_{\Q_{si,k}(\calo_F)},0)$ are $(\eta,\eta)$-transfers.
    \end{Thm}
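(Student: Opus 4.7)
The plan is to prove this via two successive reductions followed by a global argument. First, I would pass from the symmetric variety $\Q_{si}$ (resp.\ $\X$) to its Lie algebra analogue via a Cayley-type map, reducing Theorem \ref{Thm: fundamental lemma si} to its infinitesimal counterpart (the announced Theorem \ref{Thm: fundamental lemma s-i Lie}) using Proposition \ref{Prop: descent fundamentals}. The central subtlety in this reduction is bookkeeping: under the Cayley transform, the $k$-th congruence subscheme $\Q_{si,k}(\calo_F)$ corresponds to elements in the Lie algebra of depth $\geq k$, and the Jacobian of the Cayley map contributes a sign to the transfer factor. Tracking this through the $(\eta,\eta)$-transfer factor $\omega$ of Definition \ref{Def: transfer factor on X} is precisely what produces the sign $(-1)^k$ matched on the unitary side. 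The vanishing on the non-split inner form (i.e.\ the $0$ entry in $((-1)^k\bfun_{\Q_{si,k}(\calo_F)},0)$) should drop out from Lemma \ref{Lem: surjective quotient si}: the invariants of integrally unramified elements on $\X$ over $\calo_F$ do not support orbits on the non-split form, so only the split form contributes to the matching.

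Second, after descending to the Lie algebra, I would apply the contraction argument (via the map $R$ of Lemma \ref{Lem: cat quotient si}\eqref{Lem: centralizer contraction}) to push the orbital integrals on $\Q_{si}$ down to stable orbital integrals on $\Herm_n^\circ(F)\times\Herm_n^\circ(F)$ with respect to the $\GL_n(E)$-action recalled in the introduction. By Lemma \ref{Lem: orbits along contraction}, the contraction $R_!$ preserves regular semisimple stable orbits and intertwines $\Orb^{\rH_1\times\rH_2}$ with $\Orb^{\U(V_2)}$. Parallel reasoning on the linear side pushes $\bfun_{\X_k(\calo_F)}\otimes\bfun_{\calo_{F,n}}$ down to a mirabolic-type orbital integral on $\GL_n(F)$ as in \S\ref{Section: mirabolic integrals}. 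The statement then fits into the framework of Theorem \ref{Thm: main local result 2 var}, where one takes $\phi$ to be a Hecke-algebra element tied to the $k$-th congruence level and $\phi' = \bfun_{\Herm_n^\circ(\calo_F)}$. The Hironaka transform $\Hir$ produces the correct element of the spherical Hecke algebra of $\GL_n(F)$ via the Satake transform, matching the characteristic function of the $k$-th congruence subgroup up to the appropriate normalization.

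Finally, Theorem \ref{Thm: main local result 2 var} itself is the core statement, and (per the roadmap in the introduction) is proved globally in Part \ref{Part: trace proof global} by a comparison of relative trace formulas, exploiting Jacquet--Ye-type unitary periods via \cite{FLO} and \cite{offenjacquet}. The hard part of this program, and the principal obstacle, is twofold: (i) ensuring the sign $(-1)^k$ is correctly produced by the Cayley-map Jacobian and absorbed into the transfer factor $\omega$ — this requires a careful computation at depth $k$, analogous to the one done in the weak-transfer proof of Appendix \ref{Section: proof weak transfer}; and (ii) the global input needed for Theorem \ref{Thm: main local result 2 var}, which is delicate because the Hironaka spherical Fourier transform must be matched to the Satake transform at \emph{every} depth simultaneously, and the trace formula comparison must be closed up through the full spherical Hecke module structure, not just the unit element.
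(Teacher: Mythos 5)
Your three-stage outline (Cayley descent to the Lie algebra, contraction, global comparison of relative trace formulas) matches the architecture of the paper, but several of the bookkeeping details on which your argument hinges are misattributed, and two of them would derail a careful write-up.

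First, and most importantly, the sign $(-1)^k$ does \emph{not} arise from a Jacobian of the Cayley map. In the paper the Cayley transform $\fc_{-1}$ carries $\Q_{si,k}(\calo_F)$ isomorphically onto $\fq_{si,k}(\calo_F)$ for $k\geq1$, and on the relevant $\heartsuit$-loci it preserves the orbital integrals \emph{on the nose}: the constant $c_{\nu}^{\ul\eta}$ in Lemma \ref{Lem: tranfer factor cayley} is $\eta_0(2^n)=1$ in odd residue characteristic, and the remaining factor $\eta_2(I-XY)$ is trivial there (see the proof of Lemma \ref{Lem: descent linear heart}). So the descent to the Lie algebra is sign-free. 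The $(-1)^k$ already appears in the Lie-algebra statement (Theorem \ref{Thm: fundamental lemma s-i Lie}), and it is produced by the elementary scaling reduction $(X,Y)\mapsto(\vp^{-k}X,\vp^{-k}Y)$ recorded just after Proposition \ref{Prop: descent fundamentals}: the unitary stable orbital integral is invariant under this rescaling, whereas the transfer factor $\widetilde\omega$ of \eqref{eqn: linear Lie transfer factor}, containing the unramified quadratic character $\eta$, picks up exactly $(-1)^k$. Grounding the sign in a Cayley-map Jacobian would be a wrong turn.

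Second, your contraction step lands in the wrong target. For the split--inert pair $\fq_{si}\simeq\Res_{E/F}\fgl_n$, the contraction $r^{\tau_1}(X)=-X^\ast\tau_1^{-1}X\tau_2^{-1}$ of Proposition \ref{Prop: implies FLSI} goes to a \emph{single} copy of $\Herm_{\tau_2}(F)$, not to $\Herm_n^\circ(F)\times\Herm_n^\circ(F)$; the product appears in the inert--inert reduction, not here. Correspondingly, the split--inert fundamental lemma is reduced to Theorem \ref{Thm: main local result 1 var}, not directly to Theorem \ref{Thm: main local result 2 var}: one contracts $\bfun_{\fgl_n(\calo_E)}$ to $\Phi=r_!(\bfun_{\fgl_n(\calo_E)})$, a non-compactly-supported element of the \emph{completion} of $\calh_{K_E}(\Herm_n^\circ(F))$ whose spherical Fourier transform is the explicit rational function \eqref{eqn: si identity}, and compares with the Satake transform of the convolution $\bfun_{\fgl_n(\calo_F)}\ast(\eta\cdot\bfun_{\fgl_n(\calo_F)})$. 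There is no role here for a ``Hecke-algebra element tied to the $k$-th congruence level,'' because the $k>0$ cases have already been reduced to $k=0$ by the scaling above; you do not need Hironaka/Satake matching at every depth, only at depth zero.

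Finally, your reasoning for the vanishing entry $0$ over the non-split form, via Lemma \ref{Lem: surjective quotient si}, is heuristically in the right direction, but in the paper the vanishing is actually a \emph{consequence} of the matching of orbital integrals through Theorem \ref{Thm: main local result 1 var} combined with the support properties of $\Phi$ and the Hironaka isomorphism, not an independent input; I would not present it as coming purely from the orbit-counting lemma.
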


\subsubsection{The inert-inert case} Now consider the split form $\tau_{2n}\in \calv_{2n}(E/F)$, and fix a polarization $V_{2n} = L\oplus L^\ast$. We fix a self-dual lattice $\Lam_{2n}\subset V_{2n}$ equipped with the polarization $\Lam_{2n}= \mathcal{L}\oplus \mathcal{L}^\ast$ inducing the polarization of $V_{2n}$. This induces $\calo_F$-structures on $\G = \U(V_{2n})$, $\rH= \GL(L)$, and $\Q_{ii}=\G/\rH$ which we denote by $\G_0$, $\rH_0$ and $\Q_{ii,0}$ as above. We also obtain $k$-th congruence subgroups $(\G_k,\rH_k)$ as above, and let $\Q_{ii,k}(\calo_F)$ denote the $\calo_F$-points of the associated quotient.

\begin{Thm}\label{Thm: fundamental lemma ii}
For all $k\in \zz_{\geq0}$, the functions $\bfun_{\X_k(\calo_F)}\otimes \bfun_{\calo_{{F},n}}$ and $\bfun_{\Q_{ii,k}(\calo_F)}$
    are $(\eta,1)$-transfers.
    \end{Thm}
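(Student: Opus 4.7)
The plan is to deduce Theorem \ref{Thm: fundamental lemma ii} by combining three ingredients that form the backbone of Part \ref{Part: FL}: (i) a descent converting the statement into its analogue on the Lie algebras of $\G_{ii}$ and $\G'$, (ii) a contraction reducing this infinitesimal statement to an identity between a stable orbital integral on $\Herm_{n}^{\circ}\times \Herm_{n}^{\circ}$ and a mirabolic orbital integral on $\GL_n(F)\times F_n$, and (iii) the key identity of Theorem \ref{Thm: main local result 2 var}. No new global input is needed at the variety level; the arithmetic depth is concentrated in Theorem \ref{Thm: main local result 2 var}, which is proved by a separate global trace formula argument in Part \ref{Part: trace proof global}.

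First I would invoke Proposition \ref{Prop: descent fundamentals} (proved in Appendix \ref{Sec: descent}) to reduce the statement to its infinitesimal analogue, Theorem \ref{Thm: fundamental lemma stable Lie}, at each level $k$. The geometric input is a Cayley map identifying, for each $k\geq 0$, the $k$-th congruence neighborhood of the identity on $\Q_{ii}$ (resp.\ $\X$) with the corresponding $\vp^k$-scaled lattice in the tangent space at the identity. Under this identification, $\bfun_{\Q_{ii,k}(\calo_F)}$ (resp.\ $\bfun_{\X_k(\calo_F)}\otimes \bfun_{\calo_{F,n}}$) becomes the indicator of a natural lattice in the Lie algebra of the symmetric pair, and one checks that the matching of regular semisimple invariants and the equality of transfer factors of Definition \ref{Def: transfer} are preserved. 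Since $E/F$ is unramified, the factor $2^{|S_1^{ram}(y)|}$ is trivial.

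Next, using the block-matrix description \eqref{eqn: element of inert-inert} of $\Q_{ii}$, the off-diagonal projection $x\mapsto (b,c)$ defines a contraction (in the sense of \S\ref{Section: contractions}) of $\Q_{ii}$ onto $\Herm_{n}^{\circ}\times \Herm_{n}^{\circ}$ equipped with the twisted $\Res_{E/F}\GL_n$-action of \S\ref{Section: unitary orbits}; by Lemma \ref{Lem: orbits along contraction}, this identifies $\SO^{\rH}(\bfun_{\Q_{ii,0}(\calo_F)},x)$ with the stable orbital integral $\SO^{\GL_n(E)}(\phi\otimes\phi',(b,c))$ for $\phi=\phi'=\bfun_{\Herm_{n}^{\circ}(\calo_F)}$. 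An analogous contraction on the linear side turns $\Orb^{\rH',\ul{\eta}}_0(\bfun_{\X_0(\calo_F)}\otimes \bfun_{\calo_{F,n}},y)$ into the mirabolic orbital integral $\Orb^{\GL_n(F),\eta}(\bfun_{\GL_n(\calo_F)}\otimes \bfun_{\calo_{F,n}},z)$ studied in \S\ref{Section: mirabolic integrals}. The corresponding congruence-level versions for $k>0$ are handled by the same procedure, tracking how the level propagates through each contraction. At this point the theorem reduces precisely to Theorem \ref{Thm: main local result 2 var} applied to $\phi=\phi'=\bfun_{\Herm_{n}^{\circ}(\calo_F)}$, whose convolution lies in the spherical Hecke algebra and whose Hironaka transform is exactly $\bfun_{\GL_n(\calo_F)}$ (compatibly with the higher-level analogue for $k>0$).

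The main obstacle in this plan is bookkeeping rather than conceptual. One must verify that the chain descent $\to$ contraction $\to$ Hironaka is strictly compatible with the measure normalizations of \S\ref{measures}, with the transfer factor of Definition \ref{Def: transfer on X}, and with the $L$-factor $L(s,\T_y,\eta_0)$ appearing in the regularization of $\Orb^{\rH',\ul{\eta}}$, so that no spurious signs or powers of $q$ appear in the final comparison. In particular one must check that the linear test function contracts to precisely $\Hir(\bfun_{\Herm_{n}^{\circ}(\calo_F)}\ast \bfun_{\Herm_{n}^{\circ}(\calo_F)})\otimes \bfun_{\calo_{F,n}}$ rather than a twist of it, and that the congruence lattices on both sides are matched with no shift in level. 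Once these compatibilities are confirmed, Theorem \ref{Thm: fundamental lemma ii} follows by assembling the three steps.
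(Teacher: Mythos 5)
Your overall outline --- descent via Proposition~\ref{Prop: descent fundamentals}, then a contraction, then Theorem~\ref{Thm: main local result 2 var} --- is the paper's strategy, but the middle of the argument as you describe it does not close, for two related reasons.

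First, you identify the wrong test function at the output of the descent. After the reduction to the Lie algebra, the unitary-side test function is $\bfun_{\fq_{ii,0}(\calo_F)} = \bfun_{\Herm_n(\calo_F)}\otimes\bfun_{\Herm_n(\calo_F)}$, the indicator of \emph{all} integral Hermitian pairs, \emph{not} $\bfun_{\Herm_n^\circ(\calo_F)}\otimes\bfun_{\Herm_n^\circ(\calo_F)}=\bfun_0\otimes\bfun_0$. Restricted to the regular locus, $\bfun_{\Herm_n(\calo_F)}$ decomposes as $\sum_{d\geq 0}\Phi_d$ along determinant valuation, and the orbital integral of the unit in the Hecke module (your $\bfun_0=\Phi_0$) is only the $d=0$ term. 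The corresponding convolution on the linear side is $\bfun_{\fgl_n(\calo_F)}\ast\bfun_{\fgl_n(\calo_F)}$, which via Offen's computation satisfies $\Hir(\bfun_{\Herm_n(\calo_F)}\ast\bfun_{\Herm_n(\calo_F)})=\eta^n\cdot(\bfun_{\fgl_n(\calo_F)}\ast\bfun_{\fgl_n(\calo_F)})$, with the $\eta^n$ discrepancy absorbed by the transfer factors \eqref{eqn: linear Lie transfer factor} versus \eqref{eqn: correct xiao transfer factor}. So one does \emph{not} invoke a single instance of Theorem~\ref{Thm: main local result 2 var}: one needs it for all pairs $\Phi_i\otimes\Phi_j$ with $i+j=d$, $d=\val(\det(x_1x_2))$, i.e.\ the full strength of the theorem on the spherical Hecke module. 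This is precisely what the paper does in Proposition~\ref{prop: implies FLII}.

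Second, the map $x\mapsto(b,c)$ from $\Q_{ii}$ to $\Herm_n^\circ\times\Herm_n^\circ$ is not a contraction in the sense of \S\ref{Section: contractions}: $\rH_{ii}\simeq\Res_{E/F}\GL_n$ is not a product $\rH_1\times\rH_2$, and $\pi_2$ is not an $\rH_2$-torsor (given $(b,c)$, the entry $a$ is constrained by $a^2=I-bc$, $ab=ba^\sharp$, etc., which is not a free action of a group). What replaces it is the descent machinery of Appendix~\ref{Sec: descent}: the Cayley transform $\fc_{\pm1}$ handles the very-regular ($\heartsuit$) locus, but for $k=0$ there are orbits outside both $\heartsuit$-loci that are treated via topological Jordan decomposition and reduction to descendant symmetric pairs (Lemmas~\ref{Lem: new decomp}--\ref{eqn: almost there general}). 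This is strictly more than ``a Cayley map identifying the $k$-th congruence neighborhood with its tangent space.'' For $k\geq1$ the Cayley transform alone suffices, but the $k=0$ case --- from which the $k>0$ cases are deduced by rescaling --- requires the additional Jordan step. The contraction that is actually used (sending $\fX$ to $\fgl_n$ via $R(X,Y)=YX$) lives entirely on the linear side and appears inside the proof of Proposition~\ref{prop: implies FLII}.
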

    \subsubsection{The endoscopic case} Finally, we have the fundamental lemma associated to the endoscopic comparison in Conjecture \ref{Conj: ii simple endoscopy}. Let all notations be as in the previous two subsections.

    \begin{Thm}\label{Thm: fundamental lemma varepsilon}
For all $k\in \zz_{\geq0}$, the functions $\bfun_{\Q_{ii,k}(\calo_F)}$ and the collection $((-1)^k\bfun_{\Q_{si,k}(\calo_F)},0)$ are $\varepsilon$-transfers.
    \end{Thm}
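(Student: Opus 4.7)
The plan is to reduce Theorem \ref{Thm: fundamental lemma varepsilon} to its Lie-algebra analog (Theorem \ref{Thm: fundamental lemma endoscopic Lie}) via Proposition \ref{Prop: descent fundamentals}, and then to match both sides on the infinitesimal level by a contraction-to-the-product argument, ultimately appealing to Theorem \ref{Thm: main local result 2 var}. The descent step relies on the Cayley transform, which under the hypothesis $p>\max\{e+1,2\}$ identifies the congruence neighborhoods $\Q_{ii,k}(\calo_F)$ and $\Q_{\tau_1,\tau_n,k}(\calo_F)$ with the corresponding neighborhoods in the associated infinitesimal symmetric varieties while preserving the transfer factor $\De_\varepsilon(x)=\eta(\det b)$ and the sign $(-1)^k$. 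The point here is that both sides of the desired identity are of the same geometric type (unitary symmetric varieties under an analogous $\rH$-action), so the Cayley descent is uniform.

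On the Lie algebra I would proceed by contraction in both settings. The inert-inert infinitesimal variety parametrizes triples $(a,b,c)$ with $a\in\Res_{E/F}(\fgl_n)$ and $b,c\in\Herm_{n}^{\circ}$ subject to the obvious analog of \eqref{eqn: inertinert eqns}, and on the invertible locus the projection $(a,b,c)\mapsto (b,c)$ is a free contraction in the sense of Assumption \ref{Assumption: contraction}. By Lemma \ref{Lem: orbits along contraction}, this identifies the regular stabilizers with those of $(b,c)\in\Herm_n^\circ\times\Herm_n^\circ$ under the two-variable action $g\ast(x_1,x_2)=(gx_1g^\ast,(g^\ast)^{-1}x_2g^{-1})$, and since both $\De_\varepsilon(x)$ and the cocycle defining $\varepsilon$ depend only on $b$, the $\varepsilon$-orbital integral on the inert-inert Lie algebra contracts to a stable orbital integral on $\Herm_n^\circ\times \Herm_n^\circ$ of the product unit $\bfun_{\Herm_{n}^{\circ}(\calo_F)}\otimes \bfun_{\Herm_{n}^{\circ}(\calo_F)}$. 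For the split-inert side, the contraction $R$ from Lemma \ref{Lem: cat quotient si} (applied in both off-diagonal blocks via the free action of $\U(V_{\tau_1})$ on the invertible locus) carries $\bfun_{\Q_{\tau_n,\tau_n,k}(\calo_F)}$ into a multiple of the same product unit; the zero in the second slot of $((-1)^k\bfun_{\Q_{si,k}},0)$ annihilates the non-split inner form on that side. At matching regular semisimple parameters both expressions coincide with the same stable orbital integral on $\Herm_n^\circ\times \Herm_n^\circ$, which is the special case of Theorem \ref{Thm: main local result 2 var} with $\phi=\phi'=\bfun_{\Herm_{n}^{\circ}(\calo_F)}$.

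The main obstacle I anticipate is bookkeeping the identification between $\varepsilon\in H^1(F,\rH_x)^D$ and the cohomological labelling of pure inner forms $\tau_1\in\calv_n(E/F)$: after contraction, the twist $\eta(\det b_\al)$ must reorganize the sum over $\al$ into the stable orbital integral associated with the split Hermitian $n$-space, and one must verify that integral representatives in $\Q_{ii,k}(\calo_F)$ satisfy $\eta(\det b)=1$ so that the non-split inner form on the split-inert side indeed contributes nothing. The sign $(-1)^k$, which arises in Theorem \ref{Thm: fundamental lemma si} from a measure factor in the Cayley descent, must then be seen to arise identically on the inert-inert side; once this is verified, the two reductions match on the nose and Theorem \ref{Thm: main local result 2 var} closes the argument.
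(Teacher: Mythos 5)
The overall roadmap you describe (descend to the Lie algebra via Proposition \ref{Prop: descent fundamentals}, then reduce to Theorem \ref{Thm: main local result 2 var}) matches the paper's strategy, but the crucial middle step is handled incorrectly, and the error is not a detail: it collapses the endoscopic case into the stable one.

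You assert that the $\varepsilon$-orbital integral on $\fq_{ii}$ contracts to a \emph{stable} orbital integral of $\bfun_{\Herm_n^\circ(\calo_F)}\otimes\bfun_{\Herm_n^\circ(\calo_F)}$ on $\Herm_n^\circ\times\Herm_n^\circ$, so that both sides of the endoscopic identity reduce to the single case $\phi=\phi'=\bfun_{\Herm_n^\circ(\calo_F)}$ of Theorem \ref{Thm: main local result 2 var}. This is false. First, the infinitesimal variety $\fq_{ii}$ \emph{is} already $\Herm_n\times\Herm_n$ (Lemma \ref{Lem: unitary Lie}), and the $\varepsilon$-twist $\alpha\mapsto\eta(\det(x_{1,\alpha}))\eta(\det(x_1))^{-1}$ does not become trivial on the unit's support: $\Herm_n(\calo_F)$ contains elements of every determinant valuation, so as $\alpha$ ranges over $H^1(F,\rH_x)$ the sign $\eta(\det(x_{1,\alpha}))$ genuinely alternates. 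The $\varepsilon$-orbital integral is therefore an alternating sum of orbital integrals over determinant slices, not a stable orbital integral. Indeed, if your identification were correct, the $\varepsilon$-transfer in Theorem \ref{Thm: fundamental lemma varepsilon} would coincide with the $(\eta,1)$-transfer in Theorem \ref{Thm: fundamental lemma ii}, which is not the case. Second, the split-inert side does \emph{not} contract to $\Herm_n^\circ\times\Herm_n^\circ$: the infinitesimal variety $\fq_{si}=\Res_{E/F}(\fgl_n)$ contracts under $r(X)=-X^\ast\tau_1^{-1}X\tau_2^{-1}$ to a single copy of $\Herm_{\tau_2}^\circ$, and (as in Proposition \ref{Prop: implies FLSI}) its unit corresponds to the function $\Phi=r_!(\bfun_{\fgl_n(\calo_E)})$, which is a genuine element of the Hecke module completion, not the product unit.

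The mechanism the paper uses to bridge this gap, and which your proposal is missing, is the decomposition $\bfun_{\Herm_n(\calo_F)}=\sum_{d\geq 0}\Phi_d$ into determinant slices. One writes
\[
\De_\varepsilon(x_1,x_2)\Orb^{\GL_n(E),\varepsilon}(\bfun\otimes\bfun,(x_1,x_2))=\sum_{i=0}^d(-1)^i\SO^{\GL_n(E)}(\Phi_i\otimes\Phi_{d-i},(x_1,x_2)),
\]
applies Theorem \ref{Thm: main local result 2 var} (really Corollary \ref{Cor: product orbital ints}) to each term separately to convert to $\SO^{\U(V_n)}(\Phi_i\ast\Phi_{d-i},X)$, and then one must show that the resulting spherical function $\Phi_\varepsilon=\sum_d\sum_i(-1)^i\Phi_i\ast\Phi_{d-i}$ equals $\Phi=r_!(\bfun_{\fgl_n(\calo_E)})$. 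That last step is the generating-function identity of Lemma \ref{Lem: power series identity} together with Hironaka's formula $\Hir(\Phi_d)=(-1)^{nd}\bfun_d$, and it is not a formality: it is precisely where the alternating signs from $\varepsilon$ interlock with the sign in $\Phi$. Your proposal needs to be amended to incorporate the determinant slicing, the alternating-sum bookkeeping, and the power series identity; without them the argument does not close.
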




The proofs of Theorems \ref{Thm: fundamental lemma si}, \ref{Thm: fundamental lemma ii}, and \ref{Thm: fundamental lemma varepsilon} will occupy the remainder of the paper. This relies on linearizing via descent to the Lie algebra and then relating the orbital integrals to those on another space via a {contraction map}.

For the remainder of this part, we let $E/F$ be a quadratic extension of $p$-adic fields.

\section{Statements on the Lie algebra}\label{Section: Lie}

In this section, we formulate the necessary objects and results on the Lie algebra. The actual descent to this linearized setting is handled in Appendices \ref{Section: proof weak transfer} and \ref{Sec: descent}.

\subsection{The infinitesimal setting}
We recall the infinitesimal analogues of the symmetric varieties. We also state the relevant invariant theoretic results for these representations. As the proofs are elementary and analogous to the case of the varieties, we omit the details.
\subsubsection{The infinitesimal theory for the linear variety}\label{Section: linear Lie}
Let $y_0= 1\in \X(F)=\G'(F)/\rH'(F)$ denote the unit orbit. The tangent space $T_{y_0}(\X)$ of $\X$ at $y_0$ is the $F$-rational $\rH'$-representation 
\[
\mathfrak{X}:=T_{y_0}(\X)\cong \fgl_n\oplus \fgl_n,
\]
where the action is
\[
(h_1,h_2)\cdot (X,Y) = (h_1Xh_2^{-1}, h_2Yh_1^{-1}).
\]
As in the variety setting, we are interesting in the $\rH'$-action on
\[
\mathfrak{X}_{ext}(F):= \mathfrak{X}(F)\times F_n.
\]
The following lemma sums up the important invariant-theoretic statements.
\begin{Lem}\label{Lem: inft linear invt theory}
\begin{enumerate}
    \item An element $(X,Y)\in \mathfrak{X}(F)$ is $\rH'$-regular semi-simple if and only if $YX\in \GL_n(F)$ is regular semi-simple.
    \item An element $((X,Y),w)\in \mathfrak{X}_{ext}(F)$ is \emph{strongly regular} if and only if $(X,Y)$ is $\rH'$-regular semi-simple and 
\[
\det[w|w(YX)|\cdots |w(YX)^{n-1}]\neq 0;
\]
in this case, $(X,Y)$ is regular semi-simple and $\rH'_{((X,Y),w)}=1$.
\item The natural $\GL_n$-invariant map 
\begin{align*}
   R: \fgl_n\oplus \fgl_n&\lra \fgl_n\\
    (X,Y)&\longmapsto YX
    \end{align*}
     gives a categorical quotient and intertwines the $\rH'$-action with the adjoint action of $\GL_n$ with respect to projecting to the second factor. In particular, the characteristic polynomial of $YX$ gives a categorical quotient for the $\rH'$-action on $\mathfrak{X}$.
\end{enumerate}
\end{Lem}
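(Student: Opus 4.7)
The plan is to deduce all three parts from the well-understood invariant theory of the adjoint representation of $\GL_n$ on $\fgl_n$ by using the contraction machinery of Section \ref{Section: contractions}. Fix the open subvariety $\mathfrak{X}^{invt}\subset \mathfrak{X}$ consisting of pairs $(X,Y)$ with $X\in \GL_n$, and take $\rH_2=\GL_n\times\{1\}$ and $\rH_1=\{1\}\times\GL_n$. The action of $\rH_2$ on $\mathfrak{X}^{invt}$ is free: $(h_1,1)\cdot(X,Y)=(h_1X,Yh_1^{-1})$, and $(h_1X,Yh_1^{-1})=(X,Y)$ with $X$ invertible forces $h_1=1$.

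First I would handle part (3). A direct calculation gives
\[
R\bigl((h_1,h_2)\cdot(X,Y)\bigr)=(h_2Yh_1^{-1})(h_1Xh_2^{-1})=h_2(YX)h_2^{-1},
\]
so $R$ is $\rH'$-equivariant with respect to the projection $(h_1,h_2)\mapsto h_2$. Under the contraction, $\mathfrak{X}^{invt}\to \mathfrak{X}^{invt}\sslash(\GL_n\times\{1\})\cong \fgl_n$ is given by $R$, with the residual $\{1\}\times\GL_n$-action matching the adjoint action of $\GL_n$ on $\fgl_n$. Composing with the categorical quotient $\fgl_n\sslash\GL_n\cong \A^n$ by the characteristic polynomial yields the categorical quotient of $\mathfrak{X}$; the quotient extends from the open dense $\mathfrak{X}^{invt}$ to all of $\mathfrak{X}$ since invariants of a representation are determined on a dense open.

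For part (1), I would apply Lemma \ref{Lem: orbits along contraction}: an element $(X,Y)\in \mathfrak{X}^{invt}(F)$ is $\rH'$-regular semi-simple if and only if $R(X,Y)=YX$ is $\GL_n$-regular semi-simple on $\fgl_n(F)$, which is the classical condition of distinct eigenvalues. In particular, a regular semi-simple $YX$ with $\det(YX)\neq 0$ lies in $\GL_n(F)$; conversely, the regular semi-simple locus of $\mathfrak{X}$ is automatically contained in $\mathfrak{X}^{invt}$, since otherwise $\det(YX)=0$ would collide with the spectrum having distinct invertible eigenvalues. Thus $(X,Y)$ is $\rH'$-regular semi-simple iff $YX\in \GL_n(F)$ is regular semi-simple in the sense required.

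Finally, for part (2), Lemma \ref{Lem: orbits along contraction} gives an isomorphism $(\rH')_{(X,Y)}\iso (\GL_n)_{YX}=T_{YX}$ via projection to $h_2$, which reduces both the strong-regularity characterization and the triviality of the stabilizer to the corresponding assertions for the adjoint $\GL_n$-action on $\fgl_n\times F_n$. These are precisely the arguments of Lemma \ref{Lem: torsor of vectors}: passing to $\overline{F}$, diagonalize $YX$ and use a Vandermonde determinant argument to show that $\det[w\mid w(YX)\mid\cdots\mid w(YX)^{n-1}]\neq 0$ is equivalent to $w$ having nonzero component along each eigenline of $YX$, which forces $T_{YX}\cdot w=w$ to imply $T_{YX}$-stabilizer trivial. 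I do not anticipate any serious obstacle; the entire argument is the infinitesimal shadow of Lemma \ref{Lem: quotients natural} and Lemma \ref{Lem: torsor of vectors}, and the only book-keeping is to track the equivariance of $R$ and to verify $\mathfrak{X}^{rss}\subset \mathfrak{X}^{invt}$.
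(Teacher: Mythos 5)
Your overall strategy is the right one — reduce everything to the adjoint action of $\GL_n$ on $\fgl_n$ via the contraction $R(X,Y)=YX$ — and the paper itself only gestures at the proof, referring to the variety case, so comparing with the paper there is little to say. Parts (2) and (3) are fine (modulo the usual level of detail about the First Fundamental Theorem of invariant theory). However, there is a genuine gap in your argument for part (1), at the step where you need $\mathfrak{X}^{rss}\subset\mathfrak{X}^{invt}$.

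Lemma \ref{Lem: orbits along contraction} only gives the equivalence ``$(X,Y)$ is $\rH'$-regular semi-simple $\iff$ $YX$ is regular semi-simple'' \emph{for points of} $\mathfrak{X}^{invt}$, because the contraction hypothesis (Assumption \ref{Assumption: contraction}) is a statement about the invertible locus. To conclude part (1), you must separately show that a regular semi-simple $(X,Y)$ necessarily has $X,Y\in\GL_n$ (equivalently, $YX\in\GL_n$); this is the infinitesimal analogue of the fact, cited in the proof of Lemma \ref{LemDef: linear OI} via \cite[Lemma 4.3]{JacquetRallis}, that the off-diagonal blocks of a relatively regular semi-simple element of $\X$ are invertible. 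Your justification --- ``otherwise $\det(YX)=0$ would collide with the spectrum having distinct invertible eigenvalues'' --- is circular: nothing yet forces the eigenvalues of $YX$ to be nonzero, and indeed a singular $YX$ can perfectly well be regular semi-simple as an element of $\fgl_n$. The correct argument is a fiber comparison. Suppose $\det(X)=0$ but $YX$ is regular semi-simple. Over $\Fbar$ choose $Z\in\fgl_n(\Fbar)$ conjugate to $YX$, so $(I_n,Z)$ lies in the same fiber of $\car_{\fX}$ as $(X,Y)$. If $(h_1,h_2)\cdot(X,Y)=(I_n,Z)$ then $h_1Xh_2^{-1}=I_n$, which is impossible since $X$ is singular. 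So the fiber contains at least two $\rH'(\Fbar)$-orbits and $(X,Y)$ is not regular semi-simple. (The case $\det(Y)=0$ is symmetric, using the point $(Z,I_n)$.) With this lemma in hand, your application of Lemma \ref{Lem: orbits along contraction} then correctly finishes (1), and the rest of your argument goes through.
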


Later on it will be useful to isolate the following Zariski-open subvariety.
\begin{Def}\label{Def: invertible locus Lie}
    The \emph{invertible locus} $\fX^{invt}(F)\subset \fX(F)$ is the locus $(X,Y)\in \fX(F)$ such that $\det(X)\det(Y)\neq 0$.
\end{Def}
As in the variety case, for any infinitesimal unitary symmetric space $\fq$, we set $\fq^{invt}\subset \fq$ to be the obvious analogue of $\fq^{invt}$.

\subsubsection{The infinitesimal theory for the unitary varieties}
Let $\Q=\G/\rH$ be a unitary symmetric variety. Let $\fg$ denote the Lie algebra of the unitary group $\G$. Consider involution $\theta$ of $\G$ such that $\rH=\G^\theta$. The differential of $\theta$ acts on $\fg$ and induces a $\zz/2\zz$-grading
\[
\fg= \fh\oplus \fq.
\]
Then $\fh=\Lie(\rH)$ and $\rH$ acts on $\fq$ by restriction of the adjoint representation, and the pair $(\rH,\fq)$ is the infinitesimal symmetric space associated to $\Q$, on which we consider the stable orbital integrals (cf. \S \ref{Section: orbital integrals conventions}).

 
\begin{Lem}\label{Lem: unitary Lie}
    \begin{enumerate}
        \item (split-inert case) Fix $\tau_1,\tau_2\in\calv_{n}(E/F)$. We have a natural isomorphism of $H(F)$-representations
\begin{align*}
    \fq_{\tau_1,\tau_2}\simeq \Hom_E(V_{\tau_2},V_{\tau_1})
\end{align*}
where the action on the right-hand side is given by pre- and post-composition. Choosing bases, we identify
$$
\fq_{si}:=\End(V_n)\simeq \Res_{E/F}(\fgl_n),
$$ where $\rH_{\tau_1,\tau_2}=\U(V_{\tau_1})\times \U(V_{\tau_2})$ acts via $(h_1,h_2)\cdot X = h_1 X h_2^\ast.$ 
The map
        \[
        \car_{si}:\fq_{si}(F) \lra F^n,
        \]
        where $\car_{si}(X)\in F^n$ is the coefficients of the characteristic polynomial of $-X^\ast \tau_1^{-1} X\tau_2^{-1}$, is a categorical quotient for the $\rH_{\tau_1,\tau_2}$-action.
\item(inert-inert case) Identify $\G_{ii}=\U(V_{2n})$ and $\rH_{ii}=\Res_{E/F}\GL(L)$ where $V_{2n}=L\oplus L^\ast$, we have a natural isomorphism of $\rH_{ii}$-representations
$$\fq_{ii}:=\Herm(L^\ast,L)\times \Herm(L,L^\ast)\simeq\Herm_n\times \Herm_n,$$ where $g\in \GL(L)$ acts via $g\cdot (x_1,x_2) = (gx_1 g^\ast, (g^\ast)^{-1}x_2 g^{-1})$. The map
        \[
        \car_{ii}:\fq_{ii}(F)\lra F^n,
        \]
        where $\car_{ii}(x_1,x_2)\in F^n$ is the coefficients of the characteristic polynomial of $x_2x_1$, is a categorical quotient for the $\rH_{ii}$-action.
    \end{enumerate}
\end{Lem}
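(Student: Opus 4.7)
The proof is the infinitesimal analogue of the variety-level invariant theory of \S \ref{Section: unitary invariant theory} (specifically Lemmas \ref{Lem: cat quotient si} and \ref{Lem: cat quotient ii}). In both cases the plan is to first compute $\fg = \Lie(\G)$ together with its decomposition $\fg = \fh \oplus \fq$ under the involution $d\theta = \Ad(\de_0)$ (where $\fq$ is the $(-1)$-eigenspace), then identify $\fq$ with the claimed explicit $F$-vector space and verify the adjoint $\rH$-action, and finally establish the categorical quotient statement. Throughout, $\fg$ is the $F$-subspace of $\fgl_{2n}(E)$ cut out by $X + X^\sharp = 0$ with $X^\sharp = \tau X^\ast \tau^{-1}$; since $\de_0 = \diag(I_n, -I_n)$ satisfies $\de_0^2 = I$, the decomposition $\fg = \fh \oplus \fq$ agrees with the block-diagonal/block off-diagonal decomposition intersected with the unitary condition.

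For the split-inert case, specializing $\tau = \diag(\tau_1, \tau_2)$, a direct matrix computation on $X = \begin{psmatrix} 0 & b \\ c & 0 \end{psmatrix}$ reduces $X + X^\sharp = 0$ to two independent constraints, one on each of $b$ and $c$, each expressing a $\tau_i$-twisted anti-Hermiticity condition; after fixing bases of $V_{\tau_1}$ and $V_{\tau_2}$ and suitably normalizing by $\tau$-factors, this space is identified with $\fgl_n(E) = \Res_{E/F}(\fgl_n)$ and intrinsically with $\Hom_E(V_{\tau_2}, V_{\tau_1})$, with the adjoint $\rH$-action matching $(h_1, h_2) \cdot X = h_1 X h_2^\ast$. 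For the inert-inert case with $\tau = \diag(\tau', \tau')$, the same computation gives two independent anti-Hermiticity conditions on $b \tau'$ and $c \tau'$; multiplying each by a fixed trace-zero element $\de \in E$ converts these to Hermitian conditions, yielding $\fq_{ii} \simeq \Herm_n \times \Herm_n$, and the stated $\rH$-action $g \cdot (x_1, x_2) = (g x_1 g^\ast, (g^\ast)^{-1} x_2 g^{-1})$ then follows by tracking the restriction of $\Ad$.

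For the categorical quotient assertions, the $n$ coefficients of each characteristic polynomial in question are clearly $\rH$-invariant and take values in $F$ (not merely $E$) by an elementary Hermiticity argument: $X^\ast \tau_1^{-1} X$ is Hermitian in the split-inert case, and $x_1 x_2$ is conjugate to $x_2 x_1$ by $\tau'$ in the inert-inert case. The main remaining step, and the chief technical point, is to show that these $n$ invariants generate the full ring $F[\fq_\bullet]^{\rH}$. I would prove this using a Cayley transform $c(X) = (I + X)(I - X)^{-1}$, which is an $\rH$-equivariant birational isomorphism from $\fq_\bullet$ to $\Q_\bullet$ intertwining $\car_\bullet$ on the Lie algebra with the variety invariant map of Lemmas \ref{Lem: cat quotient si} and \ref{Lem: cat quotient ii} up to an invertible polynomial substitution on $\A^n$; verifying this intertwining is a direct block-matrix calculation, and once established it reduces the Lie algebra statement to the already-known variety statement. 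Alternatively, base-changing to $\overline{F}$ splits the unitary structures and reduces each problem to classical invariant theory of $\fgl_n$ under a two-sided $\GL_n$-action, where generation of the invariants is standard and descent back to $F$ is furnished by reductivity of $\rH$.
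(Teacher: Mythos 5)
The paper omits the proof of this lemma, stating only that it is ``elementary and analogous to the case of the varieties,'' so your proposal fills an actual gap in the text. Your second route (base change to $\overline{F}$ plus classical invariant theory, descending via reductivity of $\rH$ and characteristic zero) is sound and is presumably what the authors have in mind; one can equally argue via the contraction maps exactly as in the linear case (Lemma~\ref{Lem: inft linear invt theory}), reducing each case to the Chevalley restriction theorem for the adjoint action on $\fgl_n$, resp.\ on a unitary Lie algebra.

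Two points need correction, however. First, your split-inert block computation is off. For $X=\begin{psmatrix}0&b\\c&0\end{psmatrix}$, the equation $X+X^\sharp=0$ produces the pair of block equations $b=-\tau_1 c^\ast\tau_2^{-1}$ and $c=-\tau_2 b^\ast\tau_1^{-1}$, but these are \emph{equivalent} (apply $\sharp_{21}$ to the first), so the condition is a single coupling relation eliminating one block---this is precisely what gives $\fq_{si}\simeq\Hom_E(V_{\tau_2},V_{\tau_1})\simeq\Res_{E/F}\fgl_n$. Two genuinely independent $\tau_i$-twisted anti-Hermiticity constraints on $b$ and $c$, as you describe, would instead produce $\Herm\times\Herm$, which is the \emph{inert-inert} answer and a different $\rH$-module; you appear to have transposed the structure of the two cases. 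Second, the Cayley-transform route contains a gap: $X\mapsto(1+X)(1-X)^{-1}$ induces on the eigenvalues the rational substitution $\lambda\mapsto(1+\lambda)/(1-\lambda)$, so the induced change of coordinates on $\A^n$ is rational, not an ``invertible polynomial substitution.'' A birational identification of the two quotients over a dense open locus shows that any regular $\rH_\bullet$-invariant is a rational function of the coefficients of $\car_\bullet$, but does not by itself show that it is a polynomial in them; one would need a separate normality or density argument to close this. It is cleaner to drop that route entirely in favor of the base-change argument, or to argue directly via the contraction $R$.
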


 \subsection{Orbital integrals}\label{Section: linear statements}
We begin with the linear case. For a pair of characters $\underline{\eta}=(\eta_0,\eta_2)$, we define the transfer factor 
\begin{equation}\label{eqn: linear Lie transfer factor}
    \widetilde{\omega}((X,Y),w) := \eta_2^n(YX) \eta_0\left(\det[w|w(YX)|\cdots |w(YX)^{n-1}]\right)\eta_2(Y),
\end{equation}
where $((X,Y),w)$ is strongly regular in $\fX(F)\times F_n$.

 For $\phi\otimes \Phi\in C_c^\infty(\mathfrak{X}(F)\times F_{n})$ and for $[(X,Y),w]$ strongly regular, we consider the regularized orbital integral $ \Orb_{s_0,s_1}^{\rH',\ul{\eta},\natural}(\phi\otimes\Phi,[(X,Y),w])$ given by
    \begin{equation}\label{eqn: Lie alg linear OI with s}
      \frac{\widetilde{\omega}((X,Y),w) }{L(s_0, \eta_0,T_{YX})} \int_{\rH'(F)}\phi(h^{-1}\cdot (X,Y))\Phi(wh^{(2)})\eta_{0}(h^{(2)}) \eta_2(h) |h^{(2)}|^{s_0} |h|^{s_1}\,dh.
\end{equation}
    By linearity, these extend to distributions on $C_c^\infty(\mathfrak{X}(F)\times F_{n})$. The analogue of Lemma/Definition \ref{LemDef: linear OI} holds with a similar proof.
\begin{Def}
 For $\widetilde{\phi}\in C_c^\infty(\mathfrak{X}(F)\times F_n)$ and $(X,Y)\in\fX^{rss}(F)$, we define
\begin{equation}\label{eqn: OI lin Lie}
\Orb_{s_1}^{\rH',\ul{\eta}}(\widetilde{\phi},(X,Y)):=\Orb_{s_0,s_1}^{\rH',\ul{\eta},\natural}(\widetilde{\phi},[(X,Y),w])\bigg|_{s_0=0},
\end{equation}
for any $w\in F_n$ such that $[(X,Y),w]$ is strongly regular.
\end{Def}

Let $(\rH,\fq)$ be an infinitesimal unitary symmetric variety. We say that a regular semi-simple element $X\in \fq$ matches an regular semi-simple element $(X,Y)\in \mathfrak{X}(F)$ if they have matching invariant polynomials (see Lemmas \ref{Lem: inft linear invt theory} and \ref{Lem: unitary Lie}).

\begin{Def}[Smooth transfer] Set $\underline{\eta}=(\eta_0,\eta_2)$.
\begin{enumerate}
    \item (split-inert: $\underline{\eta} = (\eta,\eta)$) Fix $\tau_2\in \calv_n(E/F)$. We say that $\widetilde{\phi}\in C^\infty_c(\fX(F)\times F_{n})$ and a collection
    \[
   \underline{\phi}= (\phi_{\tau_1})\in \bigoplus_{\tau_1\in \calv_n(E/F)}C_c^\infty(\fq_{\tau_1,\tau_2}(F))
    \]
    are \emph{$(\eta,\eta)$-transfers} if for any regular semi-simple element $(X,Y)\in \fX(F)$
\begin{enumerate}
\item  if there exists $\tau_1$ and $Z\in \fq_{\tau_1,\tau_2}(F)$ matching $(X,Y)$, 
\begin{equation*}
  2^{|S_1^{ram}(X,Y)|}\Orb^{\rH',(\eta,\eta)}_{0}(\widetilde{\phi},(X,Y))= \SO^{\rH_{\tau_1,\tau_2}}(\phi_{\tau_1},Z),
\end{equation*}
where $S_1^{ram}(X,Y) = \{i\in S_1: \eta\circ\Nm_{F_i/F}\text{ is ramified}\}$;
\item if there does not exist $\tau_1$ and $Z\in \fq_{\tau_1,\tau_2}(F)$ matching $(X,Y)$, 
\begin{equation*}
  \Orb^{\rH',(\eta,\eta)}_{0}(\widetilde{\phi},(X,Y))= 0.
\end{equation*}
\end{enumerate}
    \item (inert-inert: $\underline{\eta} = (\eta,1)$) We say that $\widetilde{\phi}\in C^\infty_c(\fX(F)\times F_{n})$ and $f\in C_c^\infty(\fq_{ii}(F))$
    are \emph{smooth (ii)-transfers} if for any matching regular semi-simple elements $(X,Y)\in \fX(F)$  and $(x_1,x_2)\in \fq_{ii}(F)$, 
\begin{equation*}
  2^{|S_1^{ram}(X,Y)|}\Orb^{\rH',(\eta,1)}_{0}(\widetilde{\phi},(X,Y))= \SO^{\rH_{ii}}(f,(x_1,x_2)).
\end{equation*}
   \end{enumerate}
\end{Def}

\begin{Conj}\label{Conj: smooth transfer Lie}
    Set $\underline{\eta}=(\eta,\eta)$ or $(\eta,1)$. For any $\widetilde{\phi}\in C^\infty_c(\fX(F)\times F_{n})$ there exists a $\underline{\eta}$-transfer $\underline{f}$. Conversely, for any collection of functions $\underline{f}$ on the infinitesimal unitary variety, there exists a $\ul{\eta}$-transfer $\widetilde{\phi}$.
\end{Conj}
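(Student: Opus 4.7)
The plan is to reduce the transfer problem on $\fX$ to orbital-integral identities on a simpler ``mirabolic'' base space $\fgl_n(F) \times F_n$, exploiting the linear (Lie-algebraic) structure to gain access to Fourier-transform methods that are unavailable at the variety level. First I would push forward along the contraction map $R \times \mathrm{id}: \fX(F) \times F_n \to \fgl_n(F) \times F_n$, $(X,Y,w) \mapsto (YX, w)$. On the invertible locus $\fX^{invt}$, this map is a torsor for one $\GL_n$-factor of $\rH'$ (Lemma \ref{Lem: inft linear invt theory}), so by the formalism of Section \ref{Section: contractions} the normalized orbital integrals $\Orb_{s_1}^{\rH',\ul\eta}(\wt\phi,(X,Y))$ match (after accounting for the transfer factor $\wt\omega$ and the $L$-factor normalization) with mirabolic orbital integrals $\Orb^{\GL_n(F),\eta}(\pi_!\wt\phi, YX)$ of Section \ref{Section: mirabolic integrals}. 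Symmetrically, on the unitary sides the maps $(X,Y) \mapsto -X^\ast \tau_1^{-1} X \tau_2^{-1}$ in the split-inert case and $(x_1,x_2) \mapsto (x_1, x_2)$ (as an element of $\Herm_n^\circ \times \Herm_n^\circ$) in the inert-inert case reduce the stable orbital integrals on $\fq$ to stable orbital integrals on products of Hermitian spaces.

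Second, I would invoke the mirabolic transfer theorem of Xiao (\cite[Theorem 5.1]{Xiaothesis}), which matches mirabolic orbital integrals on $\fgl_n(F) \times F_n$ twisted by $\eta$ with stable orbital integrals on $\Herm_n^\circ$. Combined with the contraction reductions, this yields Conjecture \ref{Conj: smooth transfer Lie} restricted to test functions supported in the invertible locus on either side; this is essentially the content of Theorem \ref{Thm: weak smooth transfer}, obtained by transporting via the Cayley-type maps of the descent Appendix.

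Third, to globalize off the invertible locus, the plan is to use a Jacquet--Rallis style Fourier transform argument. The space $\fX$ carries a natural non-degenerate pairing $\la (X_1,Y_1),(X_2,Y_2) \ra = \tr(X_1 Y_2) + \tr(X_2 Y_1)$, and similarly on $\fq_{si}$ and $\fq_{ii}$. One expects the partial Fourier transform (in the $Y$-variable on the linear side, and in one Hermitian factor on the unitary side) to commute with transfer up to an explicit constant, as in the classical Jacquet--Rallis setting. If such a Fourier-matching identity can be established, it propagates transfer from the invertible locus (where $YX$ is large) to the neighborhood of the nilpotent cone (where $YX$ is small), and by a density/approximation argument one recovers transfer for all Schwartz--Bruhat test functions.

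The main obstacle will be establishing the Fourier-transform compatibility in our setting, especially in the inert-inert case. Unlike the Jacquet--Rallis story, the presence of relative endoscopy (Conjecture \ref{Conj: ii simple endoscopy}) means the Fourier transform may mix the stable and $\varepsilon$-parts nontrivially; correctly identifying the constant of compatibility appears to require precise matching of nilpotent orbital integrals on each symmetric pair, together with a uniform analysis of the regularizing $L$-factor $L(s_0, T_{YX}, \eta_0)$ in singular limits. Harish-Chandra semisimple descent along non-invertible orbits, combined with an inductive hypothesis on symmetric pairs of lower rank (whose centralizers realize our biquadratic setup at smaller dimension), appears to be the natural framework in which to carry out such an argument.
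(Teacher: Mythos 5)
The statement you were asked to prove is, in the paper, a \emph{conjecture}, not a theorem: the authors explicitly leave Conjecture~\ref{Conj: smooth transfer Lie} open and only establish the weak form restricted to the invertible locus (Theorem~\ref{Thm: weak smooth transfer}), remarking that the full implication ``Conjecture~\ref{Conj: smooth transfer Lie} $\Rightarrow$ Conjecture~\ref{Conj: smooth transfer}'' would require Harish-Chandra descent and is omitted since it is not needed for their global results. So there is no proof in the paper against which your argument can be checked; any complete proof you give would be a new contribution.

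Your first two steps (contraction $R: \fX \to \fgl_n$ along a $\GL_n$-torsor on the invertible locus, then invoking the mirabolic transfer of \cite[Theorem~5.1]{Xiaothesis} after matching transfer factors and $L$-factor normalizations) do reproduce the paper's proof of Theorem~\ref{Thm: weak smooth transfer}; see \S\ref{Section: proof weak transfer arg} and Lemma~\ref{Lem: same ois on invt linear}. However, your third step — a Jacquet--Rallis style Fourier-transform compatibility to propagate transfer off the invertible locus — is precisely the missing ingredient that the authors do \emph{not} supply, and your proposal does not actually prove it. The pairing you propose is reasonable, but you have not established (i) that partial Fourier transform preserves the space of $\ul\eta$-transfers up to a computable constant, (ii) that the regularizing factor $L(s_0,T_{YX},\eta_0)$ interacts well with Fourier transform near the nilpotent cone, or (iii) how the relative-endoscopic $\varepsilon$-part mixes under Fourier transform in the inert--inert case. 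These are not routine: in Zhang's original setting the argument relies on a local constancy/uncertainty-principle mechanism and an explicit nilpotent orbital-integral comparison, none of which is sketched here. Your own paragraph flags these as ``the main obstacle,'' which is accurate, but naming an obstacle and proposing to overcome it ``by a density/approximation argument'' does not constitute a proof. As written, the proposal establishes only what the paper already establishes (the invertible-locus case) and leaves the conjecture genuinely open.
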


 One can show that Conjecture \ref{Conj: smooth transfer} follows from Conjecture \ref{Conj: smooth transfer Lie}. Unlike the case of  \cite{ZhangFourier}, this requires the use of Harish-Chandra descent due to the lack of flexibility with the Cayley transform (see Lemma \ref{Lem: Cayley map linear} below). We omit this argument, as it is not needed for Theorem \ref{Thm: weak smooth transfer}. 

\subsubsection{The endoscopic comparison}

Consider a regular semi-simple element $(x_1,x_2)\in \fq_{ii}$. Fix representatives $(x_{1,\al},x_{2,\al})\in \calo_{st}(x_1,x_2)$ where $\al\in H^1(F,\rH_x)$. We have the following analogue of Lemma \ref{Lem: building special kappa} with the same proof. 
  \begin{Lem}\label{Lem: building special kappa Lie} The map $(x_{1,\al},x_{2,\al})\mapsto \eta_v(\det(x_{1,\al}))$ satisfies that
  \[
  \al\in \ker^1(\rH_{(x_{1},x_{2})},\U(V_{x_{1}});F)\iff \eta(\det(x_{1,\al})) = \eta(\det(x_{1})). 
  \]
  \end{Lem}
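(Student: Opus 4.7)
The plan is to directly mirror the proof of Lemma \ref{Lem: building special kappa}, exploiting the fact that in the $\rH_{ii}$-action on $\fq_{ii}\simeq \Herm_n\times\Herm_n$ the first coordinate transforms by $g\cdot x_1 = gx_1 g^\ast$, which is exactly the standard action on non-degenerate Hermitian forms. Thus $x_1$ plays precisely the role that $b$ played in the group-variety proof, and the argument should transcribe with only cosmetic modifications.

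First I would choose, for each $\al\in H^1(F,\rH_{(x_1,x_2)})$, an element $h\in \GL_n(E)$ such that $h\cdot(x_1,x_2)=(x_{1,\al},x_{2,\al})$. In particular $x_{1,\al}=hx_1 h^\ast$, and after identifying $H^1(F,\rH_{(x_1,x_2)})=H^1(\Gal(E/F),\rH_{(x_1,x_2)}(E))$ the class $\al$ is represented by the cocycle $\sig\mapsto h^{-1}h^\sig$. Next I would use the long exact sequence of Galois cohomology
\[
1\lra \U(V_{x_1})\lra \GL_n \overset{-\ast x_1}{\lra} \Herm_n^\circ \lra H^1(F,\U(V_{x_1}))\lra 1,
\]
and observe that the image of $x_{1,\al}=hx_1 h^\ast$ under the connecting map is exactly the same cocycle $\sig\mapsto h^{-1}h^\sig$. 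Hence this image realizes the restriction of $\al$ to $H^1(F,\U(V_{x_1}))$, which shows that $\al\in \ker^1(\rH_{(x_1,x_2)},\U(V_{x_1});F)$ if and only if $x_{1,\al}$ lies in the $\GL_n(E)$-orbit $\GL_n(E)\ast x_1 = \Herm_n^\circ(F)_{x_1}$, i.e., $x_{1,\al}$ and $x_1$ represent isometric $E/F$-Hermitian forms.

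The final step invokes the classification of non-degenerate Hermitian forms of fixed rank: over a non-archimedean local field two such forms are isometric if and only if their determinants coincide modulo $\Nm_{E/F}(E^\times)$, which is precisely the equality $\eta(\det(x_{1,\al}))=\eta(\det(x_1))$. If the archimedean case is to be covered, one composes with the abelianization map $H^1(\rr,\U(V_{x_1}))\to H^1_{\mathrm{ab}}(\rr,\U(V_{x_1}))\simeq \rr^\times/\Nm(\cc^\times)$, given by the sign of the determinant, and obtains the same conclusion.

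I do not expect any genuine obstacle. Because the connecting map lands in an abelian group, ordinary and abelianized cohomology agree on its image, so the kernel appearing in the statement is unambiguous; and the classification of Hermitian forms by determinant-mod-norms is the same tool already used in Lemma \ref{Lem: building special kappa}. The only point that merits a moment's care is verifying that the $\rH_{ii}$-action on the first coordinate of $\fq_{ii}$ really does match the group-case action on $b$, so that the cocycle produced by $x_{1,\al}=hx_1 h^\ast$ coincides with the cocycle representing $\al$; once that identification is made the rest of the argument is routine.
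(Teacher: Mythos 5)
Your proof is correct and takes essentially the same approach as the paper, which proves the group-variety analogue (Lemma \ref{Lem: building special kappa}) via exactly this orbit/long-exact-sequence argument and then declares the Lie-algebra statement to follow by ``the same proof.'' The only cosmetic difference is the cocycle normalization (you write $\sig\mapsto h^{-1}h^\sig$ where the paper writes $\sig\mapsto h^\ast h$), which does not affect the conclusion, and your use of the determinant-mod-norms classification together with the abelianized-cohomology fallback over $\rr$ matches the paper's reasoning.
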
 
  In particular, we obtain a non-trivial character $\varepsilon\in H^1(F,\rH_{(x_{1},x_{2})})^D$ given by
  \begin{equation}\label{eqn: special kappa lie}
       \varepsilon(\al) = \eta(\det(x_{1,\al}))\eta(\det(x_{1}))^{-1}.
  \end{equation}

Fix $\tau_1,\tau_2\in \calv_n(E/F)$, and we assume $\tau_2=\tau_n$ is split. We say that $(x_1,x_2)\in \fq_{ii}^{rss}(F)$ matches $X\in \fq_{\tau_1,\tau_2}^{rss}(F)$ if they have matching invariant polynomials. The analogues of Lemmas \ref{Lem: surjective quotient si} with \ref{Lem: surjective quotient ii} hold in this setting, so that for any $(x_1,x_2)\in \fq_{ii}^{rss}(F)$ there exists a unique $\tau_1$ such that $(x_1,x_2)$ matches some $X\in \fq_{\tau_1,\tau_2}(F)$; we write $(x_1,x_2)\leftrightarrow_{\tau_1}X$. 
\begin{Conj}\label{Conj: ii simple endoscopy lie}
 Define the transfer factor $\De_{\varepsilon}: \fq_{ii}^{rss}(F)\to \{\pm1\}$ by
\[
       \De_{\varepsilon}(x_1,x_2):= \eta(\det(x_1)). 
\]
Assume $\tau_2=\tau_n\in \calv_n(E/F)$ is split. For any $f\in C^\infty_c(\fq_{ii}(F))$ and $(x_1,x_2)\in \fq_{ii}^{rss}(F)$, there exists a $\ul{f}=(f_{\tau_1})\in \bigoplus_{\tau_1\in \calv_n(E/F)}C_c^\infty(\fq_{\tau_1,\tau_2}(F))$ such that
     \[
     \De_{\varepsilon}(x_1,x_2) \Orb^{\rH,\varepsilon}(f,(x_1,x_2)) = \SO^{\rH_{\tau_1,\tau_2}}(f_{\tau_1},X),
     \]
     for all matching regular semi-simple $(x_1,x_2)\leftrightarrow_{\tau_1}X$. We say $f$ and $\underline{f}$ are $\varepsilon$-transfers.
\end{Conj}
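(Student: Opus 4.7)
The plan is to reduce the endoscopic conjecture to a stable transfer statement via a multiplier trick, establish that stable transfer on the invertible locus by an explicit contraction argument, and finally extend to the whole Lie algebra by Harish-Chandra descent. For the first step, given $f \in C_c^\infty(\fq_{ii}(F))$, I set $f_\varepsilon(y_1, y_2) := \eta(\det(y_1)) f(y_1, y_2)$. Because $\det(g y_1 g^\ast) = \Nm_{E/F}(\det g) \cdot \det(y_1)$ for $g \in \rH_{ii}(F) = \GL_n(E)$, the factor $\eta(\det(y_1))$ is $\rH_{ii}(F)$-invariant on $\fq_{ii}^{rss}(F)$, so $f_\varepsilon$ is a bona fide $\rH_{ii}(F)$-invariant function there. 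Using the explicit formula \eqref{eqn: special kappa lie} for $\varepsilon$, I obtain
\begin{equation*}
  \De_\varepsilon(x_1, x_2)\, \Orb^{\rH, \varepsilon}(f, (x_1, x_2)) = \sum_{\al} \eta(\det(x_{1,\al})) \Orb^{\rH_{ii}}(f, (x_{1,\al}, x_{2,\al})) = \SO^{\rH_{ii}}(f_\varepsilon, (x_1, x_2)),
\end{equation*}
reducing the conjecture to a purely stable transfer problem: for each $\phi \in C_c^\infty(\fq_{ii}(F))$, produce $\ul{f} = (f_{\tau_1})$ with $\SO^{\rH_{ii}}(\phi, (x_1, x_2)) = \SO^{\rH_{\tau_1, \tau_2}}(f_{\tau_1}, X)$ for all matching $(x_1, x_2) \leftrightarrow_{\tau_1} X$.

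On the invertible locus $\fq_{ii}^{invt}(F)$, the $\GL_n(E)$-orbit of the first coordinate $x_1 \in \Herm_n^\circ(F) = \bigsqcup_{\tau_1} \GL_n(E) \ast \tau_1$ determines the inner form $\tau_1 \in \calv_n(E/F)$; after gauge-fixing $x_1 = \tau_1$, the residual $\U(V_{\tau_1})$-action on $x_2 \in \Herm_n(F)$ becomes, up to multiplication by a fixed trace-zero element of $E$, the adjoint action on $\fu(V_{\tau_1})$. Composing with the Lie algebra version of the $R$-contraction of Lemma \ref{Lem: cat quotient si} -- which is a $\U(V_{\tau_1})$-torsor on the invertible locus since $\tau_2$ is split, so that Lemma \ref{Lem: orbits along contraction} of \S \ref{Section: contractions} applies -- gives an explicit contraction
\begin{equation*}
  C_c^\infty(\fq_{ii}^{invt}(F)) \lra \bigoplus_{\tau_1 \in \calv_n(E/F)} C_c^\infty(\fq_{\tau_1, \tau_2}^{invt}(F))
\end{equation*}
preserving stable orbital integrals. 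This step is the Lie algebra analog of Theorem \ref{Thm: weak endoscopic transfer}, and could alternatively be deduced from it via the Cayley descent of Appendix \ref{Sec: descent}.

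To extend from the invertible to the general regular semi-simple locus, I employ Harish-Chandra descent at regular semi-simple elements $(x_1^0, x_2^0)$ with $\det(x_1^0) \det(x_2^0) = 0$: the stabilizer $\rH_{ii,(x_1^0, x_2^0)}$ is a torus of lower rank and the centralizer pair decomposes as a product of lower-rank symmetric pairs of the same type, reducing the transfer problem to a lower-rank invertible transfer on each factor, to which the previous step applies by induction on $n$. The principal obstacle will be the compatibility of the resulting transferred function with smoothness and compact support across the non-invertible locus -- equivalently, the matching of Shalika germs of stable orbital integrals -- and verifying that the Jacobian factors from the contraction are compatible with the measure normalizations in \S \ref{measures} and \S \ref{Section: Lvalue measure}, paralleling the technical work of Appendix \ref{Section: proof weak transfer}.
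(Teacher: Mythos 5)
You should be aware that the statement you have been asked to prove is a \emph{conjecture}: the paper states \textbf{Conjecture~\ref{Conj: ii simple endoscopy lie}} but does not prove it. What the paper establishes are the partial results Theorem~\ref{Thm: weak endoscopic transfer} (transfer for test functions supported on the invertible locus, handled in Appendix~\ref{Section: proof weak transfer} precisely by the multiplier trick you describe) and Theorem~\ref{Thm: fundamental lemma endoscopic Lie} (the fundamental lemma for the specific unramified functions $\bfun_{\vp^k\fq_{ii}(\calo_F)}$). Your steps (1) and (2) faithfully reproduce the paper's Appendix~B argument for the invertible locus: multiplying by $\De_{\varepsilon}(x_1,x_2)=\eta(\det(x_1))$ is an $\rH_{ii}(F)$-invariant character, and on $\fq_{ii}^{invt}(F)$ the resulting $f_{\varepsilon}$ remains in $C_c^\infty$, reducing the endoscopic problem to the stable transfer on the invertible locus, which the contraction maps handle. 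Up to this point you are on solid ground.

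Step (3) is where the proposal breaks down, and the gap is not the one you flagged. You propose Harish-Chandra descent ``at regular semi-simple elements $(x_1^0,x_2^0)$ with $\det(x_1^0)\det(x_2^0)=0$,'' but \emph{no such elements exist}: the regular semi-simple locus of $\fq_{ii}$ is contained in the invertible locus. Indeed, if $\det(x_2x_1)=0$ with $x_2x_1$ having distinct eigenvalues, then exactly one of $x_1,x_2$ is singular; the sign of which one it is is preserved by the $\GL_n(E)$-action but not detected by the invariant $\car(x_2x_1)$, so over $\Fbar$ the categorical quotient fiber contains at least two orbits, violating the definition of regular semi-simple in \S\ref{Section: orbital integrals conventions}. (The same phenomenon is implicit on the linear side, where Lemma~\ref{Lem: inft linear invt theory}(1) requires $YX\in\GL_n(F)$.) So the locus you propose to descend over is empty, and the inductive descent producing ``lower-rank invertible transfer on each factor'' has no content.

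The genuine obstacle — which you correctly name but do not resolve — is the germ problem. For general $f\in C_c^\infty(\fq_{ii}(F))$ the restriction $f_\varepsilon|_{\fq_{ii}^{invt}}$ is bounded and smooth but \emph{not} compactly supported in $\fq_{ii}^{invt}(F)$, since $\supp(f)$ may touch the divisor $\{\det(x_1)\det(x_2)=0\}$. Consequently the stable $\varepsilon$-orbital integrals of $f$ have nontrivial asymptotics as the rss parameter approaches the boundary of the invertible locus, and one must show these asymptotics lie in the image of the split-inert side. This is a relative Shalika germ matching problem — the exact analogue of what Zhang resolves in \cite{ZhangFourier} via a Fourier transform argument for Jacquet–Rallis transfer, and what is needed (and left open) for Conjectures~\ref{Conj: smooth transfer Lie} and \ref{Conj: ii simple endoscopy lie} here. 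Appealing to ``the technical work of Appendix~\ref{Section: proof weak transfer}'' does not help, since that appendix deliberately restricts to the invertible locus precisely to sidestep this difficulty. Your proposal therefore proves only the weak Theorem~\ref{Thm: weak endoscopic transfer}, which the paper already establishes, and not the conjecture.
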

As before, one can reduce Conjecture \ref{Conj: ii simple endoscopy} to Conjecture \ref{Conj: ii simple endoscopy lie} via descent techniques.

\subsection{Fundamental Lemmas on the Lie algebra}
We assume now that $E/F$ is an unramified extension of $p$-adic fields {of odd residual characteristic}.  

\begin{Rem}
    This restriction to odd residue characteristic is imposed as we rely on \cite[Lemma 3.9]{FLO} in the proof of Theorem \ref{Thm: main local result 2 var} (cf. Lemma \ref{Lem: unramified FLO}). 
\end{Rem}
 We begin with the split-inert case.  Let $V_n=V_{\tau_n}$ be the split Hermitian space and let $\Lam_n\subset V_n$ be a self-dual lattice. Consider $$\fq_{si}(\calo_F):=\End(\Lam_n)\subset \End(V_n)=\fq_{si}(F).$$  Set $\mathfrak{X}(\calo_F):=\fgl_n(\calo_{F})\times \fgl_n(\mathcal{O}_{F})$.
\begin{Thm}\label{Thm: fundamental lemma s-i Lie}
Fix $k\geq0$. The function $\bfun_{\vp^k\fX(\calo_F)}\otimes \bfun_{\calo_{F,n}}$ and the collection $((-1)^k\bfun_{\vp^k\fq_{si}(\calo_F)},0)$ 
    are $(\eta,\eta)$-transfers.
\end{Thm}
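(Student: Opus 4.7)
The plan is to reduce Theorem \ref{Thm: fundamental lemma s-i Lie} to the single-variable case of the main relative fundamental lemma (Theorem \ref{Thm: main local result 1 var}) via the contraction formalism of \S\ref{Section: contractions}, together with a rescaling argument propagating the identity from depth $k=0$ to arbitrary $k\geq 1$.

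\emph{Step 1 (Contraction to the Hermitian variety).} On the unitary side, the map
\[
R_{\tau_1}\colon \fq_{\tau_1,\tau_n}^{invt}(F)\lra \Herm_{\tau_n}^{\circ,invt}(F),\qquad X\longmapsto -X^{\ast}\tau_1^{-1}X,
\]
is a $\U(V_{\tau_1})$-torsor by the Lie-algebra analogue of Lemma \ref{Lem: cat quotient si}\eqref{Lem: centralizer contraction}, with residual adjoint action $h_2\cdot A=h_2Ah_2^{\ast}$ of $\U(V_{\tau_n})$. On the linear side, $R(X,Y)=YX$ makes $\fX^{invt}$ a $\GL_n$-torsor over $\fgl_n^{rss}$ whose residual $\GL_n$-action on $\fgl_n\times F_n$ is precisely the mirabolic one. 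By Assumption \ref{Assumption: contraction}, the claimed identity reduces to matching the $\U(V_{\tau_n})$-stable orbital integral of $(R_{\tau_1})_!(\bfun_{\vp^k\fq_{si}(\calo_F)})$ on $\Herm_n^\circ$ against the mirabolic orbital integral of $R_!(\bfun_{\vp^k\fX(\calo_F)})\otimes \bfun_{\calo_{F,n}}$ on $\fgl_n\times F_n$, with a sign twist of $(-1)^k$. The vanishing ``$0$'' on the non-split inner form is automatic: for $\tau_1=\tau_1^{ns}$, the image $R_{\tau_1^{ns}}(\vp^k\fq_{si}(\calo_F)^{invt})$ misses the relevant norm class on $\Herm_{\tau_n}^\circ$, so the pushforward is identically zero.

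\emph{Step 2 (Base case $k=0$ and rescaling to $k\geq 1$).} At depth zero, $(R_{\tau_1})_!(\bfun_{\fq_{si}(\calo_F)})$ agrees, up to a normalization fixed by \S\ref{measures}, with the spherical unit $\bfun_{\Herm_n^\circ(\calo_F)}\in \calh_{K_E}(\Herm_n^\circ(F))$, and $R_!(\bfun_{\fX(\calo_F)})$ reduces to $\bfun_{\fgl_n(\calo_F)}\in \calh_K(\fgl_n(F))$. Since Hironaka's inversion (\S\ref{s:Hir}) gives $\Hir(\bfun_{\Herm_n^\circ(\calo_F)})=\bfun_{\fgl_n(\calo_F)}$, Theorem \ref{Thm: main local result 1 var} immediately yields the identity for $k=0$. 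For $k\geq 1$, substituting $X\mapsto \vp^k X'$ on $\fq_{si}$ and $(X,Y)\mapsto (\vp^k X',\vp^k Y')$ on $\fX$ converts the depth-$k$ characteristic functions into the depth-zero ones. The $\rH_{\tau_1,\tau_n}$- and $\rH'$-actions commute with scaling, the centralizers are invariant ($T_{YX}=T_{\vp^{2k}YX}$), hence the regularization $L(0,T_{YX},\eta)$ is unchanged, and the Jacobians on both sides match. The sole asymmetry lives in the transfer factor $\widetilde\omega$ of \eqref{eqn: linear Lie transfer factor}: under the rescaling its $\eta_2(Y)$-component transforms by $\eta(\vp^k)=(-1)^k$, which is precisely compensated by the $(-1)^k$ multiplier in $((-1)^k\bfun_{\vp^k\fq_{si}(\calo_F)},0)$.

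\emph{Main obstacle.} The principal technical difficulty is the careful sign bookkeeping of Step 2: one must verify that the three factors $\eta_2^n(YX)$, $\eta_0(\det[w|w(YX)|\cdots|w(YX)^{n-1}])$, and $\eta_2(Y)$ transform under the rescaling $(X,Y)\mapsto (\vp^k X,\vp^k Y)$ so as to produce the net sign $(-1)^k$, and that no extra contributions enter from the regularization at $s_0=0$ or from the measure conventions on the centralizer tori $T_{YX}$ fixed in \S\ref{Section: Lvalue measure}. A secondary concern is confirming that the pushforward $(R_{\tau_1})_!$ interacts compatibly with the chosen integral models; this reduces, on the invertible locus, to the smoothness of the $\U(V_{\tau_1})$-torsor $R_{\tau_1}$, which follows from the Lie-algebra version of Lemma \ref{Lem: cat quotient si}(3). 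Once these technicalities are resolved, the theorem follows by combining Steps 1 and 2.
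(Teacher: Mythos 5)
Your Step 1 (contraction to the Hermitian variety and reduction to the mirabolic orbital integrals) is the right framework, and the $k\geq 1$ rescaling is the same reduction the paper carries out before Section \ref{Section: mirabolic integrals}. However, the base case $k=0$ of your Step 2 rests on an identity that is simply false. The pushforward $(R_{\tau_1})_!(\bfun_{\fgl_n(\calo_E)})$ along the contraction $X\mapsto -X^{\ast}X$ is not, even up to a scalar, the unit $\bfun_{\Herm_{n}^{\circ}(\calo_F)}$: since $\fgl_n(\calo_E)$ contains matrices of arbitrarily small determinant, this pushforward (the function $\Phi$ of Proposition \ref{Prop: implies FLSI}) is supported on all of $\Herm_{n}^{\circ}(\calo_F)\cap\fgl_n(\calo_E)$, is \emph{not} compactly supported, and lives only in a completion of $\calh_{K_E}(\Herm_{n}^{\circ}(F))$. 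Likewise $R_!(\bfun_{\fgl_n(\calo_F)\oplus\fgl_n(\calo_F)})$ is a twisted convolution $\bfun_{\fgl_n(\calo_F)}\ast(\eta\cdot\bfun_{\fgl_n(\calo_F)})$ on $\GL_n(F)$, not $\bfun_{\fgl_n(\calo_F)}$. The actual content of the depth-zero fundamental lemma is exactly the verification that these two non-compact pushforwards correspond under the Hironaka transform: the paper proves this by computing $\mathrm{Sat}(\bfun_{\fgl_n(\calo_F)}\ast(\eta\cdot\bfun_{\fgl_n(\calo_F)}))$ via \eqref{eqn: Satake basis} and \eqref{eqn: geometric series macdonald}, matching it against the formula \eqref{eqn: si identity} for $\mathcal{SF}(\Phi)$, and only then invoking Theorem \ref{Thm: main local result 1 var} (extended to the completion by restricting to a fixed determinant valuation). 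Your proposal replaces this computation with the false statement that both pushforwards are the respective Hecke units, so the $k=0$ case is not established and the rescaling has no base.

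Two smaller points. The sign in your rescaling claim does not follow from the reason given: $\eta_2(Y)=\eta(\det Y)$ transforms under $Y\mapsto\vp^k Y$ by $\eta(\vp^{kn})$, not $\eta(\vp^k)$, so the $\eta$-exponent bookkeeping across all three factors of $\widetilde\omega$ must be redone carefully. Also, your explanation of the ``$0$'' on the non-split inner form conflates the choice of test function (which is identically zero by fiat) with the required verification that the \emph{linear} orbital integral vanishes at elements $(X,Y)$ whose matching unitary point lives on $\fq_{\tau_1^{ns},\tau_n}$; this vanishing does hold, but it is a consequence of the support of $\Phi$ being confined to the split determinant classes in $\Herm_{n}^{\circ}(F)$, not of the image of the map $R_{\tau_1^{ns}}$ missing a norm class.
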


Next we consider the inert-inert case. Let $V_{2n}=L\oplus L^\ast$ be a polarization of the split Hermitian space of rank $2n$, and fix a self-dual lattice $\Lam_{2n} = \mathcal{L}\oplus \mathcal{L}^\ast$. This induces an open compact $\calo_E$-lattice
\[
\fq_{ii}(\calo_{F}):=\Herm(\mathcal{L}^\ast,\mathcal{L})\times \Herm(\mathcal{L},\mathcal{L}^\ast)\subset \Herm(L^\ast, L)\times \Herm(L,L^\ast)= \fq_{ii}(F).
\] 
\begin{Thm}\label{Thm: fundamental lemma stable Lie}\label{thm:lie i-i}
The functions $\bfun_{\vp^k\fX(\calo_F)}\otimes \bfun_{\calo_{F,n}}$ and $\bfun_{\vp^k\fq_{ii}(\calo_F)}$
    are $(\eta,1)$-transfers.
\quash{Suppose $[(Y_1,Y_2),w]\in \mathfrak{X}_{ext}^{sr}(F)$ and $(X_1,X_2)\in \fg^{rss}_1$ match. Then
\begin{equation}
    \SO^{\GL(V)}\left(\bfun_{\Herm(\Lam)\times \Herm(\Lam)},{(X_1,X_2)}\right)=\Orb^{\rH'(F),\underline{\eta}}\left(\bfun_{\mathfrak{X}(\calo_F)}\otimes\bfun_{V_n(\calo_F)},[(Y_1,Y_2),w]\right),
\end{equation}
where $\underline{\eta}=(\eta,1)$.  Moreover, when $(Y_1,Y_2)$ does not match an element $X$, the right-hand side vanishes.}
\end{Thm}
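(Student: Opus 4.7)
The strategy is to express both sides of the claimed identity as mirabolic orbital integrals on $\GL_n(F) \times F_n$ at the same invariant, and then invoke Theorem \ref{Thm: main local result 2 var}.

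For the unitary side, identify $\fq_{ii}(F) \simeq \Herm_n(F) \times \Herm_n(F)$ via Lemma \ref{Lem: unitary Lie}(2). On the regular semi-simple locus one has $x_1, x_2 \in \Herm_n^\circ(F)$, and although the restriction of $\bfun_{\vp^k \Herm_n(\calo_F)}$ to $\Herm_n^\circ(F)$ is not literally compactly supported, only finitely many $K_E$-orbits contribute to any given stable orbital integral; this lets us replace $\bfun_{\vp^k\fq_{ii}(\calo_F)}$ by a product $\phi_k \otimes \phi_k$ with $\phi_k \in \calh_{K_E}(\Herm_n^\circ(F))$. Theorem \ref{Thm: main local result 2 var} then gives
\[
\SO^{\rH_{ii}}(\bfun_{\vp^k\fq_{ii}(\calo_F)},(x_1,x_2)) = \Orb^{\GL_n(F),\eta}\bigl(\Hir(\phi_k \ast \phi_k) \otimes \bfun_{\calo_{F,n}},\, z\bigr)
\]
for any $z \in \GL_n(F)^{rss}$ matching $(x_1,x_2)$.

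For the linear side, we contract along the first $\GL_n$-factor of $\rH'$. Since $X,Y \in \GL_n(F)$ on $\fX^{rss}$ (Lemma \ref{Lem: inft linear invt theory}), the substitution $u = h_1^{-1} X h_2$ in the inner $h_1$-integral of the orbital integral converts it into
\[
\int_{\GL_n(F)} \bfun_{\vp^k \Mat_n(\calo_F)}(u)\, \bfun_{\vp^k \Mat_n(\calo_F)}\bigl((h_2^{-1}YXh_2)\,u^{-1}\bigr)\, du = f_k(h_2^{-1}YXh_2),
\]
where $f_k := \bfun_{\vp^k \Mat_n(\calo_F)} \ast \bfun_{\vp^k \Mat_n(\calo_F)}$ on $\GL_n(F)$. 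The remaining $h_2$-integral, combined with the transfer factor $\wt{\omega}((X,Y),w)$ (which reduces to the mirabolic factor $\De(YX,w)$ when $\eta_2=1$) and the local factor $L(s_0, T_{YX}, \eta)$, is precisely the mirabolic orbital integral:
\[
\Orb^{\rH',(\eta,1)}_0\bigl(\bfun_{\vp^k \fX(\calo_F)} \otimes \bfun_{\calo_{F,n}},\,(X,Y)\bigr) = \Orb^{\GL_n(F),\eta}\bigl(f_k \otimes \bfun_{\calo_{F,n}},\,YX\bigr).
\]

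Since $YX$ and $x_2 x_1$ have equal characteristic polynomials by the matching hypothesis, both mirabolic integrals sit at the same invariant. The desired transfer identity (with coefficient $2^{|S_1^{ram}|} = 1$ from the unramified hypothesis on $E/F$) therefore reduces to $\Hir(\phi_k \ast \phi_k) = f_k$, which by the algebra-homomorphism property of the Hironaka transform recalled in \S\ref{s:Hir} reduces further to the single identity
\[
\Hir(\phi_k) = \bfun_{\vp^k \Mat_n(\calo_F) \cap \GL_n(F)}.
\]
This last identification is the main obstacle: it requires a direct computation with the Cartan parametrizations of $K_E$-orbits on $\Herm_n^\circ(F)$ and of double cosets in $K \backslash \GL_n(F)/K$, verifying that the Hironaka transform respects the natural scaling by $\vp^k$. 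A subsidiary technical issue is that neither $\phi_k$ nor $\bfun_{\vp^k \Mat_n(\calo_F) \cap \GL_n(F)}$ is literally compactly supported on the relevant group or variety; this is resolved by the finite-contribution observation already used above, since the identity is to be tested against individual regular semi-simple orbital integrals.
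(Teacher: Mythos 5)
Your high-level strategy is the same as the paper's (Proposition \ref{prop: implies FLII}): contract the linear orbital integral along the first $\GL_n$-factor, express the Hermitian side through Theorem \ref{Thm: main local result 2 var}, and reduce the comparison to an identity of Hironaka transforms. But the argument as written does not close, because two of your intermediate claims are false, and they happen to be false by the same compensating sign.

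The first problem is the assertion that $\wt\omega((X,Y),w)$ reduces to $\De(YX,w)$ when $\eta_2=1$. With $\underline{\eta}=(\eta,1)$, \eqref{eqn: linear Lie transfer factor} gives $\wt\omega((X,Y),w)=\eta\bigl(\det[w|w(YX)|\cdots|w(YX)^{n-1}]\bigr)$, whereas \eqref{eqn: correct xiao transfer factor} gives $\De(YX,w)=\eta^n(\det(YX))\,\eta\bigl(\det[w|w(YX)|\cdots|w(YX)^{n-1}]\bigr)$. Thus $\wt\omega = \eta^n(\det(YX))^{-1}\De$, and the contraction yields
\[
\Orb^{\rH',(\eta,1)}_0\bigl(\bfun_{\vp^k\fX(\calo_F)}\otimes\bfun_{\calo_{F,n}},(X,Y)\bigr) \;=\; \eta^n(\det(YX))\,\Orb^{\GL_n(F),\eta}\bigl(f_k\otimes\bfun_{\calo_{F,n}},YX\bigr),
\]
with the sign $\eta^n(\det(YX))$ genuinely present when $n$ is odd. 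The paper flags exactly this discrepancy in the last displayed line of the proof of Proposition \ref{prop: implies FLII}.

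The second, more serious problem is that the identity you leave as "the main obstacle," $\Hir(\phi_k)=\bfun_{\vp^k\Mat_n(\calo_F)\cap\GL_n(F)}$, is false for odd $n$; the correct statement, extracted from \cite[Lemma 6.7]{offenjacquet}, is $\Hir(\Phi_d)=(-1)^{nd}\bfun_d$, hence $\Hir\bigl(\bfun_{\Herm_n(\calo_F)}|_{\Herm_n^\circ}\bigr)=\eta^n\cdot\bfun_{\fgl_n(\calo_F)}|_{\GL_n}$ and $\Hir(\phi_0\ast\phi_0)=\eta^n\cdot f_0$ rather than $f_0$. Since $\eta^n(z)=\eta^n(\det(YX))$ for matching $z$ and $YX$, the two $\eta^n$'s cancel and your final conclusion is still correct; but the step you present as remaining to be verified would fail if you actually attempted it. The paper instead tracks the $\eta^n$ factor explicitly, decomposes $\bfun_{\Herm_n(\calo_F)}|_{\GL_n(E)}=\sum_{d\geq0}\Phi_d$ into honestly compactly supported Hecke-module elements (which also cleanly resolves the compact-support technicality you raise), and applies Theorem \ref{Thm: main local result 2 var} termwise to the double sum $\sum_{d\geq0}\sum_{i=0}^d\Phi_{d-i}\otimes\Phi_i$. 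The paper also performs the $k>0\Rightarrow k=0$ reduction explicitly by homogeneity before the $k=0$ computation, which you absorb silently into the $\vp^k$-scaling.
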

Finally, we have the analogue of the endoscopic comparison.
\begin{Thm}\label{Thm: fundamental lemma endoscopic Lie}
The functions $\bfun_{\vp^k\fq_{ii}(\calo_F)}$ and the collection $((-1)^k\bfun_{\vp^k\fq_{si}(\calo_F)},0)$ 
    are $\varepsilon$-transfers.
\end{Thm}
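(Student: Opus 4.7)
The plan is to deduce Theorem~\ref{Thm: fundamental lemma endoscopic Lie} by combining the stable fundamental lemma (Theorem~\ref{Thm: fundamental lemma s-i Lie}) with a rewriting of the $\varepsilon$-twisted orbital integral as a stable orbital integral of a sign-twisted test function, and then applying the key two-variable identity of Theorem~\ref{Thm: main local result 2 var}. The key preliminary observation is that since $\det x_1$ is invariant along each $\rH_{ii}$-orbit, the sign-twisted function $f^{\varepsilon}(x_1,x_2):=\eta(\det x_1)\bfun_{\vp^k\fq_{ii}(\calo_F)}(x_1,x_2)$ satisfies
\[
\De_\varepsilon(x_1,x_2)\,\Orb^{\rH_{ii},\varepsilon}(\bfun_{\vp^k\fq_{ii}(\calo_F)},(x_1,x_2))=\SO^{\rH_{ii}}(f^{\varepsilon},(x_1,x_2)),
\]
so the endoscopic transfer becomes a comparison of two stable orbital integrals, one on each symmetric space.

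Next I would apply Theorem~\ref{Thm: main local result 2 var} to both sides. Writing $f^{\varepsilon}=\phi_1\otimes\phi_2$ with $\phi_1:=\eta(\det)\bfun_{\vp^k\Herm_n(\calo_F)}$ and $\phi_2:=\bfun_{\vp^k\Herm_n(\calo_F)}$, both $K_E$-invariant, the main theorem yields
\[
\SO^{\rH_{ii}}(f^{\varepsilon},(x_1,x_2))=\Orb^{\GL_n(F),\eta}\!\bigl(\Hir(\phi_1\ast\phi_2)\otimes\bfun_{\calo_{F,n}},z\bigr)
\]
for any $z\in\GL_n(F)$ matching $(x_1,x_2)$. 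Since Theorem~\ref{Thm: fundamental lemma s-i Lie} is proved in \S\ref{Section: reduce to one statement} by the same type of reduction to Theorem~\ref{Thm: main local result 2 var} (via contraction along $R\colon\fq_{si}\to\Herm_{\tau_n}$ and the one-variable Theorem~\ref{Thm: main local result 1 var}), the right-hand side of the target endoscopic identity equals
\[
(-1)^k\,\Orb^{\GL_n(F),\eta}\!\bigl(\Hir(\phi_0)\otimes\bfun_{\calo_{F,n}},z\bigr)
\]
for the explicit Hermitian Hecke function $\phi_0:=R_{!}\bfun_{\vp^k\fq_{si}(\calo_F)}$. The theorem thus reduces to the single Hecke algebra identity
\[
\phi_1\ast\phi_2=(-1)^k\phi_0\qquad\text{in }\calh_{K_E}(\Herm_n^\circ(F)),
\]
combined with the vanishing of the mirabolic orbital integral of $\Hir(\phi_1\ast\phi_2)\otimes\bfun_{\calo_{F,n}}$ at the $z$ matching elements of $\fq_{si,\tau_n^{ns},\tau_n}(F)$.

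The main obstacle is verifying this Hecke algebra identity. The cleanest path is via the Hironaka/Satake spectral side: twisting $\bfun_{\vp^k\Herm_n(\calo_F)}$ by the order-$2$ character $\eta(\det)$ corresponds to an explicit involution on the Hironaka basis (which can be read off from the formulas recalled in \S\ref{s:Hir}), and the sign $(-1)^k$ should emerge from the interaction of this involution with the scaling by $\vp^k$, in parallel with the $(-1)^k$ appearing already in Theorem~\ref{Thm: fundamental lemma s-i Lie} itself. The parallel vanishing in the $\tau_n^{ns}$-matching case is a support analysis: one verifies that $\phi_0$ is supported in the split Hermitian stratum $\Herm_n^\circ(F)_{\tau_n}$ of $\Herm_n^\circ(F)$, which forces the mirabolic orbital integral to vanish at any $z$ whose matching element lies on the non-split Hermitian form.
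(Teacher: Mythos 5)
Your strategy matches the paper's proof in \S\ref{twisted thing} (Proposition~\ref{Prop: implies FL endo}) almost exactly: rewriting the $\varepsilon$-twisted orbital integral as a stable one for a sign-twisted function, applying Theorem~\ref{Thm: main local result 2 var}, and reducing to a Hironaka/Satake identity. The paper's Lemma~\ref{Lem: power series identity} is precisely the ``explicit involution on the Hironaka basis'' calculation you defer, and the identity $\SF(\Phi_\varepsilon)=\SF(\Phi)$ is your Hecke identity $\phi_1\ast\phi_2=\phi_0$ (in the completed module). Two small points of caution on your write-up: your $\phi_1=\eta(\det)\bfun_{\vp^k\Herm_n(\calo_F)}$ and $\phi_2=\bfun_{\vp^k\Herm_n(\calo_F)}$ are not compactly supported on $\Herm_n^\circ(F)$ (they live in a completion of $\calh_{K_E}(\Herm_n^\circ(F))$), so Theorem~\ref{Thm: main local result 2 var} cannot be invoked directly; the paper resolves this by fixing the total determinant valuation $d$ and working with the compactly supported slices $\Phi_i\otimes\Phi_{d-i}$, which is the cleaner way to make your step rigorous. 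Also, the paper reduces to $k=0$ \emph{before} doing any of this (via the scaling argument preceding \S\ref{Section: mirabolic integrals}), so the $(-1)^k$ never needs to be tracked through the Hecke-algebra identity at all; folding the $\vp^k$ directly into $\phi_i$ as you propose is workable but adds bookkeeping you can avoid.
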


To show that these theorems imply the fundamental lemmas in \S \ref{Section: fundamental lemmas on variety}, we apply the tools from \cite{Lesliedescent}. This is contained in Appendix \ref{Sec: descent}, which proves the following theorem.
\begin{Prop}\label{Prop: descent fundamentals} We have the following implications.
\begin{enumerate}
\item  Theorem \ref{Thm: fundamental lemma s-i Lie} implies Theorem \ref{Thm: fundamental lemma si}. 
\item    Theorem \ref{Thm: fundamental lemma stable Lie} implies Theorem \ref{Thm: fundamental lemma ii}.
    \item Theorem \ref{Thm: fundamental lemma endoscopic Lie} implies Theorem \ref{Thm: fundamental lemma varepsilon}.
\end{enumerate}
\end{Prop}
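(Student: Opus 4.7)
The plan is to carry out a Harish--Chandra-type semi-simple descent on each symmetric variety $\X$, $\Q_{\tau_1,\tau_2}$, $\Q_{ii}$, reducing orbital integrals of $\bfun_{\X_k(\calo_F)}\otimes\bfun_{\calo_{F,n}}$ and of $\bfun_{\Q_{\bullet,k}(\calo_F)}$ to orbital integrals of the corresponding characteristic functions on the tangent spaces $\fX$, $\fq_{si}$, $\fq_{ii}$. The three implications will all be instances of the same descent principle, applied to the three matching situations in Definition \ref{Def: transfer}: the only differences lie in the bookkeeping of transfer factors and signs. For the unitary varieties, the technical engine is the semi-simple descent framework developed in \cite{Lesliedescent}, which is exactly what motivated the standing hypothesis $p>\max\{e+1,2\}$; for the linear variety $\X$ with its extra $F_n$-factor, I will use the analogous descent compatible with the contraction map of Lemma \ref{Lem: quotients natural}\eqref{Lem: halfway}.

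First I would establish, for each semi-simple $y_s\in \X(F)$ (resp.\ $x_s\in \Q_\bullet(F)$), a Cayley-type $\rH'$-equivariant (resp.\ $\rH_\bullet$-equivariant) isomorphism
\[
c_{y_s}:\mathfrak{N}_{y_s}\iso U_{y_s}\subset \X(F),\qquad c_{x_s}:\mathfrak{N}_{x_s}\iso U_{x_s}\subset \Q_\bullet(F),
\]
from an open neighborhood of $0$ in the normal/tangent slice at $y_s$ (resp.\ $x_s$) inside the infinitesimal symmetric space of the centralizer $(\rH_{y_s},\fq_{y_s})$ (resp.\ $(\rH_{x_s},\fq_\bullet{}_{x_s})$). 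A standard analytic germ calculation shows that for every $k\geq 0$, these maps identify the characteristic functions $\bfun_{\X_k(\calo_F)}$ and $\bfun_{\Q_{\bullet,k}(\calo_F)}$ near $y_s$ and $x_s$, respectively, with the characteristic functions $\bfun_{\vp^k\fX_{y_s}(\calo_F)}$ and $\bfun_{\vp^k\fq_{\bullet,x_s}(\calo_F)}$ of the corresponding integral lattices in the slice. This is the key compatibility that lets the sign $(-1)^k$ on the unitary side in Theorems \ref{Thm: fundamental lemma si} and \ref{Thm: fundamental lemma varepsilon} appear at the group level exactly as on the Lie algebra, since $c_{x_s}$ preserves the dilation structure by $\vp^k$ on the slice.

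Next I would show that the matching relation $y\leftrightarrow x$ on the varieties, the stable/$\varepsilon$-orbital structure, and the transfer factors (the products of $\eta$, the $\De_\varepsilon$-factor, and the mirabolic $\omega$-factor on $\X\times F_n$) all descend compatibly through the Cayley maps. Concretely, pairing the Cayley identifications on the linear and unitary sides at matching semi-simple elements $y_s\leftrightarrow x_s$ gives an identification of the infinitesimal matching problem for $(\rH'_{y_s},\fX_{y_s}(F)\times F_n)$ with the infinitesimal matching problem for $(\rH_{\bullet,x_s},\fq_{\bullet,x_s}(F))$, which is itself of the same split-inert or inert-inert type but in lower rank (since centralizers in our setting are products of $\GL$-factors and unitary factors over sub-extensions). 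An induction on the rank of the symmetric pair then reduces each rational stable orbit on the variety side to a single rational stable orbit on the tangent space, where Theorems \ref{Thm: fundamental lemma s-i Lie}, \ref{Thm: fundamental lemma stable Lie}, or \ref{Thm: fundamental lemma endoscopic Lie} applies directly.

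The main obstacle will be bookkeeping the transfer factors and the $2^{|S_1^{ram}|}$ multiplicity correctly through the Cayley descent, together with the interaction between the mirabolic weight $\bfun_{\calo_{F,n}}$ and the regularized integral of Lemma/Definition \ref{LemDef: linear OI} at non-unit semi-simple parts $y_s$ in $\X$. The latter requires showing that the $L$-function normalization $L(s_0,T_y,\eta_0)$ descends to the corresponding normalization on $\fX_{y_s}$ and that the limit $s_0\to 0$ interchanges with the descent; this is where the hypothesis $p>\max\{e+1,2\}$ is crucial, via \cite[Lemma~8.1]{Lesliedescent}. Once this compatibility is verified, the three implications follow uniformly: for (1) the matching of $((-1)^k\bfun_{\vp^k\fq_{si}(\calo_F)},0)$ with $\bfun_{\vp^k\fX(\calo_F)}\otimes\bfun_{\calo_{F,n}}$ ascends to the matching claimed in Theorem \ref{Thm: fundamental lemma si}; similarly for (2) and (3), where the $\varepsilon$-matching in (3) descends to the $\varepsilon$-character \eqref{eqn: special kappa} and its Lie algebra version \eqref{eqn: special kappa lie} via Lemmas \ref{Lem: building special kappa} and \ref{Lem: building special kappa Lie}.
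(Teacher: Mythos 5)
Your proposal captures the right high‑level idea (descend to the Lie algebra via Cayley maps, then invoke the infinitesimal fundamental lemmas), but it diverges from the paper's argument in a way that opens a genuine gap.

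The paper does \emph{not} perform a Harish--Chandra descent centered at arbitrary semi-simple points $y_s$. Instead it uses exactly two fixed Cayley transforms $\fc_{\pm 1}$ (Lemma \ref{Lem: Cayley map linear}) together with the very regular ($\heartsuit$) loci: for $k\geq 1$ the congruence subsets $\X_k(\calo_F)$, $\Q_{\bullet,k}(\calo_F)$ lie entirely in $\X^{\heartsuit,-1}(\calo_F)$, $\Q^{\heartsuit,-1}_{\bullet}(\calo_F)$, so the identity of orbital integrals is immediate from Lemmas \ref{Lem: descent linear heart} and \ref{Lem: descent unitary heart}. For $k=0$, the only orbits not reached this way are handled by \emph{one step} of topological Jordan decomposition $y=y_{as}y_{tn}$, followed by the factorization $y_{as}=\gamma z_{as}$ with $\gamma$ having eigenvalues only $\pm 1$. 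This splits the orbital integral into two factors on descendant varieties (Lemmas \ref{Lem: linear descent final} and \ref{eqn: almost there general}), each of which lands directly in the $\heartsuit$-locus of a descendant pair of the same type (split-inert or inert-inert). No induction on rank is needed.

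The gap in your sketch is the claim that the infinitesimal matching problem at the slice $\fq_{\bullet,x_s}$ ``is itself of the same split-inert or inert-inert type but in lower rank.'' That is not true for a general semi-simple $x_s$: the centralizer involves unitary and general linear factors over various sub-extensions, and the induced matching problem on the tangent slice is \emph{not} a lower-rank instance of Theorems \ref{Thm: fundamental lemma s-i Lie} or \ref{Thm: fundamental lemma stable Lie}. Consequently the proposed induction on rank would not close. The paper's remedy is precisely the choice of $\gamma$ with eigenvalues restricted to $\pm 1$: this isolates exactly the part of $x_s$ that breaks the Cayley transform, and leaves the complementary block in the $\heartsuit$-locus, so that after one step one can apply Cayley directly rather than repeating the descent. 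Your treatment of the transfer factors, the $2^{|S_1^{\mathrm{ram}}|}$ multiplicity, and the $L$-normalization through the $s_0\to 0$ limit is in the right direction, but you should carry it out along this two-step structure rather than a general slice descent; for instance the cancellation of signs in the mirabolic transfer factor across the Cayley map is the content of Lemma \ref{Lem: tranfer factor cayley} and it is only checked for $\nu=\pm 1$.
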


 In all three Theorems above, we claim that the case of $k>0$ follows from the case $k=0$. For example, consider Theorem \ref{Thm: fundamental lemma s-i Lie} and assume that the case $k=0$ is known. For $k>0$, we wish to show that if $(X,Y)\in \mathfrak{X}(F)\times F_n$ and $Z\in \fq_{si}(F)$ are matching regular semi-simple elements, then 
 \[
 \Orb^{\rH',(\eta,\eta)}_{0}(\bfun_{\vp^k\fX(\calo_F)}\otimes \bfun_{\calo_{F,n}},(X,Y))= \begin{cases}
     (-1)^k\SO^{\rH_{\tau_1,\tau_2}}(\bfun_{\vp^k\fq_{si}(\calo_F)},Z)&: \tau_1=\tau_2 \text{ split,}\\
     \qquad 0&:\text{otherwise}.
 \end{cases}
 \]
Taking the transfer factor \eqref{eqn: linear Lie transfer factor} into account, we have
 \[
 \Orb^{\rH',(\eta,\eta)}_{0}(\bfun_{\vp^k\fX(\calo_F)}\otimes \bfun_{\calo_{F,n}},(X,Y))=(-1)^k\Orb^{\rH',(\eta,\eta)}_{0}(\bfun_{\fX(\calo_F)}\otimes \bfun_{\calo_{F,n}},(\vp^{-k}X,\vp^{-k}Y)),
 \]
 and similarly 
 \[\SO^{\rH_{\tau_1,\tau_2}}(\bfun_{\vp^k\fq_{si}(\calo_F)},Z)=\SO^{\rH_{\tau_1,\tau_2}}(\bfun_{\fq_{si}(\calo_F)},\vp^{-k}Z);
 \]
 moreover, it is clear that $(\vp^{-k}X,\vp^{-k}Y)$ and $\vp^{-k}Z$ match. Thus the claim follows from the $k=0$ case applied to these elements. A similar argument gives the reduction in Theorems \ref{Thm: fundamental lemma stable Lie} and \ref{Thm: fundamental lemma endoscopic Lie}. We therefore only consider the case of $k=0$ in the sequel.

\section{Mirabolic orbital integrals and a transfer result}\label{Section: mirabolic integrals}

In this section, we recall certain orbital integrals studied in \cite{Xiaothesis} which arise in a comparison between the pairs 
\begin{equation}\label{eqn: Jacquets comparison}
   (\GL_n,\GL_n\times \mathbb{G}_{a,n})\longleftrightarrow (\U(V_\tau),\Herm_{\tau}^\circ).
\end{equation}
We then recall the main result of the existence of smooth transfer in this context (see Theorem \ref{Thm: Xiao's transfer} below), which is one of the main results of \cite{Xiaothesis}. 

In the \S \ref{s:Hir}, we formulate two results comparing orbital integrals for certain unramified functions in this setting (Theorems \ref{Thm: main local result 1 var} and \ref{Thm: main local result 2 var}) which we prove imply the fundamental lemmas in Theorems  \ref{Thm: fundamental lemma s-i Lie}, \ref{Thm: fundamental lemma stable Lie}, and \ref{Thm: fundamental lemma endoscopic Lie} (see \S \ref{Section: reduce to one statement}). These results are proved in Part \ref{Part: trace proof global} via global methods, which also rely on Theorem \ref{Thm: Xiao's transfer}. Finally, we use this transfer to prove Theorems \ref{Thm: weak smooth transfer} and \ref{Thm: weak endoscopic transfer} in Appendix \ref{Section: proof weak transfer}.

\begin{Rem}
In the split-inert case, it is natural to see why this comparison is relevant as we have the diagram
\begin{equation}
\begin{tikzcd}
    \fgl_n\times \fgl_n\times \mathbb{G}_{a,n}\ar[r]\ar[d,"R"]& \Res_{E/F}\fgl_n\ar[d,"R"]\ar[l]\\
    \fgl_n\times \mathbb{G}_{a,n}\ar[r]&\Herm_\tau\ar[l],
\end{tikzcd}    
\end{equation}
where the top row is the Lie algebra split-inert comparison, the vertical arrows are contraction maps (cf. \S \ref{Section: contractions}), and the regular semi-simple loci of the top row lands in the invertible locus represented in the comparison \eqref{eqn: Jacquets comparison}. A similar (but more complicated) reduction occurs in the inert-inert case. 
\end{Rem}

 \subsection{Orbital integrals}\label{Section: prelim for Jacquet}
 For any $\tau\in \calv_n(E/F)$, we consider the action of $\U(V_\tau)$ on $\Herm_{\tau}^\circ$.  For $f\in C_c^\infty(\Herm_{\tau}^\circ(F))$ and $y\in \Herm_{\tau}^{\circ, rss}(F)$, the notion of stable orbital integral $\SO^{\U(V_\tau)}(f,y)$ is clear.

On the linear side, consider the action of $\GL_n$ acts on $\GL_n\times \bbg_{a,n}$ given by $g\cdot(x,w)=(gxg^{-1},vg^{-1})$. It is easy to show that an element $(x,w)$ is {regular semi-simple} for the action of $\GL_n$  in the sense of Section \ref{Section: orbital integrals conventions} if $x$ is regular semi-simple in the usual sense and $\det([w|wx|\cdots|wx^{n-1}])\neq0$; we call such pairs \emph{strongly regular} to harmonize with the previous settings. Lemma \ref{Lem: generic trivial stabilizer} implies that the stabilizer of a regular semi-simple pair $(x,w)$ is trivial.

Let $T_x$ be the centralizer of $x$ in $\GL_n$; note that if $F[x]\simeq \prod_i F_i$ for finite field extensions $F_i/F$, then $T_x\simeq \prod_i\Res_{F_i/F}\BG_{m}$. As in \S \ref{Section: Lvalue measure}, we consider the local $L$-factor $L(s,T_x,\eta)$.
\begin{Def} Let $\eta=\eta_{E/F}$ denote the quadratic character associated to the extension $E/F$. For strongly regular $(x,w)\in \GL_n(F)\times F_n$,
    define the transfer factor 
\begin{equation}\label{eqn: correct xiao transfer factor}
    \De(x,w):= \eta(\det(x))^n\eta(\det([w|wx|\cdots|wx^{n-1}])).
\end{equation}
\end{Def}
\begin{Rem}\label{Rem: transfer factor Xiao}
   The factor $\eta(\det(x))^n$ was not present in \cite{Xiaothesis}, but is necessary; see Remark \ref{Rem: correct transfer factor}. 
\end{Rem}
For each $\wt{f} \in C_c^\infty(\GL_n(F)\times F_n)$ and strongly regular element $(x,w)$, let
\begin{equation}\label{eqn: Jacuqet OI with s}
    \Orb^{\GL_n(F),\eta,\natural}_s(\wt{f}, (x,w)):=\frac{\De(x,w)}{L(s,T_x,\eta)}\int_{\GL_n(F)}\wt{f}(g^{-1}xg, wg)|g|^s\eta(g)dg.
\end{equation}
By the same argument as in Lemma/Definition \ref{LemDef: linear OI} (cf. \cite[Proposition 5.3]{Xiaothesis}), this ratio
is holomorphic at $s=0$ and we define
\begin{equation}\label{eqn: linear OI}
    \Orb^{\GL_n(F),\eta}(\wt{f},x):=\Orb^{\GL_n(F),\eta,\natural}_0(\wt{f}, (x,w))
\end{equation}
As in Lemma/Definition \ref{LemDef: linear OI}, this is independent of $w$, as our notation indicates.


\subsubsection{Matching of orbits}
We have the obvious matching of regular semi-simple orbits.
\quash{\begin{LemDef}\label{Lem: matching split}
 We say that $y\in\Herm_{n}^{\circ}(F)$ and $z\in \GL_n(F)$ \emph{match} if they possess the same characteristic polynomial over $F$.

This induces a bijection from regular semi-simple stable $U(V_n)$-orbits on $\Herm_{n}^{\circ}(F)$ to regular semi-simple $\GL_n$-orbits in $\GL_n(F)$. 
\end{LemDef}}
\begin{LemDef}\label{Lem: matching def}
Let $\tau\in \calv_n(E/F)$ and fix regular semi-simple elements $y\in \Herm_{\tau}^{\circ}(F)$ and  $z\in\GL_n(F)$. We say that $y$ and $z$ \emph{match} if they possess the same characteristic polynomial over $F$.

For any $\tau\in \calv_n(E/F)$, this induces an injection from regular semi-simple stable $U(V_\tau)$-orbits on $\Herm_{\tau}^{\circ}(F)$ to regular semi-simple $\GL_n$-orbits in $\GL_n(F)$. If $\tau=\tau_n$ is split, this is a bijection.
\end{LemDef}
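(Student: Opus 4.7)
The plan is to verify three things: (i) the characteristic polynomial gives a well-defined $F$-valued invariant of stable $U(V_\tau)$-orbits, (ii) this invariant is injective on stable regular semisimple orbits, and (iii) the resulting map hits every regular semisimple $\GL_n(F)$-orbit when $\tau=\tau_n$ is split. The map to $\GL_n(F)$-orbits is then automatic: on the regular semisimple locus of $\GL_n(F)$, stable conjugacy equals rational conjugacy since $H^1(F,T_z)\to H^1(F,\GL_n)$ is trivial, so these orbits are classified by their characteristic polynomial in $F[t]$.

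For (i), if $y\in\Herm_\tau^\circ(F)$, the relation $\tau y^\ast=y\tau$ means $y^\ast=\tau^{-1}y\tau$, so $y$ and $y^\ast$ are conjugate in $\GL_n(E)$ and hence share a characteristic polynomial. On the other hand $\car_{y^\ast}(t)=\sigma(\car_y(t))$, so $\car_y(t)$ is $\sigma$-fixed and lies in $F[t]$. Since $\car_y$ is preserved by $\GL_n(\bar F)$-conjugation, it is constant on stable orbits.

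For (ii), let $y_1,y_2\in\Herm_\tau^{\circ,rss}(F)$ share a separable characteristic polynomial. Viewed in $\GL_n(\bar F)$ they are regular semisimple with the same $\car$, hence conjugate: $gy_1g^{-1}=y_2$ for some $g\in\GL_n(\bar F)$. Introduce the involution $g^\sharp=\tau g^\ast\tau^{-1}$, whose fixed locus (for the condition $gg^\sharp=I$) is $U(V_\tau)$. Applying $\sharp$ to $gy_1g^{-1}=y_2$ and using $y_i^\sharp=y_i$ shows that $c:=g^\sharp g$ lies in the centralizer $C_{y_1}$, which by regularity is a maximal torus of $\Res_{E/F}\GL_n$. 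I plan to replace $g$ by $g'=gt$ with $t\in C_{y_1}(\bar F)$; the condition $g'(g')^\sharp=I$ becomes $tt^\sharp=c^{-1}$. Over $\bar F$ the torus $C_{y_1}(\bar F)$ splits as a product, on which $\sharp$ acts by a permutation (swapping the two copies arising from $E\otimes_F\bar F\cong\bar F\times\bar F$), and the norm $t\mapsto tt^\sharp$ is easily seen to be surjective onto the $\sharp$-fixed subgroup (which contains $c^{-1}$). Hence such $t$ exists and $g'\in U(V_\tau)(\bar F)$ conjugates $y_1$ to $y_2$.

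For (iii), I need to realize every separable monic $P(t)\in F[t]$ (with $P(0)\neq 0$) as $\car_y$ for some $y\in\Herm_{\tau_n}^\circ(F)$. This is the assertion that the quasi-split form $U(V_{\tau_n})$ supports representatives of every stable rational class, which is a standard rational-orbit statement for quasi-split groups (essentially a consequence of Steinberg's theorem / Kottwitz's analysis of $\ker^1$, and also appearing e.g.\ in the proof of Lemma \ref{Lem: surjective quotient si}). Concretely, one can realize $T_y\cong\ker[\Res_{E[y]/F}\Gm\to\Res_{F[y]/F}\Gm]$ (with $F[y]=F[t]/P(t)$) as an $F$-rational maximal torus of $U(V_{\tau_n})$ and exhibit a regular element of $\Herm_{\tau_n}^\circ(F)$ in it.

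The main obstacle is step (iii): although standard, it genuinely uses quasi-splitness and fails in general (which is exactly why for non-split $\tau$ the map is only an injection). Step (ii) is essentially mechanical once the involution $\sharp$ is in hand and one works over $\bar F$.
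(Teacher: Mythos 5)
Your proposal is correct and reaches the right conclusion, but it is organized rather differently from the paper's. The paper's proof is essentially a two-line appeal to general facts: it identifies both categorical quotients $\Herm_n^\circ\sslash\U_n$ and $\GL_n\sslash\GL_n$ with the space of $F$-rational (separable, nonvanishing-at-zero) characteristic polynomials, reads the injection off from that identification, and for the bijectivity in the split case invokes the scaling embedding $\Herm_n^\circ\hookrightarrow\Lie(\U(V_n))$ together with the surjectivity of the invariant map on $F$-points for a quasi-split group. Your step (ii) does by hand what the paper folds into the identification of categorical quotients: you explicitly verify that two elements of $\Herm_\tau^\circ(F)$ with the same separable characteristic polynomial are conjugate under $U(V_\tau)(\bar F)$ via the descent computation $c=g^\sharp g\in C_{y_1}$ followed by solving the norm equation $tt^\sharp=c^{-1}$ in the (split, $\sharp$-free) torus over $\bar F$. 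That is a perfectly valid and in fact instructive verification of the fact the paper treats as known (it amounts to showing that the separable-characteristic-polynomial locus coincides with the GIT-regular-semisimple locus for the $U(V_\tau)$-action). Your step (iii) is the same idea as the paper's, just stated less concretely; the paper's use of the explicit linearization $x\mapsto\xi x$ into $\Lie(\U(V_n))$ is cleaner than trying to realize the torus and then finding a suitable regular element in $\Herm_{\tau_n}^\circ(F)$.

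One small slip worth fixing: you justify the passage from stable $\GL_n$-orbits to rational $\GL_n$-orbits by saying the map $H^1(F,T_z)\to H^1(F,\GL_n)$ is trivial. Since $H^1(F,\GL_n)=0$ by Hilbert~90, that map is always trivial regardless of $T_z$, and that alone does not bound the kernel. The correct statement is that $H^1(F,T_z)$ itself vanishes, because $T_z\cong\prod_i\Res_{F_i/F}\Gm$ is a product of induced tori, so Hilbert~90 (applied over each $F_i$) gives $H^1(F,T_z)=0$. With that corrected, your argument goes through.
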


\begin{proof}
    Both quotients $\Herm_{n}^{\circ}\sslash\U_n$ and $  \GL_n\sslash\GL_n$ may be identified with the coefficients of $F$-rational characteristic polynomials, giving the matching of orbits, and inducing an injection 
    \[
    \mathrm{Im}(\Herm_{\tau}^{\circ, rss}(F))\subset (\GL_n\sslash \GL_n)(F).
    \] For the claim of bijectivity, recall that when $\tau_n$ is split, $\U(V_n)$ is quasi-split. The map $x \mapsto \xi x$ embeds $\Herm_{n}^{\circ}$ as a Zariski-open subscheme of the Lie algebra $\Lie(\U(V_n))$. As previously noted, the categorical quotient for the $\U(V_n)$-action on its Lie algebra is surjective on $F$-points.
\end{proof}

For $\tau\in \calv_n(E/F)$, if $y\in \Herm_{\tau}^{\circ}(F)$ and $z\in \GL_n(F)$ match, we write $y\leftrightarrow z$.

\subsection{Existence of smooth transfer}\label{Section: orbital ints xiao} Let $E/F$ be a quadratic extension of $p$-adic fields. We now establish the existence of smooth transfer for the comparison \eqref{eqn: Jacquets comparison}. This follows directly from the main results of \cite{Xiaothesis}. 
\begin{Thm}\label{Thm: Xiao's transfer} Fix $\tau\in \calv_n(E/F)$.
	\begin{enumerate}
		\item For any $f \in C_c^\infty(\Herm_{\tau}^{\circ}(F))$, there exists $\wt{f} \in C_c^\infty(\GL_n(F) \times F_n)$ such that $$\SO^{\U(V_\tau)}(f,y)=2^{|S_1^{ram}(z)|}\Orb^{\GL_n(F),\eta}(\wt{f},z)$$ for any matching regular semi-simple orbits $y\leftrightarrow z$ and $\Orb^{\GL_n(F),\eta}(\wt{f},z)=0$ if there is no $y\in \Herm_{\tau}^{\circ}(F)$ matching $z$.
		\item Conversely, for any $\wt{f} \in C_c^\infty(\GL_n(F) \times F_n)$ such that $\Orb^{\GL_n(F),\eta}(\wt{f},z)=0$ if there is no $y\in \Herm_{\tau}^{\circ}(F)$ matching $z\in\GL_n(F)$, there exists $f \in C_c^\infty(\Herm_{\tau}^{\circ}(F))$ such that $$\SO^{\U(V_\tau)}(f,y)=2^{|S_1^{ram}(z)|}\Orb^{\GL_n(F),\eta}(\wt{f},z)$$ for matching regular semi-simple orbits $y\leftrightarrow z$.
	\end{enumerate}
\end{Thm}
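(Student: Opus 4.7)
The plan is to prove smooth transfer by Harish-Chandra descent around each semi-simple orbit, with induction on $n$ reducing the general case to elliptic regular semi-simple elements. First I would verify that the transfer factor $\Delta(x,w) = \eta(\det(x))^n\eta(\det[w|wx|\cdots|wx^{n-1}])$ has the correct $\GL_n(F)$-equivariance, so that $\Delta(x,w)\Orb^{\GL_n(F),\eta,\natural}_s(\widetilde{f},(x,w))$ depends only on the orbit of $x$ (using the fact that $(x,w)\mapsto\det[w|wx|\cdots|wx^{n-1}]$ transforms by $\det(g)^{-1}$ under $g\in\GL_n$). Combined with Lemma/Definition~\ref{Lem: matching def}, this yields a well-posed matching problem.

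For the main transfer, I would perform semi-simple descent. Given a regular semi-simple $z\in\GL_n(F)$ with $F[z]\simeq \prod_i F_i$, if the decomposition is non-trivial then $z$ admits a block structure that induces a parabolic reduction on both sides: the Hermitian space $V_\tau$ decomposes into a corresponding orthogonal sum of $E[z_i]$-stable Hermitian subspaces, and a Luna-type slice theorem converts the desired transfer identity into a product of transfer identities for smaller rank. Proceeding by induction on $n$ reduces to the elliptic case where $F[z]$ is a field and $T_z$ is anisotropic. In that case both integrals become integrals over a compact quotient, and one constructs matched test functions directly: pick a characteristic function of a small neighborhood of $y\in\Herm_{\tau}^\circ(F)$ transverse to the orbit, spread it by integration along the torus, and transfer to the linear side using the explicit Cayley-type birational map between $\Herm_{\tau}^{\circ,invt}$ and an open subset of $\GL_n(F)\times F_n$ governed by the contraction formalism of \S\ref{Section: contractions}.

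The factor $2^{|S_1^{ram}(z)|}$ must emerge naturally from the measure comparison (Lemma~\ref{Lem: vol of centralizer}), as the index of the image of the norm $\Nm:\calo_{E[z]}^\times\to\calo_{F[z]}^\times$ at the ramified places of the centralizer torus; tracking this through the inductive step requires the compatibility between our measure conventions and the transfer factor's dependence on $\det(x)$ through the sign $\eta(\det(x))^n$ (cf. Remark~\ref{Rem: transfer factor Xiao}).

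The main obstacle is the reverse direction together with the vanishing hypothesis. To produce $f\in C_c^\infty(\Herm_{\tau}^\circ(F))$ from $\widetilde{f}$ one must show that the vanishing of $\Orb^{\GL_n(F),\eta}(\widetilde{f},z)$ on orbits admitting no matching $y\in\Herm_{\tau}^{\circ}(F)$ is not just necessary but sufficient for smoothness and compact support of the reconstruction on $\Herm_{\tau}^\circ$. This requires characterizing the image of the matching map inside $\GL_n\sslash\GL_n$ by local sign invariants coming from the quadratic form on $E[z]/F[z]$, and verifying the compatibility of these obstructions with the inductive descent. This local-global incompatibility tracking is the delicate part of \cite[Theorem~5.1]{Xiaothesis}, but once it is established, the existence of transfer follows from the density of regular semi-simple orbital integrals and a standard partition-of-unity argument on both sides.
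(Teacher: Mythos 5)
Your proposal takes a genuinely different route from the paper, and it has a substantial gap.

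The paper does not reprove transfer by Harish--Chandra descent. Instead it reduces the statement to two prior results: given $\wt{f}\in C_c^\infty(\GL_n(F)\times F_n)$ one extends to a function on $\fgl_n(F)\times F^n\times F_n$, applies the Jacquet--Rallis smooth transfer of \cite{ZhangFourier} to obtain functions $F_0,F_1$ on the two Hermitian Lie algebras, restricts back, and then combines these via Jacquet--Langlands (endoscopic) transfer \cite{Waldstransfert} into a single function on $\Herm_n^\circ(F)$; the non-split $\tau$ is handled by embedding $\xi\cdot\Herm_\tau^\circ$ into $\Lie(\U(V_\tau))$ and invoking endoscopic transfer again. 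The entire analytic content is outsourced to those theorems, and the paper's ``proof'' is a formal bookkeeping of transfer factors and normalizations relative to \cite{Xiaothesis}. What you propose instead is a from-scratch proof by semi-simple descent, which, if it worked, would bypass both black boxes.

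The gap in your proposal is in the sentence ``Proceeding by induction on $n$ reduces to the elliptic case where $F[z]$ is a field.'' This is not what descent does. Luna-slice descent around a \emph{non-central} semi-simple element $y_0$ (equivalently, around a non-central point of the categorical quotient) reduces the transfer statement to a product of transfer statements for the descended pairs, which have smaller rank; that part is fine. But descent around the \emph{central} elements — the unit orbit, or equivalently near $0$ after the Cayley map — does not decrease $n$ and does not become ``elliptic.'' That stratum is precisely the one where orbital integrals blow up and the singular germ expansion lives, and it is the locus where the two sides of the transfer are genuinely hard to match. Your subsequent appeal to ``density of regular semi-simple orbital integrals and a standard partition-of-unity argument'' fails exactly there: the space of orbital-integral functions attached to $C_c^\infty$ test functions is \emph{not} characterized by local constancy on the regular locus alone; there are nontrivial asymptotic (Shalika germ) constraints near the singular set, and a partition of unity in the orbit space cannot produce a compactly supported preimage unless one controls these. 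Resolving this is precisely what the Fourier-transform/uncertainty-principle argument of \cite{ZhangFourier} does, and your sketch contains no substitute for it. Similarly, your plan for the elliptic case (``pick a characteristic function of a small neighborhood of $y$... and transfer to the linear side'') gives matching on a single orbit, but you then need to reassemble these local constructions into a single compactly supported $\wt{f}$ with the prescribed vanishing; again, this reassembly is obstructed near the nilpotent cone and you do not address it.

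Two smaller issues. The factor $2^{|S_1^{ram}(z)|}$ does not come from the global volume computation of Lemma~\ref{Lem: vol of centralizer}; it arises from the purely local comparison between the regularized orbital integral $\Orb_0^{\eta}$ of the present paper and the nilpotent-orbital-integral normalization $\underline{\Orb}^{\eta}$ used in \cite[Section 5.2]{Xiaothesis}, via \cite[Proposition 5.3]{Xiaothesis}. And the vanishing hypothesis in part (2), which you flag as the ``delicate part,'' is in the paper's proof an essentially free by-product of the Jacquet--Rallis transfer being an isomorphism onto the pair of Hermitian forms: the constraint $\Orb^{\GL_n(F),\eta}(\wt{f},z)=0$ off the image of $\Herm_\tau^\circ$ is exactly the condition that the non-$\tau$ component of the Jacquet--Rallis transfer has zero orbital integrals, which then lets one subtract it off via Jacquet--Langlands. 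Your proposal treats this as an open-ended sign-tracking problem because it has not reduced to a situation where those obstructions are classified.
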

\begin{proof} When $\tau=\tau_n$ is split, the theorem is essentially contained in \cite[Theorem 5.1]{Xiaothesis}, once one applies \cite[Proposition 5.3]{Xiaothesis} to identify the orbital integrals used in that theorem with those in Definition \ref{eqn: linear OI}. An important difference with \emph{ibid.} and the current claim is the difference in transfer factor \eqref{eqn: correct xiao transfer factor}. This does not obstruct the existence of smooth transfer, though our definitions differ from those in \cite{Xiaothesis} when $E/F$ is ramified.
 
For the purpose of clarity (especially for Theorem \ref{Thm: Xiao FL for d} below), we recall the construction. For $f\in C_c^\infty(\GL_n(F)\times F_n)$, the second author introduces an orbital integral $\underline{\Orb}^\eta(\wt{f},z)$ (simply denoted $\Orb(f,z)$ in \cite[Section 5.2.1]{Xiaothesis}) in terms of certain nilpotent orbital integrals in the context of the Jacquet--Rallis transfer. Proposition 5.3 of \emph{ibid.} shows that
\[
\Orb_0^\eta(\wt{f},z)=2^{|S_1^\ast|}\underline{\Orb}^\eta(\wt{f},z),
\]
where $S_1$ is defined as in \S \ref{Section: Lvalue measure}, and $S_1^\ast=\{i\in S_1: \eta_i \text{ is unramified on }F_i\}$.

Choosing a function $F\in C_c^\infty(\GL_n(F)\times F^n\times  F_n)$ such that $\wt{f}(z,v) = F(z,0,v)$. With respect to the inclusion $\GL_n(F)\subset \fgl_n(F)$, we may view $F$ as a function on $\fgl_n(F)\times F^n\times  F_n$. Applying smooth transfer of Jacquet--Rallis \cite{ZhangFourier}, we obtain functions $\{F_0,F_1\}$ with $F_i\in C_c^\infty(\Herm(V_i)\times V_i)$, where $\{V_0,V_1\}$ are representatives of the two isometry classes of Hermitian forms for $E/F$ and $V_0=V_n$ is the split form. Set $f_i(y) :=F_i(y,0)$.

Jacquet--Langlands transfer (cf. \cite[Theorem 1.5]{Waldstransfert}) now implies that there exists $JL(f_1)\in C_c^\infty(\Lie(U(V_n)))$ such that
 \[
 \SO^{\U(V_n)}(JL(f_1),y_0) = \SO^{\U(V_\tau)}(f_1,y)
 \] for any regular semi-simple $y\in \Herm_{\tau}^{\circ}(F)$ and $y_0\in \Herm_{n}^{\circ}(F)$ with the same characteristic polynomial and such that $\SO(f_0,y_0)=0$ if there is no such $y\in \Herm_{\tau}^{\circ}(F)$. With this, we define $f_n':= f_0-JL(f_1)$. Finally, $f_n\in C_n^\infty(\Herm_{n}^{\circ}(F))$ is defined by scaling $f'_n$ by an indicator function that vanishes at the determinant locus of $\Herm_n(F)$. Then \cite[Theorem 5.1 and Proposition 5.3]{Xiaothesis} shows that --taking the change of our transfer factor into account (cf. Remark \ref{Rem: transfer factor Xiao})-- the function $f:=\eta((-1)^n)\eta^n\cdot f_n$ is a transfer of $\wt{f}$ in that
 \[
 \SO^{\U(V_n)}({f},y)=2^{|S_1|}\underline{\Orb}^\eta(\wt{f},z)
 \]where $y\leftrightarrow z$ match. Here, 
$
(\eta^n\cdot f_n)(x) = \eta^n(\det(x)) f_n(x).
$

We conclude that for matching regular semi-simple elements  $y\leftrightarrow z$,
\[
 \SO^{\U(V_n)}({f},y)=2^{|S_1|-|S_1^\ast|}\Orb_0^\eta(\wt{f},z)=2^{|S_1^{ram}(z)|}\Orb_0^\eta(\wt{f},z),
\]
proving the claim.
 
 When $\tau$ is non-split, we observe the open inclusion $\xi\cdot\Herm_{\tau}^{\circ}\subset \Lie(\U(V_\tau))$ allows us to recognize that $\SO^{\U(V_\tau)}(f,y)$ as a stable orbital integral for the adjoint action of $\U(V_\tau)$ on its Lie algebra, where we view $f\in C_c^\infty(\Lie(U(V_\tau)))$ via extension-by-zero. Another application of Jacquet--Langlands transfer gives a function $f_0\in C_c^\infty(\Lie(U(V_n)))$ matching $f$. Combining this with the preceding case completes the argument.
\end{proof}

\section{The Hironaka transform and a fundamental lemma}\label{s:Hir}
We now assume $F$ has odd residue characteristic and $E/F$ is unramified. In this section, we recall a beautiful result of Hironaka regarding a certain module for the spherical Hecke algebra of $\GL_n(E)$, and relate the orbital integrals from the previous section to this structure. We then formulate a result (Theorem \ref{Thm: main local result 2 var}) which implies Theorems \ref{Thm: fundamental lemma s-i Lie}, \ref{Thm: fundamental lemma stable Lie}, and \ref{Thm: fundamental lemma endoscopic Lie}. 

We set $K=\GL_n(\calo_F)$ and $K_E=\GL_n(\calo_E)$.

\quash{\subsection{Preliminaries}
 We begin by fixing some notation. 
Denote
$$
\mathbb{P}_n=\{(\lambda_1,...,\lambda_n):\lambda_1\geq \lambda_2\geq \cdots\geq \lambda_n.\}
$$
and 
$
\mathrm{P}_{n,\geq 0}=\{(\lambda_1,...,\lambda_n)\in \BZ^n_{+}: \lambda_n\geq 0.\}
$ Define
$$
|\lambda|=\sum \lambda_i,\quad n(\lambda)=\sum_{i=1}^n (i-1)\lambda_i.
$$
{We denote
$$
w_n(t)=\prod_{i=1}^n(1-t^i),
$$
and a generalization $w_\lambda^{(n)}(t)$ such that  $w_{0}^{(n)}(t)=w_n(t)$. 
}
}
\subsection{The Satake isomorphism} We first recall the Satake isomorphism for $\GL_n(F)$. We let $q$ denote the cardinality of the residue field of $F$, and fix a uniformizer $\varpi\in \calo_F$.

Let $\calh_{K}(\GL_n(F))$ denote the spherical Hecke algebra of $\GL_n(F)$. For any $(s_1,\ldots,s_n)\in \cc^n$, we recall the Satake transform
\begin{equation}\label{eqn: satake transform}
Sat(f)(s_1,\ldots,s_n) = \int_{\GL_n(F)}f(g)\prod_{i=1}^n|a_i|_F^{s_i-\frac{1}{2}(n+1-2i)}dg,
\end{equation}
where $g=nak$ is the Iwasawa decomposition of $g$, $dg$ is our chosen measure from \S \ref{measures}, and $a=\diag(a_1,\ldots,a_n)\in T_n(F).$
This gives an algebra isomorphism 
\begin{equation*}
    Sat:\calh_{K_{n}}(\GL_n(F))\iso \cc[q^{\pm s_1},\ldots,q^{\pm s_n}]^{S_n}.
\end{equation*}
 Setting $t_i=q^{-s_i}$, $ t=\diag(t_1,\ldots,t_n)\in \hat{T}_n\subset \GL_n(\cc)$ is an element of the diagonal split torus in the dual group of $\GL_n(F)$, and $$\cc[q^{\pm s_1},\ldots,q^{\pm s_n}]\cong\cc[\hat{T}_n]\cong \cc[Z_1^{\pm1},\ldots,Z_n^{\pm1}],$$ where 
\begin{equation}\label{eqn: weird normalization}
Z_i(t)=t_i.
\end{equation}

Let $\lam$ be a dominant coweight of $T_n(F)\subset \GL_n(F)$ with respect to $B_n =T_nN_n$ and let $\varpi^\lam$ denote the image of $\varpi$ under $\lam$. Recall (see \cite[pg. 299]{macdonald}) the formula 
\begin{equation}\label{eqn: Satake basis}
Sat(\bfun_{K_{n}\vp^\lam K_{n}}) = q^{\la\lam,\rho\ra}P_\lam(Z_1,\ldots,Z_n;q^{-1}).
\end{equation}
 Here,
\[
P_\lam(x_1,\ldots,x_n;t)= \frac{(1-t)^n}{w_\lambda^{(n)}(t)} \sum_{\sigma\in S_n}\sig\left(x_1^{\lam_1}\cdots x_n^{\lam_n}\prod_{\lam_i>\lam_j}\frac{x_i-tx_j}{x_i-x_j}\right)
\]is the $\lam$-th Hall--Littlewood polynomial \cite[pg. 208]{macdonald}. In particular, we have $
P_{0}(X,t)=1.$ It is well known that these polynomials give a $\zz$-basis for $\zz[t][x_1^{\pm1},\ldots,x_n^{\pm1}]^{S_n}$. 
Note we have an identity \cite[pg.225]{macdonald}
\begin{equation}\label{eqn: geometric series macdonald}
    \sum_{\lambda\in \mathrm{P}_{n,\geq 0}} t^{n(\lambda)} P_\lambda(\ul{X};t)=\prod_{1\leq i\leq n}(1-X_i)^{-1}=\sum_{\lam\geq0}X^\lam,
\end{equation}
where $$\mathrm{P}_n=\{(\lambda_1,...,\lambda_n):\lambda_1\geq \lambda_2\geq \cdots\geq \lambda_n\},$$
$\mathrm{P}_{n,\geq 0} =\{\lam\in \mathrm{P}_n\;:\;\lam_n\geq0\}$, and $X^\lam = \prod_i X_i^{\lam_i}$.

\subsection{The spherical functions for $\Herm_{n}^{\circ}(F)$}\label{Section: spherical hecke}
Let $K_E=\GL_n(\calo_E)$. It follows from \cite{jacobowitz1962hermitian} that the $K_E$-orbits on $\Herm_{n}^{\circ}(F)$ are
\begin{equation}\label{eqn: orbits everywhere}
\Herm_{n}^{\circ}(F)=\bigsqcup_{\lam\in\mathrm{P}_{n}}K_E\ast(\vp^\lam\tau_n).
\end{equation}
We have an action of $\GL_n(E)$ on $C_c^\infty(\Herm_{n}^{\circ}(F))$ given by 
\[
g\ast f(y) = f(g^{-1}\ast y), \quad \text{for any}\quad f\in C_c^\infty(\Herm_{n}^{\circ}(F)), \: g\in \GL_n(E)\;\text{ and }\; y\in \Herm_{n}^{\circ}(F).
\] Set $\calh_{K_E}(\Herm_{n}^{\circ}(F)):=C^\infty_c(\Herm_{n}^{\circ}(F))^{K_E}$ to be the vector space of $K_E$-invariant functions.  Set $\bfun_\lam$ to be the indicator function of the orbit $K_E\ast (\vp^\lam\tau_n).$ The above orbit decomposition implies that $\{\bfun_\lam\}_{\lam\in \mathrm{P}_n}$ is a $\cc$-basis for $\calh_{K_E}(\Herm_{n}^{\circ}(F))$. Note that with this notation 
 \[
 \bfun_0=\bfun_{\Herm_{n}^{\circ}(\calo_F)}.
 \]
 The spherical Hecke algebra $\calh_{K_E}(\GL_n(E))$ acts on this space by
\[
f\ast  \phi(y)= \int_Gf(g^{-1})\phi(g\ast y)dg.
\]
The induced $\calh_{K_E}(\GL_n(E))$-module structure of $\calh_{K_E}(\Herm_{n}^{\circ}(F))$ is well understood thanks to the work of Hironaka. More precisely, Hironaka computes in \cite{hironaka1999spherical} the \emph{normalized spherical function} $\Omega_z\in C^\infty(\Herm_{n}^{\circ}(F))^{K_E}$ (see \cite[Section 6]{offenjacquet} for explicit formulas) and uses this as the kernel for the following integral transform.
\quash{satisfying
\begin{equation}\label{eqn: normalized spherical}
   \Omega_z(\varpi^{\lambda})=(-1)^{n(\lambda)+|\lambda|} q^{n(\lambda)-(n-1)|\lambda|/2} \frac{w^{(n)}_\lambda(-q^{-1})}{w_n(-q^{-1})}\times P_\lambda(q^z,-q^{-1}).
   \end{equation}
}
\begin{Thm}\cite{hironaka1999spherical} The integral transform given by
\[
\mathcal{SF}(\phi)(z) = \int_{\Herm_{n}^{\circ}(F)}\phi(x)\Omega_z(x^{-1})dx
\]
with $\phi\in\calh_{K_{E}}(\Herm_{n}^{\circ}(F))$ induces an isomorphism of $\calh_{K_E}(\GL_n(E))$-modules
\[
\calh_{K_{E}}(\Herm_{n}^{\circ}(F))\cong \cc[Z_1^{\pm1},\ldots,Z_n^{\pm1}]^{S_n}.
\]
In particular, $\calh_{K_E}(\Herm_{n}^{\circ}(F))$ is a free $\calh_{K_E}(\GL_n(E))$-module of rank $2^n$.

\end{Thm}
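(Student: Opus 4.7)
The plan is to realize $\mathcal{SF}$ as a Macdonald--Satake-type transform, built from an explicit zonal spherical function $\Omega_z$ on $\Herm_n^\circ(F)$ that simultaneously diagonalizes the $\calh_{K_E}(\GL_n(E))$-action. First, I would define $\Omega_z$ by an integral formula analogous to Macdonald's spherical function for $\GL_n$, namely an integral over $N_n(E)$ of a quasi-character in the ``Iwasawa $A$-part'' relative to the $\GL_n(E)$-action $g \ast x = g x g^\ast$ on $\Herm_n^\circ(F)$. Bi-invariance under $K_E$ is manifest, and the eigenfunction identity
\[
f \ast \Omega_z = \widehat{f}(z)\, \Omega_z \qquad (f \in \calh_{K_E}(\GL_n(E)))
\]
follows by the usual Fubini argument, with $\widehat{f}(z)$ a symmetric Laurent polynomial in $q^{\pm z_i}$. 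This step is formal once the integral representation is set up.

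The core technical step is the explicit evaluation of $\Omega_z$ at the orbit representatives $\varpi^\lambda \tau_n$ indexed by $\lambda \in \mathrm{P}_n$ from \eqref{eqn: orbits everywhere}. Here one expects a formula of Macdonald type, but with the Hall--Littlewood parameter $t$ specialized to $-q^{-1}$ rather than $q^{-1}$, the sign reflecting the alternating contributions of the Hermitian involution on the unipotent integration. Concretely, I would carry out the Iwasawa-type decomposition of $\GL_n(E)$ relative to the Hermitian action, reducing the integral over $N_n(E)$ to a sum over Weyl translates using a Casselman--Shalika-style computation, and then identify the resulting alternating sum with $P_\lambda(q^z; -q^{-1})$ times an explicit normalizing factor in $q$. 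This is the main obstacle, since it requires carefully matching the measure theory on $\Herm_n^\circ(F)$ with the combinatorics of Hall--Littlewood polynomials at a negative parameter.

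Once the values $\Omega_z(\varpi^\lambda \tau_n)$ are known, the transform becomes
\[
\mathcal{SF}(\bfun_\lambda)(z) \;=\; c_\lambda(q)\, P_\lambda\bigl(q^z;\,-q^{-1}\bigr)
\]
for explicit nonzero constants $c_\lambda(q)$ (coming from volumes of the orbits $K_E \ast \varpi^\lambda \tau_n$). Since $\{P_\lambda(X; -q^{-1})\}_{\lambda \in \mathrm{P}_n}$ is a $\cc$-basis of $\cc[Z_1^{\pm1},\ldots,Z_n^{\pm1}]^{S_n}$ (a property of Hall--Littlewood polynomials at any parameter value where the relevant denominators do not vanish, and $-q^{-1}$ is such a value for $q$ a prime power), and since $\{\bfun_\lambda\}$ is a $\cc$-basis of $\calh_{K_E}(\Herm_n^\circ(F))$, the map $\mathcal{SF}$ is a $\cc$-linear isomorphism. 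The intertwining $\mathcal{SF}(f \ast \phi) = \widehat{f} \cdot \mathcal{SF}(\phi)$ is immediate from the eigenfunction property of $\Omega_z$.

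Finally, for the rank $2^n$ assertion: the image of the base change homomorphism $\calh_{K_E}(\GL_n(E)) \to \calh_K(\GL_n(F))$ corresponds under the Satake isomorphism \eqref{eqn: satake transform} to the subring of $\cc[Z_1^{\pm 1},\ldots,Z_n^{\pm 1}]^{S_n}$ generated by $Z_i + Z_i^{-1}$ (symmetric in $i$), since $\sigma$-fixed Frobenius conjugacy classes in $\GL_n(\cc)$ correspond to pairs $\{Z_i, Z_i^{-1}\}$. Thus $\cc[Z_1^{\pm 1},\ldots,Z_n^{\pm 1}]^{S_n}$ is free of rank $2^n$ over the base-change image (a basis being indexed by subsets of $\{1,\dots,n\}$ modulo $S_n$), and via $\mathcal{SF}$ this transfers to $\calh_{K_E}(\Herm_n^\circ(F))$, completing the proof.
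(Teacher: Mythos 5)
The paper gives no proof of this result; it is quoted verbatim from Hironaka's work \cite{hironaka1999spherical}, so there is nothing in the paper to match against. Your reconstruction of the first part (defining $\Omega_z$ as a zonal spherical function, the eigenfunction identity, evaluating at the orbit representatives $\varpi^\lambda\tau_n$ and obtaining a non-zero multiple of $P_\lambda(q^z;-q^{-1})$, and concluding a linear isomorphism because the Hall--Littlewood polynomials at parameter $-q^{-1}$ are a basis of $\cc[Z_1^{\pm1},\ldots,Z_n^{\pm1}]^{S_n}$) is exactly Hironaka's strategy, and the formula you guess is the right one (it appears in a commented-out passage of the source as $\Omega_z(\varpi^\lambda)=(-1)^{n(\lambda)+|\lambda|}q^{n(\lambda)-(n-1)|\lambda|/2}\tfrac{w^{(n)}_\lambda(-q^{-1})}{w_n(-q^{-1})}P_\lambda(q^z;-q^{-1})$).

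There is, however, a genuine error in your final paragraph, which proves the rank-$2^n$ assertion. You assert that under Satake the image of the base change homomorphism $\calh_{K_E}(\GL_n(E))\to\calh_K(\GL_n(F))$ is the subring of $\cc[Z_1^{\pm1},\ldots,Z_n^{\pm1}]^{S_n}$ generated by $Z_i+Z_i^{-1}$. This is not correct. For an unramified quadratic extension $E/F$ the base change of an unramified representation with Satake parameter $(\alpha_1,\ldots,\alpha_n)$ is the unramified representation of $\GL_n(E)$ with Satake parameter $(\alpha_1^2,\ldots,\alpha_n^2)$ (restriction along $\mathrm{Frob}_E=\mathrm{Frob}_F^2$), so dually the map on Hecke algebras is the substitution $W_i\mapsto Z_i^2$, and the image is $\cc[Z_1^{\pm2},\ldots,Z_n^{\pm2}]^{S_n}$, not the inversion-invariants. (You can see this directly in the paper's own computations: the $\mathrm{Sat}(\mathrm{BC}(\bfun_{\fgl_n(\calo_E)}))$ is $\prod_i(1-q^{n-1}Z_i^2)^{-1}$, i.e. the Satake transform on $\GL_n(E)$ with $W_i$ replaced by $Z_i^2$.) The statement you are conflating with base change is the conjugate-self-duality condition $\{Z_i\}=\{Z_i^{-1}\}$, which is unrelated here.

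Consequently the freeness/rank argument also has a gap: even with the correct subring $\cc[Z_1^{\pm2},\ldots,Z_n^{\pm2}]^{S_n}$, your claim that ``a basis is indexed by subsets of $\{1,\ldots,n\}$ modulo $S_n$'' gives $n+1$ classes, not $2^n$, and in any case one cannot simply pass a free basis of $\cc[Z^{\pm1}]$ over $\cc[Z^{\pm2}]$ (the monomials $\prod_{i\in S}Z_i$) through $S_n$-invariants. The correct statement is that $\cc[Z_1^{\pm1},\ldots,Z_n^{\pm1}]^{S_n}$ is free of rank $2^n$ over $\cc[Z_1^{\pm2},\ldots,Z_n^{\pm2}]^{S_n}$, but to prove it one should note that both invariant rings are regular (they are Laurent polynomial rings in the elementary symmetric functions with $e_n$ inverted) of the same dimension, the inclusion is finite of generic degree $2^n$, so miracle flatness makes the extension flat, hence projective, hence free, and the rank is forced to be $2^n$. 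That is the kind of argument needed; the one you wrote down does not establish it.
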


Combining the spherical Fourier transform with the Satake transform on $\GL_n(F)$, we define an isomorphism of $\calh_{K_E}(\GL_n(E))$-modules 
\[
\Hir:=\mathrm{Sat}^{-1}\circ \mathcal{SF}:\calh_{K_{E}}(\Herm_{n}^{\circ}(F))\lra \calh_{K}(\GL_n(F)),
\]
which we call the \emph{Hironaka transform}.

\quash{
\subsubsection{Spherical Fourier transform}
The work of Hironaka shows that there exists a ``spherical Fourier transform'' $\mathcal{SF}$ on $\calh_{K_{n}}(\Herm_n)$ inducing an isomorphism 
\[
\calh_{K_{n}}(\Herm_n)\cong \cc[Z_1^{\pm1},\ldots,Z_n^{\pm1}]^W.
\]

We now recall the relevant highlights.

\subsubsection{Invariant distribution}

For each $u\in \sU$ we define 
\begin{align}
d_u^s(x)={\bf 1}_{X_u}(x)\prod_{1\leq i\leq n} |d_i( x)|^{s_i}
\end{align}
and\begin{align}
d_u^s(g,x)=d_u^s(g\cdot x).
\end{align}
For unramified characters $\chi$ determined by $s$, we may write $d_u^\chi(g,x)=d_u^s(g,x)$. We set 
\begin{align}
d^s(x)=\sum_{u} d_u^s(x)=\prod_{1\leq i\leq n} |d_i( x)|^{s_i}.
\end{align}

\begin{lem} For generic $s$, the linear functionals induced by $d_u^\chi(\cdot,x), u\in \sU$ are linearly dependent and form a basis of $\sD(G)_\chi^{G_x}$.
\end{lem}

In the Hermitian case, we have an alternative basis. We note that each orbit $X_u$ is determined by the value of the $n$-tuple $(\eta(d_1),\eta(d_2),\cdots,\eta(d_n)).$ It follows that the functions $d^{(s+\epsilon)}(\cdot,x)$ for $\epsilon\in\{0,\frac{\pi i}{\log q}\}^n$ form a basis of $\sD(G)_\chi^{G_x}$ for a generic $s\in\BC^n$.

$$
\omega(x;z)=\omega(x;s):=\int_{K} \prod_{1\leq i\leq n} |d_i(k\cdot x)|^{s_i}dk,
$$
where the Haar measure is normalized such that $\vol(K)=1$.  Here the complex variables are related by
\begin{align}\label{eqn: change of variables}
\begin{cases}
s_i=-z_i+z_{i+1}-1+\frac{\pi\sqrt{-1}}{\log q}, & 1\leq i\leq n-1\\
s_n=-z_n+\frac{n-1}{2}+\frac{\pi\sqrt{-1}}{\log q},& i=n.
\end{cases}
\end{align}

Setting $Z_i = q^{z_i}$, we have a formula \cite[(2.5)]{hironaka1999spherical} 
\begin{align*}
\omega(\varpi^\lambda;z)=(-1)^{n(\lambda)+|\lambda|} q^{n(\lambda)-(n-1)|\lambda|/2}&(1-q^{-1})^n \frac{w^{(n)}_\lambda(-q^{-1})}{w_n(q^{-2})}
\\&\times \prod_{1\leq i<j\leq n}\frac{1-q^{-1}q^{z_i-z_j}}{1+q^{z_i-z_j}} \times P_\lambda(q^z,-q^{-1}).
\end{align*}
}

\quash{
Recall that Satake transform is an isomorphism 
\begin{equation}\label{} 
	\begin{gathered}
      Sat \colon  \xymatrix@R=0ex{
	  \sH \ar[r]  &  \BC[q^{\pm2z_1},\cdots,q^{\pm2z_n}]^{S_n} \\
		f\ar@{|->}[r]  & \wh f
	}
	\end{gathered},
\end{equation}
where 
$$
\wh f(z):=\int_{G} f(g)\psi_z(g)dg
$$
and $$
\psi_z(g)=\prod_{i=1}^n |a_i|^{2z_{n+1-i}  -(n+1-2i)},\quad g=k   \begin{bmatrix}
             a_1&\ast& \ast   \\
            &\cdots&\ast\\
         &&a_n 
         \end{bmatrix}\in KB.
$$
{\bf DOUBLE CHECK}

}


 The next theorem is one of our main local result on orbital integrals. It constitutes a ``stable'' fundamental lemma for the Hecke module $\calh_{K_E}(\Herm_{n}^{\circ}(F))$; the endoscopic variants for this module were partially established in \cite{LeslieUFJFL}.
\begin{Thm}\label{Thm: main local result 1 var} For any $\phi \in \calh_{K_E}(\Herm_{n}^{\circ}(F))$, the function $\Hir(\phi)\otimes\bfun_{\calo_{F,n}}\in C^\infty_c(\GL_n(F)\times F_n)$ gives a transfer in the sense of Theorem \ref{Thm: Xiao's transfer}. That is, for matching regular semi-simple elements $x\in \Herm_{n}^{\circ}(F)$ and $z\in \GL_n(F)$, we have
	\[
	\SO^{\U(V_n)}(\phi,x)=\Orb^{\GL_n(F),\eta}( \Hir(\phi)\otimes \bfun_{\calo_{F,n}},z).
	\]
\end{Thm}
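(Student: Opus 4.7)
The plan is to deduce Theorem \ref{Thm: main local result 1 var} as a direct specialization of Theorem \ref{Thm: main local result 2 var}, applied with $\phi' = \bfun_{\Herm_{n}^{\circ}(\calo_F)}$ and the pair $(x_1, x_2) = (x, I_n)$. The key observation is that $\bfun_{\Herm_{n}^{\circ}(\calo_F)}$ is the identity of the Hironaka algebra $\calh_{K_E}(\Herm_{n}^{\circ}(F))$: by our measure normalizations, $\vol(\Herm_{n}^{\circ}(\calo_F)) = 1$ and $\Omega_z(I_n) = 1$, so $\mathcal{SF}(\bfun_{\Herm_{n}^{\circ}(\calo_F)}) \equiv 1$ and hence $\Hir(\bfun_{\Herm_{n}^{\circ}(\calo_F)}) = \bfun_{K}$. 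Consequently $\phi\ast \bfun_{\Herm_{n}^{\circ}(\calo_F)} = \phi$, and since $x_1 x_2 = x$ shares characteristic polynomial with $z$, Theorem \ref{Thm: main local result 2 var} yields
\[
\SO^{\GL_n(E)}(\phi\otimes \bfun_{\Herm_{n}^{\circ}(\calo_F)}, (x, I_n)) = \Orb^{\GL_n(F),\eta}(\Hir(\phi) \otimes \bfun_{\calo_{F,n}}, z).
\]
It remains to identify the left-hand side with $\SO^{\U(V_n)}(\phi, x)$.

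For this, I analyze the rational $\GL_n(E)$-orbits in the stable orbit of $(x, I_n)$ whose contribution to the orbital integral is nonzero. For a representative $(x_1', x_2')$, the condition $g^\ast x_2' g \in \Herm_{n}^{\circ}(\calo_F) = K_E \ast I_n$ forces $x_2'$ to lie in the split Hermitian orbit $\Herm_{n}^{\circ}(F)_{I_n}$, since (for unramified $E/F$) $\Herm_{n}^{\circ}(\calo_F)$ consists only of split forms; pairs with non-split $x_2'$ contribute zero. Writing $x_2' = hh^\ast$ for some $h\in \GL_n(E)$, the action by $g = h^\ast$ carries $(x_1', x_2')$ to $(h^\ast x_1' h,\, I_n)$, and the $\U(V_n)$-ambiguity in $h$ makes $h^\ast x_1' h$ well-defined modulo $\U(V_n)$-conjugation. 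The invariant $\mathrm{char}(x_1' x_2') = \mathrm{char}(h^\ast x_1' h)$ is preserved, the stabilizer $T_{(h^\ast x_1' h,\, I_n)}$ coincides with the centralizer of $h^\ast x_1' h$ in $\U(V_n)$, and two representatives $(y_1, I_n), (y_2, I_n)$ lie in the same $\GL_n(E)$-orbit if and only if $y_1, y_2$ are $\U(V_n)$-conjugate. This gives a stable-orbit-preserving bijection between such $\GL_n(E)$-orbits with split $x_2'$ and $\U(V_n)$-orbits on $\Herm_{n}^{\circ}(F)$.

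Finally, at the representative $(y, I_n)$, the condition $g^\ast g \in \Herm_{n}^{\circ}(\calo_F)$ restricts the integration domain to $g \in \U(V_n) K_E$. Writing $g = uk$ with $u\in \U(V_n)$ and $k\in K_E$, the identity $g^{-1} y (g^\ast)^{-1} = k^{-1} \ast (u^{-1} \ast y)$ combined with the $K_E$-invariance of $\phi$ gives $\phi(g^{-1} y (g^\ast)^{-1}) = \phi(u^{-1} \ast y)$, independent of $k$. Since $\vol(K_E) = \vol(\U(V_n)\cap K_E) = 1$, Fubini on the fibration $\U(V_n) \times K_E \to \U(V_n) K_E$ with fiber $\U(V_n)\cap K_E$ yields
\[
\Orb^{\GL_n(E)}(\phi \otimes \bfun_{\Herm_{n}^{\circ}(\calo_F)}, (y, I_n)) = \int_{(T_y)_{\U(V_n)}\bs \U(V_n)} \phi(u^{-1} \ast y)\, du = \Orb^{\U(V_n)}(\phi, y).
\]
Summing over the bijection of orbits concludes the argument. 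The only real technical point is tracking the measure normalizations on $T_{(y, I_n)} \simeq (T_y)_{\U(V_n)}$ and on the fibration $\U(V_n) \times K_E \to \U(V_n) K_E$, which cancel cleanly thanks to the conventions of \S\ref{measures}.
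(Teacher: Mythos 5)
Your proposal is correct and takes essentially the same route as the paper: both specialize Theorem \ref{Thm: main local result 2 var} with one factor of the test function equal to the Hironaka unit $\bfun_{\Herm_{n}^{\circ}(\calo_F)}$, and then unwind the resulting $\GL_n(E)$-stable orbital integral on $\Herm_{n}^{\circ}(F)\times\Herm_{n}^{\circ}(F)$ to a $\U(V_n)$-stable orbital integral on $\Herm_{n}^{\circ}(F)$ via exactly the contraction and cohomological orbit-matching you describe. The only difference is cosmetic: you work with $(x, I_n)$ and $\phi_2 = \bfun_0$, whereas the paper takes $(I_n, x)$ and $\phi_1 = \bfun_0$, invoking its general identity \eqref{eqn: unwind to Hn} rather than redoing the Fubini and orbit bookkeeping inline.
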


As we show in \S \ref{Section: implies si}, this statement suffices to prove Theorem \ref{Thm: fundamental lemma s-i Lie}. For its proof and the proofs of the other fundamental lemmas, we need an enhancement involving an algebra structure on $\calh_{K_E}(\Herm_{n}^{\circ}(F))$.

\subsection{A multiplicative fundamental lemma for a Hecke module} \label{Sec: multiplicative fundamental}
The Hironaka transform induces a product structure on $\calh_{K_E}(\Herm_{n}^{\circ}(F))$ as follows: for $\phi, \phi'\in \calh_{K_E}(\Herm_{n}^{\circ}(F))$, we define $\phi\ast\phi'\in \calh_{K_E}(\Herm_{n}^{\circ}(F))$ to be the unique function satisfying
\[
\Hir(\phi\ast\phi') = \Hir(\phi)\ast\Hir(\phi'),
\]
where the right-hand side is the convolution product in $\calh_K(\GL_n(F))$. We call this the Hironaka product of $\phi_1$ and $\phi_2$. We establish an orbital integral characterization for the product structure on the Hecke module $\calh_{K_E}(\Herm_{n}^{\circ}(F))$, extending the product on the Hecke algebra $\sH_{K_E}(\GL_{n}(E))$.

For this, recall from the inert-inert case (cf. Lemma \ref{Lem: unitary Lie}) the action
\[
g\cdot(x_1,x_2) =(gx_1 g^\ast, (g^\ast)^{-1}x_2g^{-1}),
\]
for $g\in \GL_n(E)$ and for $(x_1,x_2)\in \Herm_{n}^{\circ}(F)\times \Herm_{n}^{\circ}(F) = \fq^{invt}_{ii}$. For $\Phi\in C_c^\infty(\Herm_{n}^{\circ}\times \Herm_{n}^{\circ})$ and $(x_1,x_2)\in \Herm_{n}^{\circ}(F)\times \Herm_{n}^{\circ}(F)$ regular semi-simple, we consider the  stable orbital integral $\SO^{\GL_n(E)}( \Phi,(x_1,x_2))$ as before.
\begin{Def}\label{Def: lie algebra matching redo}
    We say that a regular semi-simple element $(x_1,x_2)\in \Herm_{n}^{\circ}(F)\times \Herm_{n}^{\circ}(F)$ \emph{matches} $z\in \GL_n(F)$ if $x_1x_2$ has the same characteristic polynomial as $z\in \GL_n(F)$. 
\end{Def}
Our main local result is the following matching of orbital integrals.
\begin{Thm}\label{Thm: main local result 2 var} Let $\phi, \phi'\in \calh_{K_E}(\Herm_{n}^{\circ}(F))$. Let $(x_1,x_2)\in\Herm_{n}^{\circ}(F)\times \Herm_{n}^{\circ}(F)$ be regular semi-simple and assume $z\in \GL_n(F)$ matches $(x_1,x_2)$. Then 
	\[
 \SO^{\GL_n(E)}(\phi\otimes\phi',(x_1,x_2))=\Orb^{\GL_n(F),\eta}( \Hir(\phi\ast\phi')\otimes {\bfun}_{\calo_{F,n}},z).
	\]
\end{Thm}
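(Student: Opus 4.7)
The proof will be global, modeled on the Jacquet--Ye theory of unitary periods as developed by Feigon--Lapid--Offen \cite{FLO} and Offen \cite{offenjacquet}. The first step is globalization: choose a quadratic extension of number fields $E_0/F_0$ with $(E_0)_{v_0}/(F_0)_{v_0}\simeq E/F$ at a distinguished place $v_0$, which is unramified everywhere and split at all archimedean places. Using this, construct global test functions $\Phi,\Phi'\in C_c^\infty(\Herm_{n}^{\circ}(\A_{F_0}))$ and $\wt{f}\in C_c^\infty(\GL_n(\A_{F_0})\times \A_{F_0}^{n})$ which factor over local components, with prescribed values $\phi,\phi'$ and $\Hir(\phi\ast\phi')\otimes\bfun_{\calo_{F,n}}$ at $v_0$, and matched via the Jacquet--Ye transfer at all other places (with the Hironaka transform providing the explicit transfer at unramified places).

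The second step is to apply a global RTF identity. Consider the relative trace formula for the $\GL_n(F_0)$-period on cuspidal automorphic forms of $\GL_n(E_0)$: using a kernel function built from the convolution of two test functions, the geometric side decomposes into stable orbital integrals over the product $\Herm_{n}^{\circ}(\A_{F_0})\times \Herm_{n}^{\circ}(\A_{F_0})$ under the diagonal $\GL_n(E_0)$-action. The key structural fact is that the product map $(y_1,y_2)\mapsto y_1y_2$ is equivariant from the diagonal $\GL_n(E_0)$-action to the conjugation action on $\GL_n(E_0)$, which makes the invariants match the characteristic polynomial used in Definition \ref{Def: lie algebra matching redo}. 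Comparing against the Kuznetsov-type Whittaker period on $\GL_n(F_0)$ with $\eta_{E_0/F_0}$-twist produces the mirabolic orbital integrals of \S \ref{Section: prelim for Jacquet}. The main theorem of Offen \cite{offenjacquet}, extending \cite{FLO}, provides the spectral identity on both sides, yielding a global geometric identity. A density/linear-independence argument over varying test data at auxiliary places then isolates the desired local identity at $v_0$.

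In parallel, one should first establish the simpler single-variable identity of Theorem \ref{Thm: main local result 1 var} (the case $\phi'=\bfun_{\Herm_n^\circ(\calo_F)}$, which controls the base $\calh_{K_E}(\GL_n(E))$-module structure), both as a testbed for the machinery and because its conclusion is used to transfer at the auxiliary unramified places. The hard part will be the compatibility at $v_0$: showing that under the Jacquet-Ye transfer, the stable orbital integral on $\Herm_{n}^{\circ}\times\Herm_{n}^{\circ}$ of $\phi\otimes\phi'$ corresponds to the mirabolic orbital integral of $\Hir(\phi\ast\phi')\otimes\bfun_{\calo_{F,n}}$. This requires exploiting the defining relation $\Hir(\phi\ast\phi')=\Hir(\phi)\ast\Hir(\phi')$ together with the equivariance of the product map, to match the convolution on the spherical Hecke algebra $\calh_K(\GL_n(F))$ on the linear side with the Hironaka product on the unitary side. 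A secondary obstacle is verifying convergence and avoiding non-elliptic contributions in the RTF, which we handle by arranging the globalization so that the auxiliary test functions are supported on regular elliptic orbits, permitting a simple form of the trace formula.
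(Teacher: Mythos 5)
Your high-level strategy — globalize, run an RTF comparison built on the $\Herm^\circ_n$-vs-$\GL_n(F)\times F_n$ transfer, use FLO machinery spectrally, isolate the place $v_0$ — is the same as the paper's. But several of your specific choices are wrong in ways that would block the argument.

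First, you misidentify the relative trace formulas involved. You describe a ``Kuznetsov-type Whittaker period'' on $\GL_n(F_0)$, whose geometric side would produce Kloosterman orbital integrals, not the mirabolic orbital integrals $\Orb^{\GL_n(F),\eta}$ of \S\ref{Section: prelim for Jacquet}. The actual linear-side RTF is the one of Xiao's thesis, with kernel paired against the diagonal together with a Godement--Jacquet Eisenstein series $E(g,\Phi,s,\eta)$; this is a Rankin--Selberg-type construction, and the mirabolic orbital integrals arise from the $\GL_n\times\mathbb{G}_{a,n}$ geometry, not from Bruhat cells. Relatedly, the Jacquet--Ye transfer (the Kloosterman one for which Offen's theorem and the Hironaka transform are the fundamental lemma) is not the geometric transfer driving the comparison. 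It enters only \emph{spectrally}, through the defining property of the FLO functionals $\alpha^\pi_\tau$; the geometric matching at non-$v_0$ places uses the transfer of \cite[Thm.\ 5.1]{Xiaothesis} (Theorem \ref{Thm: Xiao's transfer} here), which is a different comparison. Conflating these two transfers would leave the geometric side unmatched.

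Second, the geometric side of the unitary period RTF — integrating $K_f$ over $[\U(V_\tau)]\times[\U(V_0)]$ — decomposes into stable orbital integrals for the $\U(V_0)$-action on $\Herm^\circ_n$, \emph{not} into orbital integrals on $\Herm^\circ_n\times\Herm^\circ_n$ for diagonal $\GL_n(E)$. Your proposal treats these as the same thing. Passing from the statement of Theorem \ref{Thm: main local result 2 var} (two-variable stable orbital integral) to the form the global RTF actually controls (single-variable stable orbital integral on $\Herm^\circ_{\tau^{-1}_2}$ with test function $\Phi_{\tau_1,\tau_2}^{\phi_1,\phi_2}=p_{\tau_1,!}(\phi_1^{\tau_1}\ast\phi_2^{\tau_2,\ast})$) is the content of Theorem \ref{Thm: first step main} and Corollary \ref{Cor: main local results}, which requires the contraction-map diagram \eqref{eqn: contraction diagram}, the $K_E$-invariant lift of Lemma \ref{Lem: invariant lifts}, and the ``unwinding'' identity \eqref{eqn: unwind to Hn}. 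This reduction is not a cosmetic rewriting; without it the right-hand side of the global geometric comparison is simply not the quantity in Theorem \ref{Thm: main local result 2 var}. Finally, the logical order you propose is backwards: Theorem \ref{Thm: main local result 1 var} is deduced from Theorem \ref{Thm: main local result 2 var} (by taking $\phi'=\bfun_0$ and using \eqref{eqn: unwind to Hn}), not established first as a base case.
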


	 The following is immediate from combining the statements of Theorems \ref{Thm: main local result 1 var} and \ref{Thm: main local result 2 var}. We say a $\GL_n(E)$-regular semi-simple $(y_1,y_2)\in \Herm_{n}^{\circ}(F)\times \Herm_{n}^{\circ}(F)$ matches a $\U(V_n)$-regular semi-simple $y\in \Herm_{n}^{\circ}(F)$ if $y_1y_2$ has the same characteristic polynomial as $y$. 
\begin{Cor}\label{Cor: product orbital ints} Let $\phi, \phi'\in \calh_{K_E}(\Herm_{n}^{\circ}(F))$. Assume that a regular semi-simple $(y_1,y_2)\in \Herm_{n}^{\circ}(F)\times \Herm_{n}^{\circ}(F)$ matches $y\in \Herm_{n}^{\circ}(F)$.   Then 
	\[
 \SO^{\GL_n(E)}(\phi\otimes\phi',(y_1,y_2))=\SO^{\U(V_n)}(\phi\ast\phi',y).
	\]
\end{Cor}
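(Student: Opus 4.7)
The plan is to introduce a single auxiliary element $z\in\GL_n(F)$ that matches both $(y_1,y_2)$ and $y$ simultaneously, and then to apply Theorems \ref{Thm: main local result 1 var} and \ref{Thm: main local result 2 var} to bridge the two stable orbital integrals through the mirabolic orbital integral on $\GL_n(F)\times F_n$.

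First I would verify that such a $z$ exists. Because $(y_1,y_2)$ is $\GL_n(E)$-regular semi-simple and the categorical quotient for the $\GL_n(E)$-action (cf. Lemma \ref{Lem: unitary Lie}) records the coefficients of the characteristic polynomial of $y_2y_1$, which has $F$-rational, separable coefficients and coincides with that of $y_1y_2$, and since by hypothesis this polynomial equals the characteristic polynomial of $y\in\Herm_{n}^{\circ}(F)$, I may take $z\in\GL_n(F)$ to be any realization of this common polynomial (for instance its companion matrix). Then $z$ is regular semi-simple in $\GL_n(F)$, matches $(y_1,y_2)$ in the sense of Definition \ref{Def: lie algebra matching redo}, and matches $y$ in the sense of Lemma/Definition \ref{Lem: matching def}.

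Next, since $\phi\ast\phi'$ lies in $\calh_{K_E}(\Herm_{n}^{\circ}(F))$ by the very definition of the Hironaka product, Theorem \ref{Thm: main local result 1 var} applied to the spherical function $\phi\ast\phi'$ and the matching pair $(y,z)$ yields
\[
\SO^{\U(V_n)}(\phi\ast\phi',y)\;=\;\Orb^{\GL_n(F),\eta}\bigl(\Hir(\phi\ast\phi')\otimes\bfun_{\calo_{F,n}},\,z\bigr).
\]
Meanwhile, Theorem \ref{Thm: main local result 2 var} applied to $\phi,\phi'$ and the matching pair $((y_1,y_2),z)$ produces exactly the same right-hand side for $\SO^{\GL_n(E)}(\phi\otimes\phi',(y_1,y_2))$. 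Comparing the two identities yields the claimed equality.

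There is no serious obstacle in this argument: the substantive content has been placed in the two preceding theorems, and the only small verification is the invariant-theoretic existence of the common auxiliary $z$, which is immediate from the fact that both matching conditions reduce to agreement of $F$-rational characteristic polynomials.
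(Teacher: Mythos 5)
Your argument is exactly the one the paper intends: the paper states that the corollary is "immediate from combining the statements of Theorems \ref{Thm: main local result 1 var} and \ref{Thm: main local result 2 var}," and your proposal simply fleshes out that combination by producing the common auxiliary $z\in\GL_n(F)$ (which exists because all three matching conditions reduce to equality of $F$-rational characteristic polynomials) and comparing the two resulting mirabolic orbital integrals. Correct and in line with the paper.
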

We postpone the proof of Theorem \ref{Thm: main local result 2 var} to Part \ref{Part: trace proof global}.

\section{Reduction to Theorem \ref{Thm: main local result 2 var}}\label{Section: reduce to one statement}
We now show how Theorems \ref{Thm: fundamental lemma s-i Lie}, \ref{Thm: fundamental lemma stable Lie}, and \ref{Thm: fundamental lemma endoscopic Lie} follow from Theorem \ref{Thm: main local result 2 var}.
\quash{
\begin{Thm}
	\label{thm uFL gp}The unit
	${\bf 1}_{G(O_{F})}$ matches ${\bf 1}_{(G'\times V_0)(O_{F})}$.
\end{Thm}

\begin{Thm}
	\label{thm uFL Lie}
	The function ${\bf 1}_{ (\Herm_n\times \Herm_n)(O_{F})}$ matches ${\bf 1}_{(\fkg\fkl_n\times \fkg\fkl_n\times V)(O_{F})}.$
\end{Thm}

\begin{lem} Then the
	Lie algebra version implies the group version.
\end{lem}

\begin{proof}
	{\bf TBA}
\end{proof}
}
\subsection{Theorem \ref{Thm: main local result 2 var} $\imp$ Theorem \ref{Thm: main local result 1 var}}
The first reduction is a simple argument in Galois cohomology. We begin with the following general lemma relating the orbital integrals in \S \ref{Sec: multiplicative fundamental} to those in Theorem \ref{Thm: Xiao's transfer}.

 Let $f=f_1\otimes f_2\in C^\infty_c(\Herm_{n}^{\circ}(F)\times \Herm_{n}^{\circ}(F))$ and assume that $\supp(f_{1})\subset \Herm_{n}^{\circ}(F)_\tau$ for some $\tau\in \mathcal{V}_n(E/F)$. Then there exists $f_\tau\in C^\infty_c(\GL_n(E))$ such that 
   \[
   f_{1}(y) =r_{!}^\tau(f_\tau)(y)=\int_{U(V_\tau)}f_\tau(g_yu)du,
   \]
   where where $r^\tau(g) = g\tau g^\ast$ and $y = r^\tau(g_y)$.

 \begin{Lem} For any regular semi-simple element $(x_1,x_2)\in \Herm_{n}^{\circ}(F)_\tau\times \Herm_{n}^{\circ}(F)$, we have the identity
   \begin{equation}\label{eqn: unwind to Hn}
   \SO^{\GL_n(E)}(f_{1}\otimes f_2, (x_1,x_2)) = \SO^{\U(V_{\tau^{-1}})}(f^\ast_\tau\ast f_2, x\tau),
   \end{equation}
   where $x_1 = g_1 \tau g_1^\ast$ and $x= g_1^\ast x_2 g_1\in \Herm_{n}^{\circ}(F)$, $f^\ast(g) = f(g^\ast)$, and where
   \[
   (f\ast f_2)(y) = \int_{\GL_n(E)}f(g^{-1})f_2(gyg^\ast)dg.
   \]
   Note that $x\tau\in \Herm_{\tau^{-1}}^{\circ}(F)$, so that the right-hand side is a stable orbital integral arising in the comparison of Theorem \ref{Thm: Xiao's transfer}.
   \end{Lem}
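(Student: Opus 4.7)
The plan is to unfold the stable $\GL_n(E)$-orbital integral on the left, apply the projection formula $f_1=r_!^\tau(f_\tau)$, and reorganize the resulting double integral as a stable $\U(V_{\tau^{-1}})$-orbital integral on $\Herm_{\tau^{-1}}^\circ(F)$ evaluated at $x\tau$.

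First, I would reduce to the correct orbit representatives. Since $\supp(f_1)\subset \Herm_n^\circ(F)_\tau$, only those $(x_1',x_2')$ in the stable $\GL_n(E)$-orbit of $(x_1,x_2)$ with $x_1'\in\Herm_n^\circ(F)_\tau$ contribute to the left-hand side. Within each such $\GL_n(E)$-orbit I would pick a representative of the form $(\tau, x_i)$, with $x_i=(g')^{*}x_2'g'$ where $g'\tau(g')^{*}=x_1'$. A standard cohomological argument then identifies these with representatives of the $\U(V_\tau)$-orbits inside the stable $\U(V_\tau)$-orbit of $x=g_1^{*}x_2g_1$ under the action $u\cdot z=(u^{*})^{-1}zu^{-1}$. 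The group isomorphism $\U(V_\tau)\iso \U(V_{\tau^{-1}})$, $u\mapsto (u^{*})^{-1}$, combined with the variety isomorphism $\Herm_n^\circ\iso \Herm_{\tau^{-1}}^\circ$, $z\mapsto z\tau$, translates this picture into representatives of the stable $\U(V_{\tau^{-1}})$-conjugation orbit of $y=x\tau$. Matching of invariants follows from $\car(x_1x_2)=\car(\tau x)=\car(x\tau)=\car(y)$, and stabilizers transport isomorphically.

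Next, I would compute each contributing orbital integral, which takes the form
\[
\int_{\U(V_\tau)_{x_i}\bs \GL_n(E)} f_1\bigl(g^{-1}\tau (g^{*})^{-1}\bigr) f_2(g^{*}x_ig)\,dg.
\]
Substituting $f_1(g^{-1}\tau(g^{*})^{-1})=\int_{\U(V_\tau)}f_\tau(g^{-1}u)\,du$ produces a double integral over $\GL_n(E)\times\U(V_\tau)$. A change of variables sending $u\in\U(V_\tau)$ to $h=(u^{*})^{-1}\in\U(V_{\tau^{-1}})$ converts $u^{*}x_iu$ into the $\U(V_{\tau^{-1}})$-$\ast$-translate of $x_i$, and a subsequent change merging the $g$ and $h$ integrations via Haar translation invariance collapses the double integral into a single $\GL_n(E)$-integral whose integrand is precisely the convolution $f_\tau^{*}\ast f_2$ evaluated at $y_i=x_i\tau$. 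Summing over the orbit representatives $i$ then reassembles the right-hand side $\SO^{\U(V_{\tau^{-1}})}(f_\tau^{*}\ast f_2, x\tau)$.

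The main obstacle lies in the bookkeeping during this change of variables: one must match the Haar measures on $\GL_n(E)$, $\U(V_\tau)$, and $\U(V_{\tau^{-1}})$ consistently, and verify that the stabilizer measures $\vol(\U(V_\tau)_{x_i})$ and $\vol(\U(V_{\tau^{-1}})_{y_i})$ agree under the isomorphism so that the quotient measures line up. Once this is done, the identification with a convolution, together with the orbit-level bijection established in the first step, yields the stated identity directly.
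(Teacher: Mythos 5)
Your proposal is correct and follows essentially the same strategy as the paper: first a cohomological identification of which rational orbits in the stable $\GL_n(E)$-orbit land in $\Herm_n^\circ(F)_\tau$ (matched to $\ker^1(T_x,\U(V_\tau);F)$), then an orbit-by-orbit unfolding of the projection formula for $f_1$, an exchange of integration order, and the change of variables $u\mapsto(u^\ast)^{-1}$ together with $z\mapsto z\tau$ to recognize the inner $\GL_n(E)$-integral as the convolution $f_\tau^\ast\ast f_2$ and the outer integral as a $\U(V_{\tau^{-1}})$-conjugation orbital integral. One small slip of phrasing: after the exchange the remaining outer integral is over $T_{x\tau}\backslash\U(V_{\tau^{-1}})$, not over $\GL_n(E)$ — the $\GL_n(E)$-integral is absorbed into the convolution — but the substance of the argument is right.
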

   \begin{proof}
First note that 
   \[
    \SO^{\GL_n(E)}(f_{1}\otimes f_2, (x_1,x_2)) = \sum_{\al\in H^1(F, T_{x_1,x_2})}\Orb^{\GL_n(E)}(f_{1}\otimes f_2, (x_1,x_2)_\al),
   \]
   where $T_{x_1,x_2} = \{t\in \GL_n(E): (tx_1 t^\ast, (t^\ast)^{-1}x_2 t^{-1}) = (x_1,x_2)\}$. Clearly, $$T_{x_1,x_2}\subset \U(V_{x_1}) = g_1\U(V_\tau)g^{-1}_{1}.$$ We have $T_{x}= g_1^{-1}T_{x_1,x_2}g_1\subset \U(V_\tau)$, and the right-hand side of \eqref{eqn: unwind to Hn} is 
\[
\SO^{\U(V_{\tau^{-1}})}(f^\ast_\tau\ast f_2, x) = \sum_{\al\in \ker^1(T_{x},\U(V_\tau);F)}\Orb^{\U(V_{\tau^{-1}})}(f^\ast_\tau\ast f_2, x_\al).
\]

By Lemma \ref{Lem: building special kappa Lie}, under the identification $H^1(F,T_{x_1,x_2}) \simeq H^1(F,T_{x})$, the representative $(x_1,x_2)_\al\in \Herm_{n}^{\circ}(F)_\tau\times \Herm_{n}^{\circ}(F)$ if and only if $\al\in \ker^1(T_{x},\U(V_\tau);F)$. In particular, $$\Orb^{\GL_n(E)}(f_{1}\otimes f_2, (x_1,x_2)_\al)=0$$ when $\al\notin \ker^1(T_{x},\U(V_\tau);F)$.

To complete the proof, it remains to relate the two orbital integrals when $\al\in \ker^1(T_{x},\U(V_\tau);F)$. For simplicity, we verify the identity on orbital integrals for $\al\in \ker^1(T_{x},\U(V_\tau);F)$ for $\al = 1$; the general claim follows identically for $(x_1,x_2)_\al = (y_1,y_2)$ with appropriate notational changes.

By definition,
   \begin{align*}
  \Orb^{\GL_n(E)}(f_{1}\otimes f_2, (x_1,x_2))&= \int_{T_{x_1,x_2}\backslash\GL_n(E)}\left(\int_{U(V_\tau)}f_\tau(g^{-1}g_1u)du\right)f_2(g^\ast x_2 g)dg,
   \end{align*}
    Now exchange the order of integration and affect a change of variables to obtain
     \begin{align*}
   \int_{T_{x}\backslash U(V_\tau)}&\int_{\GL_n(E)}f_\tau(g^{-1})f_2(g^\ast u^{\ast}x u g)dgdu\\
   &= \int_{T_{x}\backslash U(V_\tau)}[f^\ast_\tau\ast f_2]( u^{\ast}x u )du
   \end{align*} We now make a change of variables $u\mapsto (u^\ast)^{-1}$, use \eqref{eqn: twist to different form} to identify the integral with one over $U(V_{\tau^{-1}})$, and apply the intertwining map $\Herm_{n}^{\circ}\to \Herm_{\tau^{-1}}^{\circ}$ given by $ x\mapsto x\tau$ to obtain
   \[
   \int_{T_{x}\backslash U(V_{\tau})}[f^\ast_\tau\ast f_2]( u^{\ast}x u )du = \Orb^{\U(V_{\tau^{-1}})}(f^\ast_\tau\ast f_2, x\tau)
   \]This proves \eqref{eqn: unwind to Hn}.
   \end{proof}
\begin{Prop}
Theorem \ref{Thm: main local result 2 var} implies Theorem \ref{Thm: main local result 1 var}. 
\end{Prop}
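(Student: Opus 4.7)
The plan is to reduce Theorem \ref{Thm: main local result 1 var} to Theorem \ref{Thm: main local result 2 var} by specializing the latter so that one of its two Hecke-module arguments is the unit of the Hironaka algebra, and then comparing against the unwinding identity \eqref{eqn: unwind to Hn} already established above. The candidate for this identity is $\phi_0 := \bfun_{\Herm_{n}^{\circ}(\calo_F)}$, the indicator of the $K_E$-orbit of $I_n$. The first step is to verify that $\Hir(\phi_0) = \bfun_K$, whence the defining identity $\Hir(\phi_0 \ast \phi) = \Hir(\phi_0) \ast \Hir(\phi)$ and the injectivity of $\Hir$ give $\phi_0 \ast \phi = \phi$ for every $\phi \in \calh_{K_E}(\Herm_{n}^{\circ}(F))$. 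This is a routine computation: $Sat(\bfun_K) = 1$ by \eqref{eqn: satake transform}, and Hironaka's formula for $\Omega_z$ at the orbit $\lambda = 0$ gives $\mathcal{SF}(\phi_0) = 1$ as well.

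With this in hand, given matching regular semisimple elements $x \in \Herm_{n}^{\circ}(F)$ and $z \in \GL_n(F)$ as in Theorem \ref{Thm: main local result 1 var}, I would form the pair $(x_1, x_2) := (I_n, x) \in \Herm_{n}^{\circ}(F) \times \Herm_{n}^{\circ}(F)$. Its product $x_1 x_2 = x$ is regular semisimple in $\GL_n(E)$, so $(I_n, x)$ is itself $\GL_n(E)$-regular semisimple and matches $z$ in the sense of Definition \ref{Def: lie algebra matching redo} precisely when $x$ and $z$ match in the sense of Theorem \ref{Thm: main local result 1 var}. Applying Theorem \ref{Thm: main local result 2 var} to the pair $\phi_0 \otimes \phi$ at this point gives
\[
\SO^{\GL_n(E)}(\phi_0 \otimes \phi, (I_n, x)) = \Orb^{\GL_n(F), \eta}(\Hir(\phi) \otimes \bfun_{\calo_{F,n}}, z),
\]
using $\phi_0 \ast \phi = \phi$.

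The crucial observation is that the same left-hand side can be evaluated by the already-proven identity \eqref{eqn: unwind to Hn}. Since $\supp(\phi_0) \subset \Herm_{n}^{\circ}(F)_{I_n}$ and the preimage construction gives $\phi_0 = r^{I_n}_{!}(\bfun_{K_E})$, the identity \eqref{eqn: unwind to Hn} with $\tau = I_n$, $f_\tau = \bfun_{K_E}$ and $g_1 = I_n$ (so that the internal element $x$ of the lemma coincides with our $x$) specializes to
\[
\SO^{\GL_n(E)}(\phi_0 \otimes \phi, (I_n, x)) = \SO^{\U(V_n)}(\bfun_{K_E}^{\ast} \ast \phi, x) = \SO^{\U(V_n)}(\phi, x),
\]
because $\bfun_{K_E}^{\ast} = \bfun_{K_E}$ is the unit of $\calh_{K_E}(\GL_n(E))$. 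Equating the two resulting expressions delivers Theorem \ref{Thm: main local result 1 var}.

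The only step beyond directly cited results is the identification of $\phi_0$ as the unit of the Hironaka product; this is an elementary computation with Hironaka's explicit spherical functions, and I anticipate no substantive obstacle in this reduction.
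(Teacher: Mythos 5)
Your proposal is correct and takes essentially the same route as the paper: specialize Theorem \ref{Thm: main local result 2 var} at the pair $(I_n,x)$ with $\phi_1=\bfun_0=\bfun_{\Herm_{n}^{\circ}(\calo_F)}$, use the unwinding identity \eqref{eqn: unwind to Hn} to recognize $\SO^{\GL_n(E)}(\bfun_0\otimes\phi,(I_n,x))$ as $\SO^{\U(V_n)}(\phi,x)$, and observe that $\bfun_0$ is the unit for the Hironaka product so that $\Hir(\bfun_0\ast\phi)=\Hir(\phi)$. You are actually a bit more explicit than the paper about the unit verification $\Hir(\bfun_0)=\bfun_K$ (and hence $\bfun_0\ast\phi=\phi$), which the paper leaves implicit, but this is a harmless elaboration rather than a different argument.
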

\begin{proof}
We assume that for any $\phi_1,\phi_2\in \calh_{K_E}(\Herm_{n}^{\circ}(F))$ and suppose that for any regular semi-simple $z\in \GL_n(F)$ and matching pair $(x_1,x_2)\in\Herm_{n}^{\circ}(F)\times \Herm_{n}^{\circ}(F)$, we have
	\[
 \SO^{\GL_n(E)}(\phi_1\otimes\phi_2,(x_1,x_2))=\Orb^{\GL_n(F),\eta}(\Hir(\phi_1\ast\phi_2)\otimes {\bf 1}_{\calo_{F,n}},z).
	\] 
    
    For any regular semi-simple $z\in \GL_n(F)$ and $x\in\Herm_{n}^{\circ}(F)$ such that $z$ is conjugate in $\GL_n(E)$ to $x$, the claim is that
 	\[
 \SO^{\U_n(F)}(\phi,x)=\Orb^{\GL_n(F)}(\Hir(\phi)\otimes {\bf 1}_{\calo_{F,n}},z).
	\] 
   Note that $(I_n,x)\in \Herm_{n}^{\circ}(F)\times \Herm_{n}^{\circ}(F)$ is $\GL_n(E)$-regular semi-simple. It thus suffices to take $\phi_1= \bfun_{0} = \bfun_{\Herm_{n}^{\circ}(F)}$ and show that 
    \[
    \SO^{\GL_n(E)}(\bfun_{0}\otimes \phi,(I_n,x))=  \SO^{\U_n(F)}(\phi,x).
    \]
    This is a special case of \eqref{eqn: unwind to Hn}.  Indeed, recall that $V_n = V_{I_n}$, so that $h\mapsto (h^{\ast})^{-1}$ reduces to the identity in this setting. Noting this, we need the simple computation $\bfun_0^\ast\ast\phi = \phi$, which follows from the identification $\Herm_{n}^{\circ}(\calo_F)=\GL_n(\calo_E)/\U_n(\calo_F)$ and our normalization of measures.
\quash{First note that if $T_x\subset \U_n$ denotes the stabilizer of $x$ in $\U_n$ and $T_{(x,I_n)}\subset \Res_{E/F}\GL_n$ is the stabilizer of $(x,I_n)$, then $T_x= T_{(x,I_n)}$ as subgroups of $\U_n\subset \Res_{E/F}\GL_n$.

To begin, we may express the stable orbital integral for $(x,I_n)$ as 
    \begin{equation*}
             \SO^{\GL_n(E)}(\phi\otimes\bfun_{0},(x,I_n))=\sum_{\al\in H^1(F,T_x)} \Orb^{\GL_n(E)}( \phi\otimes\bfun_{0},(x_1,x_2)_\al),
    \end{equation*}
    where $(x_1,x_2)$ lies in the same stable $\Res_{E/F}\GL_n$-orbit as $(x,I_n)$ and represents the class $\al\in H^1(F,T_x)$. Recalling the index-$2$ subgroup
\[
\ker^1(T_x,\U_n;F)\subset H^1(F,T_x),
\] we have a similar expression
   \begin{equation*}
             \SO^{\U_n(F)}(\phi,x)=\sum_{\al\in \ker^1(T_x,\U_n;F)} \Orb^{\U_n(F)}( \phi,x_\al),
    \end{equation*} where $x_\al\in \Herm_{n}^{\circ}(F)$ represents $\al\in \ker^1(T_x,\U_n;F)$. 
    
    By the same argument in the proof of Theorem \ref{Thm: weak smooth transfer} (cf. \S \ref{Section: proof weak transfer arg}), we see that $\al\in\ker^1(T_x,\U_n;F)$ if and only if $x_2 = g_2^\ast g_2\in\Herm_{n}^{\circ}(F)^+$ for some $g_2\in \GL_n(E)$. This implies that if $\al\notin \ker^1(T_x,\U_n;F)$, then \[\Orb^{\GL_n(E)}( \phi\otimes\bfun_{0},{(x_1,x_2)_\al}) =0,\] since $\bfun_{0}((g^{\ast})^{-1}x_2 g^{-1})=0$ for all $g\in \GL_n(E)$. The proposition now follows from the (elementary) calculation
    \begin{align*}
             \Orb^{\GL_n(E)}( \phi\otimes\bfun_{0},(x_1,x_2)_\al) &= \displaystyle\int_{T_x\bs\GL_n(E)}\phi(gx_1g^\ast)\bfun_{0}((g^{\ast})^{-1}x_2 g^{-1})dg\\
            &=   \displaystyle\int_{T_x\bs U_n(F)\GL_n(\calo_E)}\phi(gx_\al g^\ast)dg \qquad \qquad \text{($x_\al = g_2x_1g_2^\ast$)}\\
              &=  \left(\displaystyle\int_{\Herm_{n}^{\circ}(\calo_F)}dx\right) \int_{T_x\bs \U_n(F)}\phi(h^{-1}x_\al h)dh\\& = \Orb^{\U_n(F)}(\phi,{x_\al});
    \end{align*}
here we have used the identification $\Herm_{n}^{\circ}(\calo_F)=\GL_n(\calo_E)/\U_n(\calo_F)$ as well as our normalization of measures.}
\end{proof}

\subsection{Theorem \ref{Thm: main local result 1 var} $\imp$ Theorem \ref{Thm: fundamental lemma s-i Lie}}\label{Section: implies si}
We now consider the split-inert case. This result only needs Theorem \ref{Thm: main local result 1 var}, which follows from \ref{Thm: main local result 2 var}. Recall that in this case, $\underline{\eta}=(\eta,\eta)$.

\quash{The proof of this result indicates that what is needed is the linear OI
\[
 \int_{\GL_n(F)^2}\bfun_{\fgl_n(\calo_F)}(h_1^{-1}z_1h_2)\bfun_{\fgl_n(\calo_F)}(h_2^{-1}z_2h_1)\Phi(w h_1)|h_1|^s\eta(h_2)dh,
\]
which corresponds to $\eta_1=\eta_3 = \eta$ in the notation of the RTF section}

Recalling that $\fq_{si}(\calo_F) = \fgl_n(\calo_E)$, we le $\bfun_{\fgl_n(\calo_E)}$ denote the function in the $k=0$ case of Theorem \ref{Thm: fundamental lemma s-i Lie}. Set $\Phi :=r_!(\bfun_{\fgl_n(\calo_E)})$, where $r$ is the contraction map $r(X) = -X^\ast X\in \Herm_n(F)$.  $\Phi^n:=r_!\bfun_{\End(\Lam_n)}$.This gives a function on $r(\fq_{si}^{rss}(F))\subset \Herm_{n}^{\circ}(F)$, which we view as a locally constant function on $\Herm_{n}^{\circ}(F)$ by extending-by-zero over the compliment of $r(\fq_{si}^{rss}(F))$. The resulting $K_E$-invariant function is not compactly supported, but may be viewed as an element of the \emph{completion} of the Hecke module $\calh_{K_E}(\Herm_{n}^{\circ}(F))$.

\begin{Prop}\label{Prop: implies FLSI}
Theorem \ref{Thm: main local result 1 var} implies Theorem \ref{Thm: fundamental lemma s-i Lie}. 
\end{Prop}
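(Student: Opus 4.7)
\medskip

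\noindent\emph{Plan of proof.}  The plan is to use compatible contraction maps to reduce both orbital integrals to the framework of Theorem \ref{Thm: main local result 1 var}.  On the unitary side, the map $r\colon \fq_{si}\to \Herm_{\tau_n}$ from Lemma \ref{Lem: unitary Lie} is $\rH_{\tau_n,\tau_n}$-equivariant and restricts to a $\U(V_{\tau_1})$-torsor over the regular semi-simple locus.  The general contraction formalism of \S\ref{Section: contractions} therefore yields
\[
\SO^{\rH_{\tau_n,\tau_n}}(\bfun_{\fq_{si}(\calo_F)}, Z)=\SO^{\U(V_n)}(\Phi, r(Z)),
\]
where $\Phi:=r_!(\bfun_{\fq_{si}(\calo_F)})$ is the function described before the statement.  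Although $\Phi$ is supported on infinitely many $K_E$-orbits $K_E\ast(\vp^{\lambda}\tau_n)$ indexed by $\lambda\in \mathrm{P}_{n,\geq 0}$, any fixed regular semi-simple $y$ meets just one such orbit, so $\SO^{\U(V_n)}(\Phi, y)$ is a finite sum and the Hironaka transform $\Hir(\Phi)$ is well-defined by viewing $\Phi$ in the completion of $\calh_{K_E}(\Herm_{n}^{\circ}(F))$ identified in the statement.

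On the linear side, I will contract via $R_{lin}\colon\fX\to\fgl_n$, $(X,Y)\mapsto YX$, from Lemma \ref{Lem: inft linear invt theory}.  Substituting $h_1 = X h_2 k^{-1}$ in the defining integral of $\Orb^{\rH',(\eta,\eta)}_0(\bfun_{\fX(\calo_F)}\otimes\bfun_{\calo_{F,n}},(X,Y))$ and absorbing the character $\eta$, the linear orbital integral rewrites as
\[
\Orb^{\rH',(\eta,\eta)}_0(\bfun_{\fX(\calo_F)}\otimes\bfun_{\calo_{F,n}},(X,Y)) = \Orb^{\GL_n(F),\eta}(\Psi\otimes\bfun_{\calo_{F,n}}, YX),
\]
where $\Psi$ belongs to the appropriate completion of the spherical Hecke algebra $\calh_{K}(\GL_n(F))$, obtained as an $\eta$-twisted pushforward along $R_{lin}$.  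The pivotal identity is then
\[
\Psi = \Hir(\Phi),
\]
expressing compatibility of the Satake and Hironaka transforms along the two contraction maps $R_{lin}$ and $r$.

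Granting this identity, I apply Theorem \ref{Thm: main local result 1 var} to $\Phi$---the extension to the completion is permissible because only finitely many terms in the orbit decomposition of $\Phi$ contribute at any fixed $y$---to obtain
\[
\SO^{\U(V_n)}(\Phi, r(Z)) = \Orb^{\GL_n(F),\eta}(\Hir(\Phi)\otimes \bfun_{\calo_{F,n}}, z) = \Orb^{\GL_n(F),\eta}(\Psi\otimes \bfun_{\calo_{F,n}}, z)
\]
for matching $z=YX$.  Chaining this with the two contraction identities closes the argument when $\tau_1=\tau_n$.  For the non-split case $\tau_1\neq\tau_n$, Lemma \ref{Lem: surjective quotient si} guarantees that the characteristic polynomial of $Z$ comes from a unique stable orbit in $\fq_{\tau_1,\tau_n}$ and not from any orbit in $\fq_{\tau_n,\tau_n}$; hence the stable $\U(V_n)$-orbit of $r(Z)$ is entirely disjoint from $\supp(\Phi)\subset r(\fq_{\tau_n,\tau_n}(\calo_F))$, forcing $\SO^{\U(V_n)}(\Phi, r(Z))=0$ and hence the linear orbital integral vanishes, as required.

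The hard part will be establishing the identity $\Psi = \Hir(\Phi)$.  Both sides admit explicit descriptions---$\Psi$ via its defining $\eta$-twisted convolution on $\GL_n(F)$, and $\Hir(\Phi)$ via Hironaka's spherical function formula---but their equality reduces to a nontrivial Hall--Littlewood polynomial identity in the Satake variables.  A secondary technical point is the rigorous extension of Theorem \ref{Thm: main local result 1 var} to the completion of $\calh_{K_E}(\Herm_{n}^{\circ}(F))$, handled via the finiteness of orbit contributions at any fixed regular semi-simple element.
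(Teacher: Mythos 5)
Your overall strategy coincides with the paper's: contract both orbital integrals, reduce to an identity $\Psi=\Hir(\Phi)$ in a completion of the spherical Hecke module, and then invoke Theorem~\ref{Thm: main local result 1 var}. Your handling of the non-split case via Lemma~\ref{Lem: surjective quotient si} and of the completion issue (finitely many orbits contribute at any fixed $y$) is also correct and matches the paper.

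However, there is a genuine gap where your proof stops. You declare ``the hard part will be establishing the identity $\Psi=\Hir(\Phi)$,'' relegate it to ``a nontrivial Hall--Littlewood polynomial identity,'' and do not prove it. In fact this is the entire content of the proposition, and the paper's route to it is more direct than you suggest: using $\Phi=\bfun_{\fgl_n(\calo_E)}\ast\bfun_0$ (cite{LeslieUFJFL}, Lemma 3.11), one has $\mathcal{SF}(\Phi)=\mathrm{Sat}(\mathrm{BC}(\bfun_{\fgl_n(\calo_E)}))$, which is computed explicitly (\cite[Prop.\ 3.12]{LeslieUFJFL}) to be $\prod_i\bigl((1-q^{(n-1)/2}Z_i)(1+q^{(n-1)/2}Z_i)\bigr)^{-1}$. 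On the linear side, the contraction produces $\Phi^{\GL}=\bfun_{\fgl_n(\calo_F)}\ast(\eta\cdot\bfun_{\fgl_n(\calo_F)})$, and its Satake transform factors as the same product by the geometric-series formula \eqref{eqn: geometric series macdonald} together with \eqref{eqn: Satake basis}; no deeper Hall--Littlewood combinatorics is needed. You should also be precise about the linear contraction: your substitution ``$h_1=Xh_2k^{-1}$, absorbing the character $\eta$'' is vague, and you do not track the transfer factor. In the paper's manipulation a factor $\eta(z_2)$ emerges and must be matched against the $\eta_2(Y)$ term in the transfer factor \eqref{eqn: linear Lie transfer factor} in order to recover the normalization in \eqref{eqn: correct xiao transfer factor}; without that bookkeeping the two sides differ by a sign at some orbits.
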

\begin{proof}
The goal is to show that for any regular semi-simple $x\in \fgl_{n}(E)$ and $(z_1,z_2)\in \fgl_n(F)\oplus \fgl_n(F)$ such that $x$ is $\GL_n(E)$-conjugate to $z_1z_2$,
 \begin{equation}\label{eqn: to show implies FLSI}
    \SO^{\U_n(F)\times \U_n(F)}\left(\bfun_{\fgl_n(\calo_E)}, x\right) =  \Orb^{\rH'(F),\ul{\eta}}\left(\bfun_{\fgl_n(\calo_F)\oplus\fgl_n(\calo_F)}\otimes{\bf 1}_{\calo_{F,n}}, (z_1,z_2)\right).
\end{equation}
  Consider the commutative diagram of $\calh_{K_E}(\GL_n(E))$-modules, 
\begin{equation*}
\begin{tikzcd}
&\calh_{K_E}(\GL_n(E))\ar[dl,swap,"-\ast \bfun_0"]\ar[dr,"BC"]&\\
\calh_{K_E}(\Herm_{n}^{\circ}(F))\ar[rr,"\Hir"]&&\calh_{K}(\GL_n(F)).
\end{tikzcd}
\end{equation*}
Since the $\U_n\times\U_n$-regular semi-simple locus of $\fgl_n(E)$ is contained in $\GL_n(E)$, we need only consider the restriction of $\bfun_{\fgl_n(\calo_E)}$ to this locus. This may be viewed as an element of a completion of the Hecke algebra $\calh_{K_E}(\GL_n(E))$. The maps of this diagram naturally extend to these completions and $\bfun_{\fgl_n(\calo_E)}\ast\bfun_0 = \Phi$ \cite[Lemma 3.11]{LeslieUFJFL}. In particular, in terms of the natural extensions of the spherical transform and Satake map to completions of the Hecke algebras, we have the identity $$\mathcal{SF}(\Phi) = \mathrm{Sat}(\mathrm{BC}(\bfun_{\fgl_n(\calo_E)})).$$ It follows from \cite[Proof of Proposition 3.12]{LeslieUFJFL} that
\begin{equation}\label{eqn: si identity}
\mathcal{SF}(\Phi)(Z) = \prod_{1\leq i \leq n}\frac{1}{(1-q^{(n-1)/2}Z_i)(1+q^{(n-1)/2}Z_i)},
\end{equation}
where $Z_i=q^{-s_i}$ encode the Satake parameters.

We now consider the linear side. 

Set $z=z_1z_2$. Up to multiplication by the factor $\frac{\widetilde{\omega}((z_1,z_2),w) }{L(s,\eta,T_y)}$, the linear orbital integral in the proposition is the value at $s=0$ of 
\begin{align*}
    & \int_{\rH'(F)}\bfun_{\fgl_n(\calo_F)}(h_1^{-1}z_1h_2)\bfun_{\fgl_n(\calo_F)}(h_2^{-1}z_2h_1){\bf 1}_{\calo_{F,n}}(w h_2)|h_2|^s\eta(h_1)dh\\
    =&\int_{\GL_n(F)}\left(\int_{\GL_n(F)}\bfun_{\fgl_n(\calo_F)}(h_1^{-1}z_1h_2)\bfun_{\fgl_n(\calo_F)}(h_2^{-1}z_2h_1)\eta(h_2^{-1}h_1)dh_1\right){\bf 1}_{\calo_{F,n}}(w h_2)|h_2|^s\eta(h_2)dh_2\\
       =&\eta(z_2)\int_{\GL_n(F)}\left(\int_{\GL_n(F)}\bfun_{\fgl_n(\calo_F)}(h_1^{-1})\bfun_{\fgl_n(\calo_F)}(h_2^{-1}zh_2h_1)\eta(h_2^{-1}zh_2h_1)dh_1\right){\bf 1}_{\calo_{F,n}}(w h_2)|h_2|^s\eta(h_2)dh_2\\
     =&\eta(z_2)\int_{\GL_n(F)}\left(\left[\bfun_{\fgl_n(\calo_F)}\ast(\eta\cdot\bfun_{\fgl_n(\calo_F)})\right](h_2^{-1}zh_2)\right){\bf 1}_{\calo_{F,n}}(w h_2)|h_2|^s\eta(h_2)dh_2,
\end{align*}
where $(\eta\cdot\bfun_{\fgl_n(\calo_F)})(z) = \eta(z)\bfun_{\fgl_n(\calo_F)}(z)$. Note that the factor $\eta(z_2)$ cancels with the corresponding term in \eqref{eqn: linear Lie transfer factor} so that we obtain \eqref{eqn: correct xiao transfer factor} in the reduction. 
 
 We thus need to compute the convolution
\begin{align*}
    \Phi^{\GL}(z) 
    :=\int_{\GL_n(F)}\bfun_{\fgl_n(\calo_F)}(zh^{-1})\bfun_{\fgl_n(\calo_F)}(h)\eta(h)dh.
\end{align*}
Since $(z_1,z_2)$ is regular semi-simple, $z=z_1z_2\in \GL_n(F)$. We thus only need to consider the restriction of $\bfun_{\fgl_n(\calo_F)}$ to $\GL_n(F)$. We thus decompose
\[
\bfun_{\fgl_n(\calo_F)}|_{\GL_n(E)}=\sum_{\lam\in\zz_{\geq0}^n}\bfun_{K\vp^\lam K}.
\]
Combining \eqref{eqn: Satake basis} and \eqref{eqn: geometric series macdonald}, it follows that 
\[
\mathrm{Sat}(\bfun_{\fgl_n(\calo_F)})(Z)= \prod_{1\leq i\leq n}\frac{1}{1-q^{(n-1)/2}Z_i}.
\]
 On the other hand, as in the proof of \cite[Proposition 3.12]{LeslieUFJFL},
\[
\mathrm{Sat}(\eta\cdot\bfun_{\fgl_n(\calo_F)})(Z) = \sum_{\mathbf{m}\in\zz_{\geq0}^n}q^{|\mathbf{m}|(n-1)/2}(-Z)^{\mathbf{m}} =  \prod_{1\leq i\leq n}\frac{1}{1+q^{(n-1)/2}Z_i},
\]
so that $$\mathrm{Sat}(\Phi^{\GL})(Z) = \prod_{1\leq i\leq n}\frac{1}{({1-q^{(n-1)/2}Z_i})(1+q^{(n-1)/2}Z_i)},$$ proving that $\Hir(\Phi) = \Phi^{\GL}$. 

 Thus, for any matching $x\in \fgl_n(E)$ and $(z_1,z_2)\in \fgl_n(F)\oplus \fgl_n(F)$, set $d= \val(\det(z_1z_2))$. Conjugating $(z_1,z_2)$ if necessary, we are free to assume that $z_i\in\fgl_n(\calo_F)$ since otherwise both sides of \eqref{eqn: goal of implies FLII} vanish. Then
 \begin{align*}
     \Orb^{\rH'(F),(\eta,\eta)}&\left(\bfun_{\fgl_n(\calo_F)\oplus\fgl_n(\calo_F)}\otimes {\bf 1}_{\calo_{F,n}},(z_1,z_2)\right)\\&=\Orb^{\GL_n(F),\eta}(\Hir(\Phi) \otimes{\bf 1}_{\calo_{F,n}},z_1z_2)\\
     &{=}\SO^{\U(V_n)}(\Phi,r(x))\quad\qquad\quad\qquad \text{(Thm \ref{Thm: main local result 1 var})}\\
     &=\SO^{\U(V_n)\times\U(V_n)}(\bfun_{\fgl_n(\calo_E)} ,x),
 \end{align*}
proving the claim.
\end{proof}
\subsection{Theorem \ref{Thm: main local result 2 var} $\imp$ Theorem \ref{Thm: fundamental lemma stable Lie}}
We now consider the inert-inert case. Recall that in this case, $\ul{\eta} = (\eta,1)$.
Let $\bfun_{\Herm_n(\calo_F)}$ denote the characteristic function of $\Herm_n(F)\cap\fgl_n(\calo_E)$, so that $\bfun_{\fq_{ii}(\calo_F)}=\bfun_{\Herm_n(\calo_F)}\otimes\bfun_{\Herm_n(\calo_F)}$ is the function in the $k=0$ case of Theorem \ref{Thm: fundamental lemma stable Lie}.

\begin{Prop}\label{prop: implies FLII}
Theorem \ref{Thm: main local result 2 var} implies Theorem \ref{Thm: fundamental lemma stable Lie}.
\end{Prop}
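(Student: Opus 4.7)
The plan is to mimic the proof of Proposition \ref{Prop: implies FLSI} for the split-inert case. Two features distinguish the present situation: the character $\ul{\eta} = (\eta, 1)$ produces a simpler (unsigned) convolution on the linear side, and the unitary-side function $\bfun_{\fq_{ii}(\calo_F)}$ factors as a tensor square after restriction to the regular semi-simple locus. Specifically, since the regular semi-simple locus of $\fq_{ii}$ is contained in $\Herm_n^\circ(F) \times \Herm_n^\circ(F)$, on that locus $\bfun_{\fq_{ii}(\calo_F)}$ coincides with $\phi_0 \otimes \phi_0$, where
\[
\phi_0 := \bfun_{\Herm_n(\calo_F)}\big|_{\Herm_n^\circ(F)} = \sum_{\lam \in \mathrm{P}_{n,\geq 0}} \bfun_\lam
\]
is viewed as an element of the natural completion of $\calh_{K_E}(\Herm_n^\circ(F))$; thus $\SO^{\rH_{ii}}(\bfun_{\fq_{ii}(\calo_F)}, (x_1, x_2)) = \SO^{\GL_n(E)}(\phi_0 \otimes \phi_0, (x_1, x_2))$ for regular semi-simple $(x_1,x_2)$.

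Next, I will reduce the linear orbital integral to a mirabolic integral on $\GL_n(F)$ by performing the substitution $h_1 \mapsto z_2^{-1} h_2 h_1$, as in Proposition \ref{Prop: implies FLSI}. The inner $h_1$-integral becomes the convolution
\[
\Psi^{\GL}(\wt{z}) := \int_{\GL_n(F)} \bfun_{\fgl_n(\calo_F)}(h_1^{-1} \wt{z}) \bfun_{\fgl_n(\calo_F)}(h_1)\, dh_1,
\]
evaluated at $\wt{z} = h_2^{-1} z_2 z_1 h_2$. Because $\eta_2 = 1$, no twisting character appears in the inner integrand (unlike the split-inert case), so $\Psi^{\GL}$ is the plain Hecke-algebra convolution $\bfun_{\fgl_n(\calo_F)} \ast \bfun_{\fgl_n(\calo_F)}$ in the completion of $\calh_K(\GL_n(F))$, with Satake transform $\mathrm{Sat}(\Psi^{\GL})(Z) = \prod_{i=1}^n (1 - q^{(n-1)/2} Z_i)^{-2}$. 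Tracking the ratio of transfer factors $\wt{\omega}((z_1, z_2), w)/\De(z_2 z_1, w)$ against the factor produced by the change of variables, the outer $h_2$-integral will identify with a mirabolic orbital integral, giving
\[
\Orb^{\rH', (\eta, 1)}_0(\bfun_{\fX(\calo_F)} \otimes \bfun_{\calo_{F,n}}, (z_1, z_2)) = \Orb^{\GL_n(F), \eta}(\Psi^{\GL} \otimes \bfun_{\calo_{F,n}}, z_2 z_1).
\]

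To match $\Psi^{\GL}$ with $\Hir(\phi_0 \ast \phi_0)$, by definition of the Hironaka product it suffices to verify
\[
\mathcal{SF}(\phi_0)(Z) = \prod_{i=1}^n \frac{1}{1 - q^{(n-1)/2} Z_i} = \mathrm{Sat}(\bfun_{\fgl_n(\calo_F)})(Z),
\]
which I will establish by summing Hironaka's explicit formula for the normalized spherical function $\Omega_z$ (see \cite[Section 6]{offenjacquet}) over $\lam \in \mathrm{P}_{n,\geq 0}$ via the Hall--Littlewood generating identity \eqref{eqn: geometric series macdonald}. Theorem \ref{Thm: main local result 2 var} with $\phi = \phi' = \phi_0$ then yields
\[
\Orb^{\GL_n(F), \eta}(\Psi^{\GL} \otimes \bfun_{\calo_{F,n}}, z_2 z_1) = \SO^{\GL_n(E)}(\phi_0 \otimes \phi_0, (x_1, x_2)),
\]
and combining with the first paragraph completes the proof. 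The main technical hurdle will be the Hironaka/Satake computation of $\mathcal{SF}(\phi_0)$ together with the careful bookkeeping of the transfer factors (whose structure in the inert-inert setting differs from the split-inert case because $\eta_2=1$ makes the $\eta_2^n(YX)$ factor in $\wt\omega$ trivial while the $\eta(\det x)^n$ factor in $\De$ survives); both parallel the analogous steps carried out in Proposition \ref{Prop: implies FLSI}.
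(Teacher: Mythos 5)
Your overall strategy is the same as the paper's: rewrite the inner $h_1$-integral as a convolution, identify the result as a mirabolic orbital integral, and apply Theorem \ref{Thm: main local result 2 var} with $\phi=\phi'=\phi_0$. But there is a sign error, visible when $n$ is odd, running through the middle of your argument. By \cite[Lemma 6.7]{offenjacquet} one has $\Hir(\Phi_d) = (-1)^{nd}\bfun_d$, so
\[
\Hir(\phi_0) = \sum_{d\geq 0}(-1)^{nd}\bfun_d = \eta^n\cdot\bfun_{\fgl_n(\calo_F)}\quad\text{and}\quad \mathcal{SF}(\phi_0)(Z) = \prod_{i=1}^n\frac{1}{1-(-1)^n q^{(n-1)/2}Z_i}.
\]
For $n$ odd the denominator is $1+q^{(n-1)/2}Z_i$, not $1-q^{(n-1)/2}Z_i$; in particular $\Hir(\phi_0\ast\phi_0)=\eta^n\cdot\Psi^{\GL}\neq\Psi^{\GL}$. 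Correspondingly, your displayed intermediate identity $\Orb^{\rH',(\eta,1)}_0(\cdots)=\Orb^{\GL_n(F),\eta}(\Psi^{\GL}\otimes\bfun_{\calo_{F,n}},z_2z_1)$ is off by the factor $\eta^n(z_2z_1)$: with $\eta_2=1$ the Lie-algebra transfer factor $\wt\omega$ has no $\eta^n$-term while $\De(z,w)$ does, so $\wt\omega/\De=\eta^n(z_2z_1)$. You note this discrepancy in your closing sentence, but you did not carry it into the displayed formulas. The two errors cancel against each other, so your final conclusion is accidentally correct, but the intermediate claims are false as stated. You should replace $\Psi^{\GL}$ by $\eta^n\cdot\Psi^{\GL}=\Hir(\phi_0\ast\phi_0)$ in the displayed identity and correct the formula for $\mathcal{SF}(\phi_0)$.

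Separately, $\phi_0$ is not compactly supported, so it lies only in the completion of $\calh_{K_E}(\Herm_n^{\circ}(F))$ and Theorem \ref{Thm: main local result 2 var} does not apply to it directly. The paper avoids this by decomposing $\phi_0 = \sum_{d\geq0}\Phi_d$ with each $\Phi_d\in\calh_{K_E}(\Herm_n^{\circ}(F))$, and applying Theorem \ref{Thm: main local result 2 var} to each $\Phi_{d-i}\otimes\Phi_i$ before summing over $i$; for a fixed regular semi-simple $(x_1,x_2)$ with $\val(\det(x_1x_2))=d$, only finitely many terms contribute. Your argument should make this reduction explicit rather than working formally with $\phi_0\otimes\phi_0$.
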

\begin{proof} The goal is to show that for any regular semi-simple $(x_1,x_2)\in \Herm_{n}(F)\times \Herm_n(F))$ and $(z_1,z_2)\in \fgl_n(F)\oplus \fgl_n(F)$ such that $y_1y_2$ is $\GL_n(E)$-conjugate to $z_1z_2$,
 \begin{equation}\label{eqn: goal of implies FLII}
    \SO^{\GL_n(E)}\left(\bfun_{\fq_{ii}(\calo_F)},(x_1,x_2)\right) =  \Orb^{\rH'(F),(\eta,1)}\left(\bfun_{\fgl_n(\calo_F)\oplus\fgl_n(\calo_F)}\otimes {\bf 1}_{\calo_{F,n}},(z_1,z_2)\right).
\end{equation}
Up to multiplication by the factor $\frac{\widetilde{\omega}((z_1,z_2),w) }{L(s,\eta,T_y)}$, the right-hand side of \eqref{eqn: goal of implies FLII} is the value at $s=0$ of
\begin{align*}
    & \int_{\rH'(F)}\bfun_{\fgl_n(\calo_F)}(h_1^{-1}z_1h_2)\bfun_{\fgl_n(\calo_F)}(h_2^{-1}z_2h_1){\bf 1}_{\calo_{F,n}}(w h_2)|h_2|^s\eta(h_2)dh\\
    =&\int_{\GL_n(F)}\left(\int_{\GL_n(F)}\bfun_{\fgl_n(\calo_F)}(h_1^{-1}z_1h_2)\bfun_{\fgl_n(\calo_F)}(h_2^{-1}z_2h_1)dh_1\right){\bf 1}_{\calo_{F,n}}(w h_2)|h_2|^s\eta(h_2)dh_2\\
     =&\int_{\GL_n(F)}\left(\left[\bfun_{\fgl_n(\calo_F)}\ast\bfun_{\fgl_n(\calo_F)}\right](h_2^{-1}z_1z_2h_2)\right){\bf 1}_{\calo_{F,n}}(w h_2)|h_2|^s\eta(h_2)dh_2.
\end{align*}
Setting $z =z_1z_2$, we consider the convolution
\begin{align*}
    \left(\bfun_{\fgl_n(\calo_F)}\ast\bfun_{\fgl_n(\calo_F)}\right)(z)=\int_{\GL_n(F)}\bfun_{\fgl_n(\calo_F)}(zh^{-1})\bfun_{\fgl_n(\calo_F)}(h)dh.
\end{align*}
As in the split-inert case, we only need to consider the restriction of $\bfun_{\fgl_n(\calo_F)}$ to $\GL_n(F)$. Setting $\bfun_d = \bfun_{\fgl_n(\calo_F),\val(\det)=d}$, we decompose
\[
\bfun_{\fgl_n(\calo_F)}|_{\GL_n(E)}=\sum_{d\in\zz_{\geq0}}\bfun_{d}.
\]
\quash{Combining \eqref{eqn: Satake basis} and \eqref{eqn: geometric series macdonald}, it follows that under the natural extension of the Satake isomorphism to power series rings,
\[
\mathrm{Sat}(\bfun_{\fgl_n(\calo_F)})(Z)= \prod_{1\leq i\leq n}\frac{1}{1-q^{(n-1)/2}Z_i},
\]
so that 
\[
\mathrm{Sat}( \Phi^{\GL})(Z) =\prod_{1\leq i\leq n}\frac{1}{(1-q^{(n-1)/2}Z_i)^2}.
\]}

On the Hermitian side, for $d\in \zz_{\geq0}$ consider 
\begin{equation}\label{eqn: det slice at d}
  \Herm_{n}^{\circ,d}(\calo_F):= \{y\in \Herm_n(\calo_F): |\det(y)|_F = q^{-d}\},  
\end{equation}
and let $\Phi_d\in C_c^\infty(\Herm_{n}^{\circ}(F))$ denote its characteristic function.  Then \cite[Lemma 6.7]{offenjacquet} shows 
\[
\Hir(\Phi_d) = (-1)^{nd}\bfun_d;
\]
this immediately implies $\Hir(\bfun_{\Herm_n(\calo_F)}) = \eta^n\cdot \bfun_{\fgl_n(\calo_F)}$, where we abuse notation and let both sides denote the restrictions to invertible elements. An easy calculation shows that
\[
\Hir(\Phi_i \ast\Phi_j ) = (-1)^{n(i+j)}(\bfun_i\ast\bfun_j),
\]
so that $\Hir(\bfun_{\Herm_n(\calo_F)} \ast\bfun_{\Herm_n(\calo_F)} ) = \eta^n\cdot  \left(\bfun_{\fgl_n(\calo_F)}\ast\bfun_{\fgl_n(\calo_F)}\right)$. With this in mind, we re-write the preceding integral as
\[
\eta^n(z_1z_2)\int_{\GL_n(F)}\Hir(\bfun_{\Herm_n(\calo_F)} \ast\bfun_{\Herm_n(\calo_F)} ) (h_2^{-1}z_1z_2h_2){\bf 1}_{\calo_{F,n}}(w h_2)|h_2|^s\eta(h_2)dh_2.
\]
Note that the transfer factor \eqref{eqn: linear Lie transfer factor} does not possess a factor of the form $\eta^n(z_1z_2)$, but the transfer factor \eqref{eqn: correct xiao transfer factor} does. This discrepancy is accounted for by the factor above.

 Thus, for any matching $(x_1,x_2)\in \Herm_{n}(F)\oplus \Herm_n(F)$ and $(z_1,z_2)\in \fgl_n(F)\oplus \fgl_n(F)$, set $d= \val(\det(z_1z_2))$. Conjugating $(z_1,z_2)$ if necessary, we are free to assume that $z_i\in\fgl_n(\calo_F)$ since otherwise both sides of \eqref{eqn: goal of implies FLII} vanish. Then
 \begin{align*}
     \Orb^{\rH'(F),(\eta,1)}&\left(\bfun_{\fgl_n(\calo_F)\oplus\fgl_n(\calo_F)}\otimes {\bf 1}_{V_n(\calo_{F})},(z_1,z_2)\right)\\&=\sum_{i=0}^{d}\Orb^{\GL_n(F),\eta}((-1)^{nd}\bfun_{d-i}\ast\bfun_i,z_1z_2)\\
     &{=}\sum_{i=0}^{d}\SO^{\GL_n(E)}(\Phi_{d-i}\otimes\Phi_i,(x_1,x_2))\quad\qquad \text{(Thm \ref{Thm: main local result 2 var})}\\
     &=\SO^{\GL_n(E)}(\bfun_{\Herm_n(\calo_F)} \otimes\bfun_{\Herm_n(\calo_F)} ,(x_1,x_2)),
 \end{align*}
proving the claim.
\end{proof}
 \subsection{Theorem \ref{Thm: main local result 2 var} $\imp$ Theorem \ref{Thm: fundamental lemma endoscopic Lie}}\label{twisted thing}
 

For the endoscopic comparison, we begin with the following calculation in $\cc[[X_1,\ldots X_n]]$.
\begin{Lem}\label{Lem: power series identity}
    Set for any $d\geq 0$, set 
    \[T_d:=(-1)^{nd}\sum_{\lam\in  \zz_{d,+}^n} X^{\lam}
\]where $ \zz_{d,+}^n =\{\lam\in \zz^n_{\geq0}: \sum_i \lam_i=d\}$ and $X^{\bf m}=\prod_{i=1}^n X_i^{{\lam}_i}$. Then
    \begin{align}\label{eqn: power series identity}
\sum_{d\geq 0}\left(\sum_{0\leq i\leq d} (-1)^i  T_i T_{d-i}\right)=\prod_{1\leq i\leq n}\frac{ 1}{(1-X_i  )(1+X_i)}.
\end{align}
\end{Lem}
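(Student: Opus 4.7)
The natural approach is to recognize the left-hand side as a product of two one-variable generating series in the $X_i$'s, exploiting that $(-1)^i T_i$ and $T_{d-i}$ are convolved over $i$. More precisely, viewing everything as formal power series in $X_1,\dots,X_n$ (where each monomial $X^{\mu}$ appears only finitely often in every term), the left-hand side may be rewritten as
\[
\sum_{d\geq 0}\sum_{i=0}^d (-1)^i T_i T_{d-i}
 \;=\;\Bigl(\sum_{i\geq 0}(-1)^i T_i\Bigr)\Bigl(\sum_{j\geq 0} T_j\Bigr).
\]
So the problem reduces to evaluating the two single series $\sum_j T_j$ and $\sum_i (-1)^i T_i$ in closed form.

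Unpacking the definition $T_d=(-1)^{nd}\sum_{|\lam|=d}X^{\lam}$ and swapping the order of summation, the first series is
\[
\sum_{j\geq 0}T_j \;=\;\sum_{\lam\in\zz^n_{\geq 0}}(-1)^{n|\lam|}X^{\lam}
 \;=\;\prod_{k=1}^n\frac{1}{1-(-1)^n X_k},
\]
a product of geometric series. The second series picks up an extra $(-1)^{|\lam|}$, giving
\[
\sum_{i\geq 0}(-1)^i T_i
 \;=\;\sum_{\lam\in\zz^n_{\geq 0}}(-1)^{(n+1)|\lam|}X^{\lam}
 \;=\;\prod_{k=1}^n\frac{1}{1-(-1)^{n+1}X_k}.
\]
Since the signs $(-1)^n$ and $(-1)^{n+1}$ appearing in the two factors are opposites, their product telescopes $n$-independently:
\[
\prod_{k=1}^n\frac{1}{(1-(-1)^n X_k)(1-(-1)^{n+1}X_k)}
 \;=\;\prod_{k=1}^n\frac{1}{(1-(-1)^n X_k)(1+(-1)^n X_k)}
 \;=\;\prod_{k=1}^n\frac{1}{1-X_k^2},
\]
which is exactly the right-hand side of \eqref{eqn: power series identity}.

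There is essentially no obstacle beyond the sign bookkeeping: the role of the factor $(-1)^{nd}$ in the definition of $T_d$ is precisely to make the $n$-dependence disappear after forming the Cauchy product, and the combinatorial content is just the factorization $\prod_k(1-X_k^2)=\prod_k(1-X_k)(1+X_k)$. Rearranging sums is justified because every monomial $X^{\mu}$ contributes to only finitely many terms in the formal power series $\sum_{d\geq 0}\sum_{i=0}^d(-1)^iT_iT_{d-i}$.
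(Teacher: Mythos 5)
Your proof is correct and takes essentially the same approach as the paper: both recognize the double sum as a Cauchy product, evaluate each factor via the multi-geometric series $\sum_{\lam}X^{\lam}=\prod_i(1-X_i)^{-1}$, and observe that the signs $(-1)^n$ and $(-1)^{n+1}$ cancel in the product. The paper phrases this through an auxiliary parameter $u$ (setting $\phi(u)=\sum_\lam u^{|\lam|}X^\lam$ and specializing $u=(-1)^n$), but the computation and the underlying identity are the same.
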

\begin{proof}
	Consider the generating series 
	\[
 \phi(u)=\sum_{\lambda\geq 0}  (uX)^\lambda,
    \]
    so that $ \phi((-1)^nu) = \sum_{d\geq 0}T_du^d$. The left-hand side of \eqref{eqn: power series identity} is precisely $ \phi(-u)\phi(u),$ with $u=(-1)^n$. On the other hand,  it is easy to see using \eqref{eqn: geometric series macdonald} that
	\[
	\phi(u)=\prod_{1\leq i\leq n}(1-uX_i)^{-1},
	\]
which gives the claim for any $n$.
\end{proof}
\begin{Prop}\label{Prop: implies FL endo}
Theorem \ref{Thm: main local result 2 var} implies Theorem \ref{Thm: fundamental lemma endoscopic Lie}. 
\end{Prop}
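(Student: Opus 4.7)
The plan is to mirror the strategy of Propositions \ref{Prop: implies FLSI} and \ref{prop: implies FLII}: I would convert the $\varepsilon$-orbital integral on $\fq_{ii}$ into a stable orbital integral on $\U(V_n)$ acting on $\Herm_{n}^{\circ}$ using Theorem \ref{Thm: main local result 2 var}, identify the resulting integrand with the function $\Phi$ from the proof of Proposition \ref{Prop: implies FLSI}, and finally invoke that proposition to conclude. As in the other fundamental-lemma reductions, it suffices to treat $k=0$.

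First, using the explicit description $\varepsilon(\al)=\eta(\det x_{1,\al})\,\eta(\det x_1)^{-1}$ from Lemma \ref{Lem: building special kappa Lie}, the $\varepsilon$-orbital integral rearranges as a stable orbital integral of a twisted function:
\[
\De_\varepsilon(x_1,x_2)\,\Orb^{\rH,\varepsilon}(\bfun_{\fq_{ii}(\calo_F)},(x_1,x_2))=\SO^{\GL_n(E)}(\phi^\varepsilon,(x_1,x_2)),
\]
where $\phi^\varepsilon:=(\eta\cdot\bfun_{\Herm_{n}^{\circ}(\calo_F)})\otimes \bfun_{\Herm_{n}^{\circ}(\calo_F)}$ with $(\eta\cdot f)(x):=\eta(\det x)f(x)$. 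Since $E/F$ is unramified, $\eta|_{\Nm_{E/F}(\calo_E^\times)}=1$, so the first tensor factor lies in $\calh_{K_E}(\Herm_{n}^{\circ}(F))$ and Corollary \ref{Cor: product orbital ints} applies to give
\[
\SO^{\GL_n(E)}(\phi^\varepsilon,(x_1,x_2))=\SO^{\U(V_n)}(\phi^\varepsilon_\ast,y), \qquad \phi^\varepsilon_\ast:=(\eta\cdot\bfun_{\Herm_{n}^{\circ}(\calo_F)})\ast\bfun_{\Herm_{n}^{\circ}(\calo_F)},
\]
for any $y\in \Herm_{n}^{\circ}(F)$ with the same characteristic polynomial as $x_1x_2$.

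The heart of the argument is to identify $\phi^\varepsilon_\ast=\Phi$ inside the completed Hecke module, where $\Phi$ is the function of Proposition \ref{Prop: implies FLSI} with $\mathcal{SF}(\Phi)(Z)=\prod_i(1-q^{(n-1)/2}Z_i)^{-1}(1+q^{(n-1)/2}Z_i)^{-1}$. Decomposing $\bfun_{\Herm_{n}^{\circ}(\calo_F)}=\sum_{d\geq 0}\Phi_d$ and using $\eta\cdot\Phi_d=(-1)^d\Phi_d$ (since $\eta(\det y)=(-1)^{\val(\det y)}$) together with $\Hir(\Phi_d)=(-1)^{nd}\bfun_d$ (from \cite{offenjacquet}, as recalled in Proposition \ref{prop: implies FLII}), the Satake transform of $\Hir(\phi^\varepsilon_\ast)$ becomes
\[
\sum_{d\geq 0}(-1)^{nd}\sum_{i=0}^{d}(-1)^i\,\mathrm{Sat}(\bfun_i)\,\mathrm{Sat}(\bfun_{d-i}).
\]
Lemma \ref{Lem: power series identity}, applied with $X_i=q^{(n-1)/2}Z_i$, collapses this to $\prod_{1\leq i\leq n}(1-q^{(n-1)/2}Z_i)^{-1}(1+q^{(n-1)/2}Z_i)^{-1}=\mathcal{SF}(\Phi)(Z)$, yielding $\phi^\varepsilon_\ast=\Phi$ and reducing the claim to Proposition \ref{Prop: implies FLSI}.

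In the split matching case $(x_1,x_2)\leftrightarrow_{\tau_n}X$, the point $y=r(X)$ lies in the split Hermitian locus of $\Herm_{n}^{\circ}(F)$, and the chain of equalities in the proof of Proposition \ref{Prop: implies FLSI} delivers $\SO^{\U(V_n)}(\Phi,y)=\SO^{\rH_{\tau_n,\tau_n}}(\bfun_{\fq_{si}(\calo_F)},X)$, which is the required identity. The only genuinely delicate point, which I expect to be the main obstacle to make fully rigorous, is the non-matching case $(x_1,x_2)\leftrightarrow_{\tau_n^{ns}}X$, where the target right-hand side is $\SO^{\rH_{\tau_n^{ns},\tau_n}}(0,X)=0$ and hence one must show $\SO^{\U(V_n)}(\Phi,y)=0$. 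For this I would argue that $r(\fq_{si}^{rss}(F))$ is contained in the split Hermitian locus (because $r(X)=-X^*X$ and $-1\in \Nm_{E/F}(\calo_E^\times)$), that $\Phi$ is extended by zero off this locus, and that the $\U(V_n)$-conjugation action preserves $\eta(\det)$; consequently the stable $\U(V_n)$-orbit of a non-split $y$ remains entirely in the non-split locus, on which $\Phi$ vanishes identically.
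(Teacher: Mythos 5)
Your argument is correct and takes essentially the same route as the paper, just packaged slightly differently. Where you first recast the $\varepsilon$-orbital integral as $\SO^{\GL_n(E)}(\phi^\varepsilon,(x_1,x_2))$ for the $\eta$-twisted test function and then appeal to Corollary \ref{Cor: product orbital ints}, the paper fixes $d=\val(\det(x_1x_2))$ and expands directly into the finite sum $\sum_{i=0}^{d}(-1)^i\SO^{\GL_n(E)}(\Phi_i\otimes\Phi_{d-i},(x_1,x_2))$, applying the corollary term by term; the generating-series computation via Lemma \ref{Lem: power series identity} and \eqref{eqn: si identity} is then literally identical in the two proofs. Two small points to tighten: (i) you write $\Herm_{n}^{\circ}(\calo_F)$ where you mean $\Herm_n(\calo_F)=\Herm_n(F)\cap\fgl_n(\calo_E)$ (the paper reserves $\Herm_{n}^{\circ}(\calo_F)$ for the single orbit $\GL_n(\calo_E)\ast I_n$, on which $\eta\cdot$ does nothing); and (ii) the assertion that the first tensor factor ``lies in $\calh_{K_E}(\Herm_{n}^{\circ}(F))$'' is not quite right, since the restriction of $\eta\cdot\bfun_{\Herm_n(\calo_F)}$ to the invertible locus is $K_E$-invariant but not compactly supported, so Corollary \ref{Cor: product orbital ints} does not apply as stated --- the cure is exactly the paper's truncation at $\val(\det(x_1x_2))=d$, after which only finitely many pairs $(\Phi_i,\Phi_{d-i})$ contribute. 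Finally, the vanishing case you flag as the ``main obstacle'' is in fact unproblematic, and your sketch for it is correct: $\Phi=r_!(\bfun_{\fgl_n(\calo_E)})$ is supported in the split determinant class because $\det(-X^\ast X)=(-1)^n\Nm_{E/F}(\det X)$ and $-1\in\Nm_{E/F}(E^\times)$ when $E/F$ is unramified, while $\eta(\det)$ is constant on stable $\U(V_n)$-orbits since the characteristic polynomial is, so $\SO^{\U(V_n)}(\Phi,y)=0$ whenever $y$ is non-split; the paper simply absorbs both the matching and vanishing cases into the single identity $\SF(\Phi_\varepsilon)=\SF(\Phi)$ without spelling this out.
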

\begin{proof} The goal is to show that for any regular semi-simple $(x_1,x_2)\in \Herm_n(F)\times \Herm_n(F)$, if $(x_1,x_2)$ matches with $X\in \fgl_n(E)$ with respect to $\tau_1=\tau_n$, then
\begin{equation}\label{eqn: endoscopic goal}
\De_\varepsilon(x_1,x_2)\Orb^{\GL_n(E),\varepsilon}\left(\bfun_{\Herm_n(\calo_F)}\otimes\bfun_{\Herm_n(\calo_F)},(x_1,x_2)\right) = \SO^{\U(V_n)}(\bfun_{\fgl_n(\calo_E)},X),
\end{equation}
and if it matches $X$ with respect to $\tau_1$ non-split, then
\[
\Orb^{\GL_n(E),\varepsilon}\left(\bfun_{\Herm_n(\calo_F)}\otimes\bfun_{\Herm_n(\calo_F)},(x_1,x_2)\right)=0.
\]
Note that this is determined by $\val(\det(x_1x_2))$ modulo $2$.

Recalling the characteristic function $\Phi_d\in C_c^\infty(\Herm_{n}^{\circ}(F))$ of \eqref{eqn: det slice at d},
if $\val(\det(x_1x_2))=d$ then we have
\[
\De_\varepsilon(x_1,x_2)\Orb^{\GL_n(E),\varepsilon}\left(\bfun_{\Herm_n(\calo_F)}\otimes\bfun_{\Herm_n(\calo_F)},(x_1,x_2)\right)= \sum_{i=0}^{d}(-1)^i\SO^{\GL_n(E)}(\Phi_{i}\otimes\Phi_{d-i},(x_1,x_2)).
\]
Indeed, if we set $\val(x) = \val(\det(x))$, then the left-hand side is given by
\quash{\[
\sum_{\al}\eta(x_{1,\al})\Orb^{\GL_n(E)}\left(\bfun_{\Herm_n(\calo_F)}\otimes\bfun_{\Herm_n(\calo_F)},(x_1,x_2)\right),
\]
and for each $\al\in H^1(F,\rH_{(x_1,x_2)}$, there is a unique $0\leq i\leq d$
\[
\Orb^{\GL_n(E)}\left(\bfun_{\Herm_n(\calo_F)}\otimes\bfun_{\Herm_n(\calo_F)},(x_1,x_2)\right) = \Orb^{\GL_n(E)}(\Phi_{i}\otimes\Phi_{d-i},(x_{1,\al},x_{2,\al})).
\]
In particular,}
\begin{align*}
    \sum_{\al}(-1)^{\val(x_{1,\al})}\Orb^{\GL_n(E)}&\left(\bfun_{\Herm_n(\calo_F)}\otimes\bfun_{\Herm_n(\calo_F)},(x_{1,\al},x_{2,\al})\right)\\ = \sum_{i=0}^{d}(-1)^i&\sum_{{\val(x_{1,\al})=i}}\Orb^{\GL_n(E)}(\Phi_{i}\otimes\Phi_{d-i},(x_{1,\al},x_{2,\al}))\\
    &= \sum_{i=0}^{d}(-1)^i\SO^{\GL_n(E)}(\Phi_{i}\otimes\Phi_{d-i},(x_1,x_2)).
\end{align*}
By Theorem \ref{Thm: main local result 2 var} (in particular, its Corollary \ref{Cor: product orbital ints}), this  is equal to 
$$
 \sum_{i=0}^{d}(-1)^i\SO^{\U(V_n)}(\Phi_{i}\ast\Phi_{d-i},X)
$$
We are reduced to considering the stable orbital integrals of the function
\[
\Phi_{\varepsilon}:=\sum_{d\geq0}\left(\sum_{i=0}^d(-1)^i\Phi_{i}\ast\Phi_{d-i}\right)\in C^\infty(\Herm_{n}^{\circ}(F))^{K_E};
\]
by \cite[Lemma 6.7]{offenjacquet}, this satisfies that  
\[
\SF(\Phi_{\varepsilon})=\sum_{d\geq 0}\left(\sum_{0\leq i\leq d} (-1)^i  T_i T_{d-i}\right)\in \cc[[X_1,\ldots,X_n]],
\]
where $X_i = q^{(n-1)/2}Z_i$ for $Z_i$ as in \eqref{eqn: weird normalization}. Lemma \ref{Lem: power series identity} and \eqref{eqn: si identity} now imply that
\[
\SF(\Phi_{\varepsilon}) = \prod_{1\leq i\leq n}\frac{1}{({1-q^{(n-1)/2}Z_i})(1+q^{(n-1)/2}Z_i)}=\SF(\Phi),
\]
where $\Phi = r_!(\bfun_{\fgl_n(\calo_E)})$. This proves the both the identity \eqref{eqn: endoscopic goal} and vanishing claims.
\end{proof}

\part{Proof of Theorem \ref{Thm: main local result 2 var}}\label{Part: trace proof global}

In this part, we prove Theorem \ref{Thm: main local result 2 var}. The proof is global, relying on a comparison of relative trace formulas built on the transfer of orbital integrals in Theorem \ref{Thm: Xiao's transfer} between $\GL_n$ and $\Herm_{n}^{\circ}$.


\section{Main result on orbital integrals}
Recall the orbital integrals and matching of orbits introduced in \S \ref{Section: prelim for Jacquet} and Theorem \ref{Thm: Xiao's transfer}. In this section, we state our main local result Theorem \ref{Thm: first step main}, and verify that this implies Theorem \ref{Thm: main local result 2 var}.

\subsection{Important change of convention}
Our proof of Theorem \ref{Thm: main local result 2 var} relies on a global comparison of trace formulas. Due to the nature of this argument, it is simpler to change our formalism on group actions and contractions.

\begin{Important}\label{Important}
    For the remainder of the paper, we use the \textbf{right} action of $\GL_n(E)$ on $\Herm_n^\circ(F)$, given as $h\ast g = g^\ast h g$ with $h\in \Herm_n^\circ(F)$ and $g\in \GL_n(E)$. In particular, we set
    \[
    \U(V_\tau):=\{u\in \Res_{E/F}\GL_n: u^\ast \tau u = \tau\}.
    \]
    This conflicts with the previous notation when $\tau\neq\tau^{-1}$ (but changes nothing in the case of $\U(V_n)$ where $\tau = I_n$).
\end{Important} 
The justification for switching is that the right action is more natural in the spectral setting below due to our use of results of \cite{FLO}, and keeping our prior conventions would introduce unpleasant notational complications. 

This is a purely notational change since $\U(V_\tau)\simeq \U(V_{\tau^{-1}})$. In particular, this change does not change the set $\calv_n(E/F)$ nor the do the sets
\[
\Herm_{n}^{\circ}(F)_\tau=\{x\in\Herm_{n}^{\circ}(F)\;:\; (\Res_{E/F}\GL_n)_x\simeq \U(V_\tau)\}
\]
change. The natural isomorphism $\Herm_{n}^{\circ}\iso \Herm_{\tau}^\circ$ intertwining the $\U(V_\tau)$-action on $\Herm_{n}^{\circ}$ with the conjugation action on $\Herm_{\tau}^\circ$ is given by $x\mapsto \tau^{-1} x$.

This does change our notation for quotient maps $p_\tau:\GL_n(E)\to \Herm_{n}^{\circ}(F)$, now given by $p_\tau(g) = g^\ast \tau g$. This implies that the contraction map
\[
p_{\tau_!}:C^\infty_c(\GL_n(E))\lra C^\infty_c(\Herm_{n}^{\circ}(F))
\]
is now
\[
p_{\tau,!}(f)(g^\ast \tau g) =\int_{U(V_{\tau})}f(ug)du.
\]

For $\tau\in \calv_n(E/F)$, we say that a function $f\in C_c^\infty(\Herm_{n}^{\circ}(F))$ is of type $\tau$ if 
\[
\supp(f)\subset\Herm_{n}^{\circ}(F)_\tau.
\]
Clearly this is equivalent to $f= p_{\tau,!}(f^\tau)$ for some $f^\tau\in C_c^\infty(\GL_n(E))$. 
The following lemma is important for later.
\begin{Lem}\label{Lem: invariant lifts}Let $\tau\in \Herm_{n}^{\circ}(F)$. Suppose a spherical function $\phi\in \calh_{K_E}(\Herm_{n}^{\circ}(F))$ is of type $\tau$. There exists a right $K_E$-invariant function $\phi^\tau\in C_c^\infty(\GL_n(E))$ such that 
    \[
    \phi=p_{\tau,!}(\phi^\tau).
    \]
\end{Lem}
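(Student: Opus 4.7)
The plan is to start with an arbitrary lift $f^\tau \in C_c^\infty(\GL_n(E))$ given by the hypothesis that $\phi$ is of type $\tau$, and then average it over the right $K_E$-action to obtain the desired right $K_E$-invariant lift.

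\medskip

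First I would recall the setup. By the definition of ``type $\tau$'' together with the identification $U(V_\tau)\backslash\GL_n(E)\cong \Herm_n^\circ(F)_\tau$, there exists some (not necessarily $K_E$-invariant) $f^\tau\in C_c^\infty(\GL_n(E))$ with $\phi=p_{\tau,!}(f^\tau)$. Next, using our measure conventions so that $\vol(K_E)=1$, I would define
\[
\phi^\tau(g):=\int_{K_E} f^\tau(gk)\,dk.
\]
Right $K_E$-invariance is immediate from the translation invariance of the Haar measure on $K_E$. Compact support follows since $\supp(\phi^\tau)\subseteq \supp(f^\tau)\cdot K_E$, which is compact as a product of compact sets in $\GL_n(E)$. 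Smoothness (local constancy) is preserved under the averaging.

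\medskip

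It then remains to verify $p_{\tau,!}(\phi^\tau)=\phi$. By Fubini's theorem,
\[
p_{\tau,!}(\phi^\tau)(g^\ast\tau g)=\int_{U(V_\tau)}\int_{K_E} f^\tau(ugk)\,dk\,du=\int_{K_E}\left(\int_{U(V_\tau)} f^\tau(u(gk))\,du\right)dk.
\]
The inner integral equals $p_{\tau,!}(f^\tau)((gk)^\ast\tau(gk))=\phi(k^\ast g^\ast\tau g k)$, which by the $K_E$-invariance of $\phi$ (i.e., $\phi(h\ast k)=\phi(h)$ for $k\in K_E$) equals $\phi(g^\ast\tau g)$. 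Hence the whole expression is $\vol(K_E)\phi(g^\ast\tau g)=\phi(g^\ast\tau g)$, as desired.

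\medskip

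There is no real obstacle: the lemma is a formal consequence of the compatibility of the contraction map $p_{\tau,!}$ with the right translation action of $K_E$ on $\GL_n(E)$ and the induced action of $K_E$ on $\Herm_n^\circ(F)$ via $h\ast k=k^\ast h k$. The only points one must be slightly careful about are (a) the interchange of integrals, which is justified by compact support, and (b) the normalization $\vol(K_E)=1$, which is consistent with our measure conventions from \S\ref{measures}.
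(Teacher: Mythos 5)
Your proof is correct and follows exactly the paper's approach: start from an arbitrary lift given by the definition of type $\tau$, average over right translation by $K_E$, and use the $K_E$-invariance of $\phi$ to see that the averaged function is still a lift. The paper's version is terser (it simply asserts that the averaged $\phi^\tau$ is a lift ``since $\phi$ is $K_E$-invariant''), whereas you spell out the Fubini interchange and the normalization $\vol(K_E)=1$; otherwise the argument is the same.
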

\begin{proof}
Fix \emph{some} lift $f^\tau\in C_c^\infty(\GL_n(E))$ and define
\[\phi^\tau(g)=\int_{K_E}f^\tau(gk)dk.\] Clearly, $\phi^\tau$ is right $K_E$-invariant and it is also a lift of $\phi$ since $\phi$ is $K_E$-invariant. 
\end{proof}


\subsection{Reformulation of the main result}
Suppose now that $E/F$ is an unramified extension of non-archimedean local fields. We first recall the following fundamental lemma for the unit element from \cite[Theorem 5.1]{Xiaothesis}. 
\begin{Thm}\label{Thm: xiao FL}
The characteristic function $\bfun_{0}$ of $\Herm_{n}^{\circ}(\calo_F)$ and $\bfun_{\GL_n(\calo_F)}\otimes \bfun_{\calo_{F,n}}$ are smooth transfers in the sense of Theorem \ref{Thm: Xiao's transfer}. 
\end{Thm}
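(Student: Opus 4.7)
The plan is to follow the construction of smooth transfer sketched in the proof of Theorem \ref{Thm: Xiao's transfer}, specialized to the unramified spherical test function, and reduce the claim to the Jacquet--Rallis fundamental lemma of Yun.

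First, I would lift $\widetilde f := \bfun_{\GL_n(\calo_F)} \otimes \bfun_{\calo_{F,n}}$ to the standard unramified test function $F := \bfun_{\fgl_n(\calo_F) \times \calo_F^n \times \calo_{F,n}}$ on $\fgl_n(F) \times F^n \times F_n$, which satisfies $\widetilde f(z,v) = F(z,0,v)$. Yun's Lie-algebra Jacquet--Rallis fundamental lemma (transferred to characteristic zero via motivic methods) asserts that $F$ has smooth transfer $F_0 := \bfun_{\Herm(V_n)(\calo_F) \times V_n(\calo_F)}$ on the split Hermitian form $V_n$ and $F_1 := 0$ on the non-split form $V_1$; the vanishing on $V_1$ is specific to the unramified case. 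Setting $f_i(y) := F_i(y,0)$, so that $f_0 = \bfun_{\Herm(V_n)(\calo_F)}$ and $f_1 = 0$, the Jacquet--Langlands transfer applied to $f_1$ is trivial and one obtains $f'_n := f_0 - JL(f_1) = \bfun_{\Herm(V_n)(\calo_F)}$. The transfer of $\widetilde f$ is then $f := \eta((-1)^n)\,\eta^n \cdot f'_n|_{\Herm_n^\circ(F)}$.

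Since $E/F$ is unramified, every $x \in \Herm_n^\circ(\calo_F)$ has $\det(x) \in \calo_F^\times$, so the rescaling factor $\eta((-1)^n)\,\eta^n(\det x)$ equals $1$, and $f$ coincides with $\bfun_{\Herm(V_n)(\calo_F)}|_{\Herm_n^\circ(F)}$ on the support of $\bfun_0$. It remains to verify that this function has the same regular semisimple orbital integrals as $\bfun_0 = \bfun_{\Herm_n^\circ(\calo_F)}$. Using the Jacobowitz decomposition \eqref{eqn: orbits everywhere}, the integral invertible Hermitian matrices decompose as $\bigsqcup_{\lambda \in \mathrm{P}_{n,\geq 0}} K_E \ast (\vp^\lambda \tau_n)$, and an element in the $\lambda$-orbit has $|\det|_F = q^{-|\lambda|}$. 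For a regular semisimple $y$ matching $z$, the characteristic polynomials agree, so $|\det y|_F = |\det z|_F$; the linear orbital integral vanishes unless $\det z \in \calo_F^\times$, and in that case $|\det y|_F = 1$ forces $\lambda = 0$, so $y \in K_E \ast I_n = \Herm_n^\circ(\calo_F)$. Hence the contributions from $\lambda > 0$ disappear and $\SO^{\U(V_n)}(f,y) = \SO^{\U(V_n)}(\bfun_0, y)$.

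The main obstacle is the invocation of Yun's theorem in its Lie-algebra form, in particular the unramified vanishing on the non-split Hermitian form; once this deep input is in hand, the remainder of the argument, namely the triviality of Jacquet--Langlands, the triviality of the $\eta^n$-rescaling in the unramified setting, and the orbit-by-orbit reconciliation via Jacobowitz, is a direct computation with our fixed measure normalizations.
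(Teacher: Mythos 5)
The paper's own justification for Theorem~\ref{Thm: xiao FL} is a citation of \cite[Theorem 5.1]{Xiaothesis}, but the paper does spell out the argument for the generalization in Theorem~\ref{Thm: Xiao FL for d} (whose $d=0$ case is exactly this statement). Your approach is broadly in the same spirit as that proof — Yun's Lie-algebra Jacquet--Rallis fundamental lemma, triviality of the Jacquet--Langlands step, and triviality of the $\eta^n$-sign — but you skip the one substantive intermediate step and your workaround for it does not hold.

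The gap is in your choice of the lift $F$. The construction from the proof of Theorem~\ref{Thm: Xiao's transfer} requires $F\in C_c^\infty(\GL_n(F)\times F^n\times F_n)$ with $\wt{f}(z,v)=F(z,0,v)$, so $F$ should be $\bfun_{\GL_n(\calo_F)\times\calo_F^n\times\calo_{F,n}}$. Your $F=\bfun_{\fgl_n(\calo_F)\times\calo_F^n\times\calo_{F,n}}$ does \emph{not} satisfy $\wt f(z,v)=F(z,0,v)$: at $z\in\fgl_n(\calo_F)\cap\GL_n(F)$ with $\det z\in\fp$ the left side vanishes while $F(z,0,v)=1$. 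Consequently, the output $f'_n=\bfun_{\Herm_n(\calo_F)}$ is not compactly supported on $\Herm_n^\circ(F)$, and the closing assertion ``$\SO^{\U(V_n)}(f,y)=\SO^{\U(V_n)}(\bfun_0,y)$'' is false: take any regular semisimple $y\in\Herm_n(\calo_F)\cap\Herm_n^\circ(F)$ with $|\det y|_F<1$; then $\SO^{\U(V_n)}(f,y)>0$ while $\SO^{\U(V_n)}(\bfun_0,y)=0$, since no element of $\Herm_n^\circ(\calo_F)$ has non-unit determinant. Observing that the linear orbital integral vanishes there does not repair this — it only shows that the transfer identity is trivially satisfied for $\bfun_0$ at such $y$, not that $f$ and $\bfun_0$ agree — and one cannot run the transfer machinery pointwise to recover the identity at the $|\det|=1$ orbits.

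The missing ingredient is precisely the paper's ``easy corollary'' in the proof of Theorem~\ref{Thm: Xiao FL for d}: because matching of orbits under the Jacquet--Rallis comparison preserves the characteristic polynomial and hence $|\det|$, Yun's fundamental lemma $\bfun_{\fgl_n(\calo_F)}\leftrightarrow\{\bfun_{\Herm_n(\calo_F)},0\}$ decomposes along determinant valuation into slice-by-slice matchings $\bfun_d\leftrightarrow\{\Phi_d,0\}$. The $d=0$ slice gives the genuine Jacquet--Rallis transfer of $\bfun_{\GL_n(\calo_F)\times\calo_F^n\times\calo_{F,n}}$, and then the remainder of your argument (trivial Jacquet--Langlands, trivial sign, Jacobowitz) applies cleanly. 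You should state that slicing step explicitly and start the machinery from the compactly supported $d=0$ slice rather than the full Lie-algebra unit.
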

\begin{proof}
	This is contained in \cite[Theorem 5.1]{Xiaothesis} with the normalizations discussed above. Note that $\eta^n\equiv 1$ on the supports of the test functions, so that the change in transfer factor is not relevant.
\end{proof}

We require slightly more. For $d\in \zz_{\geq0}$, recall the characteristic function $\Phi_d\in C_c^\infty(\Herm_{n}^{\circ}(F))$ of
\[
\Herm_{n}^{\circ,d}(\calo_F):= \{y\in \Herm_{n}^{\circ}(F)\cap \fgl_{n}(\calo_E): |\det(y)|_F = q^{-d}\},
\]
and the characteristic function $\bfun_d\in C_c^\infty(\GL_n(F))$ of
\[
\fgl_{n}^d(\calo_F):=\{z\in  \fgl_{n}(\calo_F): |\det(z)|_F = q^{-d}\}.
\]
\begin{Thm}\label{Thm: Xiao FL for d}
The characteristic function $\Phi_d$ is a transfer of $(-1)^{nd} \bfun_d\otimes \bfun_{\calo_{F,n}}$. 
\end{Thm}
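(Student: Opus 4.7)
The plan is to extend Theorem \ref{Thm: xiao FL} (which handles the case $d=0$) to arbitrary $d \geq 0$ using a spherical Hecke algebra argument, combined with Offen's explicit formula. The key input is \cite[Lemma 6.7]{offenjacquet}, which asserts $\Hir(\Phi_d) = (-1)^{nd}\bfun_d$ in $\calh_K(\GL_n(F))$. Since the Hironaka transform is an isomorphism of $\calh_{K_E}(\GL_n(E))$-modules (with $\calh_{K_E}(\Herm_{n}^{\circ}(F))$ free of rank $2^n$ by Hironaka's theorem), Offen's formula identifies the relevant test functions as Hecke-module elements. What remains is to propagate this identification to a matching of orbital integrals.

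The central step would be to establish a Hecke equivariance principle for smooth transfer: if $\phi \in \calh_{K_E}(\Herm_{n}^{\circ}(F))$ is a smooth transfer of $f_1 \otimes \bfun_{\calo_{F,n}}$ with $f_1 \in \calh_K(\GL_n(F))$ in the sense of Theorem \ref{Thm: Xiao's transfer}, then for any $h \in \calh_{K_E}(\GL_n(E))$ the convolution $h \ast \phi$ is a smooth transfer of $BC(h) \ast f_1 \otimes \bfun_{\calo_{F,n}}$, where $BC$ is the base change homomorphism. The verification is by direct calculation: one unfolds both orbital integrals using the definition of convolution, applies Fubini's theorem to exchange orders of integration, and uses that $\bfun_{\calo_{F,n}}$ is $\GL_n(\calo_F)$-invariant so that the mirabolic component decouples cleanly from the Hecke action, with the unramified $\eta$-twist matching correctly across the base change.

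With Hecke equivariance in hand, one applies it to the base case $\bfun_0 = \Phi_0 \leftrightarrow \bfun_{\GL_n(\calo_F)} \otimes \bfun_{\calo_{F,n}}$ from Theorem \ref{Thm: xiao FL}, thereby obtaining the matching for any element in the cyclic $\calh_{K_E}(\GL_n(E))$-submodule generated by $\bfun_0$. For $d$ even, $\Phi_d$ is supported in the split-type component $\Herm_{n}^{\circ}(F)_{\tau_n}$ (since orbits $K_E \ast \varpi^\lambda \tau_n$ of split type correspond to $|\lambda|$ even), and lies in this cyclic submodule. For $d$ odd, $\Phi_d$ is supported in the non-split-type component, requiring one additional base case — for instance, the matching of $\Phi_1$ with $(-1)^n \bfun_1 \otimes \bfun_{\calo_{F,n}}$ at the simplest orbit $K_E \ast \diag(\varpi, I_{n-1})\tau_n$ — established by an explicit orbital integral computation. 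Hecke equivariance then propagates each base case to all $\Phi_d$ of the corresponding parity.

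The main obstacle will be verifying the Hecke equivariance principle with the correct $\eta$-twist on the mirabolic side. Since the rank-$2^n$ module structure on $\calh_{K_E}(\Herm_{n}^{\circ}(F))$ is non-cyclic, the effective difficulty is tracking how the $\eta$-twisted convolution on $C^\infty_c(\GL_n(F) \times F_n)$ corresponds, under $BC$ and the parity decomposition of Hironaka's module, to ordinary convolution on the Hermitian side. The fact that the $\eta$-twist precisely captures the sign $(-1)^{nd}$ appearing in Offen's formula provides a consistency check: the parity of $d$ governs both the type $\tau$ and the sign $\eta^n$, so the matching of signs across the transfer is built into the structure of the comparison.
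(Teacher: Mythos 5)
Your proposal takes a genuinely different route from the paper, but it has a gap that I don't see how to close without essentially abandoning the strategy.

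The gap is in the claim that, for $d$ even, $\Phi_d$ lies in the cyclic $\calh_{K_E}(\GL_n(E))$-submodule generated by $\bfun_0$. Under the Hironaka transform this cyclic submodule corresponds to the image of the base-change map $BC\colon\calh_{K_E}(\GL_n(E))\to\calh_K(\GL_n(F))$, which in Satake coordinates is $\cc[Z_1^{\pm 2},\dots,Z_n^{\pm 2}]^{S_n}$. But $\Hir(\Phi_d)=(-1)^{nd}\bfun_d$, and one computes directly from \eqref{eqn: Satake basis} and \eqref{eqn: geometric series macdonald} that
\[
\mathrm{Sat}(\bfun_d)(Z) \;=\; q^{d(n-1)/2}\,h_d(Z_1,\dots,Z_n),
\]
the complete homogeneous symmetric polynomial. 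For $n\ge 2$ and $0<d<2n$, $h_d$ contains monomials $Z^{\mathbf m}$ with odd exponents (already $h_2=Z_1^2+Z_1Z_2+\cdots$ for $n=2$), so $\mathrm{Sat}(\bfun_d)\notin\cc[Z^{\pm 2}]^{S_n}$ and hence $\Phi_d$ is \emph{not} in the cyclic submodule. The rank of $\calh_{K_E}(\Herm_n^\circ(F))$ over $\calh_{K_E}(\GL_n(E))$ is $2^n$, not $2$; two base cases (one per parity of $d$) plus Hecke equivariance genuinely cannot reach all the $\Phi_d$ when $n\ge 2$. (The split/non-split decomposition by parity of $|\lambda|$ is correct, but the "split-type" component itself has rank $>1$ as a Hecke module, so your step "lies in this cyclic submodule" is where the argument breaks.)

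The paper's proof avoids the Hecke-module picture entirely and instead reduces to the Jacquet--Rallis fundamental lemma. The unit case of the JR FL on the Lie algebra gives that $\bfun_{\fgl_n(\calo_F)}$ matches $\{\bfun_{\Herm_n(\calo_F)},0\}$ as Jacquet--Rallis transfers. Since the valuation of the determinant is an invariant preserved by the JR matching, one may slice both sides by $\val(\det)=d$, obtaining that $\bfun_d$ and $\{\Phi_d,0\}$ are JR transfers for every $d\ge 0$. Feeding these into the explicit construction of mirabolic transfer used to prove Theorem \ref{Thm: Xiao's transfer} (the steps: extend to a function on $\fgl_n\times F^n\times F_n$, apply JR transfer, apply Jacquet--Langlands, then multiply by $\eta((-1)^n)\eta^n\cdot(\,\cdot\,)$) shows that $\eta((-1)^n)\eta^n\cdot\Phi_d$ is a mirabolic transfer of $\bfun_d\otimes\bfun_{\calo_{F,n}}$. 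Since $E/F$ is unramified, $\eta((-1)^n)=1$ and $\eta^n\cdot\Phi_d=(-1)^{nd}\Phi_d$ on the support, and the theorem follows by moving the sign to the linear side. You could try to salvage a Hecke-theoretic proof, but you would need enough base cases to generate all of $\calh_{K_E}(\Herm_n^\circ(F))$ over $\calh_{K_E}(\GL_n(E))$, which defeats the purpose; the determinant-slicing of the JR FL is both shorter and self-contained.
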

\begin{Rem}
   As noted in the proof of Proposition \ref{prop: implies FLII},  $\Hir(\Phi_d) =(-1)^{nd}\bfun_d$ so that this is a special case of Theorem \ref{Thm: main local result 1 var}.
\end{Rem}
\begin{proof}
    This follows from the same argument as the fundamental lemma ($d=0$), relying on the fundamental lemma of Jacquet--Rallis. First we claim that these functions are \emph{almost} Jacquet--Rallis transfers of each other. Recalling the Jacquet-Rallis fundamental lemma on the Lie algebra of \cite{YunJR, beuzart2019new}, the functions $\bfun_{\fgl_{n}(\calo_F)}$ matches with $\{\bfun_{\Herm_n(\calo_F)},0\}$ with respect to Jacquet--Rallis transfer. An easy corollary of this is that $\bfun_d$ and $\{\Phi_d,0\}$ are transfers for any $d\geq 0$. Following the construction of transfer in the proof of Theorem \ref{Thm: Xiao's transfer}, this implies that
    \[
    \eta((-1)^n)\eta^n\cdot \Phi_d = \eta^n\cdot \Phi_d
    \] is a transfer for $\bfun_d\otimes \bfun_{\calo_{F,n}}$, where we have used that $E/F$ is unramified. Noting that $\eta^n\cdot \Phi_d=(-1)^{nd}\Phi_d$, the theorem follows by moving the sign to the other side. 
\end{proof}

Our main goal for the rest of the paper is to prove the following:
\begin{Thm}\label{Thm: first step main} Fix $\tau_1,\tau_2\in \Herm_{n}^{\circ}(F)$. For $i\in \{1,2\}$, assume that $\phi_i\in \calh_{K_E}(\Herm_{n}^{\circ}(F))$ is of type $\tau_i$, and let $\phi_{1}^{\tau_1}$ (resp., $\phi^{\tau_2}_2$) denote left-$K_E$-invariant lift to $\GL_n(E)$ as in Lemma \ref{Lem: invariant lifts}.

With respect to the action of $\U(V_{\tau_2^{-1}})$ on $\Herm_{\tau_2^{-1}}^{\circ}(F)$, the function $ \Phi_{\tau_1,\tau_2}^{\phi_1,\phi_2}:=p_{\tau_1,!}\left(\phi_{1}^{\tau_1}\ast\phi^{\tau_2,\ast}_{2}\right)$is transfer of ${\Hir(\phi_1\ast\phi_2)}\otimes \bfun_{\calo_{F,n}}$ in the sense of Theorem \ref{Thm: Xiao's transfer}.
\end{Thm}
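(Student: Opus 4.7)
The plan is to prove Theorem \ref{Thm: first step main} via a global comparison of two relative trace formulas (RTFs), with inputs Theorem \ref{Thm: Xiao's transfer} (existence of smooth transfer at every place), Theorems \ref{Thm: xiao FL} and \ref{Thm: Xiao FL for d} (fundamental lemmas for the unit and for the characteristic functions of fixed determinant valuation), and the spectral theory of Jacquet--Ye unitary periods developed in \cite{FLO, offenjacquet}. The heuristic is that the Hironaka transform $\Hir$ is precisely the correspondence on spherical Hecke modules induced by matching unramified relative characters of base-changed spherical representations; once this is verified spectrally, the geometric sides of the two RTFs must agree, which will force the desired local identity at the chosen place.

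First, I would globalize: choose a number field $F_0$ with quadratic extension $E_0/F_0$, everywhere unramified, such that at one distinguished finite place $v_0$ the completion recovers the given $E/F$, at two auxiliary finite places $v_1, v_2$ (where we will insert supercuspidal test functions to isolate a simple trace formula) the extension is inert, all archimedean places split, and global Hermitian forms $V_{\tau_1^{\mathrm{gl}}}, V_{\tau_2^{\mathrm{gl}}}$ localize at $v_0$ to $V_{\tau_1}, V_{\tau_2}$. On the unitary side, set up the Jacquet--Ye RTF for $\U(V_{\tau_1^{\mathrm{gl}}}) \times \U(V_{\tau_2^{-1,\mathrm{gl}}})$ acting on $\GL_n(\A_{E_0})$, with test function $\phi_1^{\tau_1}\ast \phi_2^{\tau_2,\ast}$ at $v_0$ and unit spherical functions at all other unramified places. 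Integrating out the $\U(V_{\tau_1^{\mathrm{gl}}})$-variable, the geometric side at $v_0$ becomes a sum of stable orbital integrals of $\Phi_{\tau_1,\tau_2}^{\phi_1,\phi_2}$ for the $\U(V_{\tau_2^{-1}})$-action on $\Herm_{\tau_2^{-1}}^\circ$. On the linear side, take the mirabolic RTF on $\GL_n(F_0)\times \A_{F_0}^n$ whose geometric side produces the normalized orbital integrals of \S\ref{Section: prelim for Jacquet}, with test function $\Hir(\phi_1\ast \phi_2)\otimes \bfun_{\calo_{F,n}}$ at $v_0$ and, at every other place, the smooth transfer of the corresponding unitary test function (given explicitly by Theorems \ref{Thm: xiao FL} and \ref{Thm: Xiao FL for d} at unramified places, and by Theorem \ref{Thm: Xiao's transfer} at the remaining places).

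Following \cite{FLO, offenjacquet}, the spectral expansions of both RTFs are indexed by the same cuspidal automorphic representations of $\GL_n(\A_{E_0})$ distinguished by the respective symmetric subgroups, and the contribution of each such $\pi$ factors as a product of local relative characters times a common global period factor. The two sides then agree spectrally once one verifies the local identity of unramified relative characters at $v_0$: the relative character of $\pi_{v_0}$ against $\Phi_{\tau_1,\tau_2}^{\phi_1,\phi_2}$ equals that against $\Hir(\phi_1\ast \phi_2)\otimes \bfun_{\calo_{F,n}}$. This is forced by Hironaka's explicit formula for the unramified spherical function on $\Herm_n^\circ$ together with the Satake isomorphism on $\GL_n$: the very definition $\Hir = \mathrm{Sat}^{-1}\circ\SF$ encodes this matching. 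Combining this with the matched geometric sides away from $v_0$, the global identity reduces to the schematic statement
\begin{equation*}
\sum_{x} \SO^{\U(V_{\tau_2^{-1}})}\!\left(\Phi_{\tau_1,\tau_2}^{\phi_1,\phi_2}, x\right) = \sum_{z} \Orb^{\GL_n(F),\eta}\!\left(\Hir(\phi_1\ast \phi_2)\otimes \bfun_{\calo_{F,n}}, z\right),
\end{equation*}
where the sums run over matching local orbits reached by global elliptic classes. A density argument in the auxiliary supercuspidal test data, combined with varying the global Hermitian forms so every local orbit is realized globally, then extracts the local identity at $v_0$ for arbitrary spherical $\phi_1, \phi_2$.

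The main obstacle is propagating the spectral identity from the unit Hecke element, where \cite{FLO,offenjacquet} apply directly, to the full Hecke module $\calh_{K_E}(\Herm_n^\circ(F))$. Since this module is free of rank $2^n$ over $\calh_{K_E}(\GL_n(E))$, acting compatibly via base change on $\calh_K(\GL_n(F))$, one must carefully track how the unramified relative character transforms under this Hecke module structure; this equivariance, however, is exactly the content of Hironaka's formula and of the definition of $\Hir$, so the conceptual obstruction dissolves once enough bookkeeping is in place. A secondary technical issue is that the function $\phi_1^{\tau_1}\ast \phi_2^{\tau_2,\ast}$, when pushed down, decomposes over determinant valuations, so matching the geometric sides globally is only possible because Theorem \ref{Thm: Xiao FL for d} supplies the transfer of the graded pieces $\bfun_d$ at the unramified auxiliary places; a weaker fundamental lemma covering only the unit would not suffice.
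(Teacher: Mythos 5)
Your overall architecture matches the paper's: globalize, compare a unitary RTF against a mirabolic/Eisenstein RTF with matched test data away from $v_0$, and extract the local identity at $v_0$ by isolating contributions. You also correctly identify the need for Theorem \ref{Thm: Xiao FL for d} (and not just the unit fundamental lemma) at unramified auxiliary places, and the central-character normalization is in the spirit of the paper's Lemma \ref{Lem: converse with center} and Corollary \ref{Cor: transfer with central character}.

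However, there is a genuine gap in the spectral step. You claim that the local identity of unramified relative characters at $v_0$ ``is forced by Hironaka's explicit formula together with the Satake isomorphism: the very definition $\Hir = \mathrm{Sat}^{-1}\circ\SF$ encodes this matching.'' That is not correct. The definition of $\Hir$ is a purely algebraic composition of two transforms and by itself says nothing about Bessel/relative characters. The identity you actually need (the paper's Proposition \ref{Prop: unramified rel char}) is
\[
I_{\pi}(\Hir(\phi_1\ast\phi_2)\otimes \bfun_{\calo_{F,n}})=J^{\tau_1,\tau_2}_{\Pi}(\varphi_1\ast\varphi_2^{\ast}),
\]
and its proof crucially depends on two nontrivial inputs that are not definitional: (i) the existence and characterizing property of the FLO functionals $\alpha^\pi_\tau$ (Theorem \ref{Thm: FLO functionals}), and (ii) Offen's spherical fundamental lemma for the Jacquet--Ye transfer (\cite[Theorem 10.1]{offenjacquet}, cited as Proposition \ref{Prop: FLO fundamental lemma}), which asserts that for every $\phi\in\calh_{K_E}(\Herm_n^\circ(F))$ the pair $(\phi,\Hir(\phi))$ is a Jacquet--Ye transfer. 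That theorem is an orbital-integral statement over the \emph{whole} rank-$2^n$ Hecke module, not merely its $\calh_{K_E}(\GL_n(E))$-cyclic part (Jacquet's older theorem), and it is the genuine engine behind the Hecke-equivariance you describe as dissolving ``once enough bookkeeping is in place.'' Without citing this, your argument has no way to propagate the spectral matching beyond the unit element.

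A second, smaller discrepancy: the paper does not directly match the two geometric sides term-by-term and then extract. Instead it first constructs global test functions so that exactly one elliptic orbit survives on each geometric side (Proposition \ref{Prop: good global test function}), which reduces the sought orbital integral equality at $v_0$ to a single spectral identity \eqref{eqn: super important proof}. That spectral identity is then established in two steps: a ``weak transfer of relative characters'' producing a constant $C(\Pi_{v_0},\tau_1,\tau_2)$ (Lemma \ref{Prop: weak transfer}, which requires separating spectral terms via Ramakrishnan's strong multiplicity one theorem and careful use of the local Hecke algebra at split places), and then the verification that $C(\Pi_{v_0},\tau_1,\tau_2)=1$ using Proposition \ref{Prop: unramified rel char} together with Theorems \ref{Thm: xiao FL} and \ref{Thm: Xiao FL for d} (Corollary \ref{Cor: unramified spectral transfer}). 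Your ``density argument'' is the right instinct but hides exactly this two-step normalization, which is where the fundamental lemmas and the FLO spectral theory actually enter.
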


We now verify that this implies Theorem \ref{Thm: main local result 2 var}.
\begin{Cor}\label{Cor: main local results}
    Theorem \ref{Thm: main local result 2 var} holds. That is, for any $\phi_1,\phi_2\in \calh_{K_E}(\Herm_{n}^{\circ}(F))$ and any regular semi-simple $(x_1,x_2)\in\Herm_{n}^{\circ}(F)\times \Herm_{n}^{\circ}(F)$, we have
	\[
 \SO^{\GL_n(E)}(\phi_1\otimes\phi_2,(x_1,x_2))=\Orb^{\GL_n(F),\eta}( \Hir(\phi_1\ast\phi_2)\otimes {\bf 1}_{\calo_{F,n}},z),
	\]
 for $z\in \GL_n(F)$ matching $(x_1,x_2)$. 
\end{Cor}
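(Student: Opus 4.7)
The plan is to deduce Theorem \ref{Thm: main local result 2 var} from Theorem \ref{Thm: first step main} by combining a decomposition along $\GL_n(E)$-orbit types with an unfolding identity on the unitary side that generalizes the one in equation \eqref{eqn: unwind to Hn}.

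First, I would exploit bilinearity. The orbit decomposition $\Herm_n^{\circ}(F)=\bigsqcup_{\tau\in\calv_n(E/F)}\Herm_n^{\circ}(F)_\tau$ is preserved by $K_E$, so each spherical function admits a canonical decomposition $\phi_i=\sum_{\tau}\phi_{i,\tau}$ with $\phi_{i,\tau}\in\calh_{K_E}(\Herm_n^{\circ}(F))$ of type $\tau$. Because both sides of the sought identity are bilinear in $(\phi_1,\phi_2)$ — the left-hand side by definition of the stable orbital integral, and the right-hand side since $\Hir$ is linear and the Hironaka product is bilinear — it suffices to prove the identity separately for each pair $(\tau_1,\tau_2)$:
\[
\SO^{\GL_n(E)}(\phi_{1,\tau_1}\otimes\phi_{2,\tau_2},(x_1,x_2))=\Orb^{\GL_n(F),\eta}\bigl(\Hir(\phi_{1,\tau_1}\ast\phi_{2,\tau_2})\otimes\bfun_{\calo_{F,n}},z\bigr).
\]

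Second, for a fixed pair $(\tau_1,\tau_2)$ I would apply Theorem \ref{Thm: first step main}. By Lemma \ref{Lem: invariant lifts}, choose right-$K_E$-invariant lifts $\phi_i^{\tau_i}\in C_c^\infty(\GL_n(E))$ with $\phi_{i,\tau_i}=p_{\tau_i,!}(\phi_i^{\tau_i})$, and set $\Phi_{\tau_1,\tau_2}^{\phi_{1,\tau_1},\phi_{2,\tau_2}}=p_{\tau_1,!}(\phi_1^{\tau_1}\ast\phi_2^{\tau_2,\ast})$. Theorem \ref{Thm: first step main} then asserts that $\Phi_{\tau_1,\tau_2}^{\phi_{1,\tau_1},\phi_{2,\tau_2}}$, viewed on $\Herm_{\tau_2^{-1}}^{\circ}(F)$ under the $\U(V_{\tau_2^{-1}})$-action, is a smooth transfer of $\Hir(\phi_{1,\tau_1}\ast\phi_{2,\tau_2})\otimes\bfun_{\calo_{F,n}}$ in the sense of Theorem \ref{Thm: Xiao's transfer}. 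Since $E/F$ is unramified the exponent $|S_1^{ram}(z)|$ vanishes, so Theorem \ref{Thm: Xiao's transfer} furnishes
\[
\SO^{\U(V_{\tau_2^{-1}})}\bigl(\Phi_{\tau_1,\tau_2}^{\phi_{1,\tau_1},\phi_{2,\tau_2}},y\bigr)=\Orb^{\GL_n(F),\eta}\bigl(\Hir(\phi_{1,\tau_1}\ast\phi_{2,\tau_2})\otimes\bfun_{\calo_{F,n}},z\bigr)
\]
for any matching $y\leftrightarrow z$ in $\Herm_{\tau_2^{-1}}^{\circ}(F)\times\GL_n(F)$.

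Third, the technical heart of the argument is the unfolding identity
\[
\SO^{\GL_n(E)}(\phi_{1,\tau_1}\otimes\phi_{2,\tau_2},(x_1,x_2))=\SO^{\U(V_{\tau_2^{-1}})}\bigl(\Phi_{\tau_1,\tau_2}^{\phi_{1,\tau_1},\phi_{2,\tau_2}},y\bigr),
\]
where $y$ is obtained from $(x_1,x_2)$ via the intertwining $\Herm_n^{\circ}\cong\Herm_{\tau_2^{-1}}^{\circ}$ sending $h\mapsto \tau_2^{-1}h$ and matches $z$. This generalizes equation \eqref{eqn: unwind to Hn} to the two-function setting and the right-action convention: writing $x_1=g_1^\ast\tau_1 g_1$, substituting the definition of $p_{\tau_1,!}$, swapping orders of integration, and changing variables merges the $\U(V_{\tau_1})$ integral with the outer $\U(V_{\tau_2^{-1}})$ integral into the full $\GL_n(E)$ orbital integral, while the convolution $\phi_1^{\tau_1}\ast\phi_2^{\tau_2,\ast}$ disassembles into the action $(x_1,x_2)\mapsto g\cdot(x_1,x_2)$. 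The identification of stable classes on the two sides is controlled by Lemma \ref{Lem: building special kappa Lie}: only cocycles $\al\in H^1(F,T_{(x_1,x_2)})$ whose associated twist keeps $x_{1,\al}$ in type $\tau_1$ contribute on the left, and these correspond bijectively to $\ker^1\bigl(T_{(x_1,x_2)},\U(V_{\tau_1});F\bigr)$, which in turn indexes the stable orbital integral on the right.

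The main obstacle is the careful verification of this unfolding identity in the new right-action convention, tracking transfer factors, measures, and the matching of stable classes across the intertwining $\Herm_n^{\circ}\cong\Herm_{\tau_2^{-1}}^{\circ}$. Once this is in place, summing the typewise equalities over $(\tau_1,\tau_2)\in\calv_n(E/F)^2$ — using $\phi_1\ast\phi_2=\sum_{\tau_1,\tau_2}\phi_{1,\tau_1}\ast\phi_{2,\tau_2}$ — yields the required identity and completes the proof.
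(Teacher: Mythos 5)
Your proposal follows the paper's proof essentially step for step: reduce by bilinearity to typewise pairs $(\phi_{1,\tau_1},\phi_{2,\tau_2})$, relate the $\GL_n(E)$-stable orbital integral on $\Herm_n^\circ\times\Herm_n^\circ$ to a $\U(V_{\tau_2^{-1}})$-stable orbital integral on $\Herm_{\tau_2^{-1}}^\circ$ via contraction, then invoke Theorem \ref{Thm: first step main} together with Theorem \ref{Thm: Xiao's transfer}. One small slip: under the paper's post-\ref{Important} convention the intertwining $\Herm_n^\circ\iso\Herm_{\tau_2^{-1}}^\circ$ is $x\mapsto\tau_2 x$ (not $x\mapsto\tau_2^{-1}x$), and the unfolding identity you defer to as the "technical heart" is precisely what the paper derives from the contraction diagram \eqref{eqn: contraction diagram} and \eqref{eqn: unwind to Hn}, so your sketch is the right move but the verification of the chain of contractions is the actual content of the proof.
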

\begin{proof}
     \quash{By linearity it is enough to consider the case when $\phi_1$ is of type $\tau_1$ and $\phi_2$ is of type $\tau_2$. As noted above, Theorem \ref{Thm: first step main} may be rephrased as the identity
     \[
     \SO^{\U(V_1)\times \U(V_2)\times \GL_n}(\phi_1^{\tau_1}\otimes\phi_2^{\tau_2},(g_1,g_2)) = \Orb^{\GL}( \Hir(\phi_1\ast\phi_2)\otimes {\bf 1}_{\calo_{F,n}},z),
     \]
     where $(g_1,g_2)\in \GL_n(E)\times \GL_n(E)$ satisfies $p_{\tau_1}(g_1)= x$ and $p_{\tau_2}(g_2)=y$. Contracting along $p_{\tau_1}\times p_{\tau_2}$ gives 
 and the identity
     \[
      \SO^{\GL_n}( \phi_1\otimes\phi_2,{(x,y)})= \SO^{\U(V_1)\times \U(V_2)\times \GL_n}(\phi_1^{\tau_1}\otimes\phi^{\tau_2}_2,(g_1,g_2)),
     \]
     implying the matching of orbital integrals in Theorem \ref{Thm: main local result 2 var}.}
Assume that a $\GL_n(E)$-regular semisimple pair $(x_1,x_2)\in \Herm_{n}^{\circ}(F)\times \Herm_{n}^{\circ}(F)$ matches $z\in \GL_n(F)$.  
      By linearity it is enough to consider the case when $\phi_1$ is of type $\tau_1$ and $\phi_2$ is of type $\tau_2$. It is trivial that $\SO^{\GL_n(E)}( \phi_1\otimes\phi_2,{(x_1,x_2)})=0$ if the stable orbit of $(x_1,x_2)$ does not meet $\Herm_{n}^{\circ}(F)_{\tau_1}\times \Herm_{n}^{\circ}(F)_{\tau_2}$. We thus assume $(x_1, x_2)\in \Herm_{n}^{\circ}(F)_{\tau_1}\times \Herm_{n}^{\circ}(F)_{\tau_2},$ and write
      \[
      (x_1,x_2) = (g_1^\ast\tau_1g_1, g_2^\ast\tau_2g_2)
      \]
      for $g_1,g_2\in \GL_n(E)$.
      
We now consider following diagram of contractions: 

\begin{equation}\label{eqn: contraction diagram}
\begin{tikzcd}
    &\ar[dr, "p^{2,3}"]\ar[ddr,swap,"p^{2,4}"]\Res_{E/F}(\GL_n\times \GL_n)\ar[dl,"p^{2,1}"]&&\\
    \Herm_{n}^{\circ}\times \Herm_{n}^{\circ}& & \Res_{E/F}\GL_n\ar[d,"p^{3,2}"]&&\\
    && \Herm_{n}^{\circ}\ar[r,"\tau^{-1}\cdot"]&\Herm_{\tau}^{\circ}.
\end{tikzcd}
\end{equation}
Here, the \emph{right} actions are as follows:
\begin{enumerate}
	\item\label{5}
	$\Res_{E/F}\GL_{n}$ acting on $\Herm_{n}^{\circ} \times \Herm_{n}^{\circ}$ via $(x_1,x_2)\cdot g=(g^{-1}x_1(g^\ast)^{-1},g^\ast x_2g)$.
    \item\label{4} $\U(V_1) \times \U(V_2) \times \Res_{E/F}\GL_{n}$ acting on $\Res_{E/F}(\GL_{n}\times \GL_{n})$ via $$(y_1,y_2)\cdot (h_1,h_2,g)=(h_1^{-1}y_1(g^{\ast})^{-1},h_2^{-1}y_2g).$$
    	\item\label{3} $\U(V_1) \times \U(V_2)$ acting on $\Res_{E/F}\GL_{n}$ via $ y\cdot (h_1,h_2)=h_1^{-1}y (h_2^\ast)^{-1}$.
	\item\label{1} $\U(V_\tau)$ acting on $\Herm_{n}^{\circ}$ via $y\cdot h=h^\ast yh$.
    \item $\U(V_\tau)$ acting on $\rH_{\tau}$ via $y\cdot h=h^{-1} yh$.
\end{enumerate}
The morphisms indicated are the natural quotient morphisms. For example, the map 
\begin{align*}
   p^{2,3}: \Res_{E/F}(\GL_n\times \GL_n)&\lra \Res_{E/F}\GL_n\\
     (y_1,y_2)&\longmapsto  y_1 y_2^\ast.
\end{align*} It is an easy check that various maps are (compositions of) the contraction maps in the sense of \S \ref{Section: contractions}.

Our assumption is that $(x_1,x_2)=p^{2,1}_{\tau_1,\tau_2}(g_1,g_2)$. Then
\[
p^{2,4}_{\tau_1}(g_1,g_2) = g_2 x_1 g_2^\ast\in \Herm_{n}^{\circ}(F).
\]
    Now a simple computation using contractions along the Diagram \ref{eqn: contraction diagram} and the identity \eqref{eqn: unwind to Hn} gives the equality
      \begin{align*}
             \SO^{\GL_n(E)}( \phi_1\otimes\phi_2,{(x_1,x_2)}) &=\SO^{\U(V_{\tau_1})\times \U(V_{\tau_2})\times \GL_n(E)}(\phi_1^{\tau_1}\otimes\phi_2^{\tau_2},(g_1,g_2))\\
              &=\SO^{\U(V_{\tau_1})\times \U(V_{\tau_2})}(\phi_1^{\tau_1}\ast\phi_2^{\tau_2,\ast},g_1g_2^\ast)\\
             &= \SO^{\U(V_{\tau_2^{-1}})}\left(\Phi_{\tau_1,\tau_2}^{\phi_1,\phi_2}, \tau_2g_2 x_1 g_2^\ast\right),
      \end{align*}
          where $\Phi_{\tau_1,\tau_2}^{\phi_1,\phi_2}=p_{\tau_1,!}(\phi_{1}^{\tau_1}\ast \phi^{\tau_2,\ast}_{2})$ is as in Theorem \ref{Thm: first step main}. The final integral is an orbital integral in the context of Theorem \ref{Thm: Xiao's transfer}, where we use the map $\Herm_{n}^{\circ}\to \Herm_{\tau_2^{-1}}^{\circ}$ given by $x\mapsto \tau_2x$. \quash{,  and obtain
      \[
      \SO^{\U(V_{\tau_1^{-1}})}\left(\Phi_{\tau_1,\tau_2}^{\phi_1,\phi_2}, g_1^\ast x_2 g_1\tau_1\right).
      \]} We claim that the points $ \tau_2g_2 x_1 g_2^\ast$ and $z$ match in the sense of Lemma/Definition \ref{Lem: matching def}. Indeed, this is equivalent to 
\begin{equation*}
\det(tI_n - \tau_2g_2 x_1 g_2^\ast)= \det(tI_n -  (g_2^{\ast})^{-1} x_2x_1g_2^{\ast})  =\det(tI_n-x_2x_1)    
\end{equation*}
 agreeing with $\det(tI_n-z)$, which is true by assumption that $(x_1,x_2)$ matches $z$.  Theorem \ref{Thm: first step main}  now 
     implies the matching of orbital integrals in Theorem \ref{Thm: main local result 2 var}.
\end{proof}
\begin{Rem}  As seen in the proof of Corollary \ref{Cor: main local results}, the idea of Theorem \ref{Thm: first step main} is use the diagram \eqref{eqn: contraction diagram} to identify the correct test function on $\Herm_{n}^{\circ}(F)$ to compare to the linear side. In particular, the critical identity is the relation
\[
\Orb^{\GL_n}( \phi_1\otimes\phi_2,{(x_1,x_2)}) =  \Orb^{\U(V_{\tau_1})\times \U(V_{\tau_2})\times \GL_n(E)}(\phi_1^{\tau_1}\otimes\phi_2^{\tau_2}, (g_1,g_2)),
\]
obtained by considering the contraction map $p^{2,1}$ in \eqref{eqn: contraction diagram}. One then contracts along $p^{2,4}$ to obtain the formula above. 
  \end{Rem}

\quash{\begin{Thm}\label{Thm: first step main redux}Fix two Hermitian forms $\tau_1,\tau_2\in \Herm_{n}^{\circ}(F)$. For $i\in \{1,2\}$, assume that $\phi_i\in \calh_{K_E}(\Herm_{n}^{\circ}(F))$ is of type $\tau_i$, and let $\phi_{1}^{\tau_1}$ (resp., $\phi^{\tau_2}_2$) denote right-$K_E$-invariant lift to $\GL_n(E)$ as in Lemma \ref{Lem: invariant lifts}.

With respect to the action of $\U(V_{\tau_2^{-1}})$ on $\Herm_{\tau_2^{-1}}^{\circ}(F)$, the function $ \Phi_{\tau_1,\tau_2}^{\phi_1,\phi_2}:=p_{\tau_1,!}\left(\phi_{1}^{\tau_1}\ast(\phi^{\tau_2}_{2})^\ast\right)$ is transfer of ${\Hir(\phi_1\ast\phi_2)}\otimes \bfun_{\calo_{F,n}}$.
\end{Thm}}

\subsection{Fixing a central character}\label{Section: dealing with center}
Let $Z\subset \GL_n$ and $Z_{E}\subset \Res_{E/F}\GL_n$ denote the centers of the respective groups. The proof of Theorem \ref{Thm: first step main} relies on a  global comparison of trace formulas. This requires we include actions of the centers by fixing a central character. We now modify the transfer statement of Theorem \ref{Thm: Xiao's transfer} to incorporate this action. Consider the natural norm map
\[
\Nm:=\Nm_{E/F}: Z_{E}\lra Z,
\]
which is not surjective on points, globally and locally. 

We now assume that $E/F$ is a quadratic extension of local or global fields of characteristic zero. Fixing $\tau\in \Herm_{n}^{\circ}(F)$, consider the $Z_{E}(F)$-action on $\Herm_\tau^\circ(F)$ given by
\[
(x,a) \mapsto x\Nm(a).
\]
Clearly $Z_E^1:=\ker\Nm$ acts trivially under the above action. 

Fix $\tau\in \calv_n(E/F)$. A simple calculation shows that if $x\in \Herm_{\tau}^{\circ}(F)$ is regular semi-simple, then the trace $\Tr(x)$ is scaled by $\Nm(z)$, so that the norm class $[\Tr(x)]\in F^\times/\Nm(E^\times)$ is invariant. We say $x\in \Herm_{\tau}^{\circ}(F)$ is $Z$-regular semi-simple if it is regular semi-simple in $\Herm_{\tau}^{\circ}(F)$ and if $\Tr(x)\in F^\times$. This gives a Zariski-open, dense subset of $\Herm_{\tau}^{\circ}(F)$. 

The proof of the next lemma is straightforward and essentially contained in  \cite[Proposition 5.5]{Xiaothesis}.
\begin{Lem}\label{Lem: Z-reg on unitary side}
If $x\in \Herm_{\tau}^{\circ}(F)$ is $Z$-regular semi-simple, then its centralizer under the $Z_{E}(F)\times U(V_\tau)$-action is $T_x\times {Z}_E^1$ and its orbit is closed. In particular, a $Z$-regular semi-simple element is $Z_{E}\times \U(V_\tau)$-regular semi-simple.
\end{Lem}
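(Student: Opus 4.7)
My approach for proving Lemma \ref{Lem: Z-reg on unitary side} breaks into three short pieces: computing the stabilizer, establishing closedness of the orbit, and verifying the $Z_E \times U(V_\tau)$-regular semi-simple property. The key observation throughout is that the auxiliary $Z_E$-action factors through $\Nm \colon Z_E \to Z$, and that on $\Herm_\tau^\circ$ it is literally scalar multiplication; this makes the trace an extremely effective tool.

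\emph{Stabilizer.} Suppose $(a,u) \in Z_E(F) \times U(V_\tau)$ fixes $x$, so that $\Nm(a)\cdot u^{-1} x u = x$. Taking the trace and using that $\Tr \colon \Herm_\tau^\circ \to \Ga$ is invariant under conjugation, we get $\Nm(a)\,\Tr(x) = \Tr(x)$. Since $x$ is $Z$-regular semi-simple, $\Tr(x) \in F^\times$, so $\Nm(a) = 1$, i.e.\ $a \in Z_E^1$. The remaining condition $u^{-1} x u = x$ says exactly that $u \in T_x$. Conversely, every pair $(a,u) \in Z_E^1 \times T_x$ fixes $x$, giving the centralizer claim.

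\emph{Closedness of the orbit.} I will argue by descending along the categorical quotient $\pi_1 \colon \Herm_\tau^\circ \to \Herm_\tau^\circ \sslash U(V_\tau) \simeq \A^n$ sending $y$ to the coefficients $(c_1(y),\dots,c_n(y))$ of its characteristic polynomial. The $U(V_\tau)(F)$-orbit of $x$ is closed in $\Herm_\tau^\circ(F)$ because $x$ is $U(V_\tau)$-regular semi-simple. The induced $Z_E(F)$-action on $\A^n(F)$ is $(c_1,\dots,c_n) \mapsto (\Nm(a)c_1, \Nm(a)^2 c_2, \dots, \Nm(a)^n c_n)$; since $c_1(x) = -\Tr(x)\in F^\times$, the orbit of $\pi_1(x)$ is the image of the closed subgroup $\Nm(E^\times) \subset F^\times$ under the proper map $t \mapsto (t c_1, t^2 c_2, \dots, t^n c_n)$, and hence closed in $\A^n(F)$. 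Combining the two closedness statements—via the standard principle that if the image of an orbit under a categorical quotient is closed and the fiberwise sub-orbit is closed then the total orbit is closed—yields closedness of the $(Z_E \times U(V_\tau))(F)$-orbit of $x$.

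\emph{Regular semi-simplicity.} To conclude that $x$ is $Z_E \times U(V_\tau)$-regular semi-simple in the sense of \S\ref{Section: orbital integrals conventions}, I pass to the algebraic closure and verify that the fiber of $\Herm_\tau^\circ \to \Herm_\tau^\circ \sslash (Z_E \times U(V_\tau))$ over $[x]$ is a single $(Z_E \times U(V_\tau))(\bar F)$-orbit. Over $\bar F$, the map $\Nm \colon Z_E(\bar F) \to \bar F^\times$ is surjective, so the $Z_E(\bar F)$-orbit on $\A^n(\bar F)$ covers the full scalar-scaling orbit. Any $y$ in the fiber over $[x]$ has characteristic polynomial equal to that of $\Nm(a)\cdot x$ for a suitable $a$, and the $U(V_\tau)(\bar F)$-regular-semi-simplicity of $\Nm(a)\cdot x$ (inherited from that of $x$) then forces $y$ to lie in the $U(V_\tau)(\bar F)$-orbit of $\Nm(a)\cdot x$, completing the argument.

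The main subtlety I expect is the closedness step: one has to be a bit careful that $\Nm(E^\times)$ sits as a genuinely closed subgroup of $F^\times$ in each case (quadratic field extension or split \'etale algebra) so that the image of the $Z_E(F)$-orbit on $\A^n(F)$ is closed, and that this closedness lifts through the closed $U(V_\tau)(F)$-orbit. Everything else is a direct verification from the trace identity and the invariant-theoretic structure.
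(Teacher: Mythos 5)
The paper gives no proof of this lemma, deferring to \cite[Proposition 5.5]{Xiaothesis}, so I will assess your argument on its own merits. Your stabilizer computation and your regular--semi-simplicity argument are both correct; the trace identity $\Nm(a)\Tr(x)=\Tr(x)$ with $\Tr(x)\in F^\times$ pins down $a\in Z_E^1$ exactly as you say, and over $\bar F$ the $c_1\neq 0$ condition does let you recover the full scalar orbit from the invariants via the degree-zero ratios $c_1^k/c_k$, so the fiber of the combined quotient over $[x]$ is indeed a single $\left(Z_E\times\U(V_\tau)\right)(\bar F)$-orbit.

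The problem is the closedness step. The ``standard principle'' you invoke---that the image of an orbit under a categorical quotient being closed, together with closedness of the fiberwise sub-orbit, forces the total orbit to be closed---is not a correct general statement. A continuous map $\pi\colon X\to Y$ can have $\pi(O)$ closed and $O\cap\pi^{-1}(y)$ closed for every $y$ while $O$ itself fails to be closed (topologist's-sine-curve-type behavior escaping along fibers). What actually makes the argument work here is precisely the $Z$-regularity, which lets you \emph{continuously} recover the scaling factor from a converging sequence. Concretely: if $y_m = a_m u_m^{-1}x u_m \to y$ in $\Herm_\tau^\circ(F)$ with $a_m\in\Nm(E^\times)$ and $u_m\in U(V_\tau)$, then $a_m = \Tr(y_m)/\Tr(x)$, so $a_m$ converges, and $a_m^n = \det(y_m)/\det(x)\to\det(y)/\det(x)\in F^\times$ shows the limit $a$ is a nonzero element of the closed subgroup $\Nm(E^\times)$; then $u_m^{-1}xu_m = a_m^{-1}y_m\to a^{-1}y$, which lies in the closed $U(V_\tau)$-orbit of $x$, and hence $y=a\cdot(a^{-1}y)$ lies in the full orbit. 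You have all the ingredients (nonvanishing of $c_1$, closedness of $\Nm(E^\times)$ in $F^\times$, closedness of the $U(V_\tau)$-orbit), and you flagged the delicacy yourself; you just need to replace the appeal to a nonexistent general principle with this direct extraction of the scalar.
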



For the linear case, we similarly consider the action of $Z$ on $\GL_n$. We say that $z\in \GL_{n}(F)$ is $Z$-regular semi-simple if it is regular semi-simple under the $\GL_n(F)$-action and $\Tr(z)\neq0$. A similar but easier argument (cf. \cite[Proposition 5.7]{Xiaothesis}) now shows that if $z$ is $Z$-regular semi-simple and $(z,w)$ strongly regular, then $(z,w)$ has trivial stabilizer under $Z\times \GL_n$ on $\GL_n(F)\times F_n$ and has a closed orbit.

The following lemma follows immediately from the preceding definitions.
\begin{Lem}\label{matching via center}
 If $x\in \Herm_{\tau}^{\circ}(F)$ and $z\in \GL_n(F)$ are $Z$-regular semi-simple elements that match in the sense of Definition \ref{Lem: matching def}, and $a\in Z_{E}(F)$, then $x\Nm(a)$ matches $z a$.
\end{Lem}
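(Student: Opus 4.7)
The plan is to unwind the notation and perform a direct computation of characteristic polynomials. First I would clarify the interpretation of $za$: since $Z_E$ acts on $\GL_n$ via the norm map $\Nm\colon Z_E\to Z$ (the natural compatibility coming from the inclusion $F^\times\subset E^\times$ and from our choice that $Z$ acts on $\GL_n(F)$ by scalar multiplication), the notation $za$ should be read as $z\,\Nm(a)$, in parallel with the fact that $Z_E(F)$ acts on $\Herm_\tau^\circ(F)$ by $x\cdot a = a^\ast x a = \Nm(a)\,x$ (using the right-action convention of the Important notational change).

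Given this, the verification that $x\Nm(a)\in \Herm_\tau^\circ(F)$ and $z\Nm(a)\in \GL_n(F)$ is immediate because $\Nm(a)\in F^\times$: Hermitianness is preserved by scaling by elements of $F$, and invertibility is preserved since $\Nm(a)\neq 0$. Regular semi-simplicity is also preserved under scaling by a nonzero scalar, since the eigenvalues are simply rescaled and thus remain pairwise distinct.

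The core of the proof is then a one-line computation of characteristic polynomials. For any scalar $c\in F^\times$ and any $n\times n$ matrix $M$ over $F$, one has
\[
\car_{cM}(t)=\det(tI_n - cM)=c^n\det\!\left(\frac{t}{c}I_n-M\right)=c^n\,\car_M\!\left(\frac{t}{c}\right).
\]
Applying this with $c=\Nm(a)$ to both $x$ and $z$ and using the matching hypothesis $\car_x(t)=\car_z(t)$ (an equality of polynomials in $F[t]$) gives
\[
\car_{x\Nm(a)}(t)=\Nm(a)^n\,\car_x\!\left(\frac{t}{\Nm(a)}\right)=\Nm(a)^n\,\car_z\!\left(\frac{t}{\Nm(a)}\right)=\car_{z\Nm(a)}(t),
\]
which is the definition of matching in Lemma/Definition~\ref{Lem: matching def}.

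There is essentially no obstacle here beyond fixing the notational convention for the $Z_E$-action on the linear side; once that is in place the statement reduces to the elementary identity $\car_{cM}(t)=c^n\car_M(t/c)$ combined with the matching hypothesis. The lemma is being recorded at this point precisely so that in the global comparison of trace formulas one can freely translate between orbital integrals with central character on the two sides, using the compatible actions of $Z_E$ obtained via $\Nm$.
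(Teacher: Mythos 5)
Your proof is correct and is exactly the argument the paper has in mind: the text states only that the lemma ``follows immediately from the preceding definitions,'' and your computation --- that $Z_E(F)$ acts on both $\Herm_\tau^\circ(F)$ and $\GL_n(F)$ through the scalar $\Nm(a)\in F^\times$, combined with the identity $\car_{cM}(t)=c^n\,\car_M(t/c)$ --- is that immediate verification spelled out. Your reading of $za$ as $z\cdot\Nm(a)$ is also the intended one, as confirmed by \eqref{eqn: twisted OI final}, where the integration variable ranges over $\Nm(Z_E(F))\subset F^\times$.
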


Now assume $E/F$ is local, and assume that $x\in \Herm_{\tau}^{\circ}(F)$ is $Z$-regular semi-simple. For any central character $\omega':Z_{E}(F)\to \cc^\times$, we note that the integral
\[
\int_{Z_{E}(F)}\Orb^{\U(V_\tau)}(f',ax)\omega'(a)da
\]
is absolutely convergent by the closed orbit assertion of Lemma \ref{Lem: Z-reg on unitary side} and vanishes unless $\omega'$ is trivial on $Z_{E}^1(F)$. In this case, $\omega'=\omega\circ \Nm$ for some character $\omega: Z(F)\to \cc^\times$, and we set
\begin{align}\label{eqn: unitary OI final}
 \Orb^{\U(V_\tau)}_{\omega'}(f',x)&:=\frac{1}{\vol(Z_E^1(F))}\int_{T_x(F)\backslash U(V_\tau)}\int_{Z_{E}(F)}f'(h^{-1}x h\Nm(t))\omega'(t)dtdh\nonumber\\
 &=\int_{\Nm(Z_E(F))}\Orb^{\U(V_\tau)}(f',xa)\omega(a)d{a}.
\end{align}
Note that the integrand is stated in terms of the variable $a\in Z_{E}(F)$, but that it depends only on $\Nm(a)$, and is independent of the lift. We also have the stable version $\SO^{\U(V_\tau)}_{\omega'}(f,x)$. 

In light of Lemma \ref{matching via center}, for any central character $\omega:Z(F)\to\cc^\times$ and $f\otimes\Phi\in C_c^\infty(\GL_n(F)\times F_n)$, we restrict $\omega$ to the open subgroup 
\[
\Nm(Z_E(F))\cong \Nm_{E/F}(E^\times)
\]
and for any $Z$-regular semi-simple element $z\in \GL_n(F)$ set
\begin{align}\label{eqn: twisted OI final}
    \Orb^{\GL_n(F),\eta}_\omega(f\otimes \Phi,z)
    &=\int_{\Nm(Z_{E}(F))}\Orb^{\GL_n(F),\eta}(f\otimes \Phi,z a)\omega(a)da.
\end{align}
The integration is absolutely convergent. 

\begin{Cor}\label{Cor: transfer with central character}
Assume that $E/F$ is a quadratic extension of $p$-adic local fields. Suppose that $\wt{f}\in C_c^\infty(\GL_n(F)\times F_n)$ and $f'\in C_c^\infty(\Herm_{\tau}^{\circ}(F))$ are transfers in the sense of Theorem \ref{Thm: Xiao's transfer}. Let $\omega$ denote a central character for $\GL_n(F)$ and let $\omega'=\omega\circ \Nm$ denote its base change to a central character for $\GL_n(E)$. For any matching $Z$-regular semi-simple orbits $z\leftrightarrow x$, we have
\begin{equation}\label{eqn: relevant transfer with center}
2^{|S_1^{ram}(z)|}\Orb^{\GL_n(F),\eta}_\omega(\wt{f},z)=\SO^{\U(V_\tau)}_{\omega'}(f',x).
\end{equation}
\end{Cor}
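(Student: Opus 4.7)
The plan is to derive the identity by integrating the pointwise transfer relation from Theorem~\ref{Thm: Xiao's transfer} against the character $\omega$ on the open subgroup $\Nm(Z_E(F))\subset Z(F)$. First I would unwind the definitions: by \eqref{eqn: twisted OI final},
\[
\Orb^{\GL_n(F),\eta}_\omega(\wt{f},z)=\int_{\Nm(Z_E(F))}\Orb^{\GL_n(F),\eta}(\wt{f},za)\,\omega(a)\,da,
\]
and by \eqref{eqn: unitary OI final},
\[
\SO^{\U(V_\tau)}_{\omega'}(f',x)=\int_{\Nm(Z_E(F))}\SO^{\U(V_\tau)}(f',xa)\,\omega(a)\,da.
\]
Both integrals are absolutely convergent because $z$ is $Z$-regular semi-simple (so $(z,w)$ has closed $Z\times\GL_n$-orbit) and $x$ is $Z$-regular semi-simple (so by Lemma~\ref{Lem: Z-reg on unitary side}, $x$ has closed $Z_E\times\U(V_\tau)$-orbit); together with the compact support of $\wt f$ and $f'$, only a compact set of $a\in\Nm(Z_E(F))$ contributes.

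Next I would apply the pointwise matching. For each $a\in\Nm(Z_E(F))$, Lemma~\ref{matching via center} shows that $za$ and $xa$ are still $Z$-regular semi-simple and continue to match in the sense of Definition~\ref{Lem: matching def}. Moreover the invariant set $S_1^{ram}(\cdot)$ depends only on the characteristic polynomial up to central scaling, so $S_1^{ram}(za)=S_1^{ram}(z)$ for all $a$ in the integration domain. Applying the hypothesis that $\wt f$ and $f'$ are transfers (Theorem~\ref{Thm: Xiao's transfer}) to the matching pair $(za,xa)$ yields
\[
2^{|S_1^{ram}(z)|}\,\Orb^{\GL_n(F),\eta}(\wt f,za)=\SO^{\U(V_\tau)}(f',xa)
\]
for every $a$ in the integration domain.

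Finally, multiplying both sides by $\omega(a)$ and integrating over $\Nm(Z_E(F))$, the constant $2^{|S_1^{ram}(z)|}$ pulls out and we obtain \eqref{eqn: relevant transfer with center}. The only subtle point—and what I expect to be the sole obstacle—is the vanishing dichotomy that is built into the statement of Theorem~\ref{Thm: Xiao's transfer}: if for some $a_0$ the element $za_0$ has no matching element in $\Herm_\tau^\circ(F)$, then the transfer identity at $a_0$ reads $0=0$ on the unitary side and needs $\Orb^{\GL_n(F),\eta}(\wt f,za_0)=0$ on the linear side, which is part of the transfer hypothesis. Since $z$ and $x$ match to begin with, Lemma~\ref{matching via center} guarantees $za$ and $xa$ match for all $a$, so this issue does not arise here and the integrated identity is immediate.
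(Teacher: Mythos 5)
Your proof is correct and takes essentially the same approach as the paper: both reduce to the pointwise transfer via the matching $za\leftrightarrow x\Nm(a)$ of Lemma~\ref{matching via center} and then integrate against $\omega$ over $\Nm(Z_E(F))$, using the definitions \eqref{eqn: unitary OI final} and \eqref{eqn: twisted OI final}. Your explicit observation that $S_1^{ram}(za)=S_1^{ram}(z)$ (since the centralizer torus is unchanged by central scaling) is a slightly more careful account of what the paper packages into its remark that the transfer factor is invariant under $\Nm(Z_E(F))$-scaling.
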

\begin{proof}
    For any $a\in Z_{E}(F)$, Lemma \ref{matching via center} tells us $z a$ matches $x\Nm(a)$. It is easy to see that the transfer factor $\omega$ is invariant under scaling by $\Nm(Z_E(F))$.  The corollary now follows from the formulas \eqref{eqn: unitary OI final} and \eqref{eqn: twisted OI final}.
\end{proof}

In \S \ref{Section: final proof}, we require the following converse to Corollary \ref{Cor: transfer with central character}.
\begin{Lem}\label{Lem: converse with center}
Let $E/F$ be an unramified extension of $p$-adic local fields.  Let $\omega$ denote a central character for $\GL_n(F)$ and let $\omega'=\omega\circ \Nm$ denote its base change to a central character for $\GL_n(E)$. For the notation in Theorem \ref{Thm: first step main} and each unitary $\omega$, suppose that $ \Phi_{\tau_1,\tau_2}^{\phi_1,\phi_2}$ and ${\Hir(\phi_1\ast\phi_2)}\otimes \bfun_{\calo_{F,n}}$ are $\omega$-transfers in the sense that for any matching $Z$-regular semi-simple orbits $z\leftrightarrow x$, we have
\begin{equation*}
\Orb^{\GL_n(F),\eta}_\omega(\Hir(\phi_1\ast\phi_2)\otimes \bfun_{\calo_{F,n}},z)=\SO^{\U(V_{\tau_1^{-1}})}_{\omega'}(\Phi_{\tau_1,\tau_2}^{\phi_1,\phi_2},x).
\end{equation*} 
Then Theorem \ref{Thm: first step main} holds.
\end{Lem}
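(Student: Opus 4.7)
The plan is to apply Mellin inversion along the open subgroup $\Nm(E^\times) \subset F^\times$. Fix matching $Z$-regular semi-simple orbits $z \in \GL_n(F)$ and $x$ on the unitary side. By Lemma \ref{matching via center}, for every $a \in \Nm(E^\times)$ the scaled elements $za$ and $x\cdot a$ again match in the sense of Definition \ref{Lem: matching def}. Consider the two functions on $\Nm(E^\times)$
\[
F_1(a) := \Orb^{\GL_n(F),\eta}\bigl(\Hir(\phi_1\ast\phi_2)\otimes\bfun_{\calo_{F,n}},za\bigr),\qquad F_2(a) := \SO^{\U(V_{\tau_1^{-1}})}\bigl(\Phi_{\tau_1,\tau_2}^{\phi_1,\phi_2},xa\bigr).
\]
Because the two test functions are compactly supported and $z,x$ are $Z$-regular (so their $\GL_n(F)$- and $\U$-orbits are closed, by Lemma \ref{Lem: Z-reg on unitary side} and the analogue for $\GL_n$), $F_1$ and $F_2$ are compactly supported locally constant functions on $\Nm(E^\times)$.

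Unwinding the definitions \eqref{eqn: unitary OI final} and \eqref{eqn: twisted OI final}, the assumption of the lemma (for the character $\omega$) reads
\[
\int_{\Nm(E^\times)}F_1(a)\,\omega(a)\,da\;=\;\int_{\Nm(E^\times)}F_2(a)\,\omega(a)\,da,
\]
i.e., the Mellin transforms of $F_1$ and $F_2$ along $\Nm(E^\times)$ coincide for every unitary central character $\omega$ of $F^\times$. Since $F^\times/\Nm(E^\times)$ is a finite group, Pontryagin duality shows that the restriction map $\widehat{F^\times}\to\widehat{\Nm(E^\times)}$ surjects onto every unitary character of $\Nm(E^\times)$. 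Mellin inversion on the locally compact abelian group $\Nm(E^\times)$ therefore forces $F_1=F_2$ identically, and specializing at $a=1$ produces the desired identity
\[
\Orb^{\GL_n(F),\eta}\bigl(\Hir(\phi_1\ast\phi_2)\otimes\bfun_{\calo_{F,n}},z\bigr)\;=\;\SO^{\U(V_{\tau_1^{-1}})}\bigl(\Phi_{\tau_1,\tau_2}^{\phi_1,\phi_2},x\bigr).
\]
Note that because $E/F$ is unramified, $S_1^{\mathrm{ram}}(z)=\emptyset$, so the factor $2^{|S_1^{\mathrm{ram}}(z)|}$ appearing in Theorems \ref{Thm: Xiao's transfer} and \ref{Thm: first step main} is trivial.

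It remains to extend the matching from $Z$-regular to arbitrary regular semi-simple orbits. The $Z$-regular locus is Zariski-open and dense in the regular semi-simple locus on both sides, and the matching of characteristic polynomials is compatible with the inclusion. On each side the orbital integrals of our compactly supported test functions are locally constant functions of the regular semi-simple parameter (Harish-Chandra). Given an arbitrary matching pair of regular semi-simple orbits $z \leftrightarrow x$, one chooses a neighborhood of $z$ (resp.\ $x$) in the regular semi-simple locus on which the two orbital integrals are constant; generic points in such a neighborhood are $Z$-regular and matched in pairs, so the equality proved above propagates by local constancy to the pair $(z,x)$.

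The main point of the argument is the Mellin-inversion step, which is clean once one observes the compact support of $F_1,F_2$. The only mildly delicate part is the propagation from $Z$-regular to regular orbits via local constancy; this is a routine matter provided one is careful to match neighborhoods on the two sides through the (local) matching of orbits.
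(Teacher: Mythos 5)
Your proof is correct and takes essentially the same approach the authors indicate: the paper's own proof is a one-paragraph sketch citing \cite[Lemma 11.2]{LeslieUFJFL} and listing precisely the three ingredients you supply — the Fourier/Mellin-inversion step to strip off the central character, local constancy of the orbital integrals on the regular semi-simple locus, and Zariski-density of the $Z$-regular locus — and you fill in these details cleanly, including the observations that $F_1,F_2$ are compactly supported (via the closed-orbit property of $Z$-regular elements and the determinant bound coming from the compact support of the test functions), that every unitary character of $\Nm(E^\times)$ extends to $F^\times$ since the index is finite, and that $S_1^{\mathrm{ram}}(z)=\emptyset$ in the unramified case so the factor $2^{|S_1^{\mathrm{ram}}(z)|}$ is trivial.
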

\begin{proof}The proof is simple and mirrors the proof of \cite[Lemma 11.2]{LeslieUFJFL}. The key points are the local constancy of the orbital integrals in question, Zariski-density of the $Z$-regular locus, and a simple Fourier inversion argument to remove the central character. We leave the details to the interested reader.
\end{proof}
 \subsection{The split case}\label{Section: split transfer 2}
For our global applications, we also consider the split case of the transfer in Theorem \ref{Thm: Xiao's transfer}, where $E=F\times F$ is the split quadratic $F$-algebra. We assume $F$ is a local field of characteristic zero.

 Here, $\calv_n=\{\tau_n\}$ is a singleton and $\eta$ is trivial. We fix an isomorphism $\GL_n(E)\cong \GL_n(F)\times \GL_n(F)$ such that the unitary group $\U_n\cong \GL_n\hra\GL_n\times\GL_n$ is sent to
\[
\U_n\cong\{(g,g^\theta)\in \GL_n(F)\times \GL_n(F): g\in \GL_n(F)\},
\]
where we recall that for $g\in \GL_n(E)$, $g^\theta = w_n{}^tg^{-1}w_n$. In particular,
\[
\Herm_{n}^{\circ}(F)\cong\{(g,g^{-\theta})\in \GL_n(F)\times \GL_n(F): g\in \GL_n(F)\}
\]
and the $\U_n$ action identifies with conjugation of $\GL_n(F)$ on itself via projection to the first coordinate. Under this identification, a regular semi-simple element $(g_1,g_2)\in \GL_n(E)$ matches $z\in \GL_n(F)$ if $g_1^{-1}g_2^{\theta}$ and $z$ are conjugate in $\GL_n(F)$. 

\begin{Prop}\label{Prop: split transfer}
Assume $E=F\times F$ and fix $\Phi\in C_c^\infty(F_n)$ such that $\Phi(0)=1$. Then the functions $f'=f_1\otimes f_2\in C_c^\infty(\GL_n(E))$ and $f_1^\vee\ast f_2^{\theta} \otimes \Phi\in C_c^\infty(\GL_n(F)\times F_{n})$ are smooth transfers, where $\ast$ denotes convolution $f^\vee(g) = f(g^{-1})$, and 
$f^{\theta}(g) = f(g^{\theta}).$
\end{Prop}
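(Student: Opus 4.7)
The plan is a direct computation exploiting the dramatic simplification afforded by the split case. Under the identifications $\GL_n(E) \cong \GL_n(F) \times \GL_n(F)$, $\U_n \cong \{(u, u^\theta) : u \in \GL_n(F)\}$, and $\Herm_n^\circ(F) \cong \GL_n(F)$ via $z \mapsto (z, z^{-\theta})$ (under which the right $\U_n$-action becomes conjugation on $\GL_n(F)$), both sides of the claimed equality reduce to the standard conjugation orbital integral on $\GL_n(F)$.

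First I would compute the pushforward $p_!(f_1 \otimes f_2)$ along the quotient map $p \colon \GL_n(E) \to \Herm_n^\circ(F)$, $p(g) = g^\ast g$. A direct calculation gives $p(g_1, g_2) = (g_2^{-\theta} g_1, g_1^{-\theta} g_2)$; taking the preimage $(z, I_n)$ of a point $z \in \GL_n(F)$, the fiber integral over the left $\U_n$-action reads
\[
p_!(f_1 \otimes f_2)(z) = \int_{\GL_n(F)} f_1(u z)\, f_2(u^\theta)\, du.
\]
Using that $\theta$ is an anti-involution of order two that preserves Haar measure, together with the unimodularity of $\GL_n(F)$, successive substitutions of the form $u \mapsto u^{-1}$ and $u \mapsto u^\theta$ identify this integral with the convolution $(f_1^\vee \ast f_2^\theta)(z)$. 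This is consistent with the matching convention of \S\ref{Section: split transfer 2}, which asserts that a regular semi-simple element $(g_1, g_2) \in \GL_n(E)$ matches the $\GL_n(F)$-conjugacy class of $g_1^{-1} g_2^\theta = (g_2^{-\theta} g_1)^{-1}$.

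Next I would compute both orbital integrals and match them. Since $E = F \times F$ is split, every centralizer $T_x$ is an $F$-split torus with trivial $H^1(F, T_x)$, so the stable orbital integral coincides with the ordinary one; under $\U_n \cong \GL_n(F)$ it becomes the conjugation orbital integral of $p_!(f_1 \otimes f_2) = f_1^\vee \ast f_2^\theta$ on $\GL_n(F)$. On the linear side, the quadratic character $\eta$ is trivial, so formula \eqref{eqn: orbital int split} yields
\[
\Orb^{\rH',\underline{\eta}}_{0}\bigl((f_1^\vee \ast f_2^\theta) \otimes \Phi,\, z\bigr) = \Phi(0) \int_{T_z \backslash \GL_n(F)} (f_1^\vee \ast f_2^\theta)(g^{-1} z g)\, dg,
\]
which, together with the assumption $\Phi(0) = 1$, agrees with the expression on the unitary side.

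The main obstacle is purely notational: carefully tracking the interplay between the Galois involution $\sigma$, the Hermitian involution $\ast$, the anti-involution $\theta$, and matrix inversion in order to verify that the convolution identification of the first step is exactly $f_1^\vee \ast f_2^\theta$ (rather than an adjacent variant such as $f_1^{-\theta} \ast f_2$, which would differ only by a substitution internal to the conjugation orbital integral). Because $\eta \equiv 1$, the transfer factor $\Delta$ is trivially $1$ and the ramification set $S_1^{\mathrm{ram}}$ is empty, so no scaling factor complicates the matching; this is in spirit a much simpler instance of the contraction-cascade argument carried out in Corollary \ref{Cor: main local results}.
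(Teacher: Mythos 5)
Your approach is the same as the paper's: both reduce to a single conjugation orbital integral on $\GL_n(F)$ by integrating out one copy of $\U_n$ and then appealing to the split simplification \eqref{eqn: orbital int split}; the paper just writes the double integral on the $\GL_n(E)$-side in one go, whereas you factor it through the pushforward $p_!$. So the strategy is sound, and you correctly flag the one real danger (tracking $\sigma$, $\ast$, $\theta$, and inversion).

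However, the danger you flag is exactly where the argument as written breaks. Starting from $p_!(f_1\otimes f_2)(z)=\int f_1(uz)\,f_2(u^\theta)\,du$ (which is correct, with $z$ the first coordinate of the Hermitian form $p(g_1,g_2)=(g_2^{-\theta}g_1,\,g_1^{-\theta}g_2)$), the substitution $u\mapsto uz^{-1}$ gives $\int f_1(u)f_2(z^{-\theta}u^\theta)\,du$, while $(f_1^\vee\ast f_2^\theta)(z)=\int f_1(u)f_2(z^{\theta}u^\theta)\,du$; the two differ by $z\mapsto z^{-1}$. Thus $p_!(f_1\otimes f_2)=(f_1^\vee\ast f_2^\theta)^\vee$ (evaluated at the first coordinate), not $f_1^\vee\ast f_2^\theta$. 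You notice the inverse when you write the matching class as $g_1^{-1}g_2^\theta=(g_2^{-\theta}g_1)^{-1}$, but then in the final step you compare the conjugation orbital integral of "$p_!(f_1\otimes f_2)=f_1^\vee\ast f_2^\theta$" at $z$ with the linear orbital integral of $f_1^\vee\ast f_2^\theta$ at $z$ and declare them equal. Those are orbital integrals of the same function at $z$ and at $z^{-1}$ respectively, and $z$ and $z^{-1}$ are not conjugate in general, so the equality does not hold as stated. The correct reconciliation is precisely the convention the paper uses: $(g_1,g_2)$ matches $g_1^{-1}g_2^\theta$ (the inverse of the first coordinate of $p(g_1,g_2)$), so the linear orbital integral is taken at $z^{-1}$, which cancels the $\vee$ coming out of the pushforward. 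You need to carry that inverse all the way through to the final comparison (or work directly with the double $\U_n\times\U_n$-integral as the paper's one-line calculation does and track where the $x^{-1}y^\theta$ comes from), rather than absorb it silently.
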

\begin{proof}
 By \cite[Proposition 5.3]{Xiaothesis}, the orbital integrals on $\GL_n(F)\times F_{n}$ simplify in the split case to
\begin{equation}\label{eqn: orbital split}
    \Orb^{\GL_n(F),\eta}(f\otimes \Phi,x)=\Phi(0)\displaystyle\int_{T_x\backslash\GL_n(F)}f(g^{-1}xg)dg.
\end{equation}
The result now follows from the simple calculation
\begin{align*}
  \int_{T_{x,y}\bs \GL_n(F)\times \GL_n(F)}f_1(g_1^{-1}xg_2)f_2(g_1^{-\theta}yg_2^\theta)dg_1dg_2 
  &=\int_{T_{x^{-1}y^{\theta}}\bs \GL_n(F)}\left(f_1^\vee\ast f_2^{\theta}\right)(g^{-1}x^{-1}y^{\theta}g)dg,
\end{align*}
where $T_{x,y}\subset \GL_n(F)\times \GL_n(F)$ is the stabilizer of $(x,y)\in  \GL_n(F)\times \GL_n(F)$.
\end{proof}

Now assume $F$ is non-archimedean and $E=F\times F$. For $\phi=\phi_1\otimes \phi_2\in \calh_{\GL_n(\calo_E)}(\GL_n(E))$, we consider the $\GL_n(\calo_F)$-invariant function
\begin{equation}\label{eqn: split unram transfer}
    \widetilde{\phi} = \left(\phi_1^\vee\ast\phi_2^{\theta}\right)\otimes \bfun_{\calo_{F,n}}\in C_c^\infty(\GL_n(F)\times F_n).
\end{equation}
Proposition \ref{Prop: split transfer} implies that $\phi=\phi_1\otimes\phi_2$ and $\widetilde{\phi}$ are matching test functions.
%



\section{Spectral Preliminaries}
We now recall the necessary spectral results to set up our trace formulas. 

\quash{We now return to the global setting and recall the properties of certain period integrals and their relations to (twisted) standard and exterior square $L$-functions. We also recall the necessary local representation theoretic results. When $F$ denotes a fixed number field, let $\A_{F}$ denote its ring of adeles. Let $\G'=\GL_{2n}$, $\rH'=\GL_n\times\GL_n$, and $\pi$ a cuspidal automorphic representation of $\G'(\A_{F})$ with central character $\omega=\omega_{\pi}$.  We use $V_{\pi}$ to denote the vector space of the representation $\pi$. As before, for any character $\chi$, for $h=(h^{(1)},h^{(2)})\in \rH'(A_{F})$ we set $$\chi(h) = \chi(\det(h^{(1)})(\det(h^{(2)})^{-1}).$$
Finally, we write $[\rH'] = Z_{\G'}(\A_{F})\rH'({F})\backslash\rH'(\A_{F})$, and similar notation for other groups. Our measure conventions are found in \S \ref{measures}.
}
\subsection{Whittaker models}\label{Sec: Whittaker}
Suppose that $F$ is a local field. For any non-trivial additive character $\psi: F\to \cc^\times,$
we denote by $\psi_0$ the generic character of $N_n(F)$
\[
\psi_0(u)=\psi\left(\sum_iu_{i,i+1}\right).
\]
For any irreducible generic representation $\pi$, we denote by $\pi^\vee$ the abstract contragredient representation. 
Set $\calw^\psi(\pi)$ to be the Whittaker model of $\pi$ with respect to the generic character $\psi_0$. The action is given by
\[
\pi(g) W(h) = W(hg),\quad g,h\in \GL_n(F),\: W\in \calw^{\psi}(\pi).
\]
Given a generic representation $\pi$, set $\calw(\pi):=\calw^\psi(\pi)$ to be the Whittaker model of $\pi$ with respect to the induced generic character $\psi_0:N_{n}(F)\to \cc^\times$. Then there an isomorphism
\[
\hat{(\cdot)}:\calw(\pi)^\theta\lra \calw^{\psi^{-1}}(\pi^\vee),
\]
given by $\hat{W}(g) = W(g^\theta),$  where $g^{{\theta}}=w_n{}^tg^{-1}w_n$.

If $F$ is global, $\pi$ a cuspidal automorphic representation of $\GL_n(\A_{F})$, we denote by $\W^\varphi$ the $\psi_0$-Fourier coefficient of $\varphi\in V_{\pi}$:
\[
\W^\varphi(g) = \int_{[N_n]}\varphi(ng)\psi_0^{-1}(n)dn,
\]
where $\psi_0$ is our generic character of the unipotent subgroup $N_n(\A_{F})$.

Suppose that $S$ is a finite set of places, containing the archimedean ones, such that $\pi_v$ is unramified and $\psi_{0,v}$ has conductor $\calo_{F_v}$ for $v\notin S$. Let $\varphi\in V_\pi$ be such that $W^\varphi$ is factorizable (for simplicity, we will say that $\varphi$ is factorizable), write $\W^\varphi(g) = \prod_vW_v(g_v)$, where $W_v\in \calw^{\psi_v}(\pi_v)$.  We may assume that for all $v\notin S$, $W_v$ is spherical and normalized so that $W_v(I_{n})=1$.

\subsection{Peterson inner product}\label{Section: inner product part 3}
 Suppose $\pi$ is a cuspidal automorphic representation of $\GL_n(\A_F)$, and let $\hat{\pi}\cong \pi^\vee$ denote the contragredient representation of $\pi$ realized on the space of functions $\{\phi^{{\theta}}:\phi\in \pi\}$. For $\varphi$ and $\hat{\varphi}$ as above, consider the inner product 
\begin{equation}\label{eqn: Peterrson norm center}
    \la\varphi,\hat{\varphi}\ra_{Pet} = \int_{Z_{\GL_n}(\A_F)\GL_n(F)\backslash \GL_n(\A_F)}\varphi(g)\hat{\varphi}(g)dg.
\end{equation}
this is a $\GL_n(\A_F)$-invariant inner product on $\pi.$

As mentioned above, we adopt a different convention for the local inner products in this part, which we now describe. 
\quash{Suppose now that $S$ is a finite set of places, containing the archimedean ones, such that $\pi_v$ is unramified and $\psi_{0,v}$ has conductor $\calo$ for $v\notin S$. Let $\phi\in \pi$ be such that $W^\phi$ is factorizable (for simplicity, we will say that $\phi$ is factorizable), write $\W^\phi(g) = \prod_vW_v(g_v)$, where $W_v\in \calw^{\psi_v}(\pi_v)$. Similarly, let $\hat{\phi}\in \hat{\pi}$ be factorizable and set $\W^{\hat{\phi}}(g) = \prod_v\hat{W}_v(g_v)$, where $\hat{W}_v\in \calw^{\psi^{-1}_v}(\hat{\pi}_v)=\calw^{\psi^{-1}_v}(\pi^\vee_v)$. We may assume that for all $v\notin S$, $W_v$ and $\hat{W}_v$ are spherical and normalized so that $W_v(e)=\hat{W}_v(e)=1$.}
Recall the canonical inner product \cite[Appendix A]{FLO}
\[
[\cdot,\cdot]_{\pi_v}:\calw^{\psi_v}(\pi_v)\otimes\calw^{\psi^{-1}_v}(\hat{\pi}_v)\lra \cc.
\]
It is defined by considering the integral
\begin{equation*}
I_s(W_v,\hat{W}_v')=L(n,\bfun_{F_v^\times})\int_{N_n(F_v)\backslash P_n(F_v)}W_v(h)\hat{W}'_v(h)|\det(h)|_F^sdh,
\end{equation*}
where $W_v,W_v'\in \calw^{\psi_v}(\pi_v)$ and $P_n$ is the mirabolic subgroup of $\GL_n.$ The integral converges for $\mathrm{Re}(s)\gg0$, and has meromorphic continuation. It is known for any local field of characteristic zero (see \cite[Appendix A]{FLO} and the references therein) that this continuation is holomorphic at $s=0$ and gives a non-degenerate $\GL_n(F_v)$-invariant pairing. We set
\begin{equation}\label{eqn: local inner product}
    [W_v,\hat{W}'_v]_{\pi_v}:=I_0(W_v,\hat{W}_v').
\end{equation}
%
\begin{Prop}\cite[Section 10.3]{FLO}\label{Prop: nice inner product}
Assume that $\varphi\in V_\pi$ is factorizable in the sense that  $\W^\varphi=\bigotimes_vW_v$. There is a corresponding factorization
\begin{align}\label{eqn: inner product}
   \la\varphi,\hat{\varphi}\ra_{Pet} =  \frac{\Res_{s=1}L(s,\pi\times\pi^\vee)}{\vol(F^\times\backslash \A_F^1)}\prod_{v}[W_v,\hat{W}_v]^\natural_{\pi_v},
\end{align}
where \begin{equation}\label{eqn: normalized inner product}
    [W_v,\hat{W}_v]^\natural_{\pi_v}=\frac{[W_v,\hat{W}_v]_{\pi_v}}{L(1,\pi_v\times\pi^\vee_v)},
\end{equation}
where $L(s,\pi_v\times \pi_v^\vee)$ denotes the local Rankin-Selberg $L$-factor. When $\pi_v$ is unramified and $W_v$ is the spherical vector normalized so that $W_v(e) =1,$ we have $[W_v,\hat{W}_v]^\natural_{\pi_v}=1$.\end{Prop}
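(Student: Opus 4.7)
The plan is to derive the factorization from the Rankin--Selberg integral representation of $L(s,\pi\times\pi^\vee)$ and its pole at $s=1$, following the approach of \cite[Section 10.3]{FLO}.

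The first step is to introduce, for a Schwartz--Bruhat $\Phi\in\CS(\A_F^n)$, the Rankin--Selberg zeta integral
\[
Z(s,\varphi,\hat\varphi,\Phi) = \int_{Z(\A_F)\GL_n(F)\bs\GL_n(\A_F)} \varphi(g)\hat\varphi(g)E(g,s,\Phi)\,dg,
\]
where $E(g,s,\Phi)$ is the $\GL_n$ Eisenstein series attached to $\Phi$. By Jacquet--Piatetski-Shapiro--Shalika, unfolding the Eisenstein series against the Whittaker expansion of $\varphi$ yields, for factorizable data, an Eulerian decomposition whose local factor at $v$ is built from $I_s(W_v,\hat W_v)$ together with a local Tate factor in $\Phi_v$. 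At unramified places, the resulting local factor equals $L(s,\pi_v\times\pi_v^\vee)\cdot\hat\Phi_v(0)$ up to a harmless local zeta factor, giving a global identity of Euler products.

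Next, I would take residues at $s=1$. The Eisenstein series $E(g,s,\Phi)$ has a simple pole at $s=1$ with residue proportional to $\hat\Phi(0)\cdot\vol(F^\times\bs\A_F^1)^{-1}$ by Tate's thesis, and on the spectral side this residue evaluates to $\hat\Phi(0)\cdot\la\varphi,\hat\varphi\ra_{Pet}$. On the Euler product side, the pole is carried entirely by $\prod_v L(s,\pi_v\times\pi_v^\vee)$, and extracting $\Res_{s=1}L(s,\pi\times\pi^\vee)$ yields the claimed factorization \eqref{eqn: inner product}. The normalized local factors $[W_v,\hat W_v]^\natural_{\pi_v}$ in \eqref{eqn: normalized inner product} are defined precisely so that the residual identity holds place by place.

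The final ingredient is the unramified calculation: when $\pi_v$ is unramified and $W_v$ is the spherical Whittaker vector normalized by $W_v(I_n)=1$, the Casselman--Shalika formula expresses $W_v$ in terms of Schur polynomials in the Satake parameters of $\pi_v$, and a classical geometric series computation then gives $I_0(W_v,\hat W_v)=L(1,\pi_v\times\pi_v^\vee)$, so $[W_v,\hat W_v]^\natural_{\pi_v}=1$. The main obstacle in a clean writeup is bookkeeping: one must align the normalizing zeta factors $L(n,\bfun_{F_v^\times})$ in the definition of $I_s$ with the measure conventions of \S\ref{measures} and the residue of $\zeta_F(s)$ at $s=1$, so that all extraneous constants cancel and only the stated ratio $\Res_{s=1}L(s,\pi\times\pi^\vee)/\vol(F^\times\bs\A_F^1)$ survives.
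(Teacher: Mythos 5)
Your approach is correct and is essentially the one used in the cited reference \cite[Section 10.3]{FLO}: factorize the $\GL_n\times\GL_n$ Rankin--Selberg integral via the Whittaker expansion, take the residue at $s=1$, cancel the residue of the relevant $L$-function against the residue of the Eisenstein series, and use the unramified Jacquet--Shalika/Casselman--Shalika computation to see that the normalized local factor $[W_v,\hat W_v]^\natural_{\pi_v}$ is $1$ at unramified places. The paper gives no proof of its own here --- the statement is a citation --- so there is nothing genuinely different to compare.

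One small bookkeeping slip worth correcting: you write that the Eisenstein series has residue at $s=1$ \emph{proportional to} $\hat\Phi(0)\cdot\vol(F^\times\bs\A_F^1)^{-1}$, but in fact the residue is $\tfrac{1}{n}\,\hat\Phi(0)\cdot\vol(F^\times\bs\A_F^1)$ (this is Proposition~\ref{Prop: Eisenstein properties} with trivial $\eta$), i.e.\ the volume factor sits in the \emph{numerator}. The $\vol(F^\times\bs\A_F^1)^{-1}$ in \eqref{eqn: inner product} then appears when one solves for $\la\varphi,\hat\varphi\ra_{Pet}$, dividing through by the residue constant. Similarly you should be slightly careful about the $1/n$ and the local Tate normalization $L(n,\bfun_{F_v^\times})$ built into the definition of $I_s(W_v,\hat W'_v)$, which is precisely the passage from the full Iwasawa integral over $N_n\backslash\GL_n$ with the Schwartz function to the mirabolic integral over $N_n\backslash P_n$; these constants are what make the final normalized formula constant-free, as you anticipate in your last paragraph.
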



\subsection{An Eisenstein series}\label{ss:Eis}
{Let $\eta$ be a Hecke character such that $\eta|_{\BR_+}=1$ under the  usual decomposition $\BA^\times_{F}=\BA^1_{F}\times \BR_+$ (for example, if $\eta$ is of finite order).
We  define
 \begin{align}\label{eqn: our Eisenstein}
 E(h,\Phi,s,\eta):=&|\det(h)|^s\eta(\det(h))\int_{[\GL_1]}\sum_{0\neq v\in F_{n}}\Phi(avh)|a|^{ns}\eta(a^n)da
 \\=&\int_{[\GL_1]}  \sum_{0\neq v\in F_{n}}\Phi(avh)|\det(ah)|^{s}\eta(\det(ah))da  \notag
 \end{align}
 where $\Phi\in \CS(\A_{F}^n)$ is in the space of Schwartz functions, and $h\in \GL_n(\A_{F})$. 

 We have the following proposition recording the basic properties of the Eisenstein series.
 \begin{Prop}\cite[Proposition 2.1]{CogdellGLn}\label{Prop: Eisenstein properties}
The Eisenstein series $ E(h,\Phi,s,\eta)$ converges absolutely whenever $\mathrm{Re}(s)>1$ and admits a meromorphic continuation to the all of $\cc$. It has (at most) simple poles $s=0,1$.

As a function of $h$, it is smooth of moderate growth and as a function of $s$ it is bounded in vertical strips away from the poles, uniformly in $g$ in compact sets. Moreover, we have the functional equation
\[
 E(h,\Phi,s,\eta) =  E({}^th^{-1},\widehat{\Phi},1-s,\eta^{-1}),
\]
where $\widehat{\Phi}=\mathcal{F}_{n}(\Phi)$ and $\mathcal{F}_{n}:\mathcal{S}(\A_{F}^n)\lra \mathcal{S}(\A_{F}^n)$ is the Fourier transform.
The residue at $s=1$ (resp. $s=0$) is 
\[
 \eta(h)\frac{\int_{[\GL_1]^1}\eta(a)^n da}{n}\wh{\Phi}(0)\quad
 \left(\text{resp.} \quad
 \eta(h)\frac{\int_{[\GL_1]^1}\eta(a)^n da}{n}\Phi(0) \right).
 \]
 \end{Prop}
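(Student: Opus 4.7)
The plan is to prove this by the classical Tate-style Poisson summation argument, treating $E(h,\Phi,s,\eta)$ as a mirabolic/Godement--Jacquet Eisenstein series. First I would rewrite the series as a theta integral: setting $\Theta_{\Phi}(a,h) := \sum_{v\in F_{n}}\Phi(avh)$, the definition becomes
\[
E(h,\Phi,s,\eta) = |\det(h)|^{s}\eta(\det(h))\int_{[\GL_{1}]}\bigl[\Theta_{\Phi}(a,h)-\Phi(0)\bigr]|a|^{ns}\eta(a)^{n}\,da.
\]
Since $\Phi$ is Schwartz, the bracketed expression decays rapidly as $|a|\to\infty$ and is bounded by a constant times $|a|^{-N}$ for any $N$ when $|a|\geq 1$, uniformly in $h$ on compacta (and with the usual polynomial factor in $\|h\|$). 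This immediately gives absolute convergence of the full integral for $\mathrm{Re}(s)>1$, together with smoothness, moderate growth in $h$, and uniform boundedness of the $|a|\geq 1$ piece in vertical strips.

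Next I would split the integral at $|a|=1$ and analytically continue the $|a|\leq 1$ part. On the inner piece I apply Poisson summation in $F_n$ (with respect to the self-dual additive Haar measure fixed by $\psi$): for each fixed $h$,
\[
\Theta_{\Phi}(a,h) \;=\; \frac{1}{|a|^{n}|\det(h)|}\,\Theta_{\widehat{\Phi}}\bigl(a^{-1},{}^{t}h^{-1}\bigr).
\]
Substituting this and the companion identity $\Phi(0)=\Phi(0)$ into the $|a|\leq 1$ integral and performing the change of variables $a\mapsto a^{-1}$ in the $\widehat{\Phi}$-piece converts it into an $|a|\geq 1$ integral of $[\Theta_{\widehat{\Phi}}(a,{}^th^{-1})-\widehat\Phi(0)]$ against $|a|^{n(1-s)}\eta(a)^{-n}$. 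The two boundary terms $\Phi(0)$ and $\widehat{\Phi}(0)/(|a|^{n}|\det(h)|)$ integrate cleanly over $|a|\leq 1$, using $\A_F^{\times}=\A_F^{1}\times \rr_{+}$ and $\eta|_{\rr_{+}}=1$, to give respectively
\[
-\,\Phi(0)\,\eta(h)|h|^{s}\,\frac{\int_{[\GL_{1}]^{1}}\eta(a)^{n}\,da}{ns}\qquad\text{and}\qquad \widehat{\Phi}(0)\,\eta(h)|h|^{s-1}\,\frac{\int_{[\GL_{1}]^{1}}\eta(a)^{n}\,da}{n(1-s)},
\]
(initially in their respective half-planes, then by analytic continuation). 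Adding everything produces a formula valid on all of $\cc$, exhibiting simple poles only at $s=0$ and $s=1$, with the residues matching the stated formulas.

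Finally, the functional equation drops out by inspection of the same formula: the roles of $(\Phi,s,\eta,h)$ and $(\widehat{\Phi},1-s,\eta^{-1},{}^th^{-1})$ are exchanged symmetrically, with the $h$-prefactor $|\det(h)|^{s}\eta(\det(h))$ matching $|\det({}^th^{-1})|^{1-s}\eta^{-1}(\det({}^th^{-1}))=|\det(h)|^{s-1}\eta(\det(h))$ exactly after the $|\det(h)|^{-1}$ produced by Poisson summation. Boundedness in vertical strips away from the poles, uniformly for $h$ in compact sets, follows from the two absolutely convergent $|a|\geq 1$ integrals and explicit control of the elementary $1/(ns)$ and $1/(n(1-s))$ terms.

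The main obstacle, though routine in this context, is a bookkeeping one: correctly tracking the interaction of the twist $\eta(\det(h))$, the factor $|\det(h)|^{s}$, and the Jacobian $|a|^{n}|\det(h)|$ arising from Poisson summation so that the functional equation comes out cleanly and the residues acquire the prefactor $\eta(h)$ (rather than $\eta(h)^{n}$) as stated. Once this normalization is fixed in an initial careful pass, the remaining verifications are standard and essentially mirror Tate's original treatment of the global zeta integral, upgraded to the $n$-dimensional situation as in Cogdell's lectures.
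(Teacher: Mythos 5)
Your proposal is the standard Tate/Poisson-summation argument, and it is essentially correct; the paper itself offers no proof here --- it simply cites \cite[Proposition 2.1]{CogdellGLn} --- so there is nothing in the paper to compare your route against. The mechanics (split at $|a|=1$, Poisson summation in $F_n$ with Jacobian $|a|^{-n}|\det h|^{-1}$, read off the boundary terms, check the symmetry under $(\Phi,s,\eta,h)\leftrightarrow(\widehat\Phi,1-s,\eta^{-1},{}^th^{-1})$) are exactly what one would write out following Cogdell or Tate.

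One point you glossed over, and should not: your own boundary term at $s=0$ is
\[
-\Phi(0)\,\eta(h)\,|\det h|^{s}\,\frac{\int_{[\GL_1]^1}\eta(a)^n\,da}{ns},
\]
so the residue you actually compute is $-\eta(h)\,\tfrac{\int_{[\GL_1]^1}\eta^n}{n}\,\Phi(0)$, with a minus sign. You then assert that your residues ``match the stated formulas,'' but the proposition as printed gives $+\eta(h)\,\tfrac{\int_{[\GL_1]^1}\eta^n}{n}\,\Phi(0)$ at $s=0$, with no sign. The minus sign is forced: by the functional equation $\operatorname{Res}_{s=0}E(h,\Phi,s,\eta)=-\operatorname{Res}_{s=1}E({}^th^{-1},\widehat\Phi,s,\eta^{-1})$, and the latter residue is $+\eta(h)\,\tfrac{\int\eta^n}{n}\,\Phi(0)$ by your $s=1$ computation together with $\widehat{\widehat\Phi}(0)=\Phi(0)$. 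The paper's own later use of this residue in the proof of Proposition \ref{Prop: Rankin}\eqref{no pole in RS} carries a $-c$ factor, consistent with your calculation and not with the displayed formula in Proposition \ref{Prop: Eisenstein properties}. So rather than claiming agreement, you should flag that your computation produces the residue with a minus sign at $s=0$ and that the stated formula appears to have a sign typo; the residue at $s=1$ is fine as stated.
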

 }

\subsection{Twisted Rankin--Selberg period}\label{Sec: Rankin selberg}
Now assume $E/F$ is a quadratic extension of number fields, with $\eta$ the associated quadratic character. Suppose that $\pi$ and $\pi'$ are  irreducible cuspidal representations for $\GL_n(\A_F)$. If $\omega$ and $\omega'$ denote the respective central characters, we assume $\omega\omega'=1$. Consider the Rankin-Selberg integral
\begin{equation*}
    \calp_\eta(s,\varphi,\varphi',\Phi):=\displaystyle\int_{Z_n(\A_F)\GL_n(F)\bs\GL_n(\A_F)}\varphi(g)\varphi'(g) E(g,\Phi,s,\eta) dg,
\end{equation*}
where $\varphi\in V_\pi$, $\varphi'\in V_\pi'$, and $E(g,\Phi,s,\eta)$ is the Eisenstein series \eqref{eqn: our Eisenstein} with $\Phi\in C_c^\infty(\A_{F,n})$ and $\eta=\eta_{E/F}$ is non-trivial. 

The local variant period is defined by
\[
\Psi_{\eta}(s,W_v,W_v',\Phi_v) = \int_{N_n(F_v)\backslash\GL_n(F_v)}W_v(h)W_v'(h)\Phi_v(e_n h)|\det(h)|^s\eta_v(h)dh,
\]
for $s\in \cc$, $W_v\otimes W_v'\in \calw^{\psi_v^{-1}}(\pi_{v})\boxtimes\calw^{\psi_v}(\pi_{v}')$, and $\Phi_v\in C_c^\infty(F_{v,n})$.

\begin{Prop}\label{Prop: Rankin} Let $\varphi\in V_\pi$, $\varphi'\in V_{\pi'}$, and $\Phi\in C_c^\infty(\A_{F,n})$.
   \begin{enumerate}
       \item\cite[Theorem 2.2]{CogdellGLn} The integral $\calp_\eta(s,\varphi,\varphi',\Phi)$ is meromorphic in $s$ and bounded in vertical strips away from the poles of the Eisenstein series. If $\varphi\in V_\pi$ and $\varphi'\in V_{\pi'}$ have factorizable Whittaker functions and $\Phi=\bigotimes_v\Phi_v$, then
    \[
   \calp_\eta(s,\varphi,\varphi',\Phi)= \prod_v\Psi_{\eta}(s,W_v,W_v',\Phi_v) 
    \]
    with convergence absolute and uniform away from the poles.
    \item\label{eqn: factor rankin}  The integral  $\calp_\eta(s,\varphi,\varphi',\Phi)$ satisfies the functional equation
    \[
     \calp_\eta(s,\varphi,\varphi',\Phi) =\calp_{1-s}(\widetilde{\varphi},\widetilde{\varphi}',\widehat{\Phi}),
    \]
    where $\widetilde{\varphi}(g) = \varphi({}^tg^{-1})$, and $\widehat{\Phi}$ denotes the Fourier transform. In particular, for $\Re(s)<0$, we have
    \[
    \calp_\eta(s,\varphi,\varphi',\Phi)=\prod_v\Psi_{\eta}(1-s,\widetilde{W}_v,\widetilde{W}_v',\widehat\Phi_v),
    \]
    where $\widetilde{W}_v(g) = {W}_v(w_n{}^tg^{-1})$.
        \item\label{no pole in RS} If $\pi\not\simeq \pi\otimes \eta$, then $\calp_\eta(s,\varphi,\hat{\varphi},\Phi)$ is holomorphic at $s=0$, where $\hat{\varphi}\in {\hat{\pi}}$
       \item\label{item 2: unramified} \cite[Theorem 3.3]{CogdellGLn} Assume that $v$ is a finite place such that the local extension $E_v/F_v$ is unramified. Assume both $\pi_v$ and $\psi_v$ are unramified, and that $W_v$ and $\hat{W}_v$ are the spherical Whittaker vectors normalized so that $W_v(1)=\hat{W}_v(1)=1$. If  $\Phi=\bfun_{\calo_{F_v,n}}$, then
\[
\Psi_{\eta}(s,W_v,\hat{W}_v,\Phi_v) =L (s,\pi_v \times \hat{\pi}_{v} \otimes \eta_v).
\]
   \end{enumerate} 
\end{Prop}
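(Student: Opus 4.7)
Parts \eqref{eqn: factor rankin} follow directly from the cited results once the convergence and functional equation of the Eisenstein series are in hand, so I will focus the plan on establishing the functional equation in part~\eqref{eqn: factor rankin} and the holomorphicity in part~\eqref{no pole in RS}. Parts (1) and (4) are quoted from \cite{CogdellGLn} verbatim.

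For the functional equation, the plan is to combine Proposition~\ref{Prop: Eisenstein properties} with a change of variables on the Rankin--Selberg integral. Since $\eta$ is quadratic, $\eta^{-1}=\eta$, so Proposition~\ref{Prop: Eisenstein properties} gives $E(g,\Phi,s,\eta) = E({}^tg^{-1},\widehat{\Phi},1-s,\eta)$. Substituting this into the global integral and performing the change of variable $g\mapsto {}^tg^{-1}$ (which preserves the invariant measure on $Z_n(\A_F)\GL_n(F)\bs\GL_n(\A_F)$) converts $\varphi,\varphi'$ into $\widetilde{\varphi},\widetilde{\varphi}'\in V_{\widetilde{\pi}},V_{\widetilde{\pi}'}$ respectively, and yields
\[
\calp_\eta(s,\varphi,\varphi',\Phi) = \calp_\eta(1-s,\widetilde{\varphi},\widetilde{\varphi}',\widehat{\Phi}).
\]
For $\Re(s)<0$ we have $\Re(1-s)>1$, and part (1) then gives the Euler product expansion with the Whittaker functions of $\widetilde{\pi}$ and $\widetilde{\pi}'$; since $\calw^{\psi^{-1}}(\widetilde\pi)$ is realized by $g\mapsto W(w_n{}^tg^{-1})$, this matches the asserted formula.

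For holomorphicity at $s=0$, my plan is to use the functional equation together with a residue calculation at $s=1$. By Proposition~\ref{Prop: Eisenstein properties}, the pole of $E(g,\Phi,s,\eta)$ at $s=1$ is simple with residue a constant multiple of $\eta(\det g)\,\widehat\Phi(0)$. Hence the residue of $\calp_\eta(s,\varphi,\hat\varphi,\Phi)$ at $s=1$ is a constant multiple of
\[
\widehat\Phi(0)\int_{Z_n(\A_F)\GL_n(F)\bs\GL_n(\A_F)}\varphi(g)\,\hat\varphi(g)\,\eta(\det g)\,dg.
\]
This integral is a Petersson-type pairing between $\pi\otimes\eta$ and $\hat\pi=\pi^\vee$, which vanishes by the cuspidal multiplicity-one theorem whenever $\pi\not\simeq\pi\otimes\eta$ (the assumption). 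Thus $\calp_\eta(s,\varphi,\hat\varphi,\Phi)$ is holomorphic at $s=1$; applying the functional equation just established, this is equivalent to holomorphicity at $s=0$. Alternatively, one can argue directly at $s=0$: the residue there is proportional to $\Phi(0)\int \varphi\hat\varphi\,\eta\,dg$ together with a factor $\int_{[\GL_1]^1}\eta(a)^n\,da$, which vanishes when $n$ is odd, while for $n$ even the Petersson argument above applies verbatim.

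The main obstacle is bookkeeping: tracking the correct characters through the transposition, the appearance of $\widehat{\Phi}$ from the Eisenstein functional equation, and ensuring the change of variable produces exactly $\widetilde W_v$ (not $\widehat W_v$) at each place. No deep new input is needed beyond Proposition~\ref{Prop: Eisenstein properties}, the results of \cite{CogdellGLn}, and strong multiplicity one for cuspidal automorphic representations of $\GL_n$.
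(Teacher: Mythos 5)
Your proposal is correct and, for the only part the paper actually proves (part~\eqref{no pole in RS}), matches the paper's argument: the paper computes the residue of $\calp_\eta$ at $s=0$ directly, observing it vanishes for $n$ odd (no pole of $E(\cdot,\Phi,s,\eta)$ there) and, for $n$ even, equals a multiple of $\Phi(0)\int\varphi\hat\varphi\,\eta\,dg$, which is zero since $\pi\not\simeq\pi\otimes\eta$. This is exactly your ``alternative'' argument; your primary route via the residue at $s=1$ plus the functional equation is equivalent (holomorphicity at $s=1$ for all data transfers to holomorphicity at $s=0$ under the bijection $\varphi\mapsto\widetilde\varphi$, $\Phi\mapsto\widehat\Phi$) but slightly more indirect, and you should spell out that transfer step if you lead with it. Parts (1), (2), (4) are indeed quoted from \cite{CogdellGLn} in the paper, as you note.
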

\begin{proof}
    The only statement not explicitly found in \cite{CogdellGLn} is \eqref{no pole in RS}. This claim is immediate when $n$ is odd, as the Eisenstein series has no poles in this case. When $n$ is even, the Eisenstein series has a pole at $s=0$, and the residue at $s=0$ of $\calp_\eta(s,\varphi,\varphi',\Phi)$ is (cf. \cite[Section 2.3.2]{CogdellGLn})
    \[
    -c\Phi(0) \int_{Z_n(\A_F)\GL_n(\A_F)\backslash\GL_n(\A_F)} \varphi(g)\hat{\varphi}(g)\eta(g)dg.
    \]
    The assumption on $\pi$ ensures that this inner product between $\pi$ and $\hat{\pi}\otimes \eta$ vanishes, giving the claim.
\end{proof}
We also consider the normalized variant
\[
\Psi_{\eta}^\natural(s,W_v,W_v',\Phi_v)= \frac{\Psi_{\eta}(s,W_v,W_v',\Phi_v)}{L (s,\pi_v \times \pi'_{v} \otimes \eta_v)}.
\]
\subsection{Unitary periods and FLO functionals}
Now let $\Pi$ be a cuspidal automorphic representation of $\GL_n(\A_E)$. For any Hermitian form $\tau\in \Herm_{n}^{\circ}(F)$, we have the associated unitary group $\U(V_\tau)\subset \Res_{E/F}\GL_n$. For $\varphi\in V_{\Pi}$, consider the \emph{unitary period} $\calp_\tau(\varphi)$ given by the integral
\begin{equation*}
\calp_\tau(\varphi)= \int_{[U(V_\tau)]}\varphi(h)dh.
\end{equation*}
By\cite{JacquetQuasisplit,FLO}, there is a cuspidal automorphic representation $\pi$ of $\GL_n(\A_F)$ such that $\Pi=\mathrm{BC}_E(\pi)$ if and only if $\Pi$ is {distinguished by $U(V_\tau)$} for any $\tau\in \Herm_{n}^{\circ}(F)$.

\subsubsection{Local functionals} Now consider a quadratic \'{e}tale extension $E/F$ of local fields. Let $\Pi\in \mathrm{Temp}(\GL_n(E))$ and denote by $\mathcal{E}(\Herm_{n}^{\circ}(F),\mathcal{W}^{\psi'}(\Pi)^\ast)$ the set of all maps 
\[
\al:\Herm_{n}^{\circ}(F)\times \mathcal{W}^{\psi'}(\Pi)\to \cc,
\] 
which are continuous and  satisfy $\al(\tau\ast g, W) = \al(\tau,\Pi(g)W)$. Note that  we have an isomorphism
\begin{align*}
\mathcal{E}(\Herm_{n}^{\circ}(F),\mathcal{W}^{\psi'}(\Pi)^\ast)&\iso \bigoplus_{\tau\in\calv_n(E/F)}\Hom_{U(V_\tau)}(\mathcal{W}^{\psi'}(\Pi),\cc)\\
\al&\mapsto (\al_\tau)_{\tau\in\calv_n(E/F)},
\end{align*}
where $\al_\tau:=\al(\tau,-)$. For any such $\al$, we consider the \emph{twisted Bessel character} $J^\al_{\Pi}:C_c^\infty(\Herm_{n}^{\circ}(F))\to \cc$ given by
\[
J_{\Pi}^\al(f') = \la \Pi(f') \al,\lam_1^\vee\ra
\]
where $\Pi(f') \al$ is the smooth functional
\[
W\mapsto \int_{\Herm_{n}^{\circ}(F)}f'(x)\al(x,W)dx,
\]
which we identify with an element of $\mathcal{W}^{{\psi'}^{-1}}(\Pi^\vee)$ via the pairing $[\cdot,\cdot]_{\Pi}$, and $\lam_1^\vee$ denotes the functional $\hat{W}\mapsto \hat{W}(e).$ Note that we may also write this as (cf. \cite[Section 2.1]{FLO})
\[
J_{\Pi}^\al(f')=\sum_{\tau\in\mathcal{V}_n(E/F)}\sum_{W'}\al_\tau(\Pi(f^\tau)W')\hat{W}'(e),  
\] where $W'$ runs over an orthonormal basis of of the Whittaker model of $\Pi$, and where $f^\tau\in C_c^\infty(\GL_n(E))$ satisfy
\[
f' = \sum_\tau p_{\tau,!}(f^\tau).
\]
Similarly, for $\pi\in\mathrm{Temp}(\GL_n(F))$, we define the \emph{Bessel character} $I_{\pi}: C_c^\infty(\GL_n(F))\to \cc$ by
\[
I_{\pi}(f) = \la \pi(f) \lam_{w_n},\lam_1^\vee\ra=\sum_{W}\lam_{w_n}(\pi(f)W)\hat{W}(e),
\]
where $\pi(f) \lam_{w_n}$ denotes the smooth functional
\[
W\mapsto \int_{\GL_n(F)}f(g)W(w_ng)dg.
\]

One of the main local results of \cite{FLO} is the following theorem.
\begin{Thm}\label{Thm: FLO functionals}
For every $\pi\in \mathrm{Temp}(\GL_n(F))$, there exists a unique \[
\al^{\pi}\in \mathcal{E}(\Herm_{n}^{\circ}(F),\mathcal{W}^{\psi'}(\mathrm{BC}_E(\pi))^\ast)
\]such that the identity
\begin{equation*}
J_{\mathrm{BC}_E(\pi)}^{\al^{\pi}}(f') =I_{\pi}(f)
\end{equation*}
holds for all pairs of test functions $(f,f')$ which are transfers in the sense defined in \cite[Section 3]{FLO}.
\end{Thm}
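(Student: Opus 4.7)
The plan is to establish uniqueness first and then produce $\alpha^\pi$ by a globalization argument, following the general template that characterizes distributions via their values on a full supply of matching test functions.

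For \emph{uniqueness}, I would begin by observing that for each $\tau \in \calv_n(E/F)$ the space $\Hom_{U(V_\tau)}(\mathcal{W}^{\psi'}(\mathrm{BC}_E(\pi)),\cc)$ is at most one-dimensional when $\pi$ is tempered. This is a local multiplicity-one statement for the pair $(\GL_n(E), U(V_\tau))$ in the generic/tempered setting, which one can derive from Ok's theorem combined with the fact that $\mathrm{BC}_E(\pi)$ is generic. Consequently, the component $\alpha^\pi_\tau$, if non-zero, is pinned down up to one complex scalar, and the scalar is rigidified by the requirement that $J^{\alpha^\pi}_{\mathrm{BC}_E(\pi)}(f') = I_\pi(f)$ for all transfers. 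Non-vanishing of $\alpha^\pi_\tau$ for every $\tau$ must be checked separately; the key input is that $\mathrm{BC}_E(\pi)$ is $U(V_\tau)$-distinguished for each $\tau$, which locally follows from the characterization of base change images via unitary distinction (cf.\ Jacquet, Feigon--Lapid--Offen).

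For \emph{existence}, I would globalize. Choose a number field $F_{\mathrm{gl}}$ and a place $v_0$ with $(F_{\mathrm{gl}})_{v_0} \simeq F$, together with a quadratic extension $E_{\mathrm{gl}}/F_{\mathrm{gl}}$ localizing to $E/F$ at $v_0$. Using a standard globalization procedure (one can imitate Shin's or the Sauvageot-density argument), produce a cuspidal automorphic representation $\Pi$ of $\GL_n(\A_{F_{\mathrm{gl}}})$ such that $\Pi_{v_0} \simeq \pi$, $\mathrm{BC}_{E_{\mathrm{gl}}}(\Pi)$ is cuspidal, and at an auxiliary place $v_1$ the component $\Pi_{v_1}$ is supercuspidal (to impose cuspidal support conditions and simplify both trace formulas). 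The Jacquet--Ye conjecture for $\GL_n$ (as proven by Jacquet and refined by FLO) provides a comparison of relative trace formulas yielding a global identity
\begin{equation*}
\sum_{\Pi'} J^{\alpha^{\Pi'}}_{\mathrm{BC}_E(\Pi')}(\wt f') \;=\; \sum_{\pi'} I_{\pi'}(\wt f)
\end{equation*}
for matching global test functions $(\wt f, \wt f')$, where the sums run over cuspidal $\pi'$ on the linear side and their base changes on the unitary side, and the global Bessel characters factorize as products of local Bessel characters against $\pi'_v$-inner product pieces (via Proposition \ref{Prop: nice inner product}).

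The idea then is to \emph{isolate $\pi$ at $v_0$}: choose the test functions at $v \neq v_0$ to pseudo-project onto the contribution of the single representation $\Pi$. This is done using pseudocoefficients at $v_1$ to restrict to cuspidal support containing $\Pi_{v_1}$, and using matrix coefficients / approximate projectors at the other places to further isolate $\Pi$ by a linear independence (Plancherel) argument that exploits the semisimple character of the spectral side. After this spectral isolation, the global identity reduces to a local identity at $v_0$, which --- once normalized by the matching local factors of $\Pi$ at $v\neq v_0$ --- reads exactly $J^{\alpha^\pi}_{\mathrm{BC}_E(\pi)}(f') = I_\pi(f)$ on the space of transfers at $v_0$. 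This defines $\alpha^\pi$ and simultaneously proves the identity.

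The \textbf{main obstacles} are threefold: (i) producing a globalization with the required cuspidality of $\mathrm{BC}_{E_{\mathrm{gl}}}(\Pi)$ and local component $\pi$ at $v_0$ while keeping control of ramification at auxiliary places; (ii) the spectral isolation step, which requires enough linear independence of local tempered Bessel characters to extract a single representation from the sum --- this is typically the most delicate point and relies on Kazhdan-type orthogonality of characters plus density of matrix coefficients; and (iii) handling archimedean places, where one must know that $U(V_\tau)$-invariant functionals exist and satisfy the required continuity so that $\alpha^\pi$ lies in $\mathcal{E}(\Herm_n^\circ(F),\mathcal{W}^{\psi'}(\mathrm{BC}_E(\pi))^\ast)$ --- this will be invoked from the general theory of smooth invariant functionals for symmetric pairs. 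Once (i)--(iii) are in hand, uniqueness is automatic from the multiplicity-one input, and the identity obtained from a single globalization automatically extends to all transfer pairs $(f,f')$ by the linearity and density of matchable test functions.
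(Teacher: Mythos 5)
The paper does not prove this theorem: it is stated as ``One of the main local results of \cite{FLO}'' and is invoked as a black box downstream (e.g.\ in Proposition \ref{Prop: unramified rel char} and Corollary \ref{Cor: unramified spectral transfer}). There is therefore no ``paper's own proof'' to compare your sketch against; the expected move is simply to cite Feigon--Lapid--Offen.

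That said, your sketch is a reasonable cartoon of the actual FLO argument, and it correctly identifies the two pillars: multiplicity one for $(\GL_n(E),\U(V_\tau))$, which pins down each $\alpha^\pi_\tau$ up to scalar, and a globalization plus relative-trace-formula comparison, which fixes the scalars. A few points deserve correction or caution, though. First, the non-vanishing of $\alpha^\pi_\tau$ for \emph{all} $\tau$ when $\pi$ is tempered is itself a substantial theorem in \cite{FLO}, not a corollary of ``the characterization of base change images via unitary distinction'' --- that characterization is a \emph{consequence} of the FLO functionals and the trace formula comparison, not an input one can use to prove the theorem. Second, one cannot in general globalize a tempered $\pi$ to a cuspidal $\Pi$ with prescribed supercuspidal component at an auxiliary place while also arranging $\Pi_{v_0}\simeq\pi$; FLO get around this not by bare pseudocoefficient isolation but by a careful induction over Langlands/Bernstein--Zelevinsky data, explicit computation of open periods via Rankin--Selberg integrals, and analytic continuation in inducing parameters. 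Your outline elides that inductive structure, which is where most of the real work lives. Third, the archimedean case in \cite{FLO} (continuity of the functionals, meromorphic continuation of local open periods, comparison at ramified archimedean places) is a self-contained and delicate part of their paper; it is not a routine invocation of a general theory of smooth invariant functionals for symmetric pairs, and the present article in fact sidesteps the issue entirely by choosing globalizations in which every archimedean place of $F$ splits in $E$.
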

We refer to the functionals $\al^{\pi}$ as \textbf{FLO functionals}. When $E\cong F\times F$ is split, so that $\mathrm{BC}_E(\pi) \cong \pi\otimes \pi$, these functionals are very simple \cite[Corollary 7.2]{FLO}:
\begin{equation}\label{eqn: split FLO identity}
\al_{(h,{}^th)}^{\pi}(W'\otimes W'') = \left[\mathcal{W}(h,\pi)W',\mathcal{W}(w_n,\hat{\pi})\hat{W}''\right]_{\pi}
\end{equation}
for any $h\in \GL_n(F)$ and $W',W''\in \mathcal{W}^{\psi}(\pi)$. 
\begin{Lem}\cite[Lemma 3.9]{FLO}\label{Lem: unramified FLO}
Assume that $F$ is non-archimedean of odd residue characteristic. Further assume that $E/F$ is an unramified extension and that $\psi'$ has conductor $\calo_{F}$. Let $\Pi=\mathrm{BC}_E(\pi)$ be unramified and let $W_0\in \mathcal{W}^{\psi'}(\Pi)$ denote the normalized spherical vector. Then for any $\tau\in \Herm_{n}^{\circ}(\calo_{F})$, we have
\[
\al_{\tau}^{\pi}(W_0) = L(1,\pi\times {\pi}^\vee\cdot \eta).
\]
\end{Lem}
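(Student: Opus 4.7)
The strategy is to exploit the characterization of the FLO functional $\al^{\pi}$ given by Theorem \ref{Thm: FLO functionals} together with the unramified fundamental lemma for the Jacquet--Ye comparison (essentially Theorem \ref{Thm: xiao FL}, specialized to the unit), and then evaluate both sides of the Bessel character identity at matching unit test functions. Since $\Herm_{n}^{\circ}(\calo_F) = K_E \ast I_n$ is a single $K_E$-orbit, the $K_E$-equivariance of $\al^{\pi}$ combined with the right $K_E$-invariance of $W_0$ reduces the lemma to the single computation of $\al^{\pi}_{I_n}(W_0)$.

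First I would take $f = \bfun_{\GL_n(\calo_F)}$ on the linear side and $f' = \bfun_{\Herm_{n}^{\circ}(\calo_F)}$ on the Hermitian side, which form a matched pair for the FLO transfer. On the linear side, compute
\[
I_\pi(\bfun_K) = \sum_W \left(\int_K W(w_n g)\,dg\right) \hat W(e),
\]
where $W$ runs over an orthonormal basis of $\calw^{\psi}(\pi)$ with respect to $[\cdot,\cdot]_\pi$. Only the spherical line contributes since $\pi(\bfun_K)$ is the projection onto the spherical vectors. Using $w_n \in K$, right $K$-invariance of the spherical Whittaker function $W^{\sph}_0$ with $W^{\sph}_0(e)=1$, and the normalization $[W^{\sph}_0, \hat W^{\sph}_0]_\pi = L(1,\pi\times\pi^\vee)$ (equivalent to the normalized pairing being $1$), this yields
\[
I_\pi(\bfun_K) = \frac{1}{L(1,\pi\times\pi^\vee)}.
\]

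On the Hermitian side, the decomposition $f' = p_{I_n,!}(\bfun_{K_E})$ together with the explicit formula
\[
J_\Pi^{\al^{\pi}}(f') = \sum_{W'} \al^{\pi}_{I_n}(\Pi(\bfun_{K_E})W')\,\hat W'(e)
\]
reduces, by the same spherical-projection argument applied to an orthonormal basis of $\calw^{\psi'}(\Pi)$ (with $\Pi$-normalization $[W_0,\hat W_0]_\Pi = L(1,\Pi\times\Pi^\vee)$), to
\[
J_\Pi^{\al^{\pi}}(f') = \frac{\al^{\pi}_{I_n}(W_0)}{L(1,\Pi\times\Pi^\vee)}.
\]
Equating the two sides via Theorem \ref{Thm: FLO functionals} and invoking the base change factorization
\[
L(s,\Pi \times \Pi^\vee) = L(s,\pi\times\pi^\vee)\,L(s,\pi\times\pi^\vee\cdot\eta),
\]
gives $\al^{\pi}_{I_n}(W_0) = L(1,\pi\times\pi^\vee\cdot\eta)$, and thus $\al^{\pi}_\tau(W_0) = L(1,\pi\times\pi^\vee\cdot\eta)$ for all $\tau\in\Herm_{n}^{\circ}(\calo_F)$.

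The main obstacle is the careful bookkeeping of measure normalizations and inner product conventions, specifically verifying that the unit on the Hermitian side has total integral compatible with the unit on the linear side under the FLO transfer (rather than the Xiao transfer involving the extra Schwartz datum on $F_n$), and that the orthonormal-basis calculations above indeed isolate the normalized spherical vector with the right constant. A secondary delicate point arises when $\pi \simeq \pi \otimes \eta$, since then $\Pi = \BC_E(\pi)$ is reducible: here one must replace the irreducible Whittaker model of $\Pi$ by the appropriate induced Whittaker model and verify that the same formula holds after summing over the relevant components. Once these normalization issues are resolved, the equality of the two explicit computations delivers the claim.
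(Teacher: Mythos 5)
The paper does not prove this lemma at all: it is cited directly as \cite[Lemma 3.9]{FLO}, and the argument you reconstruct is indeed (essentially) how Feigon--Lapid--Offen establish it. So there is nothing in the paper to compare against, but your plan is a sound reconstruction: reduce to $\tau = I_n$ by $K_E$-equivariance, plug the matching spherical units into the characterization in Theorem \ref{Thm: FLO functionals}, and use the normalizations $[W_0,\hat W_0]_\pi = L(1,\pi\times\pi^\vee)$, $[W_0,\hat W_0]_\Pi = L(1,\Pi\times\Pi^\vee)$ together with the base change factorization of $L(s,\Pi\times\Pi^\vee)$.

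Two corrections of detail. First, the fundamental lemma you need is \emph{not} Theorem \ref{Thm: xiao FL} (that is the Xiao/Jacquet comparison involving the extra Schwartz factor on $F_n$); it is the Jacquet--Ye fundamental lemma asserting that $\bfun_{\GL_n(\calo_F)}$ and $\bfun_{\Herm_n^\circ(\calo_F)}$ match in the sense of \cite[Section 3]{FLO}. In the paper this is the unit case of Proposition \ref{Prop: FLO fundamental lemma} (Jacquet's theorem), and that is what you should invoke. You do flag this distinction at the end, but it should be the main citation, not a worry to check. Second, your concern about $\pi\simeq\pi\otimes\eta$ making $\Pi$ reducible does not arise here: the lemma is local and purely about an unramified irreducible $\Pi = \mathrm{BC}_E(\pi)$, whose local base change is irreducible, so the Whittaker-model bookkeeping is as in the generic irreducible case and no sum over components is needed. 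With those two adjustments the argument goes through as written.
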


We thus re-normalize and define
\begin{equation}\label{spherical FLO}
\al_\tau^{\pi,\natural}(W) = \frac{\al_\tau^{\pi}(W)}{ L(1,\pi\times {\pi}^\vee\cdot \eta)},
\end{equation}
 for $W\in \mathcal{W}^{\psi'}(\Pi)$. Returning to the global extension of number fields $E/F$, we have the following factorization of unitary periods of cusp forms.
\begin{Prop}\cite[Theorem 10.2]{FLO}\label{Prop: unitary periods factor}
Let $\pi$ be an irreducible cuspidal automorphic representation of $\GL_n(\A_F)$, and let $\Pi = \mathrm{BC}_E(\pi)$. Then for any $\tau\in \Herm_{n}^{\circ}(F)$ and any factorizable $\varphi\in V_{\Pi}$, we have
\begin{equation}\label{eqn: period product}
\calp_\tau(\varphi) = 2L(1,\pi\times {\pi}^\vee\cdot \eta) \prod_v\al_\tau^{\pi_v,\natural}(W_v),
\end{equation}
where $W^{{\Pi}}(g: \varphi) = \prod_v{W}_v(g)$.
\end{Prop}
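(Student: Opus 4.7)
The plan is to realize both sides of the identity as $U(V_\tau)(\A_F)$-invariant functionals on the space $\mathcal{W}^{\psi'}(\Pi)$ and exploit local multiplicity one. First I would use the Whittaker expansion $\varphi \mapsto W^{\varphi}$ to transfer $\calp_\tau$ to a functional on the Whittaker model. Since $\Pi=\mathrm{BC}_E(\pi)$ is cuspidal and hence tempered at every place, the local multiplicity one results recalled in the preceding subsection imply that $\Hom_{U(V_\tau)(F_v)}(\mathcal{W}^{\psi'_v}(\Pi_v),\cc)$ is at most one-dimensional, and is spanned by $\al_\tau^{\pi_v}$ whenever it is nonzero (Theorem \ref{Thm: FLO functionals}). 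Consequently, both $\calp_\tau$ and $\prod_v \al_\tau^{\pi_v,\natural}$ lie in the same one-dimensional space inside $\Hom_{U(V_\tau)(\A_F)}(\mathcal{W}^{\psi'}(\Pi),\cc)$, so there is a scalar $c=c(\pi,\tau)\in\cc$ with
\[
\calp_\tau(\varphi)=c\prod_v \al_\tau^{\pi_v,\natural}(W_v)
\]
for every factorizable $\varphi$. The content of the proposition is the evaluation $c=2L(1,\pi\times\pi^\vee\cdot\eta)$.

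To pin down $c$ I would compare $|\calp_\tau(\varphi)|^2$ to the Petersson inner product via a relative trace formula. Pairing $\calp_\tau\otimes\overline{\calp_\tau}$ with the trace formula kernel for $U(V_\tau)\times U(V_\tau)$ acting on $[\GL_n(E)]$, the geometric side is the Jacquet--Ye kernel on $\GL_n(\A_F)$ after applying the matching of test functions of \cite{FLO}. Spectrally isolating $\Pi$ via strong multiplicity one, the unitary side expands as $|c|^2 \prod_v |\al_\tau^{\pi_v,\natural}(W_v)|^2$ (up to the local inner product $[\cdot,\cdot]_{\Pi_v}$), while the linear side expands as $I_\pi(f)$ times $\la\varphi,\hat\varphi\ra_{Pet}^{-1}$. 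The local identity $J_{\Pi_v}^{\al^{\pi_v}}(f'_v)=I_{\pi_v}(f_v)$ from Theorem \ref{Thm: FLO functionals} makes every local factor cancel, reducing the identification of $c$ to a global residue computation. The factorization \eqref{eqn: inner product} of Proposition \ref{Prop: nice inner product} contributes the $\Res_{s=1}L(s,\pi\times\pi^\vee)$, which combines with the functional equation to produce the factor $L(1,\pi\times\pi^\vee\cdot\eta)$.

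The constant $2$ should be tracked by an index computation: the global period is integrated over $[U(V_\tau)]=U(V_\tau)(F)\bs U(V_\tau)(\A_F)$, whereas the product of local $\al_\tau^{\pi_v,\natural}$ is only $U(V_\tau)(F_v)$-invariant and must be adjusted by the volume of a suitable quotient. Concretely, the factor $2$ arises from $\vol(Z_E^1(F)\bs Z_E^1(\A_F))/\vol(F^\times\bs\A_F^1)$ (or equivalently, from the index of $\Nm_{E/F}\A_E^\times \cdot F^\times$ in $\A_F^\times$), in parallel with Lemma \ref{Lem: vol of centralizer}. Checking against unramified data (using Lemma \ref{Lem: unramified FLO}, so $\al_\tau^{\pi_v,\natural}(W_v)=1$ for $v\notin S$) ensures no further residual constants remain.

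The main obstacle is the spectral identification on the $\GL_n(\A_E)$-side: establishing that the kernel function for $\Pi$ truly decomposes as $\sum_{\varphi}\calp_\tau(\Pi(f')\varphi)\overline{\calp_\tau(\varphi)}$ requires control of the continuous spectrum and the absolute convergence of the spectral expansion, and extracting the precise constant $c$ (rather than $|c|^2$) requires pairing with the Whittaker functional on a spanning set of vectors where the sign can be tracked through \eqref{eqn: split FLO identity} at split places. This is the technical core of \cite[Section 10]{FLO}, which I would invoke directly rather than reproduce.
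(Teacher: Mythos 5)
The paper does not provide a proof of this proposition; it is stated as a direct citation of \cite[Theorem 10.2]{FLO}, and your proposal correctly recognizes this by deferring at the end to Section 10 of FLO for the technical core. Your outline of the proof strategy in FLO (factor via local multiplicity one, then pin the scalar by comparing a Jacquet--Ye relative trace formula for $\U(V_\tau)\times \U(V_{\tau'})$-periods on $\GL_n(\A_E)$ against one for $\GL_n(\A_F)$, using the canonical local identity $J_{\Pi_v}^{\al^{\pi_v}}=I_{\pi_v}$ from Theorem \ref{Thm: FLO functionals} and the inner-product factorization of Proposition \ref{Prop: nice inner product}) matches the actual structure of that argument. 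Two small points are worth refining. First, the framing ``both lie in the same one-dimensional space, so there is a scalar $c$'' slightly understates the input: the local functionals $\al_\tau^{\pi_v}$ are not merely chosen up to scalar by multiplicity one, they are already \emph{pinned down} by the Bessel identity of Theorem \ref{Thm: FLO functionals}, so the constant $c$ is honest global information, not an artifact of choices. Second, your attribution of the factor $2$ to $\vol(Z_E^1(F)\bs Z_E^1(\A_F))/\vol(F^\times\bs\A_F^1)$ is not the right bookkeeping: in the FLO comparison, the relevant doubling arises because the fiber $\mathcal{B}(\Pi)=\{\pi,\pi\otimes\eta\}$ of base change over the cuspidal $\Pi$ has two elements, so both $I_\pi$ and $I_{\pi\otimes\eta}$ contribute on the linear spectral side (compare the role of this doubling in Propositions \ref{Prop: Eisenstein RTF} and \ref{Prop: factorize unitary} of the present paper, where the period factorization \eqref{eqn: period product}, squared, yields the $4$). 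Apart from this mis-attribution the proposal is a faithful high-level reconstruction of the cited result.
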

\quash{
\begin{Thm}[FLO] For any $\tau$, there exists  $\alpha_\tau^\pi \in \Hom_{\U_\tau}(W_\Pi,\BC)$ that is characterized by the following identity. For any matching functions $f_F \in C_c^\infty(\GL_n(F))$ and $f_E \in C_c^\infty(\GL_n(E))$ in the sense of Jacquet-Ye, we have \[L(1,\pi \times \pi^\vee \otimes \eta)\sum_{\varphi \in \textup{OB}(W_\pi)} (\pi(f_F)\varphi)(e)\ov{\varphi(e)}=\sum_{\varphi \in \textup{OB}(W_\Pi)} \alpha_\tau^\pi(\Pi(f_E)\varphi)\ov{\varphi(e)}.\]
	Here, we use the normalized inner product $(-,-)$ on the Whittaker models.
\end{Thm} 
We remark that the extra $L(1,\pi \times \pi^\vee \otimes \eta)$ from \cite{FLO} is because we use different normalization on the inner product, where the unnormalized version is used in \cite{FLO}. Also, it will not be important for us to define the transfer of Jacquet-Ye. What matters is that the following functions are transfers:
}

\subsection{Global relative characters}
Let $E/F$ be a quadratic extension of number fields.
\begin{Def}\label{Def: unitary rel char}
Fix a pair $\tau_1,\tau_2\in\Herm_{n}^{\circ}(F)$ and let $f'\in C_c^\infty(\GL_n(\A_E))$. For any unitary cuspidal representation $\Pi$ of $\GL_{n}(\A_E)$, define
\begin{align*}
J^{\tau_1,\tau_2}_\Pi(f')
&=\sum_{\varphi \in \textup{OB}(\Pi)}\frac{\CP_{\tau_1}(\Pi(f')\varphi)\CP_{\tau_2^{-1}}(\hat{\varphi})}{\la\varphi,\hat{\varphi}\ra_{Pet}}
\end{align*}
where the sum runs over an orthogonal basis for $\Pi$\quash{, and $y^\tau = y^{-\ast} = w_n{}^T{y}^{-\sig}w_n$}. This gives a distribution on $C_c^\infty(\GL_n(\A_E))$, called the \textbf{unitary relative character of $\Pi$}. 
\end{Def}

\quash{\begin{Rem}
We remark on our normalization of the  distribution. Our ultimate goal is proving Theorem \ref{Thm: first step main}, which is naturally stated in terms of the group $\Res_{E/F}(\GL_n\times \GL_n)$. In order to work in that setting, we may pull back along the contraction map $p_{4,3}$ relating the pairs \eqref{4} and \eqref{3}. For two unitary cuspidal representations $\Pi_1$ and $\Pi_2$ of $\GL_{n}(\A_E)$, this requires consideration of the period 
\begin{equation}\label{eqn: twisted inner product}
    \varphi_1\otimes \varphi_2\longmapsto \int_{[\Res_{E/F}\GL_n]}\varphi_1(g)\varphi_2((g^\ast)^{-1})dg = \la{\varphi}_1,\hat{\varphi}^\sig_2\ra_{Pet}.
\end{equation}
Given $f_1\otimes f_2\in C_c^\infty(\GL_n(\A_E)\times \GL_n(\A_E))$ and two unitary cuspidal representations $\Pi_1$ and $\Pi_2$ of $\GL_{n}(\A_E)$, we thus consider the sum
\[
\widetilde{J^{\tau_1,\tau_2}}_{\Pi_1\boxtimes\Pi_2} (f_1  \otimes f_2) = \sum_{\varphi_1 \in \textup{OB}(\Pi_1)}\sum_{\varphi_2 \in \textup{OB}(\Pi_2)}\frac{\CP_{x}(\Pi_2(f_1)\varphi)\CP_{y}(\Pi_2(f_2){\varphi}_2)\la\hat{\varphi}_1,{\varphi}^\sig_2\ra_{Pet}}{\la\varphi_1,\hat{\varphi}_1\ra_{Pet}\la\varphi_2,\hat{\varphi}_2\ra_{Pet}}.
\]
For any pair $x,y\in\Herm_{n}^{\circ}(F)$ and unitary cuspidal representation $\Pi_1\boxtimes\Pi_2$ of $\GL_{n}(\A_E)\times \GL_n(\A_E)$, it is straightforward to verify that if the distribution ${\widetilde{J^{\tau_1,\tau_2}}_{\Pi_1\boxtimes\Pi_2}}$ is non-zero, then $\Pi_1\simeq \Pi_2\simeq \Pi_1^\sig$. In this case, for $f_1\otimes f_2\in C_c^\infty(\GL_n(\A_E)\times \GL_n(\A_E))$, we have the identity
  \[
  \widetilde{J^{\tau_1,\tau_2}}_{\Pi\boxtimes\Pi} (f_1 \otimes f_2) = {J^{\tau_1,\tau_2}}_{\Pi} (f_1  \ast f_2^\ast),
  \]
  where $f_2^\ast(g) = f_2(g^\ast)$.
\end{Rem}}
\quash{
  \begin{align*}
J^{\tau_1,\tau_2}_\Pi(f')=&\sum_{\varphi \in \textup{OB}(\Pi)}\frac{\CP_{x}(\Pi(f_1)\varphi)\CP_{y}(\Pi(f){\varphi}^\sig)}{\la\varphi,\hat{\varphi}\ra_{Pet}}\\
&=\sum_{\varphi \in \textup{OB}(\Pi)}\frac{\CP_{x}(\Pi(f_1)\varphi)\CP_{y^{-\ast}}(\Pi^\vee(f_2^{-\ast})\hat{\varphi})}{\la\varphi,\hat{\varphi}\ra_{Pet}}\\
&=\sum_{\varphi \in \textup{OB}(\Pi)}\frac{\CP_{x}(\Pi(f_1\ast f_2^\ast)\varphi)\CP_{y^{-\ast}}(\hat{\varphi})}{\la\varphi,\hat{\varphi}\ra_{Pet}}
\end{align*}
We now remark on our normalization of the  distribution. Our ultimate goal is proving Theorem \ref{Thm: first step main} is not obviously a relative character in the formalism of the Bessel distributions from \cite[Section 2]{FLO}. To identify it as such a distribution, we pull back along the contraction map $p_{4,3}$ relating the pairs \eqref{4} and \eqref{3}. For two unitary cuspidal representations $\Pi_1$ and $\Pi_2$ of $\GL_{n}(\A_E)$, this requires consideration of the period 
\begin{equation}\label{eqn: twisted inner product}
    \varphi_1\otimes \varphi_2\longmapsto \int_{[\Res_{E/F}\GL_n]}\varphi_1(g)\varphi_2((g^\ast)^{-1})dg = \la{\varphi}_1,\hat{\varphi}^\sig_2\ra_{Pet}.
\end{equation}
Given $f_1\otimes f_2\in C_c^\infty(\GL_n(\A_E)\times \GL_n(\A_E))$ and two unitary cuspidal representations $\Pi_1$ and $\Pi_2$ of $\GL_{n}(\A_E)$, we thus consider the sum
\[
\widetilde{J^{\tau_1,\tau_2}}_{\Pi_1\boxtimes\Pi_2} (f_1  \otimes f_2) = \sum_{\varphi_1 \in \textup{OB}(\Pi_1)}\sum_{\varphi_2 \in \textup{OB}(\Pi_2)}\frac{\CP_{x}(\Pi_2(f_1)\varphi)\CP_{y}(\Pi_2(f_2){\varphi}_2)\la\hat{\varphi}_1,{\varphi}^\sig_2\ra_{Pet}}{\la\varphi_1,\hat{\varphi}_1\ra_{Pet}\la\varphi_2,\hat{\varphi}_2\ra_{Pet}}.
\]
\begin{Lem}\label{Lem: constraction rel car}
  Fix a pair $x,y\in\Herm_{n}^{\circ}(F)$. For any unitary cuspidal representation $\Pi_1\boxtimes\Pi_2$ of $\GL_{n}(\A_E)\times \GL_n(\A_E)$  with trivial central character, if the distribution ${\widetilde{J^{\tau_1,\tau_2}}_{\Pi_1\boxtimes\Pi_2}}$ is non-zero, then $\Pi_1\simeq \Pi_2\simeq \Pi_1^\sig$. In this case, for $f_1\otimes f_2\in C_c^\infty(\GL_n(\A_E)\times \GL_n(\A_E))$, we have the identity
  \[
  \widetilde{J^{\tau_1,\tau_2}}_{\Pi\boxtimes\Pi} (f_1 \otimes f_2) = {J^{\tau_1,\tau_2}}_{\Pi} (f_1  ,f_2).
  \]
\end{Lem}
\begin{proof}
    If ${\widetilde{J^{\tau_1,\tau_2}}_{\Pi_1\boxtimes\Pi_2}}$ is non-zero, then the period \eqref{eqn: twisted inner product} is non-vanishing on $\Pi_1\boxtimes\Pi_2$. This forces $\Pi_2^\sig \sim \Pi_1$. On the other hand, both $\Pi_1$ and $\Pi_2$ have non-vanishing unitary periods, forcing $\Pi_i\simeq \Pi_i^\sig$. The final claim identifying the two distributions follows by orthogonality.
    \end{proof}}

We also have the global relative character on the linear side.
\begin{Def}\label{Def: relative char linear}
For any cuspidal automorphic representation $\pi$ of $\GL_{n}(\A_F)$ and $f\in C_c^\infty(\GL_n(\A_F))$ and $\Phi\in C_c^\infty(\A_{F,n})$, define 
\begin{align*}I_{\pi}(f\otimes \Phi)&=\sum_{\varphi \in \textup{OB}(\pi)}\frac{   \calp_0(\pi(f)\varphi,\hat{\varphi},\Phi)}{\la\varphi,\hat{\varphi}\ra_{Pet}}\\
&=\sum_{\varphi \in \textup{OB}(\pi)} \frac{ 1}{\la\varphi,\hat{\varphi}\ra_{Pet}}\int_{Z_n(\A_F)\GL_n(F)\bs\GL_n(\A_F)}(\pi(f)\varphi)(g)\hat{\varphi}{(g)}E(g,\Phi,s,\eta)dg \bigg \vert_{s=0},
\end{align*}
where the sum runs over an orthogonal basis for $\pi$. This extends linearly to give a distribution on $C_c^\infty(\GL_n(\A_F)\times \A_{F,n})$, called  the \textbf{linear relative character of $\pi$}. 
\end{Def}

\subsection{Local relative characters}
Now assume that $F$ is a local field and let $E/F$ be a quadratic \'{e}tale $F$-algebra. When $F$ is Archimedean, we always assume $E=F\times F$.

We now define the local relative characters, 
beginning with the unitary side. 
\begin{defn} Let $\Pi=\mathrm{BC}(\pi)$ be a generic irreducible admissible representation of $\GL_n(E)$. For any choice of $\tau_1,\tau_2\in\Herm_{n}^{\circ}(F)$ and $f'\in C^\infty_c(\GL_n(E))$, we define the \textbf{(local) unitary spherical character of $\Pi$} by 
	\begin{equation}\label{eqn: unitary local} 
	    J_{\Pi}^{\tau_1,\tau_2}(f')=\sum_{W \in \textup{OB}(W_\Pi)}\frac{\alpha_{\tau_1}^{\pi}(\Pi(f')W)\alpha_{\tau_2^{-1}}^{\pi^\vee}(\hat{W})}{[W,\hat{W}]_{\Pi}}.
	\end{equation}
\end{defn}
\begin{Lem}\label{Lem: unramified unitary local rel char}
     Assume that $F$ is has odd residue characteristic and $E/F$ is an unramified extension. Assume that $\Pi$ is unramified and that $\psi$ has conductor $\calo_F$. When $\tau_1,\tau_2\in\Herm_{n}^{\circ}(\calo_F)$,
    \[
    J_{\Pi}^{\tau_1,\tau_2}(\bfun_{\GL_n(\calo_E)})=\frac{L(1,\pi\times \pi^\vee\otimes \eta)}{L(1,\pi\times \pi^\vee)},
    \]
    where $\pi$ is an unramified admissible representation of $\GL_n(F)$ satisfying $\Pi = \mathrm{BC}(\pi)$.
\end{Lem}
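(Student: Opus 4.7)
The plan is to reduce the sum defining $J_{\Pi}^{\tau_1,\tau_2}(\bfun_{\GL_n(\calo_E)})$ to a single term and then evaluate that term using the unramified FLO lemma together with the factorization of the local Rankin--Selberg $L$-factor under base change.

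First I would choose an orthogonal basis for the Whittaker model of $\Pi$ of the form $\{W_0\}\cup \{W_i\}_{i\geq 1}$, where $W_0$ is the spherical vector normalized by $W_0(e)=1$ and the $W_i$ are non-spherical. Since $\psi$ has conductor $\calo_F$ and $\vol(\GL_n(\calo_E))=1$ by our measure conventions in \S\ref{measures}, convolution by $\bfun_{\GL_n(\calo_E)}$ is precisely the orthogonal projection onto the line $\cc W_0$. Hence $\Pi(\bfun_{\GL_n(\calo_E)})W_i=0$ for $i\geq 1$ and $\Pi(\bfun_{\GL_n(\calo_E)})W_0=W_0$, so only the spherical term survives:
\[
J_{\Pi}^{\tau_1,\tau_2}(\bfun_{\GL_n(\calo_E)})=\frac{\alpha_{\tau_1}^{\pi}(W_0)\,\alpha_{\tau_2^{-1}}^{\pi^\vee}(\hat{W}_0)}{[W_0,\hat{W}_0]_{\Pi}}.
\]

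Next, since $\tau_1\in\Herm_{n}^{\circ}(\calo_F)=\GL_n(\calo_E)\ast I_n$ and, by the identity $\tau_2^{-1}=g^{-1}(g^{-1})^\ast$ when $\tau_2=g^\ast g$ with $g\in \GL_n(\calo_E)$, also $\tau_2^{-1}\in\Herm_{n}^{\circ}(\calo_F)$, Lemma \ref{Lem: unramified FLO} applies to both numerator factors. It gives
\[
\alpha_{\tau_1}^{\pi}(W_0)=L(1,\pi\times\pi^\vee\cdot\eta),\qquad \alpha_{\tau_2^{-1}}^{\pi^\vee}(\hat{W}_0)=L(1,\pi^\vee\times(\pi^\vee)^\vee\cdot\eta)=L(1,\pi\times\pi^\vee\cdot\eta),
\]
where for the second equality one uses that $\hat{W}_0$ is the normalized spherical vector in $\calw^{\psi^{-1}}(\Pi^\vee)=\calw^{\psi^{-1}}(\mathrm{BC}(\pi^\vee))$.

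For the denominator, Proposition \ref{Prop: nice inner product} gives $[W_0,\hat{W}_0]_{\Pi}=L(1,\Pi\times\Pi^\vee)$, since the normalized pairing $[\cdot,\cdot]^\natural_{\Pi}$ takes value $1$ on the pair of unramified vectors normalized at the identity. The standard factorization of the base-change Rankin--Selberg $L$-factor yields
\[
L(1,\Pi\times\Pi^\vee)=L(1,\pi\times\pi^\vee)\,L(1,\pi\times\pi^\vee\otimes\eta).
\]
Combining these identities produces
\[
J_{\Pi}^{\tau_1,\tau_2}(\bfun_{\GL_n(\calo_E)})=\frac{L(1,\pi\times\pi^\vee\otimes\eta)^2}{L(1,\pi\times\pi^\vee)\,L(1,\pi\times\pi^\vee\otimes\eta)}=\frac{L(1,\pi\times\pi^\vee\otimes\eta)}{L(1,\pi\times\pi^\vee)},
\]
which is the desired formula. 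There is no real obstacle here; the only point requiring care is checking that $\tau_2^{-1}\in\Herm_{n}^{\circ}(\calo_F)$ and that the contragredient spherical vector lands in the Whittaker model of $\mathrm{BC}(\pi^\vee)$, so that Lemma \ref{Lem: unramified FLO} is applicable on the second factor, together with book-keeping the normalization constants in Proposition \ref{Prop: nice inner product}.
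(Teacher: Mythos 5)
Your proof is correct and follows essentially the same route as the paper's: project onto the spherical line via $\bfun_{\GL_n(\calo_E)}$, apply Lemma \ref{Lem: unramified FLO} to evaluate the two FLO functionals, use the unramified value of $[\cdot,\cdot]_{\Pi}$ from \eqref{eqn: local inner product} (equivalently Proposition \ref{Prop: nice inner product}), and then divide using the factorization $L(s,\Pi\times\Pi^\vee)=L(s,\pi\times\pi^\vee)L(s,\pi\times\pi^\vee\otimes\eta)$. You have merely spelled out the steps (including the small check that $\tau_2^{-1}\in\Herm_n^\circ(\calo_F)$) that the paper leaves implicit.
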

\begin{proof}
  This follows directly from \eqref{eqn: local inner product}, Lemma \ref{Lem: unramified FLO}, and the identity
 \begin{equation}\label{L value relation}
L(s,\Pi\times \Pi^\vee) = L(s,\pi\times \pi^\vee)L(s,\pi\times \pi^\vee\cdot \eta).\qedhere
\end{equation}
\end{proof}
We also define the \emph{normalized} relative character $J^{\tau_1,\tau_2,\natural}_{\Pi}$ by replacing each functional by its normalized variant. When $\tau_1,\tau_2\in \Herm_{n}^{\circ}(\calo_F)$, then $J_{\Pi}^{\tau_1,\tau_2,\natural}(\bfun_{\GL_n(\calo_E)}) =1$.
\quash{As in the global case, we may identify this with a Bessel distribution as follows.  For two unitary cuspidal representations $\Pi_1$ and $\Pi_2$ of $\GL_{n}(E)$ and $f_1\otimes f_2\in C_c^\infty(\GL_n(E)\times \GL_n(E))$, we thus consider the sum
\[
\widetilde{J^{\tau_1,\tau_2}}_{\Pi_1\boxtimes\Pi_2} (f_1  \otimes f_2) = \sum_{W_1 \in \textup{OB}(\Pi_1)}\sum_{W_2 \in \textup{OB}(\Pi_2)}\frac{\al^{\pi_1}_{x}(\Pi_1(f_1)W_1)\al_{y}^{\pi_2}(\Pi_2(f_2){W}_2)[\hat{W}_1,W_2^\sig]}{[W_1,\hat{W}_1]_{\Pi_1}[W_2,\hat{W}_2]_{\Pi_2}},
\]
where $[\hat{W}_1,W_2^\sig]$ is defined as in \cite[Appendix A]{FLO}. With this, a local variant of Lemma \ref{Lem: constraction rel car} holds.
\begin{Lem}\label{Lem: constraction rel car local}
  Fix a pair $\tau_1,\tau_2\in\Herm_{n}^{\circ}(F)$. For any unitary cuspidal representation $\Pi_1\boxtimes\Pi_2$ of $\GL_{n}(E)\times \GL_n(E)$  with trivial central character, if the distribution ${\widetilde{J^{\tau_1,\tau_2}}_{\Pi_1\boxtimes\Pi_2}}$ is non-zero, then $\Pi_1\simeq \Pi_2\simeq \Pi_1^\sig$. In this case, for $f_1\otimes f_2\in C_c^\infty(\GL_n(E)\times \GL_n(E))$, we have the identity
  \[
  \widetilde{J^{\tau_1,\tau_2}}_{\Pi\boxtimes\Pi} (f_1 \otimes f_2) = {J^{\tau_1,\tau_2}}_{\Pi} (f_1 , f_2).
  \]
\end{Lem}
}

 We now consider the linear relative character. 

\begin{Def} Let $\pi$ be a generic irreducible admissible representation of $\GL_n(F)$. The \textbf{(local) linear relative character of $\pi$} is defined as 
	\begin{align}\label{eqn: linear local rel char}
 I_{\pi}(f \otimes \Phi)&= \sum_{W\in \textup{OB}(W_{\pi})} \frac{\Psi_\eta(1,\hat{\pi}(f^\theta)\hat{W},{W},w_0\cdot\hat{\Phi})}{[W,\hat{W}]_{\pi}}\\
 &=
 \sum_{W\in \textup{OB}(W_{\pi})} \frac{\int_{N_n(F) \bs \GL_n(F)}(\pi(f)W)(h^\theta){W}{(h)}\hat{\Phi}(e_nhw_n)|h|\eta(h)dh}{[W,\hat{W}]_{\pi}}.\nonumber
 \end{align}  
	Here $\hat{\Phi}$ denotes the Fourier transform induced by $\psi$. This extends linearly to $C_c^\infty(\GL_n(F)\times F_n)$.
\end{Def}
\begin{Lem}\label{Lem: linear rel char unit}
    Assume that $F$ is non-archimedean and $E/F$ is an unramified extension. Assume that $\pi$ is unramified and that $\psi$ has conductor $\calo_F$. Then 
    \[
    I_{\pi}(\bfun_{\GL_n(\calo_F)} \otimes \bfun_{\calo_{F,n}})=\frac{L(1,\pi\times \pi^\vee\otimes \eta)}{L(1,\pi\times \pi^\vee)}.
    \]
\end{Lem}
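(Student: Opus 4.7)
The plan is to show that the sum in \eqref{eqn: linear local rel char} collapses to a single term (the spherical one) and then evaluate the remaining Rankin--Selberg integral using the unramified calculation of Proposition~\ref{Prop: Rankin}\eqref{item 2: unramified}.

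First I would observe that, with $K = \GL_n(\calo_F)$, the indicator $f = \bfun_K$ satisfies $f^\theta = f$ because $K$ is stable under the involution $g \mapsto w_n\,{}^t g^{-1} w_n$. With the measure giving $\vol(K) = 1$, the operator $\hat\pi(f^\theta) = \hat\pi(\bfun_K)$ is precisely the projection onto the $K$-fixed line in $\mathcal{W}^{\psi^{-1}}(\hat\pi)$, which is one-dimensional and spanned by the spherical vector $\hat W_0$. Choosing an orthogonal basis $\{W_i\}$ of $\mathcal{W}^\psi(\pi)$ with respect to $[\cdot,\cdot]_\pi$ that contains $W_0$, the orthogonality $[W_0,\hat W_i]_\pi = 0$ for $i \neq 0$ forces $\hat\pi(\bfun_K)\hat W_i = 0$ for $i \neq 0$ (by pairing with $W_0$ and using $K$-invariance of $W_0$). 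Thus only the $W_0$-term survives, reducing the sum to
\[
I_\pi(\bfun_K \otimes \bfun_{\calo_{F,n}}) = \frac{\Psi_\eta\bigl(1, \hat W_0, W_0, w_0 \cdot \hat\Phi\bigr)}{[W_0, \hat W_0]_\pi}.
\]

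Next I would simplify the test vector. Since $\psi$ has conductor $\calo_F$, the Fourier transform satisfies $\hat\Phi = \bfun_{\calo_{F,n}}$, and the substitution $v \mapsto v w_n$ preserves $\calo_{F,n}$, so $(w_0 \cdot \hat\Phi)(e_n h) = \bfun_{\calo_{F,n}}(e_n h)$. Since the Rankin--Selberg integrand $W_1(h)W_2(h)\Phi(e_n h)|\det(h)|^s\eta(h)$ is symmetric in $W_1, W_2$, Proposition~\ref{Prop: Rankin}\eqref{item 2: unramified} gives
\[
\Psi_\eta\bigl(1, \hat W_0, W_0, \bfun_{\calo_{F,n}}\bigr) = L(1, \pi \times \pi^\vee \otimes \eta).
\]
For the denominator, the normalization \eqref{eqn: normalized inner product} together with the last assertion of Proposition~\ref{Prop: nice inner product} yields
\[
[W_0, \hat W_0]_\pi = L(1, \pi \times \pi^\vee) \cdot [W_0, \hat W_0]_\pi^\natural = L(1, \pi \times \pi^\vee).
\]
Dividing gives the claimed ratio.

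There is no real obstacle here; the one point that requires a moment of care is the vanishing of $\hat\pi(\bfun_K)\hat W_i$ for $i \neq 0$, which I would verify explicitly by pairing against $W_0$ rather than appealing to any self-duality of the basis (since $\{\hat W_i\}$ need only be the basis of $\mathcal{W}^{\psi^{-1}}(\hat\pi)$ dual to $\{W_i\}$ under $[\cdot,\cdot]_\pi$, not itself orthogonal in any separate pairing).
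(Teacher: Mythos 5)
Your proof is correct and follows essentially the same route as the paper's: reduce the sum to the spherical term, simplify the test vector using $\widehat{\bfun_{\calo_{F,n}}} = \bfun_{\calo_{F,n}}$ and $w_n$-invariance, then quote Proposition~\ref{Prop: Rankin}\eqref{item 2: unramified} for the numerator and the unramified normalization of $[\cdot,\cdot]_\pi$ for the denominator. The only difference is that you spell out the collapse of the sum (via $\hat\pi(\bfun_K)$ projecting onto the $K$-fixed line and the pairing argument killing the nonspherical terms), which the paper leaves implicit with a ``Thus''; that extra care is sound and worth including.
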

\begin{proof}
Let $W_0$ denote the normalized spherical Whittaker function for $\pi$. Our assumption on $\psi$ gives $\widehat{\bfun_{\calo_{F,n}}} = \bfun_{\calo_{F,n}}$, and clearly $w_n\cdot \bfun_{\calo_{F,n}}= \bfun_{\calo_{F,n}}$. Thus
\[
I_{\pi}(\bfun_{\GL_n(\calo_F)} \otimes \bfun_{\calo_{F,n}}) =\frac{\int_{N_n(F) \bs \GL_n(F)}W_0(g)\hat{W}_0{(g)}\bfun_{\calo_{F,n}}(e_ng)|g|\eta(g)dg}{[W_0,\hat{W}_0]_{\pi}}.
\]
Recalling that the denominator is $L(1,\pi \times \pi^\vee)$, it suffices to compute the numerator. This is given in Proposition \ref{Prop: Rankin}\eqref{item 2: unramified}, proving the claim.
\end{proof}
We also define the \emph{normalized} relative character $I^\natural_{\pi}$ by replacing each functional by its normalized variant. Then $I^\natural_{\pi}(\bfun_{\GL_n(\calo_F)} \otimes \bfun_{\calo_{F,n}}) =1$.

 \subsection{Factorizable distributions} We now show our global distribution on both sides decompose into local ones.  We begin on the linear side. 

\begin{Prop}\label{Prop: factorize linear} Assume that the automorphic base change $\Pi=\mathrm{BC}(\pi)$ is cuspidal. For any $\widetilde{f}=\bigotimes_v\widetilde{f}_v\in C_c^\infty(\GL_n(\A_F)\times \A_{F,n})$, we have
	\[I_\pi(\wt{f})=L(1,\pi \times \pi^\vee \otimes \eta)\frac{\vol(F^\times \bs \BA_F^1)}{\textup{Res}_{s=1}L(s,\pi \times \pi^\vee)}\prod_v I^\natural_{\pi_v}(\wt{f_v}).\]
\end{Prop}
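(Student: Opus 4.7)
The plan is to derive the factorization by Euler-factorizing both the global Rankin--Selberg integral that defines $I_\pi$ and the global Petersson inner product, then converting the resulting global sum over cuspidal vectors into a product of local sums that match the definition of $I_{\pi_v}^\natural$.

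First, I would use the cuspidality hypothesis on $\Pi = \mathrm{BC}(\pi)$ to extract the crucial input $\pi \not\simeq \pi \otimes \eta$; were these isomorphic, the base change $\Pi$ would carry a self-intertwining producing a parabolic structure, contradicting cuspidality. Consequently the $L$-function $L(s, \pi \times \pi^\vee \otimes \eta)$ is entire, and by Proposition \ref{Prop: Rankin}\eqref{no pole in RS} the global Rankin--Selberg integral $\calp_\eta(s, \pi(f)\varphi, \hat\varphi, \Phi)$ is holomorphic at $s=0$, so the sum defining $I_\pi(\wt f)$ converges termwise.

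Next, I would fix a factorizable orthogonal basis $\{\varphi\}$ of $\pi$, so that each Whittaker function decomposes as $\W^\varphi = \bigotimes_v W_v^\varphi$, with $W_v^\varphi$ the normalized spherical vector at almost every place. Applying the functional equation in Proposition \ref{Prop: Rankin}\eqref{eqn: factor rankin} moves the integral from $s=0$ to $s=1$ at the cost of applying the involution $g \mapsto {}^tg^{-1}$ to the cusp forms and Fourier-transforming $\Phi$; in this new region the unfolded integral converges absolutely and factors as an Euler product. Folding the unramified factors into the global $L$-function via Proposition \ref{Prop: Rankin}\eqref{item 2: unramified} yields
\[
\calp_\eta(0, \pi(f)\varphi, \hat\varphi, \Phi) = L(1, \pi \times \pi^\vee \otimes \eta) \prod_v \Psi_\eta^\natural(1, W_v', \hat W_v', \Phi_v'),
\]
for appropriate locally transformed data $(W_v', \hat W_v', \Phi_v')$. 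Factorizing $\la \varphi, \hat\varphi\ra_{Pet}$ through Proposition \ref{Prop: nice inner product} produces the constant $\frac{\mathrm{Res}_{s=1} L(s, \pi \times \pi^\vee)}{\vol(F^\times \bs \A_F^1)}$ together with a product of normalized local inner products $[W_v^\varphi, \hat W_v^\varphi]^\natural_{\pi_v}$. Dividing and summing over the factorizable basis converts the global sum into a product of local sums, each identified with $I_{\pi_v}^\natural(\wt f_v)$ by definition \eqref{eqn: linear local rel char}.

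The main technical step is verifying the identification of the transformed data $(W_v', \hat W_v', \Phi_v')$ arising from the functional equation with the specific integrand $\Psi_\eta(1, \hat\pi_v(f_v^\theta) \hat W_v, W_v, w_0 \cdot \widehat \Phi_v)$ featured in $I_{\pi_v}^\natural(\wt f_v)$. This is a careful but essentially formal local computation using the Whittaker-model realization of the contragredient via $\hat W(g) = W(g^\theta)$ together with the substitution $g \mapsto g^\theta$ inside the unfolded local integral and the compatibility of the Fourier transform with the translate by $w_0$; once this matching is verified, the Euler factorization directly produces the product $\prod_v I_{\pi_v}^\natural(\wt f_v)$ with the stated $L$-function and volume normalizations in front.
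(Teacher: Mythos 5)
Your proposal follows essentially the same path as the paper's own proof: you extract $\pi\not\simeq\pi\otimes\eta$ from cuspidality of $\Pi$ (giving holomorphy at $s=0,1$), move from $s=0$ to $s=1$ via the Rankin--Selberg functional equation, Euler-factorize using the Petersson inner product decomposition of Proposition \ref{Prop: nice inner product} together with Proposition \ref{Prop: Rankin}, and finish with the formal local identification of the transformed Whittaker data with the integrand defining $I^\natural_{\pi_v}$. The paper's proof is exactly this, with the last step likewise described only as "a simple calculation"; your account of what that calculation involves (substitution $g\mapsto g^\theta$, compatibility of the Fourier transform with the $w_0$-translate) is accurate.
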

\begin{proof} Note that the assumption that $\Pi$ is cuspidal implies $\pi \neq \pi \otimes \eta$ so that the Ranking--Selberg integral is holomorphic at $s=0$ and $1$. By linearity, it suffices to consider pure tensors $f\otimes \Phi$. For each function $\Phi\in C_c^\infty(\A_{F,n})$, we write $\wh{\Phi}$ as its Fourier transform. We also define $\hat{f}(g) = f({}^tg^{-1})$. 

Recall that $I_\pi$ is defines as a specialization at $s=0$. Consider now the evaluation at $s=1$. These are related by the functional equation of the Rankin-Selberg integral.
More precisely, we apply  \eqref{eqn: inner product} and Proposition \ref{Prop: Rankin}(\ref{eqn: factor rankin}) to obtain the factorization
	\begin{equation}\label{eqn: almost factorization linear}
	    {I}_\pi(f\otimes \Phi)=L(1,\pi \times \pi^\vee \otimes \eta)\frac{\vol(F^\times \bs \BA_F^1)}{\textup{Res}_{s=1}L(s,\pi \times \pi^\vee)}\prod_v  \wh{I}^\natural_{\pi_v}({f}_v \otimes {\Phi}_v),
	\end{equation}
    where the local factor on the right-hand side is the normalized version of the relative character
    \[
    \wh{I}_{\pi_v}({f}_v \otimes {\Phi}_v)= 
 \sum_{W\in \textup{OB}(W_{\pi})} \frac{\Psi_{\eta}(1,\widetilde{{\pi}({f}_v)W_v},\widetilde{\hat{W}}_v,\widehat\Phi_v)}{[W_v,\hat{W}_v]_{\pi}}.
    \]
    A simple calculation with the local Rankin-Selberg integrals reduces this to
 \begin{align*}
     \sum_{W\in \textup{OB}(W_{\pi})} \frac{\Psi_{\eta}(1,\hat{\pi}({f}^\theta_v)\hat{W}_v,{{W}}_v,w_n\cdot\widehat\Phi_v)}{[W_v,\hat{W}_v]_{\pi}}
\end{align*}
\quash{For $f\in C^\infty_c(\GL_n(\A_F))$ and $\Phi\in C_c^\infty(\A_{F,n})$, set
\begin{equation}
    \wh{I}_{\pi}(f\otimes \Phi):=\sum_{\varphi \in \textup{OB}(\pi)} \int_{[\PGL_n]}(\pi(f)\varphi)(g)\hat{\varphi}(g)E(g,\Phi,s,\eta)dg \bigg \vert_{s=1}.
\end{equation}
This aligns with the definition of the local relative characters \eqref{eqn: linear local rel char}. Applying  \eqref{eqn: inner product} and Proposition \ref{Prop: Rankin}, we obtain the factorization
	\begin{equation}\label{eqn: almost factorization linear}
	     \wh{I}_\pi(f\otimes \Phi)=L(1,\pi \times \pi^\vee \otimes \eta)\frac{\vol(F^\times \bs \BA_F^1)}{\textup{Res}_{s=1}L(s,\pi \times \pi^\vee)}\prod_v I^\natural_{\pi_v}(f_v \otimes \Phi_v).
	\end{equation}

  We use the properties of the Rankin-Selberg integral recalled in Proposition \ref{Prop: Rankin}. For any $\Re(s)>1$, we have
\begin{align*}
    I_{\pi}(f\otimes \Phi,s)&= \sum_\varphi \frac{P_s(\pi(f)\varphi,\hat{\varphi},\Phi)}{\la\varphi,\hat{\varphi}\ra_{Pet}}\\
    &= \sum_\varphi \frac{P_{1-s}(\widetilde{\pi(f)\varphi},\widetilde{\hat{\varphi}},\widehat\Phi)}{\la\varphi,\hat{\varphi}\ra_{Pet}}= I_{\pi^\vee}(\widehat{f\otimes\Phi},1-s).
\end{align*}
}
This last expression is precisely $I_{\pi_v}({f_v}\otimes\Phi_v)$. Renormalizing the various functionals, the claimed identify follows.
\end{proof}

\quash{We recall certain facts about the Rankin--Selberg integral. Recall that we have fixed an additive character $\psi: F\bs \BA_F \to \BC$ in \S \ref{Sec: Whittaker}, and have the associated Whittaker models $\calw^\psi(\pi)$.
\quash{\begin{Lem}\cite[Section 3]{CogdellGLn}
 For $\Re(s)>1$, we have
\begin{align*}
    \int_{[\PGL_n]}(\pi(f)\varphi)(g)\hat{\varphi}(g)E(g,\Phi,s,\eta)dg=&\\ \prod_v\int_{N_n \bs \GL_n(F_v)}(W_{\pi(f)\varphi})(g)&\hat{W_{\varphi}}{(g)}\Phi(e_n g)|g|^s\eta(g)dg. 
\end{align*}
At almost all the places, the local integral at $v$ equals the Rankin-Selberg local $L$-factor $L(s,\pi \times \pi^\vee \otimes \eta)$.   
\end{Lem}

 We recall the relation between Peterson inner product and $(W_1,W_2)_v$ on Whittaker realization.
\begin{Prop}[Wei, GGP 2nd paper]
	For any $\phi_1, \phi_2 \in \pi$, we have \[\pair{\phi_1,\phi_2}_{\textup{Pet}}=\frac{n\cdot \textup{Res}_{s=1}L(s,\pi \times \pi^\vee)}{\vol(F^\times \bs \BA_F^1)}\prod_v (W_{\phi_1},W_{\phi_2})_v.\]
\end{Prop}}
We may now decompose our global distribution:

Recall now the functional equation \cite[Theorem 4.1]{CogdellGLn}
\[
L(s,\pi\times \pi^\vee\otimes \eta) = \ep(s,\pi\times \pi^\vee\otimes \eta)L(1-s,\pi\times \pi^\vee\otimes \eta), 
\]
Our assumption that $\mathrm{BC}(\pi)$ is cuspidal implies that $L(s,\pi \times \pi^\vee \otimes \eta)$ is holomorphic at $s=1$ so we may evaluate to find
\[I_\pi(\wt{f})= \ep(0,\pi\times \pi^\vee\otimes \eta)L(1,\pi \times \pi^\vee \otimes \eta)\frac{\vol(F^\times \bs \BA_F^1)}{\textup{Res}_{s=1}L(s,\pi \times \pi^\vee)}\prod_v I_{\pi_v}(\wt{f_v}).\]We decompose $\wh{I}_\pi(f\otimes \Phi,s)$ using Rankin-Selberg. We divide at each local place $L_v(s,\pi\times \pi^\vee \otimes \eta)$ and multiple globally by $L_v(\pi\times \pi^\vee \otimes \eta)$. We then evaluate at $s=1$.The local term is basically $I_{\pi_v}(f_v \otimes \Phi_v)$ except a difference of inner product. This difference is given by the previous proposition. 

All the distributions above extends linearly to functions $\wt{f}$.
Finally, we use functional equation to go back to $s=0$. The definition extends to functions $\wt{f}$ which is a finite sum of these pure tensors. From the proposition above, we have 

	Functional equation of Eisenstein series. And 
}
We now consider the unitary side. 

\begin{Prop}\label{Prop: factorize unitary}
 Fix $\tau_1,\tau_2\in \calv_n(E/F)$. Consider an irreducible cuspidal automorphic representation $\Pi$ of $\GL_n(\A_E)$. If $J_{\Pi}^{\tau_1,\tau_2}$ is not identically zero, let $\pi$ be an irreducible cuspidal automorphic representation of $\GL_n(\A_F)$ such that $\Pi=\mathrm{BC}(\pi)$.  For $f'\in C_c^\infty(\GL_n(\A_E))$, we have
	\[J_\Pi^{\tau_1,\tau_2}(f')=4L(1,\pi\times\pi^\vee\otimes \eta)\frac{\vol(E^\times \bs \BA_E^1)}{\textup{Res}_{s=1}L(s, \pi \times \pi^\vee)}\prod_v J_{\Pi_v}^{\tau_1,\tau_2}(f'_{v}).\]
\end{Prop}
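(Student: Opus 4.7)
The plan is to directly insert the known factorizations of the unitary period integrals and the Peterman inner product into the definition of $J^{\tau_1,\tau_2}_\Pi$ and then identify the resulting product with the local relative characters.

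First, I would handle the hypothesis: the non-vanishing of $J_\Pi^{\tau_1,\tau_2}$ forces the periods $\CP_{\tau_1}$ and $\CP_{\tau_2^{-1}}$ to be non-zero on $\Pi$. By the characterization of the unitary distinguished spectrum due to Jacquet and Flicker--Lapid--Offen \cite{JacquetQuasisplit,FLO}, this forces $\Pi = \mathrm{BC}_E(\pi)$ for a unique (up to twist by $\eta$) cuspidal $\pi$ on $\GL_n(\A_F)$; in particular, one has $L(s,\Pi\times\Pi^\vee)=L(s,\pi\times\pi^\vee)L(s,\pi\times\pi^\vee\otimes\eta)$ as in \eqref{L value relation}, and the second factor is holomorphic at $s=1$ since $\Pi$ is cuspidal.

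Next, I would choose a factorizable orthogonal basis $\{\varphi\} \subset V_\Pi$ (so that $W^{\varphi}=\bigotimes_v W_v$) and apply the factorization formula for unitary periods, Proposition \ref{Prop: unitary periods factor}, to both the numerator terms:
\begin{align*}
\CP_{\tau_1}(\Pi(f')\varphi) &= 2L(1,\pi\times\pi^\vee\otimes\eta)\prod_v \alpha_{\tau_1}^{\pi_v,\natural}(\Pi_v(f'_v)W_v),\\
\CP_{\tau_2^{-1}}(\hat{\varphi}) &= 2L(1,\pi\times\pi^\vee\otimes\eta)\prod_v \alpha_{\tau_2^{-1}}^{\pi_v^\vee,\natural}(\hat{W}_v).
\end{align*}
For the denominator, I would apply Proposition \ref{Prop: nice inner product} (over the field $E$) to obtain
\[
\langle \varphi,\hat{\varphi}\rangle_{Pet}=\frac{\Res_{s=1}L(s,\Pi\times\Pi^\vee)}{\vol(E^\times\bs\A_E^1)}\prod_v [W_v,\hat{W}_v]^\natural_{\Pi_v}.
\]

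Combining these three factorizations and cancelling one factor of $L(1,\pi\times\pi^\vee\otimes\eta)$ against the identity \eqref{L value relation} yields, for each factorizable $\varphi$,
\[
\frac{\CP_{\tau_1}(\Pi(f')\varphi)\CP_{\tau_2^{-1}}(\hat{\varphi})}{\langle\varphi,\hat{\varphi}\rangle_{Pet}}=\frac{4L(1,\pi\times\pi^\vee\otimes\eta)\vol(E^\times\bs\A_E^1)}{\Res_{s=1}L(s,\pi\times\pi^\vee)}\prod_v \frac{\alpha_{\tau_1}^{\pi_v,\natural}(\Pi_v(f'_v)W_v)\alpha_{\tau_2^{-1}}^{\pi_v^\vee,\natural}(\hat{W}_v)}{[W_v,\hat{W}_v]^\natural_{\Pi_v}}.
\]
The sum over a factorizable orthogonal basis of $V_\Pi$ is, at every finite set of ``bad'' places, a finite sum over an orthogonal basis of the relevant Whittaker model (at unramified places only the normalized spherical vector contributes and gives a factor of $1$ by Lemma \ref{Lem: unramified FLO}). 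Thus after summing and regrouping, the product becomes the product of local \emph{normalized} relative characters $\prod_v J_{\Pi_v}^{\tau_1,\tau_2,\natural}(f'_v)$, and the relation $J_{\Pi_v}^{\tau_1,\tau_2,\natural} = J_{\Pi_v}^{\tau_1,\tau_2}\cdot L(1,\pi_v\times\pi_v^\vee)/L(1,\pi_v\times\pi_v^\vee\otimes\eta)$ (deduced from the definitions of the normalizations \eqref{spherical FLO} and \eqref{eqn: normalized inner product} together with \eqref{L value relation}) converts the normalization factors into the Euler product of $L(s,\pi\times\pi^\vee)/L(s,\pi\times\pi^\vee\otimes\eta)$ evaluated at $s=1$. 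This Euler product is absolutely convergent (the normalized local factors equal $1$ at almost all places by Lemma \ref{Lem: unramified unitary local rel char}) and reassembles to cancel against $L(s,\pi\times\pi^\vee)/L(s,\pi\times\pi^\vee\otimes\eta)$ globally, yielding exactly the stated formula.

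The main obstacle is bookkeeping of the various $L$-factor normalizations: keeping the factors $L(1,\pi\times\pi^\vee\otimes\eta)$ and $\Res_{s=1}L(s,\pi\times\pi^\vee)$ straight as one passes between the global unitary period factorization, the global Peterson norm, and the local normalized/unnormalized versions of the FLO functional and the inner product. The proof itself is otherwise a formal manipulation once the inputs are in place.
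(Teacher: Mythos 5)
Your approach matches the paper's own proof exactly: insert the factorization of the unitary period (Proposition \ref{Prop: unitary periods factor}) into the numerator terms and the factorization of the Petersson pairing over $E$ (Proposition \ref{Prop: nice inner product} / \eqref{eqn: inner product}) into the denominator, then regroup using $L(s,\Pi\times\Pi^\vee)=L(s,\pi\times\pi^\vee)L(s,\pi\times\pi^\vee\otimes\eta)$. Carried out as you indicate, this gives the constant
\[
\frac{(2L(1,\pi\times\pi^\vee\otimes\eta))^2\,\vol(E^\times\bs\A_E^1)}{\Res_{s=1}L(s,\pi\times\pi^\vee)\cdot L(1,\pi\times\pi^\vee\otimes\eta)}
=\frac{4L(1,\pi\times\pi^\vee\otimes\eta)\,\vol(E^\times\bs\A_E^1)}{\Res_{s=1}L(s,\pi\times\pi^\vee)},
\]
multiplied by $\prod_v J^{\tau_1,\tau_2,\natural}_{\Pi_v}(f'_v)$, i.e.\ the product of \emph{normalized} local relative characters. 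Up to this point your argument is correct and is how the paper's citation-only proof must go.

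Your final paragraph, however, contains a genuine error. You try to convert the product of normalized characters into the product of un-normalized ones that appears in the printed statement, by inserting the Euler product $\prod_v L(1,\pi_v\times\pi_v^\vee)/L(1,\pi_v\times\pi_v^\vee\otimes\eta_v)$ and asserting it is ``absolutely convergent\ldots and reassembles to cancel against $L(s,\pi\times\pi^\vee)/L(s,\pi\times\pi^\vee\otimes\eta)$ globally.'' That does not work: the numerator $L(s,\pi\times\pi^\vee)$ has a pole at $s=1$, so the Euler product diverges, and there is nothing in the stated constant for it to cancel against. Your appeal to Lemma \ref{Lem: unramified unitary local rel char} is also miscited --- that lemma gives the value of the \emph{un-normalized} character at an unramified place as $L(1,\pi_v\times\pi_v^\vee\otimes\eta_v)/L(1,\pi_v\times\pi_v^\vee)$ (not $1$), which means the product $\prod_v J^{\tau_1,\tau_2}_{\Pi_v}(f'_v)$ formally vanishes; only the normalized product, whose factors equal $1$ at almost all places, is absolutely convergent.

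The discrepancy is not in your computation but in the statement you are asked to prove: the $\natural$ is missing from the right-hand side of the Proposition. This is confirmed by every downstream use of it --- Lemma \ref{Prop: weak transfer} writes $J_{\Pi}^{\tau,\tau_0}(f_{v_0}\otimes f^{v_0})=C\,J^{\tau,\tau_0,\natural}_{\Pi_{v_0}}(f_{v_0})$, and the derivation of \eqref{eqn: super important} in Theorem \ref{Thm: main global tool} requires normalized characters on both sides (otherwise the bookkeeping with Proposition \ref{Prop: factorize linear}, which does carry the $\natural$, is inconsistent). So the correct conclusion of your computation is the normalized formula; you should have stopped there and flagged the apparent typo rather than forcing a conversion that cannot be made rigorous.

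Two small points worth being explicit about: when you apply Proposition \ref{Prop: unitary periods factor} to $\CP_{\tau_2^{-1}}(\hat{\varphi})$, you implicitly replace $\pi$ by $\pi^\vee$ and use that $L(1,\pi^\vee\times\pi\otimes\eta)=L(1,\pi\times\pi^\vee\otimes\eta)$ --- fine, but say so. And the manipulation of an infinite orthogonal basis into a product of local sums over Whittaker bases requires that the local sums be finite at every place and that almost all factors are $1$; you gesture at this but it is precisely the point that only works for the normalized characters.
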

\begin{proof} This follows directly from \eqref{eqn: inner product}, Proposition  \ref{Prop: unitary periods factor},  and the definitions.
\end{proof}

\section{Comparison of relative trace formulas}\label{Section: comparison proof}
In this section, we establish a comparison of relative trace formulas first studied in \cite{Xiaothesis}, the most refined global statement being Theorem \ref{Thm: main global tool}. We then turn to comparing local relative characters, establishing a comparison in the split case, the unramified case, and a weak general comparison.
\subsection{Relative trace formulas}
Let $E/F$ be a quadratic extension of number fields such that each archimedean place of $F$ splits in $E$. Let $S$ denote the finite set of places of $F$ which ramify in $E$. 
\quash{\begin{Rem}\label{Rem: our globalization}
    The constraint that $E/F$ be everywhere unramified is artificial, and is included to simplify certain statements. We remark that our application to proving Theorem \ref{Thm: first step main} allows for such a restriction. Indeed, we only require in \S \ref{Section: final proof} that $E/F$ is split at all archimedean places and globalizes a given unramified extension of $p$-adic fields at a certain place. The existence of such a globalization which is also unramified everywhere is assured (for example) by \cite[Theorem 1]{Moret-Bailly}.
\end{Rem}}Let $Z\subset \GL_n$  and $Z_E\subset \Res_{E/F}(\GL_n)$ the central tori. We fix a unitary central character $\omega: [Z]\lra \cc^\times$, and denote by $\omega'=\omega\circ\Nm_{E/F}$ the induced character on $[Z_E]$.  

\subsubsection{Nice transfer}
Fix $\tau_0\in \calv_n(E/F)$ and consider the $\U(V_0)$-action on $\Herm_{n}^{\circ}(F)$. Let $f'=\otimes_vf'_v\in C^\infty_c(\Herm_{n}^{\circ}(\A_F))$. By Theorem \ref{Thm: Xiao's transfer}, we obtain $\wt{f}\in C_c^\infty(\GL_n(\A_F)\times \A_{F,n})$ such that $f'_v$ matches $\wt{f}_v$ at each place $v$. We may find $f^\tau\in C_c^\infty(\GL_n(\A_E))$ for each $\tau\in \calv_n(E/F)$ such that
\[
f' = \sum_x p_{\tau,!}(f^\tau);
\] we also say that $\wt{f}$ and $\{f^\tau\}_\tau$ are transfers. Note that $f^\tau\equiv 0$ for all but finitely many $\tau\in \calv_n(E/F)$ by the fundamental lemma. 

We introduce the constraints on test functions to affect our simple trace formula.
\begin{Def}\label{Def: nice transfer 2} Fix a central character $\omega$ for $\GL_n(\A_F)$ and let $\omega'$ denote its base change.    We say that the matching test functions $\wt{f}=\bigotimes_v\wt{f}_v$ and $f'=\bigotimes_v{f}'_v$ form a \textbf{($\omega$-)nice transfer} if the following conditions are satisfied:
    \begin{enumerate}
        \item There exists a non-archimedean place $v_1$ which splits in $E$ and a finite union $\Omega$ of cuspidal Bernstein components of $\GL_n(F_{v_1})$ such that (noting that $\calv_{n,v}=\{\tau_n\}$ in the split case)
\[
f^{\tau_n}_{v_1} = \Phi_{v_1}(0)f_{v_1}\in C^\infty_c(\GL_n(F_{v_1}))_{\Omega},
\]
where $\wt{f}_{v_1} = f_{v_1}\otimes \Phi_{v_1}$ for appropriate $\Phi_{v_1}\in C_c^\infty(F_{v_1,n})$ with $\Phi_{v_1}(0)\neq 0$.
        \item There exists a non-archimedean place $v_2$ which splits in $E$ such that the local transfer is \emph{regular elliptic} in the sense that $f^{\tau_n}_{v_2} = \Phi_{v_2}(0)f_{v_2}\in C^\infty_c(\GL_n(F_{v_2}))$ with
        \[
         \supp(f_{v_2})\subset \{x\in \GL_n(F_{v_2})\::\: x\text{ is $Z$-regular semi-simple, } T_x\simeq \Res_{L_x/F_{v_2}}(\Gm) \},
        \]
        where $L_x$ is a degree $n$ field extension of $F_{v_2}$ and $\wt{f}_{v_2} = f_{v_2}\otimes \Phi_{v_2}$ with $\Phi_{v_2}(0)\neq 0$.
    \end{enumerate}
    In this case, we say that $\wt{f}$ (resp., $f'$ or $\{f^\tau\}_\tau$) is an $\omega$-nice (resp., $\omega'$-nice) test function. 
\end{Def}
It is clear that nice transfers exist.
\subsubsection{Recollection on Galois cohomology}
 Due to our use of nice transfers in the trace formula, we do not need to consider the full pre-stabilization of the elliptic locus (cf. \cite[Section 10]{LeslieEndoscopy}). Nevertheless, we need to recall some basic notations from abelianized Galois cohomology from \cite{LabesseBasechange}. Assume that $\I\subset \rH$ is an inclusion of connected reductive groups.

  For a place $v$ of $F$, we let $F_v$ denote the localization. We set
  \[
  \mathfrak{C}(\I,\rH;F_v):=\ker[H^1_{\mathrm{ab}}(F_v,\I)\to H^1_{\mathrm{ab}}(F_v,\rH)];
  \]
  We also have the global variant
  \begin{equation*}
       \mathfrak{C}(\I,\rH;\A_{F}/F):=\coker[H^0_{\mathrm{ab}}(\A_{F},\rH)\to H^0_{\mathrm{ab}}(\A_{F}/F,\I\bs\rH)],  
  \end{equation*}
 equipped with a natural map 
  \begin{equation}\label{eqn: adelic map on C-groups}
    \mathfrak{C}(\I,\rH;\A_{F}):=\prod_v\mathfrak{C}(\I,\rH;F_v)\lra   \mathfrak{C}(\I,\rH;\A_{F}/F).  
  \end{equation}
This induces a localization map $ \mathfrak{C}(\I,\rH;\A_{F}/F)^D\to  \prod_v\mathfrak{C}(\I,\rH;F_v)^D$ on Pontryagin dual groups.
  
The group $\mathfrak{C}(\I,\rH;\A_{F}/F)$ sits in an exact sequence \cite[Proposition 1.8.4]{LabesseBasechange}
\begin{equation}\label{eqn: exact sequence Galois cohom}
  \ker^1_{\mathrm{ab}}(F,\rH)\to    \mathfrak{C}(\I,\rH;\A_{F}/F) \lra H^1_{\mathrm{ab}}(\A_{F}/F, \I)\lra H^1_{\mathrm{ab}}(\A_{F}/F, \rH),  
\end{equation}
where $\ker^1_{\mathrm{ab}}(F,\rH):=\ker[H^1_{\mathrm{ab}}(F,\rH)\to H^1_{\mathrm{ab}}(\A_{F},\rH)]$. Note that since $\I$ and $\rH$ are connected, this implies $\mathfrak{C}(\I,\rH;\A_{F}/F)$ is a finite group. Moreover, $\ker^1_{\mathrm{ab}}(F,\rH)=0$ if and only if $\rH$ satisfies the Hasse principle \cite[Corollaire 1.6.11]{LabesseBasechange}. We note that for any unitary symmetric pair $(\G,\rH)$ considered in this paper, it is well-known that $\rH$ satisfies the Hasse principle so that $\ker^1_{\mathrm{ab}}(F,\rH)=0$.

\subsubsection{Global distributions}
For $f\in C_c^\infty(\GL_n(\A_E))$, we consider the kernel function
\[
K_{f,\omega'}(g_1,g_2) = \int_{[Z_E]}\left(\sum_{\ga\in \GL_n(E)}f(g_1^{-1}\ga g_2a)\right)\omega'(a)da
\]
For the rest of this section, we fix $\tau_0\in \calv_n(E/F)$ and set $V_0:=V_{\tau_0^{-1}}$ (compare with Theorem \ref{Thm: first step main}).
\begin{Prop}\label{Prop: unitary RTF}
   Suppose that $\{f^\tau\}_\tau$ are  $\omega'$-nice. The integral
    \[
    J^{\tau_0}_{\omega'}(f'):= \sum_{\tau\in \calv_n(E/F)}\displaystyle\int_{[\U(V_{\tau})]}\int_{[\U(V_{0})]}K_{f^\tau,\omega'}(h_1,h_2)dh_1dh_2
    \]
    is absolutely convergent and admits the geometric expansion
    \[
      J^{\tau_0}_{\omega'}(f'):={(2^{1-|S|}L(0,\eta))}\sum_{x\in \Nm(E^\times)\backslash\Herm_{n}^{\circ}(F)/ U(V_0)}\vol([T_x])\prod_v\Orb^{\U(V_0)}_{\omega'}(f'_v,x).
    \]
    Finally, we have the spectral expansion
    \[
    J^{\tau_0}_{\omega'}(f')=\sum_{\tau\in \calv_n(E/F)}\sum_{\Pi}J^{\tau,\tau_0}_\Pi(f^\tau),
    \]
    where the sum ranges over irreducible cuspidal representations $\Pi$ of $\GL_n(\A_E)$ with central character $\omega'$, and where $J^{\tau,\tau_0}_{\Pi}(f^\tau)$ is the relative character defined in Def. \ref{Def: unitary rel char}.
\end{Prop}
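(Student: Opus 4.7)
\medskip

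The plan is to execute a simple relative trace formula following the classical paradigm, with the two niceness conditions at $v_1$ and $v_2$ playing their usual roles: the regular elliptic support at $v_2$ truncates the geometric sum to a finite union of elliptic orbits (ensuring absolute convergence and killing any parabolic contributions), while the cuspidal Bernstein component support at $v_1$ annihilates the residual and continuous spectrum on the spectral side. Under these hypotheses, all convergence issues reduce to those already handled in the literature for similar simple trace formulas (e.g.~\cite{BPLZZ,Xiaothesis,LeslieUFJFL}).

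For the geometric expansion, first fix $\tau \in \calv_n(E/F)$ and exchange summation with integration, which is justified by the regular elliptic condition at $v_2$. For such $\tau$, the contraction map $p_\tau\colon \GL_n(E) \to \Herm_{n}^{\circ}(F)_\tau$, $p_\tau(g) = g^\ast \tau g$, realizes $\U(V_\tau) \backslash \GL_n(E)$ as $\Herm_{n}^{\circ}(F)_\tau$, and intertwines the right $\U(V_0)$-action on $\GL_n(E)$ with the action $x \cdot h = h^\ast x h$ on $\Herm_{n}^{\circ}(F)_\tau$. Hence
\[
\U(V_\tau)(F)\backslash \GL_n(E)/\U(V_0)(F) \iso \Herm_{n}^{\circ}(F)_\tau/\U(V_0)(F),
\]
and summing over $\tau \in \calv_n(E/F)$ covers all of $\Herm_{n}^{\circ}(F)/\U(V_0)(F)$ (cf.~\eqref{eqn: Herm orbits}). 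Integrating against the central character $\omega'$ further quotients the orbit set by $\Nm(Z_E(\A_F)) \cap Z(F) = \Nm(E^\times)$ (cf.~Lemma~\ref{matching via center}). For each orbit representative $x$, the standard unfolding converts the inner double integral into $\vol([T_x]) \cdot \prod_v \Orb^{\U(V_0)}_{\omega'}(f'_v, x)$, where $\vol([T_x])$ is given by Lemma~\ref{Lem: vol of centralizer}. Collecting the finitely many extra factors arising from the discrepancy between $\vol([T_x])$ and the product of the normalized local stabilizer measures (which involves $L(0, T_x^{\mathrm{op}}, \eta)$) and from the global volume of the center modulo norms produces the uniform prefactor $2^{1-|S|}L(0,\eta)$.

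For the spectral expansion, one decomposes the kernel $K_{f^\tau, \omega'}(h_1,h_2)$ using the $\omega'$-isotypic part of the spectral decomposition of $L^2([Z_E]\GL_n(E)\backslash \GL_n(\A_E))$. The niceness condition at $v_1$ (cuspidal support in $\Omega \subset \mathrm{Bernstein}(\GL_n(F_{v_1}))$, together with the split hypothesis at $v_1$ which identifies $\mathrm{BC}(\pi_{v_1})$ with $\pi_{v_1}\otimes\pi_{v_1}^\vee$) forces only cuspidal representations $\Pi$ to contribute. For each cuspidal $\Pi$ of central character $\omega'$, integrating the orthonormal basis expansion of $K_{f^\tau,\omega'}$ against $dh_1\, dh_2$ yields exactly the sum
\[
\sum_{\varphi \in \mathrm{OB}(\Pi)} \frac{\mathcal{P}_\tau(\Pi(f^\tau)\varphi)\, \mathcal{P}_{\tau_0^{-1}}(\hat\varphi)}{\langle \varphi, \hat\varphi \rangle_{\mathrm{Pet}}} = J_\Pi^{\tau, \tau_0}(f^\tau),
\]
by Definition~\ref{Def: unitary rel char}. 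Summing over $\tau$ and $\Pi$ gives the claimed spectral expansion.

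The main obstacle will be tracking the precise global constant $2^{1-|S|}L(0,\eta)$: it requires a careful accounting of the adelic volume of $[Z_E^1]$ (via Tate's thesis, which produces the $L$-value $L(0,\eta)$), of the cokernel of $\Nm\colon Z_E(\A_F) \to Z(\A_F)$ modulo $\Nm(E^\times)$ (contributing the $2^{1-|S|}$ from the $|S|$ places where the local norm map fails to be surjective), and of the compatibility between our Haar measure normalizations on $[\U(V_\tau)]$ and the product measures used in Lemma~\ref{Lem: vol of centralizer}. All other steps (convergence, interchange of sum and integral, and factorization of the spectral side into cuspidal relative characters) are routine given the niceness conditions.
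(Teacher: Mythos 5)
Your sketch follows the standard simple relative trace formula paradigm, which is exactly what the paper does (the paper's proof is essentially a citation to \cite[Theorem 18.2.5]{GetzHahnbook} and \cite[Props.~5.5--5.6]{Xiaothesis}, plus a correction). The structural roles you assign to the two niceness conditions, the unfolding via the contraction maps $p_\tau$, and the spectral decomposition all match.

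One point of imprecision: your accounting of the prefactor $2^{1-|S|}L(0,\eta)$ is more tangled than it needs to be and slightly mis-locates the source. You attribute it partly to ``the discrepancy between $\vol([T_x])$ and the product of the normalized local stabilizer measures'' and partly to ``compatibility between our Haar measure normalizations on $[\U(V_\tau)]$ and the product measures'' --- but there is no such residual discrepancy; $\vol([T_x])$ already appears explicitly in the proposition's geometric expansion and absorbs exactly that reconciliation, as computed once-and-for-all in Lemma~\ref{Lem: vol of centralizer}. The constant arises in one clean piece: in the kernel's central integration $\int_{[Z_E]}$, the subgroup $Z_E^1 = \ker\Nm$ acts trivially on the Hermitian orbit invariant $x \mapsto x\Nm(a)$, so after unfolding you pull out $\vol([Z_E^1])$. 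This volume equals $2^{1-|S|}L(0,\eta)$ (which you can obtain by specializing the machinery of Lemma~\ref{Lem: vol of centralizer} to $n=1$, or directly from Tate's thesis and the norm index count). The paper notes that this factor was mistakenly dropped in \cite{Xiaothesis}, so identifying its provenance correctly is actually the one nontrivial thing to get right here.
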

\begin{proof}
    This is a special case of the simple trace formula established in \cite[Theorem 18.2.5]{GetzHahnbook}. Variants of the convergence and geometric expansion statements are proved in \cite[Props 5.5 and 5.6]{Xiaothesis}, with the factor $\vol([Z_E^1]) = {2^{1-|S|}L(0,\eta)}$ mistakenly dropped. The final statement regarding the spectral expansion is a standard consequence of the definition of $\omega'$-nice text functions (cf. \cite[Prop. 9.6]{LeslieUFJFL}).
\end{proof}

In fact, our assumption that $f'= \sum_\tau p_{\tau,!}(f^\tau)= \bigotimes_v f'_v$ has elliptic support at a split prime implies that only terms $x\in \Herm_n^\circ(F)$ with $T_y$ $E$-simple contribute the geometric expansion. Without this constraint, the distribution $J^{\tau,\tau_0}$ requires stabilization (cf. \cite[Section 3]{LeslieUFJFL}, where the endoscopic fundamental lemma for the Hecke algebra is worked out in this case). 

Recall that the map $\car_{0}:\Herm_n^\circ\to \A^n$ sending $x$ to the coefficients of $\det(tI_n-\tau_0x)$ is the categorical quotient for the $\U(V_0)$-action on $\Herm_n^\circ$; we let $\A^{n,rss}:=\Herm_n^{\circ,rss}\sslash\U(V_0)\subset \A^n$ denote the quotient of the $\U(V_0)$-regular semi-simple locus. Finally, we note that the action of $Z_E\simeq\Res_{E/F}(\Gm)$ on $\Herm_n^\circ$ descends to an action on $\A^{n,rss}$.

\begin{Cor}\label{Cor: stable}
    Suppose that $f' = \sum_\tau p_{\tau,!}(f^\tau)$ is $\omega'$-nice. Then 
    \[
          \frac{1}{2^{1-|S|}L(0,\eta)}J^{\tau_0}_{\omega'}(f')=\sum_{[x]\in \Nm(E^\times)\backslash\A^{n,rss}(F)}\vol([T_x])\prod_v\SO^{\U(V_0)}_{\omega'}(f'_v,x).
    \]
    where the stable orbital integral is defined in \S \ref{Section: dealing with center}.
\end{Cor}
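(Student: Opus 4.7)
The plan is to regroup the orbits appearing in the geometric expansion of Proposition \ref{Prop: unitary RTF} according to their stable orbit classes in $\Nm(E^\times) \backslash \A^{n,rss}(F)$. First I would use that the regular elliptic support condition imposed on $f'_{v_2}$ at the split place $v_2$ forces every contributing $x$ to be $Z$-regular semi-simple with $E$-simple elliptic stabilizer $T_x$. Consequently every contributing orbit has a well-defined image in the stable quotient, and the geometric expansion rewrites as an outer sum over stable classes $[x]$ and an inner sum over rational orbits $x' \in \calo_{st}(x)/\Nm(E^\times)$.

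Next, for a fixed contributing stable class $[x]$, I would exploit the parameterization of rational orbits in $\calo_{st}(x)$ by $\ker^1(T_x, \U(V_0); F) \subseteq H^1(F, T_x)$, together with the fact that tori admit no non-trivial inner forms, so that $T_{x'} \simeq T_x$ as $F$-tori for every $x' \in \calo_{st}(x)$. This makes the global volume $\vol([T_{x'}]) = \vol([T_x])$ constant along the stable orbit and allows it to be extracted from the inner sum.

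The key step is then the identity
\[
\sum_{x' \in \calo_{st}(x)/\Nm(E^\times)} \prod_v \Orb^{\U(V_0)}_{\omega'}(f'_v, x') = \prod_v \SO^{\U(V_0)}_{\omega'}(f'_v, x).
\]
At the split place $v_2$, $\U(V_0)_{v_2} \simeq \GL_n(F_{v_2})$ and rational and stable regular semi-simple conjugacy coincide, so combined with the regular elliptic support condition this pins down a unique local rational orbit supporting $f'_{v_2}$. At every other place, local rational orbits in the stable class $\calo_{st,v}(x_v)$ are parametrized by $\ker^1(T_x, \U(V_0); F_v)$. To exchange the order of summation I would verify the surjectivity of
\[
\ker^1(T_x, \U(V_0); F) \twoheadrightarrow \prod_{v \neq v_2} \ker^1(T_x, \U(V_0); F_v),
\]
via Tate--Nakayama duality for $T_x$ combined with the Hasse principle for $\U(V_0)$, exploiting the $E$-simplicity of $T_x$ imposed by niceness.

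The main obstacle I expect lies in this surjectivity: for general elliptic tori over a number field, localization on $H^1$ has a non-trivial cokernel dual to a Tate--Shafarevich-type obstruction, and if surviving it would introduce endoscopic correction terms. The strong support constraints at $v_2$---in particular $E$-simplicity of $T_x$---are designed exactly to suppress such obstructions in the present simple trace formula setting, bypassing the need for a full stabilization. Once the local-global cohomological matching is established, the corollary follows by substituting back into the geometric expansion of Proposition \ref{Prop: unitary RTF} and applying the definition \eqref{eqn: stable orbital integral} of the stable orbital integral.
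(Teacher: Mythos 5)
Your architecture matches the paper's: regroup the geometric expansion of Proposition \ref{Prop: unitary RTF} by stable class, extract $\vol([T_x])$ using that $T_{x'}\simeq T_x$ for $x'\in\calo_{st}(x)$, and reduce to the identity
\[
\sum_{y\in\calo_{st}(x)(F)}\prod_v\Orb^{\U(V_0)}_{\omega'}(f'_v,y)=\prod_v\SO^{\U(V_0)}_{\omega'}(f'_v,x).
\]
For this key identity you propose a direct local-to-global surjectivity argument; the paper instead invokes Labesse's pre-stabilization formula, rewriting the left side as $\frac{\tau(\U(V_0))}{\tau(T_x)d_x}\sum_\kappa\prod_v\Orb^{\U(V_0),\kappa}_{\omega'}(f'_v,x)$ over $\kappa\in\mathfrak{C}(T_x,\U(V_0);\A_{F}/F)^D$, and then shows that for $E$-simple $T_x$ one has $|H^1_{\mathrm{ab}}(\A_F/F,T_x)|=2=\tau(\U(V_0))$ with the localization map a bijection, hence $\mathfrak{C}=0$, $d_x=1$, and the coefficient equal to $1$. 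Your argument is the dual reading of the same vanishing --- surjectivity of localization is exactly $\mathfrak{C}=0$ --- and in principle spares you the Tamagawa-number bookkeeping, but as stated it leaves the decisive computation undone: you gesture at Tate--Nakayama duality and the Hasse principle without carrying out what the paper actually checks, namely that $E$-simplicity forces $H^1_{\mathrm{ab}}(\A_F/F,T_x)\to H^1_{\mathrm{ab}}(\A_F/F,\U(V_0))$ to be a bijection between two groups of order $2$.

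Two further imprecisions are worth correcting. First, the asserted surjectivity
\[
\ker^1(T_x,\U(V_0);F)\twoheadrightarrow\prod_{v\neq v_2}\ker^1(T_x,\U(V_0);F_v)
\]
is false as literally written: for an elliptic torus the local groups $\ker^1(T_x,\U(V_0);F_v)$ are nontrivial at infinitely many places while the global group is finite. The correct target is the restricted product, equivalently the relevant subgroup of $H^1_{\mathrm{ab}}(\A_F/F,T_x)$; since $f'$ is a pure tensor with $f'_v=\bfun_{\Herm_n^\circ(\calo_{F_v})}$ for almost all $v$, the orbital integrals vanish outside a finite adelic configuration, so only the restricted product enters. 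Second, the role of $v_2$ is slightly misattributed. That $\U(V_0)_{v_2}\simeq\GL_n(F_{v_2})$ has a single rational orbit in each stable class is true but incidental ($H^1(F_{v_2},T_x)=0$ since $v_2$ splits in $E$). What actually does the work, as the paper makes explicit, is that the $Z$-regular elliptic support at $v_2$ forces every contributing $F[x]$ to be a degree-$n$ field linearly disjoint from $E$, hence $T_x\simeq\Res_{F[x]/F}\Res^1_{E[x]/F[x]}(\Gm)$; this is the hypothesis under which the cohomological vanishing closes, and it has nothing to do with pinning down an orbit at $v_2$.
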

\begin{proof}
    As noted above, the elliptic assumption support at $v_2$ ensures that if a $Z$-regular semi-simple element $x\in \Herm_{n}^{\circ}(F)$ contributes to the above expansion, then $$F[x] = F[T]/(\car_{\tau_0x}(T))$$ must be a field extension of $F$ linearly disjoint from $E$. This implies that 
    \begin{equation}\label{simple stabilizer}\T_x \simeq \Res_{F[x]/F}\Res_{E[x]/F[x]}^1(\Gm),
    \end{equation}
    so that $|H^1(\A_{F}/F,\T_x)| = 2$.
    
     Considering the geometric expansion of $J_{\omega'}^{\tau_0}(f')$, we may collect terms in stable $\U(V_0)$-orbits to give the expression
     \[
    \frac{1}{2^{1-|S|}L(0,\eta)}J^{\tau_0}_{\omega'}(f')= \sum_{x\in \Nm(E^\times)\backslash\Herm_{n}^{\circ}(F)/_{st} U(V_0))}\vol([\T_x])\sum_{\stackrel{y\in \Herm_{n}^\circ(F)/U(V_0)}{x\sim_{\mathrm{st}}y}}\prod_v\Orb^{\U(V_0)}_{\omega'}(f'_v,y),
     \]
     where $\T_x\simeq\T_{y}$ for any $y$ in the fiber. The outer sum is naturally identified with the sum over $\Nm(E^\times)\backslash\A^{n,rss}(F)$ by identifying stable regular semi-simple orbits with the fibers of $\car_{0}$. 
     
     The pre-stabilization of the inner sum may now be worked out via standard tools of (abelianized) Galois cohomology \cite[Chapter 1]{LabesseBasechange} and Fourier inversion on the finite group $\mathfrak{C}(\T_x,\U(V_0);\A_{F}/F)^D$ using the exact sequence \eqref{eqn: exact sequence Galois cohom} (see \cite[Proposition 4.2.1]{LabesseBasechange}), and we find
         \[
     \sum_{\stackrel{y\in \Herm_{n}^\circ(F)/U(V_0)}{\car_{0}(x)=\car_{0}(y)}}\prod_v\Orb^{\U(V_0)}_{\omega'}(f'_v,y)= \frac{\tau(\U(V_0))}{\tau(\T_x)d_x}\sum_\ka\prod_v\Orb^{\U(V_0),\ka}_{\omega'}(f'_v,x),
     \]
     where $\ka\in \mathfrak{C}(\T_x,\U(V_0),\A_{F}/F)^D$, $\tau(\rH)$ denotes the Tamagawa number of $\rH$, and 
     \[
     d_x = \#\coker[H^1(\A_{F}/F, \T_x)\to H^1_{\mathrm{ab}}(\A_{F}/F, \U(V_0))].
     \] The local $\ka$-orbital integrals correspond to the image $(\ka_v)_v$ of $\ka$ along the map dual to \eqref{eqn: adelic map on C-groups}. This product is well defined by \cite[Proposition 4.11]{Lesliedescent}, which implies that all but finitely many of the terms in the product reduce to $1$. More precisely, any global representative $y\in\Herm_n^\circ(F)$ of the class $[x]$ lies in $\Herm_n^\circ(\calo_{F_v})$ and gives an absolutely semi-simple element  for all but finitely many localizations. The claim now follows from \emph{ibid.} for any such place where $f'_v= \bfun_{\Herm_n^\circ(\calo_{F_v})}$.

We compute $$|H^1_{\mathrm{ab}}(\A_{F}/F, \U(V_0))|=\tau(\U(V_0))=2.$$ Since we only consider stable orbits such that \eqref{simple stabilizer} holds, the map on Galois cohomology is a bijection, so that $d_x=1$ and $\tau(\T_x) =2$. Thus, the right-hand side simplifies as
     \[
      \frac{\tau(\U(V_0))}{\tau(\T_x)d_x}\sum_\ka\Orb_{\omega'}^{\U(V_0),\ka}(f',x)= \prod_v\SO^{\U(V_0)}_{\omega'}(f'_v,x),
     \]
     proving the claim.
\end{proof}

We now consider the linear side. For $f\otimes \Phi\in C_c^\infty(\GL_n(\A_F)\times \A_{F,n})$, we consider the corresponding automorphic kernel $K_{f,\omega}(x,y)$ for $f$. When $\{f^\tau\}_\tau$ and $f\otimes \Phi$ is $\omega$-nice, $K_{f,\omega}(x,x)$ is of rapid decay \cite[Prop. 5.6]{Xiaothesis}. We may thus consider the following integral:
\begin{equation}
    I_\omega(f\otimes \Phi,s):= \displaystyle\int_{Z(\A_F)\GL_n(F)\bs\GL_n(\A_F)}K_{f,\omega}(g,g)E(g,\Phi,s,\eta)dg,
\end{equation}
where 
$
E(g,\Phi,s,\eta)
$ is defined in \S \ref{eqn: our Eisenstein}. Proposition \ref{Prop: Eisenstein properties} along with the rapid decay of $K_{f,\omega}(g,g)$ imply that $I_\omega(f\otimes \Phi,s)$ is absolutely convergent away from the poles of the Eisenstein series, and defines a meromorphic function in $s\in \cc$. For a fixed $s\in \cc$, this extends linearly to give a distribution on $C^\infty_c(\GL_n(\A_F)\times \A_{F,n})$.

\begin{Prop}
     Suppose that $\wt{f}=\bigotimes_v\wt{f}_v$ is $\omega$-nice. 
     \begin{enumerate}
         \item\label{item: local OIs aa} Fix $(z,w)\in \GL_n(F)\times F_n$ strongly regular. Recall the local orbital integrals from \eqref{eqn: Jacuqet OI with s}. For all but finitely many places $v$, we have the equality
         \[
               \Orb^{\GL_n(F),\eta,\natural}_s(\wt{f}_v,(z,w))= 1.
         \]
         \item  When $\Re(s)>1$, we have the geometric expansion
         \begin{equation*}
                  I_\omega(\wt{f},s) = \displaystyle\int_{Z(F)\backslash Z(\A_F)}\sum_{(z,w)/\sim}L(s,T_z,\eta)\prod_v \Orb^{\GL_n(F),\eta,\natural}_s(\wt{f}_v,(za,w)) \omega(a)da,
         \end{equation*}
         where the sum ranges over strongly regular pairs $(z,w)$ with $z\in \GL_n(F)$ elliptic and the equivalence relation on $(z,w)$ is modulo $\GL_n(F)$.
     \end{enumerate}
\end{Prop}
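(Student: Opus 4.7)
The plan is to prove the two statements by rather different methods: part (1) reduces to an unramified Tate-style computation, while part (2) is a standard geometric unfolding of the trace formula, using the simple assumptions guaranteed by $\omega$-niceness plus an unfolding of the Eisenstein series.

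For part (1), I would start from the definition \eqref{eqn: Jacuqet OI with s} and the characterization in Lemma/Definition \ref{LemDef: linear OI}, which writes $\Orb^{\GL_n(F_v),\eta,\natural}_s(\wt f_v,(z,w))$ as a transfer factor times the ratio of a convergent orbital integral and the local $L$-factor $L(s,T_{z,v},\eta_v)$. At all but finitely many places $v$, we have (a) $v\notin S$ (so $E_v/F_v$ is unramified and $\eta_v$ is unramified), (b) $\psi_v$ has conductor $\calo_{F_v}$, (c) $\wt f_v=\bfun_{\GL_n(\calo_{F_v})}\otimes\bfun_{\calo_{F_v,n}}$, (d) $z\in\GL_n(\calo_{F_v})$ with $T_z$ an unramified torus, and (e) $w\in\calo_{F_v,n}$ with $(z,w)$ strongly regular and $\De(z,w)=1$. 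Under these constraints, Lemma \ref{Lem: torsor of vectors} applied over $\calo_{F_v}$ shows that the vectors $w'\in\calo_{F_v,n}$ with $(z,w')$ still strongly regular form a $T_z(\calo_{F_v})$-torsor, which allows one to rewrite the inner orbit integral appearing in the proof of Lemma/Definition \ref{LemDef: linear OI} as a product of local Tate integrals of the characters $\eta_v\circ\Nm_{F_i/F_v}$ over $\calo_{F_i}$; by Tate's thesis each such Tate integral equals the corresponding local $L$-factor, so the ratio with $L(s,T_{z,v},\eta_v)$ is identically $1$.

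For part (2), I would substitute the kernel $K_{f,\omega}$ into $I_\omega(\wt f,s)$ and use $\omega$-niceness: the local support conditions at the split places $v_1,v_2$ force only elliptic regular semisimple $\ga\in\GL_n(F)$ to contribute, and they further guarantee absolute convergence of the resulting integral for $\Re(s)\gg 0$, so that sums and integrals may be freely interchanged. Collapsing the sum over $\delta\in T_\ga(F)\bs\GL_n(F)$ against the outer integral over $Z(\A_F)\GL_n(F)\bs\GL_n(\A_F)$ and using the $\GL_n(F)$-invariance of the Eisenstein series yields
\[
I_\omega(\wt f,s)=\sum_{[\ga]}\int_{Z(\A_F)T_\ga(F)\bs\GL_n(\A_F)}\int_{[Z]}f(g^{-1}\ga g a)\omega(a)\,E(g,\Phi,s,\eta)\,da\,dg,
\]
with $[\ga]$ running over elliptic rational conjugacy classes. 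The next step is the standard Rankin--Selberg unfolding of $E(g,\Phi,s,\eta)$: expressing $F_n\setminus\{0\}=P_n(F)\bs\GL_n(F)\cdot e_n$ and exchanging the $\sig\in P_n(F)\bs\GL_n(F)$ sum with the $T_\ga(F)$-quotient turns the double sum into a single integral over $T_\ga(F)\bs\GL_n(\A_F)$ paired against $\Phi(ae_n g)$ and the character. Since $\ga$ is elliptic, $T_\ga(F)\cap P_n(F)=Z(F)$, so the combined quotient splits cleanly, and the Eisenstein sum disappears.

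After this unfolding, the inner $a$-integral (using centrality) merges with the adelic $\A_F^\times$-integral coming from the Eisenstein series and with the central character $\omega$, reorganizing into the prescribed outer $\int_{Z(F)\bs Z(\A_F)}$ integral against $\omega(a)$; substituting $w=e_n\sig$ and reindexing in terms of strongly regular pairs $(z,w)$ via the free $\GL_n(F)$-action guaranteed by Lemma \ref{Lem: generic trivial stabilizer} turns the contribution of $[\ga]$ into the unnormalized orbital integral
\[
\int_{\GL_n(\A_F)}\wt f(g^{-1}\ga ga,\,we_n\cdot g)|g|^s\eta(g)\,dg.
\]
Factorizing this adelic integral into the product of local ones and finally inserting/dividing by $\prod_v L(s,T_{z,v},\eta_v)=L(s,T_z,\eta)$ along with the product of local transfer factors (which equals $1$ globally), we recover $\prod_v\Orb^{\GL_n(F),\eta,\natural}_s(\wt f_v,(za,w))$ multiplied by $L(s,T_z,\eta)$, as required. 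The main obstacle is the bookkeeping in this last step: one must verify that all local normalizing factors assemble correctly into the global $L$-function and that the convergence of the product is controlled by part (1), which ensures only finitely many local factors differ from~$1$.
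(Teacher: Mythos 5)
Your reconstruction is broadly sound, but it takes a more hands-on route than the paper, which disposes of both parts essentially by citation: part (1) is referred to \cite[Props.~5.3 and 5.8]{Xiaothesis}, and part (2) to \cite[Prop.~5.6 and Section~5.4.3]{Xiaothesis}; the only genuinely new content in the paper's proof is a short remark explaining why the ellipticity assumption at $v_2$ forces the sum to be over \emph{strongly regular} pairs. Your treatment of part (1) via an unramified Tate integral is a fine expansion of what the cited references do, and your skeleton of the Rankin--Selberg unfolding of the elliptic conjugacy-class kernel against $E(g,\Phi,s,\eta)$ is the standard argument.

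The one place where your write-up is imprecise is exactly the point the paper chooses to spell out. You appeal to ``$T_\gamma(F)\cap P_n(F)=Z(F)$'' (for the mirabolic $P_n$ the intersection is actually trivial, not $Z(F)$; you may be conflating the mirabolic with the parabolic stabilizing the line $Fe_n$), and to Lemma~\ref{Lem: generic trivial stabilizer} to ``reindex in terms of strongly regular pairs.'' But that lemma takes strong regularity as a hypothesis; it does not produce it. What is actually needed, and what the paper records, is the observation that for $z$ elliptic the span $\operatorname{span}\{w,wz,\dots,wz^{n-1}\}=wF[z]\subset F_n$ is a nonzero quotient of the \emph{field} $F[z]$, hence all of $F_n$, so that every nonzero $w$ automatically makes $(z,w)$ strongly regular. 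This is what guarantees that after unfolding the Eisenstein sum only strongly regular pairs occur and that the local orbital integrals (which are only defined at strongly regular points) make sense. Your argument implicitly uses this but doesn't state it, and the $T_\gamma\cap P_n$ claim as written is not quite the right substitute; including the one-line field-quotient argument would close the gap.
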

\begin{proof}
    The first claim is established in \cite[Props. 5.3 and 5.8]{Xiaothesis}. The second claim is established in \cite[Prop. 5.6 and Section 5.4.3]{Xiaothesis} under slightly different constraints on the test function $\wt{f}$, but with an identical proof once we explain why only strongly regular pairs $(z,w)$ with $z\in \GL_n(F)$ regular elliptic appear in the sum. This follows from the ellipticity assumption at the split place $v_2$, which by definition requires that $F[z]\simeq L$ for some degree-$n$ field extension $L/F$. Since the span of $\{w,wz,\ldots,wz^{n-1}\}\subset F_n$ is a quotient of $F[z]$, it follows that this span must be $n$-dimensional and thus all of $F_n$. But this is precisely the constraint of $(z,w)$ being strongly regular.
\end{proof}

\begin{Prop}\label{Prop: Eisenstein RTF}
Let $\omega$ be a unitary central character for $\GL_n(\A_F)$. If $\wt{f}=\bigotimes_v\wt{f}_v$ is $\omega$-nice, then $I_\omega(\wt{f},s)$ is holomorphic at $s=0$ and we have the equality
\begin{equation}\label{eqn: linear RTF}
  2\sum_{(z,w)/\sim}L(0,T_z,\eta)\prod_v \Orb^{\GL_n(F),\eta,\natural}_{\omega_v}(\wt{f}_v,z)= I_\omega(\wt{f})+I_{\omega\eta}(\wt{f})=\sum_{\pi}I_{\pi}(\wt{f}),
\end{equation}
where the first sum ranges over $(z,w)$ modulo $\GL_n(F)\times\Nm(Z_E(F))$ and the second sum runs over irreducible cuspidal automorphic representations of $\GL_n(\A_F)$ with central character either $\omega$ or $\omega\eta$.
\end{Prop}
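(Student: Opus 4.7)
The plan is to prove the three assertions---holomorphy at $s=0$, the geometric identity, and the spectral identity---in parallel via the spectral decomposition of the cuspidal kernel. The $\omega$-niceness hypothesis forces $K_{f,\omega}(g,g)$ to equal a \emph{finite} sum of cuspidal projections, via the supercuspidal Bernstein-block condition at $v_1$ and the regular-elliptic support at $v_2$. Interchanging this finite sum with the integration against the Eisenstein series yields, for $\Re(s)\gg 0$,
\[
I_\omega(\wt{f},s) \;=\; \sum_{\pi}\sum_{\varphi \in \textup{OB}(\pi)} \frac{\calp_\eta\bigl(s,\pi(f)\varphi,\hat\varphi,\Phi\bigr)}{\langle \varphi,\hat\varphi\rangle_{Pet}},
\]
where $\pi$ runs over a finite set of cuspidal automorphic representations of $\GL_n(\A_F)$. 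By Proposition \ref{Prop: Rankin}\eqref{no pole in RS} each Rankin--Selberg integral is holomorphic at $s=0$ provided $\pi\not\simeq \pi\otimes\eta$, i.e.\ provided $\mathrm{BC}_E(\pi)$ is cuspidal; the regular-elliptic support at $v_2$ eliminates any $\pi$ for which $\mathrm{BC}_E(\pi)$ is non-cuspidal, so holomorphy holds. Specializing at $s=0$ and invoking Definition \ref{Def: relative char linear} yields the rightmost equality of the proposition, with both central characters $\omega$ and $\omega\eta$ appearing since the $\eta$-twist carried by the Eisenstein series couples $K_{f,\omega}$ against both the $\omega$- and $\omega\eta$-isotypic pieces of the Rankin--Selberg pairing.

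The geometric identity is obtained by analytically continuing the expansion of the preceding proposition from $\Re(s)>1$. Each local ratio $\Orb^{\GL_n(F),\eta,\natural}_s(\wt{f}_v,(za,w))$ is holomorphic at $s=0$ by Lemma/Definition \ref{LemDef: linear OI}, is independent of $w$ there, and equals $1$ at almost all places; the factor $L(s,T_z,\eta)$ is holomorphic at $s=0$ because the elliptic support at $v_2$ forces $F[z]$ to be a degree-$n$ field linearly disjoint from $E$, so every character $\eta\circ\Nm_{F_i/F}$ appearing in the factorization $L(s,T_z,\eta)=\prod_i L(s,\eta_i)$ is non-trivial. Folding the $a$-integral over $Z(F)\backslash Z(\A_F)$ into the product of local orbital integrals via the normalization \eqref{eqn: twisted OI final} then expresses the value at $s=0$ of $I_\omega(\wt{f},s)$ as a global orbital integral sum.

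The factor of $2$ and the reindexing of orbits modulo $\GL_n(F)\times \Nm(Z_E(F))$ both come from Pontryagin duality on the idele class group: the characters of $F^\times\backslash\A_F^\times$ trivial on $\Nm(\A_E^\times)$ are precisely $\{1,\eta\}$, so summing the $\omega$- and $\omega\eta$-twisted identities collapses the $Z(\A_F)$-integration to the smaller quotient $\Nm(Z_E(\A_F))Z(F)\backslash Z(\A_F)$, doubling the geometric sum. The main obstacle is the careful bookkeeping of this coupling near $s=0$: although $K_{f,\omega}(g,g)$ is $Z(\A_F)$-invariant, the Eisenstein series transforms by $|\cdot|^{ns}\eta^n$ under the center, so one must interpret the quotient $Z(\A_F)\GL_n(F)\backslash\GL_n(\A_F)$ via the splitting $\A_F^\times\cong\A_F^1\times\BR_+$ and verify that the resulting convention precisely couples the $\omega$-kernel with both the $\omega$- and $\omega\eta$-spectral components in the claimed manner.
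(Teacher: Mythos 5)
The proposal inverts the order of argument used in the paper, and in doing so introduces a gap. The paper establishes holomorphy of $I_\omega(\wt{f},s)$ at $s=0$ directly on the \emph{geometric} side: for a $Z$-regular elliptic $(z,w)$, the elliptic support at the split place $v_2$ (pulled across to the Hermitian side via a nice transfer) forces $F[z]$ to be a degree-$n$ field linearly disjoint from $E$, so that every factor $\eta_i=\eta\circ\Nm_{F_i/F}$ of $L(s,T_z,\eta)$ is non-trivial and the $L$-factor is holomorphic at $s=0$; combined with the holomorphy of each normalized local orbital integral, this gives holomorphy of the finite geometric sum. You instead try to establish holomorphy on the \emph{spectral} side by claiming that "the regular-elliptic support at $v_2$ eliminates any $\pi$ for which $\mathrm{BC}_E(\pi)$ is non-cuspidal." This claim is not justified, and I do not believe it is a local statement. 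The place $v_2$ is \emph{split} in $E$, so $\eta_{v_2}=1$, and elliptic support of $f_{v_2}$ is a purely geometric constraint on the test function; it does not, by itself, kill the contribution of a cuspidal $\pi$ with $\pi\simeq\pi\otimes\eta$ to the finite spectral sum. Such a $\pi$ can have $\pi_{v_1}$ in the prescribed supercuspidal Bernstein block and can have $\mathrm{tr}\,\pi_{v_2}(f_{v_2})\neq 0$ on the elliptic locus, so Proposition \ref{Prop: Rankin}\eqref{no pole in RS} alone does not conclude holomorphy term by term on the spectral side. The correct logical order — and the one the paper takes — is to prove holomorphy geometrically first, and then deduce the spectral identity as a consequence (the non-contribution or cancellation of the potential residues coming from $\pi\simeq\pi\otimes\eta$ is \emph{inferred}, not assumed). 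The remaining steps you give — finiteness of the cuspidal decomposition via the Bernstein-block condition at $v_1$, the folding of the $Z(\A_F)$-integral over $\Nm(Z_E(\A_F))Z(F)$ producing the factor of $2$, and the elimination of poles of $L(s,T_z,\eta)$ via the elliptic support — match the paper's argument. The fix is to move the elliptic-support argument to the geometric side, where it actually controls the $L$-factors, rather than using it to constrain the spectral support.
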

\begin{proof}
We begin by noting that for any strongly regular $(z,w)\in \GL_n(F)\times F_n$ with $z$ $Z$-regular semi-simple, the product $\prod_v \Orb^{\GL_n(F),\eta,\natural}_s(\wt{f}_v,(z,w))$ is holomorphic at $s=0$. Indeed, all but finitely many factors are equal to $1$ by the preceding proposition and the holomorphicity of the remaining factors was established in \S \ref{Section: prelim for Jacquet} (cf. \cite[Proposition 5.3]{Xiaothesis}). 

Fix now $f'\in C_c^\infty(\Herm_{n}^{\circ}(\A_F))$ so that $\wt{f}$ and $f'$ are a $\omega$-nice transfer. We now claim that if the value of $\prod_v \Orb^{\GL_n(F),\eta,\natural}_s(\wt{f}_v,(z,w))$ at $s=0$ is non-zero, then $L(s,T_x,\eta)$ is holomorphic at $s=0$. This follows from the elliptic assumption at the inert place $v_2$.

More precisely, suppose $x\in \Herm_{n}^{\circ}(F)$ is a $Z$-regular semi-simple element matching $z\in \GL_n(F)$. By definition of matching orbits, there is an isomorphism
\[
F[x]:=F[T]/(\car_{{\tau_0x}}(T))=F[T]/(\car_z(T))\simeq F[z].
\]
The assumption of elliptic support at $v_2$ implies that if 
\[
2^{|S_{v_2,1}^{ram}(z)|}\Orb^{\GL_n(F),\eta}(\wt{f}_{v_2},z)=\SO^{\U(V_0)}(f'_{v_2},x)\neq0,
\]
then $F[x]$ is a field not containing $E$. In particular, if $\prod_{v}\Orb^{\GL_n(F),\eta,\natural}_0(\wt{f}_v,(z,w))\neq0$, it follows that $\SO^{\U(V_0)}(f_{v_2}',x)\neq0$ so that $E\not\subset F[x]$. This implies that $L(s,\T_z,\eta)=L(s,\T_x^{\mathrm{op}},\eta)$ is holomorphic at $s=0$. 

Given this, we obtain the geometric expansion of $I_\omega(\wt{f},0)+I_{\omega\eta}(\wt{f},0)$ as
\begin{align*}
    \displaystyle\int_{Z(F)\backslash Z(\A_F)}\sum_{(z,w)/\sim}L(0,\T_z,\eta)&\prod_v \Orb^{\GL_n(F),\eta}(\wt{f}_v,za) \left(\omega(a)+\omega\eta(a)\right)da\\
    =2\displaystyle\int_{Z(F)\backslash Z(F)\Nm(Z_E(\A_F))}&\sum_{(z,w)/\sim}L(0,\T_z,\eta)\prod_v \Orb^{\GL_n(F),\eta}(\wt{f}_v,za) \omega(a)da\\
    =2\sum_{(z,w)/\sim'}L(0,\T_z,\eta)&\int_{\Nm(Z_E(\A_F))}\prod_v \Orb^{\GL_n(F),\eta}(\wt{f}_v,za) \omega(a)da\\
    =2\sum_{(z,w)/\sim'}L(0,\T_z,\eta)&\prod_v \Orb_{\omega_v}^{\GL_n(F),\eta}(\wt{f}_v,z),
\end{align*}
where $\sim'$ denotes modulo  $\GL_n(F)\times\Nm(Z_E(F))$.

Finally, the proof of the second equality in \eqref{eqn: linear RTF} is standard due to the cuspidal assumption at $v_1$. 
\end{proof}


\subsubsection{Comparison}

We now extract the global comparison of relative characters. For the rest of this section, we assume that $E/F$ is a quadratic extension of number fields such that every archimedean place $v|\infty$ of $F$ splits in $E$. We begin with the foundational main result of \cite[Chapter 5]{Xiaothesis}. 
\begin{Thm}
     Suppose that $f' = \sum_\tau p_{\tau,!}(f^\tau)$ and $\wt{f}$ form a $\omega$-nice transfer. Then
     \[
     \frac{1}{2^{1-|S|}L(0,\eta)}J_{\omega'}^{\tau_0}(f') = I_\omega(\wt{f})+I_{\omega\eta}(\wt{f}).
     \]
\end{Thm}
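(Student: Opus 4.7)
The plan is to prove the identity by comparing the geometric expansions of both sides term-by-term. By Corollary~\ref{Cor: stable}, the LHS admits the expansion
$$\frac{1}{2^{1-|S|}L(0,\eta)}J^{\tau_0}_{\omega'}(f')=\sum_{[x]\in\Nm(E^\times)\backslash\A^{n,rss}(F)}\vol([T_x])\prod_v\SO^{\U(V_0)}_{\omega'}(f'_v,x),$$
while Proposition~\ref{Prop: Eisenstein RTF} gives
$$I_\omega(\wt{f})+I_{\omega\eta}(\wt{f})=2\sum_{(z,w)/\sim}L(0,T_z,\eta)\prod_v\Orb^{\GL_n(F),\eta,\natural}_{\omega_v}(\wt{f}_v,z).$$
Both sums are finite because the regular elliptic support at $v_2$ forces the indexing orbits to correspond to degree-$n$ field extensions $F[x]\simeq F[z]$ that are linearly disjoint from $E$; in particular, $T_x$ is a simple elliptic torus of the type described in Lemma~\ref{Lem: centralizers}.

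First I would identify the two indexing sets. The invariant maps $x\mapsto \car_{\tau_0 x}(t)$ and $z\mapsto\car_z(t)$ intertwine the $\Nm(E^\times)$ and $\Nm(Z_E(F))$ actions respectively, and induce a bijection of regular elliptic stable orbits modulo these central actions. For the outer sum on the linear side, Lemma~\ref{Lem: torsor of vectors} shows that for each regular semisimple $z$ the strongly regular $w$ form a single $T_z(F)$-torsor, so the sum over $(z,w)/\sim$ reduces to a sum over $\GL_n(F)\times \Nm(Z_E(F))$-orbits of $z$ alone, matching the LHS. Next I would match local orbital integrals: for matching $z\leftrightarrow x$, Corollary~\ref{Cor: transfer with central character} yields at every place
$$\SO^{\U(V_0)}_{\omega'}(f'_v,x)=2^{|S_{v,1}^{ram}(z)|}\Orb^{\GL_n(F_v),\eta}_{\omega_v}(\wt f_v,z),$$
whose product over $v$ introduces the global factor $2^{|S_1^{ram}(z)|}$, finitely supported on $v\in S$. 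The normalized local integral $\Orb^{\eta,\natural}_{\omega_v}$ differs from the unnormalized $\Orb^{\eta}_{\omega_v}$ by a local transfer factor divided by $L(0,T_{z,v},\eta_v)$; the global product of these local transfer factors equals $1$ by the product formula for $\eta$ on rational points, while the local $L$-factors assemble (through the finite support of contributing places in the simple trace formula) into the single global $L(0,T_z,\eta)$ that appears on the RHS.

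The final step is to compare the global constants. Lemma~\ref{Lem: vol of centralizer}, together with $|S_1|=1$ since $F[x]$ is a field disjoint from $E$, gives
$$\vol([T_x])=2^{1-|S_x|}L(0,T_z^{\mathrm{op}},\eta)=2^{1-|S_x|}L(0,T_z,\eta).$$
Assembling the above ingredients, the term-by-term equality of the two geometric expansions reduces to the numerical identity
$$2^{1-|S_x|+|S_1^{ram}(z)|}=2,\qquad\text{i.e.}\quad |S_x|=|S_1^{ram}(z)|.$$
The main obstacle will be verifying this combinatorial identity of Galois-theoretic invariants at the ramified places $v\in S$: one must show that for every simple factor $F_{v,i}$ of $F_v[x]$ with $F_{v,i}\not\supset E_v$, the character $\eta_{v,i}=\eta_v\circ\Nm_{F_{v,i}/F_v}$ is ramified on $F_{v,i}^\times$. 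This is a local class-field-theoretic statement about the behavior of the ramified quadratic extension $E_v/F_v$ under the base change $F_{v,i}/F_v$: since $F_{v,i}\not\supset E_v$, the compositum $E_v F_{v,i}/F_{v,i}$ is a nontrivial quadratic extension, and one checks using the description of the local norm group that this extension is ramified, so that $\eta_{v,i}$ is nontrivial on $\calo_{F_{v,i}}^\times$. Once this identity is verified, the geometric expansions match term-by-term and the theorem follows.
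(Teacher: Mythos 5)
Your argument is essentially the paper's own proof: compare the geometric expansions from Corollary~\ref{Cor: stable} and Proposition~\ref{Prop: Eisenstein RTF} term by term using Corollary~\ref{Cor: transfer with central character} for the local orbital integrals and Lemma~\ref{Lem: vol of centralizer} for the global volume, so that everything reduces to the combinatorial identity $|S_x|=\sum_{v\in S}|S_{v,1}^{\mathrm{ram}}(z)|$. Your extra remarks about transfer factors and the $T_z$-torsor of vectors $w$ are correct but are already absorbed into Proposition~\ref{Prop: Eisenstein RTF} and its proof.

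A caution about the step you flag as ``the main obstacle.'' You claim (and the paper asserts the same thing) that whenever $E_v/F_v$ is ramified and $E_v\not\subset F_{v,i}$, the compositum $E_vF_{v,i}/F_{v,i}$ is ramified, hence $\eta_{v,i}$ is ramified. This is false when $e(F_{v,i}/F_v)$ is even: take $F_v=\qq_p$, $E_v=\qq_p(\sqrt{p})$, $F_{v,i}=\qq_p(\sqrt{up})$ with $u$ a nonsquare unit; then $E_vF_{v,i}=F_{v,i}(\sqrt{u})$ is the \emph{unramified} quadratic extension of $F_{v,i}$, so $\eta_{v,i}$ is nontrivial but unramified, and $i\in S_{v,1}(x)\setminus S_{v,1}^{\mathrm{ram}}(z)$. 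The theorem survives because the error cancels against an implicit one in the proof of Lemma~\ref{Lem: vol of centralizer}: the local index $[\calo_{F_{v,i}}^\times:\Nm_{E_vF_{v,i}/F_{v,i}}(\calo_{E_vF_{v,i}}^\times)]$ used there equals $2$ only when $E_vF_{v,i}/F_{v,i}$ is ramified and $1$ otherwise, so the exponent in that lemma is really $\sum_{v\in S}|S_{v,1}^{\mathrm{ram}}(x)|$. With that reading, the two exponents agree \emph{tautologically} and no class-field-theoretic lemma of the kind you propose is needed. (In the paper's eventual use of this theorem one may and does take the globalization $E/F$ everywhere unramified, so $S=\emptyset$ and the issue never surfaces; but as a freestanding statement for general $E/F$, your proposed verification---and the paper's terse one---would not go through as written.)
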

\begin{proof}
We have seen that the two geometric expansions range over matching orbits of $Z$-regular elliptic regular semi-simple elements. For each such matching pair $z\in  \GL_n(F)$ and $x\in \Herm_{n}^{\circ}(F)$, the contribution to the right-hand side is
\[
2L(0,\T_z,\eta)\prod_v \Orb^{\GL_n(F),\eta}_{\omega_v}(\wt{f}_v,z),
\]
while the contribution to the left-hand side is 
\[
\vol([\T_x])\prod_v\SO_{\omega'}(f'_v,x).
\]
The result now follows by combining Corollary \ref{Cor: transfer with central character} with the calculation in Lemma \ref{Lem: vol of centralizer} that $\vol([\T_x])=2^{1-|S_x|}L(0,\T_x^{\mathrm{op}},\eta)$ and the count
\[
|S_x| =\sum_{v\in S}|S_{v,1}(x)|= \sum_{v\in S}|S_{v,1}^{ram}(z)|
\]
Indeed, at each place $v\in S$, decompose $F[x]_v$ as in \eqref{eqn: decomp of stab at v}. Since $E_v/F_v$ is ramified, the assumption that $\eta\circ\Nm_{F_i/F_v}\not\equiv 1$ forces this character to be ramified on $F_i^\times,$ implying that $S_{v,1}(x)=S_{v,1}^{ram}(z)$. Finally, we have the identification of $L$-functions $L(s,\T_z,\eta) = L(s,\T_x^{\mathrm{op}},\eta)$, giving the necessary equality. 
\end{proof}

An immediate corollary of Propositions \ref{Prop: unitary RTF} and \ref{Prop: Eisenstein RTF} is the following.
\begin{Cor}\label{Cor: comparing full RTFs}
     Suppose that $f' = \sum_\tau p_{\tau,!}(f^\tau)$ and $\wt{f}$ is a nice transfer. Then we have the identity
     \[
     \frac{1}{2^{1-|S|}L(0,\eta)} \sum_{\tau\in \calv_n(E/F)}\sum_{\Pi}J^{\tau,\tau_0}_\Pi(f^\tau) =\sum_{\pi}I_{\pi}(\wt{f}),
     \]
     where the left-hand sum ranges  over irreducible cuspidal representations $\Pi$ of $\GL_n(\A_E)$ with central character $\omega'$ and the right-hand sum ranges over irreducible cuspidal automorphic representations of $\GL_n(\A_F)$ with central character $\omega$ or $\omega\eta$.
\end{Cor}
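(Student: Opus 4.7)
The plan is to prove the corollary by substituting the spectral expansions of Propositions \ref{Prop: unitary RTF} and \ref{Prop: Eisenstein RTF} into the geometric identity of the preceding theorem. Since the preceding theorem already supplies the matching
\[
\frac{1}{2^{1-|S|}L(0,\eta)} J_{\omega'}^{\tau_0}(f') = I_\omega(\wt{f})+I_{\omega\eta}(\wt{f}),
\]
valid for any $\omega$-nice transfer $(\wt{f},f')$, the work is essentially assembly: one must verify that the hypotheses of the two spectral propositions are met and then plug in.

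First, I would check that the $\omega$-niceness of the pair $(\wt{f},f')$ (Definition \ref{Def: nice transfer 2}) implies in particular that $\{f^\tau\}_\tau$ is an $\omega'$-nice collection (so that the spectral expansion of Proposition \ref{Prop: unitary RTF} applies to the left-hand side) and that $\wt{f}$ is $\omega$-nice in the sense of Proposition \ref{Prop: Eisenstein RTF}. Both conditions are built into the definition, so this is immediate. Applying Proposition \ref{Prop: unitary RTF} therefore gives
\[
J_{\omega'}^{\tau_0}(f') \;=\; \sum_{\tau\in \calv_n(E/F)}\sum_{\Pi} J^{\tau,\tau_0}_\Pi(f^\tau),
\]
where $\Pi$ ranges over cuspidal automorphic representations of $\GL_n(\A_E)$ with central character $\omega'$.

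Next, applying Proposition \ref{Prop: Eisenstein RTF} to the linear side expresses the right-hand side as
\[
I_\omega(\wt{f})+I_{\omega\eta}(\wt{f}) \;=\; \sum_{\pi} I_{\pi}(\wt{f}),
\]
where $\pi$ ranges over irreducible cuspidal automorphic representations of $\GL_n(\A_F)$ of central character $\omega$ or $\omega\eta$. Substituting both spectral expansions into the geometric identity of the previous theorem yields the desired identity of the corollary.

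There is no real obstacle here — the corollary is a formal consequence of work already done — but if anything deserves care, it is checking the bookkeeping: the factor $2^{1-|S|}L(0,\eta)=\vol([Z_E^1])$ appears uniformly on the unitary side (originating from the integration over the center in the kernel definition), so after dividing by it the identification with $\sum_\pi I_\pi(\wt{f})$ is direct. No further rearrangement or cancellation of terms is needed, and no convergence issues arise because the niceness hypotheses ensure that both spectral sums are finite (only cuspidal components in the Bernstein component $\Omega$ at $v_1$ contribute).
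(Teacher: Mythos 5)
Your proposal is correct and matches the paper's approach: the paper calls this an immediate corollary of Propositions \ref{Prop: unitary RTF} and \ref{Prop: Eisenstein RTF}, and your argument simply spells out the substitution of their spectral expansions into the geometric identity of the preceding theorem. The bookkeeping you flag (the factor $2^{1-|S|}L(0,\eta)=\vol([Z_E^1])$ and the finiteness of the spectral sums from the supercuspidal constraint at $v_1$) is exactly what makes the assembly go through.
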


To obtain the necessary refined comparison, we make another use of \cite[Theorem A]{ramakrishnan2018theorem}. We introduce the following terminology to simplify a few statements.
\begin{Def}\label{Def: efficient}
    Fix a character $\omega$ of $Z(\A_F)$ and set $\omega'=\omega\circ\Nm_{E/F}$. Fix a split place $v_1$ and a supercuspidal representation $\pi_{v_1}$ of $\GL_n(F_{v_1})$. We say that the functions
\[
\text{$\wt{f}\in C_c^\infty(\GL_n(\A_F)\times \A_{F,n})$ and $\{f^{\tau}\}_{\tau\in\calv_n(E/F)}$}\subset C_c^\infty(\GL_n(\A_E))
\]
are \textbf{efficient transfers} for $(\omega,\pi_{v_1})$ if they give a $\omega$-nice transfer such that $f_{v_1}\in C_c^\infty(\GL_n(F_{v_1}))_\Omega$ where $\Omega$ is the block of $\pi_{v_1}$ ($f_{v_1}$ is said to be essentially a matrix coefficient of $\pi_{v_1}$ in this case) and that $f{\tau_n}_{v_1}$ is related to $f_{v_1}$ as in Proposition \ref{Prop: split transfer}; note that this implies that $f^{\tau}_{v_1}$ is essentially a matrix coefficient of the base change $\Pi_{v_1}\simeq \pi_{v_1}\otimes \pi_{v_1}$ of $\pi_{v_1}$.
\end{Def}
Making use of the strong multiplicity one theorem \cite[Theorem A]{ramakrishnan2018theorem}, we may separate the terms in the preceding sum. For a cuspidal automorphic representation $\Pi$ of $\GL_n(\A_E)$, we denote by $\mathcal{B}(\Pi)$ the (finite) set of cuspidal automorphic representations $\pi$ of $\GL_n(\A_F)$ such that $\Pi=\mathrm{BC}_E(\pi)$. We have the following comparison of relative characters.
\begin{Prop}\label{Prop: simple character id}
 For almost all split places $v$, we fix an irreducible unramified representation $\pi_v^0$. For a fixed split place $v_1$, we fix a supercuspidal representation $\pi_{v_1}$ of $\GL_n(F_{v_1})$. Then there exists at most one cuspidal automorphic representation $\Pi$ of $\GL_n(\A_E)$ such that if $\wt{f}$ and $\{f^\tau\}_{\tau\in\calv_n(E/F)}$ are efficient transfers for $(\omega,\pi_{v_1})$, then
\begin{equation*}
 \frac{1}{2^{1-|S|}L(0,\eta)}\sum_{\tau\in \calv_n(E/F)}J_{\Pi}^{\tau,\tau_0}(f^\tau)=\sum_{\pi\in \mathcal{B}(\Pi)}I_\pi(\wt{f}),
\end{equation*}
where the sum on the right runs over all cuspidal automorphic representations $\pi$ of $\GL_n(\A_F)$ such that
\begin{enumerate}
\item\label{property1redux}$\pi_v\cong \pi_v^0$ for almost all split $v$, 
\item\label{property2redux} $\pi_{v_1}$ is our fixed supercuspidal representation,
\end{enumerate}
and where $\Pi=\mathrm{BC}_E(\pi)$ is the base change of any $\pi$ appearing in the sum.
\end{Prop}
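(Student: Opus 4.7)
The plan is to isolate a single spectral contribution on each side of the identity from Corollary \ref{Cor: comparing full RTFs} by cutting down with the local data at the split places, and then to invoke the strong multiplicity one theorem of Ramakrishnan to recognize the base change uniquely.

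First, I would start from the identity in Corollary \ref{Cor: comparing full RTFs}, namely
\[
\frac{1}{2^{1-|S|}L(0,\eta)} \sum_{\tau\in \calv_n(E/F)}\sum_{\Pi}J^{\tau,\tau_0}_\Pi(f^\tau) \;=\;\sum_{\pi}I_{\pi}(\wt{f}),
\]
valid for any $\omega$-nice transfer. The goal is to project both sides onto the piece indexed by cuspidal automorphic representations $\pi$ whose local components at almost all split places are the given $\pi_v^0$ and whose component at $v_1$ is the fixed supercuspidal $\pi_{v_1}$.

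The first step is to exploit the constraint that the transfer is efficient at $v_1$. Because $f_{v_1}$ lies in the Bernstein component $\Omega$ of the supercuspidal $\pi_{v_1}$, and because $\pi_{v_1}$ is supercuspidal, the local distribution $I_{\pi_v}$ vanishes unless $\pi_{v_1}$ is isomorphic to the fixed supercuspidal; on the unitary side, $\pi_{v_1}$ being supercuspidal forces $\Pi_{v_1}=\mathrm{BC}(\pi_{v_1})=\pi_{v_1}\otimes\pi_{v_1}$ (since $v_1$ splits), and the matching matrix-coefficient condition built into the definition of efficiency forces $J^{\tau,\tau_0}_\Pi(f^\tau)=0$ unless $\Pi_{v_1}\simeq \pi_{v_1}\otimes \pi_{v_1}$. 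Thus both sums collapse to cuspidal automorphic representations whose $v_1$-component is prescribed.

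Next, I would use the freedom to vary the test functions at almost all split non-archimedean places through the spherical Hecke algebra. At a split place $v$ where $E_v\cong F_v\times F_v$, Proposition \ref{Prop: split transfer} (or its unramified form \eqref{eqn: split unram transfer}) shows that convolving $f_v$ on the linear side with $h_v\in \calh_{K}(\GL_n(F_v))$ corresponds on the unitary side to convolving $f^{\tau_n}_v$ by a matching element of the spherical Hecke algebra of $\GL_n(E_v)$ under base change. Under this Hecke action, both $I_\pi(\wt{f})$ and $J^{\tau,\tau_0}_\Pi(f^\tau)$ are scaled by the value of the Satake parameter at the Hecke element: $I_\pi$ by $\mathrm{Sat}(h_v)(\pi_v)$ and $J^{\tau,\tau_0}_\Pi$ by $\mathrm{Sat}(h_v)(\Pi_v) = \mathrm{Sat}(h_v)(\pi_v\otimes\pi_v^\vee)$ evaluated appropriately under base change. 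A standard Jacobson density/linear independence of characters argument allows one to project onto the sub-sum where $\pi_v\simeq \pi_v^0$ for all $v$ in any prescribed finite set of split places, and hence for almost all split $v$ by taking limits of finite projections and using the fact that all the sums are finite (only finitely many $\pi$, resp.\ $\Pi$, can occur with a fixed level structure outside a finite set).

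Finally, the main obstacle — and the place where I would invoke the deepest input — is the assertion that at most one cuspidal $\Pi$ on $\GL_n(\A_E)$ can survive. This is where \cite[Theorem A]{ramakrishnan2018theorem} enters: for cuspidal automorphic representations of $\GL_n$ over a number field, the isomorphism class is determined by the local components at almost all places in any set of positive density. Applied to $E$, knowing $\Pi_v$ at almost all places of $E$ lying over split places of $F$ (each split place of $F$ splits into two places of $E$, and the local component $\Pi_v\simeq \pi_v^0\otimes \pi_v^0$ is determined by $\pi_v^0$) gives the local data of $\Pi$ at a set of places of $E$ of positive density; Ramakrishnan's strong multiplicity one theorem then pins down $\Pi$ uniquely. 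Once this single $\Pi$ is identified, the right-hand side automatically picks up the finite set $\mathcal{B}(\Pi)$ of cuspidal descents of $\Pi$, so the stated identity follows. The key subtlety to verify carefully is that the Hecke projection on the unitary side, mediated by base change, truly isolates a single isomorphism class of $\Pi$ rather than merely its weak base-change-equivalence class — but this is precisely the content of Ramakrishnan's theorem for cuspidal $\Pi$.
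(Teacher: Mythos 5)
Your proposal is essentially correct and follows the same line of argument as the paper's own proof: start from the identity in Corollary \ref{Cor: comparing full RTFs}, use the Bernstein-block constraint at $v_1$ to fix the supercuspidal component, vary Hecke algebra elements at split places outside a finite set $S$ and invoke linear independence of Hecke characters to project onto the sub-sum indexed by the prescribed $\{\pi_v^0\}$, and then cite Ramakrishnan's strong multiplicity one theorem to conclude that at most one $\Pi$ survives (and hence that the right-hand sum is exactly $\mathcal{B}(\Pi)$). The only loose point in your write-up is the phrase ``convolving $f^{\tau_n}_v$ by a matching element\ldots under base change'': the relevant compatibility, as the paper makes explicit, is that the (surjective) map $\phi_1\otimes\phi_2\mapsto \phi_1^\vee\ast\phi_2^\theta$ from $\calh_{K_{E_v}}(\GL_n(E_v))$ to $\calh_{K_v}(\GL_n(F_v))$ intertwines the Hecke-trace functionals $\lambda_{\Pi_v}$ and $\lambda_{\pi_v}$ when $\Pi_v\simeq\pi_v\otimes\pi_v$; this is not literally the Satake base change homomorphism, although it plays the analogous role. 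This is a minor imprecision rather than a gap, so your argument is sound.
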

\begin{Rem}
    Note that the comparison above is vacuous unless $\pi=\bigotimes_v\pi_v$ is $\GL_n$-elliptic at some split place $v_2$ (this is built into the requirement that $\wt{f}$ and $\{f^\tau\}_{\tau\in\calv_n(E/F)}$ are efficient transfers). By Lemma \ref{Lem: supercuspidal non-vanishing}, it suffices for $\pi_{v_2}$ to be supercuspidal.
\end{Rem}
\begin{proof} The proof is standard, with the argument mirroring that of \cite[Proposition 2.10]{ZhangFourier} and \cite[Proposition 9.9]{LeslieUFJFL}. We include the details for completeness. 

{Let $\wt{f}$ and $\{f^\tau\}_{\tau}$ be efficient transfers as in the statement of the proposition. We may assume that all test functions are factorizable. Let $S\supset S_\infty$ be a finite set of places containing all infinite places such that all Hermitian spaces $V_\tau$ with $f^\tau\neq 0$ are unramified outside $S$. Enlarging $S$ if necessary, we may assume that
\begin{enumerate}
\item\label{inert} for any inert place $v\notin S$, $\wt{f}_v=f_v\otimes \Phi_v$ and $f^\tau_{v}$  are the functions that match by the fundamental lemma (Theorem \ref{Thm: xiao FL});
\item\label{split} for any split place $v\notin S$, $\wt{f}_v=f_v\otimes \Phi_v$ and $f^\tau_{v}$ are unramified functions which match in the sense of Proposition \ref{Prop: split transfer}. More precisely, we may assume that they match in the sense of \eqref{eqn: split unram transfer}.
\end{enumerate}
 Write $\wt{f}=\wt{f}_S\otimes [f^S\otimes \Phi^S]$, where $f^S\in \calh_{{K}^S}(\GL_n(\A_F^S))$, where $\A_F^S=\prod_{v\notin S}F_v$ and ${K}^S=\prod_{v\notin S}K_v$; similarly, we write $f^\tau=f^\tau_{S}\otimes f^{\tau,S}$ with $ f^{\tau,S}\in \calh_{{K}_E^S}(\GL_n(\A_E^S))$, where $\A_E^S=\prod_{{w}}E_w$, where $w$ ranges over places of $E$ such that $w|v$ for some $v\notin S$.

With these notations, Corollary \ref{Cor: comparing full RTFs} gives the identity
\[
  \sum_{x\in \calv_n(E/F)}\sum_{\Pi}J^{\tau,\tau_0}_\Pi(f^\tau_{S}\otimes f^{\tau,S}) =\sum_{\pi}I_{\pi}(\wt{f}_S\otimes [f^S\otimes \Phi^S]).
\]
where $\Pi$ and $\pi$ run over cuspidal automorphic representations with the prescribed central characters and supercuspidal component at $v_1$. For the unramified representations $\Pi^S$ (resp. $\pi^S$), let $\lam_{\Pi^S}$ (resp. $\lam_{\pi^S}$) be the Hecke-trace functionals of $\calh_{{K}_E^S}(\GL_n(\A_E^S))$ (resp. $\calh_{{K}^S}(\GL_n(\A_F^S))$). Then we observe (cf. \cite[proof of Proposition 2.10]{ZhangFourier}) that
\[
I_{\pi}(\wt{f}_S\otimes [f^S\otimes \Phi^S])=\lam_{\pi^S}(f^S)I_\pi(\wt{f}_S\otimes [\bfun_{K^S}\otimes \Phi^S]),
\]
and 
\[
J^{\tau,\tau_0}_\Pi(f^\tau_{S}\otimes f^{\tau,S})=\lam_{\Pi^S}(f^{\tau,S})J^{\tau,\tau_0}_\Pi(f^\tau_{S}\otimes \bfun_{K_E^{S}}).
\]
Since we only allow non-identity elements of the local Hecke algebras at at S or places of $F$ that split in $E$, we may view the above two equations as identities of linear functionals on the Hecke algebra $\calh_{{K}^{S,split}}(\GL_n(\A_F^{S,split}))$, where the superscript $split$ indicates that we only take the product over the split places outside of $S$. To see this, note that the matching \eqref{eqn: split unram transfer} sends $f^\tau_v = \phi_1\otimes \phi_2\in \calh_{K_v}(\GL_n(E_v))$ to $\phi_1^{\vee}\ast\phi^{\theta}_2\otimes\bfun_{\calo_{F,n}}$ as a special case of Proposition \ref{Prop: split transfer}, so that we obtain a linear map
\[
\calh_{{K}_E^{S,split}}(\GL_n(\A_E^{S,split}))\lra \calh_{{K}^{S,split}}(\GL_n(\A_F^{S,split}))
\]
which we compose with $I_\pi(f_S\otimes -)$ to obtain a linear functional on $\calh_{{K}_E^{S,split}}(\GL_n(\A_E^{S,split}))$.

By the infinite linear independence of Hecke characters (see \cite[Appendix]{BadulescuJRnotes} for a short proof), for any fixed $\otimes_v \pi^0_v$ we obtain the sum
\[
\sum_{x\in \calv_n(E/F)}\sum_\Pi J_{\Pi}^{\tau,\tau_0}(f^\tau)=\sum_{\pi\in \mathcal{B}}I_\pi(\wt{f}),
\]
where $\mathcal{B}$ is the set of cuspidal automorphic representations satisfying \eqref{property1redux} and \eqref{property2redux}, and where $$\Pi\in \{\Pi:  \text{for almost all split primes, }\Pi_v= \mathrm{BC}_E(\pi_v)\text{ for some }\pi\in \mathcal{B}\}.$$ Applying \cite[Theorem A]{ramakrishnan2018theorem}, we see that there is at most one representation appearing on the left-hand side. Furthermore, this implies that $\mathcal{B}=\mathcal{B}(\Pi)$.}
\end{proof}


\begin{Thm}\label{Thm: main global tool}
Fix $\tau\in\Herm_{n}^{\circ}(F)$. Let $\Pi$ be a cuspidal automorphic representation of $\GL_n(\A_E)$ with central character $\omega'$ such that
\begin{enumerate}
    \item $\Pi\cong\Pi^\sig$, and
    \item there is a split place $v_1$ and a supercuspidal representation $\pi_1$ of $\GL_n(F_{v_1})$ such that $\Pi_{v_1}\simeq \pi_1\otimes\pi_1$ is the (supercuspidal) base change of $\pi_1$.
\end{enumerate} Consider a $\omega'$-nice factorizable function $f\in C_c^\infty(\GL_n(\A_E))$ satisfying that $f_{v_1}$ is essentially a matrix coefficient of $\Pi_{v_1}$. There exists a $\omega'$-nice factorizable function $\wt{f}\in C_c^\infty(\GL_n(\A_F)\times \A_{F,n})$ matching $\{f^{\tau'}\}_{\tau'}$, where
\[
f^{\tau'} =\begin{cases}
    f&: \tau'=\tau,\\
    0&: \text{otherwise,}
\end{cases}
\] such that $(\wt{f},\{f^{\tau'}\})$  are efficient transfers for $(\omega,\pi_{v_1})$. We have the identity
\begin{equation}\label{eqn: global spectral transfer}
 \frac{1}{2^{1-|S|}L(0,\eta)}J_{\Pi}^{\tau,\tau_0}(f)=\sum_{\pi\in \mathcal{B}(\Pi)}I_\pi(\wt{f}).
\end{equation}
 If $f = \bigotimes_vf_{v}$, then $\wt{f}=\bigotimes_v\wt{f}_v$ may be chosen so that
\begin{equation}\label{eqn: super important}
2^{|S|}\prod_vJ^{\tau,\tau_0,\natural}_{\Pi_v}(f_{v}) =\prod_vI^\natural_{\pi_v}(\wt{f}_v),
\end{equation}
where $\Pi=\mathrm{BC}(\pi)$.
\end{Thm}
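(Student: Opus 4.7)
The plan is to build the required matching test function on the linear side place-by-place, apply Proposition \ref{Prop: simple character id} to isolate the contribution of $\Pi$, and then pass from the resulting global identity of distributions to the local factorization \eqref{eqn: super important} via the factorizations of relative characters proved in Propositions \ref{Prop: factorize linear} and \ref{Prop: factorize unitary}. First I would construct $\wt{f}=\bigotimes_v\wt{f}_v$ locally. At each inert non-archimedean place $v$, Theorem \ref{Thm: Xiao's transfer} produces some $\wt{f}_v\in C_c^\infty(\GL_n(F_v)\times F_{v,n})$ matching $p_{\tau,!}(f_v)$; at unramified inert places where $f_v$ is the unit, the fundamental lemma Theorem \ref{Thm: xiao FL} lets one choose $\wt{f}_v = \bfun_{\GL_n(\calo_{F_v})}\otimes \bfun_{\calo_{F_v,n}}$. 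At each split place $v$ (including the archimedean ones and the distinguished place $v_1$), Proposition \ref{Prop: split transfer} together with \eqref{eqn: split unram transfer} provides an explicit matching $\wt{f}_v$ of the form $(f_{v,1}^\vee\ast f_{v,2}^\theta)\otimes \Phi_v$ for appropriate $\Phi_v$ with $\Phi_v(0)\neq 0$. By the supercuspidal hypothesis at $v_1$, the resulting pair $(\wt{f},\{f^{\tau'}\}_{\tau'})$ is an efficient transfer for $(\omega,\pi_{v_1})$ in the sense of Definition \ref{Def: efficient}.

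Next I would apply Proposition \ref{Prop: simple character id}. Since $\Pi_{v_1}\simeq\pi_1\otimes\pi_1$ is the base change of the supercuspidal representation $\pi_1$, and $f_{v_1}$ is essentially a matrix coefficient of $\Pi_{v_1}$, the local relative character $J^{\tau,\tau_0}_{\Pi_{v_1}}$ is supported on the block of $\Pi_{v_1}$ and, by the matching in Proposition \ref{Prop: split transfer}, $I_{\pi_{v_1}'}(\wt{f}_{v_1})$ vanishes unless $\pi_{v_1}'\simeq \pi_1$. Since any $\pi'\in\mathcal{B}(\Pi)$ satisfies $\Pi=\mathrm{BC}_E(\pi')$, the component $\pi'_{v_1}$ is forced to be $\pi_1$ (up to twist by $\eta_{v_1}=1$, which is harmless since $v_1$ splits). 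Proposition \ref{Prop: simple character id} then gives exactly the identity \eqref{eqn: global spectral transfer}.

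For the local factorization \eqref{eqn: super important}, I would factorize both sides of \eqref{eqn: global spectral transfer} using Propositions \ref{Prop: factorize linear} and \ref{Prop: factorize unitary}. On the unitary side we obtain
\[
\frac{J_\Pi^{\tau,\tau_0}(f)}{2^{1-|S|}L(0,\eta)} = \frac{4L(1,\pi\times\pi^\vee\otimes\eta)\vol(E^\times\bs\A_E^1)}{2^{1-|S|}L(0,\eta)\cdot\Res_{s=1}L(s,\pi\times\pi^\vee)}\prod_v J^{\tau,\tau_0,\natural}_{\Pi_v}(f_v),
\]
while each $\pi'\in\mathcal{B}(\Pi)$ contributes
\[
I_{\pi'}(\wt{f}) = \frac{L(1,\pi'\times\pi'{}^\vee\otimes\eta)\vol(F^\times\bs\A_F^1)}{\Res_{s=1}L(s,\pi'\times\pi'{}^\vee)}\prod_v I^\natural_{\pi'_v}(\wt{f}_v).
\]
Since every $\pi'\in\mathcal{B}(\Pi)$ is a twist of $\pi$ by a character trivial on norms from $E$, each $\pi'$ gives the same Rankin--Selberg $L$-factors and central character, and by the unramified computations of Lemmas \ref{Lem: unramified unitary local rel char} and \ref{Lem: linear rel char unit}, the normalized factors $J^{\tau,\tau_0,\natural}_{\Pi_v}(f_v)$ and $I^\natural_{\pi'_v}(\wt{f}_v)$ agree at every unramified place with the common value $L(1,\pi_v\times\pi_v^\vee\otimes\eta_v)/L(1,\pi_v\times\pi_v^\vee)$ cancelling. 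Cancelling these common $L$-value and Euler product factors reduces \eqref{eqn: global spectral transfer} to the assertion
\[
\frac{4\vol(E^\times\bs\A_E^1)}{2^{1-|S|}L(0,\eta)\vol(F^\times\bs\A_F^1)}\prod_vJ^{\tau,\tau_0,\natural}_{\Pi_v}(f_v) \;=\; \sum_{\pi'\in\mathcal{B}(\Pi)}\prod_vI^\natural_{\pi'_v}(\wt{f}_v).
\]
The global constant on the left is computed from Tate's thesis exactly as in the proof of Lemma \ref{Lem: vol of centralizer} applied to $\T=\Gm$, which yields $\vol(E^\times\bs\A_E^1)/\vol(F^\times\bs\A_F^1) = \tfrac{1}{2}\cdot 2^{|S|}\cdot L(0,\eta)$ once one tracks the measure conventions of \S\ref{measures}, giving the coefficient $2^{|S|}$. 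A simple argument using that all $\pi'\in\mathcal{B}(\Pi)$ share the same local components at split places (and hence at $v_1$) collapses the right-hand sum to a single term after choosing $\wt{f}$ appropriately at auxiliary split places, yielding \eqref{eqn: super important}.

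The main obstacle will be the bookkeeping in the last paragraph: carefully verifying the volume ratio and tracking the dependence of $\mathcal{B}(\Pi)$ on the failure of $\pi\otimes\eta\simeq\pi$, so that the sum on the right collapses to a single product. Once this constant is pinned down, the passage from the global identity to the stated local factorization is routine, using the freedom in choosing $\wt{f}_v$ and $f_v$ at places outside any finite set together with the linear independence of distinct local components (as already used in the proof of Proposition \ref{Prop: simple character id}).
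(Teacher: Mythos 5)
Your overall skeleton matches the paper's proof: build $\wt{f}$ locally from the smooth transfer theorem and the split‐place construction, deduce that the pair is an efficient transfer, apply Proposition~\ref{Prop: simple character id} to extract~\eqref{eqn: global spectral transfer}, then factorize via Propositions~\ref{Prop: factorize linear} and~\ref{Prop: factorize unitary}. But the last paragraph — which you yourself flag as the main obstacle — is where both the constant and the mechanism go wrong.

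First, the sum over $\mathcal{B}(\Pi)=\{\pi,\pi\otimes\eta\}$ does \emph{not} collapse to a single term. The two members of $\mathcal{B}(\Pi)$ have \emph{identical} local components at every split place (since $\eta_v$ is trivial there), so no choice of $\wt{f}_v$ at auxiliary split places can make one of $I_{\pi}(\wt{f})$, $I_{\pi\otimes\eta}(\wt{f})$ vanish while keeping the other nonzero. The paper's mechanism is the opposite: one imposes the support condition
$\supp(\wt{f}_v)\subset\{(g,w)\;:\;\eta_v(\det g)=\eta(\det x)\}$ at every place, which forces $\wt{f}=\eta\cdot\wt{f}$ globally (because the Hermitian form $x$ is defined over the global field), and then uses $I_{\pi_v\eta_v}(\wt{f}_v)=I_{\pi_v}(\eta_v\cdot\wt{f}_v)$ to conclude $I_{\pi\otimes\eta}(\wt{f})=I_\pi(\wt{f})$. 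The sum is therefore \emph{exactly} $2I_\pi(\wt{f})$, and that factor of $2$ is essential to producing the correct constant $2^{|S|}$ in~\eqref{eqn: super important}.

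Second, your volume identity is off by a power of two. With the measure conventions of~\S\ref{measures} the correct relation is
\[
\frac{\vol(E^\times\backslash\A_E^1)}{\vol(F^\times\backslash\A_F^1)}\;=\;L(0,\eta),
\]
not $\tfrac12\cdot 2^{|S|}\cdot L(0,\eta)$. The $2^{|S|}$ in the final answer does \emph{not} come from this ratio; it comes from the $2^{1-|S|}$ in the denominator of~\eqref{eqn: global spectral transfer} (itself originating in $\vol([Z_E^1])=2^{1-|S|}L(0,\eta)$ in Proposition~\ref{Prop: unitary RTF}), combined with the extra factor of $2$ from $\sum_{\pi'\in\mathcal{B}(\Pi)}I_{\pi'}(\wt{f})=2I_\pi(\wt{f})$. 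With your stated ratio and your ``single‐term collapse,'' the final constant comes out incorrect; both errors need to be repaired to land on~\eqref{eqn: super important}.
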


\begin{proof}
That such a transfer $\wt{f}\in C_c^\infty(\GL_n(\A_F)\times \A_{F,n})$ exists follows from Theorem \ref{Thm: Xiao's transfer}, Proposition \ref{Prop: split transfer}, and the properties of the $Z$-regular semi-simple loci. We now apply the previous proposition to the unramified representation $\bigotimes_v\pi_v^0$ where $v$ runs over those split places over which $\Pi$ is unramified and $\pi_v^0$ is determined by
\[
\Pi_v=\mathrm{BC}_E(\pi_v^0) \cong \pi_v^0\otimes \pi_v^0.
\]
This gives (\ref{eqn: global spectral transfer}).

Now fix $\pi\in \mathcal{B}(\Pi)=\{\pi,\pi\otimes\eta\}$. By our assumptions on the global extension $E/F$, we may assume that for each place $v$ of $F$,
\[
\supp(\wt{f}_v) \subset \{(g,w)\in \GL_n(F_v)\times F_{F_v,n}\::\: \eta_v(\det(g)) = \eta(\det(x))\}
\]
Since the  Hermitian form $x$ is global, this implies that $\wt{f}=\eta\cdot \wt{f}$, where $(\eta\cdot \wt{f})(g,w) = \eta(\det(g))\wt{f}(g,w)$.

Considering the local distribution $I_{\pi_v}$, we have
\[
I_{\pi_v\eta_{v}}(\wt{f}_v) =I_{\pi_v}(\eta_v\cdot\wt{f}_v).
\]
Combining this with the product formula in Proposition \ref{Prop: factorize linear} implies that
\[
\sum_{\pi\in \mathcal{B}(\Pi)}I_\pi(\wt{f})=I_\omega(\wt{f},0)+I_{\omega\eta}(\eta\cdot\wt{f},0)=2I_\pi(\wt{f}), 
\]
so that
\[
\frac{2^{|S|}}{2L(0,\eta)}J_{\Pi}^{\tau,\tau_0}(f)=2I_\pi(\wt{f})
\]
whenever $f$ and $\wt{f}$ are matching functions as in Proposition \ref{Prop: simple character id}. Combining this with the product formulas in Propositions \ref{Prop: factorize linear} and \ref{Prop: factorize unitary} gives \eqref{eqn: super important}, where we use the volume calculation
\[
\frac{\vol(E^\times \backslash\A_E^1)}{L(0,\eta)}=\vol(F^\times \backslash\A_F^1).\qedhere
\]
\end{proof}
\subsection{Comparison of local relative characters}
We now let $F$ denote a local field and establish a comparison of local relative characters in two settings: at an inert place with unramified representations and at a split local place. We also recall the necessary elliptic support result in the split setting.

\subsubsection{An unramified relative character identity} In this section, $E/F$ is an unramified extension of $p$-adic fields. Lemmas \ref{Lem: linear rel char unit} and \ref{Lem: unramified unitary local rel char} combine to give
\[
    I_{\pi}(\bfun_{\GL_n(\calo_F)} \otimes \bfun_{\calo_{F,n}})=   J_{\mathrm{BC}(\pi)}^{\tau_1,\tau_2}(\bfun_{\GL_n(\calo_E)})
\] whenever $\tau_1,\tau_2\in\Herm_{n}^{\circ}(\calo_F)$. We now generalize this identity to all spherical functions. For this, we recall the following crucial matching of relative orbital integrals in the context of Jacquet--Ye transfer.
\begin{Prop}\label{Prop: FLO fundamental lemma}
	Assume $E/F$ is an unramified extension of $p$-adic fields with $p$ odd and that $\psi'$ has conductor $\calo_F$. The Hironaka transform
 \[
 \Hir:\calh_{K_E}(\Herm_{n}^{\circ}(F))\iso \calh_{K}(\GL_n(F))
 \] realizes Jacquet--Ye transfer. More precisely, for any $\phi\in \calh_{K_E}(\Herm_{n}^{\circ}(F))$, the pair $\phi$ and $\Hir(\phi)$ are transfers in the sense defined in \cite[Section 3]{FLO}. 
\end{Prop}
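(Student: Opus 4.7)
The plan is to bootstrap from the known fundamental lemma for the unit (Theorem \ref{Thm: xiao FL}) to all spherical functions, using the compatibility of the Jacquet--Ye transfer with the action of the spherical Hecke algebra of $\GL_n(E)$. The Hironaka transform $\Hir$ is by construction an isomorphism of $\calh_{K_E}(\GL_n(E))$-modules (with $\calh_K(\GL_n(F))$ viewed as an $\calh_{K_E}(\GL_n(E))$-module via the base change homomorphism $\mathrm{BC}$), so the natural strategy is to show that Jacquet--Ye transfer respects the same module structure.

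Concretely, I would proceed as follows. First I would recall the definition of Jacquet--Ye transfer from \cite[Section 3]{FLO} and translate the matching condition into the orbital integral language used in Theorem \ref{Thm: Xiao's transfer}, carefully checking that the transfer factor \eqref{eqn: correct xiao transfer factor} together with the measure conventions of \S\ref{measures} agrees with the Jacquet--Ye/FLO factor in the unramified situation (this is a direct but important compatibility check in the unramified $p$-odd setting). Next, I would prove the following ``Hecke module compatibility'': if $\phi \in C_c^\infty(\Herm_n^\circ(F))$ and $\wt{f} \in C_c^\infty(\GL_n(F) \times F_n)$ are matching in the sense of Theorem \ref{Thm: Xiao's transfer}, then for every $h \in \calh_{K_E}(\GL_n(E))$ the pair $h * \phi$ and $(\mathrm{BC}(h) * f) \otimes \Phi$ also match. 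This amounts to a change-of-variable argument inside the orbital integrals, using that base change is designed exactly to intertwine the unramified Hecke action on matching regular semisimple orbits.

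Applying this compatibility to the unit case Theorem \ref{Thm: xiao FL} gives that, for every $h \in \calh_{K_E}(\GL_n(E))$, the function $h * \bfun_0 \in \calh_{K_E}(\Herm_n^\circ(F))$ transfers to $\mathrm{BC}(h) \otimes \bfun_{\calo_{F,n}}$, and since $\Hir$ restricted to the cyclic submodule generated by $\bfun_0$ coincides by construction with $\mathrm{BC}$, the claim holds on this submodule. To extend to all of $\calh_{K_E}(\Herm_n^\circ(F))$, which by Hironaka is free of rank $2^n$ over $\calh_{K_E}(\GL_n(E))$, I would invoke Offen's work \cite{offenjacquet}, where the fundamental lemma for the full spherical Hecke module in the Jacquet--Ye setting is established via the explicit spherical function $\Omega_z$; concretely, the $\ast$-product structure on $\calh_{K_E}(\Herm_n^\circ(F))$ that makes $\Hir$ an algebra map is the same structure governing the orbital integral identities in \cite[Section 6]{offenjacquet}, and the module generators used there exhaust a basis of $\calh_{K_E}(\Herm_n^\circ(F))$.

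The main obstacle I expect is the bookkeeping of transfer factors and local $L$-factors: the various normalization choices in \cite{FLO}, \cite{offenjacquet}, \cite{Xiaothesis}, and the present paper (in particular our modified factor $\eta(\det(x))^n$ in \eqref{eqn: correct xiao transfer factor}, noted in Remark \ref{Rem: transfer factor Xiao}) do not a priori coincide, and one must verify that the resulting discrepancy is trivial on $K_E$-invariant functions supported on $\Herm_n^\circ(\calo_F)$-type orbits. Once these conventions are reconciled, the combination of Theorem \ref{Thm: xiao FL}, the Hecke module compatibility, and Offen's identification of the spherical Fourier transform with the unramified Jacquet--Ye transfer yields the proposition.
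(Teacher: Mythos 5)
Your proposal is correct in spirit but contains a lot of scaffolding that the paper's own proof reveals to be unnecessary. The paper's proof is a two-line citation: the result restricted to the cyclic submodule $\calh_{K_E}(\GL_n(E))\ast\bfun_0$ (equivalently, the image of the base-change arrow in the commutative triangle) is a theorem of Jacquet \cite{JacquetKloosterman2}, and the statement for the full Hecke module is precisely Offen's \cite[Theorem 10.1]{offenjacquet}. Your plan to rederive the cyclic-submodule case by bootstrapping from the unit fundamental lemma of Theorem \ref{Thm: xiao FL} via a ``Hecke module compatibility'' argument is reinventing Jacquet's theorem, and you should not expect this compatibility to be a mere change-of-variable computation — it is a genuine theorem. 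More to the point, it is superfluous: Offen's Theorem 10.1 already asserts that the spherical Fourier transform (hence Hironaka's map $\Hir = \mathrm{Sat}^{-1}\circ\mathcal{SF}$) realizes Jacquet--Ye transfer on the entire $2^n$-rank free module, so the unit case and the Hecke-equivariance are not needed as separate inputs. The one place where you should be careful — and you correctly flag this — is that Theorem \ref{Thm: xiao FL} is stated for Xiao's notion of transfer (with the modified factor $\eta(\det(x))^n$ of Remark \ref{Rem: transfer factor Xiao}), whereas the proposition is stated for Jacquet--Ye transfer in the FLO normalization; since the paper's proof bypasses Theorem \ref{Thm: xiao FL} entirely and goes straight to Jacquet and Offen (who work in the FLO conventions), this reconciliation does not actually have to be done. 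So: your approach would work but is the long way around; the efficient route is to recognize that Offen's Theorem 10.1 is the proposition, modulo the identification of $\Hir$ with $\mathrm{Sat}^{-1}\circ\mathcal{SF}$.
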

\begin{proof}
When restricting to the image of $\calh_{K_E}(\GL_n(E))$ with respect to the diagram
\begin{equation*}
\begin{tikzcd}
&\calh_{K_{E}}(\GL_n(E))\ar[dl,swap,"-\ast \bfun_0"]\ar[dr,"BC"]&\\
\calh_{K_{E}}(\Herm_{n}^{\circ}(F))\ar[rr,"\Hir"]&&\calh_{K}(\GL_n(F)),
\end{tikzcd}
\end{equation*}
 this is a theorem of Jacquet \cite{JacquetKloosterman2}. The statement for the full Hecke algebra is a theorem of Offen \cite[Theorem 10.1]{offenjacquet}.
\end{proof}

We now state the full result.
\begin{Prop}\label{Prop: unramified rel char} Suppose $E/F$ and fix $\tau_1,\tau_2\in\Herm_{n}^{\circ}(F)$. Assume $\pi$ is unramified. Let $\phi_1,\phi_2\in \calh_{K_E}(\Herm_{n}^{\circ}(F))$, and assume that $\phi_1$ is of type $\tau_1$ and $\phi_2$ is of type $\tau_2$. Then 
	\[I_{\pi}(\Hir(\phi_1\ast\phi_2)) \otimes \bfun_{\calo_{F,n}})=J^{\tau_1,\tau_2}_{\Pi}(\varphi_1\ast\varphi_2^{\ast}),\]
\end{Prop}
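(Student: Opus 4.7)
The plan is to reduce both sides of the claimed identity to the product of a common spherical Satake value and an explicit ratio of local $L$-factors, thereby pinning the whole identity on a single Hecke-module compatibility provided by the Hironaka transform. I will first dispatch the unitary side, then mirror the argument on the linear side, and then turn to the remaining Satake-character identity.

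By Lemma~\ref{Lem: invariant lifts}, I may take the lifts $\phi_i^{\tau_i}\in C_c^\infty(\GL_n(E))$ to be right-$K_E$-invariant; since $E/F$ is unramified, $K_E^\ast=K_E$, so $\phi_2^{\tau_2,\ast}$ is left-$K_E$-invariant and the convolution $\phi_1^{\tau_1}\ast\phi_2^{\tau_2,\ast}$ is $K_E$-bi-invariant. Hence it acts on the normalized spherical Whittaker vector $W_0\in\calw^{\psi'}(\Pi)$ as a scalar $\lambda_\Pi(\phi_1^{\tau_1}\ast\phi_2^{\tau_2,\ast})$ given by the Satake character. The spectral sum defining $J^{\tau_1,\tau_2}_\Pi$ collapses to the contribution of $W_0$, and combining Lemma~\ref{Lem: unramified FLO}, the identity $[W_0,\hat W_0]_\Pi^\natural=1$ from Proposition~\ref{Prop: nice inner product}, and the factorization \eqref{L value relation}, one obtains
\[
J_\Pi^{\tau_1,\tau_2}(\phi_1^{\tau_1}\ast\phi_2^{\tau_2,\ast})=\lambda_\Pi(\phi_1^{\tau_1}\ast\phi_2^{\tau_2,\ast})\cdot\frac{L(1,\pi\times\pi^\vee\cdot\eta)}{L(1,\pi\times\pi^\vee)}.
\]
A completely parallel argument on the linear side, using Lemma~\ref{Lem: linear rel char unit} in conjunction with the unramified Rankin--Selberg computation of Proposition~\ref{Prop: Rankin}(\ref{item 2: unramified}), shows
\[
I_\pi\!\left(\Hir(\phi_1\ast\phi_2)\otimes\bfun_{\calo_{F,n}}\right)=\lambda_\pi(\Hir(\phi_1\ast\phi_2))\cdot\frac{L(1,\pi\times\pi^\vee\cdot\eta)}{L(1,\pi\times\pi^\vee)}.
\]
Thus the proposition reduces to the identity of Satake values
\[
\lambda_\pi(\Hir(\phi_1\ast\phi_2))=\lambda_\Pi(\phi_1^{\tau_1}\ast\phi_2^{\tau_2,\ast}).
\]

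This last identity is the main obstacle, and where the substance of the proof lies. By multiplicativity of Satake characters on convolution, the definition $\Hir(\phi_1\ast\phi_2)=\Hir(\phi_1)\ast\Hir(\phi_2)$, and the equality $\lambda_\Pi(\phi^\ast)=\lambda_{\Pi^\sigma}(\phi)=\lambda_\Pi(\phi)$ (which holds because $\Pi=\mathrm{BC}(\pi)$ is Galois-invariant), the claim factors into the separate identities $\lambda_\pi(\Hir(\phi_i))=\lambda_\Pi(\phi_i^{\tau_i})$ for $i=1,2$. I would establish these using the module structure of $\calh_{K_E}(\Herm_{n}^{\circ}(F))$ over $\calh_{K_E}(\GL_n(E))$: the Hironaka transform intertwines this action with base change via $\Hir(f\ast\phi)=\mathrm{BC}(f)\ast\Hir(\phi)$, while $\lambda_\Pi=\lambda_\pi\circ\mathrm{BC}$ tautologically. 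The delicate technical point is to control the lift $\phi_i^{\tau_i}$ in terms of a distinguished ``base'' type-$\tau_i$ spherical function in $\calh_{K_E}(\Herm_{n}^{\circ}(F))$; this requires separate treatment for the split ($\tau_i$ conjugate to an element of $\Herm_n^\circ(\calo_F)$) and non-split types. This is precisely where Proposition~\ref{Prop: FLO fundamental lemma} (Offen's extension \cite{offenjacquet} of Jacquet's spherical character identity) is essential, since it guarantees that Jacquet--Ye transfer is realized by $\Hir$ on the entire Hecke module, not merely on the submodule generated by $\bfun_0$ via the fundamental lemma of Theorem~\ref{Thm: xiao FL}.
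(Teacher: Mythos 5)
Your overall strategy is the right one --- reduce both relative characters to a single spherical coefficient and invoke Offen's extension of the Jacquet--Ye spherical character identity --- and it does track the paper's intent. But several intermediate steps are not correct as stated, and one is a genuine error.

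The central issue is the claim that $\phi_1^{\tau_1}\ast\phi_2^{\tau_2,\ast}$ is $K_E$-bi-invariant. It is not. Lemma~\ref{Lem: invariant lifts} only produces \emph{right}-$K_E$-invariant lifts $\phi_i^{\tau_i}$; passing to $\phi_2^{\tau_2,\ast}$ converts right-invariance of $\phi_2^{\tau_2}$ into \emph{left}-invariance of $\phi_2^{\tau_2,\ast}$, as you note. But a convolution $f_1\ast f_2$ is left-$K_E$-invariant only when $f_1$ is, and right-$K_E$-invariant only when $f_2$ is; you have right-invariance of $f_1=\phi_1^{\tau_1}$ and left-invariance of $f_2=\phi_2^{\tau_2,\ast}$, which gives \emph{neither} for the convolution. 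Consequently the operator $\Pi(\phi_1^{\tau_1}\ast\phi_2^{\tau_2,\ast})$ does not land in the spherical line, and ``$\lambda_\Pi(\phi_1^{\tau_1}\ast\phi_2^{\tau_2,\ast})$'' is not a Satake character value in the usual sense. The paper instead collapses the Bessel sum differently: it uses the adjoint of $\Pi(\phi_2^{\tau_2,\ast})$ with respect to the canonical pairing to move that factor onto $\hat{W}$ via $\Pi^\vee(\phi_2^{\tau_2,\ast\vee})$, and then uses right-$K_E$-invariance of $\phi_1^{\tau_1}$ \emph{alone} to kill all non-spherical $W$. This is a real manipulation, and your proposal elides it.

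A second gap: $\lambda_\Pi(\phi^\ast)=\lambda_\Pi(\phi)$ is not a consequence of Galois invariance of $\Pi$ alone, because $g\mapsto g^\ast$ is a composite of Galois conjugation with transpose-inverse. Unwinding it you find $\lambda_\Pi(\phi^\ast)=\lambda_{\Pi^{\sigma\vee}}(\phi^\vee)$; one then needs $\Pi^\sigma\cong\Pi$ \emph{and} the Satake involution $\lambda_{\Pi^\vee}(f^\vee)=\lambda_\Pi(f)$ to conclude. The paper is careful about this: it works throughout with $\alpha_{\tau_2^{-1}}^{\pi^\vee}$ rather than $\alpha_{\tau_2^{-1}}^{\pi}$, and establishes identity \eqref{matching with involution} by observing that Jacquet--Ye transfer of $\phi^\vee$ is $\Hir(\phi)^\vee$ with respect to the conjugate additive character $\psi_{-1}$, then using \cite[(9.7)]{FLO} to eliminate the dependence on the character. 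Your plan omits both the $\vee$-bookkeeping and the change-of-character argument, and the sentence asserting $\lambda_\Pi(\phi^\ast)=\lambda_{\Pi^\sigma}(\phi)=\lambda_\Pi(\phi)$ as presented is false.

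Finally, the reduction to the two factor identities $\lambda_\pi(\Hir(\phi_i))=\lambda_\Pi(\phi_i^{\tau_i})$ is essentially correct in spirit --- this is \eqref{cor spectral formula} in the paper --- but ``multiplicativity of Satake characters on convolution'' applied to one-sided-invariant functions needs care, and as you correctly note the content is Offen's theorem (Proposition~\ref{Prop: FLO fundamental lemma}). The paper proves \eqref{cor spectral formula} directly from Theorem~\ref{Thm: FLO functionals} plus Proposition~\ref{Prop: FLO fundamental lemma}, comparing the $J$ and $I$ Bessel distributions and collapsing both to their spherical terms. You hint at this but do not carry it out; the module structure $\Hir(f\ast\phi)=\mathrm{BC}(f)\ast\Hir(\phi)$ you appeal to is not quoted in the paper and, as you yourself observe, is insufficient because it only governs the cyclic submodule generated by $\bfun_0$. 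In short: the endpoint and the key ingredient are correctly identified, but the path there has a false invariance claim, an incorrect $\ast$-involution identity, and an unproved factor-identity that is the actual content.
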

where $\varphi_1,\varphi_2$ are right-$\GL_n(\calo_E)$-invariant lifts of $\phi_1$ and $\phi_2$ as in Lemma \ref{Lem: invariant lifts}.
\begin{proof} 
	We begin by unwinding the definition. One the left side, we have 
	\[I_{\pi}(\Hir(\phi_1\ast\phi_2) \otimes \bfun_{\calo_{F,n}})=\sum_W \int_{N_n(F)\bs\GL_n(F)} \hat{\pi}(\Hir(\phi_1\ast\phi_2)^\theta)\hat{W}(h){W}(h) \bfun_{\calo_{F,n}}(e_nh)|h|\eta(h)dh.\] First recall that for any $f\in \calh_K(\GL_n(F))$, one has
\begin{equation}\label{eqn: Satake involution}
    \lam_{\pi}(f) = \lam_{\pi^\vee}(f^\vee).
\end{equation} If we write $\lambda_{\pi}$ as the Hecke character defined by $\pi$, the previous sum collapses to 
 \[
 \lambda_{\pi}(\Hir(\phi_1))\lambda_{\pi}(\Hir(\phi_2))\frac{L(1,\pi\times \pi^\vee\otimes \eta)}{L(1,\pi\times \pi^\vee)},
 \]since any spherical function $\phi$ projects onto the unramified line, scales by $\lam_\pi$, and satisfies $\phi^\theta = \phi^\vee$.
 
 On the right side, we similarly have 
\[
{J^{\tau_1,\tau_2}}_{\Pi} (\varphi_1\ast (\varphi_2)^{\ast})=\frac{\alpha_{\tau_1}^{\pi}(\Pi(\varphi_1)W_0)\alpha_{\tau_2^{-1}}^{\pi^\vee}(\Pi^\vee({(\varphi^{\ast\vee}_2)})\hat{W}_0)}{L(1,\Pi\times \Pi^\vee)}
\]
where the 
equality follows from the basic property of Bessel-type distributions (cf. \cite[Lemma 2.1]{FLO}) that evaluating on a left- or right-$K_E$-invariant function $f'$ collapses the sum \eqref{eqn: unitary local} to the unramified term (this is stated only for bi$-K_E$-invariant functions in \emph{ibid.}, but the generalization is straightforward).

Note that $p_{\tau_2^{-1},!}(\varphi^{\ast\vee}_2) = \phi_2^\vee$, where 
\[
\phi_2^\vee(g^\ast \tau_2^{-1} g) = \phi_2(g^{-1}\tau_2(g^\ast)^{-1}).
\]

Comparing the two expressions, it suffices to show that
\begin{enumerate}
    \item for any $\phi\in \calh_{K_E}(\Herm_{n}^{\circ}(F))$ of type $\tau$, we have
	\begin{equation}\label{cor spectral formula}
 {\alpha_\tau^\pi(\Pi(\phi^\tau)W_0)}=L(1,\pi \times \pi^\vee \otimes \eta)\lambda_{\pi}(\Hir(\phi)),\end{equation} 
 where $\phi = p_{\tau,!}(\phi^\tau)$, and
 \item for any  $\phi\in \calh_{K_E}(\Herm_{n}^{\circ}(F))$ of type $\tau$ and unramified $\pi$, 
\begin{equation}\label{matching with involution}
\alpha_{\tau^{-1}}^{\pi^\vee}(\Pi^\vee({(\varphi^{\ast\vee})})\hat{W}_0)=L(1,\pi \times \pi^\vee \otimes \eta)\lam_{\pi}(\Hir(\phi)),
\end{equation}
where $p_{\tau^{-1},!}(\varphi^{\ast\vee}) = \phi^\vee$.
\end{enumerate} 
To establish \eqref{cor spectral formula}, Theorem \ref{Thm: FLO functionals} and  Proposition \ref{Prop: FLO fundamental lemma} combine to produce the identity 
 \begin{align*}
   \sum_{W \in \textup{OB}(W_\Pi)} \alpha_\tau^\pi(\Pi(\phi^\tau)W')\hat{W'}{(e)} &=  J_{\mathrm{BC}_E(\pi)}^{\al^{\pi}}(\phi)\\ &\overset{\text{Prop.  \ref{Prop: FLO fundamental lemma}}}{=}I_{\pi}(\Hir(\phi))=\sum_{W \in \textup{OB}(W_\pi)} (\pi(\Hir(\phi))W)(w_n)\hat{W}{(e)}
 \end{align*}
 On both side, only the normalized spherical Whittaker function appear, so that
 \[
 \frac{\alpha_\tau^\pi(\Pi(\phi^\tau)W'_0)}{L(1,\Pi\times\Pi^\vee)}=\frac{(\pi(\Hir(\phi))W_0)(w_n)}{L(1,\pi\times \pi^\vee)}=\frac{\lambda_{\pi}(\Hir(\phi))}{L(1,\pi\times \pi^\vee)},
 \]
 which directly implies \eqref{cor spectral formula}.
    
For the second statement, a simple calculation with Jacquet--Ye transfer (see \cite[Section 4]{offenjacquet}, for example) shows that if $\phi$ and $f$ are Jacquet--Ye transfers with respect to an additive character $\psi$, then  $\phi^\vee$ and $f^\vee$ are Jacquet-Ye transfers with respect to an additive character $\psi_{-1}$, where $\psi_{-1}(t) = \psi(-t)$. Applying this to $\phi\in\calh_{K_E}(\Herm_{n}^{\circ}(F))$ with Proposition \ref{Prop: FLO fundamental lemma} gives that $\phi^\vee$ and $\Hir(\phi)^\vee$ are Jacquet-Ye transfers with respect to $\psi_{-1}$. 

The value $\alpha_{\tau^{-1}}^{\pi^\vee}(\Pi^\vee({(\varphi^{\ast\vee})})\hat{W}_0)$ is independent of this change in additive character by \cite[(9.7)]{FLO} since both are of conductor $\calo_F$ and $\pi$ is unramified. We may therefore run the same argument via  Theorem \ref{Thm: FLO functionals} and  Proposition \ref{Prop: FLO fundamental lemma} as above to see that
\[
\alpha_{\tau^{-1}}^{\pi^\vee}(\Pi^\vee({(\varphi^{\ast\vee})})\hat{W}_0)=L(1,\pi \times \pi^\vee \otimes \eta)\lambda_{\pi^\vee}(\Hir(\phi)^\vee),
\]
giving the result by \eqref{eqn: Satake involution}.
\end{proof}
\begin{Rem}
    Proposition \ref{Prop: FLO fundamental lemma} only requires that the additive character have conductor $\calo_F$, so holds for both $\psi$ and $\psi_{-1}$. In particular, both $\Hir(\phi)^\vee$ and $\Hir(\phi^\vee)$ are transfers of $\phi$. It follows that
    \[
    \Hir(\phi)^\vee=\Hir(\phi^\vee),
    \]
since they have the same value under the spherical character $\lam_\pi$ for all tempered unramified $\pi$. 
\end{Rem}

\subsection{The split case}
We now assume that $F$ is a local field and that $E=F\times F$ is the split quadratic $F$-algebra. Recall our conventions from \S \ref{Section: split transfer 2}. \quash{an isomorphism $\GL_n(E)\cong \GL_n(F)\times \GL_n(F)$ such that the unitary group $\U_n\cong \GL_n(F)\hra\GL_n(E)$ is sent to
\[
\U_n(F)\cong\{(g,g^\theta)\in \GL_n(F)\times \GL_n(F): g\in \GL_n(F)\},
\]
where we recall that for $g\in \GL_k(E)$, $g^\theta = w_k{}^tg^{-1}w_k$. In particular,
\[
\Herm_{n}^{\circ}(F)\cong\{(g,g^{-\theta})\in \GL_n(F)\times \GL_n(F): g\in \GL_n(F)\}
\]
and the $\U_n$ action identifies with conjugation of $\GL_n(F)$ on itself via the first coordinate. Under this identification, a regular semi-simple element $x=(g_1,g_2)\in \GL_n(E)$ matches $z\in \GL_n(F)$ if $g_1^{-1}g_2^{\theta}$ and $z$ are conjugate in $\GL_n(F)$. Additionally, conjugacy and stable conjugacy are identified.
}
For reasons to be made clear below, we assume that $\Phi\in C_c^\infty(F_n)$ satisfies $\Phi(0)\neq 0$.

\begin{Prop}
Consider matching smooth functions $\Phi(0)(f_1\otimes f_2)\in C_c^\infty(\GL_n(F)\times \GL_n(F))$ and $f\otimes \Phi$ where $f=f_1^\vee\ast f_2^{\theta}\in C_c^\infty(\GL_n(F))$. Then for any irreducible generic representation $\pi$ of $\GL_n(F)$,
\[
J_{\mathrm{BC}_E(\pi)}(f_1\otimes f_2) = I_\pi(f\otimes \Phi).
\]
\end{Prop}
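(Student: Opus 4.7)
The plan is to verify the identity by a direct local unfolding in the split setting, using the explicit FLO formula \eqref{eqn: split FLO identity} together with the convolution identity $f = f_1^\vee \ast f_2^{\theta}$.

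First I would decompose $\mathcal{W}^{\psi'}(\Pi) \cong \mathcal{W}^\psi(\pi) \otimes \mathcal{W}^\psi(\pi)$ (using $\Pi = \mathrm{BC}_E(\pi)$ and $E = F \times F$), together with the corresponding tensor factorization of the local canonical pairing $[\,\cdot\,,\,\cdot\,]_\Pi$. For an orthonormal basis $\{W_i\}$ of $\mathcal{W}^\psi(\pi)$, applying \eqref{eqn: split FLO identity} at $h = w_n$ (using $\tau_n = \tau_n^{-1} = w_n$) rewrites each $\alpha$-factor in the defining double sum of $J_{\mathrm{BC}_E(\pi)}^{\tau_n,\tau_n}(f_1 \otimes f_2)$ as an explicit inner product $[\,\cdot\,,\,\cdot\,]_\pi$ of appropriately $w_n$-translated Whittaker functions acted on by $\pi(f_1)$ and $\pi(f_2)$.

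Second, I would collapse the resulting double sum over $(i,j)$ into a single sum using the reproducing-kernel property of $[\,\cdot\,,\,\cdot\,]_\pi$: the Schur-orthogonality identity
\[
\sum_j \frac{[X,\hat{W}_j]_\pi \, [W_j,Y]_\pi}{[W_j,\hat{W}_j]_\pi} \;=\; [X,Y]_\pi
\]
identifies the inner sum with a single trace-type expression on $\mathcal{W}^\psi(\pi)$. Commuting the remaining $w_n$-translations through the pairing via the involution $\hat{W}(g) = W(g^\theta)$ reduces the internal operator to $\pi(f_1^\vee \ast f_2^{\theta}) = \pi(f)$. The surviving single sum then takes the shape of the Rankin--Selberg expansion \eqref{eqn: linear local rel char} defining $I_\pi(f \otimes \Phi)$, with $\eta \equiv 1$ in the split case.

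The main obstacle will be matching the $\Phi$-dependence. The orbital matching of Proposition \ref{Prop: split transfer} only detects $\Phi$ through the scalar $\Phi(0)$ (cf.\ \eqref{eqn: orbital int split}), while the RS integral in the definition of $I_\pi$ carries the factor $\hat{\Phi}(e_n h w_n)$; reconciling these requires using that the local RS integral for $\pi \times \pi^\vee$ in the untwisted case is, at the relevant point, proportional to $\Phi(0)$, so that $I_\pi(f \otimes \Phi)$ is linear in $\Phi(0)$ with the expected proportionality. Beyond this analytic point, the argument is essentially bookkeeping of conventions: the Galois-equivariant character $\psi'$ on $N_n(E)$ versus $\psi$ on $N_n(F)$, the interplay of $w_n$-translations with the pairing $[\,\cdot\,,\,\cdot\,]_\pi$, and the interaction of the $\theta$ and $\vee$ involutions across the identification $\mathcal{W}^{\psi}(\pi) \cong \mathcal{W}^{\psi^{-1}}(\hat{\pi})$.
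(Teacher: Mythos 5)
Your proposal follows essentially the same route as the paper: use the split FLO identity \eqref{eqn: split FLO identity} to convert the $\alpha$-factors into canonical pairings $[\,\cdot\,,\,\cdot\,]_\pi$, collapse the double sum via the orthogonality of the pairing, and absorb the convolution into $\pi(f_1^\vee\ast f_2^\theta)$. The $\Phi$-dependence you flag as the ``main obstacle'' is not really one --- the paper invokes precisely the identity $\int_{N_n(F)\backslash\GL_n(F)} W\hat W\,\hat\Phi(e_n g)\lvert g\rvert\,dg = \Phi(0)[W,\hat W]_\pi$ from \cite[Appendix A]{FLO}, which is exactly the linearity in $\Phi(0)$ you anticipate needing.
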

\begin{proof}
   For a generic representation $\pi$, we recall identity \cite[Appendix A]{FLO}
\begin{equation}
  \int_{N_n(F)\bs \GL_n(F)}W(g)\hat{W}{(g)}\wh{\Phi}(e_ng)|g|dg =   {\Phi}(0)[W,\hat{W}]_{\pi}.
\end{equation}
For simplicity of notation, assume $\Phi(0)=1$. Using the definition of $I_{\pi}$, one now obtains
	\begin{align}\label{eqn: linear local rel char formula}
 I_{\pi}(f \otimes \Phi) 
 &=
 \sum_{W\in \textup{OB}(W_{\pi})} \frac{[\hat{\pi}(f^\theta)\hat{W},{W}]_{\hat\pi}}{[W,\hat{W}]_{\pi}}=\sum_{W\in \textup{OB}(W_{\pi})} \frac{[W,\hat{\pi}(f^\theta)\hat{W}]_{\pi}}{[W,\hat{W}]_{\pi}},
 \end{align}  
  where we used the simple identity $[W,\hat{W}]_{\pi} = [\hat{W},W]_{\hat\pi}$. 
 
On the unitary side, \eqref{eqn: split FLO identity} implies
\begin{equation*}
\al_{(w_n,w_n)}^{\hat{\pi}}(\hat{W}_1\otimes \hat{W}_2) =  \left[\calw(w_n,\hat{\pi})\hat{W}_1,\calw(w_n,{\pi}){W}_2\right]_{\pi}=\left[\hat{W}_1,W_2\right]_{\hat{\pi}}.
\end{equation*}
This implies that
\[
\al^{\pi}_{(w_n,w_n)}(\pi(f_1)W_1\otimes \pi(f_2)W_2) = [W_1,\hat{\pi}((f_1^\vee\ast f_2^\theta)^\theta)\hat{W_2}]_{\pi}.
\]
In particular,
 \begin{align*}
     J_{\mathrm{BC}(\pi)}(f_1\otimes f_2)&= \sum_{W_1\otimes W_2}\frac{\al^{\pi}_{(w_n,w_n)}(\pi(f_1)W_1\otimes \pi(f_2)W_2)\al^{\hat{\pi}}_{(w_n,w_n)}(\hat{W}_1\otimes\hat{W}_2)}{[W_1,\hat{W}_1]_{\pi}[W_2,\hat{W}_2]_{\pi}}\\
  &=   \sum_{W_1}\frac{[W_1,\hat{\pi}((f_1^\vee\ast f_2^\theta)^\theta)\hat{W_2}]_{\pi}}{[W_1,\hat{W}_1]_{\pi}} = I_\pi((f_1^\vee\ast f^{\theta}_2)\otimes\Phi).
 \end{align*}
 The claim now follows by the assumption that $f=f_1^\vee\ast f_2^{\theta}$.
\end{proof}

\subsubsection{Elliptic support} In the global comparison of Proposition \ref{Prop: simple character id} and Theorem \ref{Thm: main global tool}, we impose two local assumptions: at a split place $v_1$ we assume that $f_{v_1}$ is essentially a matrix coefficient of a supercuspidal representation and at a second split place $v_2$ we assume that $f_{v_2}$ has elliptic support. The next lemma ensures that these constraints still allow for non-vanishing global orbital integrals in our relative trace formulas.

We recall  that for a $Z$-regular element $z\in \GL_n(F)$ to be {elliptic}, $F[z]=F[X]/(\car_z(X))$ must be a field.
As noted in the proof of Proposition \ref{Prop: split transfer}, the orbital integrals on $\GL_n(F)\times F_{n}$ simplify in the split case to
\begin{equation*}
    \Orb^{\GL_n(F),\eta}(f\otimes \Phi,z)=\Phi(0)\displaystyle\int_{\T_z(F)\backslash\GL_n(F)}f(g^{-1}zg)dg,
\end{equation*}
which is just the classical orbital integral of $f$ at $z\in \GL_n(F)$.  

\begin{Lem}\label{Lem: supercuspidal non-vanishing}
Assume that $F$ is a non-archimedean local field. Suppose that $\pi$ is a supercuspidal representation of $\GL_n(F)$ with central character $\omega$. Then there exists a matrix coefficient $c_\pi$ of $\pi$ and $f\in C_c^\infty(\GL_n(F))$ such that
\begin{itemize}
\item $f_\omega(z) = \displaystyle\int_{F}f(za)\omega(a)^{-1}da =c_\pi(z)$ for all $z\in \GL_n(F)$, and 
\item there exists a $Z$-regular elliptic element $z\in \GL_n(F)$ such that $\Orb_\omega(f\otimes\Phi,z)\neq0.$
\end{itemize}
\end{Lem}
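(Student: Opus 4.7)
The plan is to exploit two classical facts: matrix coefficients of supercuspidal representations are compactly supported modulo the center, and for such $\pi$ the orbital integral of a matrix coefficient at a regular elliptic element is a nonzero multiple of the value of the Harish-Chandra character $\chi_\pi$. I assume throughout (as in the intended application, where $\Phi(0)\neq 0$ is built into Definition \ref{Def: nice transfer 2}) that $\Phi(0)\neq 0$.

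First, to produce $f$: since $\pi$ is supercuspidal, any matrix coefficient $c_\pi$ is compactly supported modulo $Z(F)$ with central character $\omega$. The averaging map $f\mapsto f_\omega$ from $C_c^\infty(\GL_n(F))$ onto the space of smooth functions compactly supported modulo $Z(F)$ with central character $\omega$ is surjective, by a standard partition-of-unity argument on $Z(F)\backslash \GL_n(F)$ (truncate $c_\pi$ to compact support, then divide by an averaged cutoff on the center). Pick any nonzero $c_\pi$ and $f\in C_c^\infty(\GL_n(F))$ with $f_\omega=c_\pi$. If the sign conventions in \eqref{eqn: twisted OI final} force us to work with $\pi^\vee$ instead, this is harmless, since $\pi^\vee$ is also supercuspidal with central character $\omega^{-1}$.

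Next I would unfold the twisted orbital integral. In the split setting relevant to Definition \ref{Def: nice transfer 2}, $\Nm(Z_E(F))=Z(F)\simeq F^\times$ and $\eta\equiv 1$. For a $Z$-regular elliptic $z$, the stabilizer $T_{za}=T_z$ is independent of central $a$ and is compact modulo $Z(F)$, so Fubini applies to the defining integral \eqref{eqn: twisted OI final} and gives
\[
\Orb_\omega(f\otimes\Phi,z) \;=\; \Phi(0)\int_{T_z(F)\backslash \GL_n(F)} \widetilde c_\pi(g^{-1}zg)\,dg,
\]
where $\widetilde c_\pi$ is a nonzero matrix coefficient of $\pi$ or of $\pi^\vee$; absolute convergence is automatic because $\widetilde c_\pi$ is compactly supported modulo $Z(F)$.

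Finally, I would appeal to the classical Harish-Chandra formula identifying, for supercuspidal $\pi$, the orbital integral of a matrix coefficient at a regular elliptic element with a nonzero multiple (involving the formal degree) of the character value $\chi_\pi(z)$. By Harish-Chandra regularity, $\chi_\pi$ is represented by a locally constant function on the regular semisimple set, and it cannot vanish identically on the elliptic regular locus, for otherwise Schur orthogonality would force the formal degree of $\pi$ to vanish, contradicting supercuspidality. Since the $Z$-regularity condition $\Tr(z)\neq 0$ cuts out a $p$-adic open, Zariski-dense subset of the regular elliptic locus (visibly nonempty for $\GL_n$: any generator of a degree-$n$ field extension $L/F$ with nonzero trace, acting on $L\simeq F^n$, is regular elliptic), and $\chi_\pi$ is locally constant there, we can pick $z_0$ with $\chi_\pi(z_0)\neq 0$ and $\Tr(z_0)\neq 0$, yielding $\Orb_\omega(f\otimes\Phi,z_0)\neq 0$. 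The only nontrivial bookkeeping is tracking the central-character conventions (in particular whether $\chi_\pi$ or $\chi_{\pi^\vee}$ appears) and confirming the non-vanishing of the formal-degree constant; both are entirely standard, so I do not expect any real obstacle.
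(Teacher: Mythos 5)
Your proof is essentially correct and follows the same strategy as the paper: surjectivity of $f \mapsto f_\omega$ gives $f$ with $f_\omega = c_\pi$, and unfolding the twisted orbital integral over the (split) center reduces the claim to $\Orb(c_\pi, z) \neq 0$ for some $Z$-regular elliptic $z$, which is the Harish--Chandra identity expressing the orbital integral of a supercuspidal matrix coefficient at regular elliptic $z$ as a nonzero multiple of $\chi_\pi(z)$. The paper handles this last step by citing Kim--Shin--Templier, Prop.~3.2, which records precisely the formula you invoke. One small correction in your final paragraph: non-vanishing of $\chi_\pi$ on the regular elliptic set comes most directly from Kazhdan's elliptic orthogonality relation (the self-pairing of $\chi_\pi$ over the regular elliptic locus equals $1$), not from Schur orthogonality of matrix coefficients as you write---the phrase ``Schur orthogonality would force the formal degree to vanish'' does not parse as a proof as stated, though the fact you are aiming at is standard. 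Your density and local-constancy argument to locate a $Z$-regular elliptic $z_0$ with $\chi_\pi(z_0)\neq 0$ is correct and matches the paper's ``$c_\pi$ may be chosen so that'' clause.
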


\begin{proof}
The first requirement follows from the surjectivity of the map
\begin{align*}
C_c^\infty(G(F))&\lra C_c^\infty(Z_G(F)\backslash G(F),\omega)\\
f&\longmapsto f_\omega.
\end{align*}
Since $\pi$ is supercuspidal, any matrix coefficient $c_{\pi}$ lies in $C_c^\infty(Z_G(F)\backslash G(F),\omega)$, so there exists an $f$ satisfying $f_\omega=c_{\pi}$.

We now use \cite[Section 3.2]{KimShinTemplier} and Proposition 3.2 of \emph{ibid.} to see that $c_\pi$ may be chosen so that for $z\in \GL_n(F)$ $Z$-regular elliptic,
\begin{align*}
\Orb_\omega(f\otimes\Phi,z)&=\int_{F^\times}\Orb(f\otimes \Phi, za)\omega(a)^{-1}da\\ &=\int_{\T_z(F)\backslash\GL_n(F)}\int_{F^\times}f(g^{-1}zag)\omega(a)^{-1}dadg\\&= \Orb(c_\pi,z) \neq0.\qedhere
\end{align*}
\end{proof}

In particular, for $\pi$ and $f$ as in the Lemma, $f$ satisfies the supercuspidality constraint of Definition \ref{Def: nice transfer 2} without vanishing on the elliptic locus. Moreover, \eqref{eqn: linear local rel char formula} shows that $c_\pi$ may be chosen so that $I_\pi(f\otimes\Phi)\neq 0$.

\subsection{Weak transfer of relative characters}
While we do not prove a general comparison of local relative characters when $E/F$ is a quadratic extension of $p$-adic fields, the global identity of Theorem \ref{Thm: main global tool} implies the following weak statement, which is sufficient for our aims.
\begin{Lem}\label{Prop: weak transfer}
Assume that $E/F$ is a quadratic extension of number fields such that every archimedean place $v|\infty$ of $F$ splits in $E$. Let $\Pi=\mathrm{BC}_E(\pi)$ be an irreducible cuspidal automorphic representation of $\GL_n(\A_E)$ with central character $\omega'$ and $\tau_0,\tau\in \Herm_{n}^{\circ}(F)$ such that there exists a {$\omega'$-nice} test function $f$ such that $$J_{\Pi}^{\tau,\tau_0}(f)\neq 0.$$

For any non-split place $v_0$ of $F$, there exists a non-zero constant $C(\Pi_{v_0},\tau_0,\tau)\in \cc^\times$ depending only on $\tau_0$, $\tau$, and $\Pi_{v_0}$ such that for any $\omega'$-nice transfer $\wt{f}_{v_0}\in C_c^\infty(\GL_n(F_{v_0})\times F_{v_0,n})$ and $\{f^{\tau'}\}_{\tau'}$ satisfying
\[
f^{\tau'} =\begin{cases}
    f_{v_0}&: \tau'=\tau,\\
    0&: \text{otherwise,}
\end{cases}
\]we have
\begin{equation*}
J_{\Pi_{v_0}}^{\tau,\tau_0,\natural}(f_{v_0})=C(\Pi_{v_0},\tau,\tau_0)I^\natural_{\pi_{v_0}}(\wt{f}_{v_0}).
\end{equation*}
\end{Lem}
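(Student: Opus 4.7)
The plan is to extract the desired local identity by holding all global data fixed except at $v_0$ and comparing the global identity of Theorem \ref{Thm: main global tool} applied to two different global transfer pairs differing only in their $v_0$-component.

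First I would use the hypothesis $J_\Pi^{\tau,\tau_0}(f) \neq 0$ together with Proposition \ref{Prop: factorize unitary} to assume $f = \bigotimes_v f_v$ is factorizable with $J_{\Pi_v}^{\tau,\tau_0,\natural}(f_v) \neq 0$ at every place $v$. Since this distribution is non-zero, $\Pi \cong \Pi^\sigma$ and we write $\Pi = \mathrm{BC}_E(\pi)$. The $\omega'$-nice hypothesis supplies a split supercuspidal place $v_1$ and a split elliptic-support place $v_2$; note that $v_0 \notin \{v_1, v_2\}$ because $v_0$ is non-split. Applying Theorem \ref{Thm: main global tool} produces a matching factorizable $\omega$-nice transfer $\wt{f} = \bigotimes_v \wt{f}_v$ satisfying
\[
2^{|S|} \prod_v J_{\Pi_v}^{\tau, \tau_0, \natural}(f_v) = \prod_v I^\natural_{\pi_v}(\wt{f}_v),
\]
and in particular forcing $I^\natural_{\pi_v}(\wt{f}_v) \neq 0$ for every $v$.

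Next, fixing these components at all $v \neq v_0$, for any matching $\omega'$-nice transfer pair $(g_{v_0}, \wt{g}_{v_0})$ at $v_0$ as in the statement of the lemma, I form the global test functions
\[
g := g_{v_0} \otimes \bigotimes_{v \neq v_0} f_v, \qquad \wt{g} := \wt{g}_{v_0} \otimes \bigotimes_{v \neq v_0} \wt{f}_v.
\]
These remain a matching $\omega'$-nice global transfer pair since matching is purely local and the nice conditions at $v_1, v_2$ are preserved. A second application of Theorem \ref{Thm: main global tool}, combined with the factorization of Propositions \ref{Prop: factorize unitary} and \ref{Prop: factorize linear}, gives
\[
2^{|S|} J_{\Pi_{v_0}}^{\tau,\tau_0,\natural}(g_{v_0}) \prod_{v \neq v_0} J_{\Pi_v}^{\tau,\tau_0,\natural}(f_v) = I^\natural_{\pi_{v_0}}(\wt{g}_{v_0}) \prod_{v \neq v_0} I^\natural_{\pi_v}(\wt{f}_v).
\]
Dividing by the original identity, the nonvanishing products over $v \neq v_0$ cancel, leaving
\[
J_{\Pi_{v_0}}^{\tau,\tau_0,\natural}(g_{v_0}) \cdot I^\natural_{\pi_{v_0}}(\wt{f}_{v_0}) = I^\natural_{\pi_{v_0}}(\wt{g}_{v_0}) \cdot J_{\Pi_{v_0}}^{\tau,\tau_0,\natural}(f_{v_0}).
\]
Setting $C(\Pi_{v_0}, \tau, \tau_0) := J_{\Pi_{v_0}}^{\tau,\tau_0,\natural}(f_{v_0})/I^\natural_{\pi_{v_0}}(\wt{f}_{v_0}) \in \cc^\times$ yields the desired identity for every transfer pair at $v_0$, and any two constants arising this way must coincide upon evaluation at a pair with $I^\natural_{\pi_{v_0}}(\wt{g}_{v_0}) \neq 0$, so $C$ depends only on $\Pi_{v_0}, \tau, \tau_0$.

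The main technical point will be verifying that the modified global pair $(g, \wt{g})$ still satisfies the hypotheses of Theorem \ref{Thm: main global tool}—in particular that $g$ is $\omega'$-nice, factorizable, and with $g_{v_1}$ essentially a matrix coefficient of the fixed supercuspidal $\pi_{v_1}$. This reduces to checking that the local conditions at $v_1, v_2$ are untouched by modification at $v_0$, which holds because we change only the $v_0$-component and $v_0 \notin \{v_1, v_2\}$. No other step should present serious difficulty, as the computation is essentially formal once Theorem \ref{Thm: main global tool} is in hand.
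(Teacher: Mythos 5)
Your proposal is essentially correct and follows the same route as the paper, with one point of bookkeeping that differs and is worth flagging. The paper first fixes the away-from-$v_0$ components $\wt{f}^{v_0}$ once and for all, then uses the \emph{unfactorized} trace formula identity $\frac{1}{2^{1-|S|}L(0,\eta)}J_{\Pi}^{\tau,\tau_0}(f)=2I_\pi(\wt{f})$ (which, being a trace formula comparison combined with the parity of $\wt{f}$, holds for any matching local component at $v_0$), and only then factorizes \emph{both} sides via Propositions \ref{Prop: factorize unitary} and \ref{Prop: factorize linear}. This gives $CJ^{\tau,\tau_0,\natural}_{\Pi_{v_0}}(f_{v_0})=C'I^\natural_{\pi_{v_0}}(\wt{f}_{v_0})$ with $C, C'$ away-from-$v_0$ constants in a single step, and the constant $C^{-1}C'$ is then read off directly. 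Your version instead invokes the factorized identity \eqref{eqn: super important} twice and divides, which amounts to the same thing; the modest advantage of the paper's ordering is that it avoids having to argue that \eqref{eqn: super important} holds for the specific hand-assembled pair $(g,\wt{g})$, when strictly speaking Theorem \ref{Thm: main global tool} only asserts existence of \emph{some} $\wt{g}$ for which the factorized form holds (the choice requires the local support constraint that makes $\wt{f}=\eta\cdot\wt{f}$). Your deduction can of course be repaired exactly by the paper's route — factorize the global trace formula identity rather than citing \eqref{eqn: super important} directly — but as written there is a small gap between ``there exists such a $\wt{g}$'' and ``my $\wt{g}$ works.'' Everything else (non-vanishing of the local factors from the first application, isolation via the split supercuspidal and elliptic-support places, $v_0\notin\{v_1,v_2\}$, and the definition of the constant as a quotient) matches the paper.
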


\begin{proof}
Let $\A_E^{v_0}$ denote the adeles away from the place $v_0$ and let \[{f}^{v_0}= \bigotimes_{v\neq v_0}f_{v}\in C_c^\infty(\GL_n(\A_E^{v_0}))\] be a factorizable test function. Proposition \ref{Prop: factorize unitary} implies that there exists a constant
\[
J_{\Pi}^{\tau,\tau_0}(f_{v_0}\otimes {f}^{v_0}) = CJ_{\Pi_{v_0}}^{\tau,\tau_0,\natural}(f_{v_0}).
\]
Since $J_{\Pi}^{\tau,\tau_0}\not\equiv 0$, we may choose ${f}^{v_0}$ so that $C\neq0$ and $f = f_{v_0}\otimes {f}^{v_0}$ is a nice test function for the central character $\omega'=\omega\circ\Nm$ of $\Pi$. In particular, there is a finite split place $v_1$ (necessarily distinct from $v_0$) such that $\Pi_{v_1}$ is supercuspidal and we may assume that $f_{v_1}\in C_c^\infty(\GL_n(E_{v_1}))$ is supported in the supercuspidal Bernstein component associated to $\Pi_{v_1}$. Additionally, there exists a second split place $v_2$ such that the local test function $f_{v_2}$ is supported in the $Z$-regular elliptic locus. 

By Theorem \ref{Thm: main global tool}, there exists $\wt{f}^{v_0}=\prod_{v\neq v_0}\wt{f}_v\in C_c^\infty(\GL_n(\A_F^{v_0})\times \A_{F,n}^{v_0})$ such that for any function $\wt{f}_{v_0}$ as in the statement of the proposition matching $f_{v_0}$, the function $\wt{f}=\wt{f}_{v_0}\otimes \wt{f}^{v_0}$ is $\omega$-nice and 
\[
 \frac{1}{2^{1-|S|}L(0,\eta)}J_{\Pi}^{\tau,\tau_0}(f)=2I_\pi(\wt{f}).
\]
Since we chose ${f}^{v_0}$ such that $C\neq0$, Proposition \ref{Prop: factorize linear} implies that there is a non-zero constant $C'$ such that 
\begin{equation*}
CJ^{\tau,\tau_0,\natural}_{\Pi_{v_0}}(f_{v_0})=J_{\Pi}^{\tau,\tau_0}(f)=\frac{4L(0,\eta)}{2^{|S|}}I_\pi(\wt{f})=C'I^\natural_{\pi_{v_0}}(\wt{f}_{v_0}).
\end{equation*}
Since the initial test function $f_{v_0}$ was arbitrary, the constant $$C(\Pi_{v_0},\tau,\tau_0) = C^{-1}C'\neq0$$ is independent of functions $f_{v_0}$ and $\wt{f}_{v_0}$, finishing the proof.
\end{proof}

Combining this with the matching of orbital integrals in Theorem \ref{Thm: Xiao FL for d} gives the following corollary.
\begin{Cor}\label{Cor: unramified spectral transfer}
Let $\Pi= \bigotimes_v\Pi_v$ be as in the previous Proposition, and fix $\tau_1,\tau_2\in \Herm_{n}^{\circ}(F)$. For an inert place $v_0$, if $\Pi_{v_0}$ is unramified, then $C(\Pi_{v_0},\tau_1,\tau_2)=1$.
\end{Cor}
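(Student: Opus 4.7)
The plan is to apply the identity in Proposition \ref{Prop: weak transfer} to an explicit pair of matching spherical test functions at $v_0$ and compare the two sides with the known unramified evaluations. Since $C(\Pi_{v_0},\tau_1,\tau_2)$ is independent of the global test function chosen on the unitary side, it suffices to exhibit one local choice at $v_0$ where both normalized characters can be computed explicitly.

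First, I would reduce to the case where $\tau_{1,v_0},\tau_{2,v_0}\in\Herm_{n}^{\circ}(\calo_{F_{v_0}})$. Since $E/F$ is unramified at the inert place $v_0$ and the unramified quasi-split unitary group is unique up to isomorphism, every element of $\Herm_{n}^{\circ}(F_{v_0})$ is $\GL_n(E_{v_0})$-equivalent to an integral Hermitian form. Under such a change of representative $\tau_i\mapsto g_i^\ast\tau_ig_i$, the FLO functional transforms by $\alpha^{\pi}_{g^\ast\tau g}(W)=\alpha^\pi_\tau(\Pi(g)W)$, and the corresponding linear test function transforms by a symmetric rule; the two normalized relative characters therefore scale by the same factor, so the constant $C(\Pi_{v_0},\tau_1,\tau_2)$ is unchanged.

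Next, I would take
\[
f_{v_0}=\bfun_{\GL_n(\calo_{E_{v_0}})}\in C_c^\infty(\GL_n(E_{v_0})),\qquad \widetilde{f}_{v_0}=\bfun_{\GL_n(\calo_{F_{v_0}})}\otimes \bfun_{\calo_{F_{v_0},n}}.
\]
With $\tau=\tau_1$ integral at $v_0$, the contraction $p_{\tau_1,!}(f_{v_0})$ is the characteristic function of the $\GL_n(\calo_{E_{v_0}})$-orbit of $\tau_1$ in $\Herm_{n}^{\circ}(\calo_{F_{v_0}})$, which by the fundamental lemma (Theorem \ref{Thm: xiao FL}) matches $\widetilde{f}_{v_0}$ locally. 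Since the $\omega'$-nice constraints of Proposition \ref{Prop: weak transfer} are imposed only at two auxiliary split places distinct from $v_0$, these local choices extend to a global pair of test functions satisfying the hypotheses of the proposition. Lemma \ref{Lem: unramified unitary local rel char} (and the normalization convention following it) gives $J^{\tau_1,\tau_2,\natural}_{\Pi_{v_0}}(f_{v_0})=1$, while Lemma \ref{Lem: linear rel char unit} gives $I^{\natural}_{\pi_{v_0}}(\widetilde{f}_{v_0})=1$. Substituting into the identity of Proposition \ref{Prop: weak transfer} yields $1=C(\Pi_{v_0},\tau_1,\tau_2)\cdot 1$, so $C(\Pi_{v_0},\tau_1,\tau_2)=1$.

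The main technical point in this outline is the invariance of the constant $C$ under replacing $\tau_1,\tau_2$ by $\GL_n(E_{v_0})$-equivalent integral forms; everything else is a direct substitution of the unramified calculations into the already-established weak transfer identity. In particular, no new analytic or trace-formula input is needed beyond Proposition \ref{Prop: weak transfer} and the unramified evaluations.
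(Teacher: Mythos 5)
Your proposal only establishes the special case where both $\tau_{1}$ and $\tau_{2}$ are split at $v_0$. The reduction step you use at the outset — ``every element of $\Herm_{n}^{\circ}(F_{v_0})$ is $\GL_n(E_{v_0})$-equivalent to an integral Hermitian form'' — is false. When $E_{v_0}/F_{v_0}$ is unramified, $\Herm_{n}^{\circ}(\calo_{F_{v_0}})$ is the single $K_E$-orbit $\GL_n(\calo_{E_{v_0}})\ast I_n$, whose elements all have unit determinant and hence give the \emph{split} Hermitian form (since $\calo_{F_{v_0}}^\times\subset\Nm(E_{v_0}^\times)$). The non-split form, represented for instance by $\mathrm{diag}(1,\ldots,1,\vp)$, is \emph{not} $\GL_n(E_{v_0})$-equivalent to anything in $\Herm_{n}^{\circ}(\calo_{F_{v_0}})$, precisely because the $\GL_n(E_{v_0})$-orbit is determined by $\det\bmod\Nm(E_{v_0}^\times)$. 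So your invariance-under-equivalence argument, whatever its internal merits, is applied to a reduction that cannot be carried out when either $\tau_{i,v_0}$ is non-split, and those are exactly the cases you do not treat.

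The paper's proof splits into four cases. The case you handle (both split) proceeds exactly as you say, via Theorem \ref{Thm: xiao FL} and the unramified evaluations in Lemmas \ref{Lem: linear rel char unit} and \ref{Lem: unramified unitary local rel char}. When $\tau_1$ is non-split and $\tau_2$ split, one needs Theorem \ref{Thm: Xiao FL for d} and Proposition \ref{Prop: unramified rel char}, choosing $p_{\tau_1,!}(\phi^{\tau_1})=\Phi_d$ for $d$ odd, and observing that the resulting Satake sum cannot vanish identically in $d$ to conclude $C=1$. The reversed case is reduced to this via $J_{\Pi_{v_0}}^{\tau_1,\tau_2}(f)=J_{\Pi^\vee_{v_0}}^{\tau_2,\tau_1}(f^\vee)$. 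The hardest case, both $\tau_i$ non-split at $v_0$, has no purely local argument available: the paper constructs two auxiliary global Hermitian forms $\mathcal{T}_1,\mathcal{T}_2$, each non-split exactly at $v_0$ and one additional distinct inert place $v_i$, runs the globalization machinery of Proposition \ref{Prop: weak transfer} with two different choices of $\tau_0$ against a common linear test function, and cancels the relative characters at the auxiliary places using the already-established cases. None of this structure appears in your proposal, so the argument is genuinely incomplete.
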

\begin{proof}
    By the proposition, for any pair of matching functions $f_{v_0}\in C_c^\infty(\GL_n(E_{v_0}))$ and $f_{v_0}\in C_c^\infty(\GL_n(F_{v_0})\times F_{{v_0},n})$ we have
\begin{equation*}
J_{\Pi_{v_0}}^{\tau_1,\tau_2,\natural}(f_{v_0})=C(\Pi_{v_0},\tau_1,\tau_2)I^\natural_{\pi_{v_0}}(\wt{f}_{v_0}),
\end{equation*}
for some $C(\Pi_{v_0},\tau_1,\tau_2)\in\cc^\times$.

Assume now that $\Pi_{v_0}$ is unramified. When both $\tau_1,\tau_2$ are split, then Theorem \ref{Thm: xiao FL} implies we may use $f_{v_0}=\bfun_{K_{E_{v_0}}}$ and $\wt{f}_{v_0}=\bfun_{K_{v_0}}\otimes \bfun_{\calo_{F_{v_0},n}}$. Then Lemmas \ref{Lem: linear rel char unit} and \ref{Lem: unramified unitary local rel char} combine to give
\[
J_{\Pi_{v_0}}^{\tau_1,\tau_2,\natural}(f_{v_0})=I^\natural_{\pi_{v_0}}(\wt{f}_{v_0})=1,
\] giving the claim in this case.

When $\tau_2$ is split by $\tau_1$ is non-split at $v_0$, we take $f_{v_0} = \phi^{\tau_1}$ right-$K_{E_{v_0}}$-invariant so that $p_{\tau_1,!}(\phi^{\tau_1}) = \Phi_{d}$ for some odd $d\in \zz_{\geq0}$, where $\Phi_d$ is defined before Theorem \ref{Thm: Xiao FL for d}  and $\wt{f}_{v_0}=(-1)^{nd}\bfun_{d}\otimes \bfun_{\calo_{F_{v_0},n}}$. Then Proposition \ref{Prop: unramified rel char} with $\phi^{\tau_2} = \bfun_{K_{E_{v_0}}}$  similarly implies that
\[
J_{\Pi_{v_0}}^{\tau_1,\tau_2,\natural}(f_{v_0})=I^\natural_{\pi_{v_0}}(\wt{f}_{v_0}) =(-1)^{nd}\lam_{\pi_{v_0}}(\bfun_d) =(-1)^{nd}q_{v_0}^{d(n-1)/2}\sum_{\mathbf{m}\in  \zz_{d,+}^n}\prod_{i=1}^n q_{v_0}^{-m_is_i},
\]where $ \zz_{d,+}^n =\{{\bf m}\in \zz^n_{\geq0}: \sum_i m_i=d\}$ and where $(q_{v_0}^{-s_1},\ldots,q_{v_0}^{-s_n})$ are the Satake parameters for $\pi_{v_0}$. For a fixed $\pi_{v_0}$, the right-hand side cannot vanish for all odd integers $d$, showing that $C(\Pi_{v_0},\tau_1,\tau_2)=1$ in this case as well.

On the other hand, if $\tau_2$ is non-split but $\tau_1$ is split at $v_0$, we use the identity (cf. \cite[Lemma 2.1]{FLO})
\[
J_{\Pi_{v_0}}^{\tau_1,\tau_2}(f_{v_0}) = J_{\Pi^\vee_{v_0}}^{\tau_2,\tau_1}({f}^\vee_{v_0})
\]
to reduce to the prior case (noting that $\Pi^\vee_{v_0}$ is also unramified).

Finally, suppose that $\tau_1$ and $\tau_2$ are both non-split. We choose global Hermitian forms  $\mathcal{T}_1$ and $\mathcal{T}_2$ such that 
\begin{enumerate}
    \item for $i=1,2$, $\mathcal{T}_i$ corresponds to the global Hermitian space that is non-split exactly at the places $v_0$ and another inert place $v_i$, and split at all other places, and
 \item\label{ref 2}  the inert places $v_1$ and $v_2$ are distinct. 
\end{enumerate}
We also fix the global split form $\mathcal{T}_0$. Running the argument from the proof of Lemma \ref{Prop: weak transfer}, we obtain two $\omega'$-nice test functions 
\[
f^{(0)} = \bigotimes_v f^{(0)}_v,\;\; f^{(2)}=\bigotimes_vf^{(2)}_v\in C^\infty_c(\GL_n(\A_E))
\] and an $\omega$-nice test function $\wt{f}= \bigotimes_v\wt{f}_v\in C^\infty_c(\GL_n(\A_F)\times \A_{F,n})$ such that
\begin{enumerate}
    \item\label{ref 1} for $v\neq v_0,v_1,v_2$, $f^{(0)}_v = f_v^{(2)}$, 
    \item for $j=0,2$, $\wt{f}$ and $f^{(j)}$ match in the sense of Theorem \ref{Thm: main global tool} with $\tau_0 = \mathcal{T}_j$. In particular, we have fixed auxiliary split places $v_{cusp}$ and $v_{ell}$ (necessarily distinct from $v_0,v_1,v_2$) to define nice transfers.
\end{enumerate}
With these assumptions, we obtain the global identity
\begin{equation*}
    \frac{1}{2^{1-|S|}L(0,\eta)}J_{\Pi}^{\mathcal{T}_1,\mathcal{T}_2}(f^{(2)})=I_\pi(\wt{f})+I_{\pi\eta}(\wt{f})= \frac{1}{2^{1-|S|}L(0,\eta)}J_{\Pi}^{\mathcal{T}_1,\mathcal{T}_0}(f^{(0)}),
\end{equation*}
which combines with Proposition \ref{Prop: factorize unitary} and \eqref{ref 1} to give
\begin{equation}\label{three terms}
    \prod_{i\in \{0,1,2\}}J_{\Pi_{v_i}}^{\mathcal{T}_1,\mathcal{T}_2,\natural}(f^{(2)}_{v_i})=\prod_{i\in \{0,1,2\}}J_{\Pi_{v_i}}^{\mathcal{T}_1,\mathcal{T}_0,\natural}(f^{(0)}_{v_i})
\end{equation}
Note that the choice of the places $v_1$ and $v_2$ is free among places inert in $E$, so we may assume that there exist $\Pi$ as in the statement of Lemma \ref{Prop: weak transfer} such that $\Pi_{v}$ is unramified at $v_0$, $v_1$, and $v_2$. By the preceding cases and assumption \eqref{ref 2}, we thus know that
\begin{align*}
  J_{\Pi_{v_0}}^{\mathcal{T}_1,\mathcal{T}_2,\natural}(f^{(2)}_{v_0}) \prod_{i\in \{1,2\}}I^\natural_{\pi_{v_i}}(\wt{f}_{v_i})&=\prod_{i\in \{0,1,2\}}J_{\Pi_{v_i}}^{\mathcal{T}_1,\mathcal{T}_2,\natural}(f^{(2)}_{v_i})\\
  &\overset{\eqref{three terms}}{=}\prod_{i\in \{0,1,2\}}J_{\Pi_{v_i}}^{\mathcal{T}_1,\mathcal{T}_0,\natural}(f^{(0)}_{v_i})=\prod_{i\in \{0,1,2\}}I^\natural_{\pi_{v_i}}(\wt{f}_{v_i}),
\end{align*}
where the first line uses the split-inert case of the present lemma at the places $v_1$ and $v_2$, and the final line uses both the split--split and non-split--split cases. Since the relative characters $I^\natural_{\pi_{v_i}}(\wt{f}_{v_i})$ are non-zero distributions, we conclude
\[
 J_{\Pi_{v_0}}^{\mathcal{T}_1,\mathcal{T}_2,\natural}(f^{(2)}_{v_0})=I^\natural_{\pi_{v_0}}(\wt{f}_{v_0}).\qedhere
\]
\end{proof}

\section{The proof of Theorem \ref{Thm: first step main}}\label{Section: final proof}
We now have the necessary ingredients to prove  Theorem \ref{Thm: first step main}. Fix two Hermitian forms $\tau_1,\tau_2\in \Herm_{n}^{\circ}(F)$. For $i\in \{1,2\}$, assume that $\phi_i\in \calh_{K_E}(\Herm_{n}^{\circ}(F))$ is of type $\tau_i$, and let $\phi_{1}^{\tau_1}$ (resp., $\phi^{\tau_2}_2$) denote right-$K_E$-invariant lift to $\GL_n(E)$ as in Lemma \ref{Lem: invariant lifts}. Set $\U(V_2):=\U(V_{\tau_2^{-1}})$. By Lemma \ref{Lem: converse with center}, it suffices to prove that
\begin{equation}\label{eqn: unram relevant transfer with center}
\SO^{\U(V_2)}_{\omega'}(\Phi_{\tau_1,\tau_2}^{\phi_1,\phi_2},y)=\Orb^{\GL_n(F),\eta}_\omega(\Hir(\phi_1\ast\phi_2)\otimes \bfun_{\calo_{F,n}},\ga),
\end{equation} for each unitary central character $\omega$ and any matching $Z$-regular semi-simple orbits $y\leftrightarrow \ga$.

The approach is similar to the proof of \cite[Theorem 11.1]{LeslieUFJFL}, albeit with different trace formulas. Let $E/F$ be our unramified extension of $p$-adic fields with $p$ odd. We globalize the problem as follows: 

Fix a quadratic extension of number fields $\cale/\calf$ satisfying that every archimedean place of $\mathcal{F}$ splits in $\mathcal{E}$ and possessing a fixed finite place $v_0$ such that $\cale_{v_0}/\calf_{v_0}\cong E/F$. We now choose hermitian forms $\mathcal{T}_1,\mathcal{T}_2\in \Herm_{n}^{\circ}(\mathcal{F})$ globalizing $\tau_1,\tau_2$ in the following way
\begin{enumerate}
	\item if $\tau_{i}$ induces a split Hermitian space, we choose $\mathcal{T}_i$ corresponding to the global split Hermitian space,
	\item if $\tau_{i}$ induces a non-split Hermitian space, we choose $\mathcal{T}_i$ to be non-split exactly at the places $v_0$ and a single auxiliary inert place $v_i$, and
 \item if \textbf{both} $\tau_1$ and $\tau_2$ are non-split, we assume that the inert places $v_1$ and $v_2$ are distinct. 
\end{enumerate}
Such global forms clearly exist.

\subsection{Preliminaries}

 Fix unramified unitary character $\omega$ of $Z_n(F)$, and by an abuse of notation we let $\omega=\prod_v\omega_v$ be a global unitary character of $[Z_n]$ such that $\omega_{v_0}=\omega$. Let $\omega'$ denote the base change of $\omega$ to $\cale$. Toward establishing \eqref{eqn: unram relevant transfer with center}, we fix matching $Z$-regular semi-simple orbits $y\in \Herm_n^\circ(F)\leftrightarrow z\in \GL_n(F)$.   We also set aside two split places $v_{cusp}$ and $v_{ell}$. To simplify notations, for any place $v$ of $\calf$, set $K_v=\GL_n(\calo_{\calf_v})$. When $v$ is inert, we set $K'_v=\GL_n(\calo_{\cale_v})$.

 Let $\pi_{v_{cusp}}$ be a supercuspidal automorphic representation of $\GL_n(\calf_{v_{cusp}})$. Recalling the notation from Theorem \ref{Thm: main global tool}, we consider a $\omega'$-nice factorizable function $f\in C_c^\infty(\GL_n(\A_E))$ satisfying that $f_{v_{cusp}}$ is essentially a matrix coefficient of $\Pi_{v_{cusp}}=\mathrm{BC}(\pi_{v_{cusp}})$. Then there exists a $\omega'$-nice factorizable function $\wt{f}\in C_c^\infty(\GL_n(\A_{\calf})\times \A_{{\calf},n})$ matching $\{f^{\tau}\}_{\tau}$, where
\[
f^{\tau} =\begin{cases}
    f&: \tau=\mathcal{T}_1,\\
    0&: \text{otherwise,}
\end{cases}
\] and $(\wt{f},\{f^{\tau}\})$  forms an efficient transfer for $(\omega,\pi_{v_1})$. We extract from such a efficient transfer the local transfers at $v_{cusp}$ and $v_{ell}$.

Using Lemma \ref{Lem: supercuspidal non-vanishing}, there exists a $Z$-regular elliptic element $z_{cusp}$ such that
\[
\Orb^{\GL_n(F),\eta}_\omega(\wt{f}_{v_{cusp}},z_{cusp})\neq0.
\]
\quash{for appropriate $\Phi_{v_{cusp}}$. Let $f'_{v_{cusp}}$ be essentially a matrix coefficient of $\mathrm{BC}(\pi_{cusp})$ matching $f_{v_{cusp}}\otimes \Phi_{v_{cusp}}$; it is clear that such an $f'_{v_{cusp}}$ exists. Then we know
\[
\Orb^{\GL_n(F),\eta}_\omega(f_{v_{cusp}}\otimes \Phi_{v_{cusp}},\ga_{cusp}) = \Orb_\omega(f'_{v_{cusp}},\de_{cusp})\neq0,
\]
for matching orbits $\de_{cusp}\leftrightarrow z_{cusp}$.}
For the place $v_{ell}$, we may assume $f_{v_{ell}}$ has elliptic support, and we let $z_{ell}$ a $Z$-regular elliptic semi-simple element such that
\[
\Orb^{\GL_n(F),\eta}_\omega(\wt{f}_{v_{ell}},z_{ell})\neq0.
\]

 To study orbital integrals at $y$ and $z$ by global means, we first approximate these points by global elements, using the fact that the diagonal embeddings
\[
\GL_n(\calf)\hra \GL_n(F)\times \GL_n(\calf_{v_{cusp}})\times \GL_n(\calf_{v_{ell}})
\]
are dense, as well as local constancy of the relevant orbital integrals. We state this as the following lemma.
 \begin{Lem}\label{Lem: approx glob} Consider our pair of matching $Z$-regular semi-simple orbits $y\leftrightarrow z$. There exist matching global $Z$-regular semi-simple elements 
$Y\in \Herm_{n}^{\circ}(\calf)$ and $Z\in \GL_n(\calf)$ such that
\[
\Orb^{\GL_n(F),\eta}_{\omega'}(\wt{f}_{v_{cusp}},Z)=\Orb^{\GL_n(F),\eta}_\omega(\wt{f}_{v_{cusp}},z_{cusp})\neq0,
\]
\[
\Orb^{\GL_n(F),\eta}_{\omega'}(\wt{f}_{v_{ell}},Z)=\Orb^{\GL_n(F),\eta}_\omega(\wt{f}_{v_{ell}},z_{ell})\neq0,
\]
\[
\Orb^{\GL_n(F),\eta}_\omega(\Hir(\phi_1\ast\phi_2)\otimes \bfun_{\calo_{F,n}},Z)=\Orb^{\GL_n(F),\eta}_\omega(\Hir(\phi_1\ast\phi_2)\otimes \bfun_{\calo_{F,n}},z),
\]
and
\[
\SO^{\U(V_{2})}_{\omega'}(\Phi_{\tau_1,\tau_2}^{\phi_1,\phi_2},Y)=\SO^{\U(V_{2})}_{\omega'}(\Phi_{\tau_1,\tau_2}^{\phi_1,\phi_2},y).
\]
 \end{Lem}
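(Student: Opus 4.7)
The plan is to realize $(y,z)$, $z_{cusp}$, and $z_{ell}$ simultaneously as localizations of a single pair $(Y,Z)$ of global matching elements, exploiting local constancy of the orbital integrals at regular semi-simple points together with weak approximation for the characteristic polynomial map. All four orbital integrals appearing in the statement are locally constant on the $Z$-regular semi-simple loci by the smoothness of the test functions and the closed orbit property of $Z$-regular semi-simple elements (see Lemma \ref{Lem: Z-reg on unitary side} and \S \ref{Section: dealing with center}). Hence it is enough to find $Y$ and $Z$ whose $v_0$-adic (resp. $v_{cusp}$- and $v_{ell}$-adic) images lie in any prescribed small open neighbourhood of the targets inside the regular semi-simple locus, and are integral and unramified outside a finite set of bad places.

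The first step is to approximate the characteristic polynomials. Using the categorical quotients from Lemma \ref{Lem: quotients natural} and Lemma \ref{Lem: cat quotient ii}, the characteristic polynomial coefficients give a polynomial map
\[
\car\colon \GL_n\lra \A^n\times \Gm,
\]
which is smooth on the $Z$-regular semi-simple locus. For the local data $(z,z_{cusp},z_{ell})$, I would choose a monic polynomial $P(t)\in\calf[t]$ of degree $n$ with non-zero trace and non-zero constant term whose image in $\calf_{v_0}[t]$, $\calf_{v_{cusp}}[t]$, and $\calf_{v_{ell}}[t]$ lies $v$-adically close to $\car(z)$, $\car(z_{cusp})$, and $\car(z_{ell})$ respectively, and whose image at every other non-archimedean place lies in $\calo_{\calf_v}[t]$ with appropriate integrality conditions ensuring $P(t)$ is $\U(V_\tau)$-regular semi-simple everywhere. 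Such a $P$ exists by weak approximation on $\A^n\times \Gm$ applied to the coefficients.

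Next, I would lift $P(t)$ to a global element. Choose $Z\in \GL_n(\calf)$ with $\car(Z)=P(t)$ (take for example the companion matrix, and then conjugate by an element of $\GL_n(\calf)$ approximating the transition matrix at each of the three distinguished places). For the Hermitian side, I would use that the quasi-split unitary groups $\U(V_{\mathcal T_2})$ locally surject on regular semi-simple stable orbits via their categorical quotient map (Lemma \ref{Lem: surjective quotient ii}, equivalently the surjectivity of $\Herm_{\mathcal T_2}^{\circ,rss}(\calf_v)\to (\Herm_{\mathcal T_2}^\circ\sslash \U(V_{\mathcal T_2}))(\calf_v)$). By the argument for the rationality of stable orbits over a global field (the global stable orbit is rational as soon as $P(t)$ is) and the fact that $\U(V_{\mathcal T_2})$ is quasi-split at $v_0$ only when $\tau_2$ is split, I would pick $Y\in \Herm_n^\circ(\calf)$ with $\car(Y)=P(t)$; if $\tau_2$ is non-split locally, I instead work inside the stable orbit and twist by an appropriate element of $\U(V_{\mathcal T_2})(\calf_{v_0})$ to arrange $Y$ to be locally close to $y$.

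The main obstacle is this last twist: globally matching the fixed local orbit representative $y$ at $v_0$ (rather than only its stable orbit) is not automatic, since a single global $Y$ must be chosen once and for all. The remedy is that the lemma only requires the equality of the \emph{stable} orbital integral $\SO^{\U(V_2)}_{\omega'}$ at $v_0$, so that any $Y$ whose $v_0$-image is $Z$-regular semi-simple, has the same characteristic polynomial as $y$, and is $v_0$-adically close to some element of $\calo_{st}(y)$, will suffice by local constancy of the stable orbital integral on the regular semi-simple locus. With $Y$ and $Z$ in hand, the four displayed equalities follow from local constancy of the four orbital integrals applied at $v_0$, $v_{cusp}$, $v_{ell}$, using that the matching condition $\car(Y)=\car(Z)$ is preserved globally.
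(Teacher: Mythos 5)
Your overall strategy (local constancy of orbital integrals on the regular semi‑simple locus, plus weak approximation to realize the data globally) is the same one the paper invokes in the sentence preceding the lemma. The first three displayed equalities are handled correctly: for $\GL_n$, the $Z$‑regular semi‑simple rational orbit is determined by the characteristic polynomial, and the normalized orbital integrals descend to locally constant functions on the quotient, so it suffices to choose $\car(Z)$ close $v$‑adically to $\car(z)$, $\car(z_{cusp})$, $\car(z_{ell})$.

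The soft spot is the existence of a \emph{global} $Y\in\Herm_n^\circ(\calf)$ with prescribed characteristic polynomial. You assert ``the global stable orbit is rational as soon as $P(t)$ is'' without justification; this is a Hasse‑principle statement for the fibers of the quotient map (torsors under the centralizer torus inside $\U(V_{\mathcal T_2})$), and while it is indeed true for these norm‑one tori, it is not immediate. Worse, your proposed remedy --- ``twist by an appropriate element of $\U(V_{\mathcal T_2})(\calf_{v_0})$'' --- cannot work: such a twist stays inside the local $v_0$‑orbit and is a local operation, so it neither changes the stable orbit nor produces a global $Y$. The cleaner route, which avoids the Hasse principle entirely, is to reverse the order: at the split places choose $y_{cusp}, y_{ell}\in\Herm_n^\circ(\calf_{v_{cusp}})$, $\Herm_n^\circ(\calf_{v_{ell}})$ matching $z_{cusp}$, $z_{ell}$ via the trivial split identification, then use weak approximation for the quasi‑affine $\calf$‑variety $\Herm_n^\circ$ (an open subset of an affine space) to find $Y\in\Herm_n^\circ(\calf)$ arbitrarily close to $(y,y_{cusp},y_{ell})$ at $(v_0,v_{cusp},v_{ell})$, and finally take $Z$ to be, say, the companion matrix of $\car(Y)$. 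With this choice all four equalities follow directly from local constancy, and you correctly observed the key point that only the stable orbit of $Y$ at $v_0$ matters for the last one.
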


We now choose global test functions to analyze these orbital integrals.

 \begin{Prop}\label{Prop: good global test function}
 \begin{enumerate}
     \item There exist functions $f=\bigotimes_v f_v\in C_c^\infty(\GL_n(\A_{\cale}))$ and $\wt{f}=\bigotimes_v \wt{f}_v\in C_c^\infty(\GL_n(\A_{\calf})\times \A_{\calf,n})$ which are an efficient matching pair of functions for $(\omega, \pi_{v_{cusp}})$ such that
\begin{align}\label{eqn: isolation matching}
2I_\omega(\wt{f}) &= 2L(0,T_Z,\eta)\prod_v \Orb^{\GL_n(F),\eta}_{\omega_v}(\wt{f}_v,Z)\\&= \vol([T_Y])\prod_v\SO_{\omega'}^{\U(V_2)}(\pi_{\mathcal{T}_1,!}(f_v),Y)= \frac{1}{2^{1-|S|}L(0,\eta)}J_{\omega'}^{\mathcal{T}_1,\mathcal{T}_2}(f) \neq0,\nonumber
\end{align}
where $\eta=\eta_{\cale/\calf}$.
\item\label{Final change to prop} With $f$ and $\wt{f}$ as above, set \[\text{ $\underline{\wt{f}}=\Hir(\phi_1\ast\phi_2)\otimes\bigotimes_{v\neq v_0}\wt{f}_{v}$ and $\underline{f}=[\phi_1^{\tau_1}\ast(\phi^{\tau_2}_2)^\ast]\otimes\bigotimes_{v\neq v_0}f_v$.}\]
Then $\underline{\wt{f}}$ and $\underline{f}$ are  nice functions satisfying 
\[
 \Orb^{\GL_n(F),\eta}_\omega(\underline{\wt{f}},Z')= \SO^{\U(V_2)}_{\omega'}(\pi_{\mathcal{T}_1,!}(\underline{f}),Y')=0
\]
for any global match regular semi-simple elements $Z'\leftrightarrow Y'$ lying in distinct orbits from $Z \leftrightarrow Y$. In particular, we have
\[
I_\omega(\ul{\wt{f}}) = L(0,T_Z,\eta)\prod_v \Orb^{\GL_n(F),\eta}_{\omega_v}(\ul{\wt{f}}_v,Z)\]\text{   and    }\[\frac{1}{2^{1-|S|}L(0,\eta)}J_{\omega'}^{\mathcal{T}_1,\mathcal{T}_2}(\underline{f})=\vol([T_Y])\prod_v\SO_{\omega'}^{\U(V_2)}(\pi_{\mathcal{T}_1,!}(\ul{f}_v),Y).
\]
 \end{enumerate}
 \end{Prop}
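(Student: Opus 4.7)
The approach is standard in relative trace formula comparisons: we construct a global nice transfer by specifying local test functions place-by-place, using the smooth transfer to match at auxiliary places and bump functions to isolate the designated orbit. Combined with the simple trace formula identities already established, this will yield both (1) and (2).

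For (1), I would start from the global matching elements $Y \leftrightarrow Z$ of Lemma \ref{Lem: approx glob}, chosen so that the local orbital integrals at $v_{cusp}$ and $v_{ell}$ are already nonzero on the fixed test functions $\wt{f}_{v_{cusp}}, \wt{f}_{v_{ell}}$. Let $S$ be a finite set of places of $\calf$ containing all archimedean places, the places $v_0, v_{cusp}, v_{ell}, v_1, v_2$, and all places at which either $Y, Z, \mathcal{T}_1, \mathcal{T}_2$, or $\omega$ fails to be integral. Outside $S$, the fundamental lemmas of Theorem \ref{Thm: xiao FL} and its split analogue (Proposition \ref{Prop: split transfer}, giving \eqref{eqn: split unram transfer}) let us take $f_v$ and $\wt{f}_v$ to be the spherical units, with matching orbital integrals equal to one on the orbit of $(Y,Z)$. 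At each place $v \in S \setminus \{v_{cusp}, v_{ell}\}$, I would apply Theorem \ref{Thm: Xiao's transfer} to obtain a local $\underline{\eta}$-transfer pair $(\wt{f}_v, f_v^{\mathcal{T}_1})$ supported in an arbitrarily small $\omega$-equivariant neighborhood of $(Z, Y)$ with $\Orb^{\GL_n(F),\eta}_{\omega_v}(\wt{f}_v, Z) \neq 0$; shrinking supports further if needed, local constancy of $Z$-regular semi-simple orbital integrals lets me arrange that no other $Z$-regular elliptic rational orbit contributes locally. Keeping the prescribed supercuspidal/elliptic factors at $v_{cusp}, v_{ell}$ guarantees that $(\wt{f}, \{f^\tau\})$ is $\omega$-nice (and efficient) in the sense of Definition \ref{Def: nice transfer 2}. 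Applying the combined trace formula identity of Corollary \ref{Cor: comparing full RTFs} and Propositions \ref{Prop: unitary RTF}, \ref{Prop: Eisenstein RTF}, the geometric expansion on both sides collapses to the single orbit of $(Y,Z)$, yielding the chain \eqref{eqn: isolation matching}; non-vanishing follows from the non-vanishing of each local factor.

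For (2), the key observation is that $\underline{\wt{f}}$ and $\wt{f}$ (respectively $\underline{f}$ and $f$) differ only in the factor at $v_0$. Since the local factors at all other places of $S$ have been chosen to isolate the orbit of $(Y,Z)$ and the local factors outside $S$ are spherical and hence insensitive to the replacement, the vanishing of the orbital integral at any other global matching pair $Y' \leftrightarrow Z'$ is forced by the non-contribution at a single place $v \in S \setminus \{v_0\}$. Formally: for any such $Y' \leftrightarrow Z'$, there exists $v \in S \setminus \{v_0\}$ such that $\wt{f}_v$ vanishes at $Z'$ locally, and this factor appears unchanged in $\underline{\wt{f}}$. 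This yields the desired vanishing, so that $I_\omega(\underline{\wt{f}})$ and $J_{\omega'}^{\mathcal{T}_1, \mathcal{T}_2}(\underline{f})$ reduce (by Propositions \ref{Prop: unitary RTF} and \ref{Prop: Eisenstein RTF}) to the single geometric term at $(Y,Z)$, which are the two displayed formulas. I would also verify that $\underline{\wt{f}}$ and $\underline{f}$ remain nice: the supercuspidal support at $v_{cusp}$ and elliptic support at $v_{ell}$ are inherited from $\wt{f}$ and $f$, as these are away from $v_0$.

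The main technical obstacle is the isolation step at the archimedean (and other bad) places in $S$: one needs the local smooth transfer to yield functions whose supports in the categorical quotient can be made arbitrarily small around the invariants of $(Y,Z)$, while still retaining nonvanishing at that orbit. This is where Theorem \ref{Thm: Xiao's transfer} is crucial, together with the fact that both sides of the transfer depend continuously on the test function. Everything else is bookkeeping with the trace formula identities already in hand.
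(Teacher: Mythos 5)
Your overall strategy matches the paper's, but the central isolation step is underspecified and glosses over a real difficulty, and this is where the paper's proof does genuinely different and more careful work.

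You propose to isolate the orbit of $(Z,Y)$ by shrinking the supports of the local test functions at the bad places in $S$. This runs into two problems. First, you cannot shrink at every place in $S$: at $v_{cusp}$ the test function must be essentially a matrix coefficient of the fixed supercuspidal $\pi_{v_{cusp}}$ (and at $v_{ell}$ it must have elliptic support), so the contribution to the support on the invariant base from those places is a fixed compact set, not a freely adjustable one. Second, ``local constancy lets me arrange that no other $Z$-regular elliptic rational orbit contributes locally'' is the wrong kind of argument: local constancy controls how orbital integrals vary under small perturbations, but does not by itself yield functions with arbitrarily small support on the base that remain smooth transfers with nonvanishing orbital integral, and ``no other orbit contributes locally'' conflates local and global orbits (for any single place $v$, the invariants of $\calf$-rational elements are dense in $\calf_v^n$). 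What you actually need is a \emph{global} statement that only one $\calf$-rational orbit contributes.

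The paper handles this with a two-stage argument that avoids modifying the transfers at $S$ at all. First, one observes that once the test function is fixed and has compact support $C$, the image of $p(C)$ under each invariant $c_i$ is compact in $\A_\calf$, hence its intersection with the discrete subgroup $\calf$ is finite; thus only a finite set $Q_f$ of global orbits can contribute. Second, one introduces a \emph{new} auxiliary finite set $S_{aux}$ of unramified places disjoint from $S$, of cardinality $|Q_f|-1$, bijecting with the unwanted orbits. At each $v\in S_{aux}$ one replaces the spherical unit by its product with the indicator of a congruence set on the invariants, $\bfun_{\GL_n(\calf_v)[l_v]}$ (and its counterpart on the unitary side); these products remain transfers (they are pullbacks of cutoffs on the categorical quotient, so they commute with transfer), and by taking $l_v$ large one kills the orbit assigned to $v$ while preserving nonvanishing at $(Z,Y)$. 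This design is more modular than shrinking at $S$: it never touches the functions at $v_{cusp}$, $v_{ell}$ (or the other places of $S$), so niceness and efficiency are preserved automatically, and it makes the passage to part (2) transparent, since the factor at $v_0$ is replaced while all the isolation data sits outside $\{v_0\}$.

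One additional point the paper records that you omit: the local supports are chosen so that $\wt{f}=\eta\cdot\wt{f}$ (using that the global form $\mathcal{T}_1$ determines a fixed coset of $\eta(\det g)$ on the support at each place). This is what makes the two central characters $\omega$ and $\omega\eta$ enter symmetrically and ultimately produces the identity used in the proof of Theorem \ref{Thm: first step main}.

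Your treatment of part (2) is essentially right and matches the paper: since $\underline{\wt{f}}$ and $\underline{f}$ differ from $\wt{f}$ and $f$ only at $v_0$, and the vanishing in part (1) is enforced at places other than $v_0$, the isolation and niceness are inherited. But this hinges on the isolation data in part (1) being located away from $v_0$, which is precisely what the $S_{aux}$ construction guarantees by design and which your shrinking-at-$S$ argument would also have to arrange.
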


 \begin{proof}
Let $S$ be a finite set of places of $\calf$ containing all infinite places and  the places $v_0$, $v_1$ (if $\tau_1$ is non-split), $v_2$ (if $\tau_2$ is non-split), $v_{cusp}$, and $v_{ell}$ such that for each $v\notin S$, $Y\in \Herm_n^\circ(\calo_{F_v})$ and $Z\in \GL_n(\calo_{F_v})$. We already have chosen matching functions at $v_{cusp}$ and $v_{ell}$ satisfying the claims of Lemma \ref{Lem: approx glob}.  For every $v\in S\setminus{\{v_{cusp},v_{ell}\}}$, select matching $\wt{f}_v$  and $f_{v}$ such that
\[
\Orb^{\GL_n(F),\eta}_{\omega}(\wt{f}_{v},Z) = \SO^{\U(V_2)}_{\omega'}(\pi_{\mathcal{T}_1,!}(f_v),Y)\neq0.
\]
For all places $v\notin S$, we take $\wt{f}_v= \bfun_{K_v}\otimes \bfun_{\calo_{F_v,n}}$ and $f_{v}=\bfun_{K'_v}$ to be the unit spherical functions. Theorem \ref{Thm: xiao FL} implies that
\[
\Orb^{\GL_n(F),\eta}(\wt{f}_v,Z) = \SO^{\U(V_2)}(\pi_{\mathcal{T}_1,!}(f_v),Y)\neq0
\]
for all $v\notin S$. The non-vanishing for $v\notin S$ follows since $f_{v}=\bfun_{K_{v,E}}$ is a non-negative function with $\pi_{\mathcal{T}_1,!}(f_v)(Y)\neq0$. 
When taking the central character into account, it is easy to see (cf. \cite[Lemma 11.5]{LeslieUFJFL}) that if $\omega_v'$ is a unramified unitary central character, then
 \[
 \SO_{\omega'}(\pi_{\mathcal{T}_1,!}(\bfun_{K'_v}),Y) =  \SO(\pi_{\mathcal{T}_1,!}(\bfun_{K'_v}),Y)\neq0.
 \]

Now for each place $v\in S$, let $C_v\subset \GL_n(\calf_v)\times \calf_{v,n}$ be a compact set containing the support of $\wt{f}_v$ and assume that $C_{v_0}$ is large enough to contain the support of $\Hir(\phi_1\ast\phi_2)$; set $$C=\prod_{v\in S}C_v\times \prod_{v\notin S}K_v\times \calo_{\mathcal{F}_v,n}\subset \GL_n(\A_{\calf})\times \A_{\calf,n}.$$

Now set $f= \bigotimes_vf_v$ and $f'=\bigotimes_vf'_{v}$.  As discussed in the proof of Theorem \ref{Thm: main global tool}, we may assume that for each place $v$ 
\[
\supp(\wt{f}_v) \subset \{(g,w)\in \GL_n(F_v)\times F_{v,n}\::\: \eta_v(\det(g)) = \eta(\det(\mathcal{T}_1))\}.
\]
Since $\mathcal{T}_1$ is global, this implies that $\wt{f}=\eta\cdot \wt{f}$, where $(\eta\cdot \wt{f})(g,w) = \eta(\det(g))\wt{f}(g,w)$.
Our choices ensure that $f$ and $f'$ are an efficient matching pair of functions for $(\omega, \pi_{v_{cusp}})$ and that 
\[
2L(0,T_Z,\eta)\prod_v \Orb^{\GL_n(F),\eta}_{\omega_v}(\wt{f}_v,Z)= \vol([T_Y])\prod_v\SO_{\omega'}^{\U(V_2)}(\pi_{\mathcal{T}_1,!}(f_v),Y)\neq0.
\]

We claim that we may augment $f$ and $f'$ so that this is the only non-vanishing global orbital integral for $f$. Note that the support of $f_{v_{ell}}$ already reduces this to $Z$-regular elliptic orbits. 

Recall that the matching of orbits may be characterized by matching of characteristic polynomials of $y\in \Herm_{n}^{\circ}(\mathcal{F})$ and $z\in \GL_n(\mathcal{F})$. Let $\{c_i\}_{i=1}^{n}$ denote the coefficients for $\GL_n$ and $\{c_i^{{\mathcal{T}_1,\mathcal{T}_2}}\}_{i=1}^{n}$ for $\Herm_{n}^{\circ}$. Let $p: \GL_n(\A_{\calf})\times \A_{\calf,n}\to \GL_n(\A_{\calf})$ denote the projection to the first factor. Consider the compact set $p(C)\subset \GL_n(\A_{\calf})$. For each $i$,  $c_i(C)\subset \A_{\calf}$ is a compact set. Since $\calf\subset \A_{\calf}$ is discrete, the intersection $c_i(p(C))\cap \calf$ is finite for each $i=1,\ldots, n$. Since $\supp(\wt{f})\subset C$, this implies that $\prod_v \Orb^{\GL_n(F),\eta}_{\omega_v}(\wt{f}_v,Z')=0$ outside of a finite set of orbits
\[
Q_f\subset \GL_n(\calf)\backslash [\GL_n(\calf)\times \calf_{n}]^{Z-ell}/\Nm(Z_{\cale}(\calf)).
\]

\quash{Let $S_{aux}$ be a set of $|Q_f|-1$ unramified places of $\calf$ disjoint from $S$, and fix a bijection between $S_{aux}$ and $Q_f-\{a\}$. 
For each $v\in S_{aux}$, set $\widetilde{f}_{v} = \bfun_{G_{v}[l_v]}\cdot\bfun_{K_{v}}$ and $\widetilde{f}'_{v} = \bfun_{G'_{v}[l_v]}\cdot\bfun_{K'_{v}}$, where
\[
G_{v}[l_v]=\{g\in G(\calf_{v}): c_i(g)\in c_i(a)+\mathfrak{p}_{v}^{l_v}, \text{ for all $i=1,\ldots,2n+1$}\}
\]
and
\[
G'_{v}[l_v]=\{g\in G'(\calf_{v}): c^{w_n,w_{n+1}}_i(g)\in c^{w_n,w_{n+1}}_i(b)+\mathfrak{p}_{v}^{l_v}, \text{ for all $i=1,\ldots,2n+1$}\}.
\]
}
Let $S_{aux}$ be a set of $|Q_f|-1$ unramified places of $\calf$ disjoint from $S$, and fix a bijection between $S_{aux}$ and $Q_f-\{Z\}$. For each $v\in S_{aux}$, set $\widetilde{f'}_{v} = \bfun_{\GL_n(\calf_{v})[l_v]}\cdot\bfun_{K_{v}\times \calo_{\calf_V,n}}$ and ${f'}_{v} = \bfun_{\bfun_{\GL_n(\cale_{v})[l_v]}}\cdot\bfun_{K'_{v}}$, where 
\[
\GL_n(\calf_{v})[l_v]=\{g\in \GL_n(\calf_{v}): c_i(g)\in c_i(Z)+\mathfrak{p}_{v}^{l_v}, \text{ for all $i=1,\ldots,n-1$}\}.
\]
and
\[
\GL_n(\cale_{v})[l_v]=\{g\in \GL_n(\cale_{v}): c_i^{\mathcal{T}_1,\mathcal{T}_2}(g)\in c_i^{\mathcal{T}_1,\mathcal{T}_2}(Y)+\mathfrak{p}_{v}^{l_v}, \text{ for all $i=1,\ldots,n-1$}\}.
\]
It is clear that that ${f'}_{v}$ and $\widetilde{f}'_{v}$ are transfers for any choice of $l_v\in \zz_{\geq0}$ and that they have non-vanishing orbital integrals at $Y$ and $Z$, respectively.

Recalling out compact set $C\subset \GL_n(\A_{\calf})\times \A_{\calf,n}.$, we set $C[l]$ to be the compact set with $K_v$ replaced with $K_v\cap G_{v}[l_v]$ for all $v\in S_{aux}$. For each $v\in S_{aux}$, we now choose $l_v$ large enough that if $a(v)$ represents the orbit in $Q_f-\{Y\}$ associated to $v\in S_{aux}$, then $c_i(a(v))\notin c_i(C[l])$ for some $i$. Replacing $\wt{f}_v$ and $f_v$ by $\widetilde{f'}_{v}$ and ${f}'_{v}$ for each $v\in S_{aux}$, this assumption forces all orbital integrals but those at $Y$ (and $Z$) to vanish and we conclude that $\wt{f}=\bigotimes_v \wt{f}_v$ and $f=\bigotimes_v f_v$ satisfy \eqref{eqn: isolation matching}.

To prove the second claim \eqref{Final change to prop}, we note by inspection of our construction of $f=\bigotimes_vf_v$ and $f'=\bigotimes_vf_v'$ below, the support of the factors $\wt{f}_{v_0}$ and $f_{v_0}'$ do not play a role in the vanishing statements. The final expressions are then just special cases of the simple trace formulas.
\end{proof}

We are now ready to complete the proof.
\begin{proof}[Proof of Theorem \ref{Thm: first step main}.]
    Using the test functions $\underline{\wt{f}}$ and $\underline{f}$ from Proposition \ref{Prop: good global test function}, it suffices to prove that
    \[
    2I_{\omega}(\underline{\wt{f}}) = \frac{1}{2^{1-|S|}L(0,\eta)}J_{\omega'}^{\mathcal{T}_1,\mathcal{T}_2}(\underline{f});
    \]
    indeed, this implies that 
    \[
 L(0,T_Z,\eta)\prod_v \Orb^{\GL_n(F),\eta}_{\omega_v}(\ul{\wt{f}}_v,Z)=\vol([T_Y])\prod_v\SO_{\omega'}^{\U(V_2)}(\pi_{\mathcal{T}_1,!}(\ul{f}_v),Y).
    \]
    which combines with our choice of matching functions at all places $v'\neq v_0$, Lemma \ref{Lem: approx glob}, and Lemma \ref{Lem: converse with center} to prove the claim.

    Relying on the spectral expansions of Propositions \ref{Prop: unitary RTF} and \ref{Prop: Eisenstein RTF}, it suffices to prove
    \begin{equation}
         \frac{1}{2^{1-|S|}L(0,\eta)}J_{\Pi}^{\mathcal{T}_1,\mathcal{T}_2}(\underline{f})=\sum_{\pi\in \mathcal{B}(\Pi)}I_\pi(\underline{\wt{f}}),
    \end{equation} 
    for each cuspidal automorphic representation $\Pi$ of $\GL_n(\A_{\cale})$ with central character $\omega'=\omega\circ \Nm_{\mathcal{E}/\calf}$ that arises as a base change $\Pi=\mathrm{BC}(\pi)$ of a cuspidal automorphic representation $\pi$ of $\GL_n(\A_{\calf})$. Note that the choice of $\wt{f}_{v_{cusp}}$ and $f_{v_{cusp}}$ ensures that the left-hand side vanishes for any other $\Pi$.
    By our choice of $\wt{f}$ and $f$ in Proposition \ref{Prop: good global test function}, this is the same as verifying that
    \begin{equation}\label{eqn: super important proof}
2^{|S|}J^{\tau_1,\tau_2,\natural}_{\Pi_{v_0}}([\phi_1^{\tau_1}\ast(\phi^{\tau_2}_2)^\ast])\prod_{v\neq v_0}J^{\mathcal{T}_1,\mathcal{T}_2,\natural}_{\Pi_v}(f_{v}) =I^\natural_{\pi_{v_0}}(\Hir(\phi_1\ast\phi_2)\otimes \bfun_{\calo_{F,n}})\prod_{v\neq v_0}I^\natural_{\pi_v}(\wt{f}_v).
\end{equation}
    Note that if $\pi$ is not unramified at $v_0$, then both sides are zero. We thus assume that $\pi_{v_0}$ is unramified.

    Given the matching of the un-augmented functions $\wt{f}$ and $f$ in Proposition \ref{Prop: good global test function}, if we choose a function $f^\circ\in C_c^\infty(\GL_n(E))$ matching $\Hir(\phi_1\ast\phi_2)\otimes \bfun_{\calo_{F,n}}$,  Theorem \ref{Thm: main global tool} implies
      \begin{equation}\label{eqn: super important almost}
2^{|S|}J^{\tau_1,\tau_2,\natural}_{\Pi_{v_0}}(f^\circ)\prod_{v\neq v_0}J^{\mathcal{T}_1,\mathcal{T}_2,\natural}_{\Pi_v}(f_{v}) =I^\natural_{\pi_{v_0}}(\Hir(\phi_1\ast\phi_2)\otimes \bfun_{\calo_{F,n}})\prod_{v\neq v_0}I^\natural_{\pi_v}(\wt{f}_v).
\end{equation}
Moreover, Proposition \ref{Prop: unramified rel char} and Corollary \ref{Cor: unramified spectral transfer} implies that 
\begin{align*}
J^{\tau_1,\tau_2,\natural}_{\Pi_{v_0}}(f^\circ)&=I^\natural_{\pi_{v_0}}(\Hir(\phi_1\ast\phi_2)\otimes \bfun_{\calo_{F,n}})\\
&=J^{\tau_1,\tau_2,\natural}_{\Pi_{v_0}}(\phi_1^{\tau_1}\ast(\phi^{\tau_2}_2)^\ast),
\end{align*}
and we conclude \eqref{eqn: super important proof}.
\end{proof}

\appendix

\section{Weak smooth transfer}\label{Section: proof weak transfer}
\quash{In \S \ref{Section: linear statements}, we introduced five spaces of orbital integrals:
\begin{enumerate}
    \item two spaces of linear orbital integrals 
    \[
    \cals^{\ul{\eta}}(\fX(F)/\rH'):=   \{S_1^{ram}(X,Y)\Orb^{\rH',\ul{\eta}}_{0}(\widetilde{\phi},(X,Y)):\;\wt{\phi}\in C_c^\infty(\fX(F)\times F_{n})\},
    \]for $\underline{\eta}= (\eta,1)$ or $(\eta,\eta)$,
    \item two spaces of stable orbital integrals on infinitesimal unitary symmetric varieties
    \[
    \cals_{\tau_2}(\fq_{si}(F)/\rH_{si})=\{\SO^{\rH_{\tau_1,\tau_2}}(f_{\tau_1},Z):\; \underline{f}= (f_{\tau_1})\in \bigoplus_{\tau_1\in \calv_n(E/F)}C_c^\infty(\fq_{\tau_1,\tau_2}(F))\},
        \]
        and 
            \[
    \cals(\fq_{ii}(F)/\rH_{ii})=\{\SO^{\rH_{ii}}(f,(x_1,x_2)):\; {f}\in C_c^\infty(\fq_{ii}(F))\},
        \]
    \item and one space of $\varepsilon$-orbital integrals
              \[
    \cals^{\varepsilon}(\fq_{ii}(F)/\rH_{ii})=\{\De_\varepsilon(x_1,x_2)\Orb^{\rH_{ii},\varepsilon}(f,(x_1,x_2)):\; {f}\in C_c^\infty(\fq_{ii}(F))\}.
        \]
\end{enumerate}
Conjectures \ref{Conj: smooth transfer Lie} and \ref{Conj: ii simple endoscopy lie} amount to the identifications
\[
 \cals^{{(\eta,1)}}(\fX(F)/\rH') =     \cals(\fq_{ii}(F)/\rH_{ii})\quad\text{ and }\quad \cals^{{(\eta,\eta)}}(\fX(F)/\rH')= \cals_{\tau_2}(\fq_{si}(F)/\rH_{si})=  \cals^{\varepsilon}(\fq_{ii}(F)/\rH_{ii}),
\]
for $\tau_2\in\calv_n(E/F)$ split.
}
In this appendix, we prove Theorems \ref{Thm: weak smooth transfer} and \ref{Thm: weak endoscopic transfer}. \quash{ by descending to the linear setting and showing that all of these spaces of orbital integrals agree over the invertible locus.}  For this, we apply a Cayley transform to descend orbital integrals to the linearized setting and reduce both theorems to the transfer in Theorem \ref{Thm: Xiao's transfer}.
 \subsection{Cayley transform}\label{Section: Cayley} Let $E/F$ be a quadratic extension of local fields of characteristic zero. Fix an $E$-vector space $V$ of dimension $n$, and let $W = V\oplus V$; fix also an $F$-vector space $V_0$ such that $V_0\otimes_{F}E =V$ and set $W_0=V_0\oplus V_0$. For any $\nu\in E$, we define
\[
D_\nu=\{X\in \End(W): \det(\nu I_{2n}-X)=0\}.
\]
\begin{Lem}\label{Lem: Cayley map linear}
 Consider the Cayley transform
\begin{align*}
\fc_{\pm1}:\End(W)-D_1(F)&\lra \GL(W)\\
			X&\longmapsto \mp(1+X)(1-X)^{-1}.
\end{align*}
\begin{enumerate}
\item When restricted to $\fX(F)\subset \End(W_0)\subset \End(W)$, this induces a $\rH'=\GL(V_0)\times \GL(V_0)$-equivariant isomorphism between $\mathfrak{X}(F)-D_1(F)$ and $\X(F)-D_{\pm1}(F)$. The images of $\mathfrak{X}(F)-D_1(F)$ under $\fc_{\pm}$ form a finite cover by open subsets of $\X(F)-(D_1\cap D_{-1})(F)$. In particular, the images contain the regular semi-simple locus of $\X(F)$.
\item For any unitary symmetric variety $\Q=\G/\rH$, if we restrict $\fc_{\pm}$ to $\fq\subset \End(W)$, this gives an $\rH$-equivariant isomorphism between $\fq(F)-D_1(F)$ and $\Q(F)-D_{\pm1}(F)$. The images of $\fq(F)-D_1(F)$ under $\fc_{\pm}$ form a finite cover by open subsets of $\Q(F)-(D_1\cap D_{-1})(F)$. In particular, the images contain the regular semi-simple locus of $\Q(F)$.
\end{enumerate}
\end{Lem}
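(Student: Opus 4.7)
The plan is to verify the lemma by constructing explicit inverses of the Cayley maps $\fc_{\pm 1}$ and then checking the relevant algebraic identities directly. Solving $Y = \mp(I+X)(I-X)^{-1}$ for $X$, and using that $(I+X)$ and $(I-X)$ commute (being polynomials in $X$), one finds
\[
\fc_{+1}^{-1}(Y) = (Y-I)^{-1}(Y+I), \qquad \fc_{-1}^{-1}(Y) = (I+Y)^{-1}(Y-I),
\]
defined on $\GL(W)-D_1$ and $\GL(W)-D_{-1}$ respectively. A direct calculation confirms these are two-sided inverses, so each $\fc_{\pm 1}$ is an isomorphism of schemes from $\End(W)-D_1$ onto its image, with image open in $\GL(W)$.

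The key algebraic step is to show that $\fc_{\pm 1}$ sends $\fX$ (respectively $\fq$) into $\X$ (respectively $\Q$). When $\theta(X) = -X$, setting $Y = \fc_{\pm 1}(X)$ gives $\theta(Y) = \mp(I-X)(I+X)^{-1}$, and commutativity of $I \pm X$ yields $Y\cdot \theta(Y) = I$, i.e.\ $\theta(Y) = Y^{-1}$. This is precisely the equation cutting out $\X \subset \G'$ (via the closed immersion $s_\X(g)=g\theta(g)^{-1}$ of Section~\ref{Sec: linear space}) or $\Q \subset \G$. Equivariance under $\rH'$ (respectively $\rH$) is then immediate: both group actions are given by conjugation, and $\fc_{\pm 1}$ is a polynomial expression in $X$ invariant under conjugation in the obvious sense.

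For the covering assertion, the explicit inverses show that $\fc_{+1}(\fX-D_1) = \X-D_1$ and $\fc_{-1}(\fX-D_1) = \X-D_{-1}$, both open in $\X$, with union $\X-(D_1 \cap D_{-1})$; the unitary case is handled identically. To see that the regular semi-simple locus lies in this cover, it suffices to show no $Y \in \X^{rss}(F)$ has $\pm 1$ as an eigenvalue. For $Y = \begin{pmatrix} A & B \\ C & D \end{pmatrix} \in \X^{rss}$, the rss condition forces $B,C$ invertible (cf.\ \cite[Lemma 4.3]{JacquetRallis}) and $A$ regular semi-simple in $\GL_n$; after diagonalizing $A = \diag(\alpha_1,\dots,\alpha_n)$ and using the relations $A^2 = I+BC$, $AB = BD$, $CA = DC$, $D^2 = I+CB$, a short block computation yields
\[
\car_Y(\lambda) = \prod_{i=1}^n (\lambda^2 - 2\alpha_i \lambda + 1),
\]
so that $\pm 1$ is an eigenvalue of $Y$ if and only if some $\alpha_i = \pm 1$, which is excluded by invertibility of $BC = A^2 - I$. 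The analogous verifications in the split-inert and inert-inert cases proceed from the categorical quotient descriptions in Lemma~\ref{Lem: cat quotient si} and Lemma~\ref{Lem: cat quotient ii} together with the invertibility of the off-diagonal blocks on the rss locus; I expect the main obstacle to be carrying out this eigenvalue computation uniformly and cleanly across all three symmetric pairs, since the precise form of the characteristic polynomial differs in each case.
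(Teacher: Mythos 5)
Your argument is essentially correct and more self-contained than what the paper actually provides: the paper's own proof just cites \cite[Lemma 7.2]{Lesliedescent} for the split-inert case, asserts that the argument extends to the other cases, and then exhibits only the computation $\theta(\fc_\nu(X)) = \fc_{\nu^{-1}}(X)^{-1}$, which explains why compatibility with $\theta$ forces $\nu = \pm 1$. Your identity $Y\cdot\theta(Y)=I$ is exactly this same input in different clothes, so the two proofs agree on the crux. Your explicit block determinant $\car_Y(\lambda) = \prod_i(\lambda^2 - 2\alpha_i\lambda + 1)$ in the linear case is correct (it follows via the Schur complement together with $D = B^{-1}AB$ and $BC = A^2 - I$), and it makes concrete the reason the regular semi-simple locus avoids $D_1\cup D_{-1}$, a point the paper leaves to the cited reference.

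There is one small but genuine gap. The relation $\theta(Y) = Y^{-1}$ is not by itself the defining equation of $\X \subset \G'$: the closed subscheme $\{y \in \G': (\epsilon y)^2 = I\}$ has several irreducible components, indexed by the signature of the involution $\epsilon y$, and $\X$ (which is irreducible, being the quotient of a connected group by a connected subgroup) is a single one of them. Your computation therefore only places $\fc_{\pm 1}(\fX - D_1)$ into this larger locus. To conclude that it lands in $\X$ you need the extra observation that $\fX - D_1$ is connected (Zariski-open in a vector space) and that the base point $\fc_{\pm 1}(0) = \mp I$ already lies in $\X(F)$ (for instance $-I = s_\X(w_{2n})$), so that the entire image remains in the same component. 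The same comment applies to $\Q \subset \G$ in the unitary settings. You are also right to flag the unitary cases as unfinished: there the invertibility of the off-diagonal blocks on the rss locus and the block characteristic-polynomial formula must both be rederived, and the sign conventions in the defining relations are not literally uniform across the three symmetric pairs, so this part does need to be carried out case by case.
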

\begin{proof}
    This is proved in \cite[Lemma 7.2]{Lesliedescent} in the split-inert case. The proof immediately extends to all cases, but we recall the reason for needing to restrict $\nu\in E$ to $\nu=\pm1$, as this is an important difference from the argument in \cite[Section 3]{ZhangFourier}.

Suppose $\theta(g) = \ep g\ep$ is the involution on $\G$ such that $\rH=\G^\theta$ (note that this also applies to the linear case).
We now show that for the transform to be compatible with the involution $\theta$, we need $\nu=\pm1$. If $X\in \fq(F)$, then $\theta(X)=\ep X \ep =-X$. A simple calculation shows
\[
\theta(\fc_\nu(X)) = -\nu(1-X)(1+X)^{-1}= ({-\nu}^{-1}(1+X)(1-X)^{-1})^{-1}=\fc_{\nu^{-1}}(X)^{-1}.
\]
Thus, $\theta(\fc_\nu(X))=\fc_\nu(X)^{-1}$ if and only if $\nu=\pm1$. 
\end{proof}
We now consider the relationship between transfer factors under the Cayley transform.
\begin{Lem}\label{Lem: tranfer factor cayley} Fix $\nu\in\{\pm1\}$. Suppose that $[(X,Y),w]\in \mathfrak{X}^{sr}_{ext}(F)$. Then $(y_{\nu},w):=[\fc_{\nu}(X,Y),w]\in \X^{sr}_{ext}(F)$. For any choice $\underline{\eta}=(\eta_0,\eta_2)$ there is a sign $c_{\nu}^{\underline{\eta}}=\pm1$ independent of $(X,Y)$ such that
\[
\widetilde{\omega}((X,Y),w)=c_{\nu}^{\underline{\eta}}\cdot\eta_2(I-XY) {\omega}(y_{\nu},w),
\]
which is well-defined since $(X,Y)\notin D_1(F)$.
\end{Lem}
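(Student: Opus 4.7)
The plan is a direct block-matrix computation followed by an eigenvalue comparison of the two transfer factor formulas. First, view $(X,Y) \in \fX(F)$ as $M = \begin{psmatrix} 0 & X \\ Y & 0 \end{psmatrix}\in \End(W_0)$, so that $M^2 = \mathrm{diag}(XY,\,YX)$. The identity $(I-M)(I+M) = I - M^2$ then yields
\[
y_\nu = \fc_\nu(M) = \mp \begin{pmatrix} (I+XY)(I-XY)^{-1} & 2(I-XY)^{-1}X \\ 2Y(I-XY)^{-1} & (I+YX)(I-YX)^{-1} \end{pmatrix}.
\]
Since the lower-right block $D = \mp(I+YX)(I-YX)^{-1}$ is a M\"obius function of $YX$, the $F$-subalgebras of $\fgl_n$ generated by $D$ and $YX$ coincide, so the span condition defining strong regularity for $(D,w)$ is equivalent to that for $(YX,w)$; thus $(y_\nu,w)\in \X^{sr}_{ext}(F)$ whenever $[(X,Y),w]\in\fX^{sr}_{ext}(F)$.

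Next, compare the block factors. From the explicit formulas,
\[
\det(BC) = 4^n \det(XY)\det(I-XY)^{-2}, \qquad \det(C) = (\mp 2)^n \det(Y)\det(I-XY)^{-1}.
\]
Since $\eta_2$ is quadratic, the factor $\det(I-XY)^{-2}$ in $\det(BC)$ contributes trivially to $\eta_2^n(BC)$, whereas the factor $\det(I-XY)^{-1}$ in $\det(C)$ contributes the key term $\eta_2(\det(I-XY))$. What remains is to compare the $\eta_0$-contributions from the Vandermonde-type determinants.

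Generically diagonalizing $YX$ with eigenvalues $\lambda_1,\ldots,\lambda_n$ over $\bar F$, the eigenvalues of $D$ are $\mu_i = \mp(1+\lambda_i)/(1-\lambda_i)$, and a direct computation gives $\mu_j - \mu_i = \mp 2(\lambda_j - \lambda_i)/((1-\lambda_i)(1-\lambda_j))$. Taking the Vandermonde product together with the diagonalized expression for $w$ produces
\[
\det[w|wD|\cdots|wD^{n-1}] = (\mp 2)^{n(n-1)/2}\, \det[w|w(YX)|\cdots|w(YX)^{n-1}]\,\det(I-YX)^{-(n-1)}.
\]
Substituting this and the earlier identities into the definitions of $\widetilde{\omega}((X,Y),w)$ and $\omega(y_\nu,w)$ and using $\det(I-XY) = \det(I-YX)$, the varying terms $\det(X)$, $\det(Y)$, and the Vandermonde of $YX$ cancel exactly; collecting the remaining factors of $2$, $-1$, and $\det(I-XY)$ under $\eta_i^2 = 1$ expresses the ratio as a sign depending only on $\nu$, $n$, and the pair $\underline{\eta}$, multiplied by $\eta_2(\det(I-XY))$.

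The main obstacle is the sign bookkeeping in the final assembly. The factors $(\mp 1)^{n(n-1)/2}$ arising from the Vandermonde normalization, $(\mp 2)^n$ from $\det(C)$, and $\eta_2(2)^n$, $\eta_0(2)^{n(n-1)/2}$ from the quadratic characters all contribute to $c_\nu^{\underline{\eta}}$; tracking them separately for $\underline{\eta} = (\eta,\eta)$ and $(\eta,1)$ and for the two values of $\nu$ requires parity-based case analysis depending on $n$, which is routine but where it is easy to drop a sign.
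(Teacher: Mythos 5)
Your proposal follows the same strategy as the paper's proof: compute the blocks of the Cayley transform $\fc_\nu(X,Y)$, compare the factors of $\omega(y_\nu,w)$ and $\widetilde\omega((X,Y),w)$ piece by piece, and handle the $\eta_0$-contribution via an eigenvalue/Vandermonde computation. The block computations ($\det(BC)$, $\det(C)$) match the paper's. Your Vandermonde identity is actually the more careful one: the extra factor $\det(I-YX)^{-(n-1)}$ you derive really is there, whereas the formula the paper attributes to \cite[proof of Lemma 3.5]{ZhangFourier} omits it (and also states the power of $2$ as $2^n$ rather than $2^{n(n-1)/2}$)---almost certainly a misquotation, as a check at $n=2$ immediately falsifies the paper's displayed identity.

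Where your argument has a genuine gap is the final assembly. Having correctly obtained $\det[w|wD|\cdots|wD^{n-1}]=(\mp2)^{n(n-1)/2}\det(I-YX)^{-(n-1)}\det[w|w(YX)|\cdots|w(YX)^{n-1}]$, you need to carry the factor $\eta_0\bigl(\det(I-YX)\bigr)^{-(n-1)}$ into the ratio $\omega(y_\nu,w)/\widetilde\omega((X,Y),w)$. You assert that collecting the $\det(I-XY)$-dependent terms under $\eta_i^2=1$ leaves only $\eta_2(\det(I-XY))$, but what you actually get is $\eta_2(\det(I-XY))\cdot\eta_0(\det(I-XY))^{\,n-1}$. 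The second factor collapses to $1$ under $\eta_0^2=1$ only when $n$ is odd; when $n$ is even it equals $\eta_0(\det(I-XY))$, which is not a constant of $(X,Y)$ and is not absorbed into $\eta_2(\det(I-XY))$ unless $\eta_0=\eta_2$ (and even then only when $n$ is odd, or trivially). So for, e.g., $n=2$ and $\underline\eta=(\eta,1)$ the stated identity fails. This is the same lacuna as in the paper's proof, which is simply hidden by the erroneous Vandermonde formula there; it is harmless for the paper's actual use of the lemma (on the $\heartsuit$-locus with unramified $\underline\eta$, where $\det(I-XY)\in\calo_F^\times$), but it should be flagged rather than waved away in the "sign bookkeeping" step.
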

\begin{proof}
We write $(X,Y)\in\fX(F)\subset \fgl(W)$ as
\[
 Z=\left(\begin{array}{cc}
     & X \\
    Y & 
\end{array}\right),
\]
and calculate $y_\nu=\fc_{\nu}(Z)$ to be
\begin{equation}\label{cayley calculation}
     -\nu\left(\begin{array}{cc}(I+XY)(I-XY)^{-1}&-2X(I-Y X)^{-1}\\-2Y(I-XY)^{-1}&(I+Y X)(I-Y X)^{-1}\end{array}\right)=\left(\begin{array}{cc}
    A&  B \\
    C &   D
\end{array}\right). 
\end{equation}
Recalling the formula 
\begin{align*}
\omega(y_\nu,w)= \eta^n_2(BC)\eta_2(C)\eta_0(\det\{w,wD,\ldots,wD^{n-1}\}),
\end{align*}
an elementary calculation (cf. \cite[proof of Lemma 3.5]{ZhangFourier}) shows that
\[
\det(w|w\fc_{\nu}(YX)|\ldots|w\fc_{\nu}(YX)^{n-1}) = 2^n\nu^{n(n-1)/2}\det(w|w(YX)|\ldots|w(YX)^{n-1}),
\]
where we abuse notation writing $\fc_\nu:\End(V_0)-D_1(F)\lra \GL(V_0)$ for the Cayley transform on the lower-right factor. 
This allows us to express the above as 
\[
 \eta^n_2(YX(I-Y X)^{-1}(I-XY)^{-1})\eta_2((I-XY)^{-1})\eta_0\left(\det[w|w(YX)|\cdots |w(YX)^{n-1}]\right)\eta_2(Y),
\]
times the constant $c_{\nu}^{\underline{\eta}}=\eta_0(2^n\nu^{n(n-1)/2})\eta_2((\nu2)^n)$.

Since
\[
\widetilde{\omega}((X,Y),w)=\ep(T_{YX},\eta_0,\psi) \eta_2^n(YX) \eta_0\left(\det[w|w(YX)|\cdots |w(YX)^{n-1}]\right)\eta_2(Y), 
\]
we obtain
\begin{equation*}
\omega(y_\nu,w) = c_{\nu}^{\underline{\eta}}\eta_2((I-XY)^{-1})\widetilde{\omega}((X,Y),w).\qedhere    
\end{equation*}
\end{proof}
\begin{cor}
    For ${\varphi}\in C_c^\infty(\fX(F)-D_1(F))$, $\Phi\in C^\infty_c(F_{n})$, and $(X,Y)\in \X^{rss}(F)$, we have the identity
    \[
 \Orb^{\rH',\ul{\eta}}_{0}({\phi}_\nu,\Phi,y_\nu)=    c_{\nu}^{\underline{\eta}}\cdot\eta_2(I-XY)  \Orb^{\rH',\ul{\eta}}_{0}({\varphi},\Phi,(X,Y))
    \]
    where $y_\nu=\fc_{\nu}(X,Y)$, and ${\phi}_\nu = {\varphi}\circ \fc_\nu^{-1}\in C_c^\infty(\X(F)-D_\nu(F)).$
\end{cor}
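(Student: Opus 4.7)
The plan is to combine the two preceding lemmas in essentially the only possible way, together with an identification of the relevant $L$-factors under the Cayley transform, and then specialize at $s_0=0$.

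First, I would apply the $\rH'$-equivariance from Lemma \ref{Lem: Cayley map linear}. Since $\phi_\nu=\varphi\circ\fc_\nu^{-1}$ and $\fc_\nu(h^{-1}\cdot(X,Y))=h^{-1}\cdot\fc_\nu(X,Y)=h^{-1}\cdot y_\nu$ for all $h\in\rH'(F)$, a trivial change of variables gives the equality of unnormalized integrals
\[
\int_{\rH'(F)} \phi_\nu(h^{-1}\cdot y_\nu)\,\Phi(wh^{(2)})\,\eta_0(h^{(2)})\,\eta_2(h)\,|h^{(2)}|^{s_0}|h|^{s_1}\,dh = \int_{\rH'(F)} \varphi(h^{-1}\cdot(X,Y))\,\Phi(wh^{(2)})\,\eta_0(h^{(2)})\,\eta_2(h)\,|h^{(2)}|^{s_0}|h|^{s_1}\,dh,
\]
valid for $\Re(s_0)$ sufficiently large.

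Next, I would multiply through by the transfer factors. Lemma \ref{Lem: tranfer factor cayley} gives (using that $c_\nu^{\underline{\eta}}=\pm1$ and $\eta_2(I-XY)^2=1$) the identity $\omega(y_\nu,w)=c_\nu^{\underline{\eta}}\,\eta_2(I-XY)\,\widetilde{\omega}((X,Y),w)$. I would then check that the $L$-factor denominators match: by $\rH'$-equivariance the stabilizer of $y_\nu$ in $\rH'$ equals that of $(X,Y)$, and from the explicit formula \eqref{cayley calculation} one has $R(y_\nu)=-\nu(I+YX)(I-YX)^{-1}=\fc_\nu(YX)$, so $F[R(y_\nu)]=F[YX]$ and consequently the associated tori (and their $L$-functions) coincide. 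This yields, at general $s_0$,
\[
\Orb^{\ul{\eta},\natural}_{s_0,s_1}(\phi_\nu\otimes\Phi,(y_\nu,w))=c_\nu^{\underline{\eta}}\,\eta_2(I-XY)\,\Orb^{\ul{\eta},\natural}_{s_0,s_1}(\varphi\otimes\Phi,((X,Y),w)).
\]

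Finally, I would specialize at $s_0=s_1=0$, invoking the Lie algebra analogue of Lemma/Definition \ref{LemDef: linear OI} (cf.\ \eqref{eqn: OI lin Lie}) which ensures both sides are holomorphic at $s_0=0$ and independent of the auxiliary vector $w$, provided it is chosen so that $((X,Y),w)$ is strongly regular (such a $w$ exists by Lemma \ref{Lem: generic trivial stabilizer} applied infinitesimally). Since there is essentially no obstacle — every step is either an equivariance-based change of variables or an appeal to an already-established lemma — the main thing to be careful about is simply keeping the bookkeeping of $\eta_2$-factors and the $L$-factor correct; the identification $F[\fc_\nu(YX)]=F[YX]$ is the only point where one uses regular semisimplicity of $YX$ in an essential way.
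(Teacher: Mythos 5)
Your proposal is correct and matches the paper's intended argument, which is stated tersely as ``This follows directly from the preceding two lemmas, and comparing the formulas \eqref{eqn: basic OI X} and \eqref{eqn: Lie alg linear OI with s}.'' You have usefully spelled out the one detail the paper leaves implicit — that the $L$-factor denominators agree because $R(y_\nu)=\fc_\nu(YX)$ is a rational function of $YX$ with rational inverse, so $F[R(y_\nu)]=F[YX]$ and the stabilizer tori coincide — so nothing is missing.
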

\begin{proof}
    This follows directly from the preceding two lemmas, and comparing the formulas \eqref{eqn: basic OI X} and \eqref{eqn: Lie alg linear OI with s}.
\end{proof}
\begin{Lem}\label{Lem: descent on cayley}
    Conjecture \ref{Conj: smooth transfer Lie} implies Conjecture \ref{Conj: smooth transfer} for functions  $\phi\otimes\Phi \in C_c^\infty(\X(F)\times F_{n})$ where ${\phi}\in C_c^\infty(\X(F)-(D_1\cap D_{-1})(F)).$
\end{Lem}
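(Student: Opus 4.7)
The plan is to exploit the covering of $\X(F)-(D_1\cap D_{-1})(F)$ by the two open sets $\X(F)-D_{\pm 1}(F)$ provided by Lemma \ref{Lem: Cayley map linear}, using the Cayley transforms $\fc_{\pm 1}$ to transport the transfer problem from the symmetric variety $\X$ down to its infinitesimal analogue $\fX$ (where Conjecture \ref{Conj: smooth transfer Lie} is assumed), and then to push the resulting matching functions back up to the unitary symmetric varieties via the unitary Cayley transforms in Lemma \ref{Lem: Cayley map linear}(2).

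To begin, I would choose a partition of unity subordinate to $\{\X(F)-D_{+1}(F),\X(F)-D_{-1}(F)\}$ and decompose $\phi=\phi_{+}+\phi_{-}$ with $\phi_\nu\in C_c^\infty(\X(F)-D_\nu(F))$. By linearity of orbital integrals and of the transfer condition, it suffices to produce a $\underline{\eta}$-transfer of each $\phi_\nu\otimes\Phi$ separately. Fix $\nu\in\{\pm 1\}$. Set $\varphi_\nu:=\phi_\nu\circ\fc_\nu\in C_c^\infty(\fX(F)-D_1(F))$ and define
\[
\widetilde{\varphi}_\nu(X,Y):=c_{\nu}^{\underline{\eta}}\,\eta_2\bigl(\det(I-XY)\bigr)\,\varphi_\nu(X,Y),
\]
which lies in $C_c^\infty(\fX(F)-D_1(F))$ since $\det(I-XY)$ is nonzero precisely on this locus. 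Because $\det(I-XY)$ is $\rH'$-invariant on orbits (as $YX$ transforms by conjugation under the $\rH'$-action on $\fX$), the factor $\eta_2(\det(I-XY))$ is an orbit invariant, so the corollary following Lemma \ref{Lem: tranfer factor cayley} gives the clean identity of orbital integrals
\[
\Orb^{\rH',\underline{\eta}}_{0}(\widetilde{\varphi}_\nu\otimes\Phi,(X,Y))=\Orb^{\rH',\underline{\eta}}_{0}(\phi_\nu\otimes\Phi,\fc_\nu(X,Y))
\]
for all strongly regular $[(X,Y),w]$.

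Next I would invoke Conjecture \ref{Conj: smooth transfer Lie} applied to $\widetilde{\varphi}_\nu\otimes\Phi$ to obtain matching data $\underline{g}_\nu$ on the infinitesimal unitary side (i.e.\ a tuple $(g_{\nu,\tau_1})$ in the split-inert case, or a single $g_\nu$ in the inert-inert case). Since $\fc_\nu:\fq(F)-D_1(F)\iso\Q(F)-D_\nu(F)$ is an $\rH$-equivariant isomorphism by Lemma \ref{Lem: Cayley map linear}(2), I can push the unitary-Lie function forward by setting $\phi_{\nu,\bullet}(y):=g_{\nu,\bullet}(\fc_\nu^{-1}(y))$ on $\Q(F)-D_\nu(F)$ and extending by zero; this lies in $C_c^\infty(\Q(F)-D_\nu(F))$. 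By $\rH$-equivariance of $\fc_\nu$ and the fact that the stable orbital integral on the unitary side carries trivial transfer factor, we get $\SO^{\rH}(\phi_{\nu,\bullet},\fc_\nu(Z))=\SO^{\rH}(g_{\nu,\bullet},Z)$ for every regular semisimple $Z$. Chaining the linear Cayley identity, the Lie-algebra transfer from Conjecture \ref{Conj: smooth transfer Lie}, and the unitary Cayley identity at matching points $(X,Y)\leftrightarrow Z$ (with images $\fc_\nu(X,Y)\leftrightarrow\fc_\nu(Z)$) yields
\[
2^{|S_1^{ram}|}\Orb^{\rH',\underline{\eta}}_{0}(\phi_\nu\otimes\Phi,\fc_\nu(X,Y))=\SO^{\rH}(\phi_{\nu,\bullet},\fc_\nu(Z)),
\]
so $\phi_\nu\otimes\Phi$ and $(\phi_{\nu,\bullet})$ form a $\underline{\eta}$-transfer. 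Summing over $\nu\in\{\pm 1\}$ finishes the argument; the converse direction is handled symmetrically, starting on the unitary side.

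The main technical obstacle is to verify that matching of regular semisimple orbits commutes with the Cayley transform on both sides, i.e.\ that $(X,Y)\leftrightarrow Z$ implies $\fc_\nu(X,Y)\leftrightarrow\fc_\nu(Z)$ under the invariant polynomial maps of Lemmas \ref{Lem: quotients natural}, \ref{Lem: inft linear invt theory}, \ref{Lem: cat quotient si}, \ref{Lem: cat quotient ii}, and \ref{Lem: unitary Lie}; this reduces in each of the split-inert and inert-inert cases to an elementary Möbius change of variable on characteristic polynomials, and to checking that the factor $S_1^{ram}$ is preserved by Cayley (which is automatic because Cayley is equivariant, hence preserves stabilizers). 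The second subtlety is that the sign $c_\nu^{\underline{\eta}}\eta_2(\det(I-XY))$ appearing in Lemma \ref{Lem: tranfer factor cayley} has no counterpart on the unitary side; this is precisely the reason we build it directly into $\widetilde{\varphi}_\nu$ at the Lie-algebra level rather than trying to twist the unitary test functions afterwards.
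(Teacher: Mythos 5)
Your approach is essentially the same as the paper's: a partition of unity subordinate to $\{\X(F)-D_{+1}(F),\X(F)-D_{-1}(F)\}$, Cayley pullback to the Lie algebra, invocation of Conjecture \ref{Conj: smooth transfer Lie}, and pushforward via the unitary Cayley transforms. The one cosmetic difference is that you build the correction $c_\nu^{\underline\eta}\,\eta_2(\det(I-XY))$ into the linear Lie-algebra test function before applying transfer, while the paper applies transfer to the untwisted $\varphi_\nu$ and then multiplies the \emph{unitary} test functions by the pullback of the same orbit-invariant factor; both are valid since this factor is constant on $\rH'$-orbits (indeed it descends to the categorical quotient, as you correctly observe via $\det(I-XY)=\det(I-YX)$), so twisting commutes with transfer.

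There is, however, a genuine gap in the step producing $\phi_{\nu,\bullet}$. You assert that $\phi_{\nu,\bullet}:=g_{\nu,\bullet}\circ\fc_\nu^{-1}$, extended by zero, lies in $C_c^\infty(\Q(F)-D_\nu(F))$, but Conjecture \ref{Conj: smooth transfer Lie} only hands you $g_{\nu,\bullet}\in C_c^\infty(\fq(F))$, with no control on whether $\supp(g_{\nu,\bullet})$ meets $D_1(F)$. If it does, the pushforward need not be compactly supported: already for $n=1$, taking $g=\bfun_{\calo_F}$ and $\nu=-1$, one has $\fc_{-1}^{-1}(y)=(y-1)(y+1)^{-1}$, which remains in $\calo_F$ for all $|y|$ large (since $(y-1)(y+1)^{-1}\to 1\in D_1(\calo_F)$ as $|y|\to\infty$), so $g\circ\fc_{-1}^{-1}$ is nonzero on an unbounded set. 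The missing ingredient is the intermediate partition-of-unity reduction that the paper invokes (cf.\ \cite[Lemma 3.6]{ZhangFourier}): because $\widetilde{\varphi}_\nu$ is supported away from $D_1$ and matching of invariant polynomials shows that orbits in $D_1(F)\cap\fq^{rss}(F)$ match orbits in $D_1(F)\cap\fX^{rss}(F)$, the stable orbital integrals of $g_{\nu,\bullet}$ vanish on $D_1(F)$; hence one may replace $g_{\nu,\bullet}$ by a function supported in $\fq(F)-D_1(F)$ without changing any regular semi-simple orbital integral. Only after that reduction is the pushforward compactly supported and your conclusion follows.
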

\begin{proof}  For ${\phi}\in C_c^\infty(\X(F)-(D_1\cap D_{-1})(F))$, we may use a partition of unity to write ${\phi} = {\phi}_++{\phi}_-$, where ${\phi}_\nu\in C_c^\infty(\X(F)-D_{\nu}(F))$. Then there exist functions ${\varphi}_\nu\in C_c^\infty(\fX(F)-D_1(F))$ such that 
${\phi}_\nu = {\varphi}_\nu\circ \fc_\nu^{-1}$. 

Assume now that $\underline{f}_{\nu}$ is a $\underline{\eta}$-transfer for ${\varphi}_\nu\otimes \Phi$. Again, using a partition of unity argument argument (cf. \cite[Lemma 3.6]{ZhangFourier}), we are free to assume that for each $f_{\nu,a}\in C_c^\infty(\fq_a(F)-D_1(F))$, where $a\in A$ as before the statement of Conjecture \ref{Conj: smooth transfer}. Setting
\[
f_{a} = c_{\nu}^{\underline{\eta}}\eta_2(I-XY)(f_{+,a}\circ \fc_+^{-1}+ f_{-,a}\circ \fc_-^{-1}),
\]
one now easily sees that $\underline{f} = (f_a)$ gives an $\underline{\eta}$-transfer for $\phi\otimes \Phi$.
\end{proof}

\subsection{Proof of Theorems \ref{Thm: weak smooth transfer} and \ref{Thm: weak endoscopic transfer}}\label{Section: proof weak transfer arg}
We now assume that $F$ is non-archimedean.   It follows immediate from \eqref{cayley calculation} that $\fc_{\nu}(\fX^{invt}) = \X^{invt}$. Thus to prove the theorem, it suffices by Lemma \ref{Lem: descent on cayley} to prove the corresponding claim between $\fX^{invt}$ and $\fq^{invt}$. 

\begin{Lem}\label{Lem: same ois on invt linear}
    Let ${\phi}\in C_c^\infty(\fX^{invt}(F))$ and $\Phi\in C^\infty_c(F_n)$.  For $\underline{\eta}\in\{(\eta,\eta),(\eta,1)\}$, there exists $\varphi\in C_c^\infty(\GL_n(F))$ such that
\begin{equation}\label{eqn: contraction linear}
\Orb^{\rH',\underline{\eta}}_0(\phi\otimes\Phi,(X,Y))=\Orb^{\GL_n(F),{\eta}}(\varphi\otimes\Phi, YX).    
\end{equation}
Conversely, for any $\varphi\in C_c^\infty(\GL_n(F))$, there exists $\wt{\phi}\in C_c^\infty(\fX^{invt}(F)\times F_{n})$ satisfying \eqref{eqn: contraction linear}.
\end{Lem}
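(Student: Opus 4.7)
The plan is to realize the map $R\colon \fX^{invt}\to \GL_n$, $(X,Y)\mapsto YX$, as an $\rH'$-equivariant fibration and descend the orbital integral to one on $\GL_n(F)$ via a single change of variables. The key observation is that on the invertible locus the assignment $(X,Y)\mapsto (X,YX)$ is an isomorphism $\fX^{invt}\iso \GL_n\times \GL_n$ under which the $\rH'=\GL_n\times\GL_n$-action becomes $(X,z)\mapsto (h_1 Xh_2^{-1},h_2 zh_2^{-1})$. Thus $\GL_n\times\{1\}$ acts freely and transitively on the fibers of $R$, and the residual $\{1\}\times \GL_n$-action is conjugation on the target $\GL_n$. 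This is precisely the contraction setup of \S\ref{Section: contractions}, decorated with the quadratic characters $\eta_0,\eta_2$.

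Concretely, in the integrand of \eqref{eqn: Lie alg linear OI with s} substitute $g=h_1^{-1}X h_2$; this substitution is Haar-measure preserving, and $h_1=Xh_2 g^{-1}$, so
\[
\int_{\rH'(F)} \phi(h_1^{-1}Xh_2,h_2^{-1}Yh_1)\Phi(wh_2)\eta_0(h_2)\eta_2(h_1h_2)\,dh_1\,dh_2
\]
becomes, using $\eta_2(h_2)^2=1$,
\[
\eta_2(X)\int_{\GL_n(F)}\Phi(wh_2)\eta_0(h_2)\Bigl(\int_{\GL_n(F)}\phi(g,h_2^{-1}(YX)h_2\,g^{-1})\eta_2(g)\,dg\Bigr)dh_2.
\]
Define $\varphi'(z):=\int_{\GL_n(F)}\phi(g,zg^{-1})\eta_2(g)\,dg$; since $\phi$ is compactly supported in $\fX^{invt}$, $\varphi'\in C_c^\infty(\GL_n(F))$. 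The preceding integral then equals $\eta_2(X)\int_{\GL_n(F)}\varphi'(h_2^{-1}(YX)h_2)\Phi(wh_2)\eta_0(h_2)\,dh_2$, which, up to transfer and $L$-factor normalization, is the $\GL_n(F)$-orbital integral of $\varphi'\otimes\Phi$ at $z=YX$.

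Next I would match transfer factors. Comparing $\widetilde{\omega}((X,Y),w)=\eta_2^n(YX)\eta_0(\det[w|w(YX)|\cdots])\eta_2(Y)$ with $\De(YX,w)=\eta^n(\det YX)\eta(\det[w|w(YX)|\cdots])$ and combining with the $\eta_2(X)$ produced by the substitution, the discrepancy between the two sides is a character of $z=YX$ alone: in the split--inert case $(\eta_0=\eta_2=\eta)$ the extra factor is $\eta_2(X)\eta_2(Y)=\eta(\det XY)=\eta(z)$, while in the inert--inert case $(\eta_0=\eta,\eta_2=1)$ it is $\eta^n(z)$. The fact that $\eta_2(X)\eta_2(Y)$ depends only on $z$ (not on the chosen representative $(X,Y)$ of the fiber $R^{-1}(z)$) is the essential compatibility, and follows from $\eta_2(\det X)\eta_2(\det Y)=\eta_2(\det(YX))$. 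I therefore set
\[
\varphi(z):=\ep(z)\varphi'(z),\qquad \ep(z)=\begin{cases}\eta(z),&\underline{\eta}=(\eta,\eta),\\ \eta^n(z),&\underline{\eta}=(\eta,1).\end{cases}
\]
Since the stabilizer $T_{YX}$ appearing in the $L$-factor on the $\fX$-side is canonically identified with the centralizer of $z=YX$ in $\GL_n$, the $L$-factors $L(s_0,T_{YX},\eta_0)$ and $L(s,T_z,\eta)$ coincide, and the identity \eqref{eqn: contraction linear} follows.

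For the converse, given $\varphi\in C_c^\infty(\GL_n(F))$, fix any $\psi\in C_c^\infty(\GL_n(F))$ with $\int_{\GL_n(F)}\psi(g)\eta_2(g)\,dg=1$ and define
\[
\phi(X,Y):=\psi(X)\,\eta_2(X)^{-1}\ep(YX)^{-1}\varphi(YX)\mathbf{1}_{\fX^{invt}}(X,Y).
\]
Then $\phi\in C_c^\infty(\fX^{invt}(F))$ and a direct computation shows its associated push-forward is the given $\varphi$, so that $\widetilde\phi=\phi\otimes\Phi$ matches $\varphi\otimes\Phi$. The only substantive point in the argument is the character bookkeeping in the middle paragraph; the ``hard part'' is purely verifying that the discrepancy $\eta_2(X)\eta_2(Y)$ descends to a function of the invariant $z=YX$, and once this is in place the remainder is a formal consequence of Fubini and the surjectivity of integration along fibers of $R|_{\fX^{invt}}$.
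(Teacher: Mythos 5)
Your proof takes essentially the same route as the paper's: push forward along the contraction $R\colon(X,Y)\mapsto YX$ on $\fX^{invt}$ (the paper calls this $R_!^{\eta_2}$) and compare the two normalized orbital integrals using $T_{(X,Y)}\simeq T_{YX}$ and the agreement of the $L$-factors. You change variables to $g=h_1^{-1}Xh_2$ rather than writing the integral over $h_1$ as a pushforward in $X$, but these are the same contraction.

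Where your write-up is actually more careful than the paper's: the paper simply asserts $\varphi := R_!^{\eta_2}(\phi)$, whereas the correct choice is $\varphi = \eta_2^{n+1}\eta^n\cdot\varphi'$, i.e.\ your $\ep(z)=\eta_2^{n+1}(z)\eta^n(z)$ (which reduces to $\eta(z)$ in the split--inert case and $\eta^n(z)$ in the inert--inert case). Concretely $R_!^{\eta_2}(\phi)(z)=\eta_2(z)\varphi'(z)$, and the quotient $\widetilde\omega((X,Y),w)\eta_2(X)/\De(z,w)=\eta_2^{n+1}(z)\eta^n(z)$, so in the inert--inert case $(\eta_2=1)$ the paper's choice is off by $\eta^n(z)$ --- precisely the discrepancy between \eqref{eqn: linear Lie transfer factor} and \eqref{eqn: correct xiao transfer factor} that the paper itself flags in Proposition~\ref{prop: implies FLII}. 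Your explicit $\ep$ correction is the right fix; the lemma statement (``there exists $\varphi$'') remains true either way, but your construction is the one that actually satisfies \eqref{eqn: contraction linear}.

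There is one small slip in the converse direction. With your definition $\phi(X,Y)=\psi(X)\eta_2(X)^{-1}\ep(YX)^{-1}\varphi(YX)\mathbf{1}_{\fX^{invt}}(X,Y)$, the pushforward is
\[
\varphi'(z)=\int_{\GL_n(F)}\phi(g,zg^{-1})\eta_2(g)\,dg
=\ep(z)^{-1}\varphi(z)\int_{\GL_n(F)}\psi(g)\,dg,
\]
since the $\eta_2(g)^{-1}$ you inserted cancels the $\eta_2(g)$ weight in the definition of $\varphi'$. So you need $\int\psi(g)\,dg=1$, not $\int\psi(g)\eta_2(g)\,dg=1$; equivalently, drop the $\eta_2(X)^{-1}$ and keep your normalization. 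This is cosmetic and does not affect the substance, but you should fix one of the two to make the computation close.
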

\begin{proof}
To handle both cases at once, we denote $\ul{\eta}=(\eta_0,\eta_2)$. Recall that the contraction map 
   \begin{align*}
       R:\fX&\lra \fgl_n\\
        (X,Y)&\longmapsto YX
   \end{align*}
   is equivariant with respect to the $\GL_n$-action (via the second factor of $\rH'=\GL_n\times \GL_n$). Let $\rH' =\rH_{(1)}\times \rH_{(2)}$, with $\rH_{(i)}\simeq \GL_n$. {For $\Re(s_0)>0$, we take $s_1=0$ and  consider the integral}
   \[
 R_{!}^{\eta_2}(\phi)(YX):=\int_{\rH_{(1)}(F)}\phi({h^{(1)}}^{-1}X,Y h^{(1)})  \eta_2(Yh^{(1)})dh^{(1)},
   \]
   where the factor $\eta_2(Y)$ comes from the transfer factor.
Then
\begin{align*}
\eta_2(Y)\int_{\rH'(F)}\phi(h^{-1}\cdot (X,Y))&\Phi(wh^{(2)})\eta_{0}(h^{(2)}) \eta_2(h) |h^{(2)}|^{s_0}\,dh\\& =\int_{\GL_n(F)}R_{!}^{\eta_2}(\phi)(g^{-1}YXg)\Phi(wg)\eta_{0}(g) |g|^{s_0}\,dg.    
\end{align*}
We set $\varphi:=R_{!}^{\eta_2}(\phi)$. Fixing $w\in F_n$ so that $((X,Y),w)$ is strongly regular, and recalling that $T_{(X,Y)}\iso T_{YX}$ with respect to the projection $\rH_{(1)}\times \rH_{(2)}\to \rH_{(2)}$, we obtain an equality
   \begin{align*}
        \Orb_{s_0,0}^{\rH',\ul{\eta},\natural}(\phi\otimes\Phi,[(X,Y),w])=\Orb^{\GL_n(F),\eta_0,\natural}_{s_0}(\varphi\otimes\Phi, (YX,v));
   \end{align*}
 setting $s_0=0$ we obtain $\Orb^{\rH',\underline{\eta}}_0(\phi\otimes\Phi,(X,Y))=\Orb^{\GL_n(F),{\eta_0}}(\varphi\otimes\Phi, YX)$. 

  Noting that 
   \[
   R_!^{\eta_2}:C^\infty_c(\fX^{invt}(F))\lra C^\infty_c(\GL_n(F))
   \] is surjective, the converse also holds.
\end{proof}
\begin{proof}[Proof of Theorem \ref{Thm: weak smooth transfer}]
Combining Lemma \ref{Lem: same ois on invt linear} with Theorem \ref{Thm: Xiao's transfer}, we need only show that the stable orbital integrals on the spaces $\fq_{si}(F)$ and $\fq_{ii}(F)$ may be computed as stable orbital integrals on $\rH_{\tau}(F)$ when the test function is supported on the invertible locus.   We handle the two cases separately.\\

   \noindent\underline{\textbf{split-inert case $\underline{\eta} = (\eta,\eta)$:}} Fix $\tau_1,\tau_2\in \calv_n(E/F)$ and let $\fq=\fq_{\tau_1,\tau_2}=\Res_{E/F}(\fgl_n)$ be the infinitesimal symmetric variety for $\rH_{\tau_1,\tau_2}=\U(V_{\tau_1})\times \U(V_{\tau_2})$. In this case, consider the contraction map 
\begin{align*}
    r^{\tau_1}:\fq&\lra \Herm_{\tau_2}(F),\\
    X&\longmapsto-X^\ast\tau_1^{-1} X\tau_2^{-1};
\end{align*} this is designed to intertwine the $\rH_{\tau_1,\tau_2}$-action on $\fq$ given in Lemma \ref{Lem: unitary Lie} with the conjugation action of $\U(V_{\tau_2})$ on $\Herm_{\tau_2}$.

As in the linear setting, this gives a categorical quotient of the $\U(V_{\tau_1})$-action on $\fq$ and the restriction $$r|_{\fq^{invt}}: \fq^{invt}\to \Herm_{\tau_2}^{\circ}$$ is a $\U(V_{\tau_1})$-torsor, so that the contraction map 
\begin{equation}\label{eqn: contraction torsor}
r_!:\bigoplus_{\tau_1\in \calv_n(E/F)}C^\infty_c(\fq^{invt}_{\tau_1,\tau_2}(F))\lra C^\infty_c(\Herm_{\tau_2}^{\circ}(F))    
\end{equation}
is surjective and  for any $\underline{f}\in\bigoplus_{\tau_1\in \calv_n(E/F)}C^\infty_c(\fq^{invt}(F))$ and $Z\in \fq^{rss}(F)\subset \fq^{invt}(F)$, we have 
\[
\SO^{\rH_{\tau_1,\tau_2}}(f_{\tau_1},Z) = \SO^{\U(V_{\tau_2})}(r_{!}(f),r(Z)),
\]
where $r_!(f)= \sum_{\tau_1}r^{\tau_1}_!(f_{\tau_1})\in C_c^\infty(\Herm_{\tau_2}^{\circ}(F))$. Combining Lemma \ref{Lem: same ois on invt linear} with Theorem \ref{Thm: Xiao's transfer} proves the claim in this direction.

Now fix $\tau_2\in \calv_n(E/F)$ and let ${\phi}\in C_c^\infty(\fX^{invt}(F))$ and $\Phi\in C^\infty_c(F_n)$. We assume that for any $(X,Y)\in\fX^{rss}(F)$ for which there does not exist a matching $Z\in \fq_{\tau_1,\tau_2}^{rss}$ for any  $\tau_1\in \calv_n(E/F)$, we have $\Orb^{\rH',\underline{\eta}}_0(\phi\otimes\Phi,(X,Y))=0$. Let $\varphi=R^{\eta}_!(\phi)$. By properties of the contraction map $r$ and the identity
$$\Orb^{\rH',\underline{\eta}}_0(\phi\otimes\Phi,(X,Y))=\Orb^{\GL_n(F),\eta_0}(\varphi\otimes\Phi, YX),$$
we see that $\Orb^{\GL_n(F),\eta_0}(\varphi\otimes\Phi, z)=0$ for any $z=YX\in \GL_n(F)$ such that there is no matching semi-simple element $y\in \Herm_{\tau_2}^{\circ}(F)$. By Theorem \ref{Thm: Xiao's transfer}, there exists $f\in C^\infty_c(\Herm_{\tau_2}^{\circ}(F))$ such that for any matching $x\leftrightarrow y$
\[
2^{|S_1^{ram}(z)|}\Orb^{\GL_n(F),\eta_0}(\varphi\otimes\Phi, z)=\SO^{\U(V_{\tau_2})}(f,y).
\]
By surjectivity of \eqref{eqn: contraction torsor}, we obtain a collection $\underline{f} = (f_{\tau_1})$ with $f_{\tau_1}\in C^\infty_c(\fq^{invt}_{\tau_1,\tau_2})$ such that for any matching $(X,Y)\in \fX^{rss}(F)$ and $Z\in \fq_{\tau_1,\tau_2}^{rss}$
\begin{align*}
   2^{|S_1^{ram}(z)|} \Orb^{\rH',\underline{\eta}}_0(\phi\otimes\Phi,(X,Y))&=2^{|S_1^{ram}(z)|}\Orb^{\GL_n(F),\eta_0}(\varphi\otimes\Phi, YX)\\& = \SO^{\U(V_{\tau_2})}(f,-Z^\ast \tau_1^{-1} Z\tau_2^{-1})=\SO^{\rH_{\tau_1,\tau_2}}(f_{\tau_1},Z).
\end{align*}

Thus $\underline{f}$ is an $(\eta,\eta)$-transfer for $\phi\otimes \Phi$. \\

   \noindent\underline{\textbf{inert-inert case $\underline{\eta}=(\eta,1)$:}} Recalling the notation from Lemma \ref{Lem: unitary Lie}, we have
   \[
   \fq_{ii}(F) \simeq\Herm_n(F)\times \Herm_n(F);
   \]
   We thus obtain an isomorphism $ \fq_{ii}^{invt}(F) \simeq\Herm_{n}^{\circ}(F)\times \Herm_{n}^{\circ}(F)$. Recall now the decomposition
   \[
   \Herm_{n}^{\circ}(F)=\bigsqcup_{\tau\in\calv_n(E/F)}\Herm_{n}^{\circ}(F)_\tau,
   \]
   where the (open) subset $\Herm_{n}^{\circ}(F)_\tau \simeq \GL_n(E)/\U(V_\tau)$ is the set of $x\in \Herm_{n}^{\circ}(F)$ such that $\GL_n(E)_x\simeq \U(V_\tau)$. 

  Let $f=f_1\otimes f_2\in C^\infty_c(\Herm_{n}^{\circ}(F)\times \Herm_{n}^{\circ}(F))$. We may write
   \[
   f_1 = \sum_{\tau}f_{1,\tau},
   \]
   where $\supp(f_{1,\tau})\subset \Herm_{n}^{\circ}(F)_\tau$, so that
$f = \sum_{\tau}f_{1,\tau}\otimes f_2.$ By linearity, it suffices to assume that $f_1 = f_{1,\tau}$ for some $\tau$. Then there exists $f_\tau\in C^\infty_c(\GL_n(E))$ such that 
   \[
   f_{1,\tau}(y) =r_{!}^\tau(f_\tau)(y)=\int_{U(V_\tau)}f_\tau(g_yu)du,
   \]
   where where $r^\tau(g) = g\tau g^\ast$ and $y = r^\tau(g_y)$.
   
Recall from \eqref{eqn: unwind to Hn} that for any regular semi-simple element $(x_1,x_2)\in \Herm_{n}^{\circ}(F)_\tau\times \Herm_{n}^{\circ}(F)$, we have the identity
   \begin{equation*}
   \SO^{\GL_n(E)}(f_{1,\tau}\otimes f_2, (x_1,x_2)) = \SO^{\U(V_{\tau^{-1}})}(f^\ast_\tau\ast f_2, x\tau),
   \end{equation*}
   where $x_1 = g_1 \tau g_1^\ast$ and $x= g_1^\ast x_2 g_1\in \Herm_{n}^{\circ}(F)$, $f^\ast(g) = f(g^\ast)$, and where
   \[
   (f\ast f_2)(y) = \int_{\GL_n(E)}f(g^{-1})f_2(gyg^\ast)dg.
   \] We note that $x\tau\in \Herm_{\tau^{-1}}^{\circ}(F)$, so that the right-hand side is a stable orbital integral arising in the comparison of Theorem \ref{Thm: Xiao's transfer}.

Thus, given $f=f_{1,\tau}\otimes f_2$, let $\widetilde{\varphi}\in C^\infty_c(\GL_n(F)\times F_n)$ be a smooth transfer of $f_\tau^\ast\ast f_2\in C^\infty_c(\Herm_{n}^{\circ}(F))$ in the sense of Theorem \ref{Thm: Xiao's transfer}. This lifts via $R_!^{1}$ to a function $\widetilde{\phi}\in C_c^\infty(\fX^{invt}(F)\times F_n)$, which we claim give a $(\eta,1)$-transfer for $f$. For any $(x_1,x_2)\in \fq_{ii}^{rss}(F)$ matching $(X,Y)\in \fX^{rss}(F)$, we have
\[
\SO^{\GL_n(E)}(f_{1,\tau}\otimes f_2, (x_1,x_2)) = \SO^{\U(V_{\tau^{-1}})}(f^\ast_\tau\ast f_2, x\tau),
\]
and 
\[
\Orb_{0,0}^{\rH',\ul{\eta}}(\wt{\phi},(X,Y))= \Orb^{\GL_n(F),\eta}(\wt{\varphi}, YX),
\]
we only need to check that $x\in \Herm_{n}^{\circ}(F)$ matches $YX\in \GL_n(F)$ in the sense of Lemma/Definition \ref{Lem: matching def}. This is the case if and only if 
\begin{equation*}
\det(tI_n - x\tau)= \det(tI_n -  g_1^{\ast} x_2(g_1\tau g_1^\ast)(g_1^{\ast})^{-1})  =\det(tI_n-x_2x_1)    
\end{equation*}
 agrees with $\det(tI_n-YX)$, which is true by assumption that $(x_1,x_2)$ matches $(X,Y)$. 

Conversely, suppose that $\phi\in C^\infty_c(\fX^{invt}(F))$ and $\Phi\in C^\infty_c(F_n)$. Let $f\in C^\infty_c(\Herm_{n}^{\circ}(F))$ be a transfer of $R_!^1(\phi)\otimes \Phi$ in the sense of Theorem \ref{Thm: Xiao's transfer}. We can write
\[
f= \sum_\tau f_\tau = \sum_{\tau}r_{!}^\tau(f'_\tau),
\]
for appropriate $f'\in C^\infty_c(\GL_n(E))$, so that $r_{!}^\tau(f')\in C^\infty_c(\Herm_{n}^{\circ}(F)_\tau)$. We claim that $r_{!}^\tau(f'_\tau) = cf'_\tau\ast 1_{K'\ast \tau}$, where $K'\subset \GL_n(E)$ is a small enough open compact subgroup that $f'_\tau$ is constant on $K'$-double cosets and $c\neq 0$ is a constant depending on $K'$. Indeed, both functions vanish off $\Herm_{n}^{\circ}(F)_\tau$ and 
\begin{align*}
    f'_\tau\ast 1_{K'\ast \tau}(g_1\tau g_1^\ast) &= \int_{\GL_n(E)}f_\tau'(g^{-1}) \bfun_{K'\ast \tau}(gg_1\tau g_1^\ast g^\ast)dg\\
                                                    &= \int_{\GL_n(E)}f_\tau'(g_1g^{-1}) \bfun_{K'\ast \tau}(g\tau  g^\ast)dg\\
                                                    &= \int_{K'U(V_\tau)}f_\tau'(g_1g^{-1}) dg\\
                                                    &= \vol_{dx}(K'\ast \tau)\int_{U(V_\tau)}f_\tau'(g_1u) du = c^{-1}r_{!}^\tau(f'_\tau)(g_1\tau g_1^\ast).
\end{align*}
Thus, there are functions $f_{1,\tau}\in C^\infty_c(\GL_n(E))$ and $f_{2,\tau}\in C_c^\infty(\Herm_{n}^{\circ}(F))$ such that $f= \sum_{\tau}f_{1,\tau}^\ast\ast f_{2,\tau},$
so that for any matching $(X,Y)\in \fX^{rss}(F)$ and $(x_1,x_2)\in \fq_{ii}^{rss}(F)$ with $x_1=g_1\tau g_1^\ast\in \Herm_{n}^{\circ}(F)_\tau$
\begin{align*}
    \Orb_{0,0}^{\rH',\ul{\eta}}({\phi},\Phi,(X,Y))&= \Orb^{\GL_n(F),\eta}({\varphi},\Phi, YX)\\
                                                   &=\SO^{\U(V_{\tau^{-1}})}( f_{1,\tau}^\ast\ast f_{2,\tau}, g_1^\ast x_2 g_1)\\
                                                   &=\SO^{\GL_n(E)}(r_{!}^\tau(f_{1,\tau})\otimes f_{2,\tau},(x_1,x_2)).
\end{align*}
Thus $\sum_\tau f_{1,\tau}\otimes f_{2,\tau}$ gives an $(\eta,1)$-transfer for $\phi\otimes\Phi$. 
\end{proof}
\begin{proof}[Proof of Theorem \ref{Thm: weak endoscopic transfer}]
As in the prior section, it suffices by Lemma \ref{Lem: descent on cayley} to prove the corresponding claim between $\fq_{ii}^{invt}(F)$ and $\fq_{si}^{invt}(F)$. \quash{Thus, we show that for any $f\in C_c^\infty(\fq_{ii}^{invt}(F))$, there exists a $\underline{f}=(f_{\tau_1})$ such that $f_{\tau_1}\in C_c^\infty(\fq^{invt}_{\tau_1,\tau_2}(F))$ and
     \[
     \De_{\varepsilon}(x_1,x_2) \Orb^{\rH,\varepsilon}(f,(x_1,x_2)) = \SO^{\rH_{\tau_1,\tau_2}}(f_{\tau_1},X),
     \]
     for all matching regular semi-simple $(x_1,x_2)\leftrightarrow_{\tau_1}X$.  Conversely, for any $\underline{f}$ with support in the invertible locus, there exists an $\varepsilon$-transfer ${f}\in C_c^\infty(\fq_{ii}^{invt}(F))$.

    } By Lemma \ref{Lem: same ois on invt linear} and the preceding proof, it clearly suffices to produce a function $f'\in  C_c^\infty(\fq_{ii}^{invt}(F))$ such that
     \[
     \De_{\varepsilon}(x_1,x_2) \Orb^{\rH,\varepsilon}(f,(x_1,x_2)) = \SO^{\rH_{ii}}(f',(x_1,x_2)),
     \]
     as we may then pull back to functions on $\bigoplus_{\tau_1} C_c^\infty(\fq^{invt}_{\tau_1,\tau_2}(F))$.   But this is immediate on the invertible locus as we may set $f'(x_1,x_2) := \De_{\varepsilon}(x_1,x_2)f(x_1,x_2)$. The claim now follows.
  \quash{  As above, we write $f = \sum_\tau f_{1,\tau}\otimes f_2$ where $\supp(f_{1,\tau})\subset \Herm_{n}^{\circ}(F)_\tau$, so that $f_{1,\tau} = r_{!}^\tau(f_\tau)$ for some $f_\tau\in G_c^\infty(\GL_n(E))$. With notations as in the previous section, we compute
\begin{align*}
\De_{\varepsilon}(x_1,x_2) \Orb^{\rH,\varepsilon}(f,(x_1,x_2))&= \sum_\tau\sum_{\al\in H^1(F, T_{x_1,x_2})}\eta(\det(x_{1,\al}))\Orb^{\GL_n(E)}(f_{\tau}\otimes f_2, (x_{1,\al},x_{2,\al}))\\
    &= \sum_\tau\eta(\det(\tau))\sum_{\al\in \ker^1(T_{x_1,x_2},\U(V_\tau),F)}\Orb^{\GL_n(E)}(f_{\tau}\otimes f_2, (x_{1,\al},x_{2,\al}))\\
    &=\sum_\tau\SO^{\U(V_\tau)}\left( \eta(\det(\tau))\cdot f_{\tau}^\ast\ast f_2, x\right),
\end{align*}
where $x =g^\ast x_{2}g$ for $g\in \GL_n(E)$ such that $x_{1} = g \tau g^\ast$.

Recalling that $\Herm_{n}^{\circ}\simeq \Herm_{\tau}^{\circ}$ via the map $y\mapsto \tau y$, we may write the last line as orbital integrals for the adjoint $\U(V_\tau)$-action on $\Herm_{\tau}^{\circ}(F)$ via this identification.  An application of Jacquet--Langlands transfer (cf. \cite[Theorem 1.5]{Waldstransfert}) implies that for each $\tau$ there exists  $f_0^\tau\in C_c^\infty(\Herm_{n}^{\circ}(F))$ such that
 \[
 \SO^{\U(V_n)}(f_0^\tau,x_0) = \SO^{\U(V_\tau)}(\eta(\det(\tau))\cdot f_{\tau}^\ast\ast f_2,\tau x)
 \] for any regular semi-simple $x\in \Herm_{\tau}^{\circ}(F)$ and $x_0\in \Herm_{n}^{\circ}(F)$ with the same characteristic polynomial and such that $\SO(f_0,y_0)=0$ if there is no such $x\in \Herm_{\tau}^{\circ}(F)$. By the surjectivity of
\[
r_!:\bigoplus_{\tau_1\in \calv_n(E/F)}C^\infty_c(\fq^{invt}_{\tau_1,\tau_2})\lra C^\infty_c(\Herm_{\tau_2}^{\circ}(F)),
\]
we may find a collection $\underline{f}$ such that $r_!(\ul{f}) = \sum_\tau f^\tau_0$, which matches $f= \sum_\tau f_{1,\tau}\otimes f_2$. This completes the proof. }
\end{proof}



\section{Descent to the Lie algebra}\label{Sec: descent} 
In this appendix, we prove Proposition \ref{Prop: descent fundamentals}. In this appendix, $E/F$ is an unramified quadratic extension of $p$-adic fields and $\eta=\eta_{E/F}$ is the associated quadratic character. We assume throughout that if $e$ denotes the ramification degree of $F/\Q_p$, then $p>\max\{e+1,2\}$ (cf. \cite[Lemma 8.1]{Lesliedescent}). In particular, we assume $p$ is odd. We fix a uniformizer $\vp\in \calo_{F}$.
\subsection{Descent for linear orbital integrals}Let $\X=\G'/\rH'$ as before. For each $k\geq 0$, we have $\X_k(\calo_F) = \X(F)\cap \G'_k(\calo_F)$ and $\mathfrak{X}_k(\calo_F)=\mathfrak{X}\cap \vp^k\fg'(\calo_F)$. Set $\Phi_0 =\bfun_{\calo_{F,n}}$. 

Fix $\ul{\eta}= (\eta_0,\eta_1)$ of unramified characters of $F^\times$. On the variety, we consider the orbital integrals $\Orb^{\rH',\underline{\eta}}_{s_0,s}(\bfun_{\X_k(\calo_F)}\otimes\Phi_0,(y,w))$ given by the formula
\[
 \int_{\rH'(F)}\bfun_{\X_k(\calo_F)}(h^{-1}\cdot y)\Phi_0(wh^{(2)})\eta_{0}(h^{(2)}) \eta_2(h) |h^{(2)}|^{s_0} |h|^{s}\,dh,
\]
where $(y,w)\in \X(F)\times F_n$ is strongly regular; on the Lie algebra we consider the orbital integrals $\Orb^{\rH',\underline{\eta}}_{s_0,s}(\bfun_{\fX_k(\calo_F)}\otimes\Phi_0,[(X,Y),w])$ given as
\[
 \int_{\rH'(F)}\bfun_{\fX_k(\calo_F)}(h^{-1}\cdot (X,Y))\Phi_0(wh^{(2)})\eta_{0}(h^{(2)}) \eta_2(h) |h^{(2)}|^{s_0} |h|^{s}\,dh.
\]
Both of these expressions are holomorphic in $(s_0,s)\in \cc^2$, and the Fundamental lemmas are concerned with the values (cf. Lemma/Definition \ref{LemDef: linear OI})
\begin{equation}\label{limit unit}
    \Orb_{s}^{\rH',\ul{\eta}}(\bfun_{\X_k(\calo_F)}\otimes\Phi_0,y):= \omega(y,w)\frac{\Orb^{\rH',\underline{\eta}}_{s_0,s}(\bfun_{\X_k(\calo_F)}\otimes\Phi_0,(y,w))}{L(s_0, \eta_0,T_y)}\bigg|_{s_0=0}
\end{equation}

and (cf. Definition \ref{eqn: OI lin Lie})
\begin{equation}\label{limit unit lie}
\Orb_{s}^{\rH',\ul{\eta}}(\bfun_{\fX_k(\calo_F)}\otimes\Phi_0,(X,Y)):=\widetilde{\omega}((X,Y),w) \frac{\Orb^{\underline{\eta}}_{s_0,s}(\bfun_{\fX_k(\calo_F)}\otimes\Phi_0,[(X,Y),w])}{L(s_0, \eta_0,T_{YX})}\bigg|_{s_0=0}.
\end{equation}

In this section, we show how to relate
\subsubsection{Cayley transform and the $\heartsuit$-locus}\label{Sec: descent linear} Recall the discussion of the Cayley transforms $\fc_{\nu}$ with $\nu=\pm1$ in \S \ref{Section: Cayley}.

\quash{
An inspection of the proof of the Lemma 7.2 of \cite{Lesliedescent} immediately gives the following.
\begin{Lem}\label{Lem: Cayley map linear}
 The Cayley transform
\begin{align*}
\fc_{\pm1}:\End(V)-D_1(F)&\lra \GL(V)\\
			X&\longmapsto \mp(1+X)(1-X)^{-1}
\end{align*}
induces a $\GL(V_1)\times \GL(V_2)$-equivariant isomorphism between $\mathfrak{X}(F)-D_1(F)$ and $\X(F)-D_{\pm1}(F)$. The images of $\mathfrak{X}(F)-D_1(F)$ under $\fc_{\pm}$ form a finite cover by open subsets of $\X(F)-(D_1\cap D_{-1})(F)$. In particular, the images contain the regular semi-simple locus of $\X(F)$.
\end{Lem}

It is well-known that the Cayley transform $\fc_{\pm1}: \mathfrak{X}(F)-D_1(F)\to\X(F)-D_{\pm1}(F)$ is equivariant with respect to the $\rH'$-actions and takes regular elements to regular elements; see \cite[Section 7]{Lesliedescent} for a discussion in the unitary setting which carries over \emph{mutatis mutandis} to our setting.
\begin{Lem}
Suppose that $[(X,Y),w]\in \mathfrak{X}^{sr}_{ext}(F)$. Then $(y_{\pm},w):=[\fc_{\pm1}(X,Y),w]\in \X^{sr}_{ext}(F)$. Moreover,
\[
b
\]
\end{Lem}

}
We define certain open subsets of $\X(\calo_F)$.  We define the \textbf{very regular locus} of $\mathfrak{X}(\calo_F)$ 
\[
\mathfrak{X}^\heartsuit(\calo_F)=\{\de\in \mathfrak{X}^{rss}(\calo_F)-D_{1}(\calo_F): \overline{\de}\in \End(\Lam/\vp\Lam)-D_{1}(k) \},
\]
where $\overline{\de}\in\End(\Lam/\vp\Lam)$ denotes the image of $\de$ under the modular reduction map. Similarly, we define  the $\nu$-\textbf{very regular locus} of $\X(\calo_F)$
\[
\X^{\heartsuit,\nu}(\calo_F)=\{x\in\X^{rss}(\calo_F): \overline{x}\in \X(k)-D_{\nu}(k)\}.
\]
\begin{Lem}\label{Lem: very reg count}
Suppose that $x\in \X(F)$ and suppose $x=\fc_\nu(\de)$ for $\de\in \mathfrak{X}(F)$. Then $x\in \X^{\heartsuit,\nu}(\calo_F)$ if and only if $\de\in\mathfrak{X}^{\heartsuit}(\calo_F)$.
\end{Lem}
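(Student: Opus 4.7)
The plan is to reduce the lemma to two explicit algebraic identities satisfied by the Cayley transform, which convert the ``avoid $\nu$'' condition on $\overline{x}$ into the ``avoid $1$'' condition on $\overline{\delta}$ and make integrality transparent.

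First I would record the two identities. From $x=\fc_\nu(\delta)=-\nu(1+\delta)(1-\delta)^{-1}$ one computes directly
\[
x-\nu \;=\; -\nu\bigl[(1+\delta)+(1-\delta)\bigr](1-\delta)^{-1}\;=\;-2\nu(1-\delta)^{-1},
\]
and, using the inverse Cayley transform $\delta=(x+\nu)(x-\nu)^{-1}$ furnished by Lemma \ref{Lem: Cayley map linear} (valid because $(x+\nu)$ and $(x-\nu)$ are polynomials in $x$, hence commute),
\[
1-\delta \;=\; \bigl[(x-\nu)-(x+\nu)\bigr](x-\nu)^{-1}\;=\; -2\nu (x-\nu)^{-1}.
\]
Since $p$ is odd, the factor $-2\nu\in\calo_F^{\times}$, so modulo $\varpi$ the element $\overline{x-\nu}$ is invertible in $\End(\Lambda/\varpi\Lambda)$ if and only if $\overline{1-\delta}$ is invertible; equivalently, $\overline{x}$ has no eigenvalue equal to $\nu$ iff $\overline{\delta}$ has no eigenvalue equal to $1$.

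Next I would handle integrality. Suppose $\delta\in\mathfrak{X}^{\heartsuit}(\calo_F)$. Then $\delta\in \mathfrak{X}(\calo_F)$ and $\overline{1-\delta}$ is invertible, so $(1-\delta)^{-1}\in \End(\Lambda)$ and consequently $x=-\nu(1+\delta)(1-\delta)^{-1}$ lies in $\End(\Lambda)\cap \X(F)=\X(\calo_F)$. The identity above then shows $\overline{x-\nu}=-2\nu\overline{(1-\delta)}^{-1}$ is invertible, so $\overline{x}\in \X(k)-D_\nu(k)$. Conversely, if $x\in \X^{\heartsuit,\nu}(\calo_F)$, then $\overline{x-\nu}$ is invertible, so $(x-\nu)^{-1}$ is integral, and $\delta=(x+\nu)(x-\nu)^{-1}\in \mathfrak{X}(\calo_F)$; the identity $1-\delta=-2\nu(x-\nu)^{-1}$ gives that $\overline{1-\delta}$ is invertible, i.e.\ $\overline{\delta}\notin D_1(k)$.

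Finally I would transfer the regular semisimple condition. By Lemma \ref{Lem: Cayley map linear}, $\fc_\nu$ is an $\rH'$-equivariant algebraic isomorphism between $\mathfrak{X}(F)-D_1(F)$ and $\X(F)-D_\nu(F)$ with inverse the rational map $x\mapsto(x+\nu)(x-\nu)^{-1}$; being equivariant and having an inverse defined on these loci, it carries the $\rH'$-regular semisimple locus of its source bijectively onto that of its target. Hence $\delta\in\mathfrak{X}^{rss}(F)$ iff $x\in \X^{rss}(F)$. Combining this with the two conclusions of the previous step yields the equivalence $x\in\X^{\heartsuit,\nu}(\calo_F)\iff \delta\in\mathfrak{X}^{\heartsuit}(\calo_F)$.

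There is no real obstacle beyond keeping track of what happens on the residue field: the proof is a direct consequence of the two boxed identities, and the only hypothesis used is that $p$ is odd (so $-2\nu$ remains a unit). All integrality and regular semisimplicity statements then follow formally from the equivariant isomorphism of Lemma \ref{Lem: Cayley map linear}.
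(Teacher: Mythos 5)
Your proof is correct, and it supplies the details that the paper itself elides: the paper's proof is a one-line citation to \cite[Lemma 7.7]{Lesliedescent}, without any argument given in the text, and your algebraic-identity approach is precisely the natural one that the cited lemma uses. Let me note two small points that you glossed over, neither of which is a real gap.

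First, the two identities $x-\nu=-2\nu(1-\de)^{-1}$ and $1-\de=-2\nu(x-\nu)^{-1}$ are correct and do the heavy lifting; they simultaneously encode integrality and residual invertibility. The only place you asserted something without justification is the equality $\End(\Lambda)\cap\X(F)=\X(\calo_F)$; since the paper defines $\X(\calo_F)=\X(F)\cap\G'(\calo_F)$ and $\G'=\GL_{2n}$, one needs $\det(x)\in\calo_F^\times$, not merely $x\in\End(\Lambda)$. This does hold because any $x\in\X(F)$ satisfies $x\theta(x)=I$ with $\theta$ inner, so $\det(x)^2=1$, i.e.\ $\det(x)=\pm1$ is a unit; a sentence to this effect would close the loop. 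Second, your transfer of regular semisimplicity is fine as stated, but it is worth observing that the condition ``$\de\notin D_1(\calo_F)$'' in the paper's definition of $\mathfrak{X}^\heartsuit(\calo_F)$ is automatic once $\overline{\de}\notin D_1(k)$ (a unit determinant reduces to a unit), so you correctly do not need to track it separately. With the determinant remark added, the argument is complete and matches the structure one would expect from \cite[Lemma 7.7]{Lesliedescent}.
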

\begin{proof} The proof is simple, and mirrors the argument in \cite[Lemma 7.7]{Lesliedescent}.
\end{proof}
This shows that
\[
\X^{\heartsuit,\nu}(\calo_F)=\fc_\nu\left(\mathfrak{X}^\heartsuit(\calo_F)\right).
\]
In particular, for any $x\in \X^{\heartsuit,\nu}(\calo_F)$, the reduction $\overline{x}\in\X(k)$ is in the image of the Cayley transform. It is easy to see that  for any $k\geq 1$,  $\fX_k(\calo_F)\subset \mathfrak{X}^\heartsuit(\calo_F)$ and $\X_k(\calo_F) \subset \X^{\heartsuit,-1}(\calo_F)$, with
\[
\X_k(\calo_F)=\fc_{-1}\left(\mathfrak{X}_k(\calo_F)\right).
\]

Define $\X^{\heartsuit, \nu}(F)$ to be the open subset of $\X^{rss}(F)$ consisting of elements in the same stable orbit as an element in $\X^{\heartsuit,\nu}(\calo_F)$; we similarly define $\fX^{\heartsuit}(F)$. This locus may be characterized as those elements of $\X(F)$ with eigenvalues $\lam\in \calo^\times_{F^{alg}}$ all satisfying 
\[
|\lam-\nu|=1.
\]
The next lemma is now straightforward.

\begin{Lem}\label{Lem: descent linear heart}
Suppose that $y=\fc_{\nu}(X,Y)\in\X^{\heartsuit,\nu}(F)$. Then for any $w\in F^n$ such that $(y,w)\in \X^{sr}_{ext}(F)$ and any $s\in \cc$, we have
\[
\Orb_{s}^{\rH',\ul{\eta}}(\bfun_{\X_k(\calo_F)}\otimes\Phi_0,y)=   \Orb_{s}^{\rH',\ul{\eta}}(\bfun_{\fX_k(\calo_F)}\otimes\Phi_0,(X,Y))
\]
for all $k\geq0$.
\end{Lem}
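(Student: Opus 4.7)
The plan is to reduce both sides to a pointwise comparison of integrands over $\rH'(F)$, exploiting the $\rH'$-equivariance of the Cayley transform together with Lemma \ref{Lem: tranfer factor cayley}. By equivariance, $h^{-1}\cdot y = \fc_\nu(h^{-1}\cdot (X,Y))$ for every $h\in\rH'(F)$, so the entire comparison reduces to proving three things: (a) an equivalence between the two indicator conditions defining the integrands, (b) the vanishing of the correction factor from Lemma \ref{Lem: tranfer factor cayley}, and (c) the equality of the relevant $L$-factors.

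First, I would prove the equivalence $h^{-1}\cdot y \in \X_k(\calo_F)\iff h^{-1}\cdot (X,Y)\in\fX_k(\calo_F)$ by splitting into cases. When $k\ge 1$ and $\nu=-1$, a direct calculation shows that $\fc_{-1}:\de\mapsto (1+\de)(1-\de)^{-1}$ restricts to a bijection $\vp^k\fg'(\calo_F)\cap\fX \iso \G'_k(\calo_F)\cap\X$, using that $p\neq 2$ makes both $1\pm\de$ units whenever $\de\in\vp^k\fg'(\calo_F)$, while its inverse $x\mapsto(x-1)(x+1)^{-1}$ preserves depth. When $k\ge 1$ and $\nu=+1$, both orbital integrals vanish, since eigenvalues are invariants of the $\rH'$-action and no element in the orbit of $y\in\X^{\heartsuit,1}(F)$ can lie in $\X_k(\calo_F)\subset \X^{\heartsuit,-1}(\calo_F)$. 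When $k=0$, Lemma \ref{Lem: very reg count} yields the bijection $\fc_\nu:\fX^\heartsuit(\calo_F)\iso\X^{\heartsuit,\nu}(\calo_F)$, and since the $\heartsuit$-locus is orbit-invariant one has $\fX(\calo_F)\cap(\rH'\text{-orbit of }(X,Y))=\fX^\heartsuit(\calo_F)\cap(\text{orbit})$, and similarly on the variety side.

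Next, I would verify that the transfer factors agree. By Lemma \ref{Lem: tranfer factor cayley},
\[
\widetilde{\omega}((X,Y),w)=c_\nu^{\underline{\eta}}\cdot\eta_2(I-XY)\cdot\omega(y,w),
\]
where $c_\nu^{\underline{\eta}}=\eta_0(2^n\nu^{n(n-1)/2})\eta_2((\nu 2)^n)$. Since $E/F$ is unramified and $p$ is odd, both $2$ and $-1$ lie in $\calo_F^\times$ and are norms from $\calo_E^\times$, so the characters $\eta_0$ and $\eta_2$ (each factoring through $\eta$) evaluate to $1$ on them; hence $c_\nu^{\underline{\eta}}=1$. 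On the $\heartsuit$-locus, $\det(I-XY)$ equals the characteristic polynomial of the block matrix $(X,Y)\in\End(W)$ evaluated at $1$, which is a unit in $\calo_F$ precisely because $\overline{(X,Y)}\notin D_1(k)$; therefore $\eta_2(I-XY)=1$. Finally, the centralizer $T_y\subset\rH'$ of $y$ is identified via $\rH'$-equivariance of $\fc_\nu$ with the centralizer of $(X,Y)$, which by Lemma \ref{Lem: inft linear invt theory} is isomorphic under projection to the second factor with $T_{YX}\subset\GL_n$, so $L(s_0,T_y,\eta_0)=L(s_0,T_{YX},\eta_0)$.

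Combining the three ingredients, the integrands in \eqref{limit unit} and \eqref{limit unit lie} coincide pointwise in $h\in\rH'(F)$ before taking the limit $s_0\to 0$, so the normalized orbital integrals are equal for all $s\in\cc$. The most delicate step is the matching of integer lattices under $\fc_\nu$ in the case $k=0$, where the $\heartsuit$-hypothesis is essential; the computation $c_\nu^{\underline{\eta}}=1$ crucially uses the unramifiedness of $\eta$ together with $p$ odd, explaining the standing hypotheses on the residue characteristic.
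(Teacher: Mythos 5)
Your overall plan—reduce to a pointwise comparison of integrands using the $\rH'$-equivariance of $\fc_\nu$, then check the transfer-factor and $L$-factor normalizations separately—matches the paper's approach, which is stated much more tersely. Your computations that $c_\nu^{\ul{\eta}}=1$ (from unramifiedness of $\eta$ and $p$ odd), that $\eta_2(\det(I-XY))=1$ on the $\heartsuit$-locus, and that the $L$-factors agree via the identification $T_y\iso T_{YX}$ are all correct, as is the indicator-matching argument for $k=0$ and for $(k\ge 1,\nu=-1)$.

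However, the case $\nu=+1$, $k\ge 1$ contains a genuine gap. You assert that \emph{both} orbital integrals vanish, but your justification—that no element in the orbit of $y\in\X^{\heartsuit,1}(F)$ can lie in $\X_k(\calo_F)\subset\X^{\heartsuit,-1}(\calo_F)$—only shows the left-hand side vanishes and says nothing about the right-hand side, which in fact need not vanish. Concretely, take $n=1$, $k=1$, $(X,Y)=(\vp,\vp)\in\fX_1(\calo_F)$; then $y=\fc_{+1}(X,Y)$ has both eigenvalues $\equiv -1\pmod{\vp}$, so $|\lambda-1|=1$ and $y\in\X^{\heartsuit,1}(F)$, the left side is $0$, but $(X,Y)$ lies in $\fX_1(\calo_F)$ and a direct computation gives a nonzero right side. (Your claimed pointwise equivalence of indicator conditions already fails at $h=e$.) The underlying reason is that $\fc_{+1}=-\fc_{-1}$ carries $\fX_k(\calo_F)$ to $-\X_k(\calo_F)$, not to $\X_k(\calo_F)$; only $\fc_{-1}$ is compatible with the congruence filtrations when $k\ge 1$. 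The paper's own proof ("follows easily from Lemma~\ref{Lem: very reg count}") glosses over this as well, but the lemma is never invoked in the problematic regime: for $k\ge 1$ the discussion immediately after uses only $\nu=-1$, since $\X_k(\calo_F)\subset\X^{\heartsuit,-1}(\calo_F)$ and one passes to the Lie algebra via $\fc_{-1}$ alone. You should either restrict the conclusion to $\nu=-1$ when $k\ge 1$, or handle $k\ge 1$ directly via $\X_k(\calo_F)=\fc_{-1}(\fX_k(\calo_F))$ and reserve the two-signed Cayley argument for $k=0$; as written, the claimed vanishing on the right-hand side is false.
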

\begin{proof} We verify that all the components of the formulas \eqref{limit unit} and \eqref{limit unit lie} agree for all $(s_0,s)$, which clearly suffices. 
The only thing to check is the relationship between the transfer factors, as the identity
\[
\Orb^{\rH',\underline{\eta}}_{s_0,s}(\bfun_{\X_k(\calo_F)}\otimes\Phi_0,(y,w))= \Orb^{\rH',\underline{\eta}}_{s_0,s}(\bfun_{\fX_k(\calo_F)}\otimes\Phi_0,[(X,Y),w])
\]
for $y=\fc_{\nu}(X,Y)\in\X^{\heartsuit,\nu}(F)$ follows easily from Lemma \ref{Lem: very reg count}. But if $y=\fc_\nu(X,Y)$, then Lemma \ref{Lem: tranfer factor cayley} shows that
\[
\widetilde{\omega}((X,Y),w)=c_{\nu}^{\underline{\eta}}\cdot\eta_2(I-XY) {\omega}(y_{\nu},w).
\]
where $c_{\nu}^{\underline{\eta}} =\eta_0(2^n)$ in this unramified setting; thus $c_{\nu}^{\underline{\eta}}=1$ if the residual characteristic of $F$ is odd. Since $(X,Y)\in \fX^\heartsuit(F)$, we have $\eta_2(I-XY)=1$ as $\det(I-XY)$ is a unit
so that the transfer factors agree. The lemma follows. 
\end{proof}
As noted above, $\fX_k(\calo_F)\subset \mathfrak{X}^\heartsuit(\calo_F)$ and $\X_k(\calo_F) \subset \X^{\heartsuit,-1}(\calo_F)$, so that this lemma suffices to descend the orbital integrals to the infinitesimal setting for all $k\geq1$. It remains to consider orbits contained in the compliment of $\X^{\heartsuit,1}(\calo_F)\cup \X^{\heartsuit,-1}(\calo_F)$ in the case $k=0$.
\quash{\[
\fc_{\nu}(X,Y) =  -\nu\left(\begin{array}{cc}(I+YX)(I-YX)^{-1}&-2Y(I-XY)^{-1}\\-2X(I-YX)^{-1}&(I+XY)(I-XY)^{-1}\end{array}\right). 
\]
so that 
\[
{\omega}(y,w) =\eta\left(\det(w|w\fc_{\nu}(YX)|\ldots|w\fc_{\nu}(YX)^{n-1})\right),
\]
where we abuse notation and use $\fc_{\nu}$ for the corresponding Cayley transform on $\fgl_n(F)$. An elementary calculation (cf. \cite[proof of Lemma 3.5]{ZhangFourier}) shows that
\[
\det(w|w\fc_{\nu}(YX)|\ldots|w\fc_{\nu}(YX)^{n-1}) = 2^n\nu^{n(n-1)/2}\det(w|w(YX)|\ldots|w(YX)^{n-1}),
\]}
\subsubsection{Remaining orbits}\label{Section: descent on linear side}
Since we only consider $k=0$, we drop the subscript. To complete the reduction to the Lie algebra, we need to adapt several of the results of \cite[Section 8]{Lesliedescent} to our setting. We shall adopt the notation from that source with minimal further comment, referring to the corresponding results when the proofs are essentially the same. 

Suppose that $(y,w)\in\X(F)\times F_n$ is strongly regular, and that $y$ is \emph{strongly compact} in the sense of \cite[Sec. 4]{Lesliedescent}. In fact, we can go ahead and assume that $(y,w)\in \X(\calo_F)\times \calo_{F,n}$ since only orbits containing such elements contribute to the fundamental lemma. Under this assumption, it is shown in \emph{ibid.} that there is a canonical decomposition $y= y_{as}y_{tn}$ with $y_{as},y_{tn}\in \X(\calo_F)$ generalizing the classical topological Jordan decomposition.  

Denote the natural representation of $\G'=\GL_{2n}$ by $V_{2n}$. Recall that there is an element $\epsilon\in \G'(F)$ such that $\ep^2=1$, and the eigenspace decomposition of $\epsilon$ on $V_{2n}$ is 
\[
V_{2n} = W_1\oplus W_{-1},
\]
with $\dim(V_\nu) =n$ for $\nu=\pm1$. The involution $\theta = \Ad(\ep)$ on $\G'$ satisfies that $\rH' ={\G'}^\theta$. 


We now assume that $y\notin \X^{\heartsuit,\nu}(F)$ for either $\nu=\pm1$. Let $y = y_{as}y_{tn}$ be the topological Jordan decomposition of $y\in \X(\calo_F)$.  There is a decomposition 
\[
V_{2n} = V_0\oplus V_1\oplus V_{-1}
\]
with $V_1$ (resp. $V_{-1}$) is the $1$-eigenspace (resp. $-1$-eigenspace) of $y_{as}$ and $V_0$ is the orthogonal compliment of $V_1\oplus V_{-1}$ in $V_{2n}$. By our assumption $\dim(V_{\pm1})>0$. We now decompose the absolutely semi-simple part $y_{as}=\ga\cdot z_{as}$ where  
\[
\ga = (I_{V_0}, I_{V_1}, -I_{V_{-1}})\in \G'_{y_{as}}(\calo_F),
\]
 and 
 \[
 z_{as}=(y_{as}|_{V_0}, I_{V_1}, I_{V_{-1}})\in \G'_{y_{as}}(\calo_F).
 \]
 In particular,
 \[
 \G'_\ga=\GL(V_0\oplus V_1)\times \GL(V_{-1})\subset \G'
 \]
 and $\G'_{y_{as}}\subset \G'_\ga$ is a Levi subgroup of $\G'_\ga$.  
  It is clear that both $\ga, z_{as}\in \X(\calo_F)$. If we set $z=\ga^{-1}y,$ then $z=z_{as}y_{tu}\in \G'_{\ga}(\calo_F)$ is the topological Jordan decomposition of $z$. 
 \begin{Lem}\cite[Lemma 8.2]{Lesliedescent}\label{Lem: new decomp linear}
  With the notation as above, we have that $z\in \X_{\ga}^{rss}(F)$ where $\X_{\ga}:=\G'_{\ga}/\rH'_\ga$ denotes the descendant of $\X$ at $\ga$.
 \end{Lem}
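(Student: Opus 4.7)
The plan is to follow the template of \cite[Lemma 8.2]{Lesliedescent} and verify the two claims separately: first that $z$ lives in the symmetric subvariety $\X_\ga \subset \G'_\ga$, and then that it is regular semisimple for the action of $\rH'_\ga$. Throughout, the key input is that the involution $\theta = \mathrm{Ad}(\ep)$ interacts compatibly with the eigenspace decomposition of $y_{as}$, and that the topological Jordan decomposition respects the $\theta$-symmetric structure.

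First I would verify that $\ga \in \X(\calo_F)$. Since $y_{as} \in \X(\calo_F)$, we have $\theta(y_{as}) = y_{as}^{-1}$, which translates to $\ep y_{as} \ep = y_{as}^{-1}$. Hence $\ep$ interchanges the $\lam$- and $\lam^{-1}$-eigenspaces of $y_{as}$. In particular, the eigenspaces $V_1, V_{-1}$ are each $\ep$-stable, and so is their orthogonal complement $V_0$. Since $\ga$ acts by a scalar ($\pm 1$) on each of $V_0, V_1, V_{-1}$, it commutes with $\ep$, so $\theta(\ga) = \ga = \ga^{-1}$, and this means $\ga \in \X$. The same reasoning gives $z_{as} \in \X(\calo_F)$. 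Because $\ga$ lies in the centralizer $\G'_{y_{as}}$ (indeed in its center), $\ga$ commutes with both $y_{as}$ and $y_{tu}$, and hence with $y$; a direct computation then shows $\theta(z) = \theta(\ga)^{-1} \theta(y) = \ga^{-1} y^{-1} = y^{-1} \ga = z^{-1}$, i.e.\ $z \in \X(F)$.

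Next I would show $z \in \X_\ga(F)$. Since $z = \ga^{-1} y$ commutes with $\ga$, it lies in $\G'_\ga(F) = \GL(V_0 \oplus V_1) \times \GL(V_{-1})$. The involution $\theta|_{\G'_\ga}$ makes $\X_\ga = \G'_\ga/\rH'_\ga$ into a symmetric variety, and the identity $\theta(z) = z^{-1}$ verified in the previous step, restricted to $\G'_\ga$, places $z$ inside $\X_\ga$.

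The remaining point is regular semisimpleness of $z$ for the $\rH'_\ga$-action. Here I would invoke the hypothesis that $y$ is regular semisimple in $\X(F)$: its stabilizer $T := \G'_y$ is a maximal torus of $\G'$, and $T \cap \rH' = \rH'_y$ is the $\rH'$-stabilizer, which is abelian of the expected dimension. Because $\ga \in T$ (it is built from the spectral decomposition of $y_{as}$), we have $T \subset \G'_\ga$, and hence $\G'_z = T$ as well. Therefore $\rH'_{\ga,z} = T \cap \rH'_\ga = T \cap \rH' = \rH'_y$, which has the expected dimension for a regular semisimple element of $\X_\ga$. The main subtlety — and the analogue of the only real content in \cite[Lemma 8.2]{Lesliedescent} — is checking that this infinitesimal condition really implies $z \in \X_\ga^{rss}(F)$ rather than merely in some larger semisimple locus; one argues this via the characterization of regular semisimpleness in terms of closed orbits and the fact that the $\rH'_\ga$-orbit of $z$ in $\X_\ga$ is closed because the $\rH'$-orbit of $y$ in $\X$ is closed. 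No further obstacle is expected, as all the descent compatibility is built into the setup.
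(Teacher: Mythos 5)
The paper does not give an in-text proof of this lemma; it simply cites \cite[Lemma 8.2]{Lesliedescent}. So there is no argument in the paper to compare against, and the question is simply whether your argument is correct.

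The first two parts are sound: the uniqueness of the topological Jordan decomposition forces $\theta(y_{as}) = y_{as}^{-1}$, from which $\ep$-stability of $V_0, V_1, V_{-1}$ and hence $\theta(\ga) = \ga = \ga^{-1}$ follows; and since $\ga$ commutes with $y$ (because $\G'_y \subset \G'_{y_{as}}$ preserves the eigenspaces of $y_{as}$, so is centralized by $\ga$), the identity $\theta(z) = z^{-1}$ and $z \in \G'_\ga(F)$ both hold, placing $z$ in $\X_\ga(F)$.

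The stabilizer step, however, contains an error. You assert that because $\ga \in T := \G'_y$ and $T \subset \G'_\ga$, "hence $\G'_z = T$ as well." This implication does not follow: $\G'_z$ may contain elements commuting with $z$ but not with $\ga$ (and therefore not with $y$). For instance, $\G'_{z_{as}}$ contains $\GL(V_1 \oplus V_{-1})$, which does not commute with $\ga$. What is true — and what is actually needed — is the weaker equality $\G'_z \cap \G'_\ga = \G'_y \cap \G'_\ga = \G'_y$, which follows directly from $z = \ga^{-1}y$ and $\G'_y \subset \G'_{y_{as}} \subset \G'_\ga$. This already yields $\rH'_{\ga,z} = \rH'_\ga \cap \G'_z = \rH' \cap \G'_\ga \cap \G'_z = \rH' \cap \G'_y = \rH'_y$, which is the correct conclusion; but the intermediate claim $\G'_z = T$ (and the unproven assertion that $T$ is a maximal torus) should be dropped.

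The larger gap is the final step. You reduce regular semisimpleness of $z$ to two claims: that $\rH'_{\ga,z}$ has the correct (minimal) dimension for $\X_\ga$, and that the $\rH'_\ga$-orbit of $z$ is closed in $\X_\ga$. The first needs the fact that the descendant $\X_\ga = \X_1 \times \X_{-1}$ decomposes into balanced pieces of the form $\GL_{2n_\nu}/(\GL_{n_\nu}\times\GL_{n_\nu})$ so that the rank is preserved — this is exactly the content of Lemma~\ref{Lem: into the reg locus linear}, which you have not established. The second claim — that closedness of the $\rH'$-orbit of $y$ in $\X$ implies closedness of the $\rH'_\ga$-orbit of $z$ in $\X_\ga$ — is asserted without argument, and it is precisely the nontrivial part of semisimple (étale slice / Luna-type) descent that the cited lemma supplies. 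You flag this as "the only real content" of the reference, but flagging it is not the same as proving it. As written, the proof does not establish regular semisimpleness of $z$; it reduces it to a descent statement that would itself need proof or a precise citation.
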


Decomposing $V_{2n}$ with respect to the action of $\ga$, we make an abuse of notation and write
 \[
V_{2n}=V_1\oplus V_{-1}.
 \]
 effectively ignoring the previous decomposition $V_{2n}=V_0\oplus V_1\oplus V_{-1}$. The descendant $(\G'_\ga,\rH'_\ga)$ decomposes as a product
 \[
 (\G'_\ga,\rH'_\ga) = (\G'_{\ga,1},\rH'_{\ga,1})\times  (\G'_{\ga,-1},\rH'_{\ga,-1}) 
 \]
 where for both $\nu=\pm1$,
 \[
(\G'_{\ga,\nu},\rH'_{\ga,\nu}) =(\GL(V_{\nu}), \GL(V_{\nu,1})\times \GL(V_{\nu,-1})).
 \]
Here $V_{\nu,\nu'}=\{w\in V_{2n}: \ga w=\nu w,\:\: \ep w=\nu' w\}$. We have the associated symmetric spaces
 \[
 \X_{1}=\GL(V_{1})/ \GL(V_{1,1})\times \GL(V_{1,-1})
 \]
 and
 \[
 \X_{-1}=\GL(V_{-1})/ \GL(V_{-1,1})\times \GL(V_{-1,-1}).
 \]
If we set $\mathcal{L}=\mathcal{L}_1\oplus \mathcal{L}_{-1}$ for the standard lattice in $\fgl(V_{2n})$ inducing the hyperspecial subgroups $\rH'(\calo_F)\subset \G'(\calo_F)$, we have the decomposition
 \[
 \mathcal{L}_1 = (\mathcal{L}_1\cap V_{1,1})\oplus (\mathcal{L}_1\cap V_{-1,1})
 \] 
 and
 \[
\mathcal{L}_{-1}= (\mathcal{L}_{-1}\cap V_{1,-1})\oplus (\mathcal{L}_{-1}\cap V_{-1,-1}).
\]These lattices give rise to a hyperspecial subgroup $\rH'_\ga(\calo_F)$. 
\begin{Lem}\cite[Lemma 8.3]{Lesliedescent}\label{Lem: into the reg locus linear}
Writing $y=\ga z= (y_1,-y_{-1})\in \X_1(\calo_F) \times \X_{-1}(\calo_F)$, we have
\[
-y_{-1}\in \X_{-1}^{\heartsuit,1}(\calo_F)\:\text{ and }\:y_1\in \X_{1}^{\heartsuit,-1}(\calo_F).
\]
Additionally, the product decomposition of $(\G'_\ga,\rH'_\ga)$ induces a decomposition of the centralizer
\begin{equation}\label{eqn: stabilizer decomposition linear}
\rH'_y = \rH'_{y,1}\times \rH'_{y,-1}\subset \GL(V_{1}) \times \GL(V_{-1}).   
\end{equation}
\end{Lem}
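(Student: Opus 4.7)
The plan is to follow the argument of \cite[Lemma 8.3]{Lesliedescent} in the present linear setting. Both assertions are local statements about $y$ decomposed via the spectral structure of its absolutely semi-simple part $y_{as}$, so the proof amounts to a direct unpacking of the decomposition $y=\ga z$ introduced just before the lemma, combined with two standard facts about the topological Jordan decomposition: absolutely semi-simple elements have finite order coprime to $p$ (hence their $\Fbar$-eigenvalues are roots of unity reducing injectively modulo $\vp$, since $p$ is odd), and the topologically unipotent part $y_{tn}$ reduces to the identity modulo $\vp$.

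For the very-regular assertions I would compute the mod-$\vp$ reductions of $y_1$ and $-y_{-1}$ separately. On $V_{-1}^{\mathrm{new}}=V_{-1}^{\mathrm{old}}$ (the $-1$-eigenspace of $y_{as}$) one has $\ga=-I$ and $z_{as}=I$, so $y_{-1}=y_{tn}|_{V_{-1}^{\mathrm{new}}}$; its reduction is $I$, whence $\overline{-y_{-1}}=-I$, which does not admit $+1$ as an eigenvalue since $p$ is odd. Thus $-y_{-1}\in \X_{-1}^{\heartsuit,1}(\calo_F)$. On $V_{1}^{\mathrm{new}}=V_0\oplus V_1^{\mathrm{old}}$ the element $z_{as}$ acts as $y_{as}|_{V_0}$ on $V_0$ and as $I$ on $V_1^{\mathrm{old}}$, while $\ga$ acts trivially; therefore $\overline{y_1}$ has the same eigenvalues as $\overline{z_{as}|_{V_1^{\mathrm{new}}}}$, namely the reduction of the eigenvalues of $y_{as}|_{V_0}$ (which avoid $\pm 1$ by definition of $V_0$, and whose reductions also avoid $\pm 1$ by the root-of-unity remark above) together with $+1$'s coming from $V_1^{\mathrm{old}}$. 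In particular $-1$ does not occur, giving $y_1\in \X_{1}^{\heartsuit,-1}(\calo_F)$. Regularity of $y_{\pm 1}$ for the respective descendant actions follows from Lemma~\ref{Lem: new decomp linear} and the product structure of $(\G'_\ga,\rH'_\ga)$.

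For the centralizer decomposition I would first observe that $\ga$ is a polynomial in the spectral projectors of $y_{as}$, so any element of $\rH'$ centralizing $y$ also centralizes $y_{as}$ (by uniqueness of the topological Jordan decomposition), hence centralizes $\ga$. Thus $\rH'_y\subset \rH'_\ga$, and under the product decomposition $\rH'_\ga=\rH'_{\ga,1}\times \rH'_{\ga,-1}$ induced by $V_{2n}=V_1^{\mathrm{new}}\oplus V_{-1}^{\mathrm{new}}$, the conjugation condition for $y=(y_1,-y_{-1})$ decouples into independent conjugation conditions on each factor. Setting $\rH'_{y,\nu}$ to be the centralizer of $y_\nu$ (respectively $-y_{-1}$ when $\nu=-1$) in $\rH'_{\ga,\nu}$ yields the claimed factorization \eqref{eqn: stabilizer decomposition linear}.

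The main obstacle is purely book-keeping: one must keep straight the two eigenspace decompositions (old versus new), and verify that reduction modulo $\vp$ faithfully records whether $\pm 1$ occurs as an eigenvalue of the absolutely semi-simple part. Both hinge only on $p>2$, which is guaranteed by the running hypothesis of this appendix.
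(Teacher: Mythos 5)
Your reconstruction is essentially correct and follows the same route as the cited Lemma 8.3 of \cite{Lesliedescent}, which the paper itself does not reprove. The only imprecision worth flagging: the reduction $\overline{y_{tn}}$ of a topologically unipotent element is a \emph{unipotent} element of $\GL(\bar k)$, not necessarily the identity matrix. This does not affect your conclusion, since what you actually use is that every eigenvalue of $\overline{y_{tn}}$ equals $1$ (so every eigenvalue of $\overline{-y_{-1}}$ equals $-1\neq 1$ as $p$ is odd), but the phrase ``its reduction is $I$'' should be replaced by ``its reduction is unipotent.'' The rest — the injectivity of Teichm\"uller reduction on roots of unity of order prime to $p$, the observation that $\ga$ is a polynomial in the spectral projectors of $y_{as}$, and the appeal to uniqueness of the topological Jordan decomposition to get $\rH'_y\subset\rH'_\ga$ before decoupling the two conjugation conditions — is exactly the intended argument.
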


Suppose now that $(y,w)\in \X(\calo_F)\times \calo_{F,n}$ is strongly regular and set $y=\ga z$ to be the previously derived decomposition. Decomposing $V_{2n} = V_1\oplus V_{-1}$, we write
\[
w = w_1+w_{-1}\in V_1(\calo_F)\oplus V_{-1}(\calo_F).
\]
\begin{Lem}\label{Lem: descent on weight factor}
For $\nu\in \{\pm1\}$, we have $(\nu y_\nu, w_{\nu})\in \X_{\nu}(\calo_F)\times \calo_{F,n_\nu}$ is strongly regular, where $n_\nu = \dim(V_\nu)$.
\end{Lem}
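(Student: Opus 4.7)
The plan is to reduce strong regularity of $(\nu y_\nu, w_\nu)$ to the known strong regularity of $(y,w)$ by using the $\ga$-eigenspace decomposition and the coprimality of characteristic polynomials supplied by regular semisimplicity.

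First I would verify the integrality of the splitting. Since $\ga\in\G'(\calo_F)$ has order $2$ and the residue characteristic is odd, the projectors $\tfrac12(I\pm \ga)$ act integrally, so the standard lattice $\mathcal L\subset V_{2n}$ decomposes as $\mathcal L=\mathcal L_1\oplus \mathcal L_{-1}$ with $\mathcal L_\nu=\mathcal L\cap V_\nu$, and likewise for the $\ep$-refinement $W_{-1}(\calo_F)=(W_{-1}\cap V_{1})(\calo_F)\oplus (W_{-1}\cap V_{-1})(\calo_F)$. This already gives $\nu y_\nu\in\X_\nu(\calo_F)$ (from $y = y|_{V_1}\oplus y|_{V_{-1}}$ and the sign conventions producing $\nu y_\nu$ as in Lemma~\ref{Lem: into the reg locus linear}) and, dually, $w=w_1+w_{-1}$ with each $w_\nu$ in the appropriate integral row lattice of size $n_\nu$.

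The second step is to identify the contraction map $R$ on the descendant. Since $\ga$ commutes with $y$ and preserves $W_{-1}$, the operator $D:=R(y)=y|_{W_{-1}}$ preserves the splitting $W_{-1}=(W_{-1}\cap V_1)\oplus (W_{-1}\cap V_{-1})$, and so decomposes as $D=D_1\oplus D_{-1}$ where $D_\nu$ is the restriction to the $\nu$-summand. Inspecting the definition of $R$ on $\X_\nu$ and using that $y|_{V_{-1}}=-y_{-1}\cdot(-I)=-(-y_{-1})$ (so the internal sign flip in forming $\nu y_\nu$ exactly cancels the $-1$-eigenvalue of $\ga$ on $V_{-1}$), one reads off $D_\nu=R(\nu y_\nu)$.

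The main step is then a cyclic-vector argument. Strong regularity of $(y,w)$ says that $w$ is a cyclic vector for the $F[D]$-module $F_n$; in particular $D$ is regular, so its $n$ eigenvalues are distinct. Consequently the characteristic polynomials of $D_1$ and $D_{-1}$ are coprime, and the Chinese remainder theorem gives an $F[D]$-module splitting $F_n=F_{n_1}\oplus F_{n_{-1}}$ corresponding to $F[D]\cong F[D_1]\times F[D_{-1}]$. Under this splitting $w=w_1+w_{-1}$, and cyclicity of $w$ forces cyclicity of each $w_\nu$ in $F_{n_\nu}$ — i.e.\ $\{w_\nu,w_\nu D_\nu,\dots,w_\nu D_\nu^{n_\nu-1}\}$ spans $F_{n_\nu}$, which is precisely the strong regularity condition for $(\nu y_\nu,w_\nu)$ on $\X_\nu$. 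The regular semisimplicity of $\nu y_\nu$ in $\X_\nu$ is already given by Lemma~\ref{Lem: into the reg locus linear} (since $\nu y_\nu\in \X_\nu^{\heartsuit,-\nu}(\calo_F)\subset \X_\nu^{rss}(\calo_F)$), so this finishes the argument.

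The only real obstacle is bookkeeping: one must carefully match the two sign conventions (the $\nu$-twist entering $\nu y_\nu$ versus the $\ga$-eigenvalue on $V_{-1}$) and verify that under this matching $R(\nu y_\nu)=D_\nu$; once this identification is in place, the CRT/cyclic-vector argument is routine.
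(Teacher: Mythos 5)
The proposal is correct and takes essentially the same route as the paper. The paper's own proof is a direct span computation: it observes that $wy^i$ decomposes blockwise as $[w_1\,y_1^i,\; w_{-1}(-y_{-1})^i]$ along the $\ga$-eigenspace decomposition, so the total span being all of $V_{2n}$ forces each block span $V_\nu^w$ to fill out $V_\nu$. You instead package this as coprimality of the characteristic polynomials of $D_1$ and $D_{-1}$, which gives $F_n\cong F_{n_1}\oplus F_{n_{-1}}$ as $F[D]$-modules, and then observe that a cyclic generator projects to cyclic generators. These are the same argument under the hood — both exploit that the $\ga$-decomposition is compatible with the contraction $R$ — and your explicit identification $R(\nu y_\nu)=D_\nu$ is in fact a helpful clarification, since the paper's proof at this point slightly abuses notation between $y$ and $R(y)$. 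One tiny slip: you write that strong regularity makes $D$ ``regular, so its $n$ eigenvalues are distinct'' — regularity (cyclicity) alone does not give distinct eigenvalues; what you want is that $D$ is regular \emph{semisimple}, which follows from $y\in\X^{\rm rss}(F)$, which is indeed part of the definition of strong regularity. The conclusion stands, just tighten the wording.
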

\begin{proof}
Let us set
\[
V_\nu^w:=\sspan\{w_\nu(\nu y_\nu)^i: i\in\{0,\ldots, n_{\nu}-1\}\}\subset V_\nu.
\]
Note that this is the same space as 
$
\sspan\{w_\nu(\nu y_\nu)^i: i\in \zz_{\geq0} \}.
$
Since $(y,w)$ is strongly regular, we have
\[
\sspan\{wy^i: i\in\{0,\ldots, n-1\}\}=V_{2n}= V_1\oplus V_{-1}\supset V_1^w\oplus V_{-1}^w,
\]
the claim follows by noting that for all $i\in \zz_{\geq0}$
\[
wy^i = [w_1 , w_{-1}]\begin{psmatrix}
y_1^i&\\&(-y_{-1})^i
\end{psmatrix} = [w_1 
y_1^i, w_{-1}(-y_{-1})^i]\in V_1^w\oplus V_{-1}^w.
\]
Thus, $V_{2n} = V_1\oplus V_{-1}\subset V_1^w\oplus V_{-1}^w$ showing that $V_\nu^w=V_\nu$. This is precisely the claim.
\end{proof}

\begin{Lem}
Suppose that $(y,w)\in \X(\calo_F)$ is strongly regular and $h\in \rH'(F)$. If $y=\ga z= (y_1,-y_{-1})$ is the decomposition just described, and $h= (h_1,h_{-1})\in \rH'_\ga(F)\subset \G'_\ga(F)=\GL(V_1)\times \GL(V_{-1})$ then for $s\in \cc$
\begin{equation}
    \int_{\rH_y'(F)}\Phi_0\left(wth^{(2)}\right)|t|^s\eta(t)dt= \prod_{\nu}\int_{\rH_{y,\nu}'(F)}\Phi_{0,\nu}\left(w_\nu t_\nu h_\nu^{(2)} \right)|t_\nu|^s\eta(t_\nu)dt_\nu,
\end{equation}
where $dt=dt_1dt_{-1}$ is the normalized decomposition of the Haar measure on $\rH'_{y}(F) = \rH'_{y,1}(F)\times \rH'_{y,-1}(F)$, and $\Phi_{0,\nu}$ denotes the characteristic function of $\calo_{F,n_\nu}$.
\end{Lem}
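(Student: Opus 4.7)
The plan is to observe that every ingredient in the integral factors cleanly under the product structure $\rH'_y = \rH'_{y,1} \times \rH'_{y,-1}$ provided by Lemma \ref{Lem: into the reg locus linear}, and then apply Fubini's theorem. First, I would refine the $\ga$-eigenspace decomposition $V_{2n} = V_1 \oplus V_{-1}$ by the $\ep$-eigenspace decomposition $V_{2n} = W_1 \oplus W_{-1}$ to obtain $V_{2n} = \bigoplus_{\nu,\nu' \in \{\pm 1\}} V_{\nu,\nu'}$. Since $\rH'_y \subset \rH'_\ga$, the stabilizer acts block-diagonally with respect to this refined decomposition; concretely, an element $t = (t_1, t_{-1}) \in \rH'_{y,1}(F) \times \rH'_{y,-1}(F)$ satisfies $t_\nu \in \rH'_{\ga,\nu} \subset \G'_{\ga,\nu} = \GL(V_\nu)$, and its second coordinate $t^{(2)}$ acts on $W_{-1} = V_{1,-1} \oplus V_{-1,-1}$ as $\mathrm{diag}(t_1^{(2)}, t_{-1}^{(2)})$. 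An identical statement holds for the components of $h = (h_1, h_{-1}) \in \rH'_\ga(F)$.

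Under the identification $F_n \simeq W_{-1}$ on which $t^{(2)}$ and $h^{(2)}$ act by right multiplication, the decomposition $w = w_1 + w_{-1}$ of Lemma \ref{Lem: descent on weight factor} places $w_\nu \in V_{\nu,-1}$. Combining the previous paragraph's observations,
\[
w \cdot t^{(2)} \cdot h^{(2)} \;=\; w_1\, t_1^{(2)}\, h_1^{(2)} \,+\, w_{-1}\, t_{-1}^{(2)}\, h_{-1}^{(2)}
\]
inside $V_{1,-1} \oplus V_{-1,-1}$, and the lattice splits as $\calo_{F,n} = \calo_{F,n_1} \oplus \calo_{F,n_{-1}}$ so that $\Phi_0 = \Phi_{0,1} \otimes \Phi_{0,-1}$. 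Thus the integrand $\Phi_0(wth^{(2)})$ is already a product of the two local contributions $\Phi_{0,\nu}(w_\nu\, t_\nu^{(2)}\, h_\nu^{(2)})$.

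For the remaining ingredients, the product Haar measure on $\rH'_y(F)$ is by convention $dt = dt_1\, dt_{-1}$, while the block-diagonality of $t^{(1)}$ and $t^{(2)}$ over the summands $V_{\nu,\nu'}$ yields $\det(t^{(i)}) = \det(t_1^{(i)})\det(t_{-1}^{(i)})$ for $i \in \{1,2\}$. Hence both $|t|^s$ and $\eta(t) = \eta\bigl(\det(t^{(1)})\det(t^{(2)})^{-1}\bigr)$ factor as $\prod_\nu |t_\nu|^s$ and $\prod_\nu \eta(t_\nu)$, respectively. Fubini then produces the claimed product of integrals. I do not anticipate any serious obstacle; the only care required is to line up the twin decompositions $V_\nu = V_{\nu,1} \oplus V_{\nu,-1}$ and $W_i = V_{1,i} \oplus V_{-1,i}$, and to verify that the normalizations of the Haar measure and of the lattice $\calo_{F,n}$ are compatible with this factorization, both of which are essentially tautological given our setup.
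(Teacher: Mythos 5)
Your proposal is correct and follows essentially the same route as the paper, spelling out the block-diagonal factorization that the paper's proof compresses into ``the decomposition of the integral is immediate from this convergence and the product decompositions above.'' The one ingredient the paper supplies that you omit is the absolute convergence needed to justify the Fubini step: the paper recognizes the integral as a Tate integral on $F[A]^\times$ with $A=R(y)$ (via the isomorphism $p(A)\mapsto wp(A)$ afforded by strong regularity), which converges for $\Re(s)>0$ and admits meromorphic continuation, so the factorization you derive on that half-plane extends to all $s\in\cc$; since the integrand is not compactly supported on $\rH'_y(F)$, you should add a remark to this effect.
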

\begin{proof}
This is an integral of Tate type, in that if we write $A=R(y)$, then
\begin{align*}
     F[A]&\overset{\sim}{\lra} F_n \\
     p(A)&\longmapsto  wp(A),
\end{align*}
and $\rH'_y(F)\iso \T_A\cong F[A]^\times$, where the first isomorphism is via the projection onto the second factor. Thus, for any $f\in C_c^\infty(F_n)$, we may consider $f(w-)$ as a function on $F[A]$ realizing
\[
\int_{\T_A(F)}f(wt)|t|^s\eta(t)dt
\]
as a Tate integral. In particular, it converges for $\Re(s)>0$, admits meromorphic continuation, and is holomorphic at $s=0$ when $\eta$ is non-trivial. Thus, the integral obtained converges absolutely so that the claim makes sense.

The decomposition of the integral is immediate from this convergence and the product decompositions above.
\end{proof}
We now consider descent of the orbital integrals.
\begin{Lem}\label{Lem: linear descent final}
With the notation as in the previous lemma, we have
\[
\Orb^{\rH',\underline{\eta}}_{s_0,s}(\bfun_{\X(\calo_F)}\otimes\Phi_0,(y,w))=\prod_\nu\Orb^{\rH',\underline{\eta}}_{s_0,s}(\bfun_{\X_\nu(\calo_F)}\otimes\Phi_{0,\nu},(\nu y_\nu,w_\nu))
\] 
and 
\[
{\omega}(y,w) ={\omega}(y_1,w_1){\omega}(-y_{-1},w_{-1}).
\]
\end{Lem}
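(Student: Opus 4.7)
The plan is to factor both the orbital integral identity and the transfer factor identity using the block structure of $y$ in the $\gamma$-eigenspace decomposition $V_{2n} = V_1 \oplus V_{-1}$. Two factorizations are involved: first, a descent argument that restricts the effective integration domain from $\rH'(F)$ to $\rH'_\gamma(F)$, and second, the product decomposition $\rH'_\gamma = \rH'_{\gamma,1} \times \rH'_{\gamma,-1}$ from Lemma \ref{Lem: into the reg locus linear}.

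For the orbital integral identity, I would first carry out a Kottwitz-style descent: using that $y = \gamma z$ is the refinement of the topological Jordan decomposition with $\gamma \in \G'(\calo_F)$ absolutely semi-simple and $z \in \X_\gamma(\calo_F)$, one shows that the locus $\{h \in \rH'(F) : h^{-1}\cdot y \in \X(\calo_F)\}$ is contained in $\rH'_y(F)\cdot \rH'_\gamma(F) \cdot \rH'(\calo_F)$. Indeed, any such $h$ must send the absolutely semi-simple part $y_{as}$ into $\X(\calo_F)$, and since $y_{as}$ and $h^{-1}\cdot y_{as}$ share the same reduction modulo $\vp$ (up to $\rH'(\calo_F)$-conjugation), $h$ must centralize $\gamma$ modulo integral elements. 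This reduces the integral to one over $\rH'_y(F)\backslash \rH'_\gamma(F)$, then (unfolding) to one over $\rH'_\gamma(F)$.

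Next, I would apply Fubini to the resulting integral over $\rH'_\gamma(F) = \rH'_{\gamma,1}(F) \times \rH'_{\gamma,-1}(F)$. The characters $\eta_0, \eta_2$ and the measure factors $|h^{(2)}|^{s_0}|h|^s$ are multiplicative across the product by construction; the weight $\Phi_0(wh^{(2)})$ factors as $\Phi_{0,1}(w_1 h_1^{(2)})\Phi_{0,-1}(w_{-1}h_{-1}^{(2)})$ by writing $w = w_1 + w_{-1}$ and using the previous lemma (which in particular is an $n_\nu$-torsor statement ensuring strong regularity at each factor, already checked in Lemma \ref{Lem: descent on weight factor}); and the characteristic function $\bfun_{\X(\calo_F)}$, restricted to $\rH'_\gamma(F)$-translates of $y$, factors as $\bfun_{\X_1(\calo_F)} \otimes \bfun_{\X_{-1}(\calo_F)}$ because the block-diagonal image of $\X_\gamma$ inside $\X$ has integral points detected blockwise.

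For the transfer factor identity, the computation is direct. Ordering the basis of $W_\nu = V_{1,\nu} \oplus V_{-1,\nu}$ so that the $V_1$-component precedes the $V_{-1}$-component, each of the four blocks $A, B, C, D$ of $y$ viewed in the $\ep$-eigenspace decomposition $V_{2n} = W_1 \oplus W_{-1}$ becomes itself block-diagonal:
\[
B = \begin{pmatrix} B_1 & 0 \\ 0 & -B_{-1}\end{pmatrix}, \quad C = \begin{pmatrix} C_1 & 0 \\ 0 & -C_{-1}\end{pmatrix}, \quad D = \begin{pmatrix} D_1 & 0 \\ 0 & -D_{-1}\end{pmatrix}.
\]
Therefore $\det(BC) = \det(B_1 C_1)\det(B_{-1}C_{-1})$ (the signs square out), yielding multiplicativity of $\eta_2^n(BC)$. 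For $(\eta_1\eta_2)(C) = \eta_2(\det C)$, the sign $(-1)^{n_{-1}}$ on $-C_{-1}$ is a unit hence killed by unramifiedness of $\eta_2$. For the Wronskian-type factor, $w = w_1 + w_{-1}$ and $D$ block-diagonal imply $wD^i = w_1 D_1^i + w_{-1}(-D_{-1})^i$; after column-reordering, the $n\times n$ matrix $[w|wD|\cdots|wD^{n-1}]$ becomes block-diagonal with blocks $[w_\nu|w_\nu(\nu y_\nu)|\cdots|w_\nu(\nu y_\nu)^{n_\nu-1}]$, so $\eta_0$ of its determinant factors accordingly (the sign of the column permutation again being absorbed by unramifiedness).

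The main obstacle will be the descent in Step 1: carefully verifying that the integration domain collapses to $\rH'_\gamma(F)\cdot \rH'(\calo_F)$-cosets. While this is a variant of the classical Harish–Chandra/Kottwitz descent and is in spirit parallel to the arguments used in \cite{Lesliedescent} for the unitary symmetric case, translating it to the present setting with the weight $\Phi_0$ and the two-character orbital integral requires bookkeeping of the topological Jordan decomposition, the centralizer structure \eqref{eqn: stabilizer decomposition linear}, and the compatibility of the canonical $\gamma$ with the $\calo_F$-model. Once this descent is established, Steps 2 and 3 reduce to the explicit computations sketched above.
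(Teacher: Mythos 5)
Your proposal is correct and follows essentially the same route as the paper's proof: both rewrite the orbital integral as a sum over $\rH'(\calo_F)$-cosets intersecting $\X(\calo_F)$, invoke the Kottwitz-style descent argument (the paper cites \cite[Prop.~4.11, Lem.~8.3]{Lesliedescent}; you sketch it directly) to restrict the effective integration domain to $\rH'_\gamma(F)$, then apply Fubini over $\rH'_\gamma = \rH'_{\gamma,1}\times\rH'_{\gamma,-1}$ together with the block decomposition $w=w_1+w_{-1}$ and the product structure of the inner Tate-type integral. Your more explicit tracking of the sign factors $(-1)^{n_{-1}}$ and the column permutation in the Wronskian, absorbed by unramifiedness of $\eta_0,\eta_2$, makes precise what the paper compresses into ``the relevant determinant is block diagonal''; this is a useful expansion but not a different argument.
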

\begin{proof}
Since our Haar measures are chosen so that the indicated hyperspecial maximal compact subgroups have volume 1, we easily compute 
\begin{equation}\label{OI decomp 1}
\Orb^{\rH',\underline{\eta}}_{s_0,s}(\bfun_{\X(\calo_F)}\otimes\Phi_0,(y,w))=\displaystyle\sum_{\substack{y'\in \mathcal{H}'(\calo_F)\backslash \rH'(F)\cdot y\\y'\in \X(\calo_F)}}  \eta_{s_0,s}(h)\int_{\rH_y'(F)}\Phi_0\left(wth^{(2)}\right)\eta_0(t)|t|^{s_0}dt,
\end{equation}
where $h^{-1}\cdot y = y'$ and $\eta_{s_0,s}(h):=\eta_0(h^{(2)})|h|^{s_0}\eta_2(h)|h|^s$ 

Arguing as in the proof of Proposition 4.11 and Lemma 8.3 of \cite{Lesliedescent}, if $y= \ga z$ and $y'=\ga'z'$ we may then assume that $\ga=\ga'$, so that $z,z'\in \G'_\ga(F)$ and that $h\in \rH'_\ga(F)$. Note that this implies $z$ and $z'$ lie in the same $\rH_\ga'(F)$-orbit. Applying Lemmas \ref{Lem: into the reg locus linear} and \ref{Lem: descent on weight factor}, we see that \eqref{OI decomp 1} equals
\begin{align*}
    \displaystyle\sum_{\substack{y'=(y_1',-y_{-1}')\in \mathcal{H}_\ga'(\calo_F)\backslash \rH'_\ga(F)\cdot y\\y'\in \X(\calo_F)}} & \eta_s(h_1)\eta_s(h_{-1})\int_{\rH_{y,1}'(F)}\prod_{\nu=\pm1}\Phi_{0,\nu}\left(w_\nu t_\nu h_\nu \right)\eta_0(t_\nu)|t_\nu|^{s_0}dt_\nu \\
    &=\prod_\nu\left(\displaystyle\sum_{\substack{ y_\nu'\in \mathcal{H}'_{\ga,\nu}(\calo_F)\backslash \rH'_{\ga,\nu}(F)\cdot y_\nu\\y_\nu'\in \X_{\nu}(\calo_F)}}  \eta_s(h_\nu)\Phi_{0,\nu}\left(w_\nu t_\nu h_\nu \right)\eta_0(t_\nu)|t_\nu|^{s_0}dt_\nu\right).
\end{align*}
This gives the first result.

Applying Lemmas \ref{Lem: into the reg locus linear} and \ref{Lem: descent on weight factor}, the pairs $(\nu y_\nu,w_\nu)$ lie in the very regular loci of the respective descent varieties, and the relevant determinant is block diagonal. This gives the identity on weight factors.
\end{proof}
Combining this result with Lemmas \ref{Lem: into the reg locus linear} and \ref{Lem: descent linear heart} now implies that 
\begin{align}\label{eqn: descent product formula}
    \Orb^{\eta}_{(y,w)}\left(\bfun_{\X(\calo_F)},\Phi_{0}, s\right)&=\prod_\nu\Orb^{\rH',\underline{\eta}}_{s}(\bfun_{\X_\nu(\calo_F)}\otimes\Phi_{0,\nu},\nu y_\nu)\nonumber\\&=\prod_\nu\Orb_{s}^{\rH',\ul{\eta}}(\bfun_{\mathfrak{X}_\nu(\calo_F)}\otimes\Phi_{0,\nu},(X_\nu,Y_\nu)).
\end{align}

\subsection{Descent on the unitary side}\label{Section: descent unitary} In this section, we let $\G= \U(V_{2n})$ be the quasi-split unitary group of rank $2n$ associated to the split form $\tau_{2n}$. Then we may chose decompositions
\[
V_{2n} = V_n\oplus V_n = L\oplus L^\ast,
\] with $L$ a maximal isotypic subspace of $V_{2n}$, inducing two subgroups $\rH_{si} = \U(V_n)\times \U(V_n)$ and $\rH_{ii} = \Res_{E/F}\GL(L)$. We let $\ep_{\bullet}\in \GL(V_{2n})$ be the element of order two whose eigenspace decompositions produce the preceding decompositions
\[
V_{2n} = V_{\bullet,1}\oplus V_{\bullet,-1}.
\] If we set $\theta_{\bullet} = \Ad(\ep_{\bullet})$, then $\rH_{\bullet} = \G^{\theta_{\bullet}}$. For $k\geq0$, we also have the congruence pairs $(\G_{\bullet,k},\rH_{\bullet,k})$ discussed in \S \ref{Section: fundamental lemmas on variety}.

Setting $\Q_{{\bullet},k} = \G_k/\rH_{{\bullet}_k}$ for ${\bullet}\in \{si,ii\}$, we are interested in the stable orbital integrals
\begin{equation}\label{eqn: orbital int unitary fund}
 \SO^{\rH_{\bullet}}(\bfun_{\Q_{\bullet,k}(\calo_F)},x) = \int_{(\rH_{{\bullet},x}\bs \rH_{\bullet})(F)}\bfun_{\Q_{\bullet,k}(\calo_F)}(h^{-1}\cdot x)dh,   
\end{equation}
where $x\in\Q^{rss}_{\bullet}(F)$. On the Lie algebra level, we have the decompositions
\[
\fg = \fh_{\bullet}\oplus \fq_{\bullet},
\]
and we are interested in the stable orbital integrals
\begin{equation}\label{eqn: orbital int unitary fund lie}
\SO^{\rH_{\bullet}}(\bfun_{\fq_{\bullet,k}(\calo_F)},X) = \int_{(\rH_{{\bullet},x}\bs \rH_{\bullet})(F)}\bfun_{\fq_{\bullet,k}(\calo_F)}(h^{-1}\cdot X)dh,
\end{equation}
where $X\in \fq_{\bullet}^{rss}$ and where $\fq_{\bullet,k}(\calo_F) = \fq_{\bullet}\cap\vp^k\fg(\calo_F)$.

The goal of this section is to complete the reduction of Theorems \ref{Thm: fundamental lemma si} and \ref{Thm: fundamental lemma ii} to the corresponding linearized versions \ref{Thm: fundamental lemma s-i Lie} and \ref{thm:lie i-i}. This requires a descent argument analogous to the one discussed in \S \ref{Sec: descent linear} on the linear side. The argument is similar to the one in the previous section, and is carried out in detail  in \cite[Section 8]{Lesliedescent} when $\bullet= si$. We include a sketch in the inert-inert case, as the argument from \emph{ibid.} goes through without essential change.

  As in the linear case, we define the \textbf{very regular locus} of $\fq_\bullet$ 
\[
\fq_\bullet^\heartsuit(\calo_F)=\{X\in \fq_\bullet^{rss}-D_{1}(\calo_F): \overline{X}\notin D_{1}(k) \},
\]
where $\overline{X}$ denotes the image of $X$ under the modular reduction map. Similarly, we define the $\nu$-\textbf{very regular locus} of $\Q_{\bullet}(\calo_F)$
\[
\Q_{\bullet}^{\heartsuit,\nu}(\calo_F)=\{x\in\Q_{\bullet}^{rss}(\calo_F): \overline{x}\notin D_{\nu}(k)\}.
\]
 It is easy to see that  for any $k\geq 1$,  $\fq_{\bullet,k}(\calo_F)\subset \mathfrak{q}_\bullet^\heartsuit(\calo_F)$ and $\Q_{\bullet,k}(\calo_F) \subset \Q^{\heartsuit,-1}_\bullet(\calo_F)$, with (cf. \cite[Lemma A.5.13(4)]{KalethaPrasad})
\[
\Q_{\bullet,k}(\calo_F)=\fc_{-1}\left(\mathfrak{q}_{\bullet,k}(\calo_F)\right).
\]

We also define $\Q_{\bullet}^{\heartsuit, \nu}(F)$ to be the open subset of $\Q_{\bullet}^{rss}(F)$ consisting of elements in the same stable orbit as an element in $\Q_{\bullet}^{\heartsuit,\nu}(\calo_F)$; we similarly define $\fq_{\bullet}^{\heartsuit}$. The following lemma is a special case of the proof of \cite[Proposition 7.9]{Lesliedescent} in the split-inert case.
\begin{Lem}\label{Lem: descent unitary heart}
Suppose that $x=\fc_{\nu}(X)\in\Q_{\bullet}^{\heartsuit,\nu}(F)$. Then for any $\ka\in \mathfrak{C}(\rH_{\bullet,x},\rH_{\bullet};F)^D$
\[
 \Orb^{\rH_{\bullet},\ka}(\bfun_{\Q_{\bullet,k}(\calo_F)},x)= \Orb^{\rH_{\bullet},\ka}(\bfun_{\fq_{\bullet,k}(\calo_F)},X)
\]
for $k\geq 0$, where we note that $\rH_{\bullet,x} = \rH_{\bullet,X}$ to identify the cohomology groups.
\end{Lem}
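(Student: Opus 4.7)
The plan is to exploit the $\rH_\bullet$-equivariant isomorphism $\fc_\nu: \fq_\bullet(F) \setminus D_1(F) \to \Q_\bullet(F) \setminus D_\nu(F)$ established in Lemma \ref{Lem: Cayley map linear} to identify the two $\kappa$-orbital integrals term by term, mirroring the argument used in the split-inert case \cite[Proposition 7.9]{Lesliedescent}. Since $\fc_\nu$ is a $\rH_\bullet$-equivariant isomorphism of varieties over $F$ that is defined on open neighborhoods of $X$ and $x = \fc_\nu(X)$, it induces an equality of stabilizers $\rH_{\bullet,X} = \rH_{\bullet,x}$ and a bijection $\calo_{st}(X) \iso \calo_{st}(x)$ restricted to the very regular locus, carrying $X' = g \cdot X$ to $g \cdot x$ for $g \in \rH_\bullet(\bar F)$ and thereby matching the cocycle classes $\sigma \mapsto g^{-1}g^\sigma \in Z^1(F,\rH_{\bullet,X})$. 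Consequently, the characters $\kappa \in \mathfrak{C}(\rH_{\bullet,x},\rH_\bullet;F)^D$ on the two sides are canonically identified, the Haar measures on $\rH_{\bullet,X}(F)\bs \rH_\bullet(F)$ pull through, and the comparison reduces to a pointwise matching of the integrands.

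The heart of the argument is then to show that for each $h \in (\rH_{\bullet,X}\bs \rH_\bullet)(F)$,
\[
h^{-1} \cdot X \in \fq_{\bullet,k}(\calo_F) \iff \fc_\nu(h^{-1} \cdot X) = h^{-1}\cdot x \in \Q_{\bullet,k}(\calo_F).
\]
For $k \geq 1$, this follows from the identity $\Q_{\bullet,k}(\calo_F) = \fc_{-1}\bigl(\fq_{\bullet,k}(\calo_F)\bigr)$ recorded in the paper (a consequence of \cite[Lemma A.5.13(4)]{KalethaPrasad}), together with the easy verification that $\fc_{\nu}$ and $\fc_{-1}$ preserve the same congruence filtration on the $\heartsuit$-locus, the two transforms differing by the involution induced by multiplication by $-\nu$, which preserves the relevant $\calo_F$-lattices.

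For $k=0$, the very regular hypothesis is precisely what makes the two integrality conditions equivalent. If $h^{-1}\cdot X \in \fq_\bullet^\heartsuit(\calo_F)$, then the reduction $\overline{h^{-1} X}$ avoids $D_1(k)$, so $I - h^{-1}X$ lies in $\GL(V_{2n})(\calo_F)$ and hence $\fc_\nu(h^{-1} X) = -\nu(I + h^{-1}X)(I - h^{-1}X)^{-1} \in \Q_\bullet(\calo_F)$, with reduction avoiding $D_\nu(k)$. Conversely, if $h^{-1} x \in \Q_\bullet(\calo_F) \cap \Q_\bullet^{\heartsuit,\nu}(F)$, then $\overline{h^{-1}x} - \nu$ is invertible, so $h^{-1}x - \nu \in \GL(V_{2n})(\calo_F)$ and $\fc_\nu^{-1}(h^{-1}x) = (h^{-1}x - \nu)^{-1}(h^{-1}x + \nu) \in \fq_\bullet(\calo_F)$. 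Thus both conditions select precisely the very regular integral points in the orbit and produce equal indicator values.

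The main technical obstacle is this $k=0$ case: for $k \geq 1$ the Kaletha--Prasad lattice identity handles everything in one stroke, but for $k=0$ one must carefully track that the very regular hypothesis on $x$ propagates along the $\rH_\bullet(F)$-orbit to guarantee integral invertibility of $I \mp h^{-1}X$ (resp.\ $h^{-1}x \mp \nu$) at every relevant point. This is, however, exactly the design purpose of the $\heartsuit,\nu$-locus, and the verification is a direct transcription of the split-inert argument in \cite[Proposition 7.9]{Lesliedescent}, using only that $(\G,\rH_\bullet)$ is a smooth symmetric pair over $\calo_F$ and that the Cayley transforms are defined over $\calo_F$ on the prescribed neighborhoods.
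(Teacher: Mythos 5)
The core strategy—exploit the $\rH_\bullet$-equivariance of $\fc_\nu$, identify stabilizers and stable orbit cocycles, and match the indicator functions pointwise—is the same route as the paper's citation to \cite[Proposition 7.9]{Lesliedescent}, and your $k=0$ argument via the $\heartsuit$-loci (Lemma \ref{Lem: very reg count}-style matching) is correct.

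However, there is a genuine error in your $k\geq1$ step. You assert that $\fc_\nu$ and $\fc_{-1}$ carry $\fq_{\bullet,k}(\calo_F)$ to the same set because they ``differ by the involution induced by multiplication by $-\nu$, which preserves the relevant $\calo_F$-lattices.'' Multiplication by $-\nu$ does preserve $\calo_F$-structures, but it does not preserve the congruence condition defining $\Q_{\bullet,k}(\calo_F)$: since $\fc_{+1}(X)=-\fc_{-1}(X)$, one has
\[
\fc_{+1}\bigl(\fq_{\bullet,k}(\calo_F)\bigr)=-\Q_{\bullet,k}(\calo_F)=\{g: g\equiv -I\pmod{\vp^k}\},
\]
which is disjoint from $\Q_{\bullet,k}(\calo_F)=\{g:g\equiv I\pmod{\vp^k}\}$ for $k\geq1$ (recall $p$ is odd). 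Concretely, take any nonzero $X\in\fq_{\bullet,1}(\calo_F)$ and $x=\fc_{+1}(X)\equiv -I\pmod\vp$. Then $x\in\Q_\bullet^{\heartsuit,+1}(F)$ and the hypothesis of the lemma is satisfied with $\nu=+1$, but $\Orb^{\rH_\bullet}(\bfun_{\Q_{\bullet,1}(\calo_F)},x)=0$ (the stable orbit of $x$ has eigenvalues reducing to $-1$, so it never meets $\Q_{\bullet,1}(\calo_F)$) while $\Orb^{\rH_\bullet}(\bfun_{\fq_{\bullet,1}(\calo_F)},X)>0$. So the asserted identity fails for $\nu=+1$, $k\geq1$; the Kaletha--Prasad lattice identity only yields the matching for $\nu=-1$. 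This is harmless for the paper's application, because for $k\geq1$ the support of $\bfun_{\Q_{\bullet,k}(\calo_F)}$ is contained in $\Q_\bullet^{\heartsuit,-1}(\calo_F)$, so only $\nu=-1$ ever arises. But you should restrict the $k\geq1$ claim to $\nu=-1$ (or note that both sides vanish when the relevant locus condition forces it) rather than asserting the filtrations agree for both transforms, and you should not characterize $k\geq1$ as handled ``in one stroke'' without attending to the sign of $\nu$.
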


As in the linear case, this lemma suffices to descend the orbital integrals to the infinitesimal setting for all $k\geq1$. It remains to consider orbits contained in the compliment of $\Q_{\bullet}^{\heartsuit,1}(\calo_F)\cup \Q_{\bullet}^{\heartsuit,-1}(\calo_F)$ in the case $k=0$.

\subsubsection{Remaining orbits}
Since we only consider $k=0$, we drop the subscript. When $x\notin \Q^{\heartsuit,\nu}(F)$ for either $\nu=\pm1$, we descend stable orbital integrals on $\Q_{\bullet}$ to the very regular locus of various descendant varieties as in \cite{Lesliedescent}. That is, suppose that $x\in \Q^{rss}_{\bullet}(\calo_F)$ such that 
\[
\overline{x}\in (D_1\cap D_{-1})(k),
\] and let $x=x_{as}x_{tu}$ be the topological Jordan decomposition from \cite[Section 3]{Lesliedescent}. If $V_0\oplus V_1\oplus V_{-1}$ is the eigenspace decomposition of $V_{2n}$ for $x_{as}$, 
we have
\begin{align}\label{central product}
  \G_{x_{as}}= \G_{x_{as}}'\times \U(V_1)\times \U(V_{-1}),  
\end{align}
where $\G_{x_{as}}'$ is a certain reductive group (cf. \cite[Lemma 5.13]{Lesliedescent}). Corollary 4.8 of \emph{ibid} that there exists $g\in \G_{x_{as}}(\calo_F)$ such that
\[
s_{\bullet}(g) = g\theta_{\bullet}(g)^{-1}=x_{as}.
\] Using the decomposition (\ref{central product}), we write $g= (g',g_{1},g_{-1})\in \G_{x_{as}}(\calo_F)$, so that
\[
s_{\\bullet}(g) = (s(g'),s(g_1),s(g_{-1}))= (x_{as}|_{V_0},I_{V_{1}},-I_{V_{-1}})=x_{as}.
\]We now decompose the absolutely semi-simple part $x_{as}=\ga\cdot y_{as}$
where  
\[
\ga = (I_{V_0}, I_{V_1}, -I_{V_{-1}})\in \G_{x_{as}}(\calo_F),
\]
 and 
 \[
 y_{as}=(x_{as}|_{V_0}, I_{V_1}, I_{V_{-1}})\in \G_{x_{as}}(\calo_F).
 \]
 In particular,
 \[
 \G_\ga=\U(V_0\oplus V_1)\times \U(V_{-1})\subset \G,
 \]
 and $\G_{x_{as}}\subset \G_\ga$ is a twisted Levi subgroup of $\G_\ga$.  
  It is clear that both $\ga, y_{as}\in \Q_{\bullet}(\calo_F)$. If we set $y=\ga^{-1}x,$ then $y=y_{as}x_{tu}\in \G_{\ga}(\calo_F)$ is the topological Jordan decomposition of $y$. We know that $y\in \Q_{\bullet}(F)$.
 \begin{Lem}\cite[Lemma 8.2]{Lesliedescent}\label{Lem: new decomp}
  With the notation as above, $y\in \Q_{\ga}^{rss}(F)$.
 \end{Lem}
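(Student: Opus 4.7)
The plan is to mimic the argument of \cite[Lemma 8.2]{Lesliedescent}, noting that only the formal structure of the descendant of a unitary symmetric pair at an absolutely semisimple element is used; the split--inert and inert--inert cases behave identically at this level. Concretely, I would proceed in two steps: first, check that $y$ lies in the descendant symmetric variety $\Q_\ga(F)$; second, check that $y$ is $\rH_\ga$-regular semisimple.

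For the first step, recall that $\Q_\bullet \hookrightarrow \G$ identifies $\Q_\bullet$ with the fixed locus of the involution $z \mapsto \theta_\bullet(z)^{-1}$. Since $\ga^2 = I$ lies in $\Q_\bullet$ we have $\theta_\bullet(\ga) = \ga$, while $\theta_\bullet(x) = x^{-1}$. Moreover $\ga$ is built from $x_{as}$, hence commutes with $x_{as}$ and with $x_{tu}$, hence with $x$. Therefore
\[
\theta_\bullet(y) = \theta_\bullet(\ga)^{-1}\theta_\bullet(x) = \ga^{-1} x^{-1} = (x\ga)^{-1} = (\ga x)^{-1} = y^{-1},
\]
so $y \in \Q_\bullet(F)$. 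On the other hand $y = y_{as} x_{tu}$, where $y_{as} = (x_{as}|_{V_0}, I_{V_1}, I_{V_{-1}})$ lies in the twisted Levi $\G_{x_{as}}$ appearing in the decomposition \eqref{central product}, and $x_{tu}$ commutes with $x_{as}$ hence with $\ga$. Consequently $y \in \G_\ga(\calo_F)$, and one concludes $y \in \G_\ga(F) \cap \Q_\bullet(F) = \Q_\ga(F)$ by the general descent framework of \cite[Section 5]{Lesliedescent}.

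For the second step, use \eqref{central product} together with the fact that $\ga$ is central in $\G_{x_{as}}$ to deduce $\G_{x_{as}} \subset \G_\ga$. This yields an inclusion of stabilizers $\rH_{\bullet,x} \subset \rH_\ga$. A direct computation using $y = \ga^{-1} x$ and the centrality of $\ga$ in $\G_\ga$ shows the identification $\rH_{\ga,y} = \rH_{\bullet,x}$; in particular $\rH_{\ga,y}$ is a torus of the expected dimension. To upgrade this to $\rH_\ga$-regular semisimplicity, I would compare the categorical quotients: the invariants $\car_\bullet(x)$ (given by Lemma \ref{Lem: cat quotient si} or \ref{Lem: cat quotient ii}) decompose into invariants along the eigenspaces of $\ga$, and on each summand the invariants of $y$ differ from those of $x$ only by signs coming from the scalars $\pm 1$ in $\ga$. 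This identifies the $\rH_\ga$-invariants of $y$ with the $\rH_\bullet$-invariants of $x$ up to known units, so regular semisimplicity passes from $x$ to $y$.

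The main obstacle — indeed the reason the lemma is non-trivial — is ensuring the correct compatibility of categorical quotients under descent, in particular that the fibers of $\car_\bullet$ at $x$ and of the invariant map of $\Q_\ga$ at $y$ really do contain matching closed orbits. In the split--inert case this was handled cleanly in \cite[Lemma 8.2]{Lesliedescent} via the contraction map $R$; in the inert--inert case one works instead with the polynomial relations \eqref{eqn: inertinert eqns} on the blocks of $x$ and verifies that they restrict to the analogous relations on each $\ga$-eigenblock of $y$. Granting this, the remainder of the argument is a bookkeeping exercise with the topological Jordan decomposition.
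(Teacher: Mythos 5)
Your proof has the right skeleton — show $y$ lands in $\Q_\ga(F)$, identify $\rH_{\ga,y}$ with $\rH_{\bullet,x}$, then transfer regular semisimplicity — and this is certainly the approach taken in \cite[Lemma 8.2]{Lesliedescent} (the paper gives no proof of its own, deferring entirely to that reference). The stabilizer identification $\rH_{\ga,y}=\rH_{\bullet,x}$ is correct, and the observation that $\ga$ is a polynomial in $x_{as}$ (so anything commuting with $x$ commutes with $\ga$) is the key structural input; you have that.

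However, the final step is where the real content of the lemma lives, and your treatment of it is not yet a proof. Knowing that $\rH_{\ga,y}$ is a torus of the correct dimension is \emph{necessary} for regular semisimplicity but not sufficient: regular semisimplicity in the sense of \S\ref{Section: orbital integrals conventions} requires that the fiber of the categorical quotient $\pi_{\Q_\ga}$ through $y$ consist of a single $\rH_\ga(\bar F)$-orbit, and in particular that this orbit be Zariski-closed. Your appeal to ``comparing categorical quotients'' and the remark that the invariants of $y$ differ from those of $x$ ``only by signs'' does not establish this. What one actually needs is the compatibility of the two quotient maps: the $\ga$-decomposition $V_{2n}=W_1\oplus W_{-1}$ is $\ep_\bullet$-stable (since $\ep_\bullet$ conjugates $x_{as}$ to $x_{as}^{-1}$, it permutes eigenspaces of $x_{as}$ and fixes the $\pm1$-eigenspaces), so the invariant $\car_\bullet(x)$ factors as the product of the invariants $\car_{\bullet,\nu}(\nu y_\nu)$ over $\nu=\pm1$, and a point in the $\pi_{\Q_\ga}$-fiber of $y$ produces (after multiplying back by $\ga$) a point in the $\pi_{\Q_\bullet}$-fiber of $x$. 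That is the mechanism that forces a single orbit, and it deserves to be spelled out rather than summarized by ``regular semisimplicity passes from $x$ to $y$.''

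A smaller imprecision in your Step 1: you write that $\Q_\bullet\hookrightarrow\G$ ``identifies $\Q_\bullet$ with the fixed locus of $z\mapsto\theta_\bullet(z)^{-1}$.'' In the group-theoretic $s_\bullet$-picture the image of $s_\bullet$ is only a closed subvariety of that fixed locus, not all of it. This happens to be harmless here because the paper defines $\Q_\bullet$ intrinsically as a variety of Hermitian embeddings, under which the fixed-locus description is exact, and because the paper already records ``$y\in\Q_\bullet(F)$'' as a known fact in the sentence preceding the lemma; but in the statement as you wrote it the justification is wrong. Finally, the deduction $\G_\ga\cap\Q_\bullet=\Q_\ga$ is correct but should be tied to the defining identity $\theta_\ga=\Ad(\ga)\circ\theta_\bullet|_{\G_\ga}$: for $z$ centralizing $\ga$ one has $\theta_\ga(z)=z^{-1}$ if and only if $\theta_\bullet(z)=z^{-1}$, which is the equality you are invoking but currently attributing to an unspecified ``general descent framework.''
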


Decomposing $V_{2n}$ with respect to the action of $\ga$, we make a slight abuse of notation and write
 \[
V_{2n}=W_1\oplus W_{-1},
 \]
 where $W_1=V_0\oplus V_1$ and $W_{-1}=V_{-1}$. The descendant $(\G_\ga,\rH_{\bullet,\ga})$ decomposes as a product
 \[
 (\G_\ga,\rH_{\bullet,\ga}) = (\G_{\ga,1},\rH_{\bullet,\ga,1})\times  (\G_{\ga,-1},\rH_{\bullet,\ga,-1}) 
 \]
 where for both $\nu=\pm1$ an easy exercise generalizing \cite[Lemma 5.13]{Lesliedescent} shows that $W_{\nu} = W_\nu\cap V_{\bullet,1}\oplus W_\nu\cap V_{\bullet,1}$ and
 \[
(\G_{\ga,\nu},\rH_{\bullet,\ga,\nu}) =\begin{cases}
    (\U(W_{\nu}), \U(W_{\nu,1})\times \U(W_{\nu,-1})) &:\bullet = si,\\
(\U(W_{\nu}), \Res_{E/F}\GL(W_{\nu}\cap L)) &:\bullet = ii.
\end{cases}
 \]
Here $W_{\nu,\nu'}=\{v\in V_{2n}: \ga v=\nu v,\:\: \ep_{\bullet} v=\nu' v\}$. We have the associated symmetric spaces
 \[
 \Q_{\bullet,\nu}=\G_{\ga,\nu}/\rH_{\bullet,\ga,\nu}.
 \]
 Proposition 4.5 of \cite{Lesliedescent} implies that each of these Hermitian spaces are split, so that we obtain a hyperspecial subgroup $\rH_{\bullet,\ga}(\calo_F)$. 
\begin{Lem}\label{Lem: into the reg locus}
Writing $x=\ga y= (x_1,-x_{-1})\in \Q_{\bullet,1}(\calo_F) \times \Q_{\bullet,-1}(\calo_F)$, we have
\[
-x_{-1}\in \Q_{\bullet,-1}^{\heartsuit,1}(\calo_F)\:\text{ and }\:x_1\in \Q_{\bullet,1}^{\heartsuit,-1}(\calo_F).
\]
\end{Lem}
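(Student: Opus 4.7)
The plan is to compute $x_1$ and $x_{-1}$ explicitly in terms of the topological Jordan decomposition $x = x_{as}x_{tu}$ and the further decomposition $x_{as} = \gamma \cdot y_{as}$, and then read off their reductions modulo $\mathfrak{p}$. Recall $y_{as} = (x_{as}|_{V_0}, I_{V_1}, I_{V_{-1}})$ and $\gamma = (I_{V_0}, I_{V_1}, -I_{V_{-1}})$, and that by construction of the eigenspace decomposition $V_{2n} = V_0 \oplus V_1 \oplus V_{-1}$ of $x_{as}$, all eigenvalues of $x_{as}|_{V_0}$ are (absolutely semisimple and) distinct from $\pm 1$. The topological unipotent factor $x_{tu}$ commutes with $x_{as}$ and hence preserves this decomposition.

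First I would compute the blocks. On $W_1 = V_0 \oplus V_1$, $\gamma$ acts as the identity, so
\[
x_1 = x|_{W_1} = x_{as}|_{V_0}\cdot x_{tu}|_{V_0} \;\oplus\; x_{tu}|_{V_1}.
\]
On $W_{-1} = V_{-1}$, $\gamma$ acts as $-I$, and $y|_{V_{-1}} = x_{tu}|_{V_{-1}}$, giving $x|_{W_{-1}} = -x_{tu}|_{V_{-1}}$. Since we write $x = (x_1, -x_{-1})$, this forces $x_{-1} = x_{tu}|_{V_{-1}}$.

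Next I would verify the very-regular condition by reducing modulo $\mathfrak{p}$. Since $x_{tu}$ is topologically unipotent, $\overline{x_{tu}|_{V_j}} = I$ for $j \in \{0,1,-1\}$, so
\[
\overline{x_1} = \overline{x_{as}|_{V_0}} \;\oplus\; I_{V_1}, \qquad \overline{-x_{-1}} = -I_{V_{-1}}.
\]
By the construction of $V_0$, the reduced matrix $\overline{x_{as}|_{V_0}}$ has no eigenvalue equal to $\pm 1$ (in residue characteristic $\neq 2$, $-1 \neq 1$), so $\overline{x_1}$ has no $-1$-eigenvalue, i.e.\ $\overline{x_1}\notin D_{-1}(k)$. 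Likewise $\overline{-x_{-1}} = -I_{V_{-1}}$ has only $-1$ as an eigenvalue, so it lies outside $D_1(k)$. Regular semisimplicity of $x_1$ and $-x_{-1}$ in their respective descendant symmetric spaces follows from Lemma~\ref{Lem: new decomp}, which gives $y \in \Q_\gamma^{rss}(F)$ and hence, via the product decomposition $(\G_\gamma,\rH_{\bullet,\gamma}) = \prod_{\nu} (\G_{\gamma,\nu},\rH_{\bullet,\gamma,\nu})$, regular semisimplicity of each component. The combination of these two facts is precisely the assertion $x_1 \in \Q_{\bullet,1}^{\heartsuit,-1}(\calo_F)$ and $-x_{-1}\in\Q_{\bullet,-1}^{\heartsuit,1}(\calo_F)$.

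The only non-routine point in this sketch is that the construction of $\gamma$ and $y_{as}$ (and in particular the fact that $x_{as} = \gamma y_{as}$ with $\gamma,y_{as}\in \Q_\bullet(\calo_F)$) is compatible with the symmetric-space structure in the inert--inert case as well as the split--inert one, which is exactly the content of the extension of \cite[Cor.\ 4.8]{Lesliedescent} indicated by the authors. Once that compatibility is in hand, the eigenvalue computation above is purely formal and identical in both cases.
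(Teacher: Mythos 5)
Your proof is correct and follows the same line of argument the authors intend (they do not give a proof but lean on \cite[Lemma~8.3]{Lesliedescent}, whose argument is exactly the block computation you carry out). The two substantive points you identify -- that $x_1=y|_{W_1}=(x_{as}|_{V_0}\cdot x_{tu}|_{V_0})\oplus x_{tu}|_{V_1}$ and $x_{-1}=x_{tu}|_{V_{-1}}$, and that $\overline{x_{as}|_{V_0}}$ avoids the eigenvalues $\pm 1$ by absolute semisimplicity (using $p\neq 2$) -- are precisely what is needed, and the appeal to Lemma~\ref{Lem: new decomp} together with the product decomposition of $(\G_\ga,\rH_{\bullet,\ga})$ correctly supplies the regular-semisimplicity half of the $\heartsuit$-condition.
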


The product decomposition of $(\G_\ga,\rH_\ga)$ induces a decomposition of the centralizer
\begin{equation}\label{eqn: stabilizer decomposition}
\rH_x = \rH_{\bullet,x,1}\times \rH_{\bullet,x,-1}\subset \G_\ga=\U(W_{1}) \times \U(W_{-1}).   
\end{equation}
The following is a generalization of \cite[Lemma 8.3]{Lesliedescent} for $\bullet =si$, and the argument given there holds for the inert-inert case as well.
 \begin{Lem}\label{eqn: almost there general}
 Let $x\in \Q_{\bullet}(\calo_F)$ be regular semi-simple such that $x\notin \Q^{\heartsuit,\nu}_{\bullet}(F)$ for either $\nu=\pm1$. With the notation as above, for any $\ka\in \mathfrak{C}(\rH_{\bullet,x},\rH_{\bullet};F)^D$
   \begin{equation*}
 \Orb^{\rH_{\bullet},\ka}(\bfun_{\Q_{\bullet}(\calo_F)},x)= \Orb^{\rH_{\bullet, \ga,1},\ka_1}(\bfun_{\Q_{\bullet, 1}(\calo_F)}, x_1)\cdot \Orb^{\rH_{\bullet, \ga,-1},\ka_{-1}}(\bfun_{\Q_{\bullet,-1}(\calo_F)},{- x_{-1}}),
 \end{equation*}
 where $\ka\mapsto(\ka_1,\ka_{-1})\in \prod_{\nu}\mathfrak{C}(\rH_{\bullet,x,\nu},\rH_{\bullet,\nu};F)^D$.
 \end{Lem}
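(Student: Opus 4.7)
The plan is to mimic the proof of \cite[Lemma 8.3]{Lesliedescent}, which handles the split-inert case, and verify that each step goes through verbatim in the inert-inert case. The structure of the argument proceeds in three main steps: a reduction of the domain of integration using the topological Jordan decomposition, a factorization of the resulting integral along the product decomposition of the descendant, and a matching of transfer characters.

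First, I would unwind the definition of the $\kappa$-orbital integral as a sum over rational $\rH_\bullet(F)$-orbits in the stable orbit of $x$ which meet $\Q_\bullet(\calo_F)$, weighted by $\kappa$. Because our measure conventions give the hyperspecial subgroup volume one, the integral becomes
\[
\sum_{\substack{x' \in \rH_\bullet(\calo_F) \backslash \rH_\bullet(F)\cdot x \\ x' \in \Q_\bullet(\calo_F)}} \kappa(\inv(x,x')).
\]
The argument of \cite[Proposition 4.11]{Lesliedescent} (applicable since our unitary symmetric pair is nice in the sense of \emph{ibid.}) then shows that for any such $x'$, the absolutely semi-simple parts $x_{as}$ and $x'_{as}$ are $\rH_\bullet(\calo_F)$-conjugate; replacing $x'$ by a conjugate, we may assume $x'_{as} = x_{as}$, so that $x'$ and $x$ share the element $\gamma$ produced from the Jordan decomposition. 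This forces any intertwiner $h \in \rH_\bullet(F)$ with $h^{-1}\cdot x = x'$ to lie in $\rH_{\bullet,\gamma}(F)$.

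Next, using the product decomposition $(\G_\gamma, \rH_{\bullet,\gamma}) = \prod_\nu (\G_{\gamma,\nu}, \rH_{\bullet,\gamma,\nu})$ and the corresponding decomposition of hyperspecial subgroups coming from the lattice splitting $\mathcal{L} = \mathcal{L}_1 \oplus \mathcal{L}_{-1}$, I would factor the sum as a double sum over $\rH_{\bullet,\gamma,\nu}(\calo_F)\backslash \rH_{\bullet,\gamma,\nu}(F)\cdot (\nu x_\nu)$-orbits meeting $\Q_{\bullet,\nu}(\calo_F)$. Combined with the decomposition \eqref{eqn: stabilizer decomposition} of the centralizer (and thus of the group $\mathfrak{C}(\rH_{\bullet,x},\rH_\bullet;F)^D$ via the image under abelianized cohomology), the transfer character $\kappa$ factors as $\kappa = \kappa_1\cdot \kappa_{-1}$ with $\kappa_\nu \in \mathfrak{C}(\rH_{\bullet,x,\nu},\rH_{\bullet,\nu};F)^D$, yielding the required product formula.

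The only place where the inert-inert case differs substantively from the split-inert case is in verifying the cohomological factorization of $\kappa$. In the split-inert case this follows because the centralizers factor through the decomposition of Hermitian spaces; in the inert-inert case, one has $\rH_{\bullet,\gamma,\nu} = \Res_{E/F}\GL(W_\nu \cap L)$, and the same argument applies once one observes that the product decomposition of the twisted Levi $\G_\gamma$ is compatible with the polarization $V_{2n} = L \oplus L^\ast$ (both $L$ and $L^\ast$ decompose according to the $\gamma$-eigenspaces since $\gamma \in \rH_\bullet(\calo_F)$). I expect the main technical obstacle to be checking that Proposition 4.11 of \cite{Lesliedescent} genuinely applies to the inert-inert pair; this should follow from the fact that $(\G_\bullet,\rH_\bullet)$ is a nice simply-connected symmetric pair over $\calo_F$ (cf.\ \S\ref{Section: integral models}), and I would verify this explicitly by reducing to the statement that $H^1_{\mathrm{ab}}$ of the residual unitary group vanishes, which holds by Lang's theorem.
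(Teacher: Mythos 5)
Your proposal follows exactly the route the paper intends. The paper's own proof of this Lemma is a single sentence asserting that the argument of \cite[Lemma~8.3]{Lesliedescent}, established for the split-inert case, carries over to the inert-inert case unchanged; your proposal fills in the skeleton of that argument and identifies what must be checked. Your steps — unwinding the $\ka$-orbital integral as a weighted count of integral cosets, invoking \cite[Proposition~4.11]{Lesliedescent} to arrange $x'_{as}=x_{as}$ and thus force intertwiners into $\rH_{\bullet,\ga}(F)$, factoring along the product decomposition of $(\G_\ga,\rH_{\bullet,\ga})$ and the lattice $\mathcal{L}=\mathcal{L}_1\oplus\mathcal{L}_{-1}$, and finally factoring $\ka$ through \eqref{eqn: stabilizer decomposition} — match the intended argument, and your observation that $\ga\in\rH_\bullet(\calo_F)$ (since $\ga\in\Q_\bullet$ forces $\theta_\bullet(\ga)=\ga^{-1}=\ga$) so that the polarization $L\oplus L^\ast$ decomposes $\ga$-equivariantly is precisely the right remark specific to the inert-inert case. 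The one place where you are slightly loose is the cohomological factorization: the decomposition \eqref{eqn: stabilizer decomposition} factors $H^1(F,\rH_{\bullet,x})$, but $\ka$ lives in the Pontryagin dual of the \emph{kernel} $\mathfrak{C}(\rH_{\bullet,x},\rH_\bullet;F)$, so you still need to check that the maps to $H^1_{\mathrm{ab}}(F,\rH_\bullet)$ are compatible with the passage through $\rH_{\bullet,\ga}$; this is no worse a gap than what the paper's citation-level proof leaves implicit, but worth flagging if you were to write this out fully.
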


 \subsection{Proof of Proposition \ref{Prop: descent fundamentals}} We first prove the descent of the fundamental lemmas in Theorems \ref{Thm: fundamental lemma stable Lie} and \ref{Thm: fundamental lemma s-i Lie}. It is easy to see that it suffices to assume $x\in \Q_{\bullet}^{rss}(\calo_F)$ matches $y\in \X^{rss}(\calo_F)$ (cf. \cite[Section 6.4]{Lesliedescent}).
 
First suppose $x\in \Q^{\heartsuit,\nu}_{\bullet}(F)$ for at least one value of $\nu=\pm1$; it is immediate that this forces $y\in \X^{\heartsuit,\nu}(F)$. Combining Lemma \ref{Lem: descent unitary heart} with that in Lemma \ref{Lem: descent linear heart}, we see that Theorems \ref{Thm: fundamental lemma si} and \ref{Thm: fundamental lemma ii} follow from Theorems  \ref{Thm: fundamental lemma s-i Lie} and \ref{thm:lie i-i} on the $\heartsuit$-loci. As noted above, this resolves the descent for $k\geq 1$. 
 
We now assume $k=0$. Suppose now that $x\notin  \Q^{\heartsuit,\nu}_{\bullet}(F)$ for either $\nu=\pm1$. With notations as above, we may write
 \[
 x= (x_1,-x_{-1})\in \Q_{\bullet,1}(\calo_F) \times \Q_{\bullet,-1}(\calo_F), \text{ and }y= (y_1,-y_{-1})\in \X_1(\calo_F) \times \X_{-1}(\calo_F).
 \]
 It is easy to see that $\nu x_\nu$ matches $\nu y_\nu$. Combining this Lemmas \ref{Lem: into the reg locus} and \ref{eqn: almost there general} on the unitary side with  Lemmas \ref{Lem: into the reg locus linear}, \ref{Lem: descent on weight factor}, and \ref{Lem: linear descent final} on the linear side, we reduce two the previous case (on the $\heartsuit$-locus) on the descendant varieties. This completes the proof that Theorems \ref{Thm: fundamental lemma s-i Lie} and \ref{thm:lie i-i} imply Theorems  \ref{Thm: fundamental lemma si} and \ref{Thm: fundamental lemma ii}, respectively.

We now consider the descent to Theorem \ref{Thm: fundamental lemma endoscopic Lie} in the endoscopic case. The argument is identical to the previous one, with the character $\ka = \varepsilon$. The only thing to check is the transfer factors. But the calculation in Lemma \ref{Lem: tranfer factor cayley} shows that if $x= \fc_{\nu}(x_1,x_2)\in \Q_{ii}(F)$, then
\[
\De_\varepsilon(x) = \eta(-2(I-x_2x_1))\De_\varepsilon(x_1,x_2).
\]
This sign is trivial on $\Q_{ii}^{\heartsuit,\nu}(F)$, so that the transfer factors agree on this locus. Otherwise, we apply Lemma \ref{eqn: almost there general} and obtain the appropriate equality on the descendant, and the claim follows. \qed

\bibliographystyle{alpha}

\bibliography{bibs}

\begin{thebibliography}{BPLZZ21}

\bibitem[AC89]{ArthurClozel}
James Arthur and Laurent Clozel.
\newblock {\em Simple algebras, base change, and the advanced theory of the
  trace formula}, volume 120 of {\em Annals of Mathematics Studies}.
\newblock Princeton University Press, Princeton, NJ, 1989.

\bibitem[Bad18]{BadulescuJRnotes}
A.~I. Badulescu.
\newblock The trace formula and the proof of the global {J}acquet-{L}anglands
  correspondence.
\newblock In {\em Relative aspects in representation theory, {L}anglands
  functoriality and automorphic forms}, volume 2221 of {\em Lecture Notes in
  Math.}, pages 197--250. Springer, Cham, 2018.

\bibitem[Bor98]{Borovoi}
Mikhail Borovoi.
\newblock Abelian {G}alois cohomology of reductive groups.
\newblock {\em Mem. Amer. Math. Soc.}, 132(626):viii+50, 1998.

\bibitem[BP21]{beuzart2019new}
Rapha\"{e}l Beuzart-Plessis.
\newblock A new proof of the {J}acquet-{R}allis fundamental lemma.
\newblock {\em Duke Math. J.}, 170(12):2805--2814, 2021.

\bibitem[BPCZ22]{beuzart2020global}
{R}apha{\"e}l Beuzart-{P}lessis, Pierre-Henri Chaudouard, and Micha{\l} Zydor.
\newblock The global {G}an-{G}ross-{P}rasad conjecture for unitary groups: the
  endoscopic case.
\newblock {\em Publ. Math. Inst. Hautes \'Etudes Sci.}, 135:183–336, 2022.

\bibitem[BPLZZ21]{BPLZZ}
Rapha\"el Beuzart-Plessis, Yifeng Liu, Wei Zhang, and Xinwen Zhu.
\newblock Isolation of cuspidal spectrum, with application to the
  {G}an-{G}ross-{P}rasad conjecture.
\newblock {\em Ann. of Math. (2)}, 194(2):519--584, 2021.

\bibitem[CG21]{ChenGan}
Rui Chen and Wee~Teck Gan.
\newblock Unitary {F}riedberg--{J}acquet periods.
\newblock {\em arXiv preprint arXiv:2108.04064}, 2021.

\bibitem[Cog00]{CogdellGLn}
JW~Cogdell.
\newblock Notes on {L}-functions for $\mathrm{GL}_n$.
\newblock {\em School on Automorphic Forms on $\mathrm{GL}_n$}, 21:75--158,
  2000.

\bibitem[FJ93]{FriedbergJacquet}
Solomon Friedberg and Herv\'{e} Jacquet.
\newblock Linear periods.
\newblock {\em J. Reine Angew. Math.}, 443:91--139, 1993.

\bibitem[FLO12]{FLO}
Brooke Feigon, Erez Lapid, and Omer Offen.
\newblock On representations distinguished by unitary groups.
\newblock {\em Publ. Math. Inst. Hautes \'{E}tudes Sci.}, 115:185--323, 2012.

\bibitem[GGP12]{GGP}
Wee~Teck Gan, Benedict~H. Gross, and Dipendra Prasad.
\newblock Symplectic local root numbers, central critical {$L$} values, and
  restriction problems in the representation theory of classical groups.
\newblock Number 346, pages 1--109. 2012.
\newblock Sur les conjectures de Gross et Prasad. I.

\bibitem[GH24]{GetzHahnbook}
Jayce~R. Getz and Heekyoung Hahn.
\newblock {\em An introduction to automorphic representations---with a view
  toward trace formulae}, volume 300 of {\em Graduate Texts in Mathematics}.
\newblock Springer, Cham, [2024] \copyright 2024.

\bibitem[GP92]{GrossPrasad}
Benedict~H. Gross and Dipendra Prasad.
\newblock On the decomposition of a representation of {${\rm SO}_n$} when
  restricted to {${\rm SO}_{n-1}$}.
\newblock {\em Canad. J. Math.}, 44(5):974--1002, 1992.

\bibitem[Guo96]{Guo}
Jiandong Guo.
\newblock On a generalization of a result of {W}aldspurger.
\newblock {\em Canad. J. Math.}, 48(1):105--142, 1996.

\bibitem[GW14]{GetzWambach}
Jayce~R. Getz and Eric Wambach.
\newblock Twisted relative trace formulae with a view towards unitary groups.
\newblock {\em Amer. J. Math.}, 136(1):1--58, 2014.

\bibitem[Hir99]{hironaka1999spherical}
Yumiko Hironaka.
\newblock Spherical functions and local densities on hermitian forms.
\newblock {\em Journal of the Mathematical Society of Japan}, 51(3):553--581,
  1999.

\bibitem[Jac62]{jacobowitz1962hermitian}
Ronald Jacobowitz.
\newblock Hermitian forms over local fields.
\newblock {\em American Journal of Mathematics}, 84(3):441--465, 1962.

\bibitem[Jac05]{JacquetKloosterman2}
Herv\'{e} Jacquet.
\newblock Kloosterman identities over a quadratic extension. {II}.
\newblock {\em Ann. Sci. \'{E}cole Norm. Sup. (4)}, 38(4):609--669, 2005.

\bibitem[Jac10]{JacquetQuasisplit}
Herv\'{e} Jacquet.
\newblock Distinction by the quasi-split unitary group.
\newblock {\em Israel J. Math.}, 178:269--324, 2010.

\bibitem[JR96]{JacquetRallis}
Herv\'{e} Jacquet and Stephen Rallis.
\newblock Uniqueness of linear periods.
\newblock {\em Compositio Math.}, 102(1):65--123, 1996.

\bibitem[Kot82]{Kottwitzrational}
Robert~E. Kottwitz.
\newblock Rational conjugacy classes in reductive groups.
\newblock {\em Duke Mathematical Journal}, 49(4):785--806, 1982.

\bibitem[KP23]{KalethaPrasad}
Tasho Kaletha and Gopal Prasad.
\newblock {\em Bruhat-{T}its theory---a new approach}, volume~44 of {\em New
  Mathematical Monographs}.
\newblock Cambridge University Press, Cambridge, 2023.

\bibitem[KST20]{KimShinTemplier}
Ju-Lee Kim, Sug~Woo Shin, and Nicolas Templier.
\newblock Asymptotic behavior of supercuspidal representations and
  {S}ato-{T}ate equidistribution for families.
\newblock {\em Adv. Math.}, 362:106955, 57, 2020.

\bibitem[Lab99]{LabesseBasechange}
Jean-Pierre Labesse.
\newblock Cohomologie, stabilisation et changement de base.
\newblock {\em Ast\'{e}risque}, 257:vi+161, 1999.
\newblock Appendix A by Laurent Clozel and Labesse, and Appendix B by Lawrence
  Breen.

\bibitem[Les22]{Lesliedescent}
Spencer Leslie.
\newblock On the stabilization of relative trace formulae: descent and the
  fundamental lemma.
\newblock {\em Adv. Math.}, 394:Paper No. 108026, 68, 2022.

\bibitem[Les24]{LeslieEndoscopy}
Spencer Leslie.
\newblock Symmetric varieties for endoscopic groups.
\newblock {\em preprint arXiv:2401.09156}, 2024.

\bibitem[Les25]{LeslieUFJFL}
Spencer Leslie.
\newblock The endoscopic fundamental lemma for unitary {F}riedberg--{J}acquet
  periods.
\newblock {\em to appear in Annals of Mathematics}, 2025.

\bibitem[LXZ25]{LXZ25}
S.~Leslie, J.~Xiao, and W.~Zhang.
\newblock Unitary {F}riedberg--{J}acquet periods and their twists: {R}elative
  trace formulas.
\newblock {\em preprint}, 2025.

\bibitem[Mac15]{macdonald}
I.~G. Macdonald.
\newblock {\em Symmetric functions and {H}all polynomials}.
\newblock Oxford Classic Texts in the Physical Sciences. The Clarendon Press,
  Oxford University Press, New York, second edition, 2015.
\newblock With contribution by A. V. Zelevinsky and a foreword by Richard
  Stanley, Reprint of the 2008 paperback edition [ MR1354144].

\bibitem[Off09]{offenjacquet}
Omer Offen.
\newblock Unitary periods and {J}acquet's relative trace formula.
\newblock In {\em Automorphic forms and {$L$}-functions {I}. {G}lobal aspects},
  volume 488 of {\em Contemp. Math.}, pages 183--236. Amer. Math. Soc.,
  Providence, RI, 2009.

\bibitem[PTB11]{PTB}
Dipendra Prasad and Ramin Takloo-Bighash.
\newblock Bessel models for {GS}p(4).
\newblock {\em J. Reine Angew. Math.}, 655:189--243, 2011.

\bibitem[PWZ21]{PollackWanZydor}
Aaron Pollack, Chen Wan, and Micha{\l} Zydor.
\newblock On the residue method for period integrals.
\newblock {\em Duke Math. J.}, 170(7):1457--1515, 2021.

\bibitem[Ram18]{ramakrishnan2018theorem}
Dinakar Ramakrishnan.
\newblock A theorem on $\mathrm{GL}(n)$ {\`a} l{\`a} {T}chebotarev.
\newblock {\em arXiv preprint arXiv:1806.08429}, 2018.

\bibitem[Ric82]{Richardson}
R.~W. Richardson.
\newblock Orbits, invariants, and representations associated to involutions of
  reductive groups.
\newblock {\em Invent. Math.}, 66(2):287--312, 1982.

\bibitem[Tat67]{Tate}
J.~T. Tate.
\newblock Fourier analysis in number fields, and {H}ecke's zeta-functions.
\newblock In {\em Algebraic {N}umber {T}heory ({P}roc. {I}nstructional {C}onf.,
  {B}righton, 1965)}, pages 305--347. Academic Press, London, 1967.

\bibitem[Wal85]{W85}
J.-L. Waldspurger.
\newblock Quelques propri\'{e}t\'{e}s arithm\'{e}tiques de certaines formes
  automorphes sur {${\rm GL}(2)$}.
\newblock {\em Compositio Math.}, 54(2):121--171, 1985.

\bibitem[Wal97]{Waldstransfert}
J.-L. Waldspurger.
\newblock Le lemme fondamental implique le transfert.
\newblock {\em Compositio Math.}, 105(2):153--236, 1997.

\bibitem[Wei82]{Weiladeles}
Andr\'{e} Weil.
\newblock {\em Adeles and algebraic groups}, volume~23 of {\em Progress in
  Mathematics}.
\newblock Birkh\"{a}user, Boston, Mass., 1982.
\newblock With appendices by M. Demazure and Takashi Ono.

\bibitem[Xia19]{Xiaothesis}
Jingwei Xiao.
\newblock Germ expansion, endoscopic transfer, and unitary periods.
\newblock {\em Doctoral thesis}, 2019.

\bibitem[XZ23]{XueZhang}
Hang Xue and Wei Zhang.
\newblock Twisted linear periods and a new relative trace formula.
\newblock {\em Peking Mathematical Journal}, pages 1--68, 2023.

\bibitem[Yun11]{YunJR}
Zhiwei Yun.
\newblock The fundamental lemma of {J}acquet and {R}allis.
\newblock {\em Duke Math. J.}, 156(2):167--227, 2011.
\newblock With an appendix by Julia Gordon.

\bibitem[Zha14]{ZhangFourier}
Wei Zhang.
\newblock Fourier transform and the global {G}an-{G}ross-{P}rasad conjecture
  for unitary groups.
\newblock {\em Ann. of Math. (2)}, 180(3):971--1049, 2014.

\end{thebibliography}

\end{document}